%% file: rationally-not-bph.tex
\pdfoutput=1
\documentclass[11pt,reqno]{amsart}
 
\usepackage[margin=1in,headsep=.3in, top=1in, bottom=1in]{geometry}                  
\usepackage{amssymb}        
\usepackage{mathtools}      
\usepackage{microtype}
\usepackage{array}          
\usepackage{verbatim} 
\usepackage{etoolbox}  
\usepackage[numbers]{natbib}          
\usepackage{graphicx}       
\usepackage{subcaption}     
\usepackage{float}          
\usepackage{tikz}           
\usetikzlibrary{
  decorations.pathreplacing, 
  arrows.meta                
}
 
\usepackage{enumitem}       
\usepackage{refcount}       
\usepackage[pagebackref=true]{hyperref}
\definecolor{linkpurple}{RGB}{136,92,150}   
\hypersetup{colorlinks=true,linkcolor=linkpurple,citecolor=linkpurple,urlcolor=black,
  pdftitle={Rationally slice knots that are not biprojectively H-slice},
  pdfauthor={Elise Brod}}
\usepackage{cleveref}       
 
\makeatletter
\patchcmd{\@maketitle}{\dimen@34\p@}{\dimen@17\p@}{}{}  
\makeatother
\newtheorem{theorem}{Theorem}[section]
\newtheorem{lemma}[theorem]{Lemma}
\newtheorem{proposition}[theorem]{Proposition}
\newtheorem{corollary}[theorem]{Corollary}
\newtheorem{example}[theorem]{Example}
\newtheorem{family}{Family}
\newtheorem{conj}{Conjecture}
\crefname{family}{family}{families}
\Crefname{family}{Family}{Families}
\crefname{conj}{conjecture}{conjectures}
\Crefname{conj}{Conjecture}{Conjectures}
 
\theoremstyle{definition}
\newtheorem{definition}[theorem]{Definition}
\newtheorem{notation}[theorem]{Notation}
\newtheorem{question}[theorem]{Question}
\crefname{question}{question}{questions}
\Crefname{question}{Question}{Questions}
 
\theoremstyle{remark}
\newtheorem{remark}[theorem]{Remark}

\theoremstyle{plain}
\newtheorem{introthm}{Theorem}

\crefname{introthm}{Theorem}{Theorems}
 
\newenvironment{restatement}[1]
  {\edef\theintrothm{\getrefnumber{#1}}
   \edef\theHintrothm{\getrefnumber{#1}.body} 
   \addtocounter{introthm}{-1}               
   \begin{introthm}}
  {\end{introthm}}

 \newcommand{\claim}[2]{\par\smallskip\noindent\emph{\textbf{Claim~#1:} #2}\par} 
 
\newenvironment{sketch}
  {\begin{proof}[Proof sketch]}
  {\end{proof}}

\newcommand{\Z}{\mathbb{Z}}
\newcommand{\Q}{\mathbb{Q}}
\newcommand{\F}{\mathbb{F}}
\newcommand{\CP}{\mathbb{CP}^{2}} 
\newcommand{\CPbar}{\overline{\mathbb{CP}}^{2}} 
\newcommand{\uCP}{u_{\mathbb{CP}^{2}}} 
\newcommand{\uCPbar}{u_{\overline{\mathbb{CP}}^{2}}} 
\newcommand{\uCPtop}{u_{\mathbb{CP}^{2}}^{\mathrm{top}}} 
\newcommand{\uCPbartop}{u_{\overline{\mathbb{CP}}^{2}}^{\mathrm{top}}} 
\newcommand{\SigmaTwo}{\Sigma_{2}} 
\newcommand{\Kmirror}{\overline{K}} 
\newcommand{\Th}{\operatorname{Th}} 
\newcommand{\diag}{\operatorname{diag}} 
\newcommand{\rank}{\operatorname{rank}} 
\newcommand{\Adj}{\operatorname{Adj}} 
\newcommand{\Fib}{\operatorname{Fib}} 
\newcommand{\Luc}{\operatorname{Luc}} 
\newcommand{\Pell}{\operatorname{Pell}} 

\AddToHook{env/lemma/begin}{\crefalias{theorem}{lemma}}
\AddToHook{env/proposition/begin}{\crefalias{theorem}{proposition}}
\AddToHook{env/corollary/begin}{\crefalias{theorem}{corollary}}
\AddToHook{env/example/begin}{\crefalias{theorem}{example}}
\AddToHook{env/definition/begin}{\crefalias{theorem}{definition}}
\AddToHook{env/notation/begin}{\crefalias{theorem}{notation}}
\AddToHook{env/question/begin}{\crefalias{theorem}{question}}
\AddToHook{env/remark/begin}{\crefalias{theorem}{remark}}
\begin{document}
\title[Rationally slice knots that are not biprojectively H-slice]{Rationally slice knots that are not biprojectively H-slice}
\author[E.\ Brod]{Elise Brod}
\address{Department of Mathematics, University of Texas, Austin, TX 78712}
\email{ebrod@utexas.edu}
\begin{abstract}
We show that rationally slice knots need not be biprojectively H-slice (BPH-slice), answering a question in the study of the rational knot concordance group. Our main tool is a lattice embedding obstruction to H-sliceness in $\#^n\mathbb{CP}^2$ for knots with vanishing signature, extending work of Kjuchukova--Miller--Ray--Sakall{\i}. We apply the obstruction to an infinite family of rationally slice knots, beginning with $8_{18}$. As additional applications, we obstruct the BPH-sliceness of a family of 2-bridge knots which is not obstructed by the instanton knot concordance invariant $r_0$, we obstruct a non-alternating family of Montesinos knots, and we give prime alternating knots that are topologically but not smoothly biprojectively H-slice. We discuss why these knots remain plausible candidates for detecting exotic smooth structures on $\#^n\mathbb{CP}^2$ and suggest natural sources of further candidates.
\end{abstract}
\maketitle
\section{Introduction}\label{sec:intro}
A knot $K\subset S^3$ is \emph{H-slice} in a closed $4$-manifold $Z$ if it bounds a null-homologous disk in $Z$ minus a ball (where being H-slice in $S^4$ is equivalent to being smoothly slice in $B^4$). Every knot is H-slice in some $\#^a\CP\#^b\CPbar$, but it is interesting to ask what happens when only one orientation is allowed. Manolescu and Piccirillo call $K$ \emph{biprojectively H-slice} (\emph{BPH-slice}) if it is H-slice in both $\#^n\CP$ and $\#^n\CPbar$ for some $n$~\cite{MP23}. They show that nearly every prime knot with vanishing signature and at most nine crossings is BPH-slice: the knot $8_{18}$ is left unresolved.
In the context of studying the rational knot concordance group, Lee asked whether every \emph{rationally slice} knot is BPH-slice~\cite[Question~9]{Lee24}. Rationally slice knots are those that bound a smooth disk in a rational homology ball, and $8_{18}$ is one such example. We answer Lee's question, and settle $8_{18}$, with an infinite family of rationally slice Turk's head knots that begins with the quatrefoil knot, $8_{18}=\Th(3,4)$.

 \begin{figure}[H]
  \centering
  \begin{subfigure}[b]{0.2\textwidth}
    \centering
    \resizebox{!}{3cm}{\input{figures/Th-3q}}
    \caption{$\Th(3,q)$}
  \end{subfigure}
  \hspace{0.1\textwidth} 
  \begin{subfigure}[b]{0.2\textwidth}
    \centering
    \resizebox{!}{3cm}{\input{figures/Th-3-4}}
    \caption{$\Th(3,4)= 8_{18}$}
  \end{subfigure}
  \hspace{0.1\textwidth} 
  \begin{subfigure}[b]{0.2\textwidth}
    \centering
    \resizebox{!}{3cm}{\input{figures/Th-3-2}}
    \caption{$\Th(3,2)=4_1$}
  \end{subfigure}
  \caption{The Turk's head knot $\Th(3,q)$ is the braid closure of $(\sigma_1\sigma_2^{-1})^q$. For even $q\geq 4$ not divisible by 3, $\Th(3,q)$ is rationally slice but not BPH-slice (\Cref{thm:intro:Th3q}). In contrast, $\Th(3,2)=4_1$ \emph{is} BPH-slice.}
\end{figure}
\begin{introthm}\label{thm:intro:Th3q}
Let $q\geq 4$ be even and not divisible by $3$. The Turk's head knot $\Th(3,q)$, which is the closure of the $3$-braid $(\sigma_1\sigma_2^{-1})^q$, is rationally slice but is not H-slice in $\#^n\CP$ or $\#^n\CPbar$ for any $n$. In particular, $8_{18}=\Th(3,4)$ is not BPH-slice.
\end{introthm}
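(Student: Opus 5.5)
The argument splits into a rationally slice part and an obstruction part.

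\emph{Rational sliceness.} Rationally slice knots bound smooth disks in rational homology balls. I would first show that $\Th(3,q)$ is amphichiral, which follows because $(\sigma_1\sigma_2^{-1})^q$ is conjugate to its mirror-inverse. For even $q$, I would then show it is strongly negative amphichiral, via an explicit involution on the closed braid realizing $K\cong -\overline{K}$ with two fixed points. Kawauchi's theorem then gives that strongly negative amphichiral knots are rationally slice. The step I would check carefully is that the symmetry has the correct (negative) type; I expect this to be visible in the standard symmetric diagram of the Turk's head knot.

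\emph{Non-BPH-sliceness via lattices.} Since $\Th(3,q)$ is amphichiral, H-sliceness in $\#^n\CPbar$ is equivalent to H-sliceness of the mirror, i.e.\ of $K$ itself, in $\#^n\CP$. So it suffices to obstruct H-sliceness in $\#^n\CP$. I would use the paper's lattice embedding obstruction for signature-zero knots, which extends Kjuchukova--Miller--Ray--Sakall\i. Roughly, if $K$ is H-slice in $\#^n\CP$, then surgery/branched-cover arguments yield a smooth negative (or positive) definite filling of $\Sigma_2(K)$, or of a surgery on $K$. Gluing this to the Goeritz/plumbing filling coming from an alternating diagram gives a closed definite manifold, and Donaldson's theorem forces the Goeritz lattice $\Lambda$ of $K$ to embed into a standard lattice $\Z^{m}$ with rank and determinant constraints.

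For $\Th(3,q)$ the alternating diagram has checkerboard graph a wheel with $q$ spokes; its dual is also a wheel. The Goeritz lattice is therefore the wheel lattice: the Laplacian-type form on the cycle $C_q$ with hub weight, where each rim vertex has norm $3$ and adjacent rim vertices pair to $-1$. The determinant is $\det K = \Luc(2q)-2$ type expressions, which are Lucas numbers. The condition $3\nmid q$ should enter as the condition that $\det$ is not a square times a special form, or that a certain residue class fails.

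The core step is a combinatorial proof that this norm-$3$ cyclic lattice, with the required codimension (rank $q$ into $\Z^{q}$, or $\Z^{q+1}$ with an orthogonality condition), admits no embedding. I would represent each vertex $v_i=\pm e_a\pm e_b\pm e_c$ and use $v_i\cdot v_{i+1}=-1$ and $v_i\cdot v_j=0$ for non-adjacent $i,j$, in the style of Greene's analysis of cyclic lattices. Basis vectors are shared by consecutive vertices in a rigid pattern, and the count of basis vectors forces a periodicity of period $3$ around the cycle. Closing up the cycle then requires $3\mid q$, which yields the contradiction. I expect this case analysis, and verifying that the embedding constraint in the obstruction has exactly the required form, to be the main obstacle. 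The even-$q$ hypothesis is used only for rational sliceness; $q=2$ is excluded since the period argument degenerates there.
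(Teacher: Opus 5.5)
You have the rational-sliceness half right (strong negative amphichirality plus Kawauchi), but there is a genuine gap in the obstruction, starting with the lattice condition itself. If $K$ is H-slice in $\#^n\CP$, Kjuchukova--Miller--Ray--Sakall\i{} give a positive-definite filling $X$ of $\Sigma_2(K)$ whose rank-$2n$ lattice is of half-integer surgery type. Glue $X$ to the checkerboard filling $W$ of $-\Sigma_2(K)$; amphichirality gives the right orientation. Normalize the norm-two vectors $u_i$ to $e_{2i-1}+e_{2i}$. Donaldson then yields $\Lambda_W\hookrightarrow(\Z^n,2I)\oplus(\Z^q,I)$, i.e.\ $G_q=2CC^T+DD^T$ with $D$ square and $n$ unknown and unbounded.

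Ruling out an embedding of $G_q$ into $\Z^q$ or $\Z^{q+1}$ only handles the candidate $T=CC^T=0$. That is the classical sliceness obstruction and says nothing about large $n$. The figure-eight knot shows it is not enough: $G_2$ has determinant $5$ and so does not embed in $\Z^2$, yet $4_1$ is H-slice in $\CP$.

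The missing idea is an argument uniform in $n$. Since $\det G_q$ is odd, $D$ is invertible mod $2$, so no row $\rho_i$ of $D$ vanishes and $T_{ii}\le(3-1)/2=1$. Positive semidefiniteness kills the rows of $T$ with $T_{ii}=0$. For $i\ne j$ with $T_{ii}=T_{jj}=1$, the rows $\rho_i,\rho_j$ are distinct unit vectors, so $2T_{ij}=(G_q)_{ij}\in\{0,-1\}$ forces $T_{ij}=0$. Hence $T=I_{\mathcal N}$. Every row of $G_q-2I_{\mathcal N}$ has odd sum, so $v=\sum\rho_i$ is characteristic and $v\cdot v\ge q$. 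But $v\cdot v$ is the sum of the entries of $G_q-2I_{\mathcal N}$, namely $q-2|\mathcal N|$, so $T=0$.

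Your proposed combinatorial core cannot be correct, and you have the roles of the hypotheses reversed. For odd $q=2s+1$ with $3\nmid q$, $G_q$ does embed in $\Z^q$: $G_q=DD^T$ with $D=I_q+\Pi^s-\Pi^{-s}$, where $\Pi$ is the cyclic permutation matrix. Consistently, $\Th(3,5)=10_{123}$ is slice. So a ``period three, closing up the cycle requires $3\mid q$'' argument, which never uses the parity of $q$, would prove too much.

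In fact $3\nmid q$ is exactly the condition that $\Th(3,q)$ be a knot rather than a $3$-component link. Its only role in the argument is to make $\det G_q=\Luc_{2q}-2$ odd, which is what makes $D$ invertible over $\F_2$. Evenness is what kills the remaining candidate $T=0$: for even $q$, $\det G_q=5\Fib_q^2$ has an odd power of $5$ and is not a square. Conversely, evenness is not needed for rational sliceness, since $\Th(p,q)$ is strongly negative amphichiral for every odd $p$.
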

Together with~\cite[Ex.~2.7]{MP23}, \Cref{thm:intro:Th3q} completes the classification of BPH-sliceness for prime knots with at most nine crossings. The knots $\Th(3,q)$ are rationally slice because they are strongly negative amphichiral~\cite{DPS26,Kaw09}.

Several familiar concordance invariants vanish for this family of Turk's head knots. Since they are rationally slice, they have $\sigma=\sigma_{LT}=\tau=\epsilon=0$, along with other vanishing Heegaard Floer concordance invariants~\cite[\S\S1 and 6]{Lee24}. Since they are also quasi-alternating with vanishing signature, Rasmussen's $s$-invariant and $d(S^3_{\pm1}(\cdot))$ vanish as well (\Cref{lem:alt-qa-invariants}).
Nevertheless, we are able to obstruct them from being BPH-slice by studying how the intersection form of an associated $4$-manifold is constrained by a hypothetical H-slice disk.
\subsection{The lattice embedding obstruction}\label{subsec:intro:lattice}
Our obstruction relies on a construction of Kjuchukova--Miller--Ray--Sakall{\i}~\cite{KMRS24}. They show that if a knot $K$ with vanishing signature is H-slice in $\#^n\CP$, then its double branched cover $\SigmaTwo(K)$ bounds a smooth positive-definite $4$-manifold $X$ with $b_2(X)=2n$ whose intersection lattice is of \emph{half-integer surgery type}~\cite[Thm.~1.1]{KMRS24}. 

If we glue $X$ to an arbitrary positive-definite filling $W$ of $-\SigmaTwo(K)$ with intersection form given by $Q_W$, Donaldson's theorem embeds both intersection lattices in a standard diagonal lattice~\cite{Don87}. The half-integer surgery structure of $X$ forces the embedding of the intersection lattice of $W$, when represented by a matrix, to have $n$ pairs of opposite columns, together with $r=b_2(W)$ remaining columns. Grouping these columns gives the following decomposition.
\begin{introthm}\label{thm:intro:decomp}
Let $K$ be a knot with $\sigma(K)=0$, and let $W$ be a positive-definite filling of $-\SigmaTwo(K)$ with $b_2(W)=r$ and intersection lattice $\Lambda_W$ whose form is given by a matrix $Q_W$. If $K$ is H-slice in $\#^n\CP$, then $\Lambda_W\hookrightarrow(\Z^n,2I)\oplus(\Z^r,I)$. Equivalently, $Q_W$ admits a decomposition 
\[ Q_W=2CC^T+DD^T, \quad\text{where}\quad C\in M_{r\times n}(\Z)\quad\text{and}\quad D\in M_r(\Z).\]
\end{introthm}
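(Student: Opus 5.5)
We plan to glue the filling from \cite[Thm.~1.1]{KMRS24} to $W$, apply Donaldson's theorem, and then read off the column structure forced by the half-integer surgery type of $X$.

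\emph{Step 1 (gluing and Donaldson).} Since $\sigma(K)=0$ and $K$ is H-slice in $\#^n\CP$, \cite[Thm.~1.1]{KMRS24} gives a smooth positive-definite $X$ with $\partial X=\SigmaTwo(K)$ and $b_2(X)=2n$, whose lattice $\Lambda_X$ is of half-integer surgery type. Set $Z=X\cup_{\SigmaTwo(K)}W$. This is a closed smooth positive-definite $4$-manifold: $b_1$ issues vanish rationally since $\SigmaTwo(K)$ is a $\Q$-homology sphere, and $b_2(Z)=2n+r$. By Donaldson's theorem~\cite{Don87}, $H_2(Z)/\mathrm{tors}\cong(\Z^{2n+r},I)$. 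Because $\SigmaTwo(K)$ is a rational homology sphere, $\Lambda_X\oplus\Lambda_W\hookrightarrow(\Z^{2n+r},I)$ is a finite-index embedding with $\Lambda_X\perp\Lambda_W$.

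\emph{Step 2 (exploiting half-integer surgery type).} We will recall the structure meant by ``half-integer surgery type''. Concretely, $X$ arises from a Kirby diagram for $\SigmaTwo(K)$ obtained by lifting the H-slice disk complement. In the form we expect, $\Lambda_X$ contains $n$ pairs of classes whose images under any embedding into a diagonal lattice are forced to be of the shape $e_{a}+e_{b}$ paired with $e_a-e_b$. Equivalently, $\Lambda_X\otimes\Q$ is spanned by vectors that, for each of $n$ disjoint pairs of coordinates $\{a_i,b_i\}$, contain $e_{a_i}-e_{b_i}$ (or span it), while touching the remaining $r$ coordinates not at all.

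The main obstacle is proving this rigidity: any embedding of a half-integer surgery lattice of rank $2n$ into $\Z^{2n+r}$ uses exactly $2n$ coordinates, paired up so that $e_{a_i}-e_{b_i}\in\Lambda_X\otimes\Q$. We would attempt this by using the specific generators supplied in \cite{KMRS24}: the vectors of square $1$ or $2$ and their pairings. Unit-square vectors must be $\pm e_j$, and square-$2$ vectors must be $\pm e_j\pm e_k$. We would then run a short combinatorial argument, in the style of Greene's lattice analyses, forcing disjoint supports and the orthogonal complement.

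\emph{Step 3 (reading off the decomposition).} Given Step 2, each $w\in\Lambda_W$ is orthogonal to $e_{a_i}-e_{b_i}$, so its $a_i$ and $b_i$ coordinates agree. Write the embedding of a basis of $\Lambda_W$ as an $r\times(2n+r)$ integer matrix $A$. Its columns come in $n$ identical pairs, giving $C\in M_{r\times n}(\Z)$, together with $r$ remaining columns forming $D\in M_r(\Z)$. Then
\[Q_W=AA^T=2CC^T+DD^T.\]
This is the same as saying $w\mapsto(C^Tw,D^Tw)$ embeds $\Lambda_W$ into $(\Z^n,2I)\oplus(\Z^r,I)$. Conversely, such a decomposition gives the lattice embedding, so the two formulations in the statement are equivalent.
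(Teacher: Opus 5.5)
\textbf{There is a genuine gap in Step 2.}

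You never state what ``half-integer surgery type'' means, and the structure you guess is false. So is the rigidity you set up as the main obstacle. The definition is a basis $u_1,\dots,u_n,v_1,\dots,v_n$ of $\Lambda_X$ with $u_i\cdot u_j=2\delta_{ij}$ and $u_i\cdot v_j=\delta_{ij}$. Nothing is prescribed for $v_i\cdot v_j$. In particular:
\begin{itemize}
\item $\Lambda_X$ need not contain unit vectors.
\item It does not contain both $e_a+e_b$ and $e_a-e_b$.
\item Under the embedding it is \emph{not} supported on $2n$ coordinates.
\end{itemize}
Indeed, suppose $\Lambda_X\otimes\Q$ avoided the remaining $r$ coordinates. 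Then $\Lambda_W$, being orthogonal to $\Lambda_X$, would lie in a copy of $(\Z^r,I)$, and you would get $Q_W=DD^T$ with $D$ square. That is the classical sliceness obstruction with $C=0$.

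This fails already for $4_1$. It is H-slice in $\CP$, so $n=1$. Yet its checkerboard filling of $-\SigmaTwo(4_1)$ has $\det Q_W=5$, which is not a square. In general the images of the $v_i$ spill into the $W$-coordinates, and that spillover is exactly what makes a nonzero $C$ possible. So the statement you would try to prove in Step 2 cannot be proved.

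\textbf{What you actually need is much weaker and involves only the $u_i$.} Each $u_i$ has square $2$, so it is $\pm e_a\pm e_b$.
\begin{itemize}
\item Two distinct $u_i,u_j$ cannot share exactly one coordinate, since their pairing would then be $\pm1$ rather than $0$.
\item They cannot share both coordinates either. Then $u_i\equiv u_j\pmod 2$, which gives $1=u_i\cdot v_i\equiv u_j\cdot v_i=0\pmod 2$.
\end{itemize}
Squares and orthogonality alone cannot rule out the shared-support case: $e_a+e_b\perp e_a-e_b$, which is precisely the configuration you expected to see. The dual vectors $v_i$ and this mod-$2$ argument are therefore essential.

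Once the $u_i$ have pairwise disjoint supports, normalize them to $u_i=e_{a_i}-e_{b_i}$ in your sign convention. Your Step 3 then goes through as written, because it only uses $\Lambda_W\perp u_i$. This is the paper's argument, and it leaves the images of the $v_i$ completely unconstrained.
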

The proof of \Cref{thm:intro:decomp} follows lattice arguments similar to~\cite[Prop.~4.1]{KMRS24} and~\cite[Lem.~3.3]{Owe10}.

We now discuss how to apply \Cref{thm:intro:decomp} to obstruct BPH-sliceness in practice (see \Cref{prop:finiteob}). Since there are knots that are H-slice in $\#^n\CP$ but not in $\#^{n-1}\CP$ for arbitrarily large $n$~\cite[Thm.~1.2]{KMRS24}, an obstruction to BPH-sliceness must rule out every $n$ at once. 
The number $n$ of $\CP$ summands enters \Cref{thm:intro:decomp} only as the number of columns of $C$, so we set $T=CC^T$, whose size does not depend on $n$. One can readily argue that the matrix $DD^T=Q_W-2T$ is positive definite and the diagonal entries of $T$ are bounded. This leaves a finite set of \emph{candidates} for $T$, determined by $Q_W$ alone. If no candidate gives $Q_W-2T=DD^T$ with $D$ square and integral, then, by~\Cref{thm:intro:decomp}, $K$ is not H-slice in $\#^n\CP$ for any $n$. 

The candidate $T=0$ asks whether $Q_W$ itself is of the form $DD^T$ for some square $D$, which is the classical lattice embedding obstruction to $K$ being slice (see, e.g.,~\cite{Lis07,Owe10}). We can therefore interpret \Cref{thm:intro:decomp} as saying that adding any number of copies of $\CP$ can only relax this classical obstruction to sliceness in finitely many prescribed ways. In particular,  \Cref{thm:intro:decomp} can only obstruct knots that the classical obstruction already shows are not slice (see~\Cref{rem:limitations}).

The figure-eight knot, $4_1=\Th(3,2)$, illustrates why excluding $T=0$ alone does not obstruct BPH-sliceness. A checkerboard coloring gives a positive-definite filling of $-\SigmaTwo(4_1)$ with $Q_W=\left(\begin{smallmatrix}3&-2\\-2&3\end{smallmatrix}\right)$. Since $\det(Q_W)=5$ is not a square, we know $Q_W\neq DD^T$ for any square integral $D$. This recovers the familiar fact that $4_1$ is not slice. But $T=\left(\begin{smallmatrix}1&-1\\-1&1\end{smallmatrix}\right)$ gives us $Q_W-2T=I_2=I_2I^T_2$. Indeed, $4_1$ is H-slice in $\CP$, since changing a single positive crossing turns it into the unknot. By amphichirality, or by changing a negative crossing to get the unknot, we see that $4_1$ is also H-slice in $\CPbar$, hence BPH-slice (cf.~\cite[Example~2.7]{MP23}). 
 
In contrast, for the knots $\Th(3,q)$ in \Cref{thm:intro:Th3q}, a checkerboard coloring of the standard alternating diagram gives a definite filling for which no candidate $T$ admits an integral factorization $Q_W-2T=DD^T$ with $D$ square.
\subsection{Additional applications}\label{subsec:intro:applications}
\subsubsection{Comparison with the instanton knot concordance invariant \texorpdfstring{$r_0$}{r0}.} 
A knot is \emph{$0$-positive} if it is H-slice in a smooth closed $4$-manifold homeomorphic to some $\#^n\CP$, and \emph{$0$-bipolar} if both it and its mirror are $0$-positive~\cite{CHH13}. Note in particular that a BPH-slice knot is $0$-bipolar. 

Daemi, Imori, Sato, Scaduto, and Taniguchi adapted the instanton homology cobordism invariants $r_s$ of Nozaki, Sato, and Taniguchi~\cite{NST24} to define a knot concordance invariant $r_0(K)=r_0(S^3,K)$, and showed that if $\sigma(K)=0$ and $r_0(\Kmirror)<\infty$, then $K$ is not H-slice in any smooth closed positive-definite $4$-manifold with $b_1=0$~\cite[Prop.~6.6]{DISST25}.
In particular, such a knot is not $0$-positive, so a $0$-bipolar knot with $\sigma(K)=0$ has $r_0(K)=r_0(\Kmirror)=\infty$. They compute $r_0$ for families of 2-bridge knots, obstructing some of them from being $0$-bipolar, hence obstructing them from being BPH-slice.

For the following family of 2-bridge knots with vanishing signature, we show that $r_0$ does not obstruct $0$-bipolarity, but we use \Cref{thm:intro:decomp} to show that the knots are not BPH-slice.
\begin{introthm}\label{thm:intro:2bridgenew}
For every odd $h\geq 3$, the 2-bridge knot $L_h=K(8h^2+4h+1,8h^2-4h+1)$
is not H-slice in $\#^n\CP$ for any $n\geq0$. Moreover, $r_0(L_h)=r_0(\overline{L_h})=\infty$.
\end{introthm}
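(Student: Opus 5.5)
The plan has two parts, matching the two claims of \Cref{thm:intro:2bridgenew}: an obstruction via \Cref{thm:intro:decomp}, and a computation showing $r_0$ carries no information here.

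\textbf{The obstruction.} First I would fix an explicit alternating diagram of $L_h$. Writing $p=8h^2+4h+1$ and expanding $p/(8h^2-4h+1)$ as a continued fraction gives the 2-bridge diagram. Since $p/(p-8h)$ is close to $1$, I expect a short expansion whose coefficients depend linearly on $h$, something like $[1,\,h\text{-dependent},\,\dots]$. I would then check $\sigma(L_h)=0$, for example via Goeritz/Gordon--Litherland.

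Next, I would take the checkerboard surface whose Goeritz form is positive definite. This gives a plumbing-type filling $W$ of $-\SigmaTwo(L_h)$ (equivalently, use the linear plumbing for the lens space $L(p,q')$ with the appropriate orientation). Its $Q_W$ is a tridiagonal matrix whose diagonal entries grow with $h$ but whose size $r$ stays fixed, or grows slowly.

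I would then run the finite candidate procedure (\Cref{prop:finiteob}). Write $Q_W=2T+DD^T$ with $T=CC^T$. Each diagonal entry of $T$ is at most $(Q_W)_{ii}/2$, and each row $d_i$ of $D$ satisfies $|d_i|^2=(Q_W)_{ii}-2|c_i|^2$. The key is to exploit the small diagonal entries: vertices of weight $2$ or $3$ force $c_i=0$ with $|d_i|^2\in\{2,3\}$, or $|c_i|^2=1$ with $|d_i|^2\in\{0,1\}$. Propagating along the chain with the off-diagonal constraints $(Q_W)_{ij}=2c_i\cdot c_j+d_i\cdot d_j$ should pin down the possible pairs $(c_i,d_i)$ up to automorphism. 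The large-weight vertex (weight $\sim h$) would then be handled by a determinant or parity argument. Concretely, the parity of $(Q_W)_{ii}-2|c_i|^2$ is fixed. Counting how many standard basis vectors of $\Z^r$ can appear among the $d_i$ should show that $D$ cannot be square of full rank, since $\det D^2=\det(Q_W-2T)$ must be a nonzero square while linear independence fails.

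The oddness of $h$ should enter exactly here, through a mod-$2$ or mod-$4$ condition on $\det(Q_W-2T)$; for instance, $p\equiv 5\pmod 8$ when $h$ is odd. This case analysis, uniform in $h$, is the main obstacle. I would first carry it out by computer for $h=3,5,7$ to identify the surviving near-candidates, and then find the invariant that kills them all.

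\textbf{The $r_0$ statement.} I would cite the DISST computations or criteria for 2-bridge knots. Alternatively, I would show that $L_h$ and $\overline{L_h}$ are each H-slice in some negative-definite manifold, or become slice after crossing changes of a single sign, since such a cobordism forces $r_0=\infty$ by monotonicity. Concretely, I would look for one crossing change of each sign in the diagram that turns $L_h$ into a slice 2-bridge knot, for instance one of Lisca's form $K(m^2, mk\pm1)$. The parameters $8h^2\pm4h+1$ suggest that changing a crossing in the length-$h$ twist region yields $K(4h^2\cdots)$ with square determinant. If that fails, I would fall back on DISST's explicit formula for $r_0$ of 2-bridge knots in terms of the continued fraction.
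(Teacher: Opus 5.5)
Your framework is the right one: apply \Cref{thm:decomp} to a linear-plumbing filling and run the finite candidate test of \Cref{prop:finiteob}. But the step that actually proves the first claim is missing, and the mechanisms you suggest for it would fail.

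\textbf{Orientation.} The choice of filling is not optional. The filling of $-\SigmaTwo(L_h)$ is $A_h=\Lambda(p,8h)=\Lambda[h+1,3,2^{[2h-2]},3]$, of rank $2h+1$ with a single large diagonal entry $h+1$. The other plumbing, $\Lambda[2^{[h-1]},3,2h+1,2]$, bounds $+\SigmaTwo(L_h)$ and passes the test. The candidate $T=h\,e_{h+1}e_{h+1}^T$ leaves $\Lambda[2^{[h-1]},3,1,2]$, which blows down to $I_{h+2}$.

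\textbf{Counting and determinants do not finish.} Even with $A_h$, neither tool gives a contradiction. The weight-two rows $\rho_3,\ldots,\rho_{2h}$ force the corresponding rows and columns of $T$ to vanish. By \Cref{prop:2bridge-root-count} they normalize to $e_1-e_2,\ldots,e_{2h-2}-e_{2h-1}$. This uses only $2h-1$ of the $2h+1$ coordinates, so there is no counting contradiction. Also, $\det(A_h-2T)\equiv\det A_h$ holds only modulo $2$. So $p\equiv 5\pmod 8$ disposes of $T=0$ but of no other candidate.

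\textbf{The missing idea.} You need to locate $\rho_1$ and $\rho_2$. Since $\rho_1$ is orthogonal to the entire chain, its first $2h-1$ coordinates are equal. Then $\rho_1\cdot\rho_1\le h+1<2h-1$ forces them to vanish, so $\rho_1=ae_{2h}+be_{2h+1}$. Likewise, the pairings of $\rho_2$ with the chain together with $\rho_2\cdot\rho_2\le 3$ force $\rho_2=-e_1+ce_{2h}+de_{2h+1}$. Oddness of $h$ now enters through norms, not determinants. Both $a^2+b^2=h+1-2T_{11}$ and $c^2+d^2=2-2T_{22}$ are even, so $\rho_1\cdot\rho_2=ac+bd$ is even. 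But it must equal $-1-2T_{12}$, which is odd.

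\textbf{The $r_0$ claim.} Your alternative route cannot work for $\overline{L_h}$. By \cite[Prop.~6.6]{DISST25}, $r_0(\overline{L_h})=\infty$ would follow from $L_h$ being H-slice in some positive-definite closed $4$-manifold with $b_1=0$. A crossing-change argument, however, would exhibit $L_h$ as H-slice in the standard $\#^k\CP$. That is exactly what the first half of the theorem rules out. Such a disk would have to live in a nonstandard (e.g.\ exotic) definite manifold, which is the open question of \Cref{sec:exotica}.

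For $r_0(L_h)$, the same route needs $L_h$ to be H-slice in some $\#^k\CPbar$. This is not known even for $h=3$, since $\uCP(12a_{792})$ is left undetermined.

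Citing DISST also does not cover the whole family. Your fallback to their $2$-bridge machinery amounts to the computation that is the real content here. You must classify the empty lattice rectangles for $(p,\pm 8h)$ in the Daemi--Scaduto description of the I-graded S-complex. Then use them to produce a chain $\alpha\in C_{-1}$ with $d\alpha=\delta_2(1)$, and check that $\deg_I(\alpha)\le 0$.
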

We compute $r_0(L_h)$ and $r_0(\overline{L_h})$ in Appendix~\ref{app:Lh-r0} using the 2-bridge formulas of Daemi--Scaduto \cite[\S3.1]{DS24}, as in~\cite[\S6.4]{DISST25}. We have not computed $r_0$ for the knots $\Th(3,q)$ in \Cref{thm:intro:Th3q} or $J_q$ in \Cref{thm:intro:Jq}.

We also recover the failure of BPH-sliceness for the 2-bridge family $K_{m,n}$ obstructed from $0$-positivity (using $r_0$) in~\cite[Thm.~1.15]{DISST25}, using Donaldson's theorem and elementary lattice arguments (\Cref{thm:2bridgeDISST}).
\subsubsection{A non-alternating family}
\Cref{thm:intro:decomp} also applies to non-alternating knots, for which checkerboard surfaces need not give definite fillings. For the following family of Montesinos knots studied by Williams~\cite{Wil08}, we instead use a star-shaped plumbing to get a definite filling of $\SigmaTwo(J_q)$.
\begin{introthm}\label{thm:intro:Jq}
For every odd $q\geq1$, the non-alternating Montesinos knot $J_q=M\bigl(1;\frac{2q+3}{q+2},\frac{8}{3},\frac{3}{2}\bigr)$ is not H-slice in $\#^n\CP$ for any $n\geq0$.
\end{introthm}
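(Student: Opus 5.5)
The plan is to apply \Cref{thm:intro:decomp} to a definite filling built from a plumbing, and then show that no candidate $T$ in the sense of \Cref{prop:finiteob} works.

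\emph{Step 1: build the filling.} Since $J_q$ is a Montesinos knot, $\SigmaTwo(J_q)$ is the Seifert fibered space $M\bigl(1;\frac{2q+3}{q+2},\frac{8}{3},\frac{3}{2}\bigr)$. It bounds a star-shaped plumbing $P$ of disk bundles over spheres. I would expand each coefficient as a continued fraction:
\[
\frac{2q+3}{q+2}=[2,2,\dots,2,3] \ \text{($q$ twos)}, \qquad \frac{8}{3}=[3,3], \qquad \frac{3}{2}=[2,2].
\]
The central vertex carries weight $e$, determined by the normalization $1$. Choosing the orientation so that $P$ (or its reverse) is positive definite, I would check definiteness via the standard criterion $e>\sum \frac{1}{r_i}$, after flipping signs if needed.

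\emph{Step 2: fix conventions.} I need the filling $W$ of $-\SigmaTwo(J_q)$ with $b_2=r=q+6$ (the center plus $(q+1)+2+2$ arm vertices) and positive-definite form $Q_W$. One must also check $\sigma(J_q)=0$ and that the orientation conventions match \Cref{thm:intro:decomp}. I would verify the signature from a Goeritz matrix or a Seifert matrix and cross-check with Williams~\cite{Wil08}. Getting the sign right here matters, since the mirror version presumably does embed.

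\emph{Step 3: rule out every candidate $T$.} Suppose $Q_W=2T+DD^T$ with $T=CC^T$. Equivalently, there is an embedding $\Lambda_W\hookrightarrow(\Z^n,2I)\oplus(\Z^r,I)$ sending each vertex $v$ to $(c_v,d_v)$ with $2|c_v|^2+|d_v|^2=Q_W(v,v)$. The key observation is that a vertex of weight $2$ has only two options:
\begin{itemize}
\item $|c_v|^2=1$ and $d_v=0$, or
\item $c_v=0$ and $d_v=\pm e_i\pm e_j$.
\end{itemize}
A weight-$3$ vertex has either $|c_v|^2=1$ with $d_v=\pm e_i$, or $c_v=0$ with $|d_v|^2=3$.

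I would then run a case analysis in the style of Lisca and Owens along each linear arm, using that adjacent vertices pair to $-1$ and non-adjacent ones to $0$. Along the long chain of $2$'s, the $c$-parts are forced to alternate or to vanish. Components with $c\ne0$ and $d=0$ live in the even lattice $(\Z^n,2I)$, where every pairing is even. Since adjacent vertices pair to $-1$, which is odd, each adjacent pair must have a nonzero $d$-contribution. This parity constraint is the main lever.

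\emph{Step 4: count to a contradiction.} The $d$-parts then essentially give an embedding of a sublattice of rank close to $r$ into $\Z^r$. I expect this to force the "standard" Lisca-type configuration, and then a determinant or counting argument should show it fails. Concretely, the square $D$ would have $\det(D)^2=\det(Q_W-2T)$. Alternatively, one can count the number of basis vectors $e_i$ used against the $r$ available.

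\emph{Main obstacle.} The hardest step is Step 3. Because $q$ is arbitrary, the analysis must be uniform in the arm length. It must also handle the central vertex, whose weight may be large enough to allow many splittings between its $c$- and $d$-parts. My first attempt would be to propagate the constraints inward from the leaves of the three arms. The short arms $[3,3]$ and $[2,2]$ have very few options, which should pin down the images near the center. Then I would use the oddness of $q$ to get a parity contradiction in the $q$-arm's alternation pattern. This is where I expect the hypothesis that $q$ is odd to enter.
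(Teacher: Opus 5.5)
\textbf{Step~1 builds the wrong lattice.} Your overall framework matches the paper's: a star-shaped plumbing filling of $-\SigmaTwo(J_q)$, followed by a Lisca--Owens analysis of $\Lambda_W\hookrightarrow(\Z^n,2I)\oplus(\Z^r,I)$. But the expansion $\frac{2q+3}{q+2}=[2^{[q]},3]$ is false for $q\geq 3$: the right-hand side equals $\frac{2q+3}{2q+1}$, and the correct expansion is $[2,q+2]$. More seriously, with the normalization $e=1$ your own definiteness criterion fails, because $\frac{q+2}{2q+3}+\frac38+\frac23>1$. With arms $[2,q+2],[3,3],[2,2]$ and central weight $1$, the plumbing has determinant $-(26q+51)$, so it is indefinite, and reversing orientation does not repair this. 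You must first move a twist into each tangle, $J_q\simeq M\bigl(-2;-\frac{2q+3}{q+1},-\frac85,-3\bigr)$, and then reverse orientation. The positive-definite filling of $-\SigmaTwo(J_q)$ then has central weight $2$ and arms $[3,2^{[q]}]$, $[2,3,2]$, $[3]$. So the centre is a $2$, not a vertex allowing ``many splittings''. The only weight-$3$ vertices are three pairwise non-adjacent ones, labelled $v_{q+1},v_{q+4},v_{q+6}$ in the paper's matrix $S_q$.

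\textbf{Steps~3--4 contain no argument, and the tools you name cannot finish it.} Your parity remark is correct: a vertex with $d_v=0$ pairs evenly with its neighbours, so weight-$2$ vertices have $c_v=0$. However, putting orthogonal square-two vectors on disjoint supports needs the stronger fact that $D$ is invertible over $\F_2$ (\Cref{prop:Dinvert}); otherwise $e_1+e_2$ and $e_1-e_2$ are allowed.
\begin{itemize}
\item \emph{The candidate $T=0$.} A determinant test fails whenever $26q+51$ is a square, e.g.\ $q=15$, where it equals $21^2$. A coordinate count also fails, because the square-two blocks $v_1,\dots,v_q$; $v_{q+2},v_{q+3}$; $v_{q+5}$ use exactly $(q+1)+3+2=q+6=r$ coordinates. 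The missing step is to use those coordinates to pin down $v_{q+1}=(t,\dots,t,t+1;\,a-1,a,a;\,c,c)$ and $v_{q+4}=(b,\dots,b;\,d,d,d+1;\,f,f+1)$. Square $3$ then forces $c=b=0$ and $d=-1$, so $v_{q+1}\cdot v_{q+4}=1-2a\neq0$, a contradiction.
\item \emph{Candidates $T\neq0$.} You must show $T=I_{\mathcal N}$ with $\mathcal N\subseteq\{q+1,q+4,q+6\}$, then exclude each index. For example, $x=e_q+2e_{q+1}+e_{q+2}$ has $x^TS_qx=8=2x_{q+1}^2$, so $S_q-2T$ is not positive definite. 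In your language, this says $d(v_q)\equiv d(v_{q+2})\pmod 2$. The leaf $v_{q+6}$ needs the longer vector $e_q+2e_{q+1}+5e_{q+2}+3e_{q+3}+e_{q+4}+5e_{q+6}$.
\end{itemize}

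\textbf{The role of odd $q$.} The oddness of $q$ plays no part in the lattice analysis: the long arm carries no $c$-parts, so there is no alternation to exploit. It enters only through $\sigma(J_q)=0$, which fails for even $q$. Searching for a parity-of-$q$ contradiction along that arm is a dead end.
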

Like $\Th(3,q)$, the knots $J_q$ are quasi-alternating with $\sigma(J_q)=0$, so the invariants in \Cref{lem:alt-qa-invariants} vanish. Their Levine--Tristram signature functions also vanish (\Cref{prop:Jq-signatures}). Williams obstructed smooth sliceness for this family~\cite{Wil08} (i.e., he excluded the $T=0$ candidate), and \Cref{thm:intro:Jq} extends this result to obstruct H-sliceness in $\#^n\CP$ by excluding all the nonzero candidates.
\subsubsection{Topological versus smooth BPH-sliceness}
Following~\cite{KMRS24}, let $\uCP(K)$ be the smallest $n$ for which $K$ is H-slice in $\#^n\CP$ ($\infty$ if there is none), set $\uCPbar(K)=\uCP(\Kmirror)$, and let $\uCPtop$ and $\uCPbartop$ be the analogues for locally flat disks. These are the \emph{(topological) projective slicing numbers} of $K$. Dunfield and Gong conjecture that a prime alternating knot is smoothly slice if and only if it is topologically slice, and verify this through $19$ crossings~\cite[Conj.~1.15 and Thm.~1.7]{DG}. Combining our obstruction with Seifert matrix decompositions that bound $\uCPtop$ from above~\cite[Thm.~1.3]{KMRS24},  we show in \Cref{thm:intro:topvsm} that the corresponding statement for BPH-sliceness fails already at low crossing numbers, even for knots whose topological projective slicing numbers are as small as possible.
\begin{introthm}\label{thm:intro:topvsm}
The prime alternating knots $10_{28}$, $10_{109}$, $12a_{792}$, and $13a_{4237}$ are topologically BPH-slice but not smoothly BPH-slice. The knot $13a_{4237}$ satisfies the Fox--Milnor criterion and has
\[
\uCPtop(13a_{4237})=\uCPbartop(13a_{4237})=\uCP(13a_{4237})=1,\qquad\text{and}\qquad \uCPbar(13a_{4237})=\infty.
\]
The knot $10_{109}$ is rationally slice.
\end{introthm}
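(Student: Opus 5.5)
The statement bundles several claims, and I would treat them separately: topological BPH-sliceness of the four knots, smooth non-BPH-sliceness via \Cref{thm:intro:decomp}, the exact projective slicing numbers of $13a_{4237}$, the Fox--Milnor property, and rational sliceness of $10_{109}$.

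\emph{Topological side.} For each knot $K$ in the list, I would show $\uCPtop(K)$ and $\uCPbartop(K)$ are finite using the Seifert matrix criterion \cite[Thm.~1.3]{KMRS24}. I would search over integral changes of basis of a Seifert matrix $V$ (of genus $g$) for a form in which $V$ decomposes as required. Concretely, I aim for $V$ to have a $g\times g$ block whose symmetrization differs from a metabolizer-type block only by a definite diagonal piece of size $n$. Since all signatures vanish, such a block should be found by a small computer or hand search. For $13a_{4237}$ I would exhibit such a decomposition with $n=1$ in both orientations, giving $\uCPtop=\uCPbartop\le 1$. The lower bound $\ge1$ comes from the knot not being topologically slice. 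Because $13a_{4237}$ satisfies Fox--Milnor (its Alexander polynomial factors as $f(t)f(t^{-1})$, a direct check), I would use the nonvanishing of a Casson--Gordon or twisted Blanchfield obstruction, or a nontrivial algebraic concordance class, to get this bound. I expect this lower bound to be the subtle point.

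\emph{Smooth side.} I would use the alternating diagram and checkerboard colorings. These give positive-definite fillings of $\pm\SigmaTwo(K)$ whose Goeritz matrices $Q_W$ are the forms in \Cref{thm:intro:decomp}. For whichever orientation should be obstructed, I would run the finite procedure of \Cref{prop:finiteob}. First I enumerate symmetric matrices $T$ with bounded diagonal, $T$ positive semidefinite, $T$ realizable as $CC^T$, and $Q_W-2T$ positive definite. For each $T$ I then check whether $Q_W-2T$ embeds in $(\Z^r,I)$ with $r$ equal to its rank. This last check reduces to a finite search over vectors of norm $\le$ the diagonal entries.

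To prune candidates cheaply, I would first require $\det(Q_W-2T)$ to be a perfect square. Only after that would I do the full embedding search. The main obstacle is making this exhaustion manageable and verifiable: the Goeritz matrices have size up to about $6$ or $7$. I would choose the coloring minimizing $\operatorname{rank}$ and diagonal entries, and use the diagonal bound on $T$ to cut the search space.

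\emph{Remaining pieces for $13a_{4237}$ and $10_{109}$.} For $\uCP(13a_{4237})=1$, I would find a single positive-to-negative crossing change in the diagram yielding a slice knot; this gives an H-slice disk in $\CP$. The matching lower bound holds because the knot is not smoothly slice (the $T=0$ case), and the obstruction in the other orientation gives $\uCPbar=\infty$. Finally, $10_{109}$ is rationally slice because it is strongly negative amphichiral, as for $\Th(3,q)$ in \cite{Kaw09}. I would verify this directly by exhibiting the symmetry on a diagram.
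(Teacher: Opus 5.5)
Your overall architecture matches the paper's. Topological upper bounds come from Seifert-matrix decompositions via \Cref{thm:KMRS-topological}, and $\uCP(13a_{4237})\leq1$ comes from a crossing change to a slice knot. The infinite smooth slicing numbers come from Goeritz lattices and the finite candidate test of \Cref{prop:finiteob}, and Fox--Milnor is a direct factorization.

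\textbf{The gap.} The genuine gap is the lower bound $\uCPtop(13a_{4237})=\uCPbartop(13a_{4237})\geq1$, i.e.\ that $13a_{4237}$ is not topologically slice. The statement needs this for its exact values. You only list possible tools, and one of them cannot work. We have $\Delta_{13a_{4237}}\doteq f(t)f(t^{-1})$ with $f(t)=2-6t+6t^2-3t^3$, which has no rational roots and so is irreducible over $\Q$, and which is not reciprocal. So the $f$-primary part of the rational isometric structure is a $T$-invariant metabolizer. Hence the knot is algebraically slice over $\Q$, and therefore over $\Z$ by Levine's injectivity of the integral into the rational algebraic concordance group. Since also $\sigma_{LT}\equiv0$, only Casson--Gordon-type (metabelian) invariants could work, and you have not computed any. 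The paper closes this step with no new computation. The $T=0$ case of the lattice argument applied to $\overline{13a_{4237}}$ shows $13a_{4237}$ is not smoothly slice. Dunfield--Gong \cite[Thm.~1.7]{DG} proved that a prime alternating knot with at most $19$ crossings is topologically slice if and only if it is smoothly slice.

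\textbf{Smaller points.} First, \Cref{thm:KMRS-topological} requires $B=A+\sum_i c_ic_i^T$ with $\det(tB-B^T)=\pm t^k$, so that $B$ has Alexander polynomial one and Freedman's theorem applies. A \emph{metabolizer-type block} is an algebraic-sliceness condition and would not yield topological H-sliceness; changes of basis play no role.

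Second, for a non-amphichiral alternating knot the coloring is not a free choice. The $\eta=+1$ coloring gives a positive-definite filling of $\SigmaTwo(K)$ and the other gives one of $-\SigmaTwo(K)$. So obstructing $\uCPbar(13a_{4237})$ forces the rank-$9$ Goeritz lattice. The rank-$4$ lattice must pass the test, since $\uCP(13a_{4237})=1$. You can also drop the filter $T=CC^T$: excluding every $T\in\mathcal T(Q)$ via condition~\ref{cond:finiteob:embedding} already suffices.

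\textbf{Valid but different routes.} For $10_{28}=K(53,19)$ and $12a_{792}=\overline{L_3}$, the paper reuses its 2-bridge theorems. Your checkerboard lattices are the same linear plumbing lattices up to isometry, so this works but duplicates effort. For rational sliceness of $10_{109}$, the paper cites \cite[Thm.~A]{DPLS25}, which needs only negative amphichirality. Your route through \cite{Kaw09} needs \emph{strong} negative amphichirality. That does hold: $10_{109}$ is hyperbolic, and the square of an isometry realizing negative amphichirality fixes $K$ pointwise, so it is trivial by the Smith conjecture.
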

 The values of all four projective slicing numbers are collected in \Cref{thm:topvsm-examples}. To our knowledge, these are the first alternating knots that are topologically but not smoothly BPH-slice. Since $10_{109}$ is known to be rationally slice, it gives an additional answer to Lee's question~\cite[Question~9]{Lee24}.
\subsection{Further directions}
We conjecture that \Cref{thm:intro:Th3q} extends to all Turk's head knots $\Th(p,q)$ with $p\geq3$ odd and $q\geq4$ even (\Cref{conj:Thpq}). To support further applications, Appendix~\ref{app:Gpq} computes their Goeritz matrices and determinants, recovering a formula proposed in~\cite{DMMS10} (cf. the simultaneous determinant result in \cite{Sta26}). No analogue of \Cref{conj:Thpq} holds for odd $q$ (see \Cref{rem:Thpq-even}).

Our applications of \Cref{thm:intro:decomp} never actually use the requirement that each candidate matrix $T$ be decomposable as $T=CC^T$. In Appendix~\ref{app:D2}, we show that this requirement holds automatically for large $n$ (where $n$ is both the number of columns of $C$ and the number of $\CP$ summands in \Cref{thm:intro:decomp}) whenever fewer than six diagonal entries of $Q_W$ are at least $5$. We ask whether the requirement $T=CC^T$ can ever obstruct BPH-sliceness when checking if $Q_W-2T=DD^T$ (for some square integral $D$) alone does not (\Cref{q:D2}). For the Goeritz matrix of a reduced alternating diagram, such an example would need at least $16$ crossings.
\subsection{Detecting exotic definite manifolds}\label{subsec:intro:exotica}
A smooth $4$-manifold homeomorphic but not diffeomorphic to $\#^n\CP$ is called an \emph{exotic} $\#^n\CP$. It is not known if an exotic $\#^n\CP$ exists for any $n$ (the case $n=0$ is the smooth 4-dimensional Poincar\'e conjecture).

Every BPH-slice knot is $0$-bipolar, so a $0$-bipolar knot that is not BPH-slice would be H-slice in an exotic $\#^n\CP$ or $\#^n\CPbar$~(see \cite[\S2]{MP23}). Many familiar concordance invariants, including the $r_0$ obstruction discussed above, also obstruct $0$-bipolarity, so they cannot distinguish $0$-bipolar knots from BPH-slice knots. Hence, these invariants cannot be used in this way to detect an exotic $\#^n\CP$.

\Cref{thm:intro:decomp} uses the standard smooth structure on $\#^n\CP$, via the half-integer surgery filling of \cite{KMRS24}, and we do not know whether its conclusion holds for every manifold homeomorphic to $\#^n\CP$ (\Cref{q:exotic-half-integer}). Consequently, if any knot in Theorems~\ref{thm:intro:Th3q},~\ref{thm:intro:2bridgenew}, or~\ref{thm:intro:Jq}, or either of the knots $\overline{13a_{4237}}$ or $10_{109}$ from \Cref{thm:intro:topvsm}, were H-slice in a smooth closed $4$-manifold $Z$ homeomorphic to $\#^n\CP$, then $Z$ would be exotic. None of the obstructions to $0$-bipolarity that we compute rule this out, including $r_0$ for the 2-bridge family $L_h$. We record these computations and their limitations in \Cref{subsec:exotica-candidates}. More generally, quasi-alternating knots with vanishing Levine--Tristram signature function are a natural place to look for further candidates.

To use one of these knots to detect an exotic smooth structure, one would need a manifold $Z$ as above together with an H-slice disk for the knot. One way to construct such a manifold and disk is through the zero-surgery strategy of Manolescu--Piccirillo~\cite{MP23}. If a knot $K'$ is H-slice in $\#^n\CP$ and $S^3_0(K')\cong S^3_0(K)$, then $K$ is H-slice in a manifold homeomorphic to $\#^n\CP$~\cite[Lem.~3.3]{MP23}. Thus, finding such a $K'$ for one of our examples would produce an exotic definite $4$-manifold, with \Cref{thm:intro:decomp} taking the place of the $s$-invariant in their strategy.

Prior work on using H-sliceness to detect an exotic $\#^n\CP$ proposed using Rasmussen's $s$-invariant~\cite[Cor.~1.9]{MMSW23} and its generalizations as the obstruction to H-sliceness \cite{FGMW10,MP23,Nak23,Qin25RBG}.
Our lattice obstruction gives another such tool, and it supplies information that $s$ does not, since all of our examples have $s=0$. In \Cref{subsec:exotica-khovanov}, we ask whether other constructions from Khovanov homology, such as skein lasagna modules~\cite{MWW22}, can obstruct rationally slice knots from being BPH-slice.
\subsection{Conventions}\label{subsec:conv}
Manifolds are smooth, compact, and oriented. Maps are orientation-preserving. We use $\simeq$ to mean isotopic and $\cong$ to mean diffeomorphic. Knots are oriented, with $-K$ and $\overline{K}$ denoting the orientation reverse and the mirror of $K$, respectively. $\Sigma_2(K)$ denotes the double cover of $S^3$ branched over $K$. The signature of the right-handed trefoil is $-2$. We say that the 2-bridge knot $K(p,q)$ has $\SigmaTwo(K(p,q))\cong S^3_{p/q}(U)$, in agreement with \cite{DISST25} and disagreement with many other sources. Our rational tangle diagrams and Montesinos links agree with those in \cite{Iss18, OwS}. Continued fractions are taken with the convention
\[
[a_1,\ldots,a_\ell]
=a_1-\cfrac{1}{a_2-\cfrac{1}{\ddots-\cfrac{1}{a_\ell}}}.
\]
\subsection{Organization}\label{subsec:org}
\Cref{sec:prelim} contains background on slicing notions, lattices, 2-bridge and Montesinos knots, and definite fillings of double branched covers.
\Cref{sec:lattice} proves \Cref{thm:intro:decomp} and turns it into the finite candidate test of \Cref{prop:finiteob}.
Sections~\ref{sec:Th}, \ref{sec:2bridge}, and~\ref{sec:montesinos} apply this test to rationally slice Turk's head knots, 2-bridge knots, and non-alternating Montesinos knots, respectively.
\Cref{sec:topvsm} proves \Cref{thm:intro:topvsm}.
\Cref{sec:exotica} shows that several obstructions to $0$-bipolarity vanish for our examples and compares our obstruction with invariants from Khovanov homology.
Appendix~\ref{app:D2} examines condition~\ref{cond:finiteob:gram} of \Cref{prop:finiteob}. 
Appendix~\ref{app:Gpq} computes Goeritz matrices and determinants for Turk's head knots. 
Appendix~\ref{app:comps} contains signature and Seifert matrix computations.
The computations of $r_0$ for $L_h$ and its mirror appear in Appendix~\ref{app:Lh-r0}.
\begin{remark}
%
%
On a first read, we recommend reading the introduction; skimming \Cref{sec:prelim} for conventions and unfamiliar tools; reading \Cref{sec:lattice}; and reading the opening to at least one of Sections~\ref{sec:Th},~\ref{sec:2bridge}, or \ref{sec:montesinos} (each less than 2 pages). For a gentler introduction to the material and to provide intuition, read the proof of \Cref{prop:topvsm-10_109-smooth}, which shows that the knot $10_{109}$ is not BPH-slice, after reading \Cref{subsubsec:checkerboard} and \Cref{sec:lattice}.
\end{remark}
\subsection{AI Disclosure}
%
%
The author has benefited from the use of AI as a reference librarian and proofreader. The inclusions of \cite{KimOh05} and \cite{Wil08}, and the lack of spelling mistakes, are the most visible impact of this use-case.
Beyond reference-related tasks, the author used AI as a calculator and data processor to: check the computations in \Cref{thm:Gpq-det}; speed up the search for and verification of two examples in \Cref{sec:topvsm} (filter databases from~\cite{KnotInfo} and~\cite{DG}, apply \Cref{prop:finiteob} to Goeritz lattices of low-crossing alternating knots, and apply~\cite[Thm.~1.3]{KMRS24}); and help compute the instanton invariant $r_0$ for a family of 2-bridge knots using formulas from \cite{DS24} (see Appendix~\ref{app:Lh-r0}).
The author also 
%
%
used AI to convert hand-drawn images into TikZ figures. 
The tools used by the author, Claude (Opus 4 and 5) and ChatGPT Work (most recently 6.1 Sol), are freely provided to all students at The University of Texas at Austin.
\subsection{Acknowledgments}\label{subsec:ack}
 The author would like to thank her advisor, Lisa Piccirillo, for her support and insightful mathematical discussions; Jaewon Lee, for helpful discussions about rationally slice knots; and Brendan Owens, for frequently having already written down the answers to her questions years in advance. The author has been supported by the Simons Foundation Collaboration Grant on New Structures in Low-Dimensional Topology, and indirectly supported by the National Science Foundation as a participant in numerous mathematical gatherings.
 \section{Preliminaries}\label{sec:prelim}
For a knot $K \subset S^3$, we denote by $\Sigma_2(K)$ the double cover of $S^3$ branched over $K$. The $3$-manifold $\Sigma_2(K)$ is an oriented rational homology 3-sphere with $|H_1(\Sigma_2(K);\mathbb{Z})| = |\det(K)|$. By a \emph{filling} of a $3$-manifold $Y$ we mean a compact oriented $4$-manifold with oriented boundary $Y$.
\subsection{Slicing notions}\label{subsec:prelim:hslice}
\begin{definition}[Rationally slice]
    A knot $K\subset S^3$ is \emph{rationally slice} if it bounds a smoothly and properly embedded disk in a compact, oriented smooth $4$-manifold $B$ with $\partial B=S^3$ and $H_*(B;\mathbb Q)\cong H_*(B^4;\mathbb Q)$.
\end{definition}
\begin{definition}[H-slice]
A knot $K\subset S^3$ is \emph{H-slice} in a closed oriented $4$-manifold $Z$ if it bounds a smooth, properly embedded disk $\Delta$ in $Z^{\circ}=Z\setminus\mathring B^4$ with $[\Delta]=0$ in $H_2(Z^\circ,\partial Z^\circ)$.
\end{definition}
When $Z=S^4$, this agrees with the definition of $K$ being smoothly slice.
\begin{definition}[Projective slicing numbers] \label{def:slicing-numbers}
Following Kjuchukova--Miller--Ray--Sakall\i{} \cite{KMRS24}, define the $\CP$- and $\CPbar$-\emph{slicing numbers} of a knot $K\subset S^3$ as
\begin{align*}
\uCP(K)&=\min\bigl\{m\geq 0:K\text{ is H-slice in }\#^{m}\CP\bigr\},\\
\uCPbar(K)&=\min\bigl\{m\geq 0:K\text{ is H-slice in }\#^{m}\CPbar\bigr\},
\end{align*}
with the minimum of the empty set understood to be $\infty$. To suppress orientations, we refer to the \emph{projective slicing numbers} of $K$.
\end{definition}
\begin{definition}[BPH-slice]
 $K$ is \emph{biprojectively H-slice} (or \emph{BPH-slice}) if $K$ is H-slice in $\#^n\CP$ and $\#^n\CPbar$ for some $n$. Equivalently, $K$ is \emph{BPH-slice} if $\uCP(K)$ and $\uCPbar(K)$ are both finite. 
\end{definition}
To show that a knot $K$ is \emph{not} BPH-slice, we need only show that \emph{one} projective slicing number of $K$ is infinite. Note that $\uCP(K)=\uCP(-K)=\uCPbar(\Kmirror)$, so when $K$ is isotopic to its mirror (or mirror reverse), its two projective slicing numbers are equal. To compare H-sliceness in $\#^n\CP$ with H-sliceness in potentially exotic $\#^n\CP$, we use the following notions of Cochran--Harvey--Horn.
\begin{definition}[$0$-bipolar, see~\cite{CHH13}]
A knot $K$ is \emph{$0$-positive} if it is H-slice in some closed, smooth, oriented $4$-manifold homeomorphic to $\#^n\CP$, for some $n\geq0$. It is \emph{$0$-negative} if its mirror is $0$-positive, and \emph{$0$-bipolar} if it is both $0$-positive and $0$-negative.
\end{definition}
In \Cref{sec:topvsm} we will also use the following analogous topological definitions:
\begin{definition}[Topological projective slicing numbers]\label{def:topological-hslice}
A knot $K\subset S^3$ is \emph{topologically H-slice} in a closed, oriented $4$-manifold $Z$ if it bounds a locally flat, properly embedded, null-homologous disk in $Z\setminus\mathring B^4$. Its \emph{topological $\CP$-slicing number} is denoted $\uCPtop(K)$, and we define $\uCPbartop(K)$ as $\uCPtop(\Kmirror)$. We say $K$ is \emph{topologically BPH-slice} if both topological projective slicing numbers are finite.
\end{definition}
\subsubsection{Invariants obstructing H-sliceness}\label{subsubsec:invariants}
Every knot that is H-slice in $\#^n\CP$ is $0$-positive, and every BPH-slice knot is $0$-bipolar. We gather some known obstructions, separated according to whether they are known to obstruct $0$-positivity (\Cref{prop:0pos-obstructions}) or only H-sliceness in the standard $\#^n\CP$ (\Cref{prop:Hslice-obstructions}). A knot that is $0$-positive but not H-slice in $\#^n\CP$ for any $n$ would give an exotic smooth structure on some $\#^n\CP$.
\begin{proposition}[{\cite{CHH13,DISST25}}]\label{prop:0pos-obstructions}
Let $K$ be a $0$-positive knot. Then:
\begin{enumerate}
    \item\label{obs:LT} $\sigma_{LT}(\omega)\leq0$ for all $\omega\in S^1$, where
      $\sigma_{LT}(\omega)$ is averaged at roots of $\Delta_K$;
    \item\label{obs:tau} $\tau(K)\geq0$;
    \item\label{obs:delta} if the $p^r$-signatures of $K$ vanish and the $p^r$-fold branched cover of $K$ is an integral homology sphere, then $\delta_{p^r}(K)\leq0$;
    \item\label{obs:d} $d(S^3_{\pm1}(K))\leq0$;
    \item\label{obs:HK} if the $2^r$-signatures of $K$ vanish, then the corresponding Hedden--Kirk definite-filling obstructions vanish;
    \item\label{obs:r0} if $\sigma(K)=0$, then $r_0(\Kmirror)=\infty$.
\end{enumerate}
If $K$ is $0$-bipolar, then in addition $\epsilon(K)=0$.
\end{proposition}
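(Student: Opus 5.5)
This proposition collects known results, so the plan is to reduce each item to a single underlying mechanism and cite the source for it. Fix a smooth closed $Z$ homeomorphic to $\#^n\CP$ and a null-homologous disk $\Delta\subset Z^\circ$ with $\partial\Delta=K$. Two constructions are used throughout. First, removing a tubular neighborhood of $\Delta$ (or of the pushed-in surface) gives a positive-definite manifold with $b_1=0$ bounded by the zero-surgery $S^3_0(K)$, and similar manifolds bounded by $S^3_{\pm1}(K)$ after attaching a framed $2$-handle. Second, since $[\Delta]=0$, $\Delta$ lifts to branched covers of $Z^\circ$, so $\Sigma_{p^r}(K)$ bounds a branched cover of $Z^\circ$ over $\Delta$. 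These are exactly the constructions Cochran--Harvey--Horn use in~\cite{CHH13} for $n$-solvable/bipolar filtrations. Each item then follows from a definite-manifold inequality applied to one of these fillings.

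\textbf{Items~\eqref{obs:LT} and~\eqref{obs:delta}.} I would handle these with the branched-cover construction. The $G$-signature theorem computes the eigenspace signatures of the cover in terms of $\sigma_{LT}(\omega)$, and positive-definiteness of $Z$, transferred to the eigenspaces of the cover, forces the sign inequality. For~\eqref{obs:delta}, when $\Sigma_{p^r}(K)$ is a $\Z$HS, the Manolescu--Owens/Jabuka style argument applies: the $d$-invariant of the cover is bounded by the Ozsv\'ath--Szab\'o inequality for the definite cover, once the signatures vanish.

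\textbf{Items~\eqref{obs:tau},~\eqref{obs:d}, and the $\epsilon$ statement.} These follow from Ozsv\'ath--Szab\'o's adjunction/genus bounds for null-homologous surfaces in definite manifolds. Specifically, $\tau$ of a knot bounding a null-homologous disk in a negative-definite manifold is $\le 0$; after orientation bookkeeping this gives $\tau(K)\ge0$ in our convention. For~\eqref{obs:d}, I would use the $d$-invariant inequality for $S^3_{\pm1}(K)$ bounding definite manifolds. For bipolar knots, $\tau=0$ holds from both sides, and Hom's argument upgrades this to $\epsilon=0$ via the same filling with the appropriate cobordism maps (as in~\cite{CHH13}).

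\textbf{Items~\eqref{obs:HK} and~\eqref{obs:r0}.} Item~\eqref{obs:HK} is Hedden--Kirk's instanton argument applied to the $2^r$ branched covers, exactly as above. Item~\eqref{obs:r0} is~\cite[Prop.~6.6]{DISST25}, already recalled in the introduction: since $\sigma(K)=0$ and $Z$ is positive definite with $b_1=0$, $r_0(\Kmirror)$ must be infinite.

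\textbf{Main obstacle.} The real difficulty is ensuring each cited argument uses only the homeomorphism type of $Z$, namely definiteness and $b_1=0$, and never its standard smooth structure. I would verify this by inspecting the statements of the source results: each applies to arbitrary smooth definite manifolds with $b_1=0$. Alongside this, I must track orientation and mirror conventions, which determine the direction of each inequality (for example the sign convention that the right-handed trefoil has $\sigma=-2$).
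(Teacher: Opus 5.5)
Your proposal is correct and follows the paper's route. The paper's proof is purely by citation: items (1)--(5) and the $\epsilon$ statement come from \cite[Prop.~1.2(1),(2),(4)--(7)]{CHH13}, and item (6) is the contrapositive of \cite[Prop.~6.6]{DISST25} applied to $\Kmirror$. Your mechanism sketches add context but do not give a different argument.

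One small correction: the complement of the null-homologous disk has $H_1\cong\Z$, generated by the meridian, so its $b_1$ is $1$, not $0$. That is exactly what makes your item (4) step work. Because the meridian has infinite order, attaching a $\mp1$-framed $2$-handle along it kills $H_1$ without adding to $H_2$. This gives positive-definite fillings of $S^3_{\pm1}(K)$ with $b_1=0$ and intersection form $Q_Z$.
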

\begin{proof}
Items~\eqref{obs:LT}--\eqref{obs:HK} and the final assertion are \cite[Prop.~1.2(1),(2),(4)--(7)]{CHH13}. Item~\eqref{obs:r0} is the contrapositive of~\cite[Prop.~6.6]{DISST25} applied to $\Kmirror$. We refer the reader to \cite{CHH13} and \cite{DISST25} for details.
\end{proof}
\begin{remark}\label{rem:CHH-s}
 \cite[Prop.~1.2(3)]{CHH13} asserts that $s(K)\geq0$ for every $0$-positive knot, and hence that $s(K)=0$ for every $0$-bipolar knot. However, its proof uses the claimed equality between $s$ and the instanton invariant $s^{\sharp}$ from~\cite{KM13}, which was subsequently retracted. Gong showed that $s$ and $s^{\sharp}$ need not agree~\cite{Gong}. It is not known whether $s$ obstructs $0$-positivity (see~\cite[Rmk.~2.3]{MP23}). 
\end{remark}
\begin{proposition}\label{prop:Hslice-obstructions}
Let $K$ be H-slice in $\#^n\CP$ for some $n$. Then:
\begin{enumerate}\setcounter{enumi}{6}
    \item\label{obs:s} $s(K)\geq0$ \cite[Cor.~1.9 applied to $\Kmirror$]{MMSW23}; 
    \item\label{obs:decomp} if $\sigma(K)=0$, then every positive-definite filling $W$ of $-\SigmaTwo(K)$ satisfies $Q_W=2CC^T+DD^T$ where $D$ is square and $C$ has $n$ columns (\Cref{thm:intro:decomp}).
\end{enumerate}
\end{proposition}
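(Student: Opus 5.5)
This proposition gathers two results, so I would prove it by citation for item~\eqref{obs:s} and by reduction to \Cref{thm:intro:decomp} for item~\eqref{obs:decomp}.

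For item~\eqref{obs:s}, I take the result of \cite{MMSW23} essentially verbatim. Their Cor.~1.9 says that a knot H-slice in $\#^n\CPbar$ has $s\le 0$; their convention is phrased for the negative-definite side. Suppose $K$ is H-slice in $\#^n\CP$, with null-homologous disk $\Delta\subset (\#^n\CP)^\circ$. Reversing the orientation of the ambient manifold turns $\Delta$ into a null-homologous disk bounded by $\Kmirror$ in $(\#^n\CPbar)^\circ$. Applying their corollary gives $s(\Kmirror)\le 0$. Since $s(\Kmirror)=-s(K)$, we get $s(K)\ge 0$. The only care needed is to match orientation and sign conventions: our convention has the right-handed trefoil with $\sigma=-2$, and $s(T_{2,3})=2$ in the usual normalization. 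I would check these against \cite{MMSW23} to be sure the direction of the inequality is right.

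For item~\eqref{obs:decomp}, the statement is a restatement of \Cref{thm:intro:decomp}. I would take its hypotheses exactly: $\sigma(K)=0$, $K$ H-slice in $\#^n\CP$, and $W$ any positive-definite filling of $-\SigmaTwo(K)$ with $r=b_2(W)$. The conclusion is then $Q_W=2CC^T+DD^T$ with $C\in M_{r\times n}(\Z)$ and $D\in M_r(\Z)$ square.

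This proposition therefore carries no real obstacle; the substantive content is deferred to \Cref{thm:intro:decomp}. That proof runs as follows:
\begin{enumerate}
\item Use \cite[Thm.~1.1]{KMRS24} to get a positive-definite $X$ with $\partial X=\SigmaTwo(K)$, $b_2(X)=2n$, and a half-integer surgery type lattice.
\item Glue $X$ to $W$ to obtain a closed positive-definite manifold.
\item Apply Donaldson's theorem to embed $\Lambda_X\oplus\Lambda_W$ into $\Z^{2n+r}$.
\item Run an Owens-style argument showing the image of $\Lambda_X$ occupies $n$ pairs of coordinates of the form $e_i\pm e_j$. Orthogonality to these then forces each row of the embedding of $\Lambda_W$ to have equal entries on the paired coordinates, up to sign.
\end{enumerate}
The last step is where the real difficulty lies, but it belongs to \Cref{sec:lattice}, not here.
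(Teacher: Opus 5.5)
Your proposal is correct and follows the paper. Item (7) is exactly \cite[Cor.~1.9]{MMSW23} applied to $\Kmirror$, using that H-sliceness of $K$ in $\#^n\CP$ is equivalent to H-sliceness of $\Kmirror$ in $\#^n\CPbar$ and that $s(\Kmirror)=-s(K)$; item (8) is just \Cref{thm:intro:decomp}, and your sketch of its proof matches the paper's, where the square-two vectors normalize to $u_i=e_{2i-1}+e_{2i}$ and orthogonality forces the corresponding columns of the embedding matrix to come in opposite pairs $c_i,-c_i$, giving $Q_W=2CC^T+DD^T$.
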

Both \cite[Cor.~1.9]{MMSW23} and \Cref{thm:intro:decomp} use the standard smooth structure on $\#^n\CP$. 

If $K$ is $0$-bipolar, applying \Cref{prop:0pos-obstructions} to $K$ and $\Kmirror$ shows that they both have 
\[
\sigma_{LT}=\sigma=\tau=\epsilon=d(S^3_{\pm 1}(\cdot))=0;\quad  r_0=\infty;
\]
as well as $\delta_{p^r}=0$ when the $p^r$-fold branched cover is an integral homology sphere, and vanishing Hedden--Kirk obstructions to $\Sigma_{2^r}(K)$ bounding a positive-definite manifold $W$ with $H_1(W;\Z/2\Z)=0$. If $K$ is BPH-slice, then $K$ and its mirror must also have $s=0$.

The knots discussed in \Cref{sec:Th,sec:montesinos,sec:2bridge,sec:topvsm} are quasi-alternating with vanishing signature, so many of the conditions above are satisfied automatically. 
\begin{lemma}\label{lem:alt-qa-invariants}
    If $K$ is a quasi-alternating knot with $\sigma(K)=0$, then for both $K$ and $\Kmirror$,
    \[s=\tau=\epsilon=\delta_2=d\left(S^3_{\pm 1}(\cdot)\right)=0.\]
\end{lemma}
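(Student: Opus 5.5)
The plan is to treat each invariant separately and use known results for quasi-alternating knots. Since the class of quasi-alternating knots is closed under mirroring, and $\sigma(\Kmirror)=-\sigma(K)=0$, it suffices to prove each vanishing for $K$ itself and then apply the same argument to $\Kmirror$.

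\emph{Step 1: the Floer--homological $\sigma$-thin invariants.} By Manolescu--Ozsv\'ath, quasi-alternating knots are Khovanov-thin and Heegaard Floer-thin, with $s(K)=-\sigma(K)$ and $\tau(K)=-\sigma(K)/2$ (with our sign convention, in which the right-handed trefoil has $\sigma=-2$). Hence $s=\tau=0$. For thin knots, Hom's results show that $\epsilon(K)$ is determined by $\tau$ (the complex $CFK^\infty$ is determined up to acyclic summands by $\tau$ and the Alexander polynomial), so $\tau=0$ forces $\epsilon=0$. Alternatively, I would cite Petkova's computation that $CFK^\infty$ of a thin knot is a staircase determined by $\tau$ plus boxes, from which $\epsilon=\operatorname{sgn}\tau$ follows.

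\emph{Step 2: $d$-invariants of $\pm1$ surgery.} The values $d(S^3_{1}(K))=-2V_0(K)$ and $d(S^3_{-1}(K))=2V_0(\Kmirror)$ are determined by $CFK^\infty$ (Ni--Wu). For a thin knot with $\tau=0$, $CFK^\infty$ is that of the unknot plus acyclic boxes, so $V_0=0$ for $K$ and its mirror, giving $d(S^3_{\pm1}(K))=0$.

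\emph{Step 3: $\delta_2$.} The Manolescu--Owens invariant $\delta(K)=2d(\SigmaTwo(K),\mathfrak t_0)$ equals $-\sigma(K)/2$ for alternating knots. This was extended to quasi-alternating knots, since $\SigmaTwo(K)$ is then an L-space bounding a sharp negative-definite manifold (Lisca--Owens; Manolescu--Owens for alternating). Thus $\delta_2=0$; note that $\SigmaTwo(K)$ need not be a homology sphere, and the relevant quantity is the $d$-invariant at the spin structure.

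\emph{Main obstacle.} The main difficulty is getting the sign conventions right and making sure the $\delta_2$ statement for quasi-alternating (rather than alternating) knots is actually in the literature. If it is not, I would prove it from the skein exact triangle: $d$-invariants of L-spaces behave additively along quasi-alternating resolutions, with determinants adding, so an induction as in Manolescu--Ozsv\'ath reduces the claim to the unknot.
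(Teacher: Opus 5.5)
Your proposal is correct and follows the paper's proof essentially step for step. Manolescu--Ozsv\'ath thinness gives $s=\tau=0$; the paper cites Rasmussen for $s=-\sigma$ on thin knots. Hom's $\epsilon=\operatorname{sgn}\tau$ for thin knots gives $\epsilon=0$. Petkova's model of $CFK^\infty$ as the unknot complex plus acyclic squares, together with Ni--Wu, gives $V_0=0$ and hence $d(S^3_{\pm1}(\cdot))=0$. The mirror is handled because the quasi-alternating class is closed under mirroring.

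For $\delta_2$, the paper simply cites Lisca--Owens, who prove $\delta_2=-\sigma/2$ for quasi-alternating knots. So your parenthetical appeal to a sharp filling, and your skein-triangle fallback, are unnecessary.
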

\begin{proof}
Since $K$ is quasi-alternating, so is $\Kmirror$, and it suffices to show the result for $K$. 

Quasi-alternating knots are $\sigma$-thin in Khovanov homology and in knot Floer homology by \cite{ManolescuOzsvath08}, so $-\sigma(K)=2\tau(K)$. Then $s(K)=-\sigma(K)$ by \cite{Ras10} and $\epsilon(K)=\operatorname{sgn}\tau(K)$ by \cite[Prop.~3.6(5)]{Hom14Cable}. Moreover, $\delta_2(K)=-\sigma(K)/2$~\cite{LO15}. Since $\sigma(K)=0$, we have $\tau(K)=s(K)=\epsilon(K)=\delta_2(K)=0$.

Since $K$ is Floer-thin with $\tau(K)=0$, $CFK^\infty(K)$ admits a filtered model consisting of the unknot complex and acyclic square summands~\cite[Lem.~7]{Pet13}, so $V_0(K)=0$ and $d(S^3_{1}(K))=-2V_0(K)=0$~\cite[Prop.~1.6]{NW15}. Applying this to $\Kmirror$ gives $d(S^3_{-1}(K))=0$.
\end{proof}
Note that the knots considered as candidates for detecting exotica in \Cref{sec:exotica} also have vanishing Levine--Tristram signature function, but that this is not implied by $\sigma(K)=0$. We refer the reader to~\cite{Con21} for a survey on $\sigma_{LT}(\omega)$.
\subsection{Lattices and Donaldson's theorem}\label{subsec:prelim:lattice}
\begin{definition}[Gram matrix]
A matrix $G$ is called a \emph{Gram matrix} of the rows of $M$ if $G=MM^T$. Similarly, $G$ is a Gram matrix of the columns of $M$ if $G=M^TM$.
\end{definition}
\begin{definition}[\cite{KMRS24, Owe20}]
An \emph{integral lattice} is a pair $(L, Q)$, where $L$ is a finitely generated free abelian group and $Q \colon L \times L \to \mathbb{Z}$ is a symmetric bilinear form. An \emph{isometric embedding} of lattices $(L_1, Q_1) \hookrightarrow (L_2, Q_2)$ is an injective homomorphism $\varphi \colon L_1 \to L_2$ preserving the form, i.e., $Q_2(\varphi(x), \varphi(y)) = Q_1(x,y)$ for all $x,y \in L_1$.
\end{definition}
\begin{definition}[Intersection lattice]
 If $X$ is a smooth $4$-manifold (possibly with boundary), its \emph{intersection lattice} is $\Lambda_X := (H_2(X)/\mathrm{Tors}, Q_X)$. 
\end{definition}
Fixing an isomorphism $H_2(X)/\mathrm{Tors} \cong \mathbb{Z}^m$ corresponds to choosing an ordered basis $\{h_1,\dots,h_m\}$ mapped to the standard basis vectors $\{e_1, \dots, e_m\}$. Under this identification, $Q_X$ is encoded by the $m \times m$ symmetric matrix $G$ where $G_{ij} = Q_X(h_i, h_j)$. $G$ is called the matrix representation of $Q_X$ with respect to this basis. A change of basis corresponds to a choice of matrix $P \in \operatorname{GL}_m(\mathbb{Z})$ that transforms the representation of $Q_X$ from $G$ to $P^T G P$.
\begin{notation}
 All lattice embeddings we consider are isometric, so we often drop this descriptor and simply say \emph{embedding}, denoted $\hookrightarrow$. When the choice of basis is clear or implicit, we identify an intersection form $Q_X$ with its matrix $G$. For example, we write $Q_W=DD^T$ instead of $G=DD^T$.
\end{notation}
To construct lattice embeddings from $4$-manifolds, we rely on Donaldson's Diagonalization Theorem applied to the following standard gluing construction:
\begin{theorem}[Donaldson's Diagonalization Theorem~\cite{Don87}]
\label{thm:donaldson}
If $Z$ is a smooth, closed, oriented, positive-definite $4$-manifold, then its intersection lattice is standard, i.e.,
\[
(H_2(Z)/\mathrm{Tors}, Q_Z) \cong (\mathbb{Z}^{b_2(Z)}, I).
\]
\end{theorem}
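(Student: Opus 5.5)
Since this is Donaldson's theorem, we would not reprove it from scratch in the paper; we would cite~\cite{Don87}. Still, here is the route we would follow to prove it, and where the real work lies. We work with $-Z$, which is negative-definite, because the gauge theory is cleanest in that orientation. It suffices to show that the negative-definite form $Q_{-Z}$ is isomorphic to $-I$. Write $b=b_2(Z)$. We use a purely algebraic fact: a negative-definite unimodular form of rank $b$ has at most $b$ pairs $\pm e$ with $e\cdot e=-1$, and equality holds if and only if the form is $-I$. So the goal becomes to exhibit $b$ such pairs. The algebraic fact itself follows by splitting off each $-1$ vector as an orthogonal summand.

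The steps, in order, are as follows.
\begin{enumerate}
\item Fix a generic Riemannian metric on $-Z$. Take the $SU(2)$-bundle $P$ with $c_2(P)=1$, and let $\mathcal M$ be its moduli space of anti-self-dual connections. The index formula $8c_2-3(1-b_1+b^+)$ with $b^+=0$ gives virtual dimension $5-3b_1$. In the simply connected case $\mathcal M$ is $5$-dimensional, and after a perturbation away from the reducibles it is a smooth manifold away from finitely many singular points.
\item Identify the singular points. Reducible ASD connections correspond to splittings $P=L\oplus L^{-1}$ with $c_1(L)^2=-1$. These are exactly the pairs $\pm e$ with $e\cdot e=-1$. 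Since $b^+=0$, every such class is represented by an ASD $U(1)$-connection. Kuranishi theory with the residual $U(1)$ action shows that a neighbourhood of each reducible is a cone on $\CP$.
\item Use Taubes' gluing and Uhlenbeck compactness to show that the only noncompactness of $\mathcal M$ is a collar $(-Z)\times(0,\epsilon)$. This collar consists of concentrated instantons parametrized by centre and scale. Truncating the collar and the cones gives a compact $5$-manifold whose boundary is $-Z$ together with $m$ copies of $\pm\CP$, where $m$ is the number of pairs $\pm e$.
\item Orient $\mathcal M$ (Donaldson's orientability argument via the determinant line bundle). Then the oriented bordism gives $\sigma(-Z)=\sum \pm\sigma(\CP)$, so $b\le m$. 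Combined with the algebraic fact, this forces $m=b$ and the form is diagonal.
\end{enumerate}

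The main obstacle is removing the simple connectivity hypothesis, which the statement as given (with $\mathrm{Tors}$ and no assumption on $\pi_1$) requires. When $b_1>0$ there are flat reducibles, and the reducibles come in tori of dimension $b_1$. Also, $\mathcal M$ can contain flat $SU(2)$-connections, and the dimension count changes. Following~\cite{Don87}, we would first pass to a finite cover to kill as much of $\pi_1$ as possible while keeping the form definite. We would then use holonomy perturbations to make the irreducible part of $\mathcal M$ regular, and analyse neighbourhoods of the reducible tori as bundles of cones over $T^{b_1}$. The signature argument would then be replaced by a mod $2$ or cohomological pairing computed on the link of each stratum. This is the part we expect to be genuinely hard. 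By contrast, the compactness and gluing in step~3 are standard once the analysis is in place.
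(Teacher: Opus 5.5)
Citing \cite{Don87}, as you propose, is exactly what the paper does. The theorem is quoted rather than proved, and it is the general version from that paper (no hypothesis on $\pi_1$, form on $H_2/\mathrm{Tors}$) that is used, since the paper does not assume its fillings are simply connected. Your outline of the simply connected case is the standard original argument and is fine, apart from a sign slip: with $b^+=0$, the index $8-3(1-b_1+b^+)$ equals $5+3b_1$, not $5-3b_1$.

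As a proof of the statement as given, however, the sketch has a genuine gap exactly where you locate the difficulty, and the one concrete reduction you propose would fail. Finite covers do not preserve definiteness. Suppose $Z$ is definite with $b_1(Z)=0$ and $\tilde Z\to Z$ is a connected cover of degree $d\ge 2$. Then $\chi(\tilde Z)=d\chi(Z)$ and $\sigma(\tilde Z)=d\sigma(Z)$ force $\tilde Z$ to have $d-1+b_1(\tilde Z)\ge 1$ eigenvalues of the opposite sign. So the trick is useless precisely when $\pi_1$ is finite. Even a definite cover would only give an embedding of $(\Lambda_Z,dQ_Z)$ into a standard lattice, because $p^*x\cdot p^*y=d\,(x\cdot y)$. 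That is far weaker than diagonality: the $E_8$ form doubled already embeds in $(\Z^8,I)$.

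The reduction that does work is surgery on embedded loops representing a basis of $H_1(Z;\Z)/\mathrm{Tors}$. Each surgery lowers $b_1$ by one and induces an isometry on $H_2/\mathrm{Tors}$, so one may assume $b_1(Z)=0$. The remaining finite group $H_1(Z;\Z)$ still breaks your steps 2--4, in two ways:
\begin{itemize}
\item Each $e$ with $e\cdot e=-1$ (on $-Z$) has $|\mathrm{Tors}\,H^2(Z;\Z)|$ integral lifts $c$, and each pair $\pm c$ gives its own reducible cone point. The bordism then only yields $b\le|\mathrm{Tors}\,H^2(Z;\Z)|\cdot m$, which does not force $m=b$.
\item Nontrivial flat $SU(2)$ connections appear in the Uhlenbeck compactification; already the abelian ones from torsion characters do. So the ends of $\mathcal M$ are no longer just a collar on $Z$.
\end{itemize}
Handling these is what \cite{Don87} supplies and your outline does not. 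In your proposal, as in the paper, the citation is what actually carries the general case.
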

\begin{proposition}[A standard application of \Cref{thm:donaldson}]
\label{prop:standard-lattice}
Let $Y$ be an oriented rational homology $3$-sphere. Suppose $X$ and $W$ are smooth, compact, oriented, positive-definite $4$-manifolds with $\partial X = Y$ and $\partial W = -Y$. Then gluing $X$ and $W$ together along $Y$ yields a closed, positive-definite $4$-manifold $Z = X \cup_Y W$ with $b_2(Z) = b_2(X) + b_2(W)$. Moreover, there exists a finite-index isometric embedding of lattices
\begin{equation}
\label{eq:standard-embedding}
\Lambda_X \oplus \Lambda_W \ \hookrightarrow\ \Lambda_Z\cong \left(\mathbb{Z}^{b_2(X)+b_2(W)}, I\right).
\end{equation} In particular, the images of $\Lambda_X$ and $\Lambda_W$ in
$\Lambda_Z$ are mutually orthogonal.
\end{proposition}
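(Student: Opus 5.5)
Gluing $X$ and $W$ along $Y$ with the orientation conventions as stated gives a closed smooth oriented $4$-manifold $Z=X\cup_Y W$. The plan is to compute $b_2(Z)$ and $\sigma(Z)$ via Mayer--Vietoris and Novikov additivity, deduce that $Z$ is positive definite, and then apply \Cref{thm:donaldson}.

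\emph{Rational homology.} Since $Y$ is a rational homology sphere, $H_1(Y;\Q)=H_2(Y;\Q)=0$. The Mayer--Vietoris sequence with $\Q$ coefficients therefore gives an isomorphism
\[
H_2(X;\Q)\oplus H_2(W;\Q)\xrightarrow{\ \cong\ } H_2(Z;\Q),
\]
so $b_2(Z)=b_2(X)+b_2(W)$.

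\emph{Definiteness.} Because $\partial X=Y$ is a $\Q HS^3$, the form $Q_X$ is nondegenerate over $\Q$, and likewise $Q_W$. Each is positive definite by hypothesis. Novikov additivity gives $\sigma(Z)=\sigma(X)+\sigma(W)=b_2(X)+b_2(W)=b_2(Z)$, so $Z$ is positive definite. One can also see this directly: every rational class in $Z$ splits as a sum of classes from $X$ and $W$ with disjoint representatives.

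\emph{The embedding.} Inclusion induces $H_2(X)\to H_2(Z)$ and $H_2(W)\to H_2(Z)$, which preserve the intersection pairing because intersections computed inside $X$ or $W$ agree with those in $Z$. Classes supported in $X$ and in $W$ can be pushed off $Y$ to have disjoint representatives, so their images pair to zero, which gives orthogonality. Passing to $H_2/\mathrm{Tors}$, we get a form-preserving map $\Lambda_X\oplus\Lambda_W\to\Lambda_Z$. Its domain has a nondegenerate form, so it is injective. By the rational isomorphism above, its image has full rank, hence finite index.

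Finally, \Cref{thm:donaldson} applied to $Z$ identifies $\Lambda_Z\cong(\Z^{b_2(Z)},I)$. The only delicate point is the torsion bookkeeping: the map on integral $H_2$ need not be surjective, and torsion may not map injectively. Working modulo torsion and using nondegeneracy of the form avoids this, since injectivity comes from the form rather than from the homology maps.
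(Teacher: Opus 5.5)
Your proof is correct and follows the paper's route: rational Mayer--Vietoris for $b_2$, Novikov additivity for definiteness, disjoint representatives for orthogonality, then Donaldson's theorem. The only difference is the torsion bookkeeping. The paper uses the integral Mayer--Vietoris sequence, where $H_2(Y;\Z)\cong H^1(Y;\Z)=0$ makes $j_*$ injective on all of $H_2(X;\Z)\oplus H_2(W;\Z)$ and finiteness of $H_1(Y;\Z)$ gives finite index; in particular, your worry that torsion might not map injectively does not arise here. You instead get injectivity from definiteness of the form and finite index from the rational isomorphism, and both routes are valid.
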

\begin{proof} 
Gluing along $Y$ produces a smooth, closed, oriented $4$-manifold $
Z=X\cup_Y W$.
Since $Y$ is a rational homology 3-sphere ($H_1(Y;\mathbb{Q}) = H_2(Y;\mathbb{Q})= 0$), the Mayer--Vietoris sequence with rational coefficients gives $b_2(Z) = b_2(X) + b_2(W)$, and Novikov additivity implies $Z$ is positive definite (see \cite[Prop.~7]{IM18}). By
Donaldson's theorem (\Cref{thm:donaldson}), $(H_2(Z)/\mathrm{Tors}, Q_Z) \cong (\mathbb{Z}^{b_2(Z)}, I)$.

Since $H_2(Y;\mathbb Z)=0$, the Mayer--Vietoris sequence with integral coefficients contains
\[
0 \longrightarrow H_2(X;\Z) \oplus H_2(W;\Z) \xrightarrow{\ j_*\ } H_2(Z;\Z) \longrightarrow H_1(Y) \longrightarrow \cdots
\]
Since $H_1(Y;\Z)$ is finite, $j_*$ is injective with finite-index image. Classes from $X$ and $W$ may be represented by cycles contained in the disjoint interiors of the two pieces, hence the two images in $H_2(Z)/\operatorname{Tors}$ are orthogonal. Restricting the diagonalization $Q_Z\cong I$ to these two subgroups gives the isometric embedding \eqref{eq:standard-embedding}.
\end{proof}
Several elementary facts about matrices and lattices recur in our applications. We state and prove them near where we actually use them, but we reference them here for convenience: \Cref{prop:Dinvert} (odd $\det(2T+DD^T)$ implies $D$ is invertible over $\F_2$), \Cref{prop:2bridge-root-count} (normalizing consecutive square-two rows of such a $D$ to $e_1-e_2,\ldots,e_k-e_{k+1}$), and the characteristic-vector argument in the proof of \Cref{prop:Th3qdecomp}.
\subsection{2-bridge and Montesinos knots}\label{subsec:prelim:knot}
We recall the constructions of rational tangles, 2-bridge knots, and Montesinos knots, along with our conventions for their double branched covers. 
\begin{definition}\label{def:rationaltangle}
A diagram of the \emph{rational tangle} $R$ corresponding to the continued fraction $\alpha/\beta=[a_1,\dots,a_{\ell}]$ is given by the 4-strand braid $\sigma_2^{a_1}\sigma_1^{a_2}\sigma_2^{a_3}\cdots\sigma_i^{a_{\ell}}$ (where $i=1$ if $\ell$ is even and $2$ if $\ell$ is odd) using the conventions shown below. Any two continued fraction expansions of $\alpha/\beta$ represent isotopic (relative to the four boundary points) rational tangle diagrams (e.g.,~\cite[Thm.~1]{KL04}).
\begin{figure}[H]
  \centering
  \begin{subfigure}[b]{0.50\textwidth}
    \centering
    {\tikzset{every picture/.append style={scale=0.709}}\input{figures/tangle-even}}
    \caption{$\ell$ is even}
    \label{fig:tangle-even}
  \end{subfigure}
  \hfill
  \begin{subfigure}[b]{0.45\textwidth}
    \centering
    {\tikzset{every picture/.append style={scale=0.709}}\input{figures/tangle-odd}}
    \caption{$\ell$ is odd}
    \label{fig:tangle-odd}
  \end{subfigure}
  \caption{Rational tangles corresponding to $\alpha/\beta=[a_1,a_2,\dots,a_{\ell}]$}
\end{figure}
\end{definition}
\begin{definition}\label{def:2bridge}
 The 2-bridge knot $K(p,q)$, where $p>0$ is odd and $\gcd(p,q)=1$, is the numerator closure of a rational tangle determined by the continued-fraction expansion $p/q=[a_1,\dots,a_{\ell}]$. When $0<q<p$, $\overline{K(p,q)}\simeq K(p,p-q)$.
\begin{figure}[H]
  \centering
  \begin{subfigure}[b]{0.45\textwidth}
    \centering
    {\tikzset{every picture/.append style={scale=0.709}}\input{figures/twobridge-even}}
    \caption{$\ell$ is even}
    \label{fig:twobridge-even}
  \end{subfigure}
  \hfill
  \begin{subfigure}[b]{0.45\textwidth}
    \centering
    {\tikzset{every picture/.append style={scale=0.709}}\input{figures/twobridge-odd}}
    \caption{$\ell$ is odd}
    \label{fig:twobridge-odd}
  \end{subfigure}
  \caption{2-bridge knot $K(p,q)$ where $p/q=[a_1,a_2,\dots,a_{\ell}]$}
\end{figure}
\end{definition}
We follow the convention of~\cite{DISST25}, where $K(3,1)$ is the right-handed trefoil and $\SigmaTwo(K(p,q))\cong S^3_{p/q}(U)$, where $U$ is the unknot. 
\begin{definition}[{cf. \cite[Def.~3.1]{OwS}}]\label{def:montesinos} The diagram of a \emph{Montesinos link} $M(e;\alpha_1/\beta_1,\ldots,\alpha_k/\beta_k)$ is obtained by joining rational tangles $R_i$ corresponding to $\alpha_i/\beta_i$ together with an integer number $e$ of half-twists in the manner depicted below. 
\begin{figure}[H]
    \centering
    \begin{subfigure}[b]{0.47\textwidth}
        \centering
        \resizebox{!}{5cm}{\input{figures/montesinos-general}}
    \caption{$M(e;\alpha_1/\beta_1,\alpha_2/\beta_2,\dots,\alpha_k/\beta_k)$}
        \label{fig:montesinos-general}
    \end{subfigure}
    \hfill
    \begin{subfigure}[b]{0.47\textwidth}
        \centering
        \resizebox{!}{5cm}{\input{figures/Jq}}
        \caption{$M(1;[2,q+2],[3,3],[2,2])$}
        \label{fig:montesinos-example}
    \end{subfigure}
    \caption{Montesinos links}
    \label{fig:montesinos}
\end{figure}
\end{definition}
The following three Montesinos links are isotopic (e.g.,~\cite[Rmk.~3.5]{OwS}):
\begin{equation}\label{eq:montesinos}
    M(e;\dots,\frac{\alpha_i}{\beta_i},\dots);\quad M(e+1;\dots,\frac{\alpha_i}{\beta_i+\alpha_i},\dots);\quad M(e-1;\dots,\frac{\alpha_i}{\beta_i-\alpha_i},\dots).
\end{equation}

We follow the conventions of \cite{Iss18,OwS}, where $\Sigma_2(M(e;\dots,\alpha_i/\beta_i,\dots))$ is the Seifert-fibered space $Y(-e; \dots,(\alpha_i,\beta_i),\dots)$. In the conventions of \cite{NR}, whose work we also reference in \Cref{sec:montesinos}, this same Seifert-fibered space is described as $M(g=0, (1,-e),\dots,(\alpha_i,\beta_i),\dots)$.

\begin{definition}[{cf.~\cite{LT88}}]\label{def:reducedM}
 Consider a diagram for a Montesinos link $L=M(e;\alpha_1/\beta_1,\ldots,\alpha_k/\beta_k)$ (with $k\geq 3$) composed of rational tangle diagrams $R_1,\dots, R_k$ assembled as in \Cref{def:montesinos}. We say this is a \emph{reduced Montesinos diagram} of $L$ if either \begin{enumerate}
 \item the diagram is alternating; or
 \item $e=0$, each rational tangle diagram $R_i$ corresponding to $\alpha_i/\beta_i=[a_1,\dots,a_\ell]$ is alternating with at least two crossings, and $a_1\neq 0$.
 \end{enumerate}
\end{definition}
\begin{remark}[Conventions]
A rational tangle $R$ given by $\alpha/\beta=[a_1,a_2,\dots,a_{\ell}]$ as in \Cref{def:rationaltangle} is isotopic to the rational tangle $\tilde{R}$ in ``twist form'' given by the \emph{positive} continued fraction $-\beta/\alpha=[0, -a_1,a_2, -a_3,\dots,\pm a_{\ell}]_+$, where the final term is $-a_{\ell}$ if $\ell$ is odd. Consequently, $R$ is alternating if the terms of its negative continued fraction expansion \emph{alternate} signs. 
The definition of \emph{reduced} in~\cite[\S4]{LT88} requires that the ``lower'' two strands of $\tilde{R}$ be incident to a common crossing within $\tilde{R}$. These two strands correspond to the ``center'' two strands of $R$, so this requirement is satisfied when $a_1\neq 0$. See Kauffman--Lambropoulou~\cite{KL04} for a detailed comparison of rational tangle conventions.
\end{remark}
\begin{lemma}\label{lem:nonaltM}
 If $K$ admits a non-alternating reduced Montesinos diagram, then $K$ is non-alternating. 
\end{lemma}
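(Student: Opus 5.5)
This follows from the classification of Montesinos links in Lickorish--Thistlethwaite~\cite{LT88}, together with the structure of reduced diagrams. The main input is their theorem: a reduced Montesinos diagram with $k\geq 3$ rational tangles has minimal crossing number. Concretely, if $D$ is a reduced Montesinos diagram of $K$ with $c(D)$ crossings, then $c(K)=c(D)$. I would cite this as \cite[Thm.~10]{LT88}, translated into our conventions via the preceding remark, which identifies the condition $a_1\neq 0$ with their requirement that the lower two strands of the twist-form tangle meet at a crossing.

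Next, suppose for contradiction that $K$ is alternating. Then $K$ has a reduced alternating diagram $D'$. By the Kauffman--Murasugi--Thistlethwaite theorem, $c(K)=c(D')$. Moreover, any minimal crossing diagram of a prime alternating link is alternating. More precisely, Thistlethwaite's result gives that a non-alternating diagram of an alternating (prime) link is never minimal. Montesinos knots with $k\geq3$ are prime, so this applies.

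Combining the two steps, our non-alternating reduced Montesinos diagram $D$ is minimal by the first step. A minimal diagram of a prime alternating knot must be alternating by the second step. This contradicts the assumption that $D$ is non-alternating, so $K$ is not alternating. In fact, \cite{LT88} itself states this corollary directly: a link with a non-alternating reduced Montesinos diagram is non-alternating. Alternatively, one can argue via the Kauffman bracket span, which equals $4c(D)$ only for adequate diagrams. A reduced non-alternating Montesinos diagram with $e=0$ is semi-adequate, and Lickorish--Thistlethwaite show its span falls short of $4c$.

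The main obstacle is matching conventions. Our definition of reduced (case (2): $e=0$, each $R_i$ alternating with at least two crossings, $a_1\neq0$) must coincide with the hypotheses of \cite{LT88}. To check this I would convert each tangle to twist form $-\beta/\alpha=[0,-a_1,a_2,\dots]_+$ and verify that no nugatory or reducible configuration arises. With that identification, the lemma is a direct citation.
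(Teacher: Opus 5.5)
Your argument is correct and is essentially the paper's. The paper's proof simply cites Thistlethwaite's discussion in \cite[\S4]{T91}, which combines two ingredients:
\begin{itemize}
\item the minimality of reduced Montesinos diagrams from \cite{LT88};
\item the span theorems of \cite{T87}: span equals crossing number for reduced alternating diagrams, and is strictly smaller for prime non-alternating ones.
\end{itemize}
This is exactly the chain you spell out, with your primeness input supplying the hypothesis needed for the strict inequality.
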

\begin{proof}
 See the discussion in~\cite[\S4]{T91} connecting the Lickorish--Thistlethwaite definition of reduced Montesinos diagrams to Theorems 1 and 2 of \cite{T87}.
\end{proof}
\subsection{Fillings of double branched covers}\label{subsec:prelim:fillings}
If $W$ is a filling of $\pm \SigmaTwo(K)$, then $\det(K)=t^2|\det (Q_W)|$ for some integer $t$ (see, e.g., \cite[Lem.~2.1]{OwS}), so $\det(Q_W)$ is always odd. Combined with the fact that $\det(K)\equiv (-1)^{\sigma(K)/2}\pmod 4$~\cite{Mur65}, for knots with vanishing signature we have that $|\det(Q_W)|\equiv 1\pmod 4$.
\subsubsection{Checkerboard colorings and Goeritz matrices}\label{subsubsec:checkerboard}
We recall the classical Gordon--Litherland checkerboard construction \cite{GL78}, and largely follow their conventions. A checkerboard coloring of a knot diagram assigns black and white to regions of the diagram so that two regions meeting along an arc have different colors. Each crossing $c$ in the diagram is given an incidence sign $\eta(c)$ and a type according to the following figure:
\begin{figure}[H]
    \centering
    \input{figures/gl-eta-type}
    \caption{Crossing incidence signs and types of a checkerboard coloring}
    \label{fig:checkerboard-conventions}
\end{figure}

Label the white regions $X_1,\dots,X_{N+1}$, and for $1\leq i,j\leq N+1$ define
\[g_{ij}=\begin{cases}-\sum\eta(c) \text{ where $c$ is incident to }X_i \text{ and } X_j &i\neq j\\
-\sum\limits_{1\leq k\leq N+1; k\neq i}g_{ik}&i=j\end{cases}\]
Then the Goeritz matrix $G$ associated to this coloring of $K$ is given by $G_{ij}=g_{ij}$ where $1\leq i,j\leq N$. $G$ is the intersection form of the double cover of $B^4$ branched over a pushed-in copy of the \emph{black} checkerboard surface~\cite[Thm.~3]{GL78}, which has boundary $\SigmaTwo(K)$. Moreover, defining the Gordon--Litherland correction term $\mu$ as the sum of $\eta(c)$ over the type II crossings, the signature of $K$ is $\sigma(K)=\text{sign}G-\mu$ \cite[Thm.~6]{GL78}.

For alternating knots, one way to compute the Goeritz matrix of a checkerboard coloring is by using a Tait graph of a reduced alternating diagram. The \emph{Tait graph} $\Gamma_C$ associated to each color $C$ has a vertex for each region of that color and an edge for each crossing. Its \emph{graph Laplacian}, $\Delta(\Gamma_C)$, has diagonal entries equal to vertex degrees and off-diagonal entries equal to minus the numbers of edges between vertices. Deleting one row and column of the matrix, corresponding to a vertex in the Tait graph, gives the \emph{reduced Laplacian} $\Delta_{\mathrm{red}}(\Gamma_C)$.
For a black and white checkerboard coloring of an alternating diagram, all crossings have the same incidence sign, which we call $\eta$. The coloring with $\eta=+1$ gives a positive-definite Goeritz matrix $G_+$, and the other coloring gives a negative-definite Goeritz matrix $G_-$. Thus, the reduced Laplacian $\Delta_{\mathrm{red}}(\Gamma_W)$ obtained by deleting the $({N+1})$st row and column of $\Delta(\Gamma_W)$ gives the intersection form of a positive-definite filling of $\eta\SigmaTwo(K)$. For more on the construction of Goeritz matrices from signed checkerboard graphs, see \cite[\S2.1]{Greene13Spanning}. 
\subsubsection{Plumbings over graphs}
A weighted tree $\Gamma$ specifies a plumbing $P_\Gamma$ of oriented disk bundles over $S^2$. Each vertex $v$ represents a bundle with Euler number $w(v)$, and each edge specifies a plumbing of the corresponding bundles. We call $\Gamma$ the \emph{plumbing graph} and $P_\Gamma$ the \emph{plumbing over $\Gamma$}.

The manifold $P_\Gamma$ is simply connected. The zero sections give a basis for its intersection lattice; since $\Gamma$ is a tree, we can orient them so that the matrix has diagonal entries $w(v)$ and off-diagonal entries $-1$ for adjacent vertices and $0$ otherwise. We use the same labels for vertices and their corresponding basis vectors. See Gompf--Stipsicz for more details~\cite[Example 4.6.2]{GS99}.
\section{The lattice embedding obstruction}\label{sec:lattice}
A lattice embedding obstruction argument typically takes two definite smooth $4$-manifolds $X$ and $W$ glued together along their common boundary $Y$ (a rational homology 3-sphere), applies Donaldson's theorem (\Cref{thm:donaldson}) to produce an isometric lattice embedding of $\Lambda_X\oplus\Lambda_W$ into the standard diagonal lattice, and uses properties of $\Lambda_X$ and $\Lambda_W$ to produce a contradiction to the existence of one of the fillings. 

We apply this strategy to obstruct a knot $K$ with vanishing signature from being H-slice in $\#^n\CP$ for any $n$ (i.e., we show $\uCP(K)=\infty$), supposing for the sake of contradiction that $K$ \emph{is} H-slice in $\#^n\CP$. In this section, we prove \Cref{thm:decomp}, which describes a necessary decomposition of the intersection form of \emph{any} positive-definite filling $W$ of $-\Sigma_2(K)$. In \Cref{prop:finiteob} we give a practical way to use \Cref{thm:decomp} as an obstruction. If we can find some filling $W$ whose intersection form does not meet the criteria of \Cref{prop:finiteob}, we have our contradiction and we conclude that $\uCP(K)=\infty$.

To produce \Cref{thm:decomp}, we start with a construction of Kjuchukova--Miller--Ray--Sakall{\i}~\cite{KMRS24}, which extends work of Owens on unknotting numbers (particularly \cite[Thm.~2]{Owe10}) and provides a positive-definite filling $X$ of $\SigmaTwo(K)$ whose intersection lattice is of \emph{half-integer surgery type}. 

\begin{definition}[Half-integer surgery type]\label{def:half}
    A positive-definite integral lattice $\Lambda_X$ of rank $2n$ is of \emph{half-integer surgery type} if $\Lambda_X$ has a basis $u_1,\dots,u_n,v_1,\dots,v_n$ with
\begin{equation}\label{eq:half}
    u_i\cdot u_j=2\delta_{ij},\qquad u_i\cdot v_j=\delta_{ij}.
\end{equation}
%
%
\end{definition}
%
%
\begin{theorem}[{\cite[Thm.~1.1]{KMRS24}}]
\label{thm:KMRS1}
Let $K$ be a knot with $\sigma(K)=0$. If $K$ is H-slice in $\#^{n}\CP$ for some $0\le n<\infty$, then $\SigmaTwo(K)$ bounds a smooth, compact, oriented, positive-definite $4$-manifold $X$ with $b_2(X)=2n$ whose intersection lattice $\Lambda_X=(H_2(X)/\mathrm{Tors},Q_X)$ is of half-integer surgery type.
\end{theorem}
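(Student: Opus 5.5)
The plan is to take $X$ to be the double cover of $Z^\circ=(\#^n\CP)\setminus\mathring B^4$ branched along the H-slice disk $\Delta$. We then read off its homology and intersection form from the covering involution, and finally build the half-integer surgery basis by lifting $\CP^1$'s.

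\emph{Step 1: the branched cover and its Betti numbers.} Since $[\Delta]=0$ in $H_2(Z^\circ,\partial Z^\circ)$, the class of a meridian of $\Delta$ generates a surjection $H_1(Z^\circ\setminus\Delta)\to\Z/2$. So the double branched cover $X\to Z^\circ$ along $\Delta$ exists, and its boundary is $\SigmaTwo(K)$. The Euler characteristic is
\[
\chi(X)=2\chi(Z^\circ)-\chi(\Delta)=2(n+1)-1=2n+1 .
\]
A standard transfer argument with $\Q$ coefficients, using that $Z^\circ$ is simply connected and that $\SigmaTwo(K)$ is a $\Q HS^3$, gives $b_1(X)=b_3(X)=0$, and hence $b_2(X)=2n$. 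For the signature, the $G$-signature theorem for double branched covers (as used by Kauffman--Taylor and Gordon--Litherland) gives
\[
\sigma(X)=2\sigma(Z^\circ)-\tfrac12[\Delta]^2+(\text{boundary term }\sigma(K)) .
\]
Here $[\Delta]^2=0$ and $\sigma(K)=0$, so $\sigma(X)=2n=b_2(X)$. Thus $X$ is positive definite.

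\emph{Step 2: the vectors $u_i$.} Let $h_1,\dots,h_n$ be the $\CP^1$ classes, with $h_i\cdot h_j=\delta_{ij}$, and let $\pi\colon X\to Z^\circ$ be the projection. Set $u_i=\pi^!(h_i)$, the transfer, represented by the full preimage of a generic surface representing $h_i$. Then
\[
u_i\cdot u_j=2\,h_i\cdot h_j=2\delta_{ij},
\]
and more generally $x\cdot u_i=\pi_*(x)\cdot h_i$ for every $x\in H_2(X)$.

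\emph{Step 3: the vectors $v_j$.} It suffices to find classes $v_j$ with $\pi_*(v_j)=h_j$; then $u_i\cdot v_j=\delta_{ij}$ is automatic. To do this I would represent $h_j$ by an embedded surface $F_j$ in the complement of $\Delta$. Start from a sphere transverse to $\Delta$, whose intersection points with $\Delta$ have algebraic count zero, and tube the points off in cancelling pairs along arcs in $\Delta$. $F_j$ lifts to $X$ exactly when the mod-$2$ linking homomorphism, which counts meridians of $\Delta$, vanishes on $H_1(F_j)$. This can be arranged by further tubing $F_j$ to meridian spheres or twisting the tubes, and it changes $h_j$ only by classes that are zero. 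A lift $\tilde F_j$ then gives $v_j=[\tilde F_j]$ with $\pi_*v_j=h_j$.

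\emph{Step 4, the main obstacle: basis.} The vectors $u_1,\dots,u_n,v_1,\dots,v_n$ are linearly independent, since their Gram matrix has the block form $\left(\begin{smallmatrix}2I&I\\I&V\end{smallmatrix}\right)$, which is nondegenerate by definiteness. So they span a finite-index sublattice of $\Lambda_X$. The hard part is showing that they span all of $\Lambda_X$. I would attack this by computing $|H_1(\SigmaTwo(K))|=\det K$ against the discriminant, and more concretely by describing $Z^\circ$ and $\Delta$ via a handle decomposition. A null-homologous disk in $\#^n\CP$ can be presented as $K$ related to a slice knot by $n$ generalized crossing changes along $+1$-framed unknots linking $K$ algebraically zero times. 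Lifting that handle picture exactly as in Montesinos' trick and Owens' unknotting-number argument (\cite[Thm.~2]{Owe10}) should produce $X$ as a rational ball with $2n$ two-handles whose linking matrix is visibly in the form \eqref{eq:half}. If this direct handle description is cleaner, I would prove the whole theorem that way instead and use Steps 1--3 only as a consistency check. If instead it only yields a finite-index sublattice, I would try to replace $X$ by the cover of a modified disk, or pass to an overlattice, to upgrade the finite-index sublattice to a genuine basis.
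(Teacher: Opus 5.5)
Your proposal does not yet prove the statement. Half-integer surgery type is a property of a \emph{basis} of $\Lambda_X$, and Step~4, which you yourself flag, is never carried out. This is a real gap, not a formality. Step~3 determines each $v_j$ only up to adding classes $z$ with $\pi_*z=0$ (these are automatically $\tau$-anti-invariant). Such changes preserve $u_i\cdot v_j=\delta_{ij}$ but usually destroy the basis property. Already for $n=1$: if $\{u,v\}$ is a good basis, then $v+k(2v-u)$ also satisfies $\pi_*=h_1$, yet $\{u,v+k(2v-u)\}$ spans a sublattice of index $|2k+1|$.

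Your proposed remedies do not close this gap:
\begin{itemize}
\item $\Lambda_X$ is determined by $X$, so there is no overlattice to pass to.
\item The normal form in which every null-homologous disk arises from $n$ generalized crossing changes on a slice knot is asserted without argument. A general H-slice disk meets the cores of the $2$-handles, and you do not say how to remove those intersections.
\item Even granting that normal form, the lifted $2$-handle cores need not close up to classes in $H_2(X)$. Their attaching circles can be nonzero in $H_1$ of the double branched cover of $B^4$ along the slice disk, so the basis question remains.
\end{itemize}

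There are also local errors:
\begin{itemize}
\item Step~3: the tubes you add along arcs of $\Delta$ have meridians of $\Delta$ as cross-sections. That is exactly where the $\Z/2$ homomorphism is nonzero, so the tubed surface does not lift. Also, a disk in a $4$-manifold has meridian circles, not meridian spheres.
\item Step~1: the rational transfer only identifies the $\tau$-invariant part of $H_1(X;\Q)$, so $b_1(X)=0$ needs a further argument.
\item Step~4: ``nondegenerate by definiteness'' is circular. The right reason is that the Gram determinant is $\det(2V-I)$, which is odd.
\end{itemize}

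All of this can be repaired within your framework.
\begin{enumerate}
\item Put $E=Z^\circ\setminus\nu\Delta$. Since $[\Delta]=0$, we have $H_1(E)\cong\Z$ and $H_2(E)\to H_2(Z^\circ)$ is onto. The cover is classified by a class $w$ pulled back from $S^1$, so $w\cup w=0$.
\item The mod-$2$ Gysin sequence of $\tilde E\to E$ then shows that $H_1(\tilde E;\Z/2)$ is generated by the meridian of $\tilde\Delta$. Hence $H_1(X;\Z/2)=0$, which gives $b_1(X)=0$ and $H_1(X)$ of odd order. The same sequence shows that $p_*$ maps onto $H_2(E;\Z/2)$.
\item Together with $\pi_*\pi^!=2$, this makes $\pi_*\colon H_2(X)\to H_2(Z^\circ)$ onto, which is what Step~3 actually needs.
\item Next, $U=\langle u_1,\dots,u_n\rangle$ is primitive. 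If $x\in\Lambda_X\cap(U\otimes\Q)$, then $\tau_*x=x$, so $2x=\pi^!\pi_*x$ and $x\cdot y=\tfrac12\,\pi_*x\cdot\pi_*y$ for all $y$. By surjectivity of $\pi_*$, this forces $\pi_*x\in 2H_2(Z^\circ)$, so $x\in U$.
\item Finally, choose a complement $V'$ of $U$. The map $x\mapsto(x\cdot u_i)_i$ on $V'$ is given by a matrix $W$ that is invertible mod $2$. Take $M\in\mathrm{GL}_n(\Z)$ with $M\equiv W^{-1}\pmod 2$, and subtract suitable multiples of the $u_i$ from the corresponding elements of $V'$. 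This produces $v_j$ with $u_i\cdot v_j=\delta_{ij}$ such that $\{u_i,v_j\}$ is a basis.
\end{enumerate}

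Completed this way, your route genuinely differs from the one the paper relies on. There, \cite{KMRS24} build the twice-punctured $\#^n\CP$ from $S^3\times[0,1]$ and $2$-handles along a $+1$-framed unlink, and read the basis off the lifted handles. Your completed argument uses only $H_1(Z)=0$ and $Q_Z\cong I_n$, never the standard smooth structure, which is relevant to \Cref{q:exotic-half-integer}.
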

\begin{remark}
We refer the reader to \cite{KMRS24} for the detailed construction of $X$, but we emphasize that the proof of \Cref{thm:KMRS1} requires $K$ to be H-slice in $\#^n\CP$. In particular, the construction involves building a twice-punctured $\#^n\CP$ by attaching 2-handles to $S^3\times[0,1]$ along a $+1$-framed $n$-component unlink in $S^3\times\{1\}$. It remains of interest to ask whether the conclusion of \Cref{thm:KMRS1} holds for all $0$-positive knots with vanishing signature (\Cref{q:exotic-half-integer}).
\end{remark}
 One can then obtain a closed definite $4$-manifold by gluing $X$ to an arbitrary positive-definite filling $W$ of $-\SigmaTwo(K)$, which also gives an isometric embedding of $\Lambda_X\oplus\Lambda_W$ into the standard diagonal lattice. Our \Cref{thm:intro:decomp} then places a necessary condition on the intersection form of $W$. 
\begin{remark}
Kjuchukova--Miller--Ray--Sakall{\i} give examples of gluing specific plumbings to their $X$ with half-integer surgery lattice and applying Donaldson's theorem (e.g., \cite[Prop.~4.1, Prop.~4.4, Cor.~5.5]{KMRS24}). Our theorem extends the lattice analysis used by~\cite{KMRS24} for specific plumbings to arbitrary definite fillings, and our applications focus on obstructing BPH-sliceness.
\end{remark}
We now prove \Cref{thm:intro:decomp} from the introduction, restated here:
\begin{restatement}{thm:intro:decomp}\label{thm:decomp}
Let $K$ be a knot with $\sigma(K)=0$, and let $W$ be a positive-definite filling of $-\SigmaTwo(K)$ with $b_2(W)=r$ and intersection lattice $\Lambda_W$ whose form is given by a matrix $Q_W$. If $K$ is H-slice in $\#^n\CP$, then $\Lambda_W\hookrightarrow(\Z^n,2I)\oplus(\Z^r,I)$. Equivalently, $Q_W$ admits a decomposition 
\begin{equation}\label{eq:paired-decomposition}
 Q_W=2CC^T+DD^T, \quad\text{where}\quad C\in M_{r\times n}(\Z)\quad\text{and}\quad D\in M_r(\Z). 
\end{equation}
\end{restatement}
\begin{remark}\label{rem:decomp-embedding}
    When $n=0$, this gives the lattice embedding condition in the classical Donaldson obstruction to smooth sliceness: $\Lambda_W\hookrightarrow (\Z^r,I)$~\cite{Lis07}. For our applications, it is useful to express this embedding condition as a decomposition of $Q_W$, so that we can enumerate and exclude possible candidates for $T=CC^T$ (as discussed in \Cref{subsec:intro:lattice}).
\end{remark}
\begin{sketch}After gluing $W$ to $X$ as described above (where $\Lambda_X$ has a basis as in \Cref{def:half} and $\Lambda_W$ has some basis $w_1,\dots,w_r$), we show that $Q_W$ admits a decomposition of the form \eqref{eq:paired-decomposition}: first we show that the square-two basis elements $u_i$ of $\Lambda_X$ can be written as the sum of standard basis vectors $e_{2i-1}+e_{2i}$ for $1\leq i\leq n$. Then we let $Q_W=AA^T$ where $w_k$ is the $k$th row of $A$ and use the fact that $w_k\cdot u_i=w_k\cdot (e_{2i-1}+e_{2i})=0$ to describe the columns of $A$. The columns of $A$ provide our decomposition of $Q_W$. Finally, we show \eqref{eq:paired-decomposition} is equivalent to $\Lambda_W\hookrightarrow(\Z^n,2I)\oplus(\Z^r,I)$.
\end{sketch}
\begin{proof}

Suppose $K$ is H-slice in $\#^n\CP$. Since $\sigma(K)=0$, \Cref{thm:KMRS1} provides a positive-definite filling $X$ of $\Sigma_2(K)$ whose intersection lattice $\Lambda_X=(H_2(X)/\mathrm{Tors}, Q_X)$ is of half-integer surgery type with rank $2n$. That is, $\Lambda_X$ admits a basis $u_1,\dots, u_n, v_1,\dots,v_n$ with 
\[
u_i\cdot u_j=2\delta_{ij},
\qquad
u_i\cdot v_j=\delta_{ij}.
\]
We also fix an arbitrary basis $w_1,\dots, w_r$ of $\Lambda_W$.

Since $\Sigma_2(K)$ is a rational homology sphere, we glue $W$ and $X$ together along their common boundary, and \Cref{prop:standard-lattice} gives an isometric embedding of lattices
\begin{equation}\label{eq:thm:embedding}
 \Lambda_X\oplus \Lambda_W\hookrightarrow(\Z^{2n+r},I),
\end{equation} 
in which the two summands have orthogonal images. From now on, we identify the basis vectors of $\Lambda_X$ and $\Lambda_W$ with their images in the standard lattice $(\Z^{2n+r},I)$. In particular, after the embedding we still have $u_i\cdot u_j=2\delta_{ij}$ and $ u_i\cdot v_j=\delta_{ij}$.

\claim{1}{We can assume $u_i=e_{2i-1}+e_{2i}$ for $1\leq i\leq n$.}

In $(\Z^{2n+r},I)$, an integral vector of square $2$ is a sum of two distinct signed standard basis vectors, so for each $u_i$ we have $u_i=\pm e_a\pm e_b$ with $a\ne b$. For distinct $u_i,u_j$ we check that the locations of their nonzero coordinates do not overlap. If they overlapped in exactly one nonzero coordinate location, we would have $u_i\cdot u_j=\pm 1$, contradicting the fact that $u_i\cdot u_j=0$. If they overlapped in both locations, then $u_i\equiv u_j\pmod 2$ so $u_i\cdot v_i \equiv u_j\cdot v_i\pmod 2$. But $u_i\cdot v_i=1$ and $u_j\cdot v_i=0$, a contradiction. 

So we can assume that, up to automorphism of $(\Z^{2n+r},I)$ to relabel and sign-flip coordinates, our embedding \eqref{eq:thm:embedding} satisfies the following property (cf. \cite[proof of Prop.~4.1; Step 1]{KMRS24} and \cite[Lem.~3.3]{Owe10}): 
\[
u_i=e_{2i-1}+e_{2i}\qquad(1\le i\le n).\]

\claim{2}{Let $A\in M_{r\times(2n+r)}(\Z)$ be the matrix whose $k$th \emph{row} is the basis vector $w_k$. Then the \emph{columns} of $A$ are \[(c_1,-c_1,\dots,c_n,-c_n, d_1,\dots,d_r).\]}

Since the embedding is isometric, $\Lambda_W$ is represented by the matrix $Q_W=AA^{T}$. Since the two summands in \eqref{eq:thm:embedding} have orthogonal images, each row $w_k$ of $A$ is orthogonal to every $u_i$, so
\[
0=w_k\cdot u_i=A_{k,2i-1}+A_{k,2i}. \qquad (1\leq i\leq n\:;\:1\leq k\leq r)
\]
That means the $(2i-1)$st and $(2i)$th \emph{columns} of $A$ are negatives of each other. We write them as
$\{c_i,-c_i\}_{1\leq i\leq n}$ and label the remaining $r$ columns $d_1,d_2,\dots,d_r$.

\claim{3}{$Q_W=2CC^T+DD^T$.}

Notice that $AA^T$ is the sum of the outer products of its columns. Now we use the fact that $c_ic_i^T+(-c_i)(-c_i)^T=2c_ic_i^T$, and we set $C=(c_1,\dots,c_n)\in M_{r\times n}(\Z)$ and $D=(d_1,\dots,d_r)\in M_r(\Z)$ to get the desired decomposition \eqref{eq:paired-decomposition}:
\begin{equation*}
Q_W\;=AA^T\;=\;2\sum_{i=1}^{n}c_ic_i^T\;+\;\sum_{j=1}^{r}d_jd_j^T=2CC^T+DD^T.
\end{equation*}
Finally, the decomposition \eqref{eq:paired-decomposition} is equivalent to the embedding $\Lambda_W\hookrightarrow(\Z^n,2I)\oplus(\Z^r,I)$. Sending $w_k$ to the $k$th row of $(C\ D)$ preserves the form, since the pairing of the $k$th and $\ell$th rows in $(\Z^n,2I)\oplus(\Z^r,I)$ is $(2CC^T+DD^T)_{k\ell}$. Conversely, the images of a basis of $\Lambda_W$ under any such embedding form the rows of a matrix $(C\ D)$ satisfying \eqref{eq:paired-decomposition}.
\end{proof}
\begin{remark}
    The decomposition \eqref{eq:paired-decomposition} does not require $W$ to be simply connected. 
\end{remark}
The matrix $C$ in~\eqref{eq:paired-decomposition} has $n$ columns, where $n$ is the number of $\CP$ summands. In particular, the decomposition depends on $n$. To obstruct such a decomposition for all $n$, we instead consider the matrix $T=CC^T\in M_r(\Z)$, which is symmetric and positive semidefinite for any $n$. Recall from \Cref{subsec:prelim:fillings} that since $W$ is a filling of $-\SigmaTwo(K)$, we have that $\det(Q_W)$ is odd, so we can use the following:
\begin{proposition}
\label{prop:Dinvert}
 Let $Q,T,D\in M_r(\Z)$ with $Q=2T+DD^T$. If $\det(Q)$ is odd, then $D$ is invertible over $\F_2$, so $D$ is nonsingular over $\Z$ and $Q-2T$ is positive definite.
\end{proposition}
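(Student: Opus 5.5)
The plan is to reduce the equation modulo $2$ and then use positive semidefiniteness of $DD^T$. Reducing $Q=2T+DD^T$ modulo $2$ gives $Q\equiv DD^T \pmod 2$. Hence
\[
\det(Q)\equiv\det(DD^T)=\det(D)^2\pmod 2 .
\]
Since $\det(Q)$ is odd, $\det(D)^2$ is odd, so $\det(D)$ is odd. Therefore the reduction of $D$ modulo $2$ has nonzero determinant in $\F_2$, i.e.\ $D$ is invertible over $\F_2$. Only multiplicativity of the determinant and the fact that reduction mod $2$ is a ring homomorphism $\Z\to\F_2$ are needed here.

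Next, an odd integer is nonzero, so $\det(D)\neq 0$ and $D$ is nonsingular over $\Z$ (equivalently over $\Q$ or $\mathbb{R}$). For positive definiteness, note that $Q-2T=DD^T$. For any real vector $x$ we have $x^T DD^T x=\|D^Tx\|^2\geq 0$, with equality only if $D^Tx=0$. Since $D^T$ is nonsingular, this forces $x=0$. Hence $Q-2T=DD^T$ is positive definite.

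There is no real obstacle. The only point needing care is that the mod-$2$ reduction of $\det(DD^T)$ equals $\det(D)^2$ mod $2$, which is immediate. Note that no hypothesis on $T$ (e.g.\ $T=CC^T$ or semidefiniteness) is used. The argument works for arbitrary integral $T$ and square integral $D$.
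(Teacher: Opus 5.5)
Your proof is correct and follows the paper's argument essentially verbatim. You reduce modulo $2$ to get $\det(D)$ odd, then observe that $Q-2T=DD^T$ is a nonsingular positive semidefinite matrix and hence positive definite; your explicit $\|D^Tx\|^2$ computation simply spells out the last step, which the paper states in one line.
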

\begin{proof}
 Reduce $Q=2T+DD^T$ modulo $2$. The even term $2T$ vanishes, leaving $Q\equiv DD^T\pmod 2$. Then since $\det(Q)$ is odd, so is $\det(DD^T)=\det(D)^2$, which means $\det(D)$ is odd too. It follows that $D$ is invertible over $\F_2$ and nonsingular over $\Z$. Consequently, the positive semidefinite matrix $DD^T=Q-2T$ is nonsingular, so it is positive definite.
\end{proof}
In particular, no row $\rho_i$ of $D$ vanishes, so $1\leq \rho_i\cdot\rho_i$. Since $\rho_i\cdot\rho_i=(Q_W)_{ii}-2T_{ii}$, we have $T_{ii}\leq((Q_W)_{ii}-1)/2$. Together with $T_{ii}\geq 0$, the matrices $T=CC^T$ coming from a decomposition as in \Cref{thm:decomp} are restricted to the following set, which depends on $Q_W$ but not on $n$.
\begin{definition}[Candidate set $\mathcal{T}(Q)$]\label{def:candidates}
 Let $Q$ be a positive-definite rank-$r$ integral matrix with odd determinant. Define the \emph{candidate set of $Q$}, denoted $\mathcal T(Q)$, to be the set of integral symmetric positive-semidefinite \emph{candidate} matrices $T\in M_r(\Z)$ satisfying
\[
0\leq T_{ii}\leq \frac{Q_{ii}-1}{2}.
\]
\end{definition}
\begin{remark}
 For any $T\in \mathcal{T}(Q_W)$ the diagonal entries are bounded, and $T^2_{ij}\leq T_{ii}T_{jj}$ because $T$ is positive semidefinite. So the size of $\mathcal{T}(Q_W)$ depends only on the diagonal entries of $Q_W$. For example, if all diagonal entries of $Q_W$ are less than 3, $\mathcal{T}(Q_W)$ contains only $T=0$. Similarly, if every diagonal entry of $Q_W$ is at most $4$, then every candidate has $T_{ii}\in\{0,1\}$.
\end{remark}
Now we can apply \Cref{thm:decomp} as an obstruction using the following proposition, which characterizes the necessary decomposition in terms of the candidate set of $Q_W$:
\begin{proposition}\label{prop:finiteob}
Let $Q$ be a positive-definite integral matrix of rank $r$ and odd determinant, and let $n\geq0$. 
Then $Q$ admits an integral decomposition \[Q=2CC^T+DD^T,\qquad C\in M_{r\times n}(\Z),\quad D\in M_r(\Z),\]if and only if there exists some candidate $T\in\mathcal T(Q)$ such that
\begin{enumerate}[label=\textup{(D\arabic*)}, ref=\textup{(D\arabic*)}]
\item\label{cond:finiteob:embedding} $Q-2T$ is the Gram matrix of $r$ integer vectors in $\Z^r$;
\item\label{cond:finiteob:gram} $T$ is the Gram matrix of $r$ integer vectors in $\Z^n$.
\end{enumerate}
Whenever~\ref{cond:finiteob:embedding} holds, $\det(Q-2T)$ is a positive odd square.
\end{proposition}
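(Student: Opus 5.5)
The equivalence is essentially bookkeeping: set $T=CC^T$ and use \Cref{prop:Dinvert} to see that any such $T$ must be a candidate. The forward and backward directions are proved separately, and the final claim about $\det(Q-2T)$ follows from a short parity argument.

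\emph{Forward direction.} Suppose $Q=2CC^T+DD^T$ with $C\in M_{r\times n}(\Z)$ and $D\in M_r(\Z)$, and put $T=CC^T$.
\begin{itemize}
\item $T$ is integral, symmetric and positive semidefinite. Condition~\ref{cond:finiteob:gram} holds by construction, since $T$ is the Gram matrix of the rows of $C$, which are $r$ vectors in $\Z^n$.
\item Condition~\ref{cond:finiteob:embedding} holds because $Q-2T=DD^T$ is the Gram matrix of the rows of $D$, which are $r$ vectors in $\Z^r$.
\item Membership $T\in\mathcal T(Q)$ only requires the diagonal bound. Since $\det Q$ is odd, \Cref{prop:Dinvert} shows $D$ is invertible over $\F_2$. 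Hence no row $\rho_i$ of $D$ is zero, so $\rho_i\cdot\rho_i\ge 1$. Since $Q_{ii}=2T_{ii}+\rho_i\cdot\rho_i$, this gives $0\le T_{ii}\le (Q_{ii}-1)/2$.
\end{itemize}

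\emph{Backward direction.} Suppose $T\in\mathcal T(Q)$ satisfies both conditions.
\begin{itemize}
\item By~\ref{cond:finiteob:gram} there are $r$ vectors in $\Z^n$ with Gram matrix $T$. Take these as the rows of $C\in M_{r\times n}(\Z)$, so that $T=CC^T$.
\item By~\ref{cond:finiteob:embedding} there are $r$ vectors in $\Z^r$ with Gram matrix $Q-2T$. Take these as the rows of $D\in M_r(\Z)$, so that $Q-2T=DD^T$.
\item Then $Q=2CC^T+DD^T$, as required.
\end{itemize}
Neither direction uses the diagonal bound in an essential way; the candidate set only restricts the search space. One bookkeeping point: when $n=0$, condition~\ref{cond:finiteob:gram} forces $T=0$, which is consistent with $C$ being an empty matrix.

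\emph{Determinant claim.} Assume~\ref{cond:finiteob:embedding}, so $Q-2T=DD^T$ with $D$ square and integral. Then $\det(Q-2T)=\det(D)^2$ is a square. Reducing mod $2$ gives $\det(Q-2T)\equiv\det Q\pmod 2$, which is odd. An odd number is nonzero, and a nonzero square is positive. This is just the argument of \Cref{prop:Dinvert} again.

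None of these steps is a real obstacle. The only care needed is to apply \Cref{prop:Dinvert} correctly, so that the candidate diagonal bounds are actually justified.
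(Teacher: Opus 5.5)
Your proposal is correct and follows the paper's proof essentially step for step. You set $T=CC^T$, justify the diagonal bound via \Cref{prop:Dinvert} (the paper points to the discussion preceding \Cref{def:candidates}, which is the same argument), and reverse the construction for the converse. Your determinant argument, reducing $Q-2T=DD^T$ modulo $2$, is exactly the content of \Cref{prop:Dinvert}, which the paper invokes at that point.
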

\begin{proof}
 Suppose $Q$ admits such a decomposition. Setting $T=CC^T$ satisfies~\ref{cond:finiteob:gram} and gives $Q-2T=DD^T$, which satisfies~\ref{cond:finiteob:embedding} since $D$ is square. Since $\det(Q)$ is odd, \Cref{prop:Dinvert} gives that $\det(D)$ is odd (hence nonzero), so $\det(Q-2T)$ is a positive odd square. The discussion preceding \Cref{def:candidates} shows that $T\in\mathcal{T}(Q)$.
 
 On the other hand, given a $T\in \mathcal{T}(Q)$ passing both conditions, condition~\ref{cond:finiteob:gram} provides some $C\in M_{r\times n}(\Z)$ such that $T=CC^T$, and then $Q-2T=Q-2CC^T=DD^T$ for some $D\in M_r(\Z)$ by condition~\ref{cond:finiteob:embedding}.
\end{proof}
\begin{remark}\label{rem:T0}
The candidate $T=0$ always satisfies condition~\ref{cond:finiteob:gram}, and condition~\ref{cond:finiteob:embedding} becomes $Q_W=DD^T$ with $D$ square. This is precisely the lattice embedding condition obtained by gluing $W$ to a rational homology ball bounded by $\SigmaTwo(K)$ and applying Donaldson's theorem, as in the classical obstruction to smooth sliceness. In particular, a filling $W$ can show that $\uCP(K)=\infty$ only if it already shows, through this classical obstruction, that $K$ is not slice. Excluding the nonzero candidates extends this obstruction to H-sliceness in every $\#^n\CP$. If $\SigmaTwo(K)$ bounds a rational homology ball, then \Cref{thm:decomp} gives no obstruction for any filling or any $n$.
\end{remark}
\begin{remark}\label{rem:limitations}
Since the obstruction uses fillings of $\SigmaTwo(K)$, it also cannot distinguish two knots with vanishing signature and the same oriented double branched cover, such as a positive Conway mutant pair (see~\cite[\S1]{Picc} and \cite{GP26}). For example, the Conway knot is not BPH-slice. Piccirillo constructed a knot $K'$ with the same $0$-trace as the Conway knot and with $s(K')\neq0$~\cite{Picc}. By the trace embedding lemma (see \cite[Lem.~3.5]{MP23}), the Conway knot is H-slice in a given $4$-manifold exactly when $K'$ is, and \Cref{prop:Hslice-obstructions}\eqref{obs:s}, applied to $K'$ or its mirror, obstructs one of these. But the Conway knot is a positive mutant of the slice Kinoshita--Terasaka knot~\cite{Picc}, so its double branched cover bounds a rational homology ball, and \Cref{thm:decomp} cannot detect this. See also \Cref{rem:Thq-figure8}.
\end{remark}
 In practice, we start by checking $\det(Q-2T)$ because this is simple to compute, and in some settings it is sufficient to show that $\uCP(K)=\infty$. If $\det(Q-2T)$ is a positive square, we then check condition~\ref{cond:finiteob:embedding}, which does not depend on $n$. In all of our applications, this condition obstructs BPH-sliceness. That means condition~\ref{cond:finiteob:gram}, the only one that depends on $n$, is never used. 

The following criterion shows that condition~\ref{cond:finiteob:gram} is automatic for sufficiently large $n$ in all of our applications. We prove the result in Appendix~\ref{app:D2}, where we also show that its numerical threshold is sharp for individual candidates, and ask whether this condition can provide an obstruction for every $n$ when condition~\ref{cond:finiteob:embedding} alone does not.
\begin{proposition}[proved as {\Cref{cor:D2-auto}}]\label{prop:cor:D2-auto}
Let $Q$ be as in \Cref{prop:finiteob}. If fewer than six diagonal entries of $Q$ are at least $5$, then every $T\in\mathcal T(Q)$ satisfies condition~\ref{cond:finiteob:gram} for all $n\geq\operatorname{rank}(T)+3$. 
\end{proposition}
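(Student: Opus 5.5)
The plan is to translate condition~\ref{cond:finiteob:gram} into a lattice embedding problem. Then I will split off unimodular summands and finish with the classical theorem of Mordell and Ko on representing positive-definite integral forms by sums of squares. That theorem is exactly where the threshold ``six'' comes from, since the $E_6$ lattice is the first obstruction.

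\textbf{Step 1 (reformulation).} Let $T\in\mathcal T(Q)$ and set $k=\operatorname{rank}(T)$. Let $N=\{x\in\Z^r: x^TTx=0\}$. Because $T$ is positive semidefinite, $N=\ker T\cap\Z^r$. Define $\Lambda=\Z^r/N$ with the form induced by $T$. Then $\Lambda$ is a positive-definite integral lattice of rank $k$, generated by the images $t_1,\dots,t_r$ of the standard basis vectors, and $t_i\cdot t_j=T_{ij}$. An isometric embedding $\Lambda\hookrightarrow(\Z^n,I)$ sends the $t_i$ to $r$ vectors in $\Z^n$ with Gram matrix $T$, which is condition~\ref{cond:finiteob:gram}. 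Since $\Z^{k+3}\subset\Z^n$ for $n\ge k+3$, it suffices to embed $\Lambda$ into $\Z^{k+3}$.

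\textbf{Step 2 (using the diagonal bound).} For each $i$ we have $t_i\cdot t_i=T_{ii}\le (Q_{ii}-1)/2$. If $Q_{ii}\le 4$, then $T_{ii}\in\{0,1\}$. So, by hypothesis, at most five generators have norm $\ge 2$, and every other generator has norm $0$ (hence $t_i=0$ in $\Lambda$) or norm $1$.

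Next I split off the norm-one generators one at a time. If $v\in\Lambda$ has $v\cdot v=1$, then $x\mapsto x-(x\cdot v)v$ is an integral projection, so $\Lambda=\Z v\oplus v^{\perp}$. Projecting the remaining generators into $v^\perp$ keeps them integral and does not increase their norms. In particular, a former norm-one generator projects to a vector of norm $0$ or $1$, i.e.\ to $0$ or to a new unit vector. Iterating, I obtain
\[
\Lambda\cong \Z^m\oplus\Lambda',
\]
where $\Lambda'$ is generated by the projections of the at most five large generators. Hence $\Lambda'$ is a positive-definite integral lattice with $\operatorname{rank}\Lambda'=k-m\le 5$.

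\textbf{Step 3 (Ko--Mordell).} I will invoke the theorem of Mordell (for rank $\le 5$, completed by Ko) that every positive-definite integral quadratic form in $j\le 5$ variables is a sum of $j+3$ squares of integral linear forms. Equivalently, $\Lambda'$ embeds in $\Z^{(k-m)+3}$. Combining this with the identity on $\Z^m$ gives $\Lambda\hookrightarrow\Z^{k+3}$, which proves the claim by Step 1.

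The main obstacle is Step 3. Ko's classical statement is phrased for classically integral forms, i.e.\ even off-diagonal coefficients in the quadratic-form sense, so I need to check that this matches our Gram-matrix notion of integrality (it does, since $T$ is an integral symmetric matrix). I also need to confirm that the bound $j+3$ holds uniformly for every $j\le5$, including rank $0$; if it does not, I would fall back on citing the stated result of Ko in that range. For the sharpness remark, $E_6$ shows the hypothesis ``fewer than six'' cannot be dropped, since $E_6$ does not embed in $\Z^{9}$.
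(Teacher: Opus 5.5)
Your proposal is correct and follows the paper's argument essentially verbatim. You pass to the positive-definite lattice $\Z^r/\ker T$, note that $T_{ii}\ge 2$ forces $Q_{ii}\ge 5$, split off square-one generators by orthogonal projection (which never increases norms), and apply Mordell--Ko to the residual lattice of rank at most five. The worries you raise in Step~3 are unfounded: the paper states Mordell--Ko exactly for integral symmetric Gram matrices of rank at most five, and the case where the residual lattice is $0$ is immediate.
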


\medskip 
\subsection{Outline of the applications}\label{subsec:sketch}
The applications of \Cref{thm:decomp} as a lattice embedding obstruction appearing in \Cref{sec:Th,sec:montesinos,sec:2bridge,sec:topvsm} all follow the same structure, which we sketch here.
\begin{enumerate}
 \item For each knot $K$ with vanishing signature that we consider, we describe some simply-connected positive-definite filling $W$ of $-\SigmaTwo(K)$. 
 \item Since $W$ is simply connected, $\det(Q_W)=\det(K)$, so we satisfy the odd determinant hypothesis of \Cref{prop:finiteob}. 
 \item Using elementary lattice arguments inspired by work of Lisca and of Owens, we show that each candidate $T\in \mathcal{T}(Q_W)$ fails condition~\ref{cond:finiteob:embedding} (often by showing that $\det(Q_W-2T)$ is not square).
 \item We conclude that $K$ is not H-slice in $\#^n\CP$ for any $n$ (i.e., $\uCP(K)=\infty$).
\end{enumerate}
The fillings come from checkerboard surfaces for the alternating knots in \Cref{sec:Th,sec:topvsm}, linear plumbings for the 2-bridge knots in \Cref{sec:2bridge}, and star-shaped plumbings for the non-alternating Montesinos knots in \Cref{sec:montesinos}.

We sometimes treat the candidate $T=0$ separately in step (3), since it recovers the classical sliceness obstruction (\Cref{rem:T0}). In \Cref{sec:Th} we use a classical determinant computation for $T=0$. For the knots in \Cref{sec:montesinos} we exclude $T=0$ with a square-two lattice argument, recovering Williams's nonsliceness result for these knots~\cite{Wil08}.

\Cref{prop:cor:D2-auto} applies to the fillings in all of our applications, which is why we never use condition~\ref{cond:finiteob:gram}. The Goeritz matrices in \Cref{sec:Th} have diagonal entries $3$ and $4$, the linear plumbing matrices in \Cref{sec:2bridge} each have at most one diagonal entry at least $5$, and the plumbing matrix in \Cref{sec:montesinos} has diagonal entries $2$ and $3$. The Goeritz matrices used in \Cref{sec:topvsm} have diagonal entries at most $4$.
\section{Obstructing rationally slice Turk's head knots}\label{sec:Th}
In this section we apply \Cref{thm:decomp} and \Cref{prop:finiteob} to \emph{Turk's head knots}, denoted $\Th(p,q)$ where $p$ and $q$ are coprime. We show that when $q\geq 4$ is even, the knot $\Th(3,q)$ is not H-slice in $\#^n\CP$ for any $n$. $\Th(3,q)$ is rationally slice, so we answer a question of Lee~\cite[Question~9]{Lee24}, asking if there exists a rationally slice knot that fails to be BPH-slice. Since $\Th(3,4)$ is the knot $8_{18}$, we simultaneously resolve an example of Manolescu--Piccirillo~\cite{MP23} and complete their classification of the BPH-sliceness of prime knots up to 9 crossings.
\begin{definition}
For coprime $p,q\ge 2$, the \emph{Turk's head knot} $\Th(p,q)$ is the closure of the braid $\beta^q$ where $\beta=\sigma_1\sigma_2^{-1}\cdots\sigma_{p-1}^{(-1)^p}$ on $p$ strands. When $p$ and $q$ are not coprime, the Turk's head construction is a link. We will always assume $\Th(p,q)$ is a knot.
\begin{figure}[H]
    \centering
    \resizebox{!}{4cm}{\input{figures/Th-pill-crossings-flat-labelled}}
    \caption{$\Th(p,q)$, shown for odd $p$}
    \label{fig:Th-p-q}
\end{figure}
\end{definition}
When $p$ is odd, $\Th(p,q)$ has vanishing signature and is strongly negative amphichiral (see~\cite[Thm.~C, Thm.~D]{DPS26}). Strongly negative amphichiral knots are rationally slice, by Kawauchi~\cite{Kaw09}. We focus on the following family of rationally slice Turk's head knots with vanishing signature:
\begin{family}\label{fam:Th3q}
Let $q\geq 4$ be even and not divisible by $3$, and consider the family $\Th(3,q)=\widehat{(\sigma_1\sigma_2^{-1})^q}$ and its associated alternating diagram with $2q$ crossings. The quatrefoil knot, $8_{18}$, is the first member.

\begin{figure}[H]
  \centering
  \begin{subfigure}[b]{0.25\textwidth}
    \centering
    \resizebox{!}{3.5cm}{\input{figures/Th-3q}}
    \caption{$\Th(3,q)$}
    \label{fig:Th-3q}
  \end{subfigure}
  \hspace{0.15\textwidth} 
  \begin{subfigure}[b]{0.25\textwidth}
    \centering
    \resizebox{!}{3.5cm}{\input{figures/Th-3-4}}
    \caption{$\Th(3,4)=8_{18}$}
    \label{fig:Th-3-4}
  \end{subfigure}
  \caption{Turk's head knots are also known as \emph{rosette} knots}
\end{figure}
\end{family}
We restate \Cref{thm:intro:Th3q} and sketch the proof. The full proof is contained in \Cref{subsec:pf:Th3q}.
\begin{restatement}{thm:intro:Th3q}\label{thm:Th3q}
Let $q\geq 4$ be even and not divisible by $3$. The Turk's head knot $\Th(3,q)$ is rationally slice but is not H-slice in $\#^n\CP$ or $\#^n\CPbar$ for any $n$. In particular, $8_{18}=\Th(3,4)$ is not BPH-slice.
\end{restatement}
\begin{sketch}
Using the standard alternating diagram of $K=\Th(3,q)$, we apply the checkerboard construction of Gordon--Litherland to obtain a simply-connected positive-definite filling $W$ of $-\SigmaTwo(K)$ with intersection form given by a matrix $G_q$ (\Cref{lem:Th3qgoeritz}). We then use \Cref{prop:finiteob} to contradict the necessary decomposition described in \Cref{thm:decomp}. In particular, the diagonal entries of $G_q$ are all $3$, which greatly restricts the candidate matrices $T\in \mathcal{T}(G_q)$ we need to check against the conditions of \Cref{prop:finiteob}. We first show in \Cref{prop:Th3qdecomp} that any $T\neq 0$ fails condition~\ref{cond:finiteob:embedding}. A classical computation shows that $\det(G_q)$ is not a perfect square when $q$ is even, so \Cref{prop:Th3q0decomp} shows that $T=0$ fails condition~\ref{cond:finiteob:embedding}. Since $\sigma(K)=0$, we conclude that $\uCP(K)=\infty$ by \Cref{thm:decomp}. Since $K$ is isotopic to $-\Kmirror$, we also have $\uCPbar(K)=\infty$. The assertion about $8_{18}$ follows by setting $q=4$ and noticing that $\Th(3,4)=8_{18}$. 
\end{sketch}
\begin{remark}
 Our choice to study this particular family of knots was originally motivated by interest in the knot $8_{18}$, because of its presence in \cite{MP23}. Showing that $8_{18}$ is not BPH-slice also shows that its positive-slicing degree is in the range $[1,4]$ \cite[\S5 Table~1]{Qin24}. To generalize results to an infinite family, we benefited from discussions with Jaewon Lee and from Di Prisa and \c{S}avk's thorough survey on Turk's head knots~\cite{DPS26}. The survey contains a section dedicated to the family $\Th(3,q)$, which helped us realize that the determinant of these knots is essentially a classical result in graph theory. We record one version of this computation in \Cref{lem:Th3qdet}.
\end{remark}
\subsection{The Goeritz matrix of \texorpdfstring{$\Th(3,q)$}{Th(3,q)}}\label{subsec:Th3qlattice}
We use the classical Gordon--Litherland checkerboard construction, recalled in \Cref{subsubsec:checkerboard}, to describe definite fillings of $\SigmaTwo(\Th(3,q))$ whose intersection forms are given by \emph{Goeritz matrices}. Recall that for a checkerboard coloring of an alternating knot, the reduced Laplacian of the Tait graph corresponding to the white regions represents the positive-definite intersection form of a filling of $\pm\SigmaTwo(K)$. We denote the wheel graph on $q+1$ vertices by $\mathcal{W}_{q+1}$, the cycle on $q$ vertices by $\mathcal{C}_{q}$, and the adjacency matrix of a graph by $\Adj(\cdot)$.
\begin{lemma}\label{lem:Th3qgoeritz}
Consider the standard alternating diagram of
$K=\Th(3,q)$ for $q\geq 3$. A checkerboard coloring of the diagram gives us the following positive-definite Goeritz matrix:
\[
G_q \;=\; 3I_q-\Adj(\mathcal{C}_{q})\;=\;
\begin{pmatrix}
3&-1&0&\cdots&-1\\
-1&3&-1&\ddots&0\\
0&-1&3&\ddots&\vdots\\
\vdots&\ddots&\ddots&3&-1\\
-1&0&\cdots&-1&3
\end{pmatrix}.
\]
Moreover, $G_q$ is the intersection form of a positive-definite filling of $-\Sigma_2(K)$.
\end{lemma}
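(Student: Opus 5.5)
Work directly with the standard alternating diagram of $K=\Th(3,q)$, the closure of $(\sigma_1\sigma_2^{-1})^q$ drawn as a closed 3-braid around an axis. The plan has three steps: identify the regions, identify the Tait graphs, and then read off the Goeritz matrix from the Laplacian.

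\textbf{Regions.} The complementary regions of the diagram fall into three groups.
\begin{enumerate}
\item Between strands 1 and 2 there are $q$ bounded regions, one between each pair of consecutive $\sigma_1$ crossings (cyclically).
\item Between strands 2 and 3 there are $q$ regions in the same way, one between consecutive $\sigma_2^{-1}$ crossings.
\item There is one central region inside strand 1 (containing the braid axis) and one outer region outside strand 3.
\end{enumerate}
This gives $2q+2$ regions, consistent with Euler's formula for a $4$-valent diagram with $2q$ crossings.

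The $\sigma_1$ and $\sigma_2^{-1}$ crossings alternate around the axis. Hence a region between strands 1 and 2 meets the region between strands 2 and 3 across strand 2. So the checkerboard coloring puts the central region and the $q$ regions between strands 2 and 3 in one color. The outer region and the $q$ regions between strands 1 and 2 get the other color.

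\textbf{Tait graphs.} Take the color class containing the $q$ regions between strands 2 and 3 together with the central region. Its Tait graph is built as follows.
\begin{itemize}
\item Each $\sigma_2^{-1}$ crossing is incident to two consecutive regions between strands 2 and 3. These $q$ edges form a cycle $\mathcal{C}_q$.
\item Each $\sigma_1$ crossing is incident to one region between strands 2 and 3 and to the central region. These $q$ edges form spokes to a hub vertex.
\end{itemize}
So this Tait graph is the wheel $\mathcal{W}_{q+1}$, with $2q$ edges, one per crossing, as required. The other Tait graph is the planar dual, which is also a wheel since wheels are self-dual.

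\textbf{Goeritz matrix.} Because the diagram is alternating and reduced, all crossings have the same incidence sign $\eta$ for a given coloring (\Cref{subsubsec:checkerboard}). Choose the coloring with $\eta=+1$, i.e.\ the one whose white Tait graph $\Gamma_W$ is $\mathcal{W}_{q+1}$; if needed, swap colors, which is harmless since both Tait graphs are wheels.

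The Laplacian $\Delta(\mathcal{W}_{q+1})$ has rim vertices of degree $3$, hub of degree $q$, rim adjacencies $-1$, and spokes $-1$. Delete the hub's row and column to get
\[
\Delta_{\mathrm{red}}(\mathcal W_{q+1})=3I_q-\Adj(\mathcal C_q)=G_q.
\]
By Gordon--Litherland \cite[Thm.~3]{GL78}, as recalled in \Cref{subsubsec:checkerboard}, $G_q$ is the intersection form of a positive-definite filling of $\eta\SigmaTwo(K)$. This filling is the double cover of $B^4$ branched over the pushed-in black surface, which is simply connected.

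\textbf{Main obstacle: orientation.} It remains to check that this filling bounds $-\SigmaTwo(K)$ rather than $+\SigmaTwo(K)$. This sign bookkeeping is the step I expect to be the main obstacle. I would resolve it without tracking conventions, using amphichirality. The knot $K$ is strongly negative amphichiral, so $K\simeq -\Kmirror$ and hence $\SigmaTwo(K)\cong-\SigmaTwo(K)$. A positive-definite filling of either orientation is therefore a positive-definite filling of $-\SigmaTwo(K)$.

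Alternatively, mirroring the diagram swaps the roles of the colors. Both Tait graphs are $\mathcal{W}_{q+1}$, so each orientation of $\SigmaTwo(K)$ has a positive-definite filling with form $G_q$.

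Finally, positive-definiteness of $G_q$ can be confirmed directly. Its eigenvalues are $3-2\cos(2\pi k/q)\ge 1$ for $k=0,\dots,q-1$.
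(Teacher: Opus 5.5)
Your proposal is correct and follows the paper's proof. Like the paper, you identify the white Tait graph as the wheel $\mathcal{W}_{q+1}$ with the central region as hub, delete the hub to get $\Delta_{\mathrm{red}}(\mathcal{W}_{q+1})=3I_q-\Adj(\mathcal{C}_q)=G_q$, invoke the Gordon--Litherland checkerboard filling, and use amphichirality ($\SigmaTwo(K)\cong-\SigmaTwo(K)$) to settle the orientation. Your alternative orientation argument is also valid and avoids citing amphichirality: the other coloring ($\eta=-1$) again has a wheel as its white Tait graph, so its reduced Laplacian gives $G_q$ as a positive-definite filling of $-\SigmaTwo(K)$ directly.
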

\begin{proof}
 Consider the checkerboard coloring of the diagram of $\Th(3,q)$ for $q\geq3$ below. There are $q$ labeled regions arranged in a cycle and one central region meeting all of the others, so the Tait graph corresponding to the white regions, $\Gamma_W$, is the wheel $\mathcal{W}_{q+1}$ with the hub vertex $v_{q+1}$ corresponding to the central region (cf. \cite[proof of Thm.~4]{DMMS10}).
\begin{figure}[H]
  \centering
  \begin{subfigure}[b]{0.3\textwidth}
    \centering
    {\tikzset{every picture/.append style={scale=0.73}}\input{figures/Th-3q-checkerboard}}
    \caption{$\Th(3,q)$}
    \label{fig:Th-3q-color}
  \end{subfigure}
  \hspace{0.1\textwidth}
  \begin{subfigure}[b]{0.3\textwidth}
    \centering
    {\tikzset{every picture/.append style={scale=0.83}}\input{figures/Th-3q-tait-white-overlay}}
    \caption{$\mathcal{W}_{q+1}$}
    \label{fig:Th-3q-wheel}
  \end{subfigure}
  \caption{A checkerboard coloring of $\Th(3,q)$ and its Tait graph, $\mathcal{W}_{q+1}$}
\end{figure}
Recall that a graph Laplacian has the degrees of each vertex on the diagonal, $-1$ in the $ij$ entry when $v_i$ and $v_j$ are adjacent, and $0$ elsewhere. Then $\Delta(\mathcal{W}_{q+1})=\diag(3,\dots,3,q)-\Adj(\mathcal{W}_{q+1}).$ Deleting the row and column corresponding to the hub $v_{q+1}$ gives the reduced Laplacian $\Delta_{\mathrm{red}}(\mathcal W_{q+1})
=3I_q-\Adj(\mathcal C_q)$. Then $\Delta_{\mathrm{red}}(\mathcal{W}_{q+1})=G_q$ is a Goeritz matrix for this coloring. Using the checkerboard filling construction, we have that $G_q$ is the intersection form of a positive-definite filling of $\pm\SigmaTwo(K)$. Since $K$ is amphichiral, $\SigmaTwo(K)\cong -\SigmaTwo(K)$, so $G_q$ is both a Goeritz matrix corresponding to \Cref{fig:Th-3q-color} and the intersection form of a positive-definite filling of $-\Sigma_2(K)$.
\end{proof}
\begin{remark}
 Using the sign conventions from \Cref{subsubsec:checkerboard}, computing the Goeritz matrix directly from \Cref{fig:Th-3q-color} also gives $G_q$, but the purpose of the proof above is to emphasize the graph structure of $\Th(3,q)$.
We assume $q\geq3$ because when $q=2$, the Tait graph is not the wheel $\mathcal{W}_3$, and $G_2=\left(\begin{smallmatrix}3&-2\\-2&3\end{smallmatrix}\right)$. The knot $\Th(3,2)=4_1$ is BPH-slice.
\end{remark}
\begin{lemma}[{\cite[\S5.5]{DPS26}}]\label{lem:Th3qdet}
Let $\Luc_n$ and $\Fib_n$ denote the Lucas and Fibonacci numbers. Then
\[
\det(G_q)\;=\;\Luc_{2q}-2
\;=\;
\begin{cases}
5\Fib_q^{2}, & q \text{ even},\\[2pt]
\Luc_q^{2}, & q \text{ odd}.
\end{cases}
\]
Moreover, when $3\nmid q$, $\det(G_q)$ is odd.
\end{lemma}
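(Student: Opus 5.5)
The plan is to compute $\det(G_q)$ by diagonalizing the circulant matrix $G_q=3I_q-\Adj(\mathcal C_q)$. The eigenvalues of $\Adj(\mathcal C_q)$ are $2\cos(2\pi k/q)$ for $k=0,\dots,q-1$, so
\[
\det(G_q)=\prod_{k=0}^{q-1}\bigl(3-2\cos(2\pi k/q)\bigr).
\]
Write $3=\varphi^2+\varphi^{-2}$ with $\varphi=(1+\sqrt5)/2$; note $\varphi^{-2}=\bar\varphi^{2}$ where $\bar\varphi=(1-\sqrt5)/2$. Setting $\zeta=e^{2\pi i/q}$, each factor becomes
\[
3-\zeta^k-\zeta^{-k}=\varphi^{-2}(\varphi^2-\zeta^k)(\varphi^2-\zeta^{-k}).
\]
Taking the product over $k$ and using $\prod_k(x-\zeta^k)=x^q-1$ gives
\[
\det(G_q)=\varphi^{-2q}(\varphi^{2q}-1)^2=\varphi^{2q}+\varphi^{-2q}-2=\Luc_{2q}-2,
\]
since $\varphi^{-2q}=\bar\varphi^{2q}$. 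Alternatively, one could cite the classical spanning-tree count of the wheel $\mathcal W_{q+1}$ via the matrix-tree theorem, since $\det(G_q)$ is the number of spanning trees. I will present the eigenvalue computation, because it is self-contained.

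The two closed forms come from the Binet identities. We have $\Luc_{2q}=\Luc_q^2-2(-1)^q$, which gives $\Luc_{2q}-2=\Luc_q^2$ when $q$ is odd. We also have $\Luc_q^2-5\Fib_q^2=4(-1)^q$, which gives $\Luc_{2q}-2=\Luc_q^2-4=5\Fib_q^2$ when $q$ is even. Both identities are routine to verify by expanding $(\varphi^q\pm\bar\varphi^q)^2$ and using $\varphi\bar\varphi=-1$.

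For parity, I will use the standard fact that $\Fib_n$ and $\Luc_n$ are even exactly when $3\mid n$. This holds because both sequences are periodic mod $2$ with period $3$: $\Fib_n$ runs $0,1,1$ and $\Luc_n$ runs $2,1,1$. So when $3\nmid q$, $\Fib_q$ and $\Luc_q$ are odd, and hence $\det(G_q)$ is odd in both the even and odd cases.

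The only step needing care is the factorization $3-2\cos\theta=\varphi^{-2}|\varphi^2-e^{i\theta}|^2$ and the sign bookkeeping in $\prod_k(\varphi^2-\zeta^{-k})=\varphi^{2q}-1$. Beyond that, the argument is a routine verification.
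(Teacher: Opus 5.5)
Your proof is correct, but it takes a different route from the paper's proof of this lemma. The paper uses \Cref{lem:Th3qgoeritz} to identify $G_q$ as the reduced Laplacian of the wheel $\mathcal W_{q+1}$. It then applies the Matrix--Tree Theorem to get $\det(G_q)=\mathrm{Trees}(\mathcal W_{q+1})$, and cites Sedl\'a\v{c}ek and Myers for the count $\Luc_{2q}-2$. The two Lucas identities and the parity step are then handled exactly as you do.

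Your circulant diagonalization replaces both the graph-theoretic input and the citation with a self-contained computation. It needs only the matrix $3I_q-\Adj(\mathcal C_q)$, not its Tait-graph origin. It is also the $m=1$ (i.e.\ $p=3$, $M_1=(3)$) case of the paper's own method in Appendix~\ref{app:Gpq}. There, \Cref{thm:Gpq-det} diagonalizes $\Adj(\mathcal C_q)$ in the same way, and \Cref{rem:Gpq-det} uses your identity $\alpha^{-q}(\alpha^q-1)^2=\prod_k(\alpha+\alpha^{-1}-\lambda_k)$ with $\alpha=\varphi^2$. So your route is the one that extends to general $\Th(p,q)$. The paper's route is shorter given the citation, and it emphasizes the spanning-tree interpretation.

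One simplification is available to you. Note that $\varphi^{-2q}(\varphi^{2q}-1)^2=(\varphi^{q}-\varphi^{-q})^2$, and $\varphi^{-q}=(-1)^q\bar\varphi^{q}$. So this expression equals $(\varphi^q-\bar\varphi^q)^2=5\Fib_q^2$ for even $q$ and $(\varphi^q+\bar\varphi^q)^2=\Luc_q^2$ for odd $q$. You can therefore read off both closed forms directly, without the two Lucas identities.
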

\begin{proof}
By \Cref{lem:Th3qgoeritz}, $G_q$ is a reduced Laplacian of the wheel $\mathcal{W}_{q+1}$. The Matrix--Tree Theorem gives us $\det(G_q)=\mathrm{Trees}(\mathcal{W}_{q+1})$, where $\mathrm{Trees}(\cdot)$ is the number of spanning trees of a graph. The spanning-tree formula $\mathrm{Trees}(\mathcal{W}_{q+1})=\Luc_{2q}-2$ was computed by Sedl\'a\v{c}ek \cite{Sed70} and Myers \cite{Mye71}. The two determinant formulas now follow from the identities
\[
\Luc_{2q}=\Luc_q^2-2(-1)^q \qquad\text{and}\qquad \Luc_q^2-5\Fib_q^2=4(-1)^q.
\]
Finally, $\Fib_q$ and $\Luc_q$ are odd if and only if $3\nmid q$, so $\det(G_q)$ is then odd.
\end{proof}
\subsection{Proof of \texorpdfstring{\Cref{thm:Th3q}}{Theorem A}}\label{subsec:pf:Th3q}
First we gather the two remaining ingredients needed in the proof. 
\begin{proposition}\label{prop:Th3qdecomp} Let $q\geq 4$ with $3\nmid q$. If $G_q=2T+DD^T$, where $D,T\in M_q(\Z)$ and $T$ is positive semidefinite, then $T=0$.
\end{proposition}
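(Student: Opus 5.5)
The plan is to first pin down the shape of $T$ using only the diagonal of $G_q$ and the parity information in \Cref{prop:Dinvert}. After that, a characteristic-vector argument on the sum of the rows of $D$ should kill every nonzero candidate.

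\textbf{Step 1: $T$ is a diagonal $0/1$ matrix.} Write $\rho_1,\dots,\rho_q\in\Z^q$ for the rows of $D$. Since $\det(G_q)$ is odd (\Cref{lem:Th3qdet}), \Cref{prop:Dinvert} shows that $D$ is invertible over $\F_2$ and that no row vanishes. Because every diagonal entry of $G_q$ is $3$, we get $T_{ii}\in\{0,1\}$. Let $S=\{i: T_{ii}=1\}$; then $\rho_i\cdot\rho_i=1$ for $i\in S$ and $\rho_i\cdot\rho_i=3$ otherwise.

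\begin{itemize}
\item If $T_{jj}=0$, positive semidefiniteness forces the whole $j$th row and column of $T$ to vanish.
\item For distinct $i,j\in S$, the rows $\rho_i,\rho_j$ are signed unit vectors. They cannot be supported on the same coordinate, since then they would agree mod $2$, contradicting invertibility of $D$ over $\F_2$. Hence $\rho_i\cdot\rho_j=0$, so $(G_q)_{ij}=2T_{ij}$ is even. Since $(G_q)_{ij}\in\{0,-1\}$, this forces $T_{ij}=0$.
\end{itemize}

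Thus $T=\diag(\mathbf 1_S)$, and $DD^T=G_q-2\diag(\mathbf 1_S)$.

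\textbf{Step 2: the characteristic-vector argument.} Let $x=\sum_i\rho_i\in\Z^q$. Each row sum of $G_q$ equals $3-1-1=1$. Using $DD^T=G_q-2\diag(\mathbf 1_S)$, this gives
\[
x\cdot\rho_i=1-2[i\in S]\quad\text{(odd for every } i\text{)},
\]
and
\[
x\cdot x=\sum_{i,j}(G_q-2T)_{ij}=q-2|S|.
\]
Now let $\chi=(1,\dots,1)\in\Z^q$. For every integer vector $v$ we have $\chi\cdot v\equiv v\cdot v\pmod 2$, so $\chi\cdot\rho_i\equiv\rho_i\cdot\rho_i\equiv1\pmod 2$ for all $i$. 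Consequently $(x-\chi)\cdot\rho_i\equiv0\pmod 2$ for every row, i.e.\ $D(x-\chi)\equiv0\pmod 2$. Since $D$ is invertible over $\F_2$, this gives $x\equiv\chi\pmod 2$. So every coordinate of $x$ is odd, and therefore $x\cdot x\geq q$. Comparing with $x\cdot x=q-2|S|$ forces $S=\emptyset$, hence $T=0$.

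\textbf{Main obstacle.} The delicate point is Step 1, specifically ruling out $T_{ij}\neq0$ for $i,j\in S$. This relies on the rows indexed by $S$ being unit vectors with distinct supports, which comes from the mod $2$ invertibility of $D$. The hypothesis $3\nmid q$ enters only through $\det(G_q)$ being odd, which is what makes \Cref{prop:Dinvert} available. The hypothesis $q\geq4$ ensures that $G_q$ has the cycle form of \Cref{lem:Th3qgoeritz}: off-diagonal entries in $\{0,-1\}$ and all row sums equal to $1$. Once Step 1 is established, Step 2 is a short parity count.
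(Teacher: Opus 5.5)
Your proof is correct and follows the paper's argument essentially step for step. First you force $T=I_{\mathcal N}$ from the diagonal bound, positive semidefiniteness, and the fact that distinct unit rows of $D$ must be orthogonal. Then you show that the sum of the rows of $D$ is characteristic, which gives $q\le x\cdot x=q-2|S|$. The only cosmetic difference is that you check $x\equiv(1,\dots,1)\pmod 2$ directly by solving $D(x-\chi)\equiv 0$ over $\F_2$, whereas the paper phrases the same step as ``$v$ is characteristic because the rows of $D$ form a basis of $(\Z/2)^q$.''
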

\begin{sketch}
Since the diagonal entries of $G_{q}$ are all $3$, we show that $T$ must be a diagonal matrix with nonzero entries $T_{ii}=1$ exactly when the $i$th row of $D$ has square 1. We let $v$ denote the sum of the rows of $D$ and show that $v$ is characteristic, so every coordinate of $v$ is odd and $v\cdot v\geq q$. But $v\cdot v=q-2(\#\text{nonzero entries of }T)$, so we must have $T=0$.
\end{sketch}
\begin{proof}
Suppose that such a decomposition exists. Since $3\nmid q$, $\det(G_q)$ is odd by \Cref{lem:Th3qdet}. Then \Cref{prop:Dinvert} shows that $D$ is nonsingular and invertible over $\F_2$. We write $\rho_1,\dots,\rho_q\in\mathbb Z^q$ for the rows of $D$. 

\claim{1}{$T=I_\mathcal{N}$, where $I_{\mathcal{N}}$ denotes the matrix with $(I_{\mathcal{N}})_{ii}=1$ for $i$ in the set ${\mathcal{N}}$ and zeros elsewhere.}

We have $0\leq T_{ii}$ since $T$ is positive semidefinite. Since $D$ is nonsingular, \[1\leq \rho_i\cdot\rho_i=(DD^T)_{ii} = 3-2T_{ii}, \]so $T_{ii}\leq 1$. Hence $T_{ii}\in\{0,1\}$. Define the set indexing the nonzero diagonal entries of $T$ as
\[
{\mathcal{N}}:=\{\,i \;:\; T_{ii}=1\,\}=\{\,i\;:\;\rho_i\cdot\rho_i=1\,\}.
\]
Now we show that the remaining entries of $T$ are zero. If $i\not\in \mathcal{N}$, then the $i$th row and column of $T$ vanish because $T_{ii}=0$ and $T$ is positive semidefinite. So $T_{ij}=T_{ji}=0$ when $i\not\in \mathcal{N}$ or $j\not\in \mathcal{N}$. If $i,j\in \mathcal{N}$ and $i\neq j$, we have $\rho_i\cdot\rho_i=\rho_j\cdot\rho_j=1$, so $\rho_i$ and $\rho_j$ are each a signed standard basis vector. Since $D$ is nonsingular, $\rho_i\neq \pm\rho_j$, so $\rho_i\cdot\rho_j=0$. Then $(DD^T)_{ij}=0$, and because $G_q=2T+DD^T$, we have
\[
2T_{ij}=(G_q)_{ij}.
\]
Since $2T_{ij}$ is even, and the off-diagonal entries of $G_q$ are either $0$ or $-1$, we must have $(G_q)_{ij}=0$ and hence $T_{ij}=0$. Thus $T=I_{\mathcal{N}}$.

\claim{2}{$\mathcal{N}=\emptyset$.}

Recall that a vector $v$ in an integral lattice $L$ is \emph{characteristic} if $v\cdot x\equiv x\cdot x \pmod 2$ for every $x\in L$. In the standard lattice $(\Z^q,I)$, a vector is characteristic exactly when each of its coordinates is odd.

Let $v=\rho_1+\cdots+\rho_q$. Because every row of $G_q-2I_{\mathcal{N}}$ has odd sum, $v\cdot \rho_i \equiv \rho_i\cdot\rho_i \pmod 2$ for every $i$. Since $D$ is invertible over $\F_2$, the rows of $D$ form a basis of $(\Z/2)^q$, so $v$ is characteristic in $(\Z^q, I)$. Hence every coordinate of $v$ is odd, so $v\cdot v\geq q$.
 Moreover, $v\cdot v$ is the sum of all the entries of $DD^T$, so $v\cdot v=q-2|{\mathcal{N}}|$ (because $q$ is the sum of the entries of $G_q$ and $2|{\mathcal{N}}|$ is the sum of the entries of $2T$). We now have $v\cdot v=q-2|{\mathcal{N}}|\geq q$. Thus, ${\mathcal{N}}=\emptyset$ and $T=0$.
\end{proof}
\begin{remark}
 The preceding characteristic-vector argument is modeled on Lisca's use of the Wu element of a circular subset in the proof of \cite[Lem.~3.2]{Lis17}.
\end{remark}
\begin{proposition}\label{prop:Th3q0decomp}
 Let $q\geq 4$. If $q$ is even, then $G_q$ does not admit a decomposition $G_q=DD^T$, where $D\in M_q(\Z)$.
\end{proposition}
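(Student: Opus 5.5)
The plan is to take determinants. If $G_q=DD^T$ with $D\in M_q(\Z)$, then $\det(G_q)=\det(D)^2$ is a perfect square. By \Cref{lem:Th3qdet}, for even $q$ we have $\det(G_q)=5\Fib_q^2$. This is positive, since $G_q$ is positive definite, or simply since $\Fib_q\neq 0$ for $q\geq 1$. So it suffices to show that $5\Fib_q^2$ is never a perfect square.

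That step is immediate. If $5\Fib_q^2=m^2$ with $\Fib_q\neq 0$, then $5=(m/\Fib_q)^2$ is the square of a rational number. Since $\sqrt5$ is irrational, this is impossible. Equivalently, the $5$-adic valuation of $5\Fib_q^2$ is odd, while that of any nonzero square is even. Hence no integral square $D$ satisfies $G_q=DD^T$.

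Two remarks on scope. The statement does not assume $3\nmid q$, and the argument does not need it: \Cref{lem:Th3qdet} gives $\det(G_q)=\Luc_{2q}-2$ via the Matrix--Tree theorem for every $q\geq 3$, and the parity claim in that lemma is not used. The only point needing care is to use the even case of the identity, $\Luc_{2q}-2=\Luc_q^2-4=5\Fib_q^2$. This follows from $\Luc_{2q}=\Luc_q^2-2(-1)^q$ and $\Luc_q^2-5\Fib_q^2=4(-1)^q$, both already recorded in the proof of \Cref{lem:Th3qdet}. There is no real obstacle here; this is the classical sliceness obstruction ($T=0$) reduced to the non-squareness of the knot determinant.

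For orientation: when $q$ is odd the determinant is $\Luc_q^2$, a square, so this argument fails, consistent with \Cref{rem:Thpq-even}.
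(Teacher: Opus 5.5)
Your proof is correct and follows essentially the same route as the paper: if $G_q=DD^T$ then $\det(G_q)=\det(D)^2$ would be a square, but for even $q$ the determinant is $5\Fib_q^2$, whose $5$-adic valuation is odd. Your observation that the hypothesis $3\nmid q$ is not needed here is also accurate, since the determinant formula does not depend on it.
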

\begin{proof}
 Suppose $G_q$ did admit such a decomposition. Then $\det(G_q)=(\det(D))^2$ is a perfect square. Since $q$ is even, \Cref{lem:Th3qdet} gives $\det(G_q)=5\Fib_q^2$. But the exponent of $5$ in the prime factorization of $5\Fib_q^2$ is odd, so $5\Fib_q^2$ is not a square, contradicting $G_q=DD^T$.
\end{proof}
\begin{remark}[Cyclic branched covers of the figure-eight knot]\label{rem:Thq-figure8}
The double branched cover $\SigmaTwo(\Th(3,q))$ is homeomorphic to the $q$-fold cyclic branched cover $\Sigma_q(4_1)$ of the figure-eight knot (see~\cite[\S4.7]{DPS26}; this also follows from~\cite[Prop.~2.1]{AMMMPS21}). For even $q\geq4$, the determinant computation in \Cref{prop:Th3q0decomp} shows that $G_q\neq DD^T$, so by the argument in \Cref{rem:T0}, $\Sigma_q(4_1)$ does not bound a rational homology ball. When moreover $3\nmid q$, \Cref{prop:Th3qdecomp,prop:Th3q0decomp} together with the proof of \Cref{thm:decomp} show that $\Sigma_q(4_1)$ does not bound a positive-definite $4$-manifold whose intersection lattice is of half-integer surgery type. Since $4_1$ is amphichiral, $\Sigma_q(4_1)\cong-\Sigma_q(4_1)$, so the same holds for negative-definite $4$-manifolds. In contrast, $\Sigma_2(4_1)$ bounds such a manifold by \Cref{thm:KMRS1}, since $4_1$ is H-slice in $\CP$. On the other hand, when $q$ is an odd prime power not divisible by $3$, $\Sigma_q(4_1)$ bounds a rational homology ball~\cite[Thm.~1.1 and Prop.~2.1]{AMMMPS21}. So by \Cref{rem:T0}, our obstruction cannot detect whether $\Th(3,q)$ is BPH-slice for these $q$, no matter which filling we use.
\end{remark}
We now prove \Cref{thm:Th3q}. 

\begin{proof}[Proof of \Cref{thm:Th3q}]
Suppose that $q\geq 4$ is even and not divisible by $3$, and define $K=\Th(3,q)$. By \Cref{lem:Th3qgoeritz}, $G_q$ is the intersection form of a positive-definite filling of $-\Sigma_2(K)$. Since $\det(G_q)$ is odd, \Cref{prop:finiteob} applies. By \Cref{prop:Th3qdecomp}, all nonzero candidates in $\mathcal{T}(G_q)$ fail condition~\ref{cond:finiteob:embedding}. By \Cref{prop:Th3q0decomp}, the candidate $T=0$ does too. It follows from \Cref{prop:finiteob} that $G_q$ does not admit an integral decomposition $G_q=2CC^T+DD^T$ where $C\in M_{q\times n}(\Z)$ and $D\in M_q(\Z)$. Since $\sigma(K)=0$, we apply \Cref{thm:decomp} to conclude that $\uCP(K)=\infty$. Hence $K$ is not BPH-slice.

Since $K$ is isotopic to $-\Kmirror$, mirroring exchanges $\CP$ and $\CPbar$, and reversing knot orientation does not change projective slicing numbers, we have $\uCPbar(K)=\infty$. Since the determinants $5\Fib_{q}^2$ are strictly increasing with $q$, these knots are pairwise distinct and form an infinite family of rationally slice knots. 
\end{proof}
\subsection{BPH-sliceness of other rationally slice Turk's head knots}\label{subsec:Thpq}
When $p$ is odd, $\Th(p,q)$ is rationally slice, so it may be of interest to determine when these knots are BPH-slice. 
When $q$ is even, this question appears to be approachable using \Cref{prop:finiteob} applied to the Goeritz matrix $G_{p,q}$ from the standard alternating diagram of $\Th(p,q)$, although it does not follow directly from the arguments in the $p=3$ case. We make the following conjecture: 
\begin{conj}\label{conj:Thpq}
Let $p\geq3$ be odd and $q\geq4$ be even, with
$\gcd(p,q)=1$. Then $\Th(p,q)$ is not BPH-slice.
\end{conj}
\begin{remark}\label{rem:Thpq-even}
When $q$ is odd, no general analogue of \Cref{thm:Th3q} holds (e.g., $\Th(3,5)=10_{123}$ and $\Th(5,3)=12a_{1019}$ are smoothly slice \cite{KnotInfo}). Moreover, \Cref{prop:Gpq-factorization} shows that the Goeritz matrix $G_{p,q}$, defined in \Cref{lem:Gpq-goeritz}, admits an integral factorization $G_{p,q}=DD^T$ with $D$ square. Thus $T=0$ satisfies both conditions of \Cref{prop:finiteob}, so our lattice obstruction applied to $G_{p,q}$ cannot answer the question in this case.
\end{remark}
Although we are primarily interested in the case where $q$ is even, to make this conjecture more approachable we compute the Goeritz matrix $G_{p,q}$ of the standard alternating diagram of $\Th(p,q)$ for odd $p$ and any $q\geq 3$ (\Cref{lem:Gpq-goeritz}), together with its determinant (\Cref{thm:Gpq-det}) in Appendix~\ref{app:Gpq}. 
For $p>3$, the larger rank of $G_{p,q}$ allows more nonzero candidates than in the proof of \Cref{thm:Th3q}. Nevertheless, all diagonal entries are $3$ or $4$, so every candidate has $T_{ii}\in \{0,1\}$, and \Cref{prop:cor:D2-auto} shows that condition~\ref{cond:finiteob:gram} never provides an additional obstruction to BPH-sliceness. The small diagonal entries of $G_{p,q}$ contribute to our suspicion that this conjecture is tractable in the even-$q$ case using the same techniques as when $p=3$. 

In \Cref{thm:Gpq-det} we see that when $q$ is even, $\det(\Th(p,q))$ is the $p$th Pell number times some perfect square (cf.~\cite{Sau26}). Since $\Pell_7=169$ is the only square Pell number greater than $1$, $\det(G_{p,q})$ is a perfect square for even $q$ exactly when $p=7$. So excluding the candidate $T=0\in\mathcal T(G_{7,q})$ using condition~\ref{cond:finiteob:embedding} requires a lattice nonembedding argument rather than just the square determinant condition.
\section{Obstructing two families of 2-bridge knots}\label{sec:2bridge}
In this section, we apply our lattice obstruction to two families of 2-bridge knots. We are motivated by work of Daemi, Imori, Sato, Scaduto, and Taniguchi, who define a knot concordance invariant $r_0$ from instanton Floer theory (adapting the invariants $r_s$ of~\cite{NST24}) and use it to obstruct H-sliceness in definite $4$-manifolds: if $\sigma(K)=0$ and $r_0(\Kmirror)<\infty$, then $K$ is not H-slice in any positive-definite closed smooth $4$-manifold with $b_1=0$~\cite[Prop.~6.6, applied to $\overline K$]{DISST25}. We show that \Cref{thm:decomp} obstructs BPH-sliceness in settings where $r_0$ does not obstruct $0$-bipolarity, and that it can be used to recover consequences of this $r_0$ obstruction.

We obstruct each member of the following family of 2-bridge knots from being BPH-slice, and we show that $r_0$ does not obstruct them from being $0$-bipolar. We discuss the significance of this distinction further in \Cref{sec:exotica}.
\begin{family}\label{fam:Lh}
For all odd $h\geq 3$, define the 2-bridge knot $L_h$ as
\begin{equation}\label{eq:2bridge-Lh}
L_h=K(8h^2+4h+1,8h^2-4h+1).
\end{equation}
Using the continued-fraction expansion $(8h^2+4h+1)/(8h^2-4h+1)=[2^{[h-1]},3,2h+1,2]$ gives the following 2-bridge diagram for $L_h$:
\begin{figure}[H]
    \centering
    \resizebox{!}{2.5cm}{\input{figures/Lh}}
    \captionsetup{skip=-4pt}
    \caption{$L_h$}
    \label{fig:Lh}
\end{figure}
\end{family}
We restate \Cref{thm:intro:2bridgenew} from the introduction here and sketch its proof. The full proof is contained in \Cref{subsec:new2b}.
\begin{restatement}{thm:intro:2bridgenew}\label{thm:2bridgenew}
For every odd $h\geq 3$, the 2-bridge knot $L_h=K(8h^2+4h+1,8h^2-4h+1)$
is not H-slice in $\#^n\CP$ for any $n\geq0$. Moreover, $r_0(L_h)=r_0(\overline{L_h})=\infty.$
\end{restatement}
\begin{sketch}
We construct a positive-definite filling of $-\SigmaTwo(L_h)$ with intersection matrix $A_h$ of rank $2h+1$ and odd determinant (\Cref{prop:2bridge-Lh-form}). Then we show that if $A_h-2T=DD^T$, the consecutive $2h-2$ diagonal $2$'s of $A_h$ force the first row $\rho_1$ of $D$ to be supported only on the final two coordinates. When $h$ is odd, a contradiction in the parity of $\rho_1\cdot \rho_2$ shows that every candidate $T\in\mathcal T(A_h)$ fails condition~\ref{cond:finiteob:embedding} of \Cref{prop:finiteob}. Since $\sigma(L_h)=0$, \Cref{thm:decomp} gives $\uCP(L_h)=\infty$. \Cref{prop:Lh-r0} gives $r_0(L_h)=r_0(\overline{L_h})=\infty$.
\end{sketch}

This second family of 2-bridge knots is treated by \cite[Thm.~1.15]{DISST25}. Using $r_0$, they show that for each $K$ in this family, $K$ is not $0$-positive. We recover the consequence that $K$ is not H-slice in $\#^n\CP$ for any $n$. 
\begin{family}\label{fam:DISST}
Following \cite[Thm.~1.15]{DISST25}, for all $m\geq 1$ and $n\geq 0$, define the 2-bridge knot 
\begin{equation}\label{eq:2bridge-Kmn}
K_{m,n}=K(212mn-68n+53,106m-34).
\end{equation}
 Using the continued-fraction expansion $(212mn-68n+53)/(106m-34)=[2n+1,2^{[2m-2]},4,5,4]$ (where $2^{[2m-2]}$ means $2m-2$ consecutive entries equal to $2$, with $2^{[0]}$ denoting no entries) gives the following 2-bridge diagram for $K_{m,n}$. When $n=0$, notice that $K_{m,0}=K(53,106m-34)\simeq K(53,19)$ for every $m$, and the corresponding diagram is isotopic to the one given by $[3,5,4]=53/19$.
\begin{figure}[H]
\centering
\begin{subfigure}[b]{0.9\textwidth}
    \centering
    \resizebox{!}{2.75cm}{\input{figures/Kmn}}
    \captionsetup{skip=-15pt}
    \caption{}
    \label{fig:Kmn}
\end{subfigure}

\vspace{10pt}

\begin{subfigure}[b]{0.6\textwidth}
    \centering
    \resizebox{!}{1.5cm}
    {\input{figures/K354}}
    \caption{}
    \label{fig:K354}
\end{subfigure}
\caption{(A) $K_{m,n}$ when $n\geq 1$; (B) $K_{m,0}=K(53,19)$}
\end{figure}
\end{family}
\begin{theorem}\label{thm:2bridgeDISST}
For every $m\geq1$ and $n\geq0$, the 2-bridge knot $K_{m,n}$ is not H-slice in $\#^j\CP$ for any $j\geq0$. 
\end{theorem}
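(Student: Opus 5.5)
The plan is to run the four-step outline of \Cref{subsec:sketch} with the filling $W$ taken to be a linear plumbing. Since $\SigmaTwo(K_{m,n})\cong S^3_{p/q}(U)$ with $p/q=[2n+1,2^{[2m-2]},4,5,4]$, the manifold $-\SigmaTwo(K_{m,n})$ is $S^3_{p/(p-q)}(U)$, up to the usual lens space identification. It therefore bounds the positive-definite linear plumbing on the Riemenschneider dual expansion of $p/(p-q)$. Computing the dual with the point diagram of $(2n,1^{[2m-2]},3,4,3)$, I expect
\[
\frac{p}{p-q}=[\,2^{[2n-1]},\,2m+1,\,2,\,3,\,2,\,2,\,3,\,2\,].
\]
I will double-check this against $\det=p=212mn-68n+53$ and the $n=0$ case $K(53,19)$, where the first block is absent and the expansion is adjusted accordingly. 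Let $A$ be the associated tridiagonal matrix, with off-diagonal entries $-1$. The plumbing is simply connected, so $\det A=p$ is odd and \Cref{prop:finiteob} applies. Its hypotheses also fit \Cref{prop:cor:D2-auto}, since there is exactly one diagonal entry $\ge5$. So it suffices to show that every $T\in\mathcal T(A)$ fails condition~\ref{cond:finiteob:embedding}.

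Next I restrict the candidates. On every weight-$2$ vertex the bound in \Cref{def:candidates} forces $T_{ii}=0$, and positive semidefiniteness then kills the whole row and column. Hence $T$ is supported on the three vertices of weights $2m+1$, $3$, $3$. There $T_{vv}\le m$ at the $(2m+1)$-vertex $v$, $T_{ii}\in\{0,1\}$ at the two $3$-vertices, and the off-diagonal entries satisfy $T_{ij}^2\le T_{ii}T_{jj}$. Write $A-2T=DD^T$ with rows $\rho_i$ of $D$. By \Cref{prop:Dinvert}, $D$ is invertible over $\F_2$.

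The rows for the initial chain of $2n-1$ weight-$2$ vertices are consecutive square-two vectors with pairwise products $-1$. By \Cref{prop:2bridge-root-count} they normalize to $e_1-e_2,\dots,e_{2n-1}-e_{2n}$. The same treatment applies to the pairs of $2$'s in the tail. I would then analyze the remaining rows case by case:
\begin{enumerate}
\item $T=0$ is the classical sliceness case. I expect either that $p$ is not a square, or else a Lisca-style count of the linear chain showing the standard embedding fails. Since these knots are known to be nonslice, this should be routine.
\item If a $3$-vertex has $T_{ii}=1$, its row becomes a unit vector $\pm e_a$. Orthogonality to the neighboring square-two rows, together with the $\pm1$ pairings to those neighbors, should give parity contradictions similar to the $\rho_1\cdot\rho_2$ parity argument sketched for $L_h$.
\item If $T_{vv}=k$ with $1\le k\le m$, then $\rho_v\cdot\rho_v=2m+1-2k$. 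The row $\rho_v$ must pair to $-1$ with $e_{2n-1}-e_{2n}$ and to $0$ with all earlier chain vectors, which forces $\rho_v=e_{2n}+(\text{terms off }e_1,\dots,e_{2n})$ up to sign. The rest of the tail then has too few coordinates: $D$ is square, and the $2n$ coordinates are used up by the chain together with the coupling to $v$.
\end{enumerate}
In every case I would first apply the cheap filter that $\det(A-2T)$ must be an odd square. This determinant is linear in each $T_{ii}$ and computable by the continuant recursion, which should eliminate most cases outright.

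The main obstacle I anticipate is the family of candidates with nonzero off-diagonal entries $T_{ij}$ between $v$ and the $3$-vertices. These entries are allowed when both diagonals are positive, and they alter entries of $A-2T$ between non-adjacent vertices, so $A-2T$ is no longer a plumbing matrix and the determinant filter is less transparent. I would first try to rule these out by reducing the relation $A-2T=DD^T$ mod $2$. That reduction equals $A$ mod $2$, so the same characteristic-vector trick as in \Cref{prop:Th3qdecomp} applies: take the sum of the appropriate rows of $D$, which must have all coordinates odd, and compare its square with the entry sum of $A-2T$. Any leftover cases would be handled by the coordinate-counting argument in case (3).
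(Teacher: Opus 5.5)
Your setup is the right one: the linear plumbing on $p/(p-q)$, the vanishing of $T$ on weight-$2$ vertices, and normalizing runs of square-two rows. But the plan never closes, and one step is wrong.

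\begin{itemize}
\item The dual expansion is $[2^{[2n-1]},2m+1,2,3,2,2,3,2,2]$, with two trailing $2$'s, so $r=2n+7$. Your version drops a $2$; for $m=n=1$ it has determinant $145\neq 197$.
\item Case~(1) is only asserted. The square-determinant test cannot settle it in general: for $m=2$, $n=11$ one gets $p=3969=63^2$. The ``Lisca-style count'' you mention is exactly the argument needed, but you do not carry it out.
\item The normalization in case~(3) is false. Orthogonality to $e_1-e_2,\dots,e_{2n-2}-e_{2n-1}$ and pairing $-1$ with $e_{2n-1}-e_{2n}$ only give $\rho_v=(k,\dots,k,k+1,\ast)$. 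The option $k=-1$, i.e.\ $\rho_v=-(e_1+\cdots+e_{2n-1})+\cdots$, is not excluded whenever $2n-1\le 2m+1-2T_{vv}$. For $n\ge 2$ this is not of the form $\pm e_{2n}+\cdots$.
\item The candidates with off-diagonal entries are deferred to a characteristic-vector computation. You have not shown that it yields a contradiction for a linear chain.
\end{itemize}

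The missing idea is that the values of $T$ do not matter. Each maximal run of diagonal $2$'s in $A$ gives a block of square-two rows of $D$ whose pairings are exactly those of $A$. Rows from different runs are orthogonal, because their indices differ by at least $2$ and the corresponding rows of $T$ vanish. By \Cref{prop:2bridge-root-count}, different runs then have disjoint supports, and a run of length $k$ occupies exactly $k+1$ coordinates. The runs have lengths $2n-1,1,2,2$ (or $1,2,2$ when $n=0$). They therefore require $2n+2+3+3=2n+8=r+1$ coordinates, which is impossible for $D\in M_r(\Z)$.

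This rules out every candidate at once, including $T=0$ and those with off-diagonal entries, with no determinant filter or case split. It is exactly the paper's argument in \Cref{prop:Kmn-lattice}.
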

\begin{sketch}
We construct a positive-definite filling of $-\SigmaTwo(K_{m,n})$ with intersection matrix $B_{m,n}$ of rank $2n+7$ and odd determinant (\Cref{prop:2bridge-Kmn-form}). If $B_{m,n}-2T=DD^T$, then when $(B_{m,n})_{ii}=2$, the $i$th row of $D\in M_{2n+7}(\Z)$ has square two, and we show that these square-two rows would require $2n+8$ coordinates. This contradiction shows that every candidate $T\in\mathcal T(B_{m,n})$ fails condition~\ref{cond:finiteob:embedding} of \Cref{prop:finiteob}. Since $\sigma(K_{m,n})=0$, \Cref{thm:decomp} gives $\uCP(K_{m,n})=\infty$. The full proof is contained in \Cref{subsec:rec2b}.
\end{sketch}
\subsection{Linear plumbings}\label{subsec:2bridge-plumbings}
Recall that $K(p,q)$ is a knot when $p$ is odd and coprime to $q$, and that we follow the conventions of~\cite{DISST25}, so
\[
\SigmaTwo(K(p,q))\cong S^3_{p/q}(U)\quad\text{and}\quad -\SigmaTwo(K(p,q))\cong S^3_{p/(p-q)}(U).
\]
\begin{definition}\label{def:linear-plumbing}
For integers $a_1,\ldots,a_r\geq2$, let $\Lambda[a_1,\ldots,a_r]$ be the positive-definite, symmetric, tridiagonal matrix with diagonal entries $a_1,\ldots,a_r$ and adjacent off-diagonal entries $-1$:
\[
\Lambda[a_1,\ldots,a_r]=
\begin{pmatrix}
a_1&-1&0&\cdots&0\\
-1&a_2&-1&\ddots&\vdots\\
0&-1&\ddots&\ddots&0\\
\vdots&\ddots&\ddots&a_{r-1}&-1\\
0&\cdots&0&-1&a_r
\end{pmatrix}.
\]
If $p/q=[a_1,\ldots,a_r]$, we also write $\Lambda(p,q)=\Lambda[a_1,\ldots,a_r]$. We call $\Lambda(p,q)$ a \emph{linear plumbing matrix} associated to $p/q$.
\end{definition}
\begin{lemma}\label{lem:2bridge-plumbing}
Let $p>q>0$ be coprime. Then $\Lambda(p,q)$ is positive definite, has determinant $p$, and is the intersection form of a simply-connected linear plumbing with boundary $S^3_{p/q}(U)$. 
\end{lemma}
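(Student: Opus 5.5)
This is the standard linear plumbing statement. I would prove its three parts in order: positive definiteness and the determinant first, then the identification of the boundary; simple connectivity is immediate.

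\emph{Determinant and definiteness.} Write $p/q=[a_1,\ldots,a_r]$ with every $a_i\geq 2$. Such an expansion exists and is unique for coprime $p>q>0$: take $a_1=\lceil p/q\rceil$ and recurse on $q/(a_1q-p)$, which is again a fraction greater than $1$ (or we stop when the remainder vanishes). Let $M_k=\Lambda[a_k,\ldots,a_r]$. Expanding the determinant along the first row gives
\[\det M_k=a_k\det M_{k+1}-\det M_{k+2}.\]
This is exactly the recurrence for the numerators of the tails $[a_k,\ldots,a_r]=p_k/q_k$, since $p_k/q_k=a_k-q_{k+1}/p_{k+1}$ and $q_k=p_{k+1}$. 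Inducting from the end, with $\det M_r=a_r$ and $\det M_{r+1}=1$, gives $\det\Lambda(p,q)=p$ and $\det M_2=q$. For positive definiteness, the same induction shows that $\det M_k>\det M_{k+1}>0$, because each tail fraction exceeds $1$ when all $a_i\geq 2$. Thus all trailing principal minors are positive, and Sylvester's criterion (applied with the coordinates reversed) gives definiteness. Alternatively, $\Lambda$ is diagonally dominant with strictly dominant end rows and is irreducible, so it is positive definite.

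\emph{The manifold.} Take the plumbing $P$ over the linear graph with weights $a_1,\ldots,a_r$. It is simply connected, as noted in the plumbing subsection of \Cref{sec:prelim}, and its intersection matrix is $\Lambda[a_1,\ldots,a_r]$ by the convention there. As a Kirby diagram, $P$ is the linear chain of unknots with framings $a_1,\ldots,a_r$, consecutive ones linked once. Now apply slam-dunks starting from the last component: the $a_r$-framed unknot meridian to the $a_{r-1}$-framed one turns that framing into $a_{r-1}-1/a_r$. Continuing up the chain leaves a single unknot with framing $[a_1,\ldots,a_r]=p/q$, so $\partial P\cong S^3_{p/q}(U)$.

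\emph{Main obstacle.} The only real hazard is the sign and orientation conventions. I would have to check that, with the paper's convention that linking edges give off-diagonal entries $-1$ and continued fractions are taken as $[a_1,\ldots]=a_1-1/\cdots$, the slam-dunks produce $+p/q$ surgery rather than its mirror. This is standard (see, e.g., Gompf--Stipsicz, \S5.3). Orienting the components so that the linking numbers are $-1$ makes each slam-dunk subtract the reciprocal, which matches the negative continued fraction convention, so the sign comes out as claimed.
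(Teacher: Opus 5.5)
Your proof is correct, but it takes a more self-contained route than the paper. The paper's argument is three lines. It cites Lisca for the canonical simply-connected negative-definite linear plumbing with intersection form $-\Lambda(p,q)$ and boundary $S^3_{-p/q}(U)$, and reverses orientation. It then reads off $\det\Lambda(p,q)=p$ from the fact that $\Lambda(p,q)$ presents $H_1(S^3_{p/q}(U);\Z)$, a group of order $p$, with the sign of the determinant coming from definiteness. So both definiteness and the boundary identification are outsourced, and the determinant is obtained topologically.

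You instead prove each piece directly:
\begin{itemize}
\item The determinant comes from the tridiagonal recurrence matched against the numerators of the continued-fraction tails.
\item Definiteness comes from Sylvester's criterion applied to the nested trailing minors.
\item The boundary comes from slam-dunking the positively weighted chain.
\end{itemize}
This gives $+p/q$ directly, so you never pass through $S^3_{-p/q}(U)$ and an orientation reversal. What your route buys is independence from the topology for the algebraic claims, plus the extra bookkeeping fact $\det\Lambda[a_2,\ldots,a_r]=q$. What the paper's route buys is brevity.

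One small correction to your last paragraph: the sign worry is not actually a hazard. The slam-dunk rule (an integrally framed $n$ knot with a meridian framed $r$ becomes the knot framed $n-1/r$) does not depend on how the components are oriented. Orientations only affect the signs of the off-diagonal linking entries. On a tree those signs can always be made $-1$ by flipping basis vectors, which does not change the lattice. So the negative continued-fraction convention produces $+p/q$ automatically, whatever orientations you choose.
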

\begin{proof}
Take the canonical simply-connected negative-definite $4$-manifold plumbing with boundary $S^3_{-p/q}(U)$, as in~\cite{Lis07}. Its intersection form is $-\Lambda(p,q)$, so reversing orientation gives a simply-connected positive-definite plumbing with intersection form $\Lambda(p,q)$ and boundary $ S^3_{p/q}(U)$.
Since $\Lambda(p,q)$ presents $H_1(S^3_{p/q}(U);\Z)$, which has order $p$, $\det(\Lambda(p,q))=p$.
\end{proof}
 $\Lambda(p,q)$ represents the positive-definite linear lattice $(\Z^r,\Lambda(p,q))$. When $p/q=[a_1,\dots,a_r]$, the indices $1,\dots,r$ determine the order of the basis elements of the plumbing lattice. 
\begin{corollary}\label{cor:2bridge-plumbing}
 For coprime $p>q>0$ with $p$ odd, $-\SigmaTwo(K(p,q))$ bounds a positive-definite simply-connected manifold with intersection form given by $\Lambda(p,p-q)$, which has $\det(\Lambda(p,p-q))=p$.
\end{corollary}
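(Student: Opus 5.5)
The plan is to reduce the corollary directly to \Cref{lem:2bridge-plumbing}, using the stated convention $-\SigmaTwo(K(p,q))\cong S^3_{p/(p-q)}(U)$. With that convention in hand, only two things remain: justify the orientation-reversal identification, and check that $(p,p-q)$ satisfies the hypotheses of the lemma.

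First, I would confirm the hypotheses. Since $p>q>0$, we have $p>p-q>0$. Since $\gcd(p,q)=1$, also $\gcd(p,p-q)=\gcd(p,q)=1$. So \Cref{lem:2bridge-plumbing} applies to the pair $(p,p-q)$. It gives a simply-connected positive-definite linear plumbing with intersection form $\Lambda(p,p-q)$, determinant $p$, and boundary $S^3_{p/(p-q)}(U)$.

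Second, I would justify $-S^3_{p/q}(U)\cong S^3_{-p/q}(U)\cong S^3_{p/(p-q)}(U)$. The first diffeomorphism is the standard effect of mirroring the unknot. For the second, note that $-p/q$ and $p/(p-q)$ both have numerator $p$, and their denominators satisfy $-q\equiv p-q \pmod p$. Lens-space surgeries on the unknot with the same numerator and denominators congruent mod $p$ give diffeomorphic oriented manifolds; this is the Rolfsen twist, which changes $q$ by multiples of $p$. Combining these with $\SigmaTwo(K(p,q))\cong S^3_{p/q}(U)$ gives $-\SigmaTwo(K(p,q))\cong S^3_{p/(p-q)}(U)$, so the plumbing is a filling of $-\SigmaTwo(K(p,q))$.

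The only real obstacle is the orientation bookkeeping: one must be sure that the sign convention $\SigmaTwo(K(p,q))\cong S^3_{p/q}(U)$ matches the orientation used for the plumbing boundary in \Cref{lem:2bridge-plumbing}. I would double-check this on $K(3,1)$, the right-handed trefoil. There $-\SigmaTwo\cong S^3_{3/2}(U)$, and the lemma gives the form $\Lambda[2,2]$ with determinant $3$. This is consistent with $\sigma=-2$ forcing $\SigmaTwo$ itself to bound the negative-definite $E$-type form $-\Lambda[2,2]$. The oddness of $p$ is used only so that $K(p,q)$ is a knot and the determinant is odd, as needed later for \Cref{prop:finiteob}.
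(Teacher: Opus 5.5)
Your proposal is correct and follows the paper's route. The paper also obtains the corollary by applying \Cref{lem:2bridge-plumbing} to the coprime pair $(p,p-q)$ and using the identification $-\SigmaTwo(K(p,q))\cong S^3_{p/(p-q)}(U)$. The paper simply records that identification as part of its convention; your mirroring-plus-Rolfsen-twist argument supplies its justification. One minor slip: in your trefoil check, $\Lambda[2,2]$ is the $A_2$ form, not an $E$-type form.
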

We will use the following proposition to normalize square-two rows in our lattice embedding arguments in the proofs of \Cref{thm:2bridgeDISST} and \Cref{thm:2bridgenew}, as well as the proofs of \Cref{prop:JqT0} and \Cref{prop:13a4237sm-CPbar}. Recall that $2^{[k]}$ denotes $k$ consecutive entries equal to $2$. 
\begin{proposition}\label{prop:2bridge-root-count}
Let $D\in M_r(\Z)$ have odd determinant, and suppose that $v_1,\ldots,v_k$ are rows of $D$ with Gram matrix $\Lambda[2^{[k]}]$, where $k\geq1$. Then after permuting and changing signs of the standard basis vectors, we have
\[
v_i=e_i-e_{i+1}\qquad(1\leq i\leq k).
\]
In particular, $D$ has at least $k+1$ distinct columns containing nonzero entries, so $r\geq k+1$.
Moreover, orthogonal square-two rows of $D$ have disjoint supports. Consequently, mutually orthogonal blocks of square-two rows can be normalized in this way on disjoint sets of coordinates.
\end{proposition}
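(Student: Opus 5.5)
The plan is to induct on $k$, using only the integrality of the rows $v_i\in\Z^r$ and the fact that $D$ is nonsingular. Nonsingularity holds because $\det(D)$ is odd, hence nonzero, so the rows of $D$ are linearly independent and in particular pairwise distinct up to sign.

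First I would record the elementary facts about square-two vectors, as in Claim~1 of the proof of \Cref{thm:decomp}. A vector of square two in $(\Z^r,I)$ has the form $\pm e_a\pm e_b$ with $a\neq b$. For two such vectors $x,y$, the pairing $x\cdot y$ is determined by how their supports meet:
\begin{itemize}
\item if the supports are disjoint, then $x\cdot y=0$;
\item if the supports share exactly one coordinate, then $x\cdot y=\pm1$;
\item if the supports coincide, then $x\cdot y\in\{0,\pm2\}$, with $\pm2$ exactly when $x=\pm y$.
\end{itemize}
Rows of $D$ cannot satisfy $x=\pm y$, by linear independence. The remaining case is equal supports with $x\cdot y=0$, that is, $x=e_a+e_b$ and $y=\pm(e_a-e_b)$. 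Then $x\equiv y\pmod 2$, contradicting that the rows of $D$ are linearly independent mod $2$; this holds because $\det D$ is odd. So distinct square-two rows either have disjoint supports (pairing $0$) or share exactly one coordinate (pairing $\pm1$). This settles the \emph{moreover} statement: orthogonal square-two rows of $D$ have disjoint supports.

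For the normalization, I would induct on $k$. For $k=1$, permute coordinates and flip signs so that $v_1=e_1-e_2$. Assume $v_1,\dots,v_{k-1}$ have been normalized as $v_i=e_i-e_{i+1}$. Since $v_k\cdot v_{k-1}=-1$, the support of $v_k$ meets $\{k-1,k\}$ in exactly one coordinate. Since $v_k\cdot v_i=0$ for $i<k-1$, its support is disjoint from $\{1,\dots,k-1\}$ whenever $k\geq3$. The main obstacle is excluding that the shared coordinate is $k-1$ rather than $k$; I would handle this by cases.
\begin{itemize}
\item \emph{Case $k\ge3$.} Coordinate $k-1$ lies in the support of $v_{k-2}$, so a shared coordinate $k-1$ would force $v_k\cdot v_{k-2}\neq0$, a contradiction. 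Hence the shared coordinate is $k$.
\item \emph{Case $k=2$.} Here the two choices are symmetric. If $v_2$ meets coordinate $1$, I would swap $e_1$ and $e_2$ and flip signs to reduce to the standard form.
\end{itemize}
In either case $v_k=\pm e_k\pm e_j$ with $j\notin\{1,\dots,k\}$. The condition $v_k\cdot v_{k-1}=-1$ fixes the sign of the $e_k$ coefficient to be $+1$. Relabeling the coordinate $j$ as $k+1$ and flipping the sign of $e_{k+1}$ if needed gives $v_k=e_k-e_{k+1}$. These operations do not disturb $v_1,\dots,v_{k-1}$, since those rows do not involve coordinate $j$.

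The normalized rows are supported on $k+1$ coordinates, so $r\geq k+1$. For mutually orthogonal blocks, the disjoint-support fact shows that different blocks use disjoint coordinate sets. The coordinate permutations and sign changes for each block can therefore be performed independently.
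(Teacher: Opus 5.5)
Your proposal is correct and follows essentially the same route as the paper: mod-$2$ linear independence of the rows of $D$ (from $\det D$ odd) rules out orthogonal square-two rows with equal supports, and the chain $v_1,\dots,v_k$ is then normalized one row at a time. Your explicit treatment of the $k=2$ symmetry makes precise a relabeling the paper leaves implicit. One small correction to your opening framing: the argument needs the odd determinant, not mere nonsingularity, since $e_1+e_2$ and $e_1-e_2$ are orthogonal, independent over $\Q$, and have the same support.
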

\begin{proof}
Since $v_i\cdot v_i=2$ for all $1\leq i\leq k$, we use the following observation about orthogonal square-two vectors from the proof of \Cref{thm:decomp}: each $v_i$ has exactly two nonzero coordinates, both equal to $\pm1$. When $v_i\cdot v_j=0$, they either have disjoint supports or have the same support. In the latter case, $v_i\equiv v_j\pmod2$. But $\det(D)$ is odd, so $D$ is invertible over $\F_2$ and its rows are linearly independent modulo $2$. In particular, two distinct rows cannot be congruent modulo $2$. Thus $v_i$ and $v_j$ have disjoint supports.

Since $v_i\cdot v_j=0$ when $|i-j|\geq2$, it follows that nonconsecutive rows have disjoint supports. On the other hand, $v_i\cdot v_{i+1}=-1$ implies that consecutive rows share exactly one coordinate. Since nonconsecutive rows have disjoint supports, the support of $v_{i+1}$ contains exactly one coordinate outside of the union of the supports of $v_1,\ldots,v_{i}$.

Label these coordinates so that $v_i$ is supported on the $i$th and $(i+1)$st coordinates. Choose their signs so that $v_1=e_1-e_2$. Once $v_i=e_i-e_{i+1}$, the pairing $v_i\cdot v_{i+1}=-1$ forces the coefficient of $e_{i+1}$ in
$v_{i+1}$ to be $1$. Then we choose the sign of its second coordinate so that $v_{i+1}=e_{i+1}-e_{i+2}$. Continue until we have $v_k=e_k-e_{k+1}$.

The union of the supports therefore has cardinality $k+1$, so $D$ has at least $k+1$ distinct columns with nonzero entries and we must have $r\geq k+1$.

Finally, rows belonging to different, mutually orthogonal blocks have disjoint supports by the first paragraph of the argument, so the coordinate changes for each block can be made independently.
\end{proof}
\subsection{Proof of \texorpdfstring{\Cref{thm:2bridgenew}}{Theorem C}}\label{subsec:new2b}
First we gather the remaining ingredients needed in our proof. 
\begin{proposition}\label{prop:Lh-properties}
For every odd $h\geq3$, $\sigma(L_h)=0$ and $r_0(L_h)=r_0(\overline{L_h})=\infty$.
\end{proposition}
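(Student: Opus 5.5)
The proof has two independent parts. The signature is computed combinatorially from the continued fraction. The statement $r_0(L_h)=r_0(\overline{L_h})=\infty$ is deferred to the 2-bridge formulas of Daemi--Scaduto, carried out in Appendix~\ref{app:Lh-r0}.

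\emph{Signature.} I would use the expansion $p/q=(8h^2+4h+1)/(8h^2-4h+1)=[2^{[h-1]},3,2h+1,2]$ and the alternating diagram it gives. Every term is at least $2$, so the numerator closure of the rational tangle $\sigma_2^{a_1}\sigma_1^{a_2}\cdots$ with all $a_i\geq 2$ is alternating and reduced. Its Tait graphs are then read off directly.
\begin{itemize}
\item One checkerboard coloring yields a definite Goeritz matrix of linear plumbing type, equal to $\Lambda[2^{[h-1]},3,2h+1,2]$, or the matrix for the dual expansion of $p/(p-q)$.
\item By Gordon--Litherland, $\sigma(L_h)=\operatorname{sign}G-\mu$, where $\mu$ is the sum of $\eta(c)$ over the type II crossings.
\end{itemize}
For alternating diagrams, the type II crossings are governed by the orientation. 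So the main step is to orient $L_h$ and count, in each twist box $a_i$, whether its crossings are type I or type II.

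An alternative route avoids tracking orientations through the picture. For 2-bridge knots there is the classical formula of the form
\[
\sigma(K(p,q))=\sum_{k=1}^{p-1}(-1)^{\lfloor kq/p\rfloor},
\]
up to the sign convention that makes the right-handed trefoil have signature $-2$. Equivalently, one can use the expansion of $p/q$ with even terms, where the signature is minus the sum of the signs of the entries at odd positions. I would try to find an even continued fraction for $p/q$ or for $p/(q\pm p)$. Since $p$ and $q$ are both odd and $p-q=8h$, the ratio $p/(p-q)=(8h^2+4h+1)/(8h)$ is promising. Its quotient is $h$, which is odd, so I would shift to $h\pm1$ to get even terms: $(8h^2+4h+1)/(8h)=(h+1)-(4h-1)/(8h)$. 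Iterating this yields a short even expansion of length $2$ or $4$ with alternating signs, whose odd-position signs cancel and give $\sigma=0$. I expect this bookkeeping to be the only real obstacle: the convention check with the trefoil $K(3,1)$ and the parity of $h$ must be right.

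\emph{Invariant $r_0$.} For the $r_0$ claim I would cite \Cref{prop:Lh-r0} in Appendix~\ref{app:Lh-r0}. That appendix applies the formulas of \cite[\S3.1]{DS24} as in \cite[\S6.4]{DISST25} to both $L_h$ and its mirror $K(p,p-q)$, and shows that the relevant obstruction sets are empty, so both invariants are infinite.
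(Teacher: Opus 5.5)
Your proposal is correct in substance. The $r_0$ half is exactly what the paper does: it cites \Cref{prop:Lh-r0}. Your route to $\sigma(L_h)=0$ is different, though.

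The paper obtains the signature as the $\omega=-1$ case of \Cref{prop:Lh-LT-signature}:
\begin{itemize}
\item it starts from the even expansion $(8h^2+4h+1)/(8h)=[h+1,2,-2h,-2]$ and writes down the Seifert matrix $F_h$;
\item it computes $\Delta_{L_h}$ and observes that, in the variable $x=2-t-t^{-1}$, this is a quadratic with negative discriminant, so it has no roots on $S^1$;
\item it concludes that the entire Levine--Tristram signature function vanishes.
\end{itemize}
Your first step $(h+1)-(4h-1)/(8h)$ continues to exactly this expansion, and reading $\sigma$ off it directly is shorter. What the paper's detour buys is $\sigma_{LT}\equiv0$, which it reuses in \Cref{prop:exotica-invariants}. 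Working with $p/(p-q)$, which gives the mirror, is harmless since the target value is $0$.

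Your reading-off step needs a correction. The rule you quote (minus the sum of the signs in odd positions) is not valid in this convention: for $[2,2]=3/2$, a trefoil, it would give an odd signature. Also, the signs of $[h+1,2,-2h,-2]$ are $(+,+,-,-)$, not alternating.

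The correct rule is as follows. $F_h+F_h^T$ is the tridiagonal matrix with diagonal $(h+1,2,-2h,-2)$ and off-diagonal entries $1$. Every diagonal entry has absolute value at least $2$, so each ratio of consecutive leading minors has the sign of the corresponding diagonal entry. Hence the signature is the sum of the signs of \emph{all} the entries: $1+1-1-1=0$. Concretely, the leading minors are $h+1$, $2h+1$, $-(4h^2+3h+1)$, $8h^2+4h+1$, with two sign changes. Your odd-position count happens to give $0$ here as well, so the conclusion stands, but the justification should use this rule.

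Finally, your summary of the appendix is slightly off: it does not show that ``obstruction sets are empty.'' Instead it exhibits $\alpha\in C_{-1}$ with $d\alpha=\delta_2(1)$ and $\deg_I(\alpha)<0$. This makes the infimum in the formula for $r_0$ equal to $-\infty$. Since you only cite the result, this does not affect your proof.
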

\begin{proof}
The signature is the ordinary-signature case of \Cref{prop:Lh-LT-signature}, and the values of $r_0$ are \Cref{prop:Lh-r0}.
\end{proof}
\begin{proposition}\label{prop:2bridge-Lh-form}
For odd $h\geq3$, the rank-$(2h+1)$ matrix
\begin{equation}\label{eq:2bridge-Ah}
A_h=\Lambda[h+1,3,2^{[2h-2]},3]
\end{equation}
is the intersection form of a simply-connected positive-definite filling of $-\SigmaTwo(L_h)$. Moreover,
\[
\det(A_h)=\det(L_h)=8h^2+4h+1.
\]
\end{proposition}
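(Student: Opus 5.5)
The plan is to identify $A_h$ with the linear plumbing matrix $\Lambda(p,p-q)$ from \Cref{cor:2bridge-plumbing}, where $p=8h^2+4h+1$ and $q=8h^2-4h+1$. Note that $p-q=8h$. So it suffices to check three things:
\begin{itemize}
\item $p$ is odd and $\gcd(p,8h)=1$, so that $L_h=K(p,q)$ is a knot and the corollary applies;
\item $p/(8h)=[h+1,3,2^{[2h-2]},3]$;
\item this expansion has all entries at least $2$, so that $\Lambda(p,p-q)=\Lambda[h+1,3,2^{[2h-2]},3]=A_h$.
\end{itemize}
The first is immediate: $p$ is odd, $p\equiv 1 \pmod h$, and $p\equiv 4h+1$ is odd modulo $8$, so no prime dividing $8h$ divides $p$. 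The third is clear since $h+1\geq 4$. The rank is $1+1+(2h-2)+1=2h+1$.

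The main step is evaluating the continued fraction from the inside out. I first claim by induction on $k\geq 0$ that
\[
[2^{[k]},3]=\frac{2k+3}{2k+1}.
\]
The base case is $[3]=3/1$. For the inductive step,
\[
2-\frac{2k+1}{2k+3}=\frac{2k+5}{2k+3}.
\]
Taking $k=2h-2$ gives $[2^{[2h-2]},3]=(4h-1)/(4h-3)$. Then
\[
[3,2^{[2h-2]},3]=3-\frac{4h-3}{4h-1}=\frac{8h}{4h-1},
\]
and finally
\[
[h+1,3,2^{[2h-2]},3]=h+1-\frac{4h-1}{8h}=\frac{8h^2+4h+1}{8h}=\frac{p}{p-q}.
\]
Since a fraction $p/(p-q)>1$ has a unique expansion with all entries $\geq 2$ in the convention of \Cref{subsec:conv}, this gives $\Lambda(p,p-q)=A_h$.

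Now \Cref{cor:2bridge-plumbing} (via \Cref{lem:2bridge-plumbing}) says that $-\SigmaTwo(L_h)\cong S^3_{p/(p-q)}(U)$ bounds a simply-connected positive-definite plumbing with intersection form $A_h$ and $\det(A_h)=p$. For the equality with $\det(L_h)$: since the filling is simply connected, $\det(Q_W)=|H_1(\SigmaTwo(L_h))|=\det(L_h)$, as in step~(2) of \Cref{subsec:sketch}. Alternatively, $\SigmaTwo(K(p,q))=S^3_{p/q}(U)$ has $|H_1|=p$ directly.

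The only real obstacle is bookkeeping: matching the continued-fraction convention of \Cref{subsec:conv} and the orientation convention $-\SigmaTwo(K(p,q))\cong S^3_{p/(p-q)}(U)$. I would double-check the orientation by a small sanity case, for example confirming against the $4_1$ or trefoil conventions of \cite{DISST25}.
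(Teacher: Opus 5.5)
Your proposal is correct and follows essentially the same route as the paper: both apply \Cref{cor:2bridge-plumbing} and identify $\Lambda(8h^2+4h+1,8h)$ with $A_h$ through the continued fraction, with the determinant equal to $p=\det(L_h)$. The only difference is how the expansion is verified. You evaluate $[h+1,3,2^{[2h-2]},3]=(8h^2+4h+1)/(8h)$ directly from the inside out, whereas the paper derives it from the diagram's expansion $[2^{[h-1]},3,2h+1,2]$ of $p/q$ using Riemenschneider's point rule.
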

\begin{proof}
By \Cref{cor:2bridge-plumbing}, $-\SigmaTwo(L_h)$ bounds a positive-definite simply-connected manifold with intersection form given by $\Lambda(8h^2+4h+1,8h)$ with determinant $8h^2+4h+1$. The result follows from expressing this linear plumbing matrix using a continued fraction to show that $A_h=\Lambda(8h^2+4h+1,8h)$.

From our description of \Cref{fam:Lh}, we have the following continued fraction:
\[\frac{8h^2+4h+1}{8h^2-4h+1}=[2^{[h-1]},3,2h+1,2].\]
Using Riemenschneider's point rule (e.g., as in \cite{Lis07}) to compute $\left[\frac{p}{p-q}\right]$ from $\left[\frac{p}{q}\right]$, we have 
\[
\frac{8h^2+4h+1}{8h}=[h+1,3,2^{[2h-2]},3].
\]
\end{proof}
\begin{proposition}\label{prop:Lh-lattice}
Let $h\geq3$ be odd and $r=2h+1$. For every $T\in\mathcal T(A_h)$, the matrix $A_h-2T$ is not the Gram matrix of integer vectors in $\Z^r$. That is, $T$ fails condition~\ref{cond:finiteob:embedding} of \Cref{prop:finiteob}.
\end{proposition}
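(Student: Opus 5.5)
Write $v_1,\dots,v_{2h+1}$ for the plumbing basis, so $A_h$ has diagonal $(h+1,3,2,\dots,2,3)$, with the $2^{[2h-2]}$ block in positions $3,\dots,2h$. Suppose $A_h-2T=DD^T$ with $T\in\mathcal T(A_h)$ and $D\in M_{2h+1}(\Z)$, and let $\rho_1,\dots,\rho_{2h+1}$ be the rows of $D$. The plan is to pin down $T$ and then most of $D$ using the long chain of $2$'s, and finish with a parity contradiction that uses $h$ odd.

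First I would restrict $T$. For $3\le i\le 2h$ we have $(A_h)_{ii}=2$, so $T_{ii}\le \tfrac12$, hence $T_{ii}=0$. Since $T$ is positive semidefinite, the whole $i$th row and column of $T$ vanish for these $i$. So $T$ is supported on the indices $\{1,2,2h+1\}$, with $T_{22},T_{2h+1,2h+1}\in\{0,1\}$.

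Consequently the rows $\rho_3,\dots,\rho_{2h}$ have Gram matrix $\Lambda[2^{[2h-2]}]$. Since $\det A_h$ is odd, \Cref{prop:Dinvert} makes $\det D$ odd, so \Cref{prop:2bridge-root-count} applies: after a signed permutation of coordinates, $\rho_{j+2}=e_j-e_{j+1}$ for $1\le j\le 2h-2$. This chain uses coordinates $1,\dots,2h-1$, leaving exactly two free coordinates, which I call $f_1,f_2$.

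Next I would read off the remaining rows from their pairings with the chain, using that the relevant entries of $T$ vanish.
\begin{itemize}
\item $\rho_1$ is orthogonal to every chain row, so it has a constant coefficient $a$ on coordinates $1,\dots,2h-1$. Then $(2h-1)a^2\le h+1$ forces $a=0$, so $\rho_1$ is supported on $\{f_1,f_2\}$.
\item $\rho_2$ pairs to $-1$ with $\rho_3$ and to $0$ with the other chain rows. So its coefficients satisfy $x_1-x_2=-1$ and $x_2=\dots=x_{2h-1}=c$. The norm bound $3\ge (2h-2)c^2\ge 4c^2$ gives $c=0$ and $x_1=-1$.
\item Symmetrically, $\rho_{2h+1}$ restricted to the chain coordinates is $e_{2h-1}$.
\end{itemize}
Write $z$ for the part of $\rho_2$ on $\{f_1,f_2\}$. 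Then $z\cdot z=(3-2T_{22})-1\in\{0,2\}$, so $z_1+z_2\equiv z\cdot z\equiv 0\pmod 2$.

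Now comes the parity step, which is where $h$ odd enters. Write $\rho_1=(\alpha,\beta)$ on $\{f_1,f_2\}$. Then $\alpha^2+\beta^2=h+1-2T_{11}$ is even, so $\alpha\equiv\beta\pmod 2$. Because $\rho_1$ vanishes on the chain coordinates,
\[
\rho_1\cdot\rho_2=\alpha z_1+\beta z_2\equiv\alpha(z_1+z_2)\equiv 0\pmod 2 .
\]
But $(A_h-2T)_{12}=-1-2T_{12}$ is odd, which is a contradiction. Hence no $T\in\mathcal T(A_h)$ satisfies condition~\ref{cond:finiteob:embedding}.

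I expect the main obstacle to be the middle step: showing that $\rho_1$, $\rho_2$ and $\rho_{2h+1}$ carry no hidden constant component along the chain. This rests on the norm bounds, and those are exactly where $h\ge3$, i.e.\ chain length $2h-2\ge4$, is needed. The entries $(A_h)_{1,2h+1}$ and $(A_h)_{2,2h+1}$ give further consistency checks, but the contradiction should not need them.
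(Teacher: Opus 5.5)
Your proposal is correct and follows the paper's proof essentially step for step. You normalize the square-two chain $\rho_3,\dots,\rho_{2h}$ via \Cref{prop:2bridge-root-count}, use norm bounds to push $\rho_1$ onto the two free coordinates and to reduce $\rho_2$ to $-e_1$ plus a free part, and then obtain the parity contradiction on $\rho_1\cdot\rho_2$ from $h$ odd. Your extra determination of $\rho_{2h+1}$ along the chain is correct but unnecessary, and the paper omits it.
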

\begin{proof}
Suppose $A_h-2T=DD^T$, where $D\in M_r(\Z)$. Since $\det(A_h)$ is odd, \Cref{prop:Dinvert} shows that $\det(D)$ is odd. Let $\rho_1,\ldots,\rho_r$ be the rows of $D$. For $3\leq i\leq r-1$, we have $(A_h)_{ii}=2$, so $T_{ii}=0$.
Since $T$ is positive semidefinite, the corresponding rows and columns of $T$ vanish. Thus
\[
\rho_i\cdot\rho_j=(A_h)_{ij}
\qquad(3\leq i\leq r-1,\ 1\leq j\leq r).
\]
In particular, the rows $\rho_3,\ldots,\rho_{r-1}$ have Gram matrix $\Lambda[2^{[2h-2]}]$. Since $\det(D)$ is odd, \Cref{prop:2bridge-root-count} allows us to permute and change signs of the standard basis vectors so that
\[
\rho_3=e_1-e_2,\quad
\rho_4=e_2-e_3,\quad\ldots,\quad
\rho_{r-1}=e_{2h-2}-e_{2h-1}.
\]
We use these coordinates throughout the remainder of the proof.

Since $\rho_1$ is orthogonal to $\rho_3,\ldots,\rho_{r-1}$, its first $2h-1$ coordinates are equal. But
\[
\rho_1\cdot\rho_1=h+1-2T_{11}\leq h+1<2h-1,
\]
so these coordinates must vanish. Therefore $\rho_1=ae_{2h}+be_{2h+1}$ for some $a,b\in\Z$.

Similarly, $\rho_2\cdot\rho_3=-1$ and $\rho_2\cdot\rho_i=0$ for $4\leq i\leq r-1$, so the first $2h-1$ coordinates of $\rho_2$ have the form
\[
(k-1,k,\ldots,k)
\]
for some $k\in\Z$. If $k\neq0$, then $\rho_2\cdot\rho_2\geq(2h-2)k^2\geq4$, contradicting $\rho_2\cdot\rho_2=3-2T_{22}\leq3$. Hence $k=0$, and $\rho_2=-e_1+ce_{2h}+de_{2h+1}$ for some $c,d\in\Z$.

We now reach a contradiction from the parity of $\rho_1\cdot\rho_2$. Since $h$ is odd,
\[
a^2+b^2=h+1-2T_{11}\equiv0\pmod2,
\]
so $a\equiv b\pmod2$. Likewise,
\[
c^2+d^2=\rho_2\cdot\rho_2-1=2-2T_{22} \equiv0\pmod2,
\]
so $c\equiv d\pmod2$. It follows that $\rho_1\cdot\rho_2=ac+bd\equiv0\pmod2$. However, we also have $\rho_1\cdot\rho_2=(A_h)_{12}-2T_{12}=-1-2T_{12}$, which is odd. 

This contradiction shows that $A_h-2T$ is not the Gram matrix of integer vectors in $\Z^{r}$. 
\end{proof}
\begin{proof}[Proof of \Cref{thm:2bridgenew}]
By \Cref{prop:2bridge-Lh-form}, $A_h$ is the intersection form of a positive-definite filling of $-\SigmaTwo(L_h)$ and has odd determinant so \Cref{prop:finiteob} applies. Then \Cref{prop:Lh-lattice} shows that every $T\in\mathcal{T}(A_h)$ fails condition~\ref{cond:finiteob:embedding} of \Cref{prop:finiteob}. Since $\sigma(L_h)=0$ by \Cref{prop:Lh-properties}, \Cref{thm:decomp} gives $\uCP(L_h)=\infty$. Also by \Cref{prop:Lh-properties}, $r_0(L_h)=r_0(\overline{L_h})=\infty$. 
\end{proof}
\subsection{Proof of \texorpdfstring{\Cref{thm:2bridgeDISST}}{Theorem 5.1}}\label{subsec:rec2b}
Again we start by gathering the remaining ingredients needed in our proof. 
\begin{lemma}[{\cite[Lem.~6.5]{DISST25}}]\label{lem:2bridge-DISST-invariants}
For $m\geq1$ and $n\geq0$, $\sigma(K_{m,n})=0$.
\end{lemma}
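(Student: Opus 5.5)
The plan is to compute $\sigma(K_{m,n})$ directly with the Gordon--Litherland formula $\sigma(K)=\operatorname{sign}G-\mu$ from \Cref{subsubsec:checkerboard}. I will apply it to the 2-bridge diagram of \Cref{fig:Kmn} given by the continued fraction $[2n+1,2^{[2m-2]},4,5,4]$. Choose the checkerboard coloring in which each twist region of $a_i$ crossings contributes a twisted band of the black surface. Then the black surface is a linear plumbing of bands, so its Goeritz matrix is, up to an overall sign, the linear plumbing matrix $\Lambda[2n+1,2^{[2m-2]},4,5,4]$. This matrix is positive definite of rank $\ell=2m+2$, so $\operatorname{sign}G=\pm(2m+2)$, with the sign fixed by the incidence conventions of \Cref{fig:checkerboard-conventions}. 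The proof then reduces to showing that the correction term satisfies $\mu=\operatorname{sign}G$.

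The crossings in a single twist region all have the same incidence sign $\eta$. They also all have the same type: type II exactly when the two strands through that region are oriented parallel. So I will orient $K_{m,n}$ and record, for each twist region, whether its strands are parallel or antiparallel. Because the diagram is a plat closure of a 4-braid word, this is a finite bookkeeping problem: track the connectivity permutation mod $2$ through the word $\sigma_2^{a_1}\sigma_1^{a_2}\cdots$. The parities of the $a_i$ are fixed ($2n+1$ odd, then even entries), so the orientation pattern should be independent of $m$ and $n$, and two steps give this:
\begin{itemize}
\item Inserting a pair of consecutive $2$'s, i.e.\ passing from $m$ to $m+1$, should add two twist regions whose type-II contributions are $\pm 2$, matching the increase of $\operatorname{sign}G$ by $2$.
\item Replacing $2n+1$ by $2n+3$ adds two crossings to a single twist region. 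If that region is of type I, $\mu$ is unchanged and so is $\operatorname{sign}G$. If it is of type II, I will use instead the coloring where that region lies in the white surface.
\end{itemize}
A more robust alternative for the second step is the classical fact that adding a full twist to two antiparallel strands changes $\sigma$ by at most $2$ and, via the Goeritz form, by exactly the sign change computed from $\det$. Since $\det(K_{m,n})>0$ is odd and $\equiv 1\pmod 4$ must hold when $\sigma=0$, I can track this with Murasugi's congruence $\det(K)\equiv(-1)^{\sigma/2}\pmod 4$ as a consistency check.

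The base case is the diagram $[3,5,4]$ of $K_{m,0}=K(53,19)$. Here $G=\Lambda[3,5,4]$ has signature $3$, and I expect the orientation computation to show that all $3$ twist regions are of type II with $\eta=+1$, so $\mu=3$ and $\sigma=0$. This value can be cross-checked against the tabulated signature of the 2-bridge knot $K(53,19)$. Once this is settled, the two inductive steps above give $\sigma(K_{m,n})=0$ for all $m\geq1$ and $n\geq0$.

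The main obstacle is the orientation bookkeeping: determining which twist regions are parallel, and doing so uniformly in $m$ and $n$, while keeping the sign conventions for $\eta$ and type consistent with \cite{GL78} and the convention that $K(3,1)$ is the right-handed trefoil with $\sigma=-2$. If this becomes unwieldy, I would fall back on the arithmetic formula $\sigma(K(p,q))=\pm\sum_{i=1}^{p-1}(-1)^{\lfloor iq/p\rfloor}$ or the even continued fraction formula. For the latter, take $q'\equiv q$ (replacing $q$ by $q\pm p$ as needed so that $q'$ is even) with $p/q'=[2b_1,\dots,2b_{2k}]$. Then $\sigma$ is, up to the global sign convention, $\sum_i(-1)^{i}\operatorname{sgn}(b_i)$, and I would verify that this expansion is periodic in the parameters. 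This is the computation carried out in \cite[Lem.~6.5]{DISST25}.
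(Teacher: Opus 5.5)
Your main route fails at its first step, and the fallback that would work is only sketched.

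\textbf{The Goeritz identification is false for this diagram.} Take the plat built from $\sigma_2^{a_1}\sigma_1^{a_2}\cdots$ with $[a_1,\dots,a_\ell]=[2n+1,2^{[2m-2]},4,5,4]$. Its checkerboard coloring is determined up to swapping colors. The strip between the two middle strands, together with the unbounded region it opens onto, has one color. The strips above and below have the other. Consequently:
\begin{itemize}
\item Twist regions on the middle pair and twist regions on the top pair have bigons of opposite colors. No coloring makes every twist region a band of the black surface.
\item All crossings are braid generators of the same sign; this is how the diagram encodes a negative continued fraction. So the diagram is not alternating, and $\eta$ takes opposite values on middle and top twist regions.
\item Each Goeritz matrix therefore has diagonal entries of both signs and is indefinite. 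Neither is $\pm\Lambda[2n+1,2^{[2m-2]},4,5,4]$, and $\operatorname{sign}G=\pm(2m+2)$ has no basis.
\end{itemize}
Definite linear plumbing matrices do occur as Goeritz matrices, but of the reduced alternating diagram coming from the positive continued fraction, whose twist regions are different.

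\textbf{The base case is miscounted.} $\mu$ sums $\eta$ over type~II \emph{crossings}, so a type~II twist region with $a$ crossings contributes $\pm a$. ``All three twist regions of $[3,5,4]$ of type~II with $\eta=+1$'' would give $\mu=12$, not $3$.

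\textbf{Nothing else pins down $\sigma$.} The two inductive steps are expectations with a contingency, not arguments. Murasugi's congruence only gives $\sigma\equiv 0\pmod 4$, since $\det K_{m,n}\equiv 1\pmod 4$.

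\textbf{The even continued fraction route does work, but watch the convention.} With the paper's bracket $[a_1,a_2,\dots]=a_1-1/(a_2-\cdots)$, the standard Seifert matrix for $[2b_1,\dots,2b_{2k}]$ is upper bidiagonal with diagonal $(b_1,\dots,b_{2k})$, as for $F_h$ in \Cref{prop:Lh-LT-signature}. Hence $\sigma=\pm\sum_i\operatorname{sgn}(b_i)$, with no alternating sign. Your rule $\sum_i(-1)^i\operatorname{sgn}(b_i)$ belongs to the convention $a_1+1/(a_2+\cdots)$. Applied to the paper's bracket it gives $\pm2$ for the figure-eight knot $K(5,2)$, where $5/2=[2,-2]$.

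\textbf{Carrying it out.} Here $q=106m-34$ is already even, and
\[
\frac{212mn-68n+53}{106m-34}=[2n,-2m,-2,-2,4,4].
\]
For $n\geq 1$, the symmetrized Seifert form is tridiagonal with diagonal $(2n,-2m,-2,-2,4,4)$ and off-diagonal entries $1$. Every diagonal entry has absolute value at least $2$, so the ratios of consecutive leading principal minors have the signs of the diagonal entries. This gives $\sigma=\pm(1-1-1-1+1+1)=0$. For $n=0$, use $K(53,19)=K(53,-34)$ and $53/(-34)=[-2,-2,4,4]$, which gives $\sigma=\pm(-1-1+1+1)=0$.
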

\begin{proposition}\label{prop:2bridge-Kmn-form}
For $m\geq1$ and $n\geq0$, the rank-$(2n+7)$ matrix
\begin{equation}\label{eq:2bridge-Bmn}
B_{m,n}=\begin{cases}
\Lambda[2,3,2^{[2]},3,2^{[2]}],&n=0,\\
\Lambda[2^{[2n-1]},2m+1,2,3,2^{[2]},3,2^{[2]}],&n\geq1
\end{cases}
\end{equation}
is the intersection form of a simply-connected positive-definite filling of $-\SigmaTwo(K_{m,n})$, and
\[
\det(B_{m,n})=\det(K_{m,n})=212mn-68n+53.
\]
\end{proposition}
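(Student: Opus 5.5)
The plan mirrors the proof of \Cref{prop:2bridge-Lh-form}. By \Cref{cor:2bridge-plumbing}, $-\SigmaTwo(K_{m,n})$ bounds a simply-connected positive-definite plumbing. Writing $p=212mn-68n+53$ and $q=106m-34$, its intersection form is $\Lambda(p,p-q)$, with determinant $p$. Since the filling is simply connected, its determinant equals $\det(K_{m,n})$. So it suffices to show that the continued fraction of $p/(p-q)$ is the one encoded by $B_{m,n}$.

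For $n\geq1$, start from the expansion given in \Cref{fam:DISST},
\[
\frac{p}{q}=[2n+1,2^{[2m-2]},4,5,4],
\]
and apply Riemenschneider's point rule as in the proof of \Cref{prop:2bridge-Lh-form}. The point diagram has rows of $a_i-1$ dots, each row beginning in the column where the previous row ends. The rows have $2n$, then $2m-2$ rows of $1$, then $3$, $4$, $3$ dots. Reading off the columns should produce $[2^{[2n-1]},2m+1,2,3,2^{[2]},3,2^{[2]}]$:
\begin{itemize}
\item the first row of $2n$ dots gives $2n-1$ columns of height one, i.e.\ entries $2$;
\item the column shared with the run of single dots gives the entry $2m+1$;
\item the last three rows produce the tail $2,3,2,2,3,2,2$.
\end{itemize}
The step I expect to require the most care is this bookkeeping at the junctions between rows, since an off-by-one there changes the middle entries.

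As an independent check, I would verify the determinant directly. Use the recursion for tridiagonal determinants, or equivalently evaluate the continued fraction backwards from the tail $[2,3,2,2,3,2,2]$. This should give a numerator/denominator pair of the form $(53,\ast)$ after the tail, and the prefix $2^{[2n-1]},2m+1$ should then produce the linear-in-$n$ and bilinear-in-$mn$ terms $212mn-68n+53$. For the tail alone, $\det\Lambda[2,3,2,2,3,2,2]$ should be $53$.

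For $n=0$, \Cref{fam:DISST} notes that $K_{m,0}\simeq K(53,19)$ with $53/19=[3,5,4]$. Applying the point rule to $53/(53-19)=53/34$ should give $[2,3,2,2,3,2,2]$. This is $B_{m,0}$, of rank $7=2n+7$, and its determinant $53$ matches $\det(K_{m,0})=53$.

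Finally, count the entries for $n\geq1$: they number $(2n-1)+1+7=2n+7$, confirming the rank. All entries are at least $2$, so $\Lambda[\cdots]$ is positive definite by \Cref{lem:2bridge-plumbing}.
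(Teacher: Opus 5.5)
Your proposal is correct and follows the paper's proof: apply \Cref{cor:2bridge-plumbing}, handle $n=0$ separately via $K_{m,0}\simeq K(53,19)$, and use Riemenschneider's point rule on $[2n+1,2^{[2m-2]},4,5,4]$ and $[3,5,4]$. Your dot bookkeeping is right, including column $2n$ carrying $1+(2m-2)+1=2m$ dots even when $m=1$. Your backward continued-fraction check (tail $53/34$, then $(106m+19)/53$, then prepending $2n-1$ twos to get $212mn-68n+53$) is a valid extra confirmation that the paper does not spell out.
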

\begin{proof}
By \Cref{cor:2bridge-plumbing}, recalling that $K_{m,0}=K(53,19)$, we have that $-\SigmaTwo(K_{m,n})$ bounds a positive-definite simply-connected manifold with intersection form given by $\Lambda(53,53-19)$ when $n=0$ and by $\Lambda(212mn-68n+53,(212mn-68n+53)-(106m-34))$ when $n\geq 1$. In both cases, we have determinant $212mn-68n+53$. The result follows from expressing these two linear plumbing matrices using continued fractions.

From our description of \Cref{fam:DISST}, we have the following continued fractions:
\[\frac{53}{19}=[3,5,4], \quad \text{and }\quad\frac{212mn-68n+53}{106m-34}=[2n+1,2^{[2m-2]},4,5,4]\quad (n\geq 1).\]
Using Riemenschneider's point rule (e.g., as in \cite{Lis07}) to compute $\left[\frac{p}{p-q}\right]$ from $\left[\frac{p}{q}\right]$, we have 
\begin{align*} 
\frac{53}{53-19}&=[2,3,2^{[2]},3,2^{[2]}],\quad\text{and}\\
\frac{212mn-68n+53}{212mn-68n+53-(106m-34)} &= [2^{[2n-1]},2m+1,2,3,2^{[2]},3,2^{[2]}].
\end{align*}
\end{proof}
\begin{proposition}\label{prop:Kmn-lattice}
Let $m\geq1$, $n\geq0$, and $r=2n+7$. For every $T\in\mathcal T(B_{m,n})$, the matrix $B_{m,n}-2T$ is not the Gram matrix of integer vectors in $\Z^r$. That is, $T$ fails condition~\ref{cond:finiteob:embedding} of \Cref{prop:finiteob}.
\end{proposition}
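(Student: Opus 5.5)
The plan is to show that the rows of $D$ indexed by the diagonal $2$'s of $B_{m,n}$ already need more than $r=2n+7$ coordinates. Suppose $B_{m,n}-2T=DD^T$ with $D\in M_r(\Z)$ and $T\in\mathcal T(B_{m,n})$. By \Cref{prop:2bridge-Kmn-form}, $\det(B_{m,n})$ is odd, so \Cref{prop:Dinvert} gives that $\det(D)$ is odd. Write $\rho_1,\dots,\rho_r$ for the rows of $D$. Whenever $(B_{m,n})_{ii}=2$, the candidate bound forces $T_{ii}=0$. Since $T$ is positive semidefinite, the $i$th row and column of $T$ then vanish, so $\rho_i\cdot\rho_j=(B_{m,n})_{ij}$ for all $j$. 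In particular every such $\rho_i$ has square two, and the Gram matrix of these rows is the corresponding principal submatrix of $B_{m,n}$.

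Next I would read off the maximal runs of consecutive diagonal $2$'s in $B_{m,n}$. For $n\geq1$, the diagonal is $[2^{[2n-1]},2m+1,2,3,2^{[2]},3,2^{[2]}]$. This gives four runs, of lengths $2n-1$, $1$, $2$ and $2$, occupying indices $\{1,\dots,2n-1\}$, $\{2n+1\}$, $\{2n+3,2n+4\}$ and $\{2n+6,2n+7\}$. The entry $2m+1\geq3$ separates the first two runs (this is the one place where $m\geq1$ is used), and the two $3$'s separate the others. For $n=0$, the diagonal $[2,3,2^{[2]},3,2^{[2]}]$ gives three runs, of lengths $1$, $2$ and $2$. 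Because $B_{m,n}$ is tridiagonal and distinct runs are never adjacent, rows from different runs are orthogonal. Within each run the Gram matrix is $\Lambda[2^{[k]}]$.

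I would then apply \Cref{prop:2bridge-root-count} to each run. A run of length $k$ is normalized to $e_i-e_{i+1}$ and so occupies $k+1$ coordinates. Since the runs are mutually orthogonal blocks of square-two rows, the proposition says their supports are pairwise disjoint. The number of coordinates needed is therefore at least $2n+2+3+3=2n+8$ when $n\geq1$, and $2+3+3=8$ when $n=0$. Both exceed $r=2n+7$, which is impossible since $D$ has only $r$ columns. Hence no candidate $T$ satisfies condition~\ref{cond:finiteob:embedding}.

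The only delicate step is checking that the runs are genuinely orthogonal blocks, i.e.\ that no square-two row from one run pairs nontrivially with one from another. This follows because the vanishing of the corresponding rows of $T$ makes those pairings equal to the entries of the tridiagonal $B_{m,n}$, which are zero for non-adjacent indices. Beyond that, the argument is pure coordinate counting, and the rows with diagonal $3$ or $2m+1$ play no role.
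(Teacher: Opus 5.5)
Your proposal is correct and follows the paper's proof essentially step for step. You force $T_{ii}=0$ on the diagonal $2$'s, read off the maximal runs of lengths $2n-1,1,2,2$ (or $1,2,2$ when $n=0$), and apply \Cref{prop:2bridge-root-count} to put the runs on disjoint supports. That requires $2n+8=r+1$ coordinates, which $D\in M_r(\Z)$ does not have.
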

\begin{proof} 

Suppose $B_{m,n}-2T=DD^T$, where $D\in M_r(\Z)$. Since $\det(B_{m,n})$ is odd, \Cref{prop:Dinvert} shows that $\det(D)$ is odd. Let $\rho_1,\ldots,\rho_r$ be the rows of $D$. If $(B_{m,n})_{ii}=2$, then $T_{ii}=0$. Since $T$ is positive semidefinite, the $i$th row and column of $T$ vanish, so
\[
\rho_i\cdot\rho_j=(B_{m,n})_{ij}
\qquad(1\leq j\leq r).
\]
In particular, $\rho_i$ has square two.

We will now show that the blocks of square-two rows of $D$ require $r+1$ distinct coordinate positions, contradicting $D\in M_r(\Z)$.

When $B_{m,n}$ has a consecutive string of $k$ diagonal entries equal to $2$, $D$ has a block of $k$ square-two rows. Then the maximal strings of twos in $\diag(B_{m,0})$ have lengths $1$, $2$, and $2$. When $n\geq1$, the maximal strings of twos in $\diag(B_{m,n})$ have lengths $2n-1$, $1$, $2$, and $2$. Rows of $D$ corresponding to different maximal blocks are orthogonal, because their indices differ by at least two, and their pairings agree with those of the tridiagonal matrix $B_{m,n}$.

Since $\det(D)$ is odd, \Cref{prop:2bridge-root-count} shows that a block of length $k$ uses exactly $k+1$ coordinates, and that different blocks have disjoint supports. The union of these supports therefore has cardinality
\[
\begin{cases}
2+3+3=8,&n=0,\\
2n+2+3+3=2n+8,&n\geq1.
\end{cases}
\]
In both cases, the square-two rows require $2n+8=r+1$ distinct coordinate positions, contradicting $D\in M_r(\Z)$.
\end{proof}
\begin{proof}[Proof of \Cref{thm:2bridgeDISST}]
By \Cref{prop:2bridge-Kmn-form}, $B_{m,n}$ is the intersection form of a positive-definite filling of $-\SigmaTwo(K_{m,n})$ and has odd determinant so \Cref{prop:finiteob} applies. By \Cref{prop:Kmn-lattice}, every $T\in \mathcal{T}(B_{m,n})$ fails condition \ref{cond:finiteob:embedding}
of \Cref{prop:finiteob}. Since $\sigma(K_{m,n})=0$ by \Cref{lem:2bridge-DISST-invariants}, \Cref{thm:decomp} gives $\uCP(K_{m,n})=\infty$.
\end{proof}
\cite[Thm.~1.15]{DISST25} proves the stronger statement that these knots are not H-slice in any positive-definite closed smooth $4$-manifold with $b_1=0$.
%
\section{Obstructing a non-alternating family of Montesinos knots}\label{sec:montesinos}
In this section, we apply our lattice obstruction to a non-alternating family of Montesinos knots described by Williams~\cite[\S5]{Wil08}, where the filling comes from a plumbing over a star-shaped graph rather than a checkerboard surface or a linear plumbing. Williams obstructed smooth sliceness for this family, and we extend his result to show that this family is not H-slice in $\#^n\CP$ for any $n$. We are motivated by showing that the utility of our obstruction is not tied to alternating knots, and by the many vanishing obstructions to $0$-bipolarity for this family (discussed in \Cref{sec:exotica}).

Recall that diagrams of Montesinos knots are constructed by connecting rational tangle diagrams together with some number of twists (see \Cref{subsec:prelim:knot} for diagram conventions, which agree with \cite{Iss18, OwS}). 
\begin{family}\label{fam:Jq}
For each odd integer $q\geq1$, define the Montesinos knot \[J_q=M\left(1;\frac{2q+3}{q+2},\frac{8}{3},\frac{3}{2}\right),\]
where the diagram below uses these continued fraction expansions of each rational tangle parameter: $\frac{2q+3}{q+2}=[2,q+2]$; $\frac{8}{3}=[3,3]$; and $\frac{3}{2}=[2,2]$.

\begin{figure}[H]
    \centering
    \resizebox{!}{5.5cm}{\input{figures/Jq}}
    \caption{$J_q=M\left(1;[2,q+2],[3,3],[2,2]\right)$}
    \label{fig:Jq}
\end{figure}
\end{family}
We restate \Cref{thm:intro:Jq} from the introduction and sketch its proof. The full proof is contained in \Cref{subsec:pf:Jq}.
\begin{restatement}{thm:intro:Jq}\label{thm:Jq}
For every odd $q\geq 1$, the non-alternating Montesinos knot $J_q=M\bigl(1;\frac{2q+3}{q+2},\frac{8}{3},\frac{3}{2}\bigr)$ is not H-slice in $\#^n\CP$ for any $n\geq0$.
\end{restatement}
\begin{sketch}
$J_q$ is non-alternating (\Cref{prop:Jq-properties}) and $-\SigmaTwo(J_q)$ bounds a simply-connected positive-definite plumbing with intersection matrix $S_q\in M_{q+6}(\Z)$ (\Cref{prop:Jq-lattice}). We then use \Cref{prop:finiteob} to contradict the necessary decomposition described in \Cref{thm:decomp}. The diagonal entries of $S_q$ are $3$ in exactly three locations, and are $2$ elsewhere, which greatly restricts the candidate matrices $T\in \mathcal T(S_q)$. First we show in \Cref{prop:JqTneq0} that every nonzero $T\in\mathcal T(S_q)$ fails condition~\ref{cond:finiteob:embedding} of \Cref{prop:finiteob}. \Cref{prop:JqT0} shows that $T=0$ fails condition~\ref{cond:finiteob:embedding}, reproving Williams's lattice embedding obstruction to $S_q\hookrightarrow\Z^{q+6}$ (although when $\det(J_q)$ is not a square, $T=0$ fails condition~\ref{cond:finiteob:embedding} automatically). Since $\sigma(J_q)=0$ (\Cref{prop:Jq-properties}), we conclude that $\uCP(J_q)=\infty$.
\end{sketch}
\begin{proposition}\label{prop:Jq-properties}
 Let $q\geq 1$ be odd. Then $J_q$ is quasi-alternating, non-alternating, $\det(J_q)=26q+51$, and $\sigma(J_q)=0$.
\end{proposition}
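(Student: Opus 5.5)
The proposition has four independent claims. I would treat them separately, relying on standard tools for Montesinos knots together with the explicit diagram in \Cref{fig:Jq}.

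\emph{Determinant.} I would use the Seifert invariants. By the conventions recalled after \Cref{def:montesinos}, $\SigmaTwo(J_q)$ is the Seifert fibered space $Y(-1;(2q+3,q+2),(8,3),(3,2))$. Its first homology has order
\[
\Bigl|\alpha_1\alpha_2\alpha_3\Bigl(e-\sum_i\tfrac{\beta_i}{\alpha_i}\Bigr)\Bigr|,
\]
with the appropriate sign convention for $e$. Here $\alpha_1\alpha_2\alpha_3=24(2q+3)$. Computing $24(2q+3)\bigl(1-\tfrac{q+2}{2q+3}-\tfrac38-\tfrac23\bigr)$ gives $24(2q+3)-24(q+2)-9(2q+3)-16(2q+3)$, whose absolute value should be $26q+51$. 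Equivalently, I would take the determinant of the star-shaped plumbing matrix, which \Cref{prop:Jq-lattice} will use anyway.

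\emph{Signature.} I would compute $\sigma$ by Gordon--Litherland (\Cref{subsubsec:checkerboard}) applied to a checkerboard coloring of the diagram in \Cref{fig:Jq}. I would choose the coloring so that the Goeritz matrix is essentially the star-shaped plumbing lattice of size $q+6$, with the central region and tangle regions appearing as vertices. Then I would count the type II crossings to get the correction term $\mu$ and check that $\operatorname{sign}G=\mu$. An alternative route is to use the positive-definite plumbing filling of $-\SigmaTwo(J_q)$ together with Saveliev's formula relating $\sigma$ to the plumbing signature for Montesinos knots. The work is mainly sign bookkeeping, and I expect this to be the most error-prone step, so I would check it numerically at $q=1$ against KnotInfo.

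\emph{Non-alternating.} I would apply \Cref{lem:nonaltM}. Using \eqref{eq:montesinos}, I would absorb the $e=1$ twist into one tangle, rewriting $J_q$ as $M\bigl(0;\tfrac{2q+3}{q+2},\tfrac83,\tfrac{3}{2-3}\bigr)$ or a similar choice so that $e=0$. Next I would choose continued fractions for which each tangle is alternating, has at least two crossings, and has $a_1\neq0$. Since the tangle slopes then have mixed signs, the resulting reduced diagram is non-alternating, and \Cref{lem:nonaltM} gives the claim. The delicate part is choosing the representative with $e=0$ so that it satisfies \Cref{def:reducedM}(2).

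\emph{Quasi-alternating.} I would invoke the classification of quasi-alternating Montesinos links due to Issa \cite{Iss18}. After normalizing the invariants into the form used there, I would verify the inequalities on $e$ and the slopes. Alternatively, I would exhibit a crossing in the first tangle whose resolutions are alternating links with determinants summing to $26q+51$, and induct on $q$.
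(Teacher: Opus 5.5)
For quasi-alternating, non-alternating and the determinant, you are following the paper. The paper uses Issa's classification (with $e=1$, all slopes $>1$, and $\frac{3}{3-2}=3>\frac83$). It rewrites $J_q$ as $M\bigl(0;\frac{2q+3}{q+2},\frac83,\frac{3}{2-3}\bigr)$ and applies \Cref{lem:nonaltM}. It computes $\bigl|24(2q+3)\bigl(1-\frac{q+2}{2q+3}-\frac38-\frac23\bigr)\bigr|=26q+51$. The sign convention genuinely matters there: using $e+\sum\beta_i/\alpha_i$ would give $122q+195$, so do pin it down with $\det S_q$ as you suggest.

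Two small corrections to these steps. In the non-alternating step, changing $e$ is not enough; you must also change the expansions of \Cref{fig:Jq}. The tangles $[2,q+2]$ and $[3,3]$ are not alternating in these conventions, and the paper uses $[1,-1,q+1]$, $[2,-1,2]$, $[-3]$ instead. Your fallback for quasi-alternating is misdescribed. Resolving a crossing of the $(q+2)$-twist region gives $J_{q-1}$ and the fixed link $L10n41$ (determinants $26q+25$ and $26$), and neither is alternating. That induction would therefore still need an Issa-type check for $L10n41$.

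The signature is where you genuinely depart from the paper. The paper gets $\sigma(J_q)=0$ from \Cref{prop:Jq-signatures}. There, a skein relation in the long twist region gives $\nabla_{J_{q+2}}-\nabla_{J_q}=\pm z\nabla_{L10n41}$, normalized by $J_1=12n_{300}$. The resulting $\Delta_{J_q}$ has no roots on $S^1$, so the entire Levine--Tristram function vanishes. That is more than this proposition needs, but it is used later in \Cref{prop:exotica-invariants}.

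Your Gordon--Litherland route is the one recorded in \Cref{rem:Jq-signature}, with two cautions.
\begin{enumerate}
\item The tangles in \Cref{fig:Jq} are not alternating, so its Tait graphs carry edges of both signs. Do not expect a Goeritz matrix that is essentially $S_q$; for odd $q$, the coloring with the unbounded region white gives $\operatorname{sign}G=-2=\mu$.
\item Your type II count must use that $q$ is odd. For even $q$ one gets $\mu=-4$ and $\sigma=2$, so a computation that never uses parity cannot be right, and a check at $q=1$ will not catch that.
\end{enumerate}

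Your Saveliev alternative is genuinely new and closes in one line. For odd $q$, the spherical Wu set of the tree of $S_q$ is
\[
W=\{v_1,v_3,\dots,v_q,\;v_{q+3},\;v_{q+6}\},\qquad w\cdot w=(q+1)+2+3=q+6=\operatorname{sign}(P_q),
\]
so $\operatorname{sign}(P_q)-w\cdot w=0$ whatever the orientation convention. For even $q$, the Wu set is $\{v_1,v_3,\dots,v_{q-1},v_{q+1},v_{q+5},v_{q+6}\}$ with $w\cdot w=q+8$, which matches $|\sigma|=2$. This route shows exactly where parity enters and reuses the plumbing of \Cref{prop:Jq-lattice}. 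However, it gives only $\sigma=\sigma_{LT}(-1)$, not the vanishing of the whole signature function that the paper's computation provides.
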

\begin{proof}
Using our original description of $J_q$, we have $e=1$, each rational tangle parameter is greater than $1$, and $\frac{3}{3-2}=3>\frac{8}{3}$, so by \cite[Thm.~1]{Iss18}, $J_q$ is quasi-alternating.

Recall from \Cref{subsec:prelim:knot}, equation~\eqref{eq:montesinos}, that a Montesinos knot with $e$ twists and a rational tangle $R=\alpha/\beta$ is isotopic to the knot with $e-1$ twists where $R$ is replaced by $R'=\alpha/(\beta-\alpha)$, and that different continued fraction expansions of the same rational number yield an isotopic rational tangle diagram. Then we have
\[M\left(1;\frac{2q+3}{q+2},\frac{8}{3},\frac{3}{2}\right)\simeq M\left(0;\frac{2q+3}{q+2},\frac{8}{3},\frac{3}{2-3}\right)\simeq M\left(0; [1,-1,q+1],[2,-1,2],-3\right)\]
The new diagram is non-alternating, and we highlight two consecutive over-crossings below:
\begin{figure}[H]
    \centering
    \resizebox{!}{5.5cm}{\input{figures/Jq-nonalt-highlight}}
    \caption{$J_q=M\left(0; [1,-1,q+1],[2,-1,2],-3\right)$}
    \label{fig:Jq-nonalt-highlight}
\end{figure}
Since this diagram is non-alternating, and each of its rational tangle diagrams is alternating with at least two crossings, this is a \emph{reduced Montesinos diagram} by \Cref{def:reducedM}. By \Cref{lem:nonaltM}, $J_q$ is non-alternating. 

Using the determinant formula for Montesinos links (see \cite[proof of Thm.~1]{Iss18}), we have 
\[\det(J_q)=\left|(2q+3)(8)(3)\left(1-\frac{q+2}{2q+3}-\frac{3}{8}-\frac{2}{3}\right)\right|=26q+51.\]

\Cref{prop:Jq-signatures} shows that $J_q$ has vanishing Levine--Tristram signature function, so $\sigma(J_q)=0$. 
\end{proof}
\begin{remark}\label{rem:Jq-signature}
    We can also compute the signature and determinant of $J_q$ using a checkerboard coloring. Following the conventions in \Cref{subsubsec:checkerboard} and letting the point at infinity be white, we get a Goeritz matrix $G$ with $|\det(G)|=26q+51$ and $\text{sign}(G)=-2$. When $q$ is odd, the correction term is $\mu=-2$, so $\sigma(J_q)=-2-(-2)=0$. Note that when $q$ is even, the correction term is $-4$, so those Montesinos knots do not have vanishing signature. 
\end{remark}
\begin{remark}
    To see $J_q$ in the notation of~\cite{Wil08}, take $n_1=q$, $m_1=n_2=1$, $m_2=2$, and the final parameter to be $-3$. That gives $M\left(0;\frac{2q+3}{q+2},\frac{8}{3},-3\right)$. Note that Williams claims this knot has vanishing signature for all $q\geq 1$, but \Cref{rem:Jq-signature} shows this only holds when $q$ is odd. 
\end{remark}
To illustrate the sensitivity of condition~\ref{cond:finiteob:embedding} compared to the square determinant condition, we point out the infinite subfamily indexed by
\begin{equation}
\mathcal Q=\bigl\{26m^2-10m-1,\ 26m^2+10m-1\;:\;m\geq1\bigr\}.
\end{equation}
These are precisely the integers $q$ for which $26q+51$ is a perfect square~\cite[\S5]{Wil08}.
%
%
If $q\in\mathcal Q$, condition~\ref{cond:finiteob:embedding} excludes a candidate ($T=0$) that passes the square determinant condition.
\subsection{The star-shaped plumbing lattice of \texorpdfstring{$J_q$}{Jq}}\label{subsec:Jq-lattice}
\begin{proposition}\label{prop:Jq-lattice}
For every odd $q\geq1$, let $P_q$ be the plumbing over the weighted tree below. Then $P_q$ is simply connected and positive definite, with boundary $-\SigmaTwo(J_q)$ and intersection matrix $S_q$, defined below. 
\[
\input{figures/sq-plumbing}
\]
There are $q$ vertices of weight $2$ preceding $v_{q+1}$, and
\[(S_q)_{ij}=\begin{cases}
    w(v_i),&i=j\\
    -1, &\text{if }v_i \text{ and }v_j \text{ are adjacent} \\
    0, & \text{otherwise}.
\end{cases}
\]
\end{proposition}
\begin{proof}
Recall (as in the proof of \Cref{prop:Jq-properties}) that we can replace all three of our original rational tangle parameters $\alpha_i/\beta_i$ with $\alpha_i/(\beta_i-\alpha_i)$ and lower the twist parameter by three to get 
\[
J_q\simeq M(-2;-\frac{2q+3}{q+1},-\frac{8}{5},-3)\simeq M(-2;[-3,(-2)^{[q]}],[-2,-3,-2],-3).
\]
Then $\SigmaTwo(J_q)$ bounds the $4$-manifold plumbing over a weighted star-shaped graph obtained by negating the weight of each vertex in the graph above~\cite[\S2, Fig.~4]{Iss18}. We check that the plumbing is negative definite by computing its rational Euler number, $\mathcal{E}_{\Q}$: 
\[
\mathcal{E}_{\Q}\left(\SigmaTwo(J_q)\right)=-2-\left(-\frac{q+1}{2q+3}-\frac{5}{8}-\frac{1}{3}\right)=\frac{q+1}{2q+3}-\frac{25}{24}<0.
\] 
By \cite[Thm.~5.2]{NR}, the plumbing is negative definite. Reversing orientations, we have that $-\SigmaTwo(J_q)=\partial P_q$. Using its vertex basis ordered as $v_1,\ldots,v_{q+6}$, the positive-definite intersection form of $P_q$ is given by the matrix $S_q$.
\end{proof}
%
%
\subsection{Proof of \texorpdfstring{\Cref{thm:Jq}}{Theorem D}}\label{subsec:pf:Jq}
To show that the nonzero candidate matrices $T\in\mathcal{T}(S_q)$ fail condition~\ref{cond:finiteob:embedding} we contradict the positive-definiteness of $S_q-2T$. When $T=0$, that strategy is not viable because $S_q$ \emph{is} positive definite. Instead, we use the square-two vectors of the plumbing lattice to reach a contradiction in the parity of $v_{q+1}\cdot v_{q+4}$. Although a more uniform argument (similar to the $T=0$ case) is possible, we separate the cases to emphasize that more tools are available to obstruct nonzero candidates. 
\begin{proposition}\label{prop:JqTneq0}
Let $q\geq 1$ be odd and $r=q+6$. If $T\in\mathcal T(S_q)$ and $S_q-2T$ is the Gram matrix of integer vectors in $\Z^r$, then $T=0$.
\end{proposition}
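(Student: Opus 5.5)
The plan is to argue by contradiction, following the strategy announced before the statement: assume $S_q-2T=DD^T$ with $D\in M_r(\Z)$ and $T\neq0$, and contradict the positive-definiteness of $S_q-2T$. Since $\det(S_q)=\det(J_q)=26q+51$ is odd (\Cref{prop:Jq-properties}, and $P_q$ is simply connected), \Cref{prop:Dinvert} shows that $D$ is nonsingular. Hence $S_q-2T$ must be positive definite, and it suffices to produce, for every nonzero candidate $T$, an integer vector $x\neq0$ with $x^T(S_q-2T)x\le 0$.

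The first step is to pin down the shape of $T$, exactly as in Claim~1 of \Cref{prop:Th3qdecomp}.
\begin{itemize}
\item At every weight-$2$ vertex we get $T_{ii}\le (2-1)/2$, so $T_{ii}=0$. Positive semidefiniteness then kills the whole $i$th row and column of $T$.
\item So $T$ is supported on the three weight-$3$ vertices, which I read off the plumbing as $v_{q+1}$, the middle vertex $v_{q+4}$ of the arm $(2,3,2)$, and the leaf $v_{q+6}$. There $T_{ii}\in\{0,1\}$.
\item Off the diagonal, $|T_{ij}|\le\sqrt{T_{ii}T_{jj}}\le 1$. These three vertices are pairwise nonadjacent, so $(S_q)_{ij}=0$ for them.
\end{itemize}
Thus $T$ is determined by a nonempty set $\mathcal N$ of weight-$3$ vertices with $T_{ii}=1$, together with signs $T_{ij}\in\{-1,0,1\}$ for $i,j\in\mathcal N$. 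This is a small, explicit finite list for every $q$.

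Next I would treat the cases by test vectors chosen from the graph.
\begin{itemize}
\item If $v_{q+4}\in\mathcal N$: the arm $v_{q+3},v_{q+4},v_{q+5}$ has Gram matrix $\Lambda[2,1,2]$ in $S_q-2T$, because only the diagonal entry at $v_{q+4}$ changes. This block has determinant $0$, so $x=v_{q+3}+2v_{q+4}+v_{q+5}$ gives $x^T(S_q-2T)x=0$ when $\mathcal N=\{v_{q+4}\}$.
\item If $\mathcal N$ contains other vertices as well, I would first check whether the cross terms $-4T_{ij}x_ix_j$ at $v_{q+4}$ vanish. If they do not, I would extend $x$ by $\pm$ coefficients on the other vertices of $\mathcal N$, with signs chosen so that those cross terms only lower the form.
\item If $\mathcal N\subseteq\{v_{q+1},v_{q+6}\}$: both vertices are adjacent to the central vertex $v_{q+2}$. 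I would use extended-Dynkin-type test vectors on the subgraph formed by the center, the leaf $v_{q+6}$, the neighbor $v_{q+3}$, and the chain $v_{q+1},v_q,\dots,v_1$, giving weights $2$ at the center and adjusting the coefficients along the chain of $2$'s. For instance, with only the leaf reduced to weight $1$, the vector $2v_{q+2}+2v_{q+6}+v_{q+3}+v_{q+1}+(\text{chain terms})$ should bring the form down to $\le0$ once $v_{q+1}$ also drops to weight $1$.
\end{itemize}

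The step I expect to be hardest is the case $\mathcal N=\{v_{q+6}\}$ alone, or $\{v_{q+1}\}$ alone. There the reduced weighted graph may still be positive definite, so no test vector will exist. In that case I would switch to a lattice argument instead.
\begin{enumerate}
\item A row of $D$ of square $1$ is a signed basis vector $\pm e_j$.
\item The weight-$2$ rows of $D$ have Gram matrix equal to the corresponding block of $S_q$. So \Cref{prop:2bridge-root-count} normalizes the chain $v_1,\dots,v_q$ (together with the center and the arm's end vertices) to vectors $e_i-e_{i+1}$ on disjoint coordinate sets.
\item I would then count the coordinates forced by the orthogonality and adjacency relations with the square-$1$ row, aiming to show that more than $r=q+6$ coordinates are needed, or that $D$ fails to be invertible mod $2$.
\end{enumerate}
Handling the sign choices of the off-diagonal $T_{ij}$ uniformly across these cases is the bookkeeping I expect to take the most care.
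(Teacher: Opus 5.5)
Your strategy is the right one: force $S_q-2T$ to be positive definite via \Cref{prop:Dinvert}, then exhibit a vector on which it is $\le 0$. Your $v_{q+4}$ vector is also correct. But the proposal is incomplete exactly where you flag difficulty, and the reason you give for the difficulty is wrong. You suggest that for $\mathcal N=\{v_{q+1}\}$ or $\mathcal N=\{v_{q+6}\}$ the reduced form may still be positive definite, so no test vector exists. You then fall back on a coordinate-counting argument that you only say you are ``aiming'' for. In fact both cases have explicit test vectors:
\begin{itemize}
\item $v_{q+1}$: like $v_{q+4}$, this vertex sits between two weight-$2$ vertices, namely $v_q$ (which exists since $q\geq 1$) and the center $v_{q+2}$. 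So $x=e_q+2e_{q+1}+e_{q+2}$ gives $x^TS_qx=8=2x_{q+1}^2$. This is the same $\Lambda[2,1,2]$ degeneracy you found at $v_{q+4}$.
\item $v_{q+6}$: the vector $x=e_q+2e_{q+1}+5e_{q+2}+3e_{q+3}+e_{q+4}+5e_{q+6}$ gives $x^TS_qx=50=2x_{q+6}^2$.
\end{itemize}
So the argument for these two cases is missing, and the premise that motivated your detour is false. The lattice fallback is never carried out, so it does not close the gap.

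The off-diagonal bookkeeping you worry about also disappears once you finish Claim~1. Suppose $T_{ii}=T_{jj}=1$ for two of the weight-$3$ vertices, which are pairwise nonadjacent. Then the $2\times 2$ principal minor of $S_q-2T$ on $\{i,j\}$ is $1-4T_{ij}^2$, which must be positive, so $T_{ij}=0$. Equivalently, the two square-one rows of $D$ are distinct signed basis vectors, hence orthogonal, so $-2T_{ij}=(S_q)_{ij}=0$. With $T=I_{\mathcal N}$ diagonal, for any $i\in\mathcal N$ you get $x^T(S_q-2T)x=x^TS_qx-2\sum_{j\in\mathcal N}x_j^2\leq x^TS_qx-2x_i^2$. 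Hence the three single-vertex vectors handle every nonempty $\mathcal N$ at once, with no need to extend $x$ or track signs. This is how the paper's proof proceeds.
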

\begin{sketch}
We first show that $T$ must be a diagonal matrix with nonzero entries $T_{ii}=1$ only if $i\in \{q+1,q+4,q+6\}$ (cf. the proof of \Cref{prop:Th3qdecomp}). Next we show that, for each remaining candidate $T\neq 0$, $S_q-2T$ is not positive definite. We conclude that $T=0$.
\end{sketch}
\begin{proof}
Suppose $T$ passes condition~\ref{cond:finiteob:embedding} of \Cref{prop:finiteob}, so $S_q-2T=DD^T$ with $D\in M_r(\Z)$. By \Cref{prop:Dinvert}, $S_q-2T$ is positive definite.

\claim{1}{$T=I_{\mathcal{N}}$ where $I_{\mathcal{N}}$ denotes the diagonal matrix with $(I_{\mathcal{N}})_{ii}=1$ for $i\in {\mathcal{N}}$ and $(I_{\mathcal{N}})_{ii}=0$ otherwise.}

The only diagonal entries of $S_q$ greater than $2$ are the three $3$'s on the diagonal, located at indices $q+1$, $q+4$, and $q+6$. Since $0\leq T_{ii}\leq ((S_q)_{ii}-1)/2$, we can define the set indexing the nonzero diagonal entries of $T$ as
\[
{\mathcal{N}}=\{i\::\:T_{ii}=1\}\subseteq\{q+1,q+4,q+6\}.
\]
Now we show the remaining entries of $T$ are zero. Positive semidefiniteness gives $T_{ij}^2\leq T_{ii}T_{jj}$, so every row and column with zero diagonal entry vanishes.

Let $\rho_i$ be the $i$th row of $D$. If $T_{ii}=1$, then $\rho_i\cdot\rho_i=3-2=1$, so $\rho_i$ is a signed standard basis vector. If $i\neq j$ and $T_{ii}=T_{jj}=1$, nonsingularity of $D$ gives $\rho_i\neq\pm\rho_j$, hence $\rho_i\cdot\rho_j=0$. The vertices indexed by ${\mathcal{N}}$ are pairwise nonadjacent, so $(S_q)_{ij}=0$. Then we have 
\[
0=\rho_i\cdot\rho_j=(S_q)_{ij}-2T_{ij}=-2T_{ij}.
\]
It follows that $T=I_{\mathcal{N}}$.

\claim{2}{If ${\mathcal{N}}\neq\emptyset$, then $S_q-2T$ is not positive definite.}

Since $S_q-2T$ is positive definite, we have $x^T(S_q-2T)x>0$ for every nonzero $x\in \Z^r$. Since $T=I_{\mathcal{N}}$ by Claim 1, we have \[x^T(S_q-2T)x=x^TS_qx-2\sum_{i\in {\mathcal{N}}}x_i^2.\]
Now for each $i\in\{q+1,q+4,q+6\}$, we construct a vector $x=\sum x_ke_k$ so that $x^TS_qx=2x_i^2$. 

To make the construction more geometrically natural, consider the plumbing tree for $P_q$, where $w(v_k)$ denotes the weight of the vertex $v_k$ and $(v_a,v_b)\in E$ if $a<b$ and there is an edge between $v_a$ and $v_b$. Then we have
\[
x^TS_qx=\sum_kw(v_k)x_k^2-2\sum_{(v_a,v_b)\in E}x_ax_b.
\]
This interpretation makes the entries of the following table particularly easy to check.
\[
\begin{array}{c|l|c|c}
i & x & x^TS_qx & 2x_i^2\\ \hline
q+1 & e_q+2e_{q+1}+e_{q+2} & 8 & 8\\
q+4 & e_{q+3}+2e_{q+4}+e_{q+5} & 8 & 8\\
q+6 & e_q+2e_{q+1}+5e_{q+2}+3e_{q+3}+e_{q+4}+5e_{q+6}
&50&50
\end{array}
\]
Suppose ${\mathcal{N}}\neq\emptyset$ and choose $i\in {\mathcal{N}}\subseteq \{q+1,q+4,q+6\}$. Take $x$ from the table above. In each case, $x^TS_qx=2x_i^2$. Since $i\in {\mathcal{N}}$,
\[
x^T(S_q-2T)x=2x_i^2-2\sum_{j\in {\mathcal{N}}}x_j^2\leq 0 .
\]
But $S_q-2T$ is positive definite by \Cref{prop:Dinvert}, so $x^T(S_q-2T)x>0$ for every nonzero $x$. This contradiction shows
${\mathcal{N}}=\emptyset$, and therefore $T=0$.
\end{proof}
\begin{remark}
 To construct a vector $x$ such that $x^TS_qx=2x_i^2$ by hand, use the plumbing tree. Take $i\in {\mathcal{N}}$, set the weight of $v_i$ to $1$, blow down $v_i$, and then continue with a series of blowdowns (doing a change of basis to $S_q-2I_{\{i\}}$ each time) until the corresponding matrix $S$ is simple enough that we can see some $\tilde x$ such that $\tilde x^TS\tilde x=0$. Then reverse the change of basis to get $x$. See \cite[\S5]{NR} for examples.
\end{remark}
\begin{proposition}\label{prop:JqT0}
For every odd integer $q\geq1$, the lattice $(\Z^{q+6}, S_q)$ does not embed in the standard diagonal lattice of the same rank. That is, $T=0$ fails condition~\ref{cond:finiteob:embedding} of \Cref{prop:finiteob}.
\end{proposition}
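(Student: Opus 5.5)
We argue by contradiction. Suppose $S_q=DD^T$ with $D\in M_{q+6}(\Z)$, and write $\rho_i$ for the $i$th row of $D$. Since $\det(S_q)=\det(J_q)=26q+51$ is odd, \Cref{prop:Dinvert} shows that $\det(D)$ is odd. That makes \Cref{prop:2bridge-root-count} available.

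From the plumbing tree, as confirmed by the vectors in the table in the proof of \Cref{prop:JqTneq0}, the structure is as follows.
\begin{itemize}
\item $v_1,\dots,v_q$ form a chain of weight-$2$ vertices ending at $v_{q+1}$, which has weight $3$.
\item $v_{q+2}$ is the central vertex, of weight $2$. It is adjacent to $v_{q+1}$, $v_{q+3}$ and $v_{q+6}$.
\item The second arm is $v_{q+3},v_{q+4},v_{q+5}$, with weights $2,3,2$.
\item $v_{q+6}$ has weight $3$.
\end{itemize}
So the square-two rows fall into three mutually orthogonal blocks: $\{\rho_1,\dots,\rho_q\}$ with Gram matrix $\Lambda[2^{[q]}]$, $\{\rho_{q+2},\rho_{q+3}\}$ with Gram matrix $\Lambda[2,2]$, and $\{\rho_{q+5}\}$. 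By \Cref{prop:2bridge-root-count}, after permuting and changing signs of coordinates we may take
\[
\rho_i=e_i-e_{i+1}\ (1\le i\le q),\quad
\rho_{q+2}=f_1-f_2,\quad \rho_{q+3}=f_2-f_3,\quad \rho_{q+5}=g_1-g_2 .
\]
Here $e_1,\dots,e_{q+1},f_1,f_2,f_3,g_1,g_2$ are distinct standard basis vectors. There are exactly $(q+1)+3+2=q+6=r$ of them, so they exhaust all coordinates. This tightness is what makes the argument work: every remaining row is determined by its pairings with these normalized rows.

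Next we pin down $\rho_{q+1}$, $\rho_{q+4}$ and $\rho_{q+6}$ by their pairings with the square-two rows.
\begin{itemize}
\item $\rho_{q+4}$ is orthogonal to $\rho_1,\dots,\rho_q$, $\rho_{q+2}$ and $\rho_{q+6}$'s neighbors, and pairs to $-1$ with $\rho_{q+3}$ and $\rho_{q+5}$. Hence its $e$-part is constant, say $k\sum e_j$. Its $f$-part has the form $(a,a,a+1)$ up to the sign conventions fixed above, and its $g$-part has the form $(b,b+1)$.
\item $\rho_{q+1}$ has $e$-part $(c,\dots,c,c\pm1)$ from the pairing $-1$ with $\rho_q$. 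Its $f$-part is constrained by $\rho_{q+1}\cdot\rho_{q+2}=-1$ and $\rho_{q+1}\cdot\rho_{q+3}=0$, and its $g$-part is constant.
\end{itemize}
Each of these rows has square $3$.

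The key parity input is that $q$ is odd, so the $e$-block has $q+1$ coordinates, an even number. A constant $e$-part therefore contributes $(q+1)k^2\equiv 0\pmod 2$ to the square. For $q\ge3$ the bound $(q+1)k^2\le3$ forces $k=0$, and similarly $c\in\{0,-1\}$ with only small possibilities.

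We then do the parity bookkeeping. We read off the parities of $a$, $b$ and of the free parameters of $\rho_{q+1}$ from the square-$3$ conditions, and compute $\rho_{q+1}\cdot\rho_{q+4}$ coordinatewise. The goal is to show that this pairing is odd, whereas $(S_q)_{q+1,q+4}=0$ because $v_{q+1}$ and $v_{q+4}$ are not adjacent. This is the parity contradiction promised in the sketch. If a parity contradiction does not close every subcase, the fallback is to use $\rho_{q+6}$, which is orthogonal to all blocks except pairing $-1$ with $\rho_{q+2}$: its $f$-part is then forced, and its pairings with $\rho_{q+1}$ and $\rho_{q+4}$ give further constraints.

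The main obstacle is the case analysis in this final step. The $f$- and $g$-parts of $\rho_{q+1}$ and $\rho_{q+4}$ each admit a few integer solutions of small square, and every combination must be excluded. The small case $q=1$ needs separate care, because the bound on constant $e$-parts is weaker when the $e$-block has only two coordinates. I would handle $q=1$ by the same parity computation, checking directly that $k=\pm1$ is either ruled out or still yields an odd pairing.
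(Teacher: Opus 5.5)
Your setup is the same as the paper's proof. You normalize the three square-two blocks $\rho_1,\dots,\rho_q$; $\rho_{q+2},\rho_{q+3}$; $\rho_{q+5}$ on disjoint coordinates that exhaust all $q+6$ positions. The pairings then parametrize
\[
\rho_{q+1}=(c,\dots,c,c+1;\,x-1,x,x;\,y,y),\qquad \rho_{q+4}=(k,\dots,k;\,a,a,a+1;\,b,b+1),
\]
and the target contradiction is that $\rho_{q+1}\cdot\rho_{q+4}$ is odd. However, you leave the decisive last step undone, and the route you describe for it, parity bookkeeping from the square-$3$ conditions, does not close on its own.

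For odd $q$, reducing the two square-$3$ equations mod $2$ forces $a$ and $x$ odd. It says nothing about $y$, which enters as $2y^2$, or about $k$, which enters as $(q+1)k^2$. With $a$ odd, one computes $\rho_{q+1}\cdot\rho_{q+4}\equiv 1+k+y \pmod 2$. Your bound $(q+1)k^2\le 3$ gives $k=0$ only for $q\ge3$. Nothing in the proposal rules out $y=\pm1$, and that is exactly the subcase where the parity argument fails. This is presumably why you hedge toward a fallback through $\rho_{q+6}$.

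The missing idea, which the paper uses, is to apply the square-$3$ condition over $\Z$ rather than mod $2$: an integral vector of square $3$ has exactly three nonzero entries, each $\pm1$.
\begin{enumerate}
\item In $\rho_{q+1}$, the blocks $(c,\dots,c,c+1)$ and $(x-1,x,x)$ are automatically nonzero. So the constant block $(y,y)$ must vanish.
\item In $\rho_{q+4}$, the blocks $(a,a,a+1)$ and $(b,b+1)$ are automatically nonzero. So the constant block of length $q+1\ge 2$ must vanish, giving $k=0$. This also covers $q=1$, with no separate treatment.
\item The block $(b,b+1)$ then has exactly one nonzero entry. Hence $(a,a,a+1)$ has exactly two, which forces $a=-1$.
\item Therefore $\rho_{q+1}\cdot\rho_{q+4}=-(x-1)-x=1-2x\neq 0$, contradicting $(S_q)_{q+1,q+4}=0$.
\end{enumerate}
The oddness of $q$ plays no role in this argument, contrary to your ``key parity input,'' and $\rho_{q+6}$ is never needed.
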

\begin{proof}
Suppose such an embedding exists, and identify the ordered vertex basis $v_1,\ldots,v_{q+6}$ with its image in $\Z^{q+6}$. Let $D\in M_{q+6}(\Z)$ be the matrix whose $i$th row is $v_i$. Then $S_q=DD^T$. Since $\det(S_q)=26q+51$ is odd, $\det(D)$ is odd.

We first normalize the square-two rows of $D$ using a familiar argument, and then we use the pairings and squares of $v_{q+1}$ and $v_{q+4}$ to obtain a contradiction.

\claim{1}{After permuting and changing signs of the standard basis vectors, we may assume that our square-two rows of $D$ have the form $v_i=e_{i}-e_{i+1}$.}

The square-two rows form three mutually orthogonal blocks:
\[
v_1,\ldots,v_q;\qquad
v_{q+2},v_{q+3};\qquad
v_{q+5}.
\]
Their Gram matrices are $\Lambda[2^{[q]}]$, $\Lambda[2^{[2]}]$, and $\Lambda[2]$ (respectively). By \Cref{prop:2bridge-root-count}, these blocks can be normalized on disjoint sets of coordinates. Together they use $(q+1)+3+2=q+6$ coordinates, and we can arrange them as $v_i=e_i-e_{i+1}$ for $i\in \{1, \dots, q, q+2, q+3, q+5\}$.

\claim{2}{The pairing $v_{q+1}\cdot v_{q+4}$ given by $S_q$ is incompatible with Claim 1.}

The pairings of $v_{q+1}$ and $v_{q+4}$ with each of the vectors in Claim~1 give us
\[
\begin{aligned}
v_{q+1}
  &=(\underbrace{t,\ldots,t}_{q},t+1;\
      a-1,a,a;\
      c,c),\\
v_{q+4}
  &=(\underbrace{b,\ldots,b}_{q+1};\
      d,d,d+1;\
      f,f+1)
\end{aligned}
\]
for integers $t,a,b,c,d,f$, where the semicolons separate coordinate blocks of sizes $q+1$, $3$, and $2$.

The first two blocks of $v_{q+1}$ are both nonzero. Since $v_{q+1}\cdot v_{q+1}=3$, and an integral vector of square three has exactly three nonzero coordinate (each $\pm1$), the final block $(c,c)$ must be $(0, 0)$.
Similarly, since $v_{q+4}$ is square three, it has three nonzero entries equal to $\pm 1$. The last two blocks of $v_{q+4}$ are both nonzero, so we must have $b=0$. Since the final two coordinates of $v_{q+4}$ differ by one, exactly one of them is nonzero. Then the middle block $(d,d,d+1)$ has exactly two nonzero coordinates, giving us $d=-1$. Thus
\[
v_{q+1}\cdot v_{q+4}
=(a-1)(-1)+a(-1)+a\cdot0
=1-2a\neq0.
\]
This contradicts the orthogonality of $v_{q+1}$ and $v_{q+4}$.
\end{proof}
\begin{remark}
    The preceding proposition also recovers Williams's obstruction to smooth sliceness for these knots \cite[\S4.2]{Wil08}.
\end{remark}
\begin{proof}[Proof of \Cref{thm:Jq}]
By \Cref{prop:Jq-lattice}, $S_q$ represents the intersection form of a positive-definite filling of $-\SigmaTwo(J_q)$, and $\det(S_q)$ is odd, so \Cref{prop:finiteob} applies. \Cref{prop:JqTneq0} shows that every nonzero $T\in\mathcal T(S_q)$ fails condition~\ref{cond:finiteob:embedding}, and \Cref{prop:JqT0} rules it out for $T=0$. So \Cref{prop:finiteob} excludes every possible decomposition $S_q=2CC^T+DD^T$. Since $\sigma(J_q)=0$ by \Cref{prop:Jq-properties}, \Cref{thm:decomp} shows that $\uCP(J_q)=\infty$. 
\end{proof}
\section{Topological versus smooth BPH-sliceness}\label{sec:topvsm}
In this section we compare the topological and smooth notions of BPH-sliceness. Recall that we write $\uCPtop$ and $\uCPbartop$ for the topological counterparts of $\uCP$ and $\uCPbar$, using locally flat disks in place of smooth disks, and we say a knot is topologically BPH-slice when both $\uCPtop$ and $\uCPbartop$ are finite. 

Combining \Cref{thm:decomp} with Seifert matrix decompositions that give upper bounds on the topological projective slicing numbers, we prove the following refinement of \Cref{thm:intro:topvsm}. We give examples of prime alternating knots that are topologically BPH-slice but not smoothly BPH-slice. To the author's knowledge, these are the first such alternating examples pointed out in the literature. Each example has topological projective slicing numbers at most two. One example is rationally slice, incidentally providing an additional answer to \cite[Question~9]{Lee24}. Our examples extend the comparisons of differing topological and smooth projective slicing numbers in~\cite{KMRS24}.
\begin{theorem}\label{thm:topvsm-examples}
The following prime alternating knots with vanishing signature have the indicated projective slicing numbers:
\[
\begin{array}{c|cc|cc}
K&\uCPtop(K)&\uCPbartop(K)&\uCP(K)&\uCPbar(K)\\ \hline
13a_{4237}&1&1&1&\infty\\
10_{109}&a&a&\infty&\infty\\
10_{28}&1&1&\infty&1\\
12a_{792}&b&c&\text{not determined here}&\infty
\end{array}
\]
where $a,b,c\in\{1,2\}$. Moreover:
\begin{enumerate}
\item\label{item:13a4237} $13a_{4237}$ satisfies the Fox--Milnor criterion.
\item\label{item:10_109} $10_{109}$ is rationally slice.
\item\label{item:2bridge-top}$r_0(\overline{10_{28}})<\infty$, while $r_0(12a_{792})=r_0(\overline{12a_{792}})=\infty$.
\end{enumerate}
\end{theorem}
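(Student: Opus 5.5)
The table has three kinds of entries, and I will prove them separately. Finite smooth and topological upper bounds come from explicit constructions. Topological lower bounds come from the signature, the Fox--Milnor criterion and the determinant. The infinite smooth entries come from the lattice test of \Cref{prop:finiteob}.

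\emph{Upper bounds.} For the topological entries I will apply \cite[Thm.~1.3]{KMRS24}. It converts a Seifert matrix decomposition, namely a change of basis exhibiting a suitable metabolizer-type block structure relative to $n$ copies of a $\pm1$ form, into a locally flat null-homologous disk in $\#^n\CP$. I will search the KnotInfo Seifert matrices of each knot and its mirror for such decompositions with $n\le 2$. This gives $a,b,c\in\{1,2\}$, and $1$ where the table claims it.

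For the smooth entries $\uCP(13a_{4237})=1$ and $\uCPbar(10_{28})=1$, I will exhibit in each case a single crossing change, of the appropriate sign, to a slice knot. Changing a positive crossing yields a null-homologous disk in $\CP$, as in the $4_1$ example in \Cref{subsec:intro:lattice}. Since smooth bounds dominate topological ones, these also give the corresponding topological upper bounds.

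\emph{Lower bounds of $1$.} A knot with $u=0$ in either sense would be topologically slice. For $10_{28}$ and $13a_{4237}$ I need to rule this out. I will use Fox--Milnor where it fails, and otherwise the lattice obstruction at $T=0$, which shows the knot is not smoothly slice. For the topological statement I need a topological obstruction. For $13a_{4237}$, which satisfies Fox--Milnor, I would use Casson--Gordon or twisted Alexander polynomials if needed. I flag this as a possible gap, and an alternative is to cite that KnotInfo records it as not topologically slice.

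\emph{The infinite smooth entries.} For each knot in question I will take the positive-definite Goeritz matrix $G$ of a reduced alternating diagram, obtained as a reduced Tait-graph Laplacian as in \Cref{subsubsec:checkerboard}. I choose the coloring, or pass to the mirror, so that $G$ fills $-\SigmaTwo(K)$ for the orientation needed. All diagonal entries of $G$ are at most $4$, so every $T\in\mathcal T(G)$ has $T_{ii}\in\{0,1\}$ and the candidate set is small. For each candidate I will first test whether $\det(G-2T)$ is an odd square. For the survivors I will rule out $G-2T=DD^T$ by the square-two normalization of \Cref{prop:2bridge-root-count} and the characteristic-vector or parity arguments of \Cref{prop:Th3qdecomp} and \Cref{prop:JqT0}. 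Then \Cref{thm:decomp} gives $\uCP=\infty$ or $\uCPbar=\infty$ as appropriate. For $10_{109}$ a single orientation suffices if the knot is amphichiral; otherwise I will treat both colorings.

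\emph{Remaining items.} Item~\eqref{item:13a4237} is a factorization $\Delta(t)=\pm f(t)f(t^{-1})$. Item~\eqref{item:10_109} follows from strong negative amphichirality and Kawauchi~\cite{Kaw09}. Item~\eqref{item:2bridge-top} uses the 2-bridge formulas of \cite{DS24}, as in Appendix~\ref{app:Lh-r0}, since $10_{28}$ and $12a_{792}$ are 2-bridge. The main obstacle is the case-by-case exclusion of candidates $T$ whose determinant is a square, for instance for $12a_{792}$. There I would first normalize the maximal chains of diagonal-$2$ rows and then force a parity contradiction on one off-diagonal pairing.
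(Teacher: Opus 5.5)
Your overall plan follows the same route as the paper. You use crossing changes for the finite smooth entries and \cite[Thm.~1.3]{KMRS24} for the topological upper bounds. You apply the candidate test of \Cref{prop:finiteob} on definite fillings for the infinite entries, a Fox--Milnor factorization for item~(1), and amphichirality for item~(2). Two steps, however, are not closed.

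\textbf{Gap 1: topological non-sliceness of $13a_{4237}$.} This is the step you flag yourself. Nothing in the proposal shows that $13a_{4237}$ is not topologically slice, so $\uCPtop(13a_{4237})\geq1$ and $\uCPbartop(13a_{4237})\geq1$ remain unproven. Casson--Gordon and twisted Alexander invariants are named but not computed, and the KnotInfo citation is uncertain. The missing idea is that you already have smooth non-sliceness for free. The Goeritz filling you use for $\uCPbar(13a_{4237})=\infty$ excludes the candidate $T=0$ too, which is the classical sliceness obstruction. Dunfield--Gong verified that smooth and topological sliceness agree for prime alternating knots with at most $19$ crossings \cite[Thm.~1.7]{DG}. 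Combining these is exactly how the paper obtains the topological lower bound.

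\textbf{Gap 2: item~(3) for $10_{28}$.} The method of Appendix~\ref{app:Lh-r0}, namely \Cref{lem:2bridge-r0-criterion}, only certifies $r_0=\infty$ by exhibiting one chain. Proving $r_0(\overline{10_{28}})<\infty$ is a different task: you must bound $\deg_I(d\alpha-\delta_2(1))$ from below over every $\alpha\in C_{-1}$ with $\deg_I(\alpha)\leq0$. That needs the full I-graded S-complex. The paper does not redo this computation. It identifies $10_{28}\simeq K_{m,0}=K(53,19)$ and cites \cite[Prop.~6.21 and the proof of Thm.~1.15]{DISST25}.

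Making the identifications $10_{28}\simeq K_{m,0}$ and $12a_{792}\simeq\overline{L_3}$ also gives you several entries directly, with no new candidate exclusions:
\begin{itemize}
\item $\uCP(10_{28})=\infty$ from \Cref{thm:2bridgeDISST};
\item $\uCPbar(12a_{792})=\uCP(L_3)=\infty$ from \Cref{thm:2bridgenew};
\item $r_0(12a_{792})=r_0(\overline{12a_{792}})=\infty$ from \Cref{prop:Lh-r0}.
\end{itemize}

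\textbf{Smaller points.}
\begin{itemize}
\item The lower bounds $a,b,c\geq1$ also require $10_{109}$ and $12a_{792}$ to be non-topologically-slice, which you never address. Both have determinant $85$, which is not a square, so Fox--Milnor fails.
\item You assert without justification that $10_{109}$ is strongly negative amphichiral. The paper needs only negative amphichirality, recorded in KnotInfo, together with \cite[Thm.~A]{DPLS25}.
\item \cite[Thm.~1.3]{KMRS24} is not a metabolizer-block statement. It asks for $A=B-\sum_{i=1}^n c_ic_i^T$ with $\det(tB-B^T)=\pm t^k$. Your search must therefore produce explicit $B$ and $c_i$, as in Appendix~\ref{subsec:seifert}.
\end{itemize}
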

\begin{sketch}
None of these knots are topologically slice, so all of their projective slicing numbers must be nonzero. We show the finite smooth projective slicing numbers using crossing changes, and the remaining topological projective slicing numbers using the Seifert matrix decomposition criterion of \Cref{thm:KMRS-topological}~(\cite[Thm.~1.3]{KMRS24}). The infinite smooth projective slicing numbers of $13a_{4237}$ and $10_{109}$ follow from applying our lattice obstruction to checkerboard fillings. Observing that $K_{m,0}\simeq 10_{28}$ and $\overline{L_3}\simeq 12a_{792}$ gives us $\uCP(10_{28})$, $\uCPbar(12a_{792})$, and \eqref{item:2bridge-top}. The proofs appear in \Cref{subsec:13a4237,subsec:10_109,subsec:2bridge-top}, with the Seifert matrix computations deferred to Appendix~\ref{subsec:seifert}.
\end{sketch}

Many known obstructions to BPH-sliceness vanish for the knots $13a_{4237}$, $10_{109}$, and $12a_{792}$ (while $10_{28}$ is already obstructed from being $0$-bipolar by \cite{DISST25}). We discuss these invariants and their implications in \Cref{sec:exotica}.
\begin{remark}
 These examples are not unique in their projective slicing properties or their listed vanishing invariants. We chose them because they nicely illustrate the separation of topological and smooth notions for prime alternating knots. 
\end{remark}
Our interest in prime alternating knots is motivated by the following conjecture of Dunfield--Gong.
\begin{conj}[{\cite[Conj.~1.15]{DG}}]\label{conj:DG}
A prime alternating knot is smoothly slice if and only if it is topologically slice.
\end{conj}
Dunfield--Gong establish this equivalence for prime alternating knots with at most $19$ crossings~\cite[Thm.~1.7]{DG}. It is therefore natural to ask whether topological and smooth BPH-sliceness coincide for prime alternating knots. Our examples show that the corresponding agreement between categories already fails for BPH-sliceness at low crossing number, even with small topological slicing numbers and many vanishing invariants.

For non-alternating knots, the distinction between smooth and topological BPH-sliceness already appears in the literature.
\begin{example}
Let $K$ be the positive-clasped untwisted Whitehead double of the right-handed trefoil. As discussed
in~\cite[\S1.2]{KMRS24}, $K$ is topologically BPH-slice (in fact it is topologically slice) but not smoothly BPH-slice. It is prime because it has Seifert genus one, and non-alternating because it is nontrivial with Alexander polynomial one. Here $\tau(K)=1$ obstructs smooth BPH-sliceness.
\end{example}
For prime alternating knots, \cite{KMRS24} provides examples where the topological and smooth projective slicing numbers differ, but all four quantities are finite in their examples. 
\subsection{Topological upper bounds}\label{subsec:topvsm-seifert}
We use the following result to bound the topological projective slicing numbers of our examples.
\begin{theorem}[{\cite[Thm.~1.3]{KMRS24}}]\label{thm:KMRS-topological}
Let $A$ be a $2g\times2g$ Seifert matrix for a knot $K$. Suppose there are an integral matrix $B\in M_{2g}(\Z)$ and column vectors $c_1,\ldots,c_n\in\Z^{2g}$ such that
\[
A=B-\sum_{i=1}^n c_ic_i^T, \qquad \det(tB-B^T)=\pm t^k
\]
for some integer $k$. Then $\uCPtop(K)\leq n$.
\end{theorem}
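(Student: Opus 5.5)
The plan is to realize the algebraic decomposition $A=B-\sum_i c_ic_i^T$ geometrically: perform $n$ full twists on a Seifert surface $F$ of $K$ to reach a knot $K'$ with Seifert matrix $B$, and then use Freedman's theorem to slice $K'$ topologically. The twisting cobordism lives in a punctured $\#^n\CP$, and the locally flat disk for $K'$ caps it off.

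First, fix a Seifert surface $F$ with a basis $\alpha_1,\dots,\alpha_{2g}$ of $H_1(F)$ realizing $A$. For each $i$, I would choose an unknot $U_i\subset S^3\setminus F$ with $\operatorname{lk}(U_i,\alpha_j)=(c_i)_j$ for every $j$.
\begin{itemize}
\item Such a curve exists because, by Alexander duality, $H_1(S^3\setminus F)\cong\Z^{2g}$ is dual to $H_1(F)$ via linking. Any representing curve can be made unknotted by crossing changes of the curve with itself, and these do not alter the linking numbers.
\item The $U_i$ can be taken pairwise disjoint and forming an unlink: crossing changes between different $U_i$ are also allowed, since they do not touch $F$.
\item Because each $U_i$ misses $F$, we get $\operatorname{lk}(U_i,K)=U_i\cdot F=0$.
\end{itemize}

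Next, view $U=\bigsqcup U_i$ as an unlink with each component framed $+1$, and do the corresponding $-1$ twists along disks $\Delta_i$ bounded by $U_i$. These disks meet $F$ only in arcs crossing the bands. The twist sends $F$ to a Seifert surface $F'$ of a new knot $K'$. A twist along $U_i$ changes linking numbers by
\[
\operatorname{lk}(x,y)\mapsto \operatorname{lk}(x,y)\mp\operatorname{lk}(x,U_i)\operatorname{lk}(y,U_i),
\]
applied to $x=\alpha_j^+$ and $y=\alpha_k$. So the Seifert matrix of $F'$ in the transported basis is $A\pm\sum_i c_ic_i^T$. With the sign matching the $+1$-framing convention (this sign bookkeeping is where I expect to be most careful), this gives $B$.

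The hypothesis $\det(tB-B^T)=\pm t^k$ says $\Delta_{K'}\doteq 1$, so Freedman's theorem makes $K'$ topologically slice. It remains to see that this slicing transfers back to $K$:
\begin{itemize}
\item Attaching $+1$-framed $2$-handles to $S^3\times[0,1]$ along the unlink $U\times\{1\}$ gives a twice-punctured $\#^n\CP$. Its top boundary is identified with $S^3$ by blowing down, and under this identification $K\times\{1\}$ becomes $K'$.
\item The product annulus $K\times[0,1]$ therefore gives a smooth annulus from $K$ to $K'$.
\item This annulus is null-homologous rel boundary, because its intersection with the cocore of each $2$-handle is $\operatorname{lk}(K,U_i)=0$.
\item Capping with a locally flat slice disk for $K'$ in $B^4$ produces a null-homologous locally flat disk for $K$ in $(\#^n\CP)^\circ$, so $\uCPtop(K)\le n$.
\end{itemize}

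The main obstacle is the compatibility of sign conventions. The framing of the blow-up, the direction of the twist, and the Seifert-form convention $\operatorname{lk}(\alpha_j^+,\alpha_k)$ together must produce $A+\sum_i c_ic_i^T$ rather than $A-\sum_i c_ic_i^T$, so that positive projective planes $\CP$ rather than $\CPbar$ appear. I would fix this by checking a single clasp example, for instance the figure-eight knot with one $c$, and then propagating by linearity.

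Simultaneity of the twists is harmless once the $U_i$ form an unlink. Otherwise their mutual linking would alter both the framings and the cross terms $c_ic_j^T$.
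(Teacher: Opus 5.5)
Your plan is the right kind of argument. Twist $K$ along an unlink that misses a Seifert surface $F$ to reach a knot $K'$ with Seifert matrix $B$, slice $K'$ by Freedman, and read the twists as blow-downs. The paper does not reprove this result; it quotes it from \cite{KMRS24}, and the same blow-up mechanism underlies the crossing-change criterion \cite[Lem.~2.4]{MP23} that the paper uses.

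However, the sign you defer to the end is a genuine error. It cannot be fixed by choosing a Seifert-form convention, because $c_ic_i^T$ is symmetric.
\begin{itemize}
\item Blowing down a $+1$-framed unknot inserts a \emph{left}-handed full twist and changes linking numbers by $\operatorname{lk}(x,y)\mapsto\operatorname{lk}(x,y)-\operatorname{lk}(x,U_i)\operatorname{lk}(y,U_i)$. So your $K'$ has Seifert matrix $A-\sum_i c_ic_i^T$, not $B$. For example, for the right-handed trefoil take $A=\left(\begin{smallmatrix}-1&1\\0&-1\end{smallmatrix}\right)$ and $c=(1,0)^T$. The hypothesis holds with $n=1$, since $\det(tB-B^T)=t$, but $A-cc^T$ has Alexander polynomial $2t^2-3t+2$.
\item Separately, $K$ sits in the \emph{incoming} end $S^3\times\{0\}$ of your cobordism. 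After you cap the outgoing end, the punctured $\#^n\CP$ you build has oriented boundary $-S^3\times\{0\}$. Your disk therefore exhibits $\Kmirror$, not $K$, as H-slice.
\end{itemize}
Run literally, your argument proves the mirror statement: if $A-\sum_ic_ic_i^T$ has trivial Alexander polynomial, then $\uCPbartop(K)\le n$. Checking a clasp example on the Seifert matrix alone would not catch this, because what is missing is the orientation of the end that contains $K$.

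The repair is to use $-1$-framed handles, i.e.\ right-handed twists along the $\Delta_i$ (generalized positive-to-negative crossing changes).
\begin{itemize}
\item These twists change linking numbers by $+\operatorname{lk}(x,U_i)\operatorname{lk}(y,U_i)$, so the twisted surface has Seifert matrix exactly $B$.
\item The cobordism is then a twice-punctured $\#^n\CPbar$. Capping its top with a locally flat slice disk for $K'$ gives a punctured $\#^n\CPbar$, call it $X$, with $\partial X=-S^3\times\{0\}\supset K$.
\item The orientation-reversed manifold $\overline{X}$ is a punctured $\#^n\CP$ whose correctly oriented boundary contains $K$.
\end{itemize}
Equivalently, read the cobordism from the top. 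Then $K$ is obtained from the slice knot $K'$ by blowing down $+1$-framed unknots, i.e.\ by generalized negative-to-positive crossing changes, exactly as in \cite[Lem.~2.4]{MP23}.

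Two smaller points also need fixing.
\begin{itemize}
\item Your justification for the unlink does not work as stated. A crossing change moves a strand across a short arc, and if that arc meets the surface $F$, the new curve is no longer disjoint from $F$. Instead, put $F$ in disk--band form. Realize each $c_i$ by a descending curve built from band meridians, lying in a thin shell around the $0$-handle, and use disjoint shells for different $i$.
\item The annulus $K\times[0,1]$ misses the handles, so pairing it with cocores tells you nothing. Null-homology should be tested against the spheres $\Delta_i\cup\mathrm{core}_i$, whose algebraic intersection with the disk is $\pm\operatorname{lk}(K,U_i)=0$.
\end{itemize}
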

Using the Seifert matrix $-A^T$ for the mirror gives the corresponding upper bound on $\uCPbartop(K)$. Each application below can be checked from a Seifert matrix and a list of integral vectors. A knot obtained from a smoothly slice knot by a negative-to-positive crossing change is H-slice in $\CP$~\cite[Lem.~2.4]{MP23}, which also gives a topological upper bound.
\subsection{\texorpdfstring{$13a_{4237}$}{13a4237}} \label{subsec:13a4237}
This example separates the two categories even when both topological projective slicing numbers are as small as possible for a nonslice knot. It also satisfies the Fox--Milnor criterion and is smoothly H-slice in $\CP$, while our lattice obstruction rules out H-sliceness in every $\#^n\CPbar$.
The following four propositions prove the projective slicing assertions for
$13a_{4237}$ and show~\eqref{item:13a4237}. 
%
%
\begin{proposition}\label{prop:13a4237sm-CP}
$\uCP(13a_{4237})=1$.
\end{proposition}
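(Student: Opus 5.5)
The plan is to prove $\uCP(13a_{4237})=1$ by establishing matching upper and lower bounds.

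\emph{Lower bound.} We need $\uCP(13a_{4237})\geq 1$, i.e.\ that $13a_{4237}$ is not slice. I would get this from the topological side, as in the sketch of \Cref{thm:topvsm-examples}: the knot is not topologically slice, hence not smoothly slice, so it is not H-slice in $S^4=\#^0\CP$. Concretely, I would cite KnotInfo's topological slice status for $13a_{4237}$; it cannot be certified by the Fox--Milnor condition, which the knot satisfies. Failing that, I would use a classical lattice argument. Take a checkerboard Goeritz matrix $G$ of the reduced alternating diagram, positive definite and filling $\mp\SigmaTwo(K)$. Then check, via \Cref{rem:T0} with $T=0$, that $G$ is not $DD^T$ for a square $D$. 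Since the determinant is presumably a square (Fox--Milnor), this check would have to be a genuine nonembedding argument using square-two rows and \Cref{prop:2bridge-root-count}.

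\emph{Upper bound.} We need $\uCP(13a_{4237})\leq 1$. I would use \cite[Lem.~2.4]{MP23}: a knot obtained from a smoothly slice knot by changing a single negative crossing to a positive one is H-slice in $\CP$. So the task is to locate a crossing $c$ in a minimal (or other convenient) diagram of $13a_{4237}$ such that changing $c$ yields a slice knot $K'$, with $c$ negative in $K'$ and positive in $13a_{4237}$.

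To find such a crossing, I would run through the 13 crossings of the alternating diagram and identify each resulting knot (via SnapPy or KnotInfo invariants). The aim is one that is a known slice knot, for instance the unknot or a ribbon knot such as $6_1$, $8_8$, $8_9$ or $8_{20}$. Sliceness of $K'$ would then be exhibited by an explicit ribbon move or by citing KnotInfo.

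\emph{Main obstacle.} The hardest step is finding the right crossing and verifying that $K'$ is slice, with the sign convention checked. Recall that $\uCP(K)$ concerns changes positive$\to$negative in $K$, i.e.\ negative$\to$positive applied to the slice knot (cf.\ $4_1$ in the introduction). If no single crossing change works in the minimal diagram, my next attempt would be a non-minimal diagram, or a band/null-homologous twist argument giving an H-slice disk in $\CP$ directly.
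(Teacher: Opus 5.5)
Your method matches the paper's: the upper bound comes from \cite[Lem.~2.4]{MP23} applied to a single crossing change from a slice knot, and your sign bookkeeping is right (a positive crossing of $13a_{4237}$ that becomes negative in the slice knot $K'$). But as written the proposal is a search procedure, not a proof. The only substantive content of the upper bound is a specific crossing whose change gives a slice knot, and you never produce one. The paper supplies it. In SnapPy's diagram of $13a_{4237}$, changing the circled positive crossing (\Cref{fig:cc-a}) creates a pair of crossings that cancel by a Reidemeister~II move, leaving a reduced alternating $11$-crossing diagram $K'$. Three facts identify $K'$:
\begin{enumerate}
\item its Jones polynomial is $V_{11a_{169}}(t^{-1})$;
\item it is prime by \cite{Men84}, since $t^4V_{K'}(t)$ is irreducible;
\item no other knot with at most $11$ crossings, or its mirror, has this polynomial.
\end{enumerate}
So $K'\simeq\overline{11a_{169}}$, which is smoothly slice by \cite{KnotInfo}. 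Note that the slice knot has $11$ crossings; it is not one of the small ribbon knots you list.

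For the lower bound, citing topological non-sliceness is not an independent route here. The knot satisfies Fox--Milnor and has vanishing Levine--Tristram signatures. In the paper its topological non-sliceness is deduced \emph{from} smooth non-sliceness via \cite[Thm.~1.7]{DG}. Your fallback is what the paper actually uses, but it has to be carried out. The $\eta=+1$ coloring gives $G=\Delta_{\mathrm{red}}(\Gamma_W)$ with $\det G=289=17^2$ and diagonal $(2,3,3,3,2,2,3,2,2)$. \Cref{prop:2bridge-root-count} normalizes the square-two rows to $\rho_5=e_1-e_2$, $\rho_9=e_3-e_4$, $\rho_6=e_5-e_6$, $\rho_1=e_7-e_8$, $\rho_8=e_8-e_9$. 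The pairings of $\rho_3$ with these rows then force $\rho_3\cdot\rho_3$ to be even, contradicting $\rho_3\cdot\rho_3=3$. This is the $T=0$ case of \Cref{prop:13a4237sm-CPbar}. That $G$ fills $+\SigmaTwo(13a_{4237})$ rather than $-\SigmaTwo(13a_{4237})$ is harmless for the sliceness test, since a rational ball can be glued on with either orientation.
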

\begin{figure}[H]
  \centering
  \tikzset{every picture/.append style={scale=0.4}}
  \begin{subfigure}[b]{0.33\textwidth}
    \centering
    \input{figures/cc3-1-13a4237}
    \caption{}\label{fig:cc-a}
  \end{subfigure}\hfill
  \begin{subfigure}[b]{0.33\textwidth}
    \centering
    \input{figures/cc3-2-switched}
    \caption{}\label{fig:cc-b}
  \end{subfigure}\hfill
  \begin{subfigure}[b]{0.30\textwidth}
    \centering
    \input{figures/cc3-3-snappy-mirror-11a169}
    \caption{}\label{fig:cc-c}
  \end{subfigure}
  \caption{(A) SnapPy's diagram of $13a_{4237}$, with the crossing to be changed circled. (B) $K'$ is the diagram after the crossing change. The two circled crossings cancel by a Reidemeister II move. (C) SnapPy's diagram of $\overline{11a_{169}}$.}
  \label{fig:cc}
\end{figure}
\begin{proof}
Changing the marked positive crossing in \Cref{fig:cc-a} gives $\overline{11a_{169}}$, which is smoothly slice~\cite{KnotInfo}. Reversing this change recovers $13a_{4237}$ by a negative-to-positive crossing change, so  $\uCP(13a_{4237})\leq1$. Since $13a_{4237}$ is not smoothly slice (e.g., \Cref{prop:13a4237sm-CPbar}), we have $\uCP(13a_{4237})=1$.

To see that $K'\simeq\overline{11a_{169}}$, note that the two circled crossings in \Cref{fig:cc-b} cancel by a Reidemeister~II move, leaving a reduced alternating $11$-crossing diagram of $K'$. As a diagram on $S^2$, it differs from SnapPy's diagram of $\overline{11a_{169}}$ in \Cref{fig:cc-c} by a single flype, but rather than exhibit this isotopy, we identify $K'$ by its Jones polynomial. With KnotInfo's normalization,
\[
V_{K'}(t) = -t^{7}+3t^{6}-7t^{5}+12t^{4}-16t^{3}+19t^{2}-19t+18-13t^{-1}+8t^{-2}-4t^{-3}+t^{-4},
\]
which is $V_{11a_{169}}(t^{-1})$. Since $K'$ is alternating and $t^{4}V_{K'}(t)$ is irreducible over $\mathbb{Z}$, $K'$ is prime \cite{Men84}. Among prime knots with at most $11$ crossings and their mirror images, $\overline{11a_{169}}$ is the only one with this Jones polynomial \cite{KnotInfo}, so $K'\simeq\overline{11a_{169}}$.
\end{proof}
\begin{proposition}\label{prop:13a4237sm-CPbar}
    $\uCPbar(13a_{4237})=\infty$.
\end{proposition}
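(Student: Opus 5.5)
Since $\uCPbar(13a_{4237})=\uCP(\overline{13a_{4237}})$, I will show that $\uCP(\overline{13a_{4237}})=\infty$ by running the four-step outline of \Cref{subsec:sketch} on the mirror $K=\overline{13a_{4237}}$. Its signature vanishes, so \Cref{thm:decomp} applies. It is alternating, so a reduced alternating diagram has a checkerboard coloring whose Tait graph has positive-definite reduced Laplacian $G=\Delta_{\mathrm{red}}(\Gamma)$. This $G$ is the intersection form of a simply-connected positive-definite filling of $-\SigmaTwo(K)$, with $\det(G)=\det(13a_{4237})$ odd. I have to choose the coloring that bounds $-\SigmaTwo(K)$ rather than $\SigmaTwo(K)$. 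The $\CP$ direction is realized by a crossing change (\Cref{prop:13a4237sm-CP}), so \Cref{thm:decomp} applied to the wrong filling would give no obstruction, which is a useful sanity check on orientations. As stated in \Cref{subsec:sketch}, the Goeritz matrices in this section have diagonal entries at most $4$. I will use the Tait graph whose vertex degrees are at most $4$, and delete a vertex so that every diagonal entry of $G$ lies in $\{2,3,4\}$.

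Because the diagonal is small, every $T\in\mathcal T(G)$ has $T_{ii}\in\{0,1\}$, and $T_{ii}=0$ whenever $G_{ii}=2$. By positive semidefiniteness, $T$ is supported on the principal submatrix indexed by the vertices of degree $3$ or $4$, and its off-diagonal entries lie in $\{-1,0,1\}$. I will then split the finite candidate list into two parts. First, for most candidates I will compute $\det(G-2T)$ and check that it is not a positive odd square, which rules them out by the last sentence of \Cref{prop:finiteob}. The candidate $T=0$ is also excluded this way if $\det(13a_{4237})$ is not a square. Since the knot satisfies Fox--Milnor, its Alexander polynomial factors as $f(t)f(t^{-1})$ and the determinant is a square, so $T=0$ must be handled by a lattice argument. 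For $T=0$ and for the remaining candidates with square determinant, I will run arguments in the style of \Cref{prop:JqT0} and \Cref{prop:Th3qdecomp}. Using \Cref{prop:2bridge-root-count}, I normalize the chains of square-two rows of $D$ to $e_i-e_{i+1}$. A row with $G_{ii}-2T_{ii}=1$ is a signed standard basis vector. Rows of square $3$ or $4$ are then pinned down by their pairings with these normalized rows, until I reach either a parity contradiction in some pairing or a count showing that more than $r$ coordinates are needed. When that is awkward, I will fall back on the characteristic-vector trick: take the sum of rows weighted so that it is characteristic, and compare its norm with the sum of entries of $G-2T$.

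The main obstacle is step two. Unlike $\Th(3,q)$, the Goeritz lattice here has mixed diagonal entries and no obvious symmetry, so the candidate set is not tiny. The hardest case is $T=0$, because the Fox--Milnor condition makes the determinant a square, which is exactly why this knot is interesting. I expect to need an explicit embedding-exclusion argument for $G$ itself, leaning on the square-two chains. If hand enumeration becomes unwieldy, I will organize the candidates by the support pattern of $T$ on the degree-$\ge3$ vertices and use symmetries of the Tait graph to reduce the cases. Once every $T$ fails \ref{cond:finiteob:embedding}, \Cref{prop:finiteob} and \Cref{thm:decomp} give $\uCP(\overline{13a_{4237}})=\infty$, which means $\uCPbar(13a_{4237})=\infty$.
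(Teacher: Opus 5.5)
Your setup matches the paper's, but the step that actually excludes the candidates is never carried out, so as written this is a plan rather than a proof.

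What is right:
\begin{itemize}
\item The Tait graph with all degrees at most $4$ is the one the paper uses. The other Tait graph has five vertices and thirteen edges, so it cannot qualify.
\item Its reduced Laplacian with $v_{10}$ deleted has diagonal $(2,3,3,3,2,2,3,2,2)$ and determinant $289=17^2$.
\item Your ``sanity check'' is a valid way to fix the orientation. Since $13a_{4237}$ is H-slice in $\CP$, any Goeritz matrix you obstruct cannot be a filling of $-\SigmaTwo(13a_{4237})$. The paper instead reads the orientation off from $\eta=+1$.
\end{itemize}

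The gap is that you defer the exclusion to ``arguments in the style of'' earlier propositions, and the concrete fallbacks you name do not close it here:
\begin{itemize}
\item \emph{No coordinate overflow.} The square-two rows $\rho_1,\rho_5,\rho_6,\rho_8,\rho_9$ form orthogonal blocks of lengths $1,1,1,2$, since only $v_1$ and $v_8$ are adjacent. These use exactly $2+2+2+3=9=r$ coordinates.
\item \emph{No characteristic-vector violation.} For $T=0$ the characteristic combination is $\rho_2+\rho_5+\rho_8+\rho_9$. These four vertices are pairwise nonadjacent, so its norm is exactly $3+2+2+2=9$, which meets the bound rather than violating it.
\end{itemize}

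The missing idea is that a single row settles every candidate at once. This makes your determinant screen, and your separate worry about $T=0$, unnecessary.
\begin{itemize}
\item For every $T\in\mathcal T(G)$, the rows and columns of $T$ indexed by $1,5,6,8,9$ vanish. So those rows of $D$ keep all their pairings from $G$.
\item By \Cref{prop:2bridge-root-count} they normalize to $\rho_5=e_1-e_2$, $\rho_9=e_3-e_4$, $\rho_6=e_5-e_6$, $\rho_1=e_7-e_8$, $\rho_8=e_8-e_9$.
\item The vertex $v_3$ is adjacent to $v_5$ and $v_9$ but not to $v_1,v_6,v_8$. Also $\rho_3\cdot\rho_3=3-2T_{33}\in\{1,3\}$, so $\rho_3=(x_1,\ldots,x_9)$ has entries in $\{0,\pm1\}$.
\item The pairings force $x_1-x_2=x_3-x_4=-1$, $x_5=x_6$, and $x_7=x_8=x_9$.
\item The first four coordinates then contribute exactly $2$. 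The last three must vanish, since otherwise they add $3$. The pair $x_5,x_6$ contributes an even amount.
\item Hence $\rho_3\cdot\rho_3$ is even, contradicting $\rho_3\cdot\rho_3\in\{1,3\}$.
\end{itemize}
So every candidate, including $T=0$, fails \ref{cond:finiteob:embedding}. Then \Cref{prop:finiteob} and \Cref{thm:decomp}, applied to $\overline{13a_{4237}}$, finish the proof as you describe.
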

\begin{proof}
To show $\uCPbar(13a_{4237})=\infty$, we use the following checkerboard coloring, which has $\eta=+1$:
\begin{figure}[H]
  \centering
  \begin{subfigure}[b]{0.4\textwidth}
    \centering
    {\tikzset{every picture/.append style={scale=0.5625}}\input{figures/13a4237-checkerboard}}
  \end{subfigure}
  \hspace{0.1\textwidth}%
  \begin{subfigure}[b]{0.4\textwidth}
    \centering
    {\tikzset{every picture/.append style={scale=0.5625}}\input{figures/13a4237-tait-white-overlay}}
  \end{subfigure}
  \caption{A checkerboard coloring of $13a_{4237}$ and its Tait graph $\Gamma_W$}
  \label{fig:13a4237-checkerboard-graph}
\end{figure}
Since $\eta=+1$, the checkerboard construction gives a positive-definite filling of $\SigmaTwo(13a_{4237})$ whose intersection form is given by $G=\Delta_{\mathrm{red}}(\Gamma_W)$. Labeling the vertices of the Tait graph as in the figure and deleting $v_{10}$ gives $\det(G)=289$ and diagonal entries $(2,3,3,3,2,2,3,2,2)$. The remaining entries can be read off from $\Gamma_{W}$ or the checkerboard coloring directly.

Suppose $G-2T=DD^T$ for $T\in\mathcal T(G)$ and $D\in M_9(\Z)$, and write $\rho_1,\ldots,\rho_9$ for the rows of $D$. The rows and columns of $T$ indexed by $1,5,6,8,9$ vanish. Consequently, these rows of $D$ have square $2$ and retain all their pairings from $G$.

By \Cref{prop:Dinvert}, $\det(D)$ is odd. Among the five specified square-two rows, only $\rho_1$ and $\rho_8$ have nonzero pairing (because $v_1$ and $v_8$ are adjacent in $\Gamma_W$, $\rho_1\cdot\rho_8=-1$). Thus they form mutually orthogonal blocks of lengths $1,1,1,2$. By \Cref{prop:2bridge-root-count}, after permuting and changing signs of coordinates, we may write
\[
\rho_5=e_1-e_2,\qquad
\rho_9=e_3-e_4,\qquad
\rho_6=e_5-e_6,\qquad
\rho_1=e_7-e_8,\qquad
\rho_8=e_8-e_9.
\]

Write $\rho_3=(x_1,\ldots,x_9)$. Its square is $3-2T_{33}\in\{1,3\}$, so every coordinate belongs to $\{0,\pm1\}$. Its pairings with the five displayed rows give
\[
x_1-x_2=x_3-x_4=-1,\qquad
x_5=x_6,\qquad
x_7=x_8=x_9.
\]
The first four coordinates contribute exactly $2$ to its square. The last three must vanish, since otherwise they contribute at least $3$ more. The remaining two coordinates contribute an even number, so $\rho_3$ has even square, a contradiction. Thus every candidate, including $T=0$, fails condition~\ref{cond:finiteob:embedding}. Since $\sigma(13a_{4237})=0$, applying \Cref{thm:decomp} to $\overline{13a_{4237}}$ gives $\uCPbar(13a_{4237})=\infty$.
\end{proof}
\begin{proposition}\label{prop:13a4237top}
$\uCPtop(13a_{4237})=\uCPbartop(13a_{4237})=1$.
\end{proposition}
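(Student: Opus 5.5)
The plan is to prove matching lower and upper bounds on both topological slicing numbers.

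For the lower bounds, $13a_{4237}$ is not topologically slice: its Alexander polynomial is not $1$ and it is alternating, so this is part of the Dunfield--Gong verification through $19$ crossings. Since the Fox--Milnor criterion holds, the obstruction must come from elsewhere, for instance twisted Alexander polynomials or Casson--Gordon invariants as in \cite{DG}, or simply KnotInfo's record of topological non-sliceness. Any knot that is topologically H-slice in $\#^0\CP=S^4$ is topologically slice. Hence $\uCPtop(13a_{4237})\geq1$ and $\uCPbartop(13a_{4237})\geq1$.

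For the upper bound on $\uCPtop$, the smooth statement does the work. By \Cref{prop:13a4237sm-CP}, $13a_{4237}$ is smoothly H-slice in $\CP$. A smooth null-homologous disk is in particular locally flat, so $\uCPtop(13a_{4237})\leq\uCP(13a_{4237})=1$.

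For the upper bound on $\uCPbartop$, I cannot use a smooth crossing change, because $\uCPbar(13a_{4237})=\infty$ by \Cref{prop:13a4237sm-CPbar}. Instead I will apply \Cref{thm:KMRS-topological} with $n=1$ to the mirror, whose Seifert matrix is $-A^T$, where $A$ is a Seifert matrix of $13a_{4237}$ (genus presumably $4$, so $8\times8$). The task is to find a single integral vector $c\in\Z^{2g}$ such that $B=-A^T+cc^T$ satisfies $\det(tB-B^T)=\pm t^k$.

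Note that $B-B^T=-A^T+A$, so $B$ is again a Seifert-type matrix with the same antisymmetrization. The condition therefore says that $B$ is a Seifert matrix of an Alexander-polynomial-one knot, i.e.\ a knot that is topologically slice by Freedman. The heuristic is that a single change of a negative crossing in the mirror to a positive one gives an Alexander-polynomial-one knot, even though no smooth slice knot arises this way.

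The practical search will try vectors $c$ with entries in $\{0,\pm1\}$ and small support, computing $\det(tB-B^T)$ symbolically for each. This is exactly the computer-assisted step the paper mentions, with the result deferred to Appendix~\ref{subsec:seifert}. The main obstacle is this search: there is no a priori guarantee that a small vector works. If the search fails, a fallback is to look for a crossing in a diagram of $\overline{13a_{4237}}$ whose change yields a knot with $\Delta=1$. The Seifert matrix of that new diagram then has the required form $A'=B-cc^T$ relative to a compatible Seifert surface, which hands us $c$.

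Once a suitable $c$ is exhibited, verifying the determinant is routine, and the two bounds combine to give $\uCPtop(13a_{4237})=\uCPbartop(13a_{4237})=1$.
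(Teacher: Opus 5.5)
Your plan is the paper's proof: $\uCPtop(13a_{4237})\le\uCP(13a_{4237})=1$ via \Cref{prop:13a4237sm-CP}; $\uCPbartop(13a_{4237})\le1$ via \Cref{thm:KMRS-topological} applied to $-A^T$ with the single vector of \Cref{prop:13a4237-seifert}; and nonzero lower bounds from non-topological-sliceness. Two corrections to the $\CPbar$ step: the Seifert matrix there is $6\times6$ and $c_1=(-2,1,3,-2,2,5)^T$, which lies outside your $\{0,\pm1\}$ search, and your crossing-change heuristic is reversed, since turning a negative crossing of the mirror into a positive one would only reprove the $\CP$ bound. For the lower bounds, the relevant input is not $\Delta\neq1$ but that \Cref{prop:13a4237sm-CPbar} shows $13a_{4237}$ is not smoothly slice, which \cite[Thm.~1.7]{DG} then upgrades to not topologically slice.
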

\begin{proof}
\Cref{prop:13a4237sm-CP} gives $\uCPtop(13a_{4237})\leq\uCP(13a_{4237})=1$.
For the other orientation, \Cref{prop:13a4237-seifert} supplies a Seifert matrix decomposition with one vector, so \Cref{thm:KMRS-topological} gives $\uCPbartop(13a_{4237})\leq1$. By \Cref{prop:13a4237sm-CPbar}, this knot is not smoothly slice. Then \cite[Thm.~1.7]{DG} implies that it is not topologically slice, so neither topological slicing number is zero.
\end{proof}
\begin{proposition} \label{prop:13a4237-alexander-signature}
The Alexander polynomial of $K=13a_{4237}$ satisfies the Fox--Milnor criterion and $K$ has vanishing Levine--Tristram signature function.
\end{proposition}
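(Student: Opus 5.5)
We will compute both claims directly from a Seifert matrix (or from the Alexander polynomial and a Goeritz-type computation). The plan is to take the Seifert matrix $A$ of $13a_{4237}$ from KnotInfo; it is the same matrix used in \Cref{prop:13a4237-seifert}, so the two appendix computations share data. From it we compute $\Delta_K(t)=\det(tA-A^T)$, normalized to be symmetric with $\Delta_K(1)=1$.

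\textbf{Fox--Milnor.} We will exhibit a factorization $\Delta_K(t)\doteq f(t)f(t^{-1})$ with $f\in\Z[t]$. As a consistency check, $\det(K)=|\Delta_K(-1)|=289=17^2$ (matching $\det(G)=289$ in \Cref{prop:13a4237sm-CPbar}), so $|f(-1)|=17$. To find $f$, factor $\Delta_K$ over $\Z[t]$ and pair each irreducible factor with its reciprocal. We expect $\Delta_K$ to have the form $g(t)\,g(t^{-1})$, up to a unit $\pm t^k$, for a quadratic or cubic $g$. The delicate point is that the factorization must be genuinely of the form $f\bar f$: a self-reciprocal irreducible factor must appear to an even power. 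We will check this from the explicit factorization.

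\textbf{Levine--Tristram signatures.} Recall that $\sigma_\omega(K)=\operatorname{sign}\bigl((1-\omega)A+(1-\bar\omega)A^T\bigr)$. This is locally constant on $S^1$ away from roots of $\Delta_K$, and it vanishes near $\omega=1$. It can jump only at roots $\omega$ of $\Delta_K$ on the unit circle, by an amount bounded by twice the multiplicity of the root. The argument then splits into two cases.
\begin{enumerate}
\item If $\Delta_K$ has no roots on $S^1$, the function is identically zero on $S^1$ minus those roots, and the averaged values at roots are trivially zero.
\item Otherwise, each root on $S^1$ comes from a factor $g\bar g$ with $g$ and $\bar g$ sharing the root. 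Here we compute the signature of the Hermitian matrix at a point $\omega$ just past each root and at $\omega=-1$; the latter gives $\sigma(K)=0$, which we already know.
\end{enumerate}
Since $13a_{4237}$ is alternating, the jump sizes are constrained, but the cleanest route is to evaluate the signature numerically at one point in each arc of $S^1$ cut out by the roots of $\Delta_K$, with exact verification via a Sturm sequence or exact rational arithmetic at a rational point of $S^1$ such as $\omega=(3+4i)/5$.

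The main obstacle we expect is the case where $\Delta_K$ has unit-circle roots. There we must show that the signature does not jump at all, rather than jumping up and back down. We would try to handle this with an explicit congruence $A\sim$ a matrix splitting as $M\oplus(\text{metabolic})$. Alternatively, we would directly evaluate the signature on each arc, which suffices because the function is piecewise constant and averaged at the roots.
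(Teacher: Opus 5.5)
Your plan is correct. The Fox--Milnor half is the paper's argument. The paper reads $\Delta_K(t)=-6+30t-66t^2+85t^3-66t^4+30t^5-6t^6$ off KnotInfo and exhibits $f(t)=2-6t+6t^2-3t^3$ with $f(t)f(t^{-1})=t^{-3}\Delta_K(t)$. So your expectation of a cubic is right, and $|f(-1)|=17$ matches your determinant check.

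The signature half takes a different route. The paper simply cites KnotInfo for $\sigma_{LT}\equiv0$, while you propose to verify it directly. That makes the claim independent of the database, as the paper itself does for $L_h$ and $J_q$ in \Cref{prop:Lh-LT-signature} and \Cref{prop:Jq-signatures}.

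The obstacle you anticipate does not occur here; only your first case arises. The cubic $f$ has no rational roots (test $\pm1,\pm2,\pm\tfrac13,\pm\tfrac23$), so it is irreducible over $\Q$. It is also not proportional to its reciprocal $f^{*}(t)=t^3f(t^{-1})=2t^3-6t^2+6t-3$, so $f$ and $f^{*}$ are coprime. For $|\omega|=1$ one has $f^{*}(\omega)=\omega^3\overline{f(\omega)}$, so a unit-circle root of $f$ would be a common root of $f$ and $f^{*}$. Hence $\Delta_K=ff^{*}$ has no roots on $S^1$. Equivalently, with $x=t+t^{-1}$ one gets $t^{-3}\Delta_K(t)=-6x^3+30x^2-48x+25$, whose minimum on $[-2,2]$ is $1/9>0$, attained at $x=4/3$.

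Since $\sigma_\omega$ vanishes near $\omega=1$ and is locally constant away from roots of $\Delta_K$, it vanishes identically. So you can drop the Sturm sequences, arc-by-arc evaluations, and metabolic splittings from your plan.
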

\begin{proof}
From KnotInfo, we have $\Delta_K(t)=-6+ 30t-66t^2+ 85t^3-66t^4+ 30t^5-6t^6$. Let $f(t)=2-6t+6t^2-3t^3$. Then
\[f(t)f(t^{-1})=t^{-3}\left(-6+ 30t-66t^2+ 85t^3-66t^4+ 30t^5-6t^6\right)\doteq\Delta_K(t),
\]
so $\Delta_K(t)$ satisfies the Fox--Milnor criterion. KnotInfo also gives $\sigma_{LT}=0$.
\end{proof}
\subsection{\texorpdfstring{$10_{109}$}{10.109}}\label{subsec:10_109}
 We show that $10_{109}$ is topologically BPH-slice, but not smoothly BPH-slice. Since $10_{109}$ is negative amphichiral~\cite{KnotInfo}, it is rationally slice by~\cite[Thm.~A]{DPLS25}, so this provides an additional answer to \cite[Question~9]{Lee24}.
\begin{proposition}\label{prop:topvsm-10_109-upper}
$\uCPtop(10_{109})=\uCPbartop(10_{109})\in\{1,2\}$.
\end{proposition}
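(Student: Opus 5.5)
The equality $\uCPtop(10_{109})=\uCPbartop(10_{109})$ comes from symmetry, the upper bound from \Cref{thm:KMRS-topological}, and the lower bound from the failure of topological sliceness.

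\emph{Symmetry.} Since $10_{109}$ is negative amphichiral, $10_{109}\simeq -\overline{10_{109}}$. Reversing orientation does not change projective slicing numbers, and mirroring exchanges $\CP$ and $\CPbar$; both facts hold verbatim for locally flat disks. Hence $\uCPtop(10_{109})=\uCPtop(\overline{10_{109}})=\uCPbartop(10_{109})$, and it suffices to bound $\uCPtop(10_{109})$ alone.

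\emph{Upper bound.} I would take a Seifert matrix $A$ for $10_{109}$ (genus $2$, so $A$ is $4\times 4$, e.g.\ from KnotInfo or a Seifert surface of the standard diagram). I would then exhibit an integral $B\in M_4(\Z)$ and at most two vectors $c_1,c_2\in\Z^4$ with $A=B-\sum_i c_ic_i^T$ and $\det(tB-B^T)=\pm t^k$. Applying \Cref{thm:KMRS-topological} gives $\uCPtop(10_{109})\le 2$. As in \Cref{prop:13a4237-seifert}, the explicit matrices would be deferred to Appendix~\ref{subsec:seifert}, and verification is a direct determinant computation.

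\emph{Finding the vectors.} This search is the main obstacle. My first attempt would be a single vector. I would try $c$ supported on one or two coordinates and ask whether $A+cc^T$ has a unimodular symmetrization pattern making $\det(t(A+cc^T)-(A+cc^T)^T)$ a monomial. Concretely, I would look for a rank-$2$ half-lagrangian-type block, i.e.\ a $2\times2$ zero (or strictly triangular with monomial determinant) block after change of basis. If one vector fails, I would allow two vectors, which suffices for the stated range. Another route to an upper bound is a single negative-to-positive crossing change to a (topologically) slice knot, via \cite[Lem.~2.4]{MP23}; I would check this in the standard diagram first, since it would give the bound $1$ quickly.

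\emph{Lower bound.} I would show $10_{109}$ is not topologically slice. Its Goeritz/lattice obstruction in \Cref{prop:topvsm-10_109-smooth} shows it is not smoothly slice (the $T=0$ case). Since it is prime alternating with at most $19$ crossings, \cite[Thm.~1.7]{DG} then gives that it is not topologically slice. Therefore $\uCPtop(10_{109})\ge1$, and the common value lies in $\{1,2\}$.
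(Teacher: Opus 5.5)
Your symmetry step and your upper-bound strategy are the same as the paper's. Negative amphichirality gives $\uCPtop(10_{109})=\uCPbartop(10_{109})$, and \Cref{thm:KMRS-topological} applied to a two-vector Seifert decomposition gives $\uCPtop(10_{109})\le 2$.

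That bound, however, rests entirely on an explicit certificate, which you do not supply, and your search plan starts from a wrong premise. $10_{109}$ has genus $4$, not $2$, so its Seifert matrix is $8\times 8$ (this is the KnotInfo matrix used in \Cref{prop:10_109-seifert}). A ``$2\times 2$ zero block in a $4\times 4$ matrix'' is therefore the wrong target. With the correct matrix $A$, the paper's certificate is short: $c_1=e_8$, $c_2=(1,0,1,-1,-1,-1,-1,-1)^T$ and $B=A+c_1c_1^T+c_2c_2^T$ give $\det(tB-B^T)=t^4$.

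Your crossing-change shortcut cannot work with a smoothly slice target. By \cite[Lem.~2.4]{MP23} it would make $10_{109}$ smoothly H-slice in $\CP$ or $\CPbar$, contradicting \Cref{prop:topvsm-10_109-smooth}.

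For the lower bound you take a different route, which is valid. You use the $T=0$ lattice obstruction to show $10_{109}$ is not smoothly slice, and then \cite[Thm.~1.7]{DG} to rule out topological sliceness. The paper instead notes that $\det(10_{109})=85$ is not a square. Hence the Fox--Milnor criterion fails, and $10_{109}$ is not topologically slice directly.

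Your $T=0$ obstruction in \Cref{prop:topvsm-10_109-smooth} is exactly this non-square determinant. So your route goes through Donaldson's theorem and the computer-assisted Dunfield--Gong classification only to recover what a classical topological obstruction gives at once. Its one advantage is reusing a smooth result you need anyway. The paper saves the Dunfield--Gong input for $13a_{4237}$, whose Alexander polynomial satisfies Fox--Milnor, so the cheap argument is unavailable there.
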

\begin{proof}
\Cref{prop:10_109-seifert} supplies a Seifert matrix decomposition with two vectors, so \Cref{thm:KMRS-topological} gives $\uCPtop(10_{109})\leq2$. Since $10_{109}$ is negative amphichiral, its topological projective slicing numbers are equal. Since $\det(10_{109})=85$ is not a square, $10_{109}$ fails the Fox--Milnor criterion and is not topologically slice, so its topological slicing numbers are either $1$ or $2$. 
\end{proof}
\begin{remark}
As mentioned in \Cref{subsec:org}, the following example is meant to give a gentle introduction to the strategy used to prove \Cref{thm:Th3q}. We illustrate how we can use determinant computations to obstruct BPH-sliceness. The proof uses the checkerboard filling construction from \Cref{subsec:prelim:fillings} and the results of \Cref{sec:lattice}, but does not depend on the preceding applications. We use these two consequences from condition~\ref{cond:finiteob:embedding} of \Cref{prop:finiteob}: If a candidate $T$ satisfies condition~\ref{cond:finiteob:embedding}, then $G-2T$ is positive definite (\Cref{prop:Dinvert}), and $\det(G-2T)$ is a perfect square (\Cref{prop:finiteob}).
\end{remark}
\begin{proposition}\label{prop:topvsm-10_109-smooth}
    $\uCP(10_{109})=\uCPbar(10_{109})=\infty$.
\end{proposition}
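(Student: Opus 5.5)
Since $10_{109}$ is negative amphichiral, $\uCP(10_{109})=\uCPbar(10_{109})$, so it suffices to show $\uCP(10_{109})=\infty$. Because $\sigma(10_{109})=0$, \Cref{thm:decomp} applies once we have a positive-definite filling of $-\SigmaTwo(10_{109})$. We will take a reduced alternating diagram of $10_{109}$ and the checkerboard coloring with the appropriate incidence sign, so that the reduced Laplacian $G=\Delta_{\mathrm{red}}(\Gamma_W)$ of the white Tait graph is the intersection form of a simply-connected positive-definite filling of $-\SigmaTwo(10_{109})$. If the coloring instead gives a filling of $+\SigmaTwo$, amphichirality ($\SigmaTwo(K)\cong-\SigmaTwo(K)$) fixes the orientation. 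Then $\det G=\det(10_{109})=85$, which is odd, so \Cref{prop:finiteob} applies. The goal is to show that every $T\in\mathcal T(G)$ fails condition~\ref{cond:finiteob:embedding}, using only two facts: $G-2T$ must be positive definite (\Cref{prop:Dinvert}) and $\det(G-2T)$ must be an odd perfect square.

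The first step is to write $G$ down explicitly. A $10$-crossing diagram has $12$ regions, so with a balanced split the white Tait graph should have around $5$ or $6$ vertices, and $G$ has rank around $4$ or $5$. Its diagonal entries are the degrees of the Tait graph, and these should be at most $4$. By the remark after \Cref{def:candidates}, every candidate then has $T_{ii}\in\{0,1\}$. Rows with $G_{ii}=2$ force the corresponding rows and columns of $T$ to vanish. On the remaining indices, positive semidefiniteness gives $T_{ij}\in\{-1,0,1\}$, and the nonzero off-diagonal entries are constrained by $T_{ij}^2\le T_{ii}T_{jj}$ together with the $2\times2$ and $3\times3$ minors. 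This produces a short explicit list of candidates.

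The second step is to run through that list. The candidate $T=0$ gives $\det G=85$, which is not a square, so it fails immediately. This is the classical non-sliceness. For each nonzero candidate, we either exhibit an integer vector $x$ with $x^T(G-2T)x\le0$ (for instance the vertex vectors, or sums along a cycle of the Tait graph, as in Claim 2 of \Cref{prop:JqTneq0}), or we compute $\det(G-2T)$ and check that it is not an odd square.

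The main obstacle is keeping this enumeration under control. To do that, we first reduce by the symmetries of the Tait graph, which act on candidates by simultaneous permutation. We also use the fact that every row of $T$ supported on a degree-$2$ vertex vanishes. If some candidate happens to pass the determinant test, we fall back on a lattice argument: normalize the square-two rows via \Cref{prop:2bridge-root-count} and derive a parity contradiction, as in \Cref{prop:13a4237sm-CPbar}.
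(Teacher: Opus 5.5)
Your plan is the same as the paper's proof. The paper uses the checkerboard Goeritz matrix $G$ of $10_{109}$ (rank $5$, diagonal $(4,2,4,2,3)$, $\det G=85$) and uses $T_{22}=T_{44}=0$ together with positivity of the $2\times 2$ principal minors of $G-2T$ to cut the candidates down to ten; five of these have positive non-square determinant and five have negative determinant, so the lattice-normalization fallback you mention is never needed, and amphichirality gives the $\overline{\mathbb{CP}}^{2}$ case.
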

\begin{proof}
\emph{Step 1: build a definite filling.} Consider the following checkerboard coloring of $K=10_{109}$ and its corresponding Goeritz matrix:
\[
  \vcenter{\hbox{%
    \tikzset{every picture/.append style={scale=0.5}}%
    \input{figures/10_109-checkerboard}%
  }}
  \qquad
  G=\begin{pmatrix}
    4&-1&0&-1&0\\
    -1&2&0&0&-1\\
    0&0&4&-1&-1\\
    -1&0&-1&2&0\\
    0&-1&-1&0&3
  \end{pmatrix}
\]
Since $K$ is alternating, the checkerboard construction (\Cref{subsec:prelim:fillings}) gives a positive-definite filling $W$ of $\pm\SigmaTwo(K)$ with intersection matrix $G$. Since $K$ is amphichiral, $\SigmaTwo(K)\cong-\SigmaTwo(K)$, so $W$ is a positive-definite filling of $-\SigmaTwo(K)$. We have $\sigma(K)=0$, and $\det(G)=85$ is odd, so \Cref{prop:finiteob} applies to $Q=G$.

\emph{Step 2: describe the candidate set.} A candidate $T\in\mathcal T(G)$ satisfies $0\leq T_{ii}\leq (G_{ii}-1)/2$ (\Cref{def:candidates}). The diagonal of $G$ is $(4,2,4,2,3)$, so
\[
T_{22}=T_{44}=0\qquad\text{and}\qquad T_{11},T_{33},T_{55}\in\{0,1\}.
\]
Since $T$ is positive semidefinite, $T_{ij}^2\leq T_{ii}T_{jj}$. Hence the second and fourth rows and columns of $T$ vanish, and each of $T_{13}$, $T_{15}$, $T_{35}$ lies in $\{-1,0,1\}$ and can be nonzero only if both of the corresponding diagonal entries of $T$ equal $1$.

\emph{Step 3: when could $G-2T$ be positive definite?} Suppose $T$ satisfies condition~\ref{cond:finiteob:embedding}, so that $G-2T$ is positive definite and in particular its principal $2\times2$ minors are positive. If $T_{11}=T_{33}=1$, then the minor of $G-2T$ on rows and columns $1$ and $3$ is
\[
\det\begin{pmatrix}2&-2T_{13}\\-2T_{13}&2\end{pmatrix}=4-4T_{13}^2,
\]
so $T_{13}=0$. In the same way, if $T_{11}=T_{55}=1$, then the minor on rows $1$ and $5$ is $2-4T_{15}^2$, so $T_{15}=0$; and if $T_{33}=T_{55}=1$, then the minor on rows $3$ and $5$ is $2-(1+2T_{35})^2$, so $T_{35}\in\{0,-1\}$. These conditions leave us with ten possibilities: the eight diagonal matrices with $T_{11},T_{33},T_{55}\in\{0,1\}$, and the two matrices with $T_{33}=T_{55}=1$ and $T_{35}=T_{53}=-1$.

\emph{Step 4: compute some determinants.} For example, the candidate with $T_{33}=1$ as its only nonzero entry gives
\[
G-2T=\begin{pmatrix}
    4&-1&0&-1&0\\
    -1&2&0&0&-1\\
    0&0&2&-1&-1\\
    -1&0&-1&2&0\\
    0&-1&-1&0&3
  \end{pmatrix},\qquad \det(G-2T)=27,
\]
which is not a perfect square. The determinants of all ten candidates are as follows, where $T_{35}=0$ unless otherwise indicated:
\[
\begin{array}{c|ccccc}
(T_{11},T_{33},T_{55})&000&100&010&001&110\\ \hline
\det(G-2T)&85&23&27&3&5
\end{array}
\qquad
\begin{array}{c|ccccc}
(T_{11},T_{33},T_{55})&101&011&111&011&111\\
T_{35}&0&0&0&-1&-1\\ \hline
\det(G-2T)&-3&-7&-5&-3&-1
\end{array}
\]
For the five candidates on the left, $\det(G-2T)$ is positive but not a perfect square. The first of these is $T=0$, where this is the classical obstruction (\Cref{rem:T0}): $\det(10_{109})=85$ is not a square, so $10_{109}$ is not slice. For the five candidates on the right, $\det(G-2T)<0$, so $G-2T$ is not even positive definite.
\emph{Step 5: conclusion.} No candidate satisfies condition~\ref{cond:finiteob:embedding}, so by \Cref{prop:finiteob}, $G$ admits no decomposition $G=2CC^T+DD^T$ for any $n$. Since $\sigma(K)=0$, \Cref{thm:decomp} gives $\uCP(K)=\infty$. Since $K$ is amphichiral, its two projective slicing numbers are equal (\Cref{subsec:prelim:hslice}), so $\uCPbar(K)=\infty$ as well.
\end{proof}
\subsection{\texorpdfstring{$10_{28}$ and $12a_{792}$}{10.28 and 12a792}}\label{subsec:2bridge-top}
$10_{28}$ is $K_{m,0}$ from \Cref{fam:DISST} (see \Cref{fig:K354}) and is obstructed from being $0$-positive by \Cref{prop:0pos-obstructions}\eqref{obs:r0} because its mirror has $r_0<\infty$ (by the proof of \cite[Thm.~1.15]{DISST25}). $12a_{792}$ is $\overline{L_3}$ from \Cref{fam:Lh} (shown in \Cref{fig:12a792}), so we obstruct it from being H-slice in connected sums of the standard $\CPbar$ (\Cref{thm:2bridgenew}) and show that $r_0$ does not obstruct it from being $0$-negative (\Cref{prop:Lh-r0}). For the remaining values in the table, we show that both of these knots are topologically BPH-slice, and that $\uCPbar(10_{28})=1$.

\begin{figure}[h]
    \centering
    \input{figures/L3-mirror}
    \caption{$12a_{792}$ is isotopic to $\overline{L_3}$, given by $[-2,-2,-3,-7,-2]$}
    \label{fig:12a792}
\end{figure}
\begin{proposition}\label{prop:1028-sm}
$\uCPbar(10_{28})=1.$
\end{proposition}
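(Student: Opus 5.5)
The plan is to prove $\uCPbar(10_{28})\geq 1$ and $\uCPbar(10_{28})\leq 1$ separately.

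\emph{Lower bound.} $10_{28}=K_{m,0}=K(53,19)$ has $\det=53$, which is not a perfect square. So $10_{28}$ fails the Fox--Milnor criterion and is not (even topologically) slice. Equivalently, the candidate $T=0$ fails condition~\ref{cond:finiteob:embedding} for any filling, by \Cref{rem:T0}. Hence $\uCPbar(10_{28})\neq 0$.

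\emph{Upper bound.} I will use the mirror of \cite[Lem.~2.4]{MP23}. If $K$ is obtained from a smoothly slice knot $J$ by a positive-to-negative crossing change, then $\overline K$ is obtained from the slice knot $\overline J$ by a negative-to-positive change. Hence $\overline K$ is H-slice in $\CP$, so $K$ is H-slice in $\CPbar$. It therefore suffices to find a single crossing of $10_{28}$ whose change produces a slice knot, and to check that the crossing has the correct sign.

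I would work in the 2-bridge diagram of \Cref{fig:K354}, given by $53/19=[3,5,4]$. Changing one crossing in a twist region lowers that entry of the continued fraction by $2$. The three options are:
\begin{itemize}
\item changing a crossing in the $3$-region gives $[1,5,4]=15/19$, with determinant $15$;
\item changing a crossing in the $5$-region gives $[3,3,4]=29/11$, with determinant $29$;
\item changing a crossing in the $4$-region gives $[3,5,2]=25/9$, with determinant $25$.
\end{itemize}
Only the last determinant is a square, so I aim for the third option and obtain the 2-bridge knot $K(25,9)$. This knot is smoothly slice by Lisca's classification of slice 2-bridge knots \cite{Lis07}: with $p=m^2$ and $q=mk\pm1$, take $m=5$, $k=2$, so $q=5\cdot 2-1=9$. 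Alternatively, it appears as a slice knot in \cite{KnotInfo} (it should be $8_8$ or $8_9$-type, with determinant $25$; I would identify it there).

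\emph{Main obstacle: the sign of the crossing.} I need the crossings in the $4$-twist region of $10_{28}$ to be positive, so that changing one is a positive-to-negative change. I would orient the diagram in \Cref{fig:K354} and read off the sign of a crossing in that twist region. The crossings within one twist region of a 2-bridge knot all share a sign, so checking one suffices.

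If the sign turns out to be negative, the same argument instead yields $\uCP(10_{28})\leq 1$, which contradicts \Cref{thm:2bridgeDISST}. So the sign must in fact be positive, and this gives a consistency check on the computation. With the sign settled, combining the two bounds gives $\uCPbar(10_{28})=1$.
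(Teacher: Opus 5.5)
The overall plan is the paper's. You realize $10_{28}$ as one crossing change away from the slice knot $K(25,9)=8_8$ and apply the mirror of \cite[Lem.~2.4]{MP23}, with the lower bound coming from non-sliceness. Your continued-fraction bookkeeping $[3,5,4]\to[3,5,2]=25/9$ and the appeal to Lisca are fine.

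The gap is in the step you flag as the main obstacle: you have the required sign backwards. ``$K$ is obtained from $J$ by a positive-to-negative crossing change'' means the crossing is positive in $J$ and negative in $K$. So to conclude $\uCPbar(10_{28})\leq 1$, the crossing of $10_{28}$ that you change must be \emph{negative}. If it were positive, changing it to negative would produce $J$. Then $10_{28}$ would be obtained from $J$ by a negative-to-positive change, and \cite[Lem.~2.4]{MP23} would give $\uCP(10_{28})\leq 1$ instead.

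As a result, your consistency check runs the wrong way. \Cref{thm:2bridgeDISST} forces the crossing to be negative, and your assertion that ``the sign must in fact be positive'' is false. Orient the diagram of \Cref{fig:K354}: the two strands are parallel in the $3$-twist region and antiparallel in the $5$- and $4$-twist regions. So the three crossings of the first region are positive and the other nine are negative. Under the rule you stated, this computation would yield $\uCP(10_{28})\leq 1$, contradicting \Cref{thm:2bridgeDISST}, so the argument as written does not close.

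With the direction corrected, it does close. Changing a negative crossing in the $4$-twist region gives $8_8$, so $10_{28}$ is obtained from a slice knot by a positive-to-negative change. This is exactly the paper's proof, where the marked crossing in KnotInfo's diagram is negative and the change produces $8_8$. Reading the sign off the diagram is also better than inferring it from \Cref{thm:2bridgeDISST}, which would use the full lattice obstruction just to certify an upper bound.
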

\begin{proof}
Changing the marked negative crossing in \Cref{fig:cc1028-1} gives us the knot $8_8$, which is smoothly slice~\cite{KnotInfo}. Since $10_{28}$ is not slice, but it is obtained from a slice knot by changing a positive crossing to a negative one, $\uCPbar(10_{28})=1$.
\end{proof}
\begin{figure}[ht]
  \centering
  \tikzset{every picture/.append style={scale=0.4}}
  \begin{subfigure}[b]{0.33\textwidth}
    \centering
    \input{figures/cc1028-1-10_28}

    \caption{}\label{fig:cc1028-1}
  \end{subfigure}\hfill
  \begin{subfigure}[b]{0.33\textwidth}
    \centering
    \input{figures/cc1028-2-afterR2}
    \caption{}\label{fig:cc1028-2}
  \end{subfigure}\hfill
  \begin{subfigure}[b]{0.30\textwidth}
    \centering
    \input{figures/cc1028-3-8_8}
    \caption{}\label{fig:cc1028-3}
  \end{subfigure}
  \caption{(A) KnotInfo's diagram of $10_{28}$, with the crossing to be changed circled; (B) The diagram after the crossing change and a Reidemeister II move; (C) KnotInfo's diagram of $8_8$, which is isotopic to (B)}
  \label{fig:cc1028}
\end{figure}
\begin{proposition}\label{prop:1028-top}
$\uCPtop(10_{28})=\uCPbartop(10_{28})=1$.
\end{proposition}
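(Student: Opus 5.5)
The plan is to obtain matching upper and lower bounds. The upper bound $\uCPbartop(10_{28})\le\uCPbar(10_{28})=1$ is immediate from \Cref{prop:1028-sm}, since a smooth null-homologous disk is in particular locally flat. For the lower bounds, note that $\det(10_{28})=53$ is not a perfect square. Hence $10_{28}$ fails the Fox--Milnor criterion and is not topologically slice. This shows that neither topological projective slicing number is zero.

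The remaining task is to prove $\uCPtop(10_{28})\le 1$. I would try two routes, in this order.

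\emph{First route: a crossing change.} Look for a single positive-to-negative crossing change in some diagram of $10_{28}$, read in the reverse direction as a negative-to-positive change, that produces a slice knot. By \cite[Lem.~2.4]{MP23} this would give H-sliceness in $\CP$, even smoothly. However, this route cannot succeed smoothly: by \Cref{thm:2bridgeDISST}, $\uCP(10_{28})=\infty$. So no such crossing change to a smoothly slice knot exists, and it would at best have to land on a topologically slice knot.

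\emph{Second route: the Seifert matrix criterion.} The robust approach is \Cref{thm:KMRS-topological}. Take a genus-$2$ Seifert matrix $A$ for $10_{28}$, which is a $4\times4$ matrix. Then find a single vector $c\in\Z^4$ such that $B=A+cc^T$ satisfies $\det(tB-B^T)=\pm t^k$. In practice I would search over vectors $c$ with small entries, computing $\det(t(A+cc^T)-(A+cc^T)^T)$ for each. A good starting point is a vector that is dual to a band of the Seifert surface passing through the crossing used in \Cref{prop:1028-sm}. The heuristic is that a crossing change which unknots, up to an Alexander-polynomial-one knot, corresponds to a rank-one modification of the Seifert form.

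The verification itself would be deferred to Appendix~\ref{subsec:seifert}, alongside the decompositions for $13a_{4237}$ and $10_{109}$. Combining $\uCPtop(10_{28})\le1$ with the lower bound of $1$ gives the claim.

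The main obstacle is actually finding the vector $c$. There is no guarantee that a genus-$2$ decomposition with $n=1$ exists for an arbitrary Seifert matrix. If the search fails, I would try a different Seifert matrix, for instance one coming from a different spanning surface of the 2-bridge diagram $[3,5,4]$, before searching again.
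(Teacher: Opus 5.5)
Your outline matches the paper's proof. The lower bound comes from $\det(10_{28})=53$ failing the Fox--Milnor criterion. The bound $\uCPbartop(10_{28})\le 1$ follows from \Cref{prop:1028-sm}, as the paper also notes. The bound $\uCPtop(10_{28})\le1$ is meant to come from \Cref{thm:KMRS-topological} with a single vector. The gap is that you never exhibit that vector, and you concede the search might fail, so as written $\uCPtop(10_{28})\le 1$ is not proved. The paper supplies the certificate in \Cref{prop:1028-seifert}. For KnotInfo's Seifert matrix and vector
\[
A=\begin{pmatrix}-1&0&0&0\\0&2&0&0\\-1&0&-1&0\\1&1&1&2\end{pmatrix},\qquad c=\begin{pmatrix}-2\\1\\0\\-2\end{pmatrix},
\]
the matrix $B=A+cc^T$ satisfies $\det(tB-B^T)=t^2$. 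This is quick to confirm by hand. The polynomial $p(t)=\det(tB-B^T)$ satisfies $p(t)=t^4p(t^{-1})$, and its $t^4$ and $t^0$ coefficients both equal $\det B=0$. The values $p(1)=\det(A-A^T)=1$ and $p(-1)=\det(B+B^T)=1$ then force $p(t)=t^2$.

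The seed you propose for the search also points in the wrong direction. The crossing in \Cref{prop:1028-sm} shows $10_{28}$ is obtained from $8_8$ by a positive-to-negative change. In the conventions of \Cref{thm:KMRS-topological}, such a change adds (rather than subtracts) a rank-one term to the Seifert form. That is the shape of a decomposition of the mirror's matrix $-A^T$, which is evidence for $\uCPbartop$, not the shape $A=B-cc^T$ needed for $\uCPtop$. Moreover, $8_8$ does not have trivial Alexander polynomial. As you correctly observe, $\uCP(10_{28})=\infty$ by \Cref{thm:2bridgeDISST}, so no smooth crossing-change picture exists in the $\CP$ direction. The vector has to be found directly, for instance by your small-entry search, which does succeed here.
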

\begin{proof}
In \Cref{prop:1028-seifert} we give a Seifert matrix decomposition using one vector, so by~\Cref{thm:KMRS-topological}, $\uCPtop(10_{28})\leq 1$ and $\uCPbartop(10_{28})\leq 1$ (the latter also follows from \Cref{prop:1028-sm}). Since $10_{28}$ is not topologically slice (it fails the Fox--Milnor criterion), we have $\uCPtop(10_{28})=\uCPbartop(10_{28})=1$.
\end{proof}
\begin{proposition}\label{prop:topvsm-12a792-upper}
\[\uCPtop(12a_{792})\in \{1,2\};\quad \uCPbartop(12a_{792})\in\{1,2\}.\]
\end{proposition}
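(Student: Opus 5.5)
Both bounds have the same two-part structure: an upper bound of $2$ from \Cref{thm:KMRS-topological}, and a lower bound of $1$ from the fact that $12a_{792}$ is not topologically slice.

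For the upper bound on $\uCPtop(12a_{792})$, I would take a Seifert matrix $A$ of $12a_{792}$ (genus $g$, so $A\in M_{2g}(\Z)$), for instance one read off from the 2-bridge diagram of $\overline{L_3}$ given by $[-2,-2,-3,-7,-2]$, or KnotInfo's matrix. I would then search for integral vectors $c_1,c_2\in\Z^{2g}$ such that $B=A+c_1c_1^T+c_2c_2^T$ satisfies $\det(tB-B^T)=\pm t^k$. \Cref{thm:KMRS-topological} then gives $\uCPtop(12a_{792})\leq 2$. For $\uCPbartop$, I would run the same search on the Seifert matrix $-A^T$ of the mirror, again with at most two vectors; by the remark after \Cref{thm:KMRS-topological} this bounds $\uCPbartop(12a_{792})\leq 2$. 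These decompositions would be recorded in Appendix~\ref{subsec:seifert}, in the same way as \Cref{prop:10_109-seifert} and \Cref{prop:1028-seifert}.

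The main obstacle is actually finding the vectors. The condition is that the Alexander-type polynomial of $B$ is a unit, i.e.\ $B$ looks like the Seifert matrix of an Alexander-polynomial-one knot. This is a finite search over small-entry vectors, which I would do by computer, trying one vector first and then two. A natural first guess is to take vectors supported on the basis curves that correspond to a single twist region, mimicking a crossing change. That heuristic may not apply directly, because $\uCPbar(12a_{792})=\infty$ smoothly, so on that side no smooth crossing change can work and the search has to be genuinely topological. Either way, once candidate vectors are found, verifying them is a routine determinant computation.

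For the lower bound, I would show that neither number is $0$, i.e.\ that $12a_{792}$ is not topologically slice. The simplest route is the determinant: $\det(12a_{792})=\det(L_3)=8\cdot9+12+1=85$ is not a square, so the Fox--Milnor criterion fails. Alternatively, \Cref{thm:2bridgenew} with $n=0$ shows the knot is not smoothly slice, and \cite[Thm.~1.7]{DG} then upgrades this to topological non-sliceness. Combining the two parts gives both values in $\{1,2\}$.
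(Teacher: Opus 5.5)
Your proposal matches the paper's proof: both upper bounds come from two-vector decompositions, via \Cref{thm:KMRS-topological}, of a Seifert matrix $A$ and of $-A^T$, and the lower bound comes from $\det(12a_{792})=85$ not being a square. The one piece you leave as a computer search is exactly what the paper supplies in \Cref{prop:12a792-seifert}, which uses the $4\times 4$ Seifert matrix $F_3$ from the even continued fraction in \Cref{prop:Lh-LT-signature} together with explicit matrices $C_\pm$ satisfying $\det(tB_\pm-B_\pm^T)=t^2$; until you exhibit such vectors, your upper bound remains a plan rather than a proof.
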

\begin{proof}
\Cref{prop:12a792-seifert} supplies a Seifert matrix decomposition with two vectors for $12a_{792}$ and $\overline{12a_{792}}$, so $\uCPtop(12a_{792})\leq2$ and $\uCPbartop(12a_{792})\leq 2$ by \Cref{thm:KMRS-topological}. Its determinant is $\det(12a_{792})=85$, so $12a_{792}$ is not topologically slice and its topological slicing numbers are either $1$ or $2$. 
\end{proof}
\section{Detecting exotic definite manifolds}\label{sec:exotica}
As discussed in \Cref{subsec:intro:exotica}, a $0$-bipolar knot that is not BPH-slice would detect an exotic definite $4$-manifold. \Cref{thm:decomp}, like Rasmussen's $s$-invariant, is not known to obstruct $0$-positivity. The construction underlying \Cref{thm:decomp} uses the standard smooth structure through the half-integer surgery filling of \Cref{thm:KMRS1}. We therefore ask whether the conclusion of \Cref{thm:KMRS1} holds for every smooth structure.
\begin{question}\label{q:exotic-half-integer}
Let $Z$ be a closed smooth oriented $4$-manifold homeomorphic to $\#^n\CP$, and let $K\subset S^3$ have $\sigma(K)=0$. If $K$ is H-slice in $Z$, must $\SigmaTwo(K)$ bound a smooth positive-definite $4$-manifold with $b_2=2n$ whose intersection lattice is of half-integer surgery type?
\end{question}
An affirmative answer would extend our obstruction to $0$-positive knots with vanishing signature. Finding an H-slice disk for one of our obstructed knots in such a manifold $Z$ would instead detect an exotic smooth structure on $\#^n\CP$ and give a negative answer. See \Cref{subsec:intro:exotica} for one possible approach through the zero-surgery constructions of Manolescu--Piccirillo~\cite{MP23}.

In \Cref{subsec:exotica-candidates}, we record the invariants that leave our examples as candidates for detecting exotic definite $4$-manifolds. In \Cref{subsec:exotica-khovanov}, we compare our obstruction with invariants from Khovanov homology.
\subsection{Quasi-alternating candidates for detecting exotic definite manifolds}\label{subsec:exotica-candidates}
The knots in \Cref{fam:Th3q,fam:Lh,fam:Jq}, together with $13a_{4237}$ and $10_{109}$, are quasi-alternating with vanishing signature and have an infinite projective slicing number. The following computations leave open the possibility that they could be $0$-bipolar. The proposition also covers the larger Turk's head family in \Cref{conj:Thpq}. In \Cref{rem:families-invariants}, we record the obstructions in \Cref{prop:0pos-obstructions}\eqref{obs:delta} and~\eqref{obs:HK} that we have only partially determined.
\begin{proposition}
    \label{prop:exotica-invariants}
Let $K$ be one of the Turk's head knots $\Th(p,q)$ with odd $p\geq3$ and even $q\geq4$, the knots $L_h$ with odd $h\geq3$, the knots $J_q$ with odd $q\geq 1$, $13a_{4237}$, or $10_{109}$. Then:
\begin{enumerate}
\item \label{item:exotica-QA}$\sigma(K)=s(K)=\tau(K)=\epsilon(K)=\delta_2(K)=d(S^3_{\pm1}(K))=0.$
\item\label{item:exotica-LT} $\sigma_{LT}(K)(\omega)=0$ if $\omega\in S^1\setminus\{1\},\Delta_K(\omega)\ne0$.
\item \label{item:exotica-r0}$r_0(L_h)=r_0(\overline{L_h})=\infty.$
\item \label{item:exotica-delta}For every prime $\ell$ and integer $r\geq1$,
$\delta_{\ell^r}(\Th(p,q))=\delta_{\ell^r}(10_{109})=0$.
\end{enumerate}
\end{proposition}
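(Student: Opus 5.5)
We verify the four items separately, relying mostly on quasi-alternating structure and amphichirality.

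For item~\eqref{item:exotica-QA}, we first confirm that each knot is quasi-alternating with $\sigma=0$. Then we apply \Cref{lem:alt-qa-invariants}.
\begin{itemize}
\item Turk's head knots with odd $p$ are alternating, and their signature vanishes by \cite{DPS26}.
\item $L_h$ is a 2-bridge knot, hence alternating, and $\sigma(L_h)=0$ by \Cref{prop:Lh-properties}.
\item $J_q$ is quasi-alternating with $\sigma=0$ by \Cref{prop:Jq-properties}.
\item $13a_{4237}$ and $10_{109}$ are alternating, with signature $0$ by \Cref{thm:topvsm-examples}.
\end{itemize}
Alternating knots are quasi-alternating, so \Cref{lem:alt-qa-invariants} gives all of item~\eqref{item:exotica-QA} at once.

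For item~\eqref{item:exotica-LT}, we split into cases.
\begin{itemize}
\item \emph{Amphichiral knots.} For $\Th(p,q)$ and $10_{109}$ we use amphichirality. Mirroring negates $\sigma_{LT}$, and reversing orientation does not change it, so $\sigma_{LT}(\omega)=-\sigma_{LT}(\omega)=0$ wherever $\Delta_K(\omega)\neq0$.
\item \emph{$13a_{4237}$.} This is \Cref{prop:13a4237-alexander-signature}.
\item \emph{$J_q$ and $L_h$.} We cite the appendix computations, \Cref{prop:Jq-signatures} and \Cref{prop:Lh-LT-signature}. Those computations use the Goeritz or Seifert forms: one shows that the Hermitian form $(1-\omega)A+(1-\bar\omega)A^T$ has constant signature $0$ away from roots of $\Delta_K$. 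One way is to exhibit a metabolizer-like rational decomposition, or to track eigenvalue sign changes, which can only occur at roots of $\Delta_K$ on the circle. The main obstacle here is pinning down the finitely many roots of $\Delta_K$ on $S^1$ and checking that the signature jumps cancel.
\end{itemize}

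Item~\eqref{item:exotica-r0} is \Cref{prop:Lh-r0}, proved in Appendix~\ref{app:Lh-r0} from the Daemi--Scaduto 2-bridge formulas. We simply cite it.

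For item~\eqref{item:exotica-delta}, we again use negative amphichirality. Both $\Th(p,q)$ (strongly negative amphichiral by \cite{DPS26}) and $10_{109}$ are negative amphichiral, so $K\simeq -\Kmirror$. The invariant $\delta_{\ell^r}$ is a concordance invariant of the form $2d(\Sigma_{\ell^r}(K),\mathfrak s_0)$ (up to normalization), evaluated at the canonical spin$^c$ structure lifted from $S^3$. It is unchanged by orientation reversal of the knot, and it changes sign under mirroring, since the branched cover reverses orientation and $d$ is negated. Hence $\delta_{\ell^r}(K)=\delta_{\ell^r}(-\Kmirror)=-\delta_{\ell^r}(K)=0$. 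This holds whether or not the branched cover is an integral homology sphere, as long as $\delta$ is defined via the canonical spin structure, which is the case when $\ell^r$ is a prime power (so the cover is a rational homology sphere). The only point to check is that the orientation-reversal and mirror conventions are compatible, which is routine.
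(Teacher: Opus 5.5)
Your treatment of items (1)--(3) matches the paper. You check that each knot is quasi-alternating with $\sigma=0$ and invoke the lemma on quasi-alternating knots, use amphichirality for $\Th(p,q)$ and $10_{109}$, cite the separate signature computations for $L_h$, $J_q$ and $13a_{4237}$, and cite the $r_0$ computation. Your guess at how the $L_h$ and $J_q$ signature computations go is off, though. There, $\Delta_{L_h}$ and $\Delta_{J_q}$ are shown to have no roots on $S^1$ at all, so $\sigma_{LT}$ is constant and equal to its value $0$ near $\omega=1$; there are no jumps to cancel. Since you only cite those results, this does not affect your argument.

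The genuine difference is in item (4).
\begin{itemize}
\item The paper cites Jabuka's theorem that $\delta_{\ell^r}$ is an integer-valued smooth concordance homomorphism. Negative amphichirality gives $[K]=-[K]$ in the concordance group, so $2\delta_{\ell^r}(K)=0$ in $\Z$.
\item You argue directly from the definition $\delta_{\ell^r}(K)=2d(\Sigma_{\ell^r}(K),\mathfrak s_0)$. Reversing $K$ changes neither the branched cover nor $\mathfrak s_0$. Also $\Sigma_{\ell^r}(\Kmirror)\cong-\Sigma_{\ell^r}(K)$ and $d(-Y,\mathfrak s)=-d(Y,\mathfrak s)$. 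Hence $\delta_{\ell^r}(K)=\delta_{\ell^r}(-\Kmirror)=-\delta_{\ell^r}(K)$.
\end{itemize}

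Your route is correct and more self-contained. It uses neither concordance invariance, additivity, nor integrality; rational values suffice. It also kills $\delta_{\ell^r}$ on amphichiral knots of either type.

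Its only cost is the compatibility check you flag: the orientation-reversing lift of a reflection of $S^3$ must carry $\mathfrak s_0$ to $\mathfrak s_0$. This holds because $\mathfrak s_0$ is defined naturally from the unique spin structure on $S^3$. It is essentially the ingredient behind $\delta_{\ell^r}(-\Kmirror)=-\delta_{\ell^r}(K)$ in Jabuka's homomorphism result.

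The paper's one-line citation, in turn, needs only that $[K]$ has order at most two in concordance. So it applies unchanged to knots that are merely concordant, rather than isotopic, to their reverse mirror.
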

\begin{proof}
Each knot has vanishing signature. Since they are all quasi-alternating, item~\eqref{item:exotica-QA} follows from \Cref{lem:alt-qa-invariants}. For item~\eqref{item:exotica-LT}, the Turk's head knots with odd $p$ and the knot $10_{109}$ are negative amphichiral so $K$ is isotopic to $-\Kmirror$. Since $\sigma_{LT}(-\Kmirror)=-\sigma_{LT}(K)$, we have $\sigma_{LT}(K)=0$ (see \cite[Prop.~2.3(4),(5)]{Con21}). \Cref{prop:Lh-LT-signature} gives $\sigma_{LT}(L_h)=0$, \Cref{prop:Jq-signatures} gives $\sigma_{LT}(J_q)=0$, and \Cref{prop:13a4237-alexander-signature} gives $\sigma_{LT}(13a_{4237})=0$.

Item~\eqref{item:exotica-r0} is \Cref{prop:Lh-r0}. 
For item~\eqref{item:exotica-delta}, Jabuka proves that each $\delta_{\ell^r}$ is an integer-valued smooth concordance homomorphism \cite[Thm.~1.2]{Jab12}. The knots in question are negative amphichiral, so their concordance classes satisfy $[K]=-[K]$. Consequently, $2\delta_{\ell^r}(K)=0$.
\end{proof}
Item~\eqref{item:exotica-QA} of \Cref{prop:exotica-invariants} holds for every quasi-alternating knot with vanishing signature (\Cref{lem:alt-qa-invariants}). Those whose Levine--Tristram signature functions also vanish are therefore natural targets for applying \Cref{thm:decomp} in the search for further candidates.
\begin{remark}[Other obstructions from branched covers]
\label{rem:families-invariants}
There are some obstructions to $0$-bipolarity which we only partially determine. For quasi-alternating $K$, $\SigmaTwo(K)$ bounds smooth definite manifolds of both signs with $H_1(-;\Z)=0$ \cite[\S1, property~(iii)]{Iss18}, so the Hedden--Kirk obstruction in \Cref{prop:0pos-obstructions}\eqref{obs:HK} does not obstruct our examples. We have not checked the Hedden--Kirk obstructions for covers of degree $2^r>2$. Since these knots all have $\det(K)>1$, $\Sigma_{2^r}(K)$ is not an integral homology sphere \cite[p.~190, \S2]{Jiang81}. Thus \Cref{prop:0pos-obstructions}\eqref{obs:delta} does not apply to covers of degree $2^r$. We have not checked the $\delta_{\ell^r}$ obstructions for odd primes $\ell$, beyond the vanishings established in \Cref{prop:exotica-invariants}\eqref{item:exotica-delta}.
\end{remark}
\subsection{Comparison with Khovanov homology}\label{subsec:exotica-khovanov}
We compare our lattice obstruction with $s$ and related invariants from Khovanov homology by asking which knots they can obstruct from being BPH-slice. Negative amphichiral knots provide a natural setting for this comparison. Their concordance classes have order at most two, so every integer-valued concordance homomorphism vanishes on them, including Rasmussen's $s$-invariant over any field~\cite[Lem.~3.20]{DLS26}.
They are also rationally slice~\cite{DPLS25}. 

Our Turk's head examples and $10_{109}$ show that our lattice obstruction can detect failure of BPH-sliceness among rationally slice knots with vanishing $s$-invariant. Since these knots are alternating with vanishing signature, the refinements of $s$ introduced by Dunfield--Lipshitz--Sch\"utz vanish as well~\cite[Prop.~6.30]{DLS26}. 

It is then natural to ask whether constructions from Khovanov homology can obstruct rationally slice knots from being BPH-slice, especially since it is not known whether $s$ vanishes for all rationally slice knots~\cite[\S6]{Lee24}. 
\begin{question}\label{q:khovanov}
Can constructions from Khovanov homology or skein lasagna modules be used to show that some rationally slice knot is not BPH-slice? 
\end{question}
\appendix
\section{The role of condition \texorpdfstring{\ref{cond:finiteob:gram}}{(D2)}}\label{app:D2}
Recall that, for a positive-definite matrix $Q$ with odd determinant, condition~\ref{cond:finiteob:gram} asks that a candidate $T\in\mathcal T(Q)$ be the Gram matrix of $r$ integer vectors in $\Z^n$, that is, $T=CC^T$ for some $C\in M_{r\times n}(\Z)$. Condition~\ref{cond:finiteob:gram} is the only part of \Cref{prop:finiteob} involving $n$, the number of $\CP$ summands in \Cref{thm:decomp}. Condition~\ref{cond:finiteob:gram} is not needed in any of our applications. We prove that this condition provides no additional obstruction to BPH-sliceness for the applications in this paper and ask whether it can be essential in other settings.

To determine when condition~\ref{cond:finiteob:gram} is satisfied automatically for sufficiently large $n$, we use the following classical results of Mordell and Ko (appearing in several forms across overlapping manuscripts published between 1932 and 1938, summarized
in~\cite[\S1]{KimOh05}).
\begin{theorem}[Mordell and Ko]\label{thm:mordell-ko}
Every positive-definite integral symmetric matrix $T$ of rank $m\leq5$ is the Gram matrix of integer vectors in $\Z^{m+3}$. In contrast, the rank-$6$ Cartan matrix $E_6$ is not the Gram matrix of integer vectors in $\Z^N$ for any $N$.
\end{theorem}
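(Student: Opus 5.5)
The statement has two parts of different character, and I would prove them separately.

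For the negative assertion about $E_6$, I would reuse the square-two normalization from the proof of \Cref{thm:decomp} and \Cref{prop:2bridge-root-count}. Suppose the simple roots of $E_6$ are realized as vectors in $\Z^N$ with the Cartan pairings.
\begin{enumerate}
\item Each root has square $2$, so it has the form $\pm e_a\pm e_b$.
\item Orthogonal roots either have disjoint supports or have the same support. Two distinct roots with the same support cannot both occur here: $\det E_6=3$ is odd, so the argument of \Cref{prop:Dinvert} makes the rows independent mod $2$. Hence orthogonal roots have disjoint supports.
\item Normalize the trivalent vertex as $v_0=e_2-e_3$. Its three neighbours are pairwise orthogonal, and each shares exactly one coordinate with $v_0$. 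By pigeonhole, two of them share the coordinate $e_3$, and orthogonality forces them to be $e_3-e_4$ and $e_3+e_4$. The third neighbour is then $e_1-e_2$.
\item Now try to extend the leg through $e_3-e_4$. A root $w$ adjacent to it and orthogonal to $e_3+e_4$ and $v_0$ must have $w=ae_3+be_4+(\text{other coordinates})$ with $a-b=-1$ and $a+b=0$. This has no integral solution. The same holds with the roles of $e_3\pm e_4$ swapped.
\item So only the $e_1-e_2$ leg of the $D_4$ can be lengthened. But $E_6$ has two legs of length two at the branch vertex, and at least one of them must be one of the $e_3\pm e_4$ legs. This is a contradiction, so no $N$ works.
\end{enumerate}

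For the positive assertion I would argue arithmetically. Finding a Gram realization of $T$ in $\Z^{m+3}$ is the same as finding a representation of the lattice $(\Z^m,T)$ by $I_{m+3}$. The plan is a local-to-global argument.
\begin{enumerate}
\item Odd unimodular positive-definite lattices of rank at most $8$ form a single class, namely $\Z^N$. So the genus of $I_{m+3}$ has class number one when $m\le5$.
\item By Siegel's theorem (or the Hasse principle for representations by a genus), it then suffices to represent $T$ locally at every completion by $I_{m+3}$.
\item Over $\mathbb R$ this is immediate from positive definiteness.
\item For odd $p$, $I_{m+3}$ is unimodular of rank $m+3$. Codimension at least $3$ lets one split off $T$ by Witt-type arguments; I would diagonalize $T$ over $\Z_p$ and represent each Jordan block inside a unimodular complement of adequate rank.
\end{enumerate}

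The main obstacle is the $2$-adic step: showing that every $\Z_2$-lattice of rank $m$ is represented by the odd unimodular $\Z_2$-lattice $\langle1\rangle^{m+3}$. Here $T$ can have even (type II) Jordan components, and the codimension-$3$ hypothesis is exactly what is needed. I would first decompose $T$ over $\Z_2$ into $1$-dimensional components and $2$-dimensional blocks of the form $\left(\begin{smallmatrix}2a&b\\b&2c\end{smallmatrix}\right)$. I would then embed them one at a time, using that $\langle1,1,1\rangle$ represents every $2$-adic binary form up to the usual square-class corrections. At each stage I would check that the orthogonal complement still contains enough odd unimodular rank.

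If this proves messy, I would instead cite the classical Mordell--Ko analysis summarized in \cite{KimOh05}, which treats these $2$-adic cases explicitly.
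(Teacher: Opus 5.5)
The paper does not prove this statement; it quotes Mordell and Ko via \cite{KimOh05}. Your $E_6$ half is a valid self-contained substitute, but it needs two repairs.

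First, \Cref{prop:Dinvert} concerns a square $D$. For a $6\times N$ matrix $D$ with $DD^T=E_6$, argue directly: a mod-$2$ relation among the rows of $D$ would give one among the rows of $E_6$, which is impossible since $\det E_6=3$ is odd.

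Second, your steps 2 and 3 contradict each other. Once orthogonal roots have disjoint supports, two pairwise orthogonal neighbours of $v_0$ cannot both contain $e_3$. The pair $e_3\pm e_4$ is exactly the same-support configuration that step 2 excludes. So the contradiction is already reached at the pigeonhole, and steps 4--5 are unnecessary. Alternatively, drop step 2: steps 3--5 on their own form the classical determinant-free argument, and they are correct.

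The positive half has a genuine gap at $p=2$, and that local step is the whole content of the theorem. Your global framework is sound and is the standard modern explanation. The genus of $I_{m+3}$ is a single class exactly when $m+3\le 8$. For $m=6$ the genus of $I_9$ also contains $E_8\perp\langle1\rangle\supset E_6$, which matches the second sentence of the statement. The target you set, that every $\Z_2$-lattice of rank $m$ is represented by $\langle1\rangle^{m+3}$, is true, but your sketch would not prove it as written, for three reasons.
\begin{enumerate}
\item The fact you quote is not available: $\langle1,1,1\rangle$ does not represent even the unary lattice $\langle7\rangle$ over $\Z_2$, since $7$ is not a sum of three squares mod $8$.
\item Block-by-block bookkeeping by odd unimodular rank cannot close. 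The orthogonal complement of a norm-$2$ vector in a unimodular $\Z_2$-lattice always has determinant $2\cdot(\text{unit})$. So each $\langle2\rangle$ placed in the unimodular part uses up rank $2$ of it, and $\langle2\rangle^5$ would need rank $10>8$. The actual embedding $e_1\pm e_2,\ e_3\pm e_4,\ e_5+e_6$ works only because it reuses the non-unimodular pieces of earlier complements.
\item The clean argument for odd $p$ does not transfer. For odd $p$ you embed $(\Q_p^m,T)$ into $\Q_p^{m+3}$, which is always possible in codimension $3$, then enlarge the integral image to a maximal lattice; all such lattices are isometric to the unimodular one. This is also the way to make your vague odd-$p$ step rigorous. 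At $p=2$, however, $\Q_2^8$ carries both $I_8\otimes\Z_2$ and $E_8\otimes\Z_2$ as maximal integral lattices, so you must show the image lies in one isometric to $I_{m+3}\otimes\Z_2$.
\end{enumerate}
As proposed, then, the positive half still rests on the same citation \cite{KimOh05} that the paper uses.
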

The following proposition allows us to apply this theorem even when the rank of $T$ is large (as it often is in our applications), and its corollary explains why we never use condition~\ref{cond:finiteob:gram}.
\begin{proposition}\label{prop:D2-auto}
Let $T\in M_r(\Z)$ be symmetric and positive semidefinite of rank $m$, with fewer than six diagonal entries greater than or equal to $2$. Then $T$ is the Gram matrix of integer vectors in $\Z^{m+3}$.
\end{proposition}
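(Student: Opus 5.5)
Our plan is to reduce to a positive-definite matrix of small rank and then invoke \Cref{thm:mordell-ko}. Let $N_0=\{i: T_{ii}=0\}$, $N_1=\{i:T_{ii}=1\}$ and $N_{\geq2}=\{i:T_{ii}\geq2\}$, so $|N_{\geq 2}|\leq 5$ by hypothesis. Since $T$ is positive semidefinite, $T_{ij}^2\leq T_{ii}T_{jj}$. Hence every row and column indexed by $N_0$ vanishes, and we assign the zero vector to those indices. For $i,j\in N_1$ we get $T_{ij}\in\{-1,0,1\}$. Moreover, if $T_{ij}=\pm1$ with $i\ne j$ in $N_1$, then the principal $2\times 2$ minor vanishes, so the vector $e_i\mp e_j$ lies in the kernel of $T$. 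Thus the corresponding Gram vectors satisfy $x_i=\pm x_j$ in any realization.

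The first step is to collapse $N_1$. Define a relation on $N_1$ by $i\sim j$ if $T_{ij}=\pm1$. Because $T$ is positive semidefinite, a kernel vector $e_i\mp e_j$ forces rows $i$ and $j$ of $T$ to agree up to sign. So $\sim$ is an equivalence relation, and the signs are consistent. Choose one representative from each class, and do the same for any index of $N_{\geq2}$ whose row is $\pm$ another row. This produces a principal submatrix $T'$ with the same rank $m$. Every nonrepresentative row is then $\pm$ a representative row, so it suffices to realize $T'$ in $\Z^{m+3}$ and copy vectors up to sign. After the reduction, distinct indices of $N_1$ are orthogonal in $T'$. We aim to show $T'$ is positive definite. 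If it is, its rank equals its size, which is $m$.

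The second step, which I expect to be the main obstacle, is showing that $T'$ is positive definite. The difficulty is that a kernel vector of $T'$ could have support involving several indices without arising from a single $\pm1$ off-diagonal entry. My first attempt would instead realize the diagonal-one part directly, without needing definiteness there. Each $i\in N_1$ in $T'$ must be sent to a unit vector. Since these indices are pairwise orthogonal, send them to distinct standard basis vectors $f_i$. Now consider an index $j\in N_{\geq2}$. Its Gram vector must pair with each $f_i$ as $T'_{ij}$, so we write it as $\sum_{i\in N_1}T'_{ij}f_i+y_j$. The remaining parts $y_j$ are orthogonal to all the $f_i$, and their Gram matrix is the Schur complement
\[
S=T'_{N_{\geq2}}-B^TB,
\]
where $B$ is the block $(T'_{ij})_{i\in N_1,\,j\in N_{\geq 2}}$. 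This matrix $S$ is integral and positive semidefinite, and has size at most $5$. If $S$ has rank $k$, then $S$ is a Gram matrix of a rank-$k$ positive-definite form of size at most $5$. (One passes to a rank-$k$ principal submatrix after another row-collapsing step, or uses Mordell--Ko in its semidefinite form.) By \Cref{thm:mordell-ko}, $S$ is realized in $\Z^{k+3}$.

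Finally we count coordinates. The rank of $T'$ equals $|N_1|$ plus the rank of $S$, so the realization uses $|N_1|+k+3=m+3$ coordinates. The delicate point is the semidefinite Mordell--Ko step for $S$ when $S$ is singular. There I would try to find an integral basis change that splits off the kernel, using unimodularity of a saturated sublattice. This would reduce $S$ to a positive-definite $k\times k$ integral Gram matrix together with integral combinations, so that the vectors remain integral.
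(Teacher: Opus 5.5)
Your argument is correct once the step you flag is written out. It reaches the result by a somewhat different bookkeeping than the paper.

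\textbf{The paper's route.} The paper passes immediately to the positive-definite lattice $L=\Z^r/\ker T$. There your row-collapsing is automatic: a pair with $e_i\mp e_j\in\ker T$ simply gives $g_i=\pm g_j$. The semidefinite issue also never arises. It then splits off square-one generators one at a time, $L=\Z g\oplus g^\perp$, including generators that only acquire square one after projection. It stops when what is left is generated by projections of the at most five original generators of square at least $2$, and applies Mordell--Ko to that summand.

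\textbf{Your route.} You remove all original square-one indices in a single step via the block factorization $T'=\bigl(\begin{smallmatrix}I&B\\B^T&B^TB+S\end{smallmatrix}\bigr)$. You then apply Mordell--Ko to the Schur complement $S$. No iteration is needed, because $S$ has size at most $5$ whatever its diagonal.

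\textbf{Closing the step you flag.} The ``delicate point'' is settled by the same device the paper uses at the outset. The lattice $\ker S\cap\Z^s$ is saturated, hence a direct summand. So some unimodular $P$ gives $P^TSP=G\oplus 0$ with $G$ positive definite of size $k$. Realizing $G$ by Mordell--Ko and transporting back by $P^{-1}$ keeps all vectors integral.

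\textbf{Two corrections.}
\begin{enumerate}
\item Your stated aim of showing $T'$ positive definite is false in general. For example, $T=\bigl(\begin{smallmatrix}4&2\\2&1\end{smallmatrix}\bigr)$ has no zero or $\pm$-repeated rows, yet has rank $1$. Your final argument never uses this aim, since the count $m=|N_1|+\operatorname{rank}S$ comes directly from the block factorization.
\item The alternative of passing to a rank-$k$ principal submatrix of $S$ is unreliable. Those generators may span only a finite-index sublattice, so the remaining vectors need not come out integral.
\end{enumerate}

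The paper's version is shorter and handles collapsing and semidefiniteness uniformly. Yours makes the shape of the realization explicit.
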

\begin{proof}
Consider the positive-definite integral lattice $L=\Z^r/\ker T$ with the form induced by $T$. It suffices to embed $L$ in $(\Z^{m+3},I)$.

The images $g_1,\ldots,g_r$ of the standard basis vectors generate $L$ and satisfy $g_i^2=T_{ii}$. Whenever a generator $g$ has square one, we have the orthogonal splitting $ L=\Z g\oplus g^\perp, $ where $g^\perp$ is generated by the vectors $g_i-(g_i\cdot g)g$. Their squares are $g_i^2-(g_i\cdot g)^2\leq g_i^2$.

Repeat this process of splitting off square-one generators, and discard zero generators. This gives
\[
L\cong(\Z^\ell,I)\oplus L',
\]
where $\ell$ is the number of square-one summands we split off. Since projection never increases squares, each remaining generator comes from an original $g_i$ with $T_{ii}\geq2$. There are at most five such generators, so $\rank L'\leq5$.

If $L'=0$, the conclusion is immediate. Otherwise, \Cref{thm:mordell-ko} embeds $L'$ in $(\Z^{\rank L'+3},I)$, giving an embedding of $L$ in $(\Z^{\ell+\rank L'+3},I)=(\Z^{m+3},I)$.
\end{proof}
\begin{corollary}\label{cor:D2-auto}
Let $Q$ be as in \Cref{prop:finiteob}. If fewer than six diagonal entries of $Q$ are at least $5$, then every $T\in\mathcal T(Q)$ satisfies condition~\ref{cond:finiteob:gram} for all $n\geq\operatorname{rank}(T)+3$. 
\end{corollary}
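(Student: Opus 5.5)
The plan is to reduce directly to \Cref{prop:D2-auto}. Let $T\in\mathcal T(Q)$. By \Cref{def:candidates}, $T$ is an integral symmetric positive-semidefinite matrix with $0\leq T_{ii}\leq (Q_{ii}-1)/2$. The first step is to translate the hypothesis on $Q$ into the hypothesis of \Cref{prop:D2-auto} on $T$. If $T_{ii}\geq 2$, then $(Q_{ii}-1)/2\geq 2$, so $Q_{ii}\geq 5$. Thus the indices $i$ with $T_{ii}\geq2$ form a subset of the indices with $Q_{ii}\geq 5$. By hypothesis the latter set has fewer than six elements, so $T$ has fewer than six diagonal entries at least $2$.

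Next, set $m=\operatorname{rank}(T)$ and apply \Cref{prop:D2-auto}. This gives a matrix $C_0\in M_{r\times(m+3)}(\Z)$ with $T=C_0C_0^T$; that is, $T$ is the Gram matrix of $r$ integer vectors in $\Z^{m+3}$.

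It remains to pass from $\Z^{m+3}$ to $\Z^n$ for every $n\geq m+3$. Pad $C_0$ with $n-(m+3)$ zero columns to obtain $C\in M_{r\times n}(\Z)$. This does not change the outer products, so $CC^T=C_0C_0^T=T$. Equivalently, include $(\Z^{m+3},I)$ isometrically into $(\Z^n,I)$ as the first $m+3$ coordinates. Hence $T$ satisfies condition~\ref{cond:finiteob:gram} for all $n\geq\operatorname{rank}(T)+3$.

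There is no real obstacle, since the work is contained in \Cref{prop:D2-auto} and \Cref{thm:mordell-ko}. The only point requiring care is the inequality $T_{ii}\geq 2\Rightarrow Q_{ii}\geq5$. This uses the candidate bound in \Cref{def:candidates}, which in turn relies on $\det(Q)$ being odd, as guaranteed by the hypothesis that $Q$ is as in \Cref{prop:finiteob}.
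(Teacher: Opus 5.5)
Your proof is correct and is essentially the paper's argument. The defining bound of $\mathcal T(Q)$ gives $T_{ii}\geq 2\Rightarrow Q_{ii}\geq 5$, so \Cref{prop:D2-auto} applies, and padding with zero columns extends the Gram representation from $\Z^{\operatorname{rank}(T)+3}$ to every larger $\Z^n$. One small aside: that implication follows directly from the inequality in \Cref{def:candidates}, so the odd determinant is not actually used at that step.
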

\begin{proof}
For $T\in\mathcal T(Q)$, $T_{ii}\geq2$ only if $Q_{ii}\geq5$. \Cref{prop:D2-auto} therefore gives a Gram representation in $\Z^{\operatorname{rank}(T)+3}$, and appending zero coordinates gives one in every larger dimension.
\end{proof}
As discussed in \Cref{subsec:sketch}, \Cref{cor:D2-auto} applies to the fillings in all of our applications, so we never use condition~\ref{cond:finiteob:gram}.

At the level of lattices, a candidate passing condition~\ref{cond:finiteob:embedding} does not necessarily pass condition~\ref{cond:finiteob:gram} for any $n$. The following example demonstrates this behavior, although the lattice does still admit a decomposition using a different candidate. 
\begin{example}\label{ex:2E6}
Let $Q=2E_6+I_6$, where the vertices $v_1,\ldots,v_5$ of the $E_6$ Dynkin diagram form a chain and $v_6$ is adjacent only to $v_3$. This matrix is positive definite with $\det(Q)=5061$. The matrix $E_6$ belongs to $\mathcal T(Q)$, and $Q-2E_6=I_6$ satisfies condition~\ref{cond:finiteob:embedding}. But $E_6$ is not the Gram matrix of integer vectors in $\Z^n$ for any $n$, so it fails condition~\ref{cond:finiteob:gram}. $E_6$ has six diagonal entries equal to $2$, so it also demonstrates that the bound in \Cref{prop:D2-auto} is sharp.

Nevertheless, $Q$ admits a decomposition with $n=3$:
\[
Q=2CC^T+DD^T,\qquad
C=\begin{pmatrix}
0&0&0\\0&0&0\\1&0&0\\-1&0&0\\0&1&0\\0&0&1
\end{pmatrix},\qquad
D=\begin{pmatrix}
1&1&1&1&1&0\\
-1&-1&-1&0&1&1\\
0&0&1&-1&0&-1\\
-1&0&0&1&0&-1\\
0&1&0&-1&0&1\\
0&0&0&1&-1&1
\end{pmatrix}.
\]
\end{example}
It is not clear whether, even at the level of lattices, condition~\ref{cond:finiteob:gram} can exclude \emph{every} candidate passing condition~\ref{cond:finiteob:embedding} (when at least one such candidate exists). The issue we encountered in trying to construct such an example is that when $Q$ has diagonal entries large enough to allow for a candidate failing condition~\ref{cond:finiteob:gram}, there are also many candidates satisfying it. In all of our attempts, at least one of these candidates also passed condition~\ref{cond:finiteob:embedding}.

Even if such a $Q$ does exist, in the context of obstructing BPH-sliceness, it remains to show that $Q$ is realized as the intersection form of a filling of $-\SigmaTwo(K)$ for some knot $K$ with $\sigma(K)=0$. Recall that $\det(Q)\equiv 1\pmod 4$ is one necessary condition.
\begin{question}\label{q:D2}
Is there a positive-definite integral matrix $Q$ of odd determinant and rank $r$ that admits no decomposition $Q=2CC^T+DD^T$, with $D\in M_r(\Z)$ and $C\in M_{r\times n}(\Z)$, for any $n\geq0$, but for which some $T\in\mathcal T(Q)$ satisfies condition~\ref{cond:finiteob:embedding}? If so, can such a $Q$ be the intersection form of a positive-definite filling of $-\SigmaTwo(K)$ for a knot $K$ with $\sigma(K)=0$?
\end{question}
\begin{remark}
By \Cref{cor:D2-auto}, such a $Q$ must have at least six diagonal entries at least $5$, because every candidate $T$ passing condition~\ref{cond:finiteob:embedding} must have at least six diagonal entries at least $2$ (otherwise \Cref{prop:D2-auto} would give a decomposition for some $n$). If $Q$ is also the Goeritz matrix of a reduced alternating diagram of $K$, then $K$ must have at least $16$ crossings because the diagonal entries of $Q$ are the degrees of all but one vertex of the Tait graph, and each vertex has degree at least 2.
\end{remark}
\section{More on Turk's head knots}\label{app:Gpq}
In this appendix we compute the Goeritz matrix $G_{p,q}$ and its determinant, proving the determinant formula for $\Th(p,q)$ proposed in \cite{DMMS10}. The purpose of these computations is to make \Cref{conj:Thpq} more approachable, and to explain why \Cref{prop:finiteob} applied to the Goeritz matrix of the standard alternating diagram of $\Th(p,q)$ for odd $q$ does not obstruct BPH-sliceness. 
\begin{lemma}\label{lem:Gpq-goeritz}
Let $p=2m+1\geq3$ and $q\geq3$. The standard alternating diagram of $\Th(p,q)$ has a Goeritz matrix
\[
G_{p,q}=M_m\otimes I_q-I_m\otimes \Adj(\mathcal{C}_{q})=
\begin{pmatrix}
    B_q &      -I_q &       &\\
    -I_q&  \ddots   & \ddots&\\
        &   \ddots  &B_q    &-I_q\\
        &           &-I_q   &G_q
\end{pmatrix},
\qquad B_q=4I_q-\Adj(\mathcal{C}_{q}),
\]
of rank $mq$, where $M_m$ is the $m\times m$ tridiagonal matrix with diagonal entries $4,\ldots,4,3$ and off-diagonal entries $-1$.
\end{lemma}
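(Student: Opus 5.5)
We will compute $G_{p,q}$ as the reduced Laplacian of a white Tait graph of the standard alternating diagram, exactly as in the proof of \Cref{lem:Th3qgoeritz}. Since the diagram is alternating, all crossings have the same incidence sign in a checkerboard coloring. For the coloring with $\eta=+1$, the Goeritz matrix is $\Delta_{\mathrm{red}}(\Gamma_W)$ (\Cref{subsubsec:checkerboard}). So the whole task is to identify $\Gamma_W$ and the vertex to delete.

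\emph{Step 1: describe the regions.} Draw $\Th(p,q)$ as the closure of $\beta^q$, with $\beta=\sigma_1\sigma_2^{-1}\cdots\sigma_{p-1}^{(-1)^p}$. Place it radially as in \Cref{fig:Th-p-q}, so the $p$ strands are concentric circles, $p-1$ annular gaps separate them, and there is a central disk plus an outer region. Each factor $\beta$ contributes one crossing in each gap $j=1,\dots,p-1$. Each gap therefore contains $q$ crossings and is cut into $q$ bigon-like regions, and the regions alternate in color from gap to gap. With $p=2m+1$ there are $2m$ gaps. Color so that the white regions are the regions in gaps $2,4,\dots,2m$ together with the central disk; that gives $mq+1$ white regions. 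In gap $2i$, the $q$ regions form a cycle: consecutive regions share a crossing in that gap. Each region in gap $2i$ also meets exactly one region of gap $2i\pm2$ through a crossing in gap $2i\pm1$, and this matching is bijective after a rotation. The regions of the innermost white gap are all adjacent to the central disk. So $\Gamma_W$ is $m$ stacked $q$-cycles, with consecutive layers joined by a perfect matching (a rung), and the outermost layer coned to a hub. This matches \Cref{lem:Th3qgoeritz} when $m=1$: it is the wheel there.

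\emph{Step 2: degrees and the Laplacian.} A vertex in a middle layer has degree $2+2=4$: two cycle neighbors and two rungs. A vertex in the layer adjacent to the hub also has degree $4$ (two cycle edges, one rung, one hub edge), except when $m=1$. A vertex in the extreme layer away from the hub has degree $3$. Delete the hub and order the layers starting from the layer farthest from the hub, relabeling the matching in each layer so that rungs become the identity. The result is the block matrix with diagonal blocks $4I_q-\Adj(\mathcal C_q)=B_q$ except one block $G_q=3I_q-\Adj(\mathcal C_q)$, and off-diagonal blocks $-I_q$. This is $M_m\otimes I_q-I_m\otimes\Adj(\mathcal C_q)$, up to reversing the layer order to put the $3$ at the end, as in the displayed matrix. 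Its rank is $mq$, and it is positive definite because a reduced Laplacian of a connected graph is.

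\emph{Main obstacle.} The delicate step is Step 1: verifying from the braid picture which crossings connect which regions. In particular, we must check that the inter-layer adjacencies form a perfect matching compatible with the cyclic order, so that a single relabeling makes all rungs the identity simultaneously and no twist accumulates. We must also get the degree-$3$ layer on the correct side. I would settle this by tracking a single column of $\beta$ and using the $\Z/q$ rotational symmetry of the diagram, which maps the $k$th region of each gap to the $(k+1)$st. The matching is then rotation-equivariant, hence a fixed shift, and the shift can be absorbed by reindexing each layer. The sign check $\eta=+1$ and the type of each crossing are routine once the coloring is fixed; they follow from alternation as in \Cref{lem:Th3qgoeritz}.
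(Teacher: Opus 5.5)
Your proposal is correct and follows the paper's proof. The paper also identifies the white Tait graph as a hub (the central disk) plus $m$ concentric $q$-cycles joined by rungs, notes that every non-hub vertex has degree $4$ except those in the layer farthest from the hub, which have degree $3$, and deletes the hub; your rotation-equivariance argument for the rungs just makes explicit the labeling the paper asserts.

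One slip: in Step~1 you say the \emph{outermost} layer is coned to the hub. As you note a sentence earlier, and as your degree count in Step~2 uses, it is the innermost white ring (gap~$2$) that meets the central disk through the gap-$1$ crossings; this does not change the resulting matrix.
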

\begin{proof}
Use the same checkerboard construction as in \Cref{lem:Th3qgoeritz}. 
\begin{figure}[H]
  \centering
  \resizebox{!}{7.65cm}{\input{figures/Th-podd-weave-checkerboard}}\hfill
  \resizebox{!}{7.65cm}{\input{figures/Th-podd-tait-target}}
  \caption{A checkerboard coloring of $\Th(p,q)$ and its Tait graph $\Gamma_W$}
  \label{fig:Thpq-checkerboard}
\end{figure}
Think of the colored diagram as a target with $p-1$ concentric rings of alternating colors surrounding the white bullseye, where each ring is made up of $q$ regions. The white regions are the bullseye and the $q$ regions in each of the $m$ white rings. Then the Tait graph $\Gamma_W$ consists of a hub vertex $v_{mq+1}$ (the bullseye) and $m$ concentric $q$-cycles (ordered innermost to outermost). We label the vertices so that the $i$th cycle is the $q$-cycle on $(v_{(i-1)q+1}, v_{(i-1)q+2},\dots,v_{iq})$ and for $1\leq i\leq m-1$ and $1\leq j\leq q$, vertex $v_{(i-1)q+j}$ in the $i$th cycle is adjacent to $v_{iq+j}$ in the $(i+1)$st cycle. Each of the vertices in the first (innermost) cycle is also adjacent to the hub vertex. Then all vertices in $\Gamma_W$ have degree $4$ except for the $q$ vertices in the $m$th (outermost) cycle (which have degree $3$), and the hub vertex (which has degree $q$). Deleting the hub gives $G_{p,q}$, where the diagonal of $M_m$ lists the degrees of the remaining vertices. 
\end{proof}
The following proposition explains why our lattice obstruction, applied to $G_{p,q}$, cannot obstruct BPH-sliceness when $q$ is odd.
\begin{proposition}
    \label{prop:Gpq-factorization}
Let $p=2m+1\geq3$ and $q\geq3$ be odd. Then $G_{p,q}=DD^T$ for some $D\in M_{mq}(\Z)$.
\end{proposition}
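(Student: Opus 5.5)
The approach I would take is to write down an explicit factorization built from circulant blocks, and to use the oddness of $q$ in exactly one place: the existence of a square root of the cyclic shift. Identify $\Z^{mq}=\Z^m\otimes\Z^q$. Let $P$ be the $q\times q$ cyclic permutation matrix, so $\Adj(\mathcal C_q)=P+P^{T}=P+P^{-1}$. Every block of $G_{p,q}$ then lies in the commutative ring $R=\Z[P]/(P^q-1)$, with transpose acting as the involution $P\mapsto P^{-1}$. Because $q$ is odd, $Q:=P^{(q+1)/2}$ satisfies $Q^2=P$. This is the feature that fails for even $q$; compare the non-square determinants in \Cref{lem:Th3qdet} and \Cref{thm:Gpq-det}.

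The first step is the case $m=1$, that is, $G_q=3I-P-P^{-1}$. Here I would take
\[
U=I-P-P^{(q+1)/2}.
\]
Expanding $UU^T$ gives $3I-P-P^{-1}$ plus the four terms $\pm P^{\pm(q+1)/2}$ and $\pm P^{\pm(q-1)/2}$. These cancel in pairs because $-(q+1)/2\equiv (q-1)/2 \pmod q$. Hence $G_q=UU^T$, which also recovers $\det(G_q)=\Luc_q^2$ from \Cref{lem:Th3qdet}.

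For general $m$ I would look for a block matrix $D=(D_{ij})$ with circulant entries $D_{ij}\in R$, supported near the block diagonal. The target identities are $\sum_k D_{ik}D_{jk}^{*}=(G_{p,q})_{ij}$, namely
\[
(G_{p,q})_{ii}=4-Q^2-Q^{-2}\ (i<m),\qquad (G_{p,q})_{mm}=3-Q^2-Q^{-2},\qquad (G_{p,q})_{i,i+1}=-1.
\]
A purely bidiagonal ansatz does not work. There the off-diagonal condition forces the entries to be units $\pm Q^k$, and the diagonal conditions then cannot hold. So I would allow each layer of rows to use two or three neighbouring coordinate blocks. I would also use the useful identities
\[
2-P-P^{-1}=(1-P)(1-P)^{*},\qquad 4-P-P^{-1}=(3-P-P^{-1})+1,\qquad (1+Q)(1+Q)^{*}=2+Q+Q^{-1}.
\]
These should let each diagonal block split as the norm of one ``new'' coordinate block plus the norms of contributions shared with adjacent layers.

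The concrete procedure I would follow is a Cholesky-type recursion from the outermost layer inward. Put the $U$ above in block $(m,m)$. Then choose the layer $m-1$ rows so that their pairing with $U$ is $-I$; this uses coordinates in block $m$ that are orthogonal to nothing else, for example entries of the form $\pm Q^{k}$. After that, correct the $(m-1)$-diagonal block using block $m-1$ coordinates, and continue inward. I would first compute small cases by machine, say $(p,q)=(5,3)$, $(5,5)$ and $(7,3)$, and read off the pattern before attempting the general formula.

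As a fallback, there is a route through a ribbon or rational-ball argument rather than an explicit construction. For odd $q$ the knots $\Th(3,5)$ and $\Th(5,3)$ are slice. If one could show that $\Sigma_2(\Th(p,q))$ bounds a rational homology ball for all odd $q$, then Donaldson's theorem would give the embedding $G_{p,q}=DD^T$ via \Cref{rem:T0}. I do not expect such a general result to be available, however, so I would treat this only as a sanity check.

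The main obstacle is clearly finding the general pattern for $D$ when $m\ge2$. The off-diagonal $-I$ blocks force interaction between layers, and this interaction must be absorbed without introducing extra coordinates, since $D$ has to be square. I expect the square root $Q$ to be exactly what makes the bookkeeping close up.
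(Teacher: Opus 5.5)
Your $m=1$ computation is correct: with $Q=P^{(q+1)/2}$ one has $(I-P-Q)(I-P-Q)^T=3I-P-P^{-1}=G_q$. For $m\ge 2$, however, the proposal contains no argument, only a search plan. Moreover, the concrete recursion you describe fails at its first step.

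Suppose the $(m,m)$ block of $D$ is your $U$. Then $UU^T$ already equals the diagonal block $G_q$ of $G_{p,q}$. So the other blocks in the $m$th block row contribute a positive semidefinite matrix of trace zero, and they must vanish. The $(m-1,m)$ block of $DD^T$ is then $D_{m-1,m}U^T$. Requiring this to be $-I_q$ forces $U$ to be invertible over $\Z$, whereas $\det(U)^2=\det(G_q)=\Luc_q^2>1$.

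The same argument shows more. In any factorization, the $q$ rows of the outermost layer cannot be supported on a common set of $q$ coordinates. So an ansatz that completes one diagonal block and then propagates inward has the wrong shape.

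The missing idea is to use the tensor structure directly,
\[
G_{p,q}=(M_m-2I_m)\otimes I_q+I_m\otimes(2I_q-P-P^{-1}),
\]
and to factor the first summand by a \emph{symmetric} matrix and the second by a \emph{skew-symmetric} one.

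Your $Q$ is the right ingredient, but in the form $S=Q^{-1}-Q=P^{(q-1)/2}-P^{(q+1)/2}$. This satisfies $S^T=-S$ and $S^2=P+P^{-1}-2I_q$, because $Q^2=P$; that is the only use of oddness.

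Let $A\in M_m(\Z)$ have $A_{ij}=1$ if $i+j=m+1$, $A_{ij}=-1$ if $i+j=m$, and $A_{ij}=0$ otherwise. This is the bidiagonal factor of $M_m-2I_m$ times the reversal permutation, so $A=A^T$ and $A^2=M_m-2I_m$.

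Then $D=A\otimes I_q+I_m\otimes S$ works, since $A\otimes I_q$ and $I_m\otimes S$ commute:
\[
DD^T=(A\otimes I_q+I_m\otimes S)(A\otimes I_q-I_m\otimes S)=A^2\otimes I_q-I_m\otimes S^2=G_{p,q}.
\]
The cross terms cancel exactly because $S$ is skew. With your factor $I-P$ of $2I-P-P^{-1}$ they would leave $A\otimes(2I-P-P^{-1})$.

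This is the paper's proof. For $m=1$ it gives $I+Q^{-1}-Q$ rather than your $I-P-Q$. For general $m$, block row $i$ of $D$ meets column blocks $m-i$, $m+1-i$ and $i$. So it is not of the near-block-diagonal form you were searching for.
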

\begin{proof}
Write $q=2s+1$. Let $\Pi$ be the cyclic permutation matrix of size $q$, so that $\Adj(\mathcal C_q)=\Pi+\Pi^{-1}$, and set $S=\Pi^s-\Pi^{-s}$. Since $\Pi^T=\Pi^{-1}$ and $\Pi^q=I_q$,
\[
S^T=-S,\qquad S^2=\Adj(\mathcal C_q)-2I_q.
\]

Let $U$ be the $m\times m$ upper-bidiagonal matrix with diagonal entries $1$ and superdiagonal entries $-1$, and let $R$ be the permutation matrix reversing the standard basis. Since $RUR=U^T$, the matrix $A=UR$ is symmetric and
\[
A^2=AA^T=UU^T=M_m-2I_m.
\]
Consequently, the integral matrix $D=A\otimes I_q+I_m\otimes S$ satisfies
\[
DD^T=(M_m-2I_m)\otimes I_q  -I_m\otimes\bigl(\Adj(\mathcal C_q)-2I_q\bigr)=G_{p,q}.
\]
\end{proof}
Now we compute $\det(G_{p,q})$, which we expect to be a necessary step in proving \Cref{conj:Thpq}. 
\begin{remark}\label{rem:Gpq-det-other-results}
This result is not necessarily novel, but the way we compute it in terms of the Goeritz matrix is relevant to \Cref{conj:Thpq}. The formula was known to the authors of \cite{DMMS10}, and proving it was no longer needed for the Turk's head case of the Kauffman--Harary conjecture after Mattman--Solis~\cite{MS09} proved the conjecture in general. Independently,~\cite{Sta26} gives a different version of this computation, generated by Claude, that was posted to the arXiv within days of our initial submission of this paper. For even $q$, the factorization of $\det(\Th(p,q))$ as $\Pell_p$ times a square also follows from Saurabh's Alexander-polynomial factorization \cite[Thm.~7.4 and Rmk.~7.5]{Sau26}. The argument below identifies the square factor explicitly and verifies the determinant formula proposed by Dowdall--Mattman--Meek--Solis~\cite{DMMS10}.
\end{remark}
Let $\Pell_n$ denote the Pell numbers, defined by
\[
\Pell_0=0,\qquad \Pell_1=1,\qquad \Pell_{n+2}=2\Pell_{n+1}+\Pell_n.
\]
For the even and odd cases of $q$, respectively, define polynomials $V_j,O_j\in\Z[x]$ by
\[
\begin{aligned}
V_0&=0,\qquad V_1=1,
& V_{j+1}&=(x+2)V_j-V_{j-1},\\
O_0&=1,\qquad O_1=x+3,
& O_{j+1}&=(x+2)O_j-O_{j-1},
\end{aligned}
\qquad j\geq1.
\]
Set $H_m=M_m-2I_m$.
\begin{theorem}\label{thm:Gpq-det}
Let $p=2m+1\geq3$, $q\geq3$, and $\lambda_k=2\cos(2\pi k/q)$. Then
\[
\det(G_{p,q})
=\prod_{k=0}^{q-1}\det(M_m-\lambda_kI_m)
=
\begin{cases}
\Pell_p\,\det\bigl(V_{q/2}(H_m)\bigr)^2,
  &q\text{ even},\\[2pt]
\det\bigl(O_{(q-1)/2}(H_m)\bigr)^2,
  &q\text{ odd}.
\end{cases}
\]
In particular, $\det(\Th(p,q))$ is composite, and it is a perfect square if and only if $q$ is odd or $p=7$.
\end{theorem}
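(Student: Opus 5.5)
The plan is to block-diagonalize $G_{p,q}$ with the discrete Fourier transform on the cycle factor, and then compute each block determinant with a Chebyshev-type recursion. By \Cref{lem:Gpq-goeritz}, $G_{p,q}=M_m\otimes I_q-I_m\otimes\Adj(\mathcal C_q)$. The circulant $\Adj(\mathcal C_q)$ is diagonalized over $\mathbb C$ by the Fourier matrix, with eigenvalues $\lambda_k=2\cos(2\pi k/q)$ for $0\le k\le q-1$. Conjugating by $I_m\otimes F$ turns $G_{p,q}$ into a block-diagonal matrix with blocks $M_m-\lambda_kI_m$, which gives the first equality.

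Next, write $x_k=\lambda_k-2$, so that $M_m-\lambda_kI_m=H_m-x_kI_m$ and the product becomes $\prod_k\chi(x_k)$, where $\chi(x)=\det(H_m-xI_m)$. I then pair the eigenvalues. For $k\neq0$ and $k\neq q/2$, the indices $k$ and $q-k$ give the same $x_k$, so those factors come in squares.

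The $k=0$ factor is $\det(M_m-2I_m)=\det(UU^T)=1$, using $U$ from \Cref{prop:Gpq-factorization}.

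When $q$ is even there is one unpaired factor, at $k=q/2$. It equals $\det(M_m+2I_m)=\det\Lambda[6,\dots,6,5]$. Expanding along the last row, the tridiagonal recursion $d_j=6d_{j-1}-d_{j-2}$ should identify this with $\Pell_{2m+1}$, using the identity $\Pell_{n+4}=6\Pell_{n+2}-\Pell_n$ and checking initial values ($m=1$: $5=\Pell_3$; $m=2$: $29=\Pell_5$).

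For the squared part I need a polynomial identity. The claim is $\prod_{k=1}^{q/2-1}(y-x_k)=V_{q/2}(y)$ for even $q$, and $\prod_{k=1}^{(q-1)/2}(y-x_k)=O_{(q-1)/2}(y)$ for odd $q$. Substituting $y=z^2-2$ is the natural route: the recursion with multiplier $y+2=z^2$ can be turned into Chebyshev form in $t$ with $y+2=t+t^{-1}+2$. Concretely, I would check that $V_j$ and $O_j$ vanish exactly at $x_k$ for the required $k$, using the closed forms $V_j=\frac{\sinh(2j\theta)}{\sinh(2\theta)}$-type expressions, and then compare degrees and leading coefficients. Given this identity, $\prod_{k\text{ paired}}\det(H_m-x_kI)=\det(V_{q/2}(H_m))$, because the determinant of a polynomial in $H_m$ is the product of that polynomial's values at the eigenvalues of $H_m$. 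Squaring gives the stated formula, and the odd case works identically with $O$.

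For the final claims I would argue as follows.
\begin{enumerate}
\item Compositeness: for odd $q$ the determinant is a square exceeding $1$. For even $q$ I need $\det V_{q/2}(H_m)>1$, or, if that factor equals $1$, that $\Pell_p$ is itself composite. I would check $\det V_{q/2}(H_m)>1$ by positivity of the eigenvalues of $H_m$.
\item Squareness: when $q$ is even, the determinant is a square exactly when $\Pell_p$ is a square. The classical result that $\Pell_7=169$ is the only square Pell number greater than $1$ (Ljunggren) then finishes.
\end{enumerate}

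I expect the main obstacle to be the polynomial identification, specifically getting the indices and the odd-$q$ initial condition $O_1=x+3$ to match the roots $x_k$. I would first verify small cases $q=3,4,5,6$ directly, and then prove the general case by induction on the three-term recursion.
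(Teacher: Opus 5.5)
Your proposal is correct and follows the paper's proof essentially step for step. The shared steps are: Fourier-diagonalize the cycle factor; get the $k=0$ factor from $H_m=UU^T$; match $\det(M_m+2I_m)$ to $\Pell_p$ via $d_j=6d_{j-1}-d_{j-2}$; pair $k$ with $q-k$; identify the paired product with $V_{q/2}$ or $O_{(q-1)/2}$ by roots and monic degree; use $\det(H_m+cI_m)>\det H_m=1$ for $c>0$ for compositeness; and finish with Ljunggren.

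Two small corrections to the sketch. First, that recurrence comes from expanding along the first row, so the $5$ stays in the corner. Second, the clean parametrization is $y+2=t+t^{-1}$ (not $t+t^{-1}+2$), i.e.\ $y=2\cos\theta-2$. This gives $V_j=\sin(j\theta)/\sin\theta$ and $O_j=\sin((j+\tfrac12)\theta)/\sin(\theta/2)$, with roots exactly at $\theta=2\pi k/q$.
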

\begin{proof}
The adjacency matrix of $\mathcal{C}_q$ has eigenvalues $\lambda_0,\ldots,\lambda_{q-1}$. Diagonalizing this $\Adj(\mathcal{C}_q)$ in
\[
G_{p,q}=M_m\otimes I_q-I_m\otimes\Adj(\mathcal{C}_q)
\]
gives the first equality.
The factor indexed by $k=0$ is $\det(H_m)=1$, since $H_m=UU^T$ and $\det(U)=1$ (see the proof of \Cref{prop:Gpq-factorization}).
When $q$ is even, the factor indexed by $k=q/2$ is $\det(M_m+2I_m)$. Its trailing principal determinants satisfy
\[
t_0=1,\qquad t_1=5,\qquad t_j=6t_{j-1}-t_{j-2}\quad(j>1).
\]
The odd-indexed Pell numbers $\Pell_{2j+1}$ have the same initial values and recurrence, so this factor is $\Pell_{2m+1}=\Pell_p$.

Since $\lambda_k=\lambda_{q-k}$, the remaining determinant factors indexed by $k$ and $q-k$ are equal, so we have 
\[
\det(G_{p,q})
=
\begin{cases}
\Pell_p\left(\prod_{k=1}^{(q/2)-1}\det(M_m-\lambda_kI_m)\right)^2
  &q\text{ even},\\[2pt]
\left(\prod_{k=1}^{(q-1)/2}\det(M_m-\lambda_kI_m)\right)^2
  &q\text{ odd}.
\end{cases}
\]

Now we identify the remaining product expression. The polynomial recurrences give
\[
V_j(2\cos\theta-2)=\frac{\sin(j\theta)}{\sin\theta},
\qquad
O_j(2\cos\theta-2)
=\frac{\sin((j+\tfrac12)\theta)}{\sin(\theta/2)}
\quad(0<\theta<\pi).
\]
Both sides have the same initial values and satisfy the same recurrence. Since $V_j$ and $O_j$ are monic of degrees $j-1$ and $j$ (for $j\geq 1$), respectively, these identities give
\[
\begin{aligned}
V_{q/2}(x)
  &=\prod_{k=1}^{q/2-1}(x+2-\lambda_k)
  &&(q\text{ even}),\\
O_{(q-1)/2}(x)
  &=\prod_{k=1}^{(q-1)/2}(x+2-\lambda_k)
  &&(q\text{ odd}).
\end{aligned}
\]
Since $H_m+2I_m=M_m$, substituting $H_m$ and taking determinants gives, in the even case,
\[
\det\bigl(V_{q/2}(H_m)\bigr)
=\prod_{k=1}^{q/2-1}\det(M_m-\lambda_kI_m).
\]
For the odd case, use $O_{(q-1)/2}$. Substituting both into the preceding expression for $\det(G_{p,q})$ proves the formula.

Since $H_m$ is positive definite with determinant one, for $1\leq k<q$ we have
\[
\det(M_m-\lambda_kI_m)
=\det\bigl(H_m+(2-\lambda_k)I_m\bigr)>1.
\]
In particular, $\det\bigl(V_{q/2}(H_m)\bigr)$ for even $q$ and $\det\bigl(O_{(q-1)/2}(H_m)\bigr)$ for odd $q$ are both greater than $1$, so $\det(\Th(p,q))$ is composite. Ljunggren's theorem states that the only square Pell numbers are $\Pell_0=0$, $\Pell_1=1$, and $\Pell_7=169$ (see \cite[p.~19]{Coh96}), so $\det(\Th(p,q))$ is a perfect square if and only if $q$ is odd or $p=7$.
\end{proof}
For even $q$ and $p=3$, our computations recover $\det(G_q)=5\Fib_q^2$. 
\begin{remark}\label{rem:Gpq-det}Dowdall--Mattman--Meek--Solis proposed a formula for $\det(\Th(p,q))$ when $p$ is odd, which they verified numerically for $p\leq23$, $q\leq29$ \cite[\S6]{DMMS10}. They define
\[
    d_1=1,\qquad d_2=1-x,\qquad d_{n+1}=(1-x)d_n-xd_{n-1}.
\]
For $p=2m+1$, the roots of $d_p$ occur in positive reciprocal pairs $\alpha_i^{\pm1}$, $1\leq i\leq m$. Choosing $\alpha_i>1$, their proposed determinant is
\[
    \tilde{G}_{p,q} =\Bigl[\prod_{i=1}^m \bigl(\alpha_i^{q/2}-\alpha_i^{-q/2}\bigr)\Bigr]^2.
\]
\Cref{thm:Gpq-det} proves this formula. Taking every second term of their recurrence gives
\[
    d_{n+2}=(x^2-4x+1)d_n-x^2d_{n-2}.
\]
Set $f_0(y)=1$ and $f_j(y)=\det(M_j-yI_j)$ for $j\geq1$. Then $f_1(y)=3-y$, and expansion along the first row gives
\[
    f_j(y)=(4-y)f_{j-1}(y)-f_{j-2}(y)\qquad(j\geq2).
\]
Thus $(-1)^jx^jf_j(x+x^{-1})$ satisfies the recurrence for $d_{2j+1}(x)$ and has initial terms $d_1=1$ and $d_3=1-3x+x^2$. Hence
\[
    d_{2m+1}(x)=(-1)^mx^mf_m(x+x^{-1}),
\]
so the numbers $\alpha_i+\alpha_i^{-1}$ are the eigenvalues of $M_m$. Since
\[
\bigl(\alpha^{q/2}-\alpha^{-q/2}\bigr)^2 =\alpha^{-q}(\alpha^q-1)^2=\prod_{k=0}^{q-1}\bigl(\alpha+\alpha^{-1}-\lambda_k\bigr),
\]
we obtain
\[
\tilde{G}_{p,q}=\prod_{k=0}^{q-1}\det(M_m-\lambda_kI_m)=\det(G_{p,q}).
\]
\end{remark}
\section{Signature and Seifert matrix computations}\label{app:comps}
We collect the Seifert matrix calculations used in \Cref{sec:topvsm} and the invariant calculations used in \Cref{sec:exotica}.
\subsection{Levine--Tristram signature functions}%
\begin{proposition}\label{prop:Lh-LT-signature}
For every odd $h\geq3$, the symmetrically normalized Alexander polynomial of $L_h$ is
\[
\Delta_{L_h}(t)
=\frac{h(h+1)}2(t+t^{-1}-2)^2
 +h(t+t^{-1}-2)+1.
\]
It has no roots on the unit circle, and the Levine--Tristram signature function of $L_h$ vanishes identically.
\end{proposition}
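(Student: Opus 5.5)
The plan is to compute the Alexander polynomial from the 2-bridge structure, then read off the signature behavior from the resulting quadratic. Set $z=t+t^{-1}-2$. On the unit circle, $t=\omega=e^{i\theta}$ gives $z=2\cos\theta-2\in[-4,0]$.

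\textbf{Step 1: the Alexander polynomial.} Rather than a Seifert matrix, I would use the Conway polynomial. Since $z=t+t^{-1}-2=(t^{1/2}-t^{-1/2})^2$ is the square of the Conway variable, the claimed formula reads
\[
\nabla_{L_h}(x)=1+hx^2+\tfrac{h(h+1)}{2}x^4 .
\]
This form is natural for a genus-$2$ knot. I would compute it from a genus-$2$ Seifert surface read off an even continued fraction expansion of $p/q$ with $p=8h^2+4h+1$ and $q=8h^2-4h+1$. Equivalently, I would use the standard recursive formula for Conway polynomials of 2-bridge knots along a continued fraction $[2b_1,2c_1,2b_2,2c_2]$.

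Finding such a length-four even expansion of $(8h^2+4h+1)/(8h^2-4h+1)$, possibly after replacing $q$ by $q\pm p$ or by $q'$ with $qq'\equiv\pm1\pmod p$, is the concrete sub-task. Then a $4\times4$ Seifert matrix $V$ gives $\Delta=\det(t^{1/2}V-t^{-1/2}V^T)$ directly.

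As checks, I would verify that $\Delta_{L_h}(-1)=16\cdot\frac{h(h+1)}{2}-4h+1=8h^2+4h+1=\det(L_h)$, which matches \Cref{prop:2bridge-Lh-form}, and that $\Delta(1)=1$. I expect this step, pinning down the exact coefficients $h$ and $h(h+1)/2$, to be the main obstacle. If a clean even expansion is elusive, I would instead compute $\det(t\,V-V^T)$ from the Gordon--Litherland / Goeritz data, or use the Seifert matrix from Appendix~\ref{subsec:seifert} with $h=3$ as a sanity check.

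\textbf{Step 2: no roots on the unit circle.} Write $f(z)=\frac{h(h+1)}{2}z^2+hz+1$. Its discriminant is
\[
h^2-2h(h+1)=-h^2-2h<0,
\]
so $f$ has no real roots. Since $z$ is real on $S^1$, $\Delta_{L_h}(\omega)\neq0$ for all $\omega\in S^1$. Indeed, $f(z)>0$ there.

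\textbf{Step 3: vanishing signature function.} The Levine--Tristram signature $\sigma_{LT}(\omega)$ is locally constant on the complement of the roots of $\Delta$ on $S^1$. Here there are no such roots, so $\sigma_{LT}$ is constant on $S^1\setminus\{1\}$. Near $\omega=1$ it tends to $0$, since $\sigma_\omega(V)=\operatorname{sign}((1-\omega)V+(1-\bar\omega)V^T)$ and this form is small and nondegenerate because $\Delta(1)\neq0$. Equivalently, the constant value equals $\sigma(L_h)=\sigma_{LT}(-1)$.

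To show this constant is $0$ independently, I would use the Gordon--Litherland formula with the alternating diagram. The positive-definite Goeritz form $A_h$ of rank $2h+1$ and the other checkerboard form give $\operatorname{sign}G-\mu=0$, which amounts to checking that the crossings of the 2-bridge diagram split evenly between the two types. Alternatively, the constant is forced to be $0$ by the continuity argument near $\omega=1$. Either way, $\sigma_{LT}\equiv0$, which also yields the $\sigma(L_h)=0$ used in \Cref{prop:Lh-properties}.
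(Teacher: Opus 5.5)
Your Steps 2 and 3 are the paper's argument: the quadratic in $t+t^{-1}-2$ has negative discriminant $-h^2-2h$, and $\sigma_{LT}$ is locally constant on $S^1\setminus\{1\}$ and vanishes near $1$, so it vanishes identically. The Gordon--Litherland detour is unnecessary. Its stated criterion is also off: for a positive-definite Goeritz matrix of rank $r$ from an alternating diagram, $\sigma=0$ means exactly $r$ crossings are of type II, not an even split.

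\textbf{The gap is Step 1, which is the substance of the proposition.} You never produce the even continued fraction, the Seifert matrix, or the determinant. Your checks cannot replace them. Write $\Delta=1+\alpha z+\beta z^2$. Then $\Delta(1)=1$ holds automatically, and $\Delta(-1)=8h^2+4h+1$ gives only the single linear relation $4\beta-\alpha=2h^2+h$. So the coefficients $h$ and $h(h+1)/2$ are not pinned down. Your fallbacks do not close the gap either. Goeritz or Gordon--Litherland data is a symmetric form on a spanning surface that is generally nonorientable, and it does not determine $\det(tV-V^T)$. The appendix Seifert matrix for $12a_{792}$ is only the case $h=3$.

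The missing piece is short, and your own hint finds it. Since $q-p=-8h$, we have $L_h\simeq K(p,-8h)$, and its mirror is $K(p,8h)$ with
\[
\frac{8h^2+4h+1}{8h}=[h+1,\,2,\,-2h,\,-2].
\]
All entries are even because $h$ is odd. To check the expansion: $[-2h,-2]=(1-4h)/2$ and $[2,-2h,-2]=8h/(4h-1)$. The standard genus-two Seifert surface for this expansion gives, up to mirroring (which does not change $\Delta$), the Seifert matrix
\[
F_h=\begin{pmatrix}a&1&0&0\\0&1&1&0\\0&0&-h&1\\0&0&0&-1\end{pmatrix},\qquad a=\tfrac{h+1}{2}.
\]
The matrix $F_h-tF_h^T$ is tridiagonal with diagonal $(1-t)(a,1,-h,-1)$ and off-diagonal products $-t$. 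The recursion $D_k=d_kD_{k-1}+tD_{k-2}$ then gives
\[
\det(F_h-tF_h^T)=ah(1-t)^4+ht(1-t)^2+t^2.
\]
Dividing by $t^2$ and using $(1-t)^2/t=t+t^{-1}-2$ gives exactly the stated formula. With this in place, your Steps 2 and 3 complete the proof as in the paper.
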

\begin{proof}
The mirror of $L_h$ has the even continued fraction
\[
\frac{8h^2+4h+1}{8h}=[h+1,2,-2h,-2].
\]
The standard Seifert surface for this even continued fraction gives, up to mirroring, the following Seifert matrix
\[
F_h=
\begin{pmatrix}
(h+1)/2&1&0&0\\
0&1&1&0\\
0&0&-h&1\\
0&0&0&-1
\end{pmatrix}.
\]
Since mirroring preserves the Alexander polynomial, we can use this matrix to compute $\Delta_{L_h}$. Writing $a=(h+1)/2$, expansion of the tridiagonal determinant gives
\[
\det(F_h-tF_h^T)
=ah(1-t)^4+ht(1-t)^2+t^2.
\]
Dividing by $t^2$ gives the stated normalization and formula.

For $t\in S^1$, set $x=2-t-t^{-1}\in[0,4]$. Then
\[
\Delta_{L_h}(t)=\frac{h(h+1)}2x^2-hx+1.
\]
This quadratic has positive leading coefficient and negative discriminant, so $\Delta_{L_h}$ has no roots on the unit circle. The Levine--Tristram signature is locally constant away from Alexander roots and vanishes near $1$, so the signature vanishes identically (see, e.g.,~\cite[Rmks.~2.1 and 2.2]{Con21}).
\end{proof}
\begin{proposition}\label{prop:Jq-signatures}
For odd $q\geq1$,
\[
\nabla_{J_q}(z)=1+z^2+z^4-z^6-\tfrac{q-1}{2}\,z^2(1+z^2+z^4).
\]
In particular, $\Delta_{J_q}$ has no roots on the unit circle, so $\sigma_{LT}(\omega)=0$ for every $\omega\in S^1\setminus\{1\}$, and $\sigma(J_q)=0$.
\end{proposition}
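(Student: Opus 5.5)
We need to prove the Conway polynomial formula for $J_q$, deduce that $\Delta_{J_q}$ has no unit-circle roots, and conclude $\sigma_{LT}=0$.

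\emph{Step 1: a Seifert matrix.} The plan mirrors the proof of \Cref{prop:Lh-LT-signature}: compute $\nabla_{J_q}$ from an explicit Seifert matrix, then analyze $\Delta_{J_q}$ on $S^1$. Write $J_q$ in the form $M(0;[1,-1,q+1],[2,-1,2],-3)$ from the proof of \Cref{prop:Jq-properties}, or any equivalent form whose tangles admit even-length continued fraction expansions. Such a form gives a Seifert surface built from twisted bands plumbed along a tree. The first tangle depends on $q$, but only through a single twisted band whose twisting is $(q\pm1)/2$ full twists. We therefore expect a $6\times 6$ Seifert matrix $F_q$ with exactly one entry affine in $q$.

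\emph{Step 2: the Conway polynomial.} Compute $\nabla_{J_q}(z)=\det\bigl(t^{1/2}F_q-t^{-1/2}F_q^T\bigr)$ with $z=t^{1/2}-t^{-1/2}$. Because the determinant is affine in the $q$-dependent entry, $\nabla_{J_q}$ is linear in $(q-1)/2$. So it suffices to do two computations:
\begin{itemize}
\item the $q=1$ case, which should give $1+z^2+z^4-z^6$;
\item the coefficient of the variable entry, which is $\pm z^2$ times the Conway polynomial of the complementary minor and should give $-z^2(1+z^2+z^4)$.
\end{itemize}
As a cheap check, setting $z=2i$ gives $\nabla(2i)=\pm\det$, and this should match $\det(J_q)=26q+51$ from \Cref{prop:Jq-properties}. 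Indeed $1-4+16+64=77$ and $-4(1-4+16)\cdot\tfrac{q-1}{2}=-26(q-1)$, and the value $77$ correctly gives $77$ at $q=1$. The resulting count is $51+26q$, which agrees. Alternatively, one can use the skein relation at a crossing in the $q$-twist region, $\nabla_{J_{q+2}}-\nabla_{J_q}=z\nabla_{L}$, where $L$ is a fixed two-component link. Then $z\nabla_L=-z^2(1+z^2+z^4)$, which again reduces everything to the base case.

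\emph{Step 3: no roots on the unit circle.} For $t=e^{i\theta}$ we have $z^2=t+t^{-1}-2=-x$ with $x\in[0,4]$. Substituting, and writing $k=(q-1)/2\geq0$,
\[
\Delta(x)=1-x+x^2+x^3+k\,x(1-x+x^2).
\]
On $[0,4]$ we have $1-x+x^2>0$, so the $k$-term is nonnegative. It remains to show $1-x+x^2+x^3>0$ on $[0,4]$:
\begin{itemize}
\item for $x\geq1$, $x^2-x\geq0$, so the expression is at least $1+x^3>0$;
\item for $x<1$, $1-x>0$, so the expression is positive.
\end{itemize}
Hence $\Delta_{J_q}(\omega)\neq0$ for all $\omega\in S^1$. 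The signature function is locally constant off Alexander roots and vanishes near $1$, as cited in \Cref{prop:Lh-LT-signature}, so it vanishes identically and in particular $\sigma(J_q)=0$.

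\emph{Main obstacle.} The main obstacle is Step 1. Getting a correct Seifert matrix with consistent orientations for a non-alternating Montesinos diagram is error-prone. I would validate it against the determinant check and against $\sigma=0$ from \Cref{rem:Jq-signature}.
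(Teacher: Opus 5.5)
The gap is that the Conway polynomial formula, which is the substance of the proposition, is never derived. Your Step~3 is correct and is the paper's argument: your $x$ is the paper's $u=|1-\omega|^2$, and the paper writes the same polynomial as $u^3+(1+\tfrac{q-1}{2}u)(u^2-u+1)$. Your skein ``alternative'' at the end of Step~2 is also the route the paper actually takes.

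What is missing in Step~2:
\begin{itemize}
\item You never write down $F_q$. The two inputs you need, $\nabla_{J_1}=1+z^2+z^4-z^6$ and the $q$-coefficient $-z^2(1+z^2+z^4)$, appear only as what the computation ``should give,'' i.e.\ they are read off from the statement.
\item The claim that $F_q$ has fixed size with a single entry affine in $q$ is likewise only an expectation.
\item In the skein version you do not identify $L$, compute $\nabla_L$, or determine the sign of the skein term.
\item You omit the check that makes the skein recursion work at all: the two strands of the twist region must be oriented antiparallel. Then the oriented smoothing at a crossing there turns back, and the remaining twists are removed by Reidemeister~I moves, so $L$ is independent of $q$. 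With parallel strands the smoothing would keep $q+3$ crossings of the twist region. You would then get a second-order recursion rather than a formula linear in $\tfrac{q-1}{2}$.
\end{itemize}

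The paper closes this by identifying the pieces. The smoothing is $M(1;2,[3,3],[2,2])=L10n41$, whose Conway polynomial is $\pm z(1+z^2+z^4)$ for both orientations. The base case is $J_1=12n_{300}$, with $\nabla=1+z^2+z^4-z^6$. Since all crossings in the twist region have the same sign, this gives $\nabla_{J_q}=\nabla_{J_1}+\varepsilon\tfrac{q-1}{2}z^2(1+z^2+z^4)$ for a sign $\varepsilon$ independent of $q$.

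The determinant is then used to \emph{fix} $\varepsilon$, not merely to check the answer. One has $|\nabla_{J_q}(2i)|=|77-26\varepsilon(q-1)|$, and $\det(J_3)=129$ forces $\varepsilon=-1$. Your own check contains exactly this sign slip. The $q$-term at $z=2i$ is $-\tfrac{q-1}{2}\cdot(-4)\cdot 13=+26(q-1)$, not $-26(q-1)$. With your displayed value the total would be $|103-26q|$, which is $25\neq 129$ at $q=3$.

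To complete the proof you need either an explicit, verified $F_q$, or the full skein argument. The skein argument requires three things: the orientation check, an honest determination of $\nabla_{J_1}$ and $\nabla_L$ (by identification in the tables or by direct computation), and the determinant of some $J_q$ with $q>1$ to pin down the sign.
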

\begin{proof} First we describe $\nabla_{J_q}$ relative to a 2-component link.

Based on the diagram for $J_q$ given in \Cref{fig:Jq}, consider $J_{q+2}$ and fix a crossing $c$ in the $(q+4)$-twist region of the first tangle $[2,q+4]$ (on the top two strands). The two oriented strands incident to $c$ are antiparallel, so changing $c$ produces two adjacent crossings of opposite sign, which cancel by a Reidemeister II move and leave us with $J_q$. 

The oriented smoothing at $c$ leaves crossings in the twist region which can all be removed by Reidemeister I moves. The result is the two-component link $L=M(1;2,[3,3],[2,2])=M(0;2,[3,3],-3)=L10n41$, which does not depend on $q$. All crossings of the region have the same sign, so the skein relation gives
\[
\nabla_{J_{q+2}}-\nabla_{J_q}=\pm z\,\nabla_L ,
\]
with a sign independent of $q$. \cite{LinkInfo} records $\nabla_{L10n41}=\pm z(1+z^2+z^4)$ for both orientations, so $\nabla_{J_q}=\nabla_{J_1}+\varepsilon\,\tfrac{q-1}{2}\,z^2(1+z^2+z^4)$ for some $\varepsilon=\pm1$.

To determine the sign of $\varepsilon$, we consider $J_1=12n_{300}$, which has $\nabla_{12n_{300}}=1+z^2+z^4-z^6$~\cite{KnotInfo}. Evaluating at $z=2i$ gives $\det(J_q)=|\nabla_{J_q}(2i)|=|77-26\,\varepsilon\,(q-1)|$. Since $\det(J_3)=129$, we get $\varepsilon=-1$, and the formula follows.

Finally, $\Delta_{J_q}(t)=\nabla_{J_q}(t^{1/2}-t^{-1/2})$, and for $\omega\in S^1$ we have $z^2=\omega+\omega^{-1}-2=-u$ with $u=|1-\omega|^2\geq0$. Substituting gives
\[
\Delta_{J_q}(\omega)=u^3+\Bigl(1+\tfrac{q-1}{2}u\Bigr)(u^2-u+1)>0,
\]
so $\Delta_{J_q}$ has no roots on the unit circle, and the Levine--Tristram signature (which vanishes near $\omega=1$ and is locally constant away from Alexander roots) vanishes identically (see~\cite[Rmks.~2.1 and 2.2]{Con21}).
\end{proof}
\subsection{Seifert matrices for topological projective slicing numbers}\label{subsec:seifert}
\begin{proposition}\label{prop:13a4237-seifert}
$\uCPbartop(13a_{4237})\leq1$.
\end{proposition}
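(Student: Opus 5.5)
The bound will come from \Cref{thm:KMRS-topological} applied to a Seifert matrix of the mirror $\overline{13a_{4237}}$, with a single correction vector. Since $\uCPbartop(K)=\uCPtop(\Kmirror)$ and $-A^T$ is a Seifert matrix of $\Kmirror$ whenever $A$ is one for $K$, it suffices to find a Seifert matrix $A$ of $13a_{4237}$, an integral matrix $B$, and one vector $c\in\Z^{2g}$ such that
\[
-A^T=B-cc^T,\qquad \det(tB-B^T)=\pm t^k .
\]

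The first step is to write down an explicit Seifert matrix. I would take the standard Seifert surface from Seifert's algorithm on a reduced alternating diagram, or simply import the Seifert matrix recorded by KnotInfo. I would then check it against the Alexander polynomial $\Delta_K(t)=-6+30t-66t^2+85t^3-66t^4+30t^5-6t^6$ from \Cref{prop:13a4237-alexander-signature}. That polynomial has degree $6$, so $g=3$ and the matrix is $6\times 6$.

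The second step, and the main obstacle, is to find $c$. The goal is for $B=-A^T+cc^T$ to have $\det(tB-B^T)$ a unit monomial, which is the Alexander-polynomial condition for a topologically slice knot. The guiding heuristic is that $B$ should be a Seifert matrix of an Alexander polynomial one knot, obtained from the mirror by a single positive-to-negative twisting of a band, that is, a rank-one change. A natural first attempt is to translate a known crossing change into Seifert-matrix language. \Cref{prop:13a4237sm-CP} already shows that one crossing change turns $13a_{4237}$ into the slice knot $\overline{11a_{169}}$. That change has the wrong sign for $\CPbar$, so instead I would search for a crossing or band twist of the opposite sign that leads to an Alexander polynomial one knot.

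Failing that, I would search directly. The condition is that, after substituting $B=-A^T+cc^T$, the coefficients of $\det(tB-B^T)$ vanish except in one degree. This gives polynomial equations in the six entries of $c$, and I would search over small entries, say each entry in $\{-2,\dots,2\}$. Because the correction is rank one, $\det(tB-B^T)$ equals $\det(tA'-A'^T)$ plus a term linear in the adjugate paired with $c$. This is a quadratic form in $c$ with polynomial coefficients, so the search is finite and mechanical.

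Finally, I would record the resulting $A$, $c$ and $B$, and verify $\det(tB-B^T)=\pm t^k$ by direct expansion, then invoke \Cref{thm:KMRS-topological} with $n=1$.
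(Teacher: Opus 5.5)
Your reduction is exactly the paper's: take a genus-three Seifert matrix $A$ of $13a_{4237}$, use $-A^T$ for the mirror, and apply \Cref{thm:KMRS-topological} with $n=1$. The gap is that you never produce the vector $c$, and the entire content of the proposition is that such a $c$ exists.

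Your matrix-determinant-lemma reduction is correct and makes the issue precise. With $M(t)=A-tA^T$ and $B=-A^T+cc^T$,
\[
\det(tB-B^T)=\det M(t)+(t-1)\,c^T\operatorname{adj}M(t)\,c .
\]
The target $\pm t^k$ is forced to be $t^3$, because the left side equals $\det(A-A^T)=1$ at $t=1$ and satisfies $f(t)=t^6f(t^{-1})$. Since also $c^T\operatorname{adj}M(t)\,c=-t^5\,c^T\operatorname{adj}M(t^{-1})\,c$, the requirement becomes a system of three integral quadratic equations in the six entries of $c$.

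Nothing you say shows this system has an integer solution. \Cref{thm:KMRS-topological} is only a sufficient condition, and for $10_{109}$ and $12a_{792}$ the paper itself only exhibits two-vector decompositions. The crossing-change heuristic is likewise just a heuristic; by \Cref{prop:13a4237sm-CPbar}, such a change could never land on a smoothly slice knot. So as written, $\uCPbartop(13a_{4237})\le 1$ is not yet proved.

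What the paper supplies is precisely the witness. It applies Seifert's algorithm to KnotInfo's diagram of $13a_{4237}$ to get an explicit $6\times 6$ matrix $A$, with $A_{ij}=\operatorname{lk}(\gamma_i^+,\gamma_j)$. It then sets $c_1=(-2,1,3,-2,2,5)^T$ and $B_-=-A^T+c_1c_1^T$, and checks $\det(tB_--B_-^T)=t^3$.

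In that basis the working vector has entries $3$ and $5$, outside the box $\{-2,\dots,2\}^6$ you proposed. Your search as specified might therefore have come up empty. Once you have a candidate, though, your formula is an efficient way to verify it. Only the coefficients of $t^0,t^1,t^2$ in $c^T\operatorname{adj}M(t)\,c$ need checking, and they must equal $-6$, $24$, $-42$.
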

\begin{proof}
Applying Seifert's algorithm to the diagram of $13a_{4237}$ shown in \Cref{fig:cc-a} (obtained using the PD code on KnotInfo) gives a genus-three Seifert surface $F$. For a choice of basis $\gamma_1,\ldots,\gamma_6$ of $H_1(F;\Z)$, using the convention $A_{ij}=\operatorname{lk}(\gamma_i^+,\gamma_j)$, we obtain the following Seifert matrix:
\begin{equation}\label{eq:13a4237-seifert}
A=\begin{pmatrix}
1&1&1&0&1&1\\
0&1&1&0&1&1\\
0&0&1&1&1&0\\
0&0&0&-2&-1&0\\
0&0&0&-1&-2&0\\
0&0&0&1&0&-2
\end{pmatrix}.
\end{equation}
Using the convention with the positive push-off in the second argument instead gives $A^T$ and does not change the result.

Now set
\[
c_1=\begin{pmatrix}-2&1&3&-2&2&5\end{pmatrix}^T, \text{ and }
\qquad B_-=-A^T+c_1c_1^T.
\]
Then $\det(tB_--B_-^T)=t^3$. Since $-A^T=B_--c_1c_1^T$ is a Seifert matrix for the mirror, \Cref{thm:KMRS-topological} applies with one vector, so $\uCPbartop(13a_{4237})\leq1$.
\end{proof}
%
%
\begin{proposition}\label{prop:10_109-seifert} 
$\uCPtop(10_{109})\leq 2$.
\end{proposition}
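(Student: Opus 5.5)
We will exhibit an explicit Seifert matrix $A$ for $10_{109}$ and integral vectors $c_1,c_2\in\Z^{2g}$ such that $B=A+c_1c_1^T+c_2c_2^T$ satisfies $\det(tB-B^T)=\pm t^k$. Then \Cref{thm:KMRS-topological} applies with $n=2$ and gives $\uCPtop(10_{109})\leq 2$. This mirrors the proof of \Cref{prop:13a4237-seifert}.

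\emph{Seifert matrix.} We start from the PD code of $10_{109}$ on KnotInfo and apply Seifert's algorithm to the standard diagram. $10_{109}$ has genus $3$, so we expect a $6\times 6$ matrix. We fix a basis $\gamma_1,\ldots,\gamma_6$ of $H_1(F;\Z)$ and record $A_{ij}=\operatorname{lk}(\gamma_i^+,\gamma_j)$, using the same convention as before. Two sanity checks on $A$:
\begin{itemize}
\item $\det(A-A^T)=1$;
\item $\det(tA-A^T)$ reproduces $\Delta_{10_{109}}(t)$ up to units, and $\det(A+A^T)$ has absolute value $85$.
\end{itemize}
Since $10_{109}$ is negative amphichiral, the choice of convention (equivalently, replacing $A$ by $-A^T$) does not matter for the final bound.

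\emph{Finding the vectors.} The condition $\det(tB-B^T)=\pm t^k$ says that $B$ is the Seifert-type form of an "Alexander polynomial one" object. A natural way to aim for this is to make $B$ upper triangular with unit diagonal, or block-triangular in a suitable basis. We first try to choose $c_1,c_2$ so that $B$ has a half-rank sublattice on which $B$ vanishes, with the complementary block unimodular. Concretely, we look for $c_1,c_2$ supported on the coordinates where $A$ differs from such a form.

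If this does not produce vectors by hand, we run a small computer search over vectors with entries in $\{-5,\dots,5\}$. The search first filters by the necessary condition $\det(B+B^T)=\pm1$, which is $\Delta_B(-1)$ up to a unit. It then checks the full polynomial identity symbolically.

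\emph{Main obstacle.} The main difficulty is locating $c_1,c_2$. A single vector is insufficient in general, consistent with \Cref{prop:topvsm-10_109-upper} leaving $\{1,2\}$ open, and the search space grows quickly. Once candidates are found, verification is a routine determinant computation. We will present the vectors explicitly and state the resulting value $\det(tB-B^T)=\pm t^3$.
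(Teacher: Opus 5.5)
Your strategy is the same as the paper's: take a Seifert matrix $A$ of $10_{109}$, find $c_1,c_2$ such that $B=A+c_1c_1^T+c_2c_2^T$ has $\det(tB-B^T)$ a monomial, and apply \Cref{thm:KMRS-topological} with $n=2$. The gap is that for this proposition the explicit certificate \emph{is} the proof, and you never supply it. You describe heuristics and a computer search and promise to report the vectors once found. Nothing in your argument shows that such vectors exist: the Seifert-matrix criterion is only sufficient, and whether two vectors suffice is exactly what is being claimed. So as written, $\uCPtop(10_{109})\leq 2$ is not established. The paper closes this gap by exhibiting the data. It uses KnotInfo's $8\times 8$ Seifert matrix $A$ with $c_1=(0,0,0,0,0,0,0,1)^T$ and $c_2=(1,0,1,-1,-1,-1,-1,-1)^T$, and checks that $\det(tB-B^T)=t^4$.

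Your setup also contains a factual error: $10_{109}$ has genus $4$, not $3$. KnotInfo's matrix has $\det A=1$, so $\det(A-tA^T)$ has degree $8$, and every Seifert matrix of $10_{109}$ is at least $8\times 8$. Hence the target cannot be $\pm t^3$. For a $2g\times 2g$ matrix, $f(t)=\det(tB-B^T)$ satisfies $f(t)=t^{2g}f(t^{-1})$, so a monomial value must be $\pm t^{g}$, here $t^4$. The sign is $+$ since $f(1)=\det(A-A^T)=1$.

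Two smaller points. First, an upper triangular $B$ with unit diagonal does not produce a monomial: $B=\left(\begin{smallmatrix}1&1\\0&1\end{smallmatrix}\right)$ gives $t^2-t+1$. The useful target is the metabolic shape you mention next. Second, changing the linking-number convention replaces $A$ by $A^T$, which never affects the criterion because $\det(tB^T-B)=\det(tB-B^T)$. Replacing $A$ by $-A^T$ is a different operation (it gives a Seifert matrix of the mirror), and it is harmless here only because $10_{109}$ is negative amphichiral.
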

\begin{proof}
\cite{KnotInfo} records the following Seifert matrix for $10_{109}$:
\[
A=\begin{pmatrix}
-1&-1& 0& 0&-1& 0& 0& 0\\
 0&-1& 0& 0& 0& 0& 0& 0\\
-1&-1&-1& 0&-1& 0& 0& 0\\
-1& 0&-1& 1& 0& 0& 1& 0\\
 0&-1& 0& 0&-1& 0& 0& 0\\
-1&-1&-1& 1&-1& 1& 1& 1\\
 0& 0& 0& 0& 0& 0& 1& 0\\
-1& 0&-1& 1& 0& 0& 1& 1
\end{pmatrix}
\]
We apply \Cref{thm:KMRS-topological} with $n=2$ by setting
\[
c_1=(0,0,0,0,0,0,0,1)^T,\quad c_2=(1,0,1,-1,-1,-1,-1,-1)^T, \quad \text{and}\quad B=A+c_1c_1^T+c_2c_2^T.
\]
Then we compute $\det(tB-B^T)=t^4$, which gives us $\uCPtop(10_{109})\leq2$.
\end{proof}
\begin{proposition}\label{prop:1028-seifert}
$\uCPtop(10_{28})\leq 1$ and $\uCPbartop(10_{28})\leq 1$.
\end{proposition}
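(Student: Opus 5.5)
The plan is to apply \Cref{thm:KMRS-topological} twice, once to a Seifert matrix $A$ for $10_{28}$ and once to the mirror Seifert matrix $-A^T$, each time with a single vector, exactly as in \Cref{prop:13a4237-seifert,prop:10_109-seifert}. Since $10_{28}$ has genus two, I take the $4\times 4$ Seifert matrix recorded in \cite{KnotInfo}, or equivalently the one coming from the standard Seifert surface of the even continued fraction expansion of the 2-bridge knot $K(53,19)$. This matrix is upper triangular up to a tridiagonal correction, as in the proof of \Cref{prop:Lh-LT-signature}.

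For the bound on $\uCPtop$, I look for $c_1\in\Z^4$ such that $B=A+c_1c_1^T$ satisfies $\det(tB-B^T)=\pm t^k$. In other words, $B$ should be a Seifert-type matrix with trivial Alexander polynomial. The geometric guide is the crossing change of \Cref{prop:1028-sm}, read in reverse. If that crossing lies in a twist region of the 2-bridge surface, then changing it alters a single diagonal entry of $A$ by $\pm1$. A rank-one modification by $c_1=e_i$ is then a natural first guess. If that fails, I would try vectors with small entries supported on two adjacent band coordinates.

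For $\uCPbartop$, the statement of the proposition remarks that the bound also follows from \Cref{prop:1028-sm}. Indeed, changing a crossing from the slice knot $8_8$ gives H-sliceness in $\CPbar$, so $\uCPbartop\le\uCPbar=1$. Consequently only the $\uCPtop$ bound genuinely needs a Seifert decomposition. Still, I would try to exhibit a vector $c$ with $-A^T+cc^T$ unimodular-Alexander, so that the proof is uniform.

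Once candidate vectors are found, the verification is a routine $4\times4$ determinant computation. The main obstacle is finding $c_1$ for the $\CP$ direction. I would run a small computer search over vectors with entries in $\{-2,\dots,2\}$ and test whether $\det(tB-B^T)$ is a monomial. A useful prefilter is that $\det(B-B^T)=\pm1$ holds automatically, so the real constraints are on the middle coefficients. As a fallback I would use the heuristic that $A+c_1c_1^T$ should be congruent to a block matrix $\begin{pmatrix}0&*\\ *&*\end{pmatrix}$ with a half-rank zero block, which is the usual metabolic form that forces Alexander polynomial $1$.
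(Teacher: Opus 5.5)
Your strategy is the paper's: apply \Cref{thm:KMRS-topological} with a single vector, once to $A$ and once to $-A^T$. Your argument for the second inequality is fine and takes a different route. You use $\uCPbartop(10_{28})\le\uCPbar(10_{28})=1$ from \Cref{prop:1028-sm}, since a smooth null-homologous disk is locally flat. The paper's proof instead exhibits the algebraic certificate $c'=(-2,1,-2,-2)^T$ for $-A^T$, and mentions the crossing-change route only in passing in \Cref{prop:1028-top}.

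The gap is the first inequality: you never produce $c_1$. \Cref{thm:KMRS-topological} is only a sufficient criterion, so nothing guarantees in advance that a rank-one decomposition exists. The bound is therefore not established until a vector is exhibited. The paper supplies one: for KnotInfo's matrix $A$ (lower triangular, with rows $(-1,0,0,0)$, $(0,2,0,0)$, $(-1,0,-1,0)$, $(1,1,1,2)$), the vector $c=(-2,1,0,-2)^T$ gives $\det(tB-B^T)=t^2$ for $B=A+cc^T$.

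Your heuristics for finding $c_1$ would also steer you wrong:
\begin{itemize}
\item The crossing change of \Cref{prop:1028-sm} turns a negative crossing of $10_{28}$ into a positive one. At best it suggests $A=A_{8_8}+e_ie_i^T$, which is the $\CPbar$ shape, not the $\CP$ shape. In any case $A_{8_8}$ is not Alexander-trivial.
\item Accordingly, $c_1=e_i$ fails for this $A$. For $i=2,4$ one has $\det(A+e_ie_i^T)=6$, and for $i=1,3$ one gets $\det(tB-B^T)=t(4t^2-7t+4)$.
\item Your fallback is false as stated. A metabolic Seifert matrix forces only the Fox--Milnor form $\Delta\doteq f(t)f(t^{-1})$, not $\Delta=1$; the slice knot $8_8$ has $\Delta\neq 1$.
\end{itemize}
Since $\det(tB-B^T)$ is palindromic with value $1$ at $t=1$, the conditions your search actually needs are $\det B=0$ and vanishing of the coefficient of $t$. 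The middle coefficient is then forced to be $1$.
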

\begin{proof}
\cite{KnotInfo} records the following Seifert matrix for $10_{28}$:
\[
A=\begin{pmatrix}
-1&0&0&0\\
0&2&0&0\\
-1&0&-1&0\\
1&1&1&2
\end{pmatrix},
\qquad \det(A-tA^T)=4-13t+19t^2-13t^3+4t^4 .
\]
We apply \Cref{thm:KMRS-topological} with $n=1$ by setting $c=(-2,1,0,-2)^T$ and $B=A+cc^T$. Then $\det(tB-B^T)=t^2$, which gives $\uCPtop(10_{28})\leq 1$.
Similarly, $-A^T$ is a Seifert matrix for $\overline{10_{28}}$. Setting $c'=(-2,1,-2,-2)^T$ and $B'=-A^T+c'c'^T$, we have $\det(tB'-B'^T)=t^2$, which gives $\uCPbartop(10_{28})\leq 1$.
\end{proof}
\begin{proposition}\label{prop:12a792-seifert}
$\uCPtop(12a_{792})\leq2,\qquad \uCPbartop(12a_{792})\leq2$.
\end{proposition}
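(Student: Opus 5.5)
We will prove both bounds with \Cref{thm:KMRS-topological}, following the pattern of \Cref{prop:10_109-seifert} and \Cref{prop:1028-seifert}. We need a Seifert matrix $A$ for $12a_{792}$ and two integral vectors $c_1,c_2$ such that $B=A+c_1c_1^T+c_2c_2^T$ satisfies $\det(tB-B^T)=\pm t^k$. For the mirror we use $-A^T$ and a second pair $c_1',c_2'$ with $B'=-A^T+c_1'c_1'^T+c_2'c_2'^T$.

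For the Seifert matrix, we will use that $12a_{792}\simeq\overline{L_3}$. The even continued fraction used in \Cref{prop:Lh-LT-signature} gives the $4\times4$ matrix $F_3$, which up to mirroring is a Seifert matrix for $L_3$. So both $F_3$ and $-F_3^T$ are available, and exactly one is a Seifert matrix for $12a_{792}$; the other is one for its mirror. Since we bound both slicing numbers, we need not fix which is which. It suffices to find decompositions for both $F_3$ and $-F_3^T$. Explicitly,
\[
F_3=\begin{pmatrix}2&1&0&0\\0&1&1&0\\0&0&-3&1\\0&0&0&-1\end{pmatrix}.
\]
Working with a genus-two surface keeps the search small.

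Next we search for the vectors. Adding $cc^T$ leaves $B-B^T=A-A^T$ unchanged, so $B$ is again a Seifert-type matrix. The condition $\det(tB-B^T)=\pm t^2$ says that $B$ has trivial Alexander polynomial. Equivalently, the Alexander polynomial of $B$ must be $1$ up to units; on the quadratic-in-$x$ normal form this means the coefficient terms must vanish, where $x=t+t^{-1}-2$. In $F_3$ the negative diagonal entries $-3,-1$ must be cancelled by positive rank-one additions. A natural first attempt is therefore to take $c_1,c_2$ supported mainly on the last two coordinates, such as $c_1=e_4+\cdots$ and $c_2=e_3+\cdots$. We would then solve the resulting small Diophantine system for the two Alexander coefficients, using a short computer search over vectors with entries in $\{-2,\dots,2\}$.

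We then verify $\det(tB-B^T)$ directly and do the same for $-F_3^T$, where the positive entries $2,1$ become negative and must be compensated instead. This bounds both topological slicing numbers by $2$.

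The main obstacle is finding the vectors. One vector does not suffice, consistent with the paper leaving the value in $\{1,2\}$. With two vectors, the search space is small, but we must make sure that a solution exists on the genus-two surface rather than requiring a larger Seifert matrix. If it fails, we would fall back on KnotInfo's Seifert matrix for $12a_{792}$ and search there, as was done for $10_{109}$.
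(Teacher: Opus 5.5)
Your framework is exactly the paper's. You take the genus-two matrix $F_3$ from \Cref{prop:Lh-LT-signature}, and you use both $F_3$ and $-F_3^T$ so you need not decide which belongs to $12a_{792}$ and which to its mirror. You then apply \Cref{thm:KMRS-topological} with two vectors for each. The gap is that you stop before the only substantive step. The whole proof consists of exhibiting the vectors and checking a determinant. Your proposal only describes a search, admits you do not know whether it succeeds on the genus-two matrix, and names a fallback, so as written it proves neither bound.

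The search does succeed on $F_3$. Take
\[
c_1=(1,0,-1,0)^T,\quad c_2=(-1,0,0,-1)^T,\qquad c_1'=(1,-1,1,-1)^T,\quad c_2'=(0,-1,0,-1)^T.
\]
These give
\[
B=F_3+c_1c_1^T+c_2c_2^T=\begin{pmatrix}4&1&-1&1\\0&1&1&0\\-1&0&-2&1\\1&0&0&0\end{pmatrix},\qquad
B'=-F_3^T+c_1'c_1'^T+c_2'c_2'^T=\begin{pmatrix}-1&-1&1&-1\\-2&1&-1&2\\1&-2&4&-1\\-1&2&-2&3\end{pmatrix},
\]
and $\det(tB-B^T)=\det(tB'-B'^T)=t^2$.

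A quick check: $p(t)=\det(tB-B^T)$ is palindromic of degree at most $4$, and its $t^4$ and constant coefficients both equal $\det B=0$. Then $p(1)=\det(F_3-F_3^T)=1$ and $p(2)=4$ force $p(t)=t^2$. The same argument works for $B'$. These vectors are the columns of $C_+$ and $C_-$ in the paper's proof.

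Two side remarks. First, this $B$ still has $-2$ in position $(3,3)$, so the idea that the negative diagonal entries of $F_3$ \emph{must} be cancelled is not a real constraint. Second, your claim that one vector does not suffice is unsupported and unnecessary. \Cref{thm:KMRS-topological} is only a sufficient criterion, and the paper leaves these values in $\{1,2\}$ precisely because they are undetermined.
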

\begin{proof}
Take the Seifert matrix $F_h$ from \Cref{prop:Lh-LT-signature} with $h=3$. Then 
set
\[
C_+=\begin{pmatrix}1&-1\\0&0\\-1&0\\0&-1\end{pmatrix},
\qquad
C_-=\begin{pmatrix}1&0\\-1&-1\\1&0\\-1&-1\end{pmatrix},
\qquad
B_+=V_3+C_+C_+^T,\quad B_-=-V_3^T+C_-C_-^T.
\]
Expanding the two determinants gives $\det(tB_+-B_+^T)=\det(tB_--B_-^T)=t^2$. Applying \Cref{thm:KMRS-topological} to the two columns of each $C_\pm$ gives the stated bounds.
\end{proof}
\section{Computation of \texorpdfstring{$r_0(L_h)$}{r0(Lh)}}
\label{app:Lh-r0}
We prove the following using the 2-bridge formulas of Daemi--Scaduto~\cite{DS24}.
\begin{proposition}\label{prop:Lh-r0}
For every odd $h\geq3$, $r_0(L_h)=r_0(\overline{L_h})=\infty$.
\end{proposition}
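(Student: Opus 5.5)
We will compute $r_0$ for $L_h$ and its mirror directly from the 2-bridge formulas of Daemi--Scaduto~\cite[\S3.1]{DS24}, following the template of~\cite[\S6.4]{DISST25}. Their method works with the Chern--Simons filtration on the instanton chain complex of a 2-bridge knot. The double branched cover is the lens space $S^3_{p/q}(U)$, whose flat $SU(2)$ (equivalently, relevant $SO(3)$) connections are indexed by residues $k\in\Z/p$. Each connection has a Chern--Simons value of the form $-k^2q^*/p$ modulo $\Z$, where $qq^*\equiv1\pmod p$, together with an index or grading given by a Dedekind-sum type expression. In this language, $r_0(K)<\infty$ roughly requires an obstruction class: a generator in the relevant degree that survives in the filtered complex, determined by whether some critical value realizes a nonzero element below level $0$. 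The plan is therefore to show that, for $p=8h^2+4h+1$ and $q=8h^2-4h+1$ (and for $p-q=8h$ for the mirror), no generator meets the conditions that would make $r_0$ finite.

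Concretely, we first record $q^*$ modulo $p$ for both $q$ and $p-q$. From $[2^{[h-1]},3,2h+1,2]$ and its Riemenschneider dual $[h+1,3,2^{[2h-2]},3]$, we expect closed forms linear or quadratic in $h$. Next, for each $k$ we write the Daemi--Scaduto grading and Chern--Simons value as explicit functions of $h$ and $k$. These come from the continued fraction through the usual sum over partial quotients, so that the relevant set of generators, in the degree where $r_0$ is detected, can be parametrized. Using $\sigma(L_h)=0$ from \Cref{prop:Lh-LT-signature} fixes the grading normalization. Then $r_0(L_h)=\infty$ should reduce to the statement that the generators in that degree either do not exist or cancel in pairs under the differential. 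By the 2-bridge formulas this is decided purely by the lattice-point and parity data. The same computation with $q$ replaced by $p-q$ handles $\overline{L_h}$, since $\overline{K(p,q)}\simeq K(p,p-q)$.

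We expect the main obstacle to be the uniform-in-$h$ bookkeeping of the gradings. The Dedekind-sum expressions must be simplified along the specific continued fraction with a run of $2$'s of length depending on $h$, and we must verify the vanishing of the relevant class for every odd $h\geq3$ rather than case by case. We would first compute $h=3$ (where $\overline{L_3}=12a_{792}$) and $h=5$ by hand or machine to identify the pattern. We would then prove the general case by induction along the run of $2$'s, using the recursive structure of the formulas in~\cite{DS24}. Oddness of $h$ should enter exactly as in \Cref{prop:Lh-lattice}, through a parity of $(h+1)/2$ terms.
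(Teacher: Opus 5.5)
There is a genuine gap. Your proposal never pins down the condition that has to be verified, and the reduction it does state is not the right one. By \cite[Eq.~(118)]{DISST25} with $s=0$,
$r_0=-\min\bigl\{\inf\{\deg_I(d\alpha-\delta_2(1)):\alpha\in C_{-1},\ \deg_I(\alpha)\le 0\},0\bigr\}$.
So you must show that the image $\delta_2(1)$ of the reducible generator equals $d\alpha$ for some $\alpha\in C_{-1}$ with $\deg_I(\alpha)\le 0$.

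This is not the same as ``the generators in that degree do not exist or cancel in pairs.'' For $L_h$ and its mirror, $\delta_2(1)\neq0$. Being a boundary is also not enough: the real content is the Chern--Simons filtration bound on the primitive $\alpha$. Your plan never mentions $\delta_2$ or this inequality. So a full computation of Chern--Simons values and Dedekind-sum gradings, organized as you describe, would still not decide $r_0$. The proposed induction along the run of $2$'s also has no mechanism behind it, since the 2-bridge formulas of \cite{DS24} are not recursive in the continued fraction. Computing $h=3,5$ would only suggest a pattern.

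Here is what is missing. By \cite[Eqs.~(3.1)--(3.5)]{DS24}, a component of $d$ (or of $\delta_2$, taking source index $0$) corresponds to a positive odd pair $(a,b)$ satisfying the congruences there, such that the rectangle $[-a,a]\times[-b,b]$ contains no nonzero point of $\mathcal L_{p,q}$ except possibly at corners. Each such component lowers $\deg_I$ by $ab/p$.

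Take $p=8h^2+4h+1$ and $q=8h$, so that $L_h=K^{\mathrm{DS}}(p,q)$ and $\overline{L_h}=K^{\mathrm{DS}}(p,-q)$. Write $u=h(4h+1)$ and $w=h(4h-1)$. \Cref{lem:2bridge-rectangles} shows that each relevant congruence has a unique empty-rectangle solution. Consequently:
\begin{itemize}
\item For $L_h$, $\delta_2(1)$ has a single component, to generator $1$ via $(1,u)$. The differential of generator $4h$ has a single component, hitting generator $1$ via $(4h-1,h)$.
\item For $\overline{L_h}$, $\delta_2(1)$ has a single component, to generator $4h+1$ via $(4h+1,h)$. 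The differential of generator $4h+2$ has a single component, hitting generator $4h+1$ via $(1,w)$.
\end{itemize}
Rescaling by units gives $d\alpha=\delta_2(1)$ with $\deg_I(\alpha)=(w-u)/p=-2h/p<0$. That $\alpha\in C_{-1}$ follows from the map degrees alone, so no Dedekind sums are needed. Oddness of $h$ enters because the pairs $(a,b)$ must be odd, which forces $h$, $u$ and $w$ to be odd. It does not enter through $(h+1)/2$ as in \Cref{prop:Lh-lattice}.
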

We follow~\cite[\S6.4]{DISST25}, where Daemi, Imori, Sato, Scaduto, and Taniguchi use the description of~\cite{DS24} to compute the I-graded S-complex of $\overline{10_{28}}\simeq \overline{K_{m,0}}$ explicitly (see~\cite[Fig.~11 and Prop.~6.21]{DISST25}) and note that similar computations can be carried out for other 2-bridge knots. As explained there, for 2-bridge knots one can avoid perturbations and work with I-graded S-complexes over $\Z[T^{\pm1},U^{\pm1}]$. We need only two components of this complex: the map from the reducible generator and the differential of one irreducible generator.

Following their notation, we write $(C_*,d)$ for the irreducible chain complex and $\delta_2$ for the map from the reducible generator $1$ to $C_*$. The maps $d$ and $\delta_2$ have integer degrees $-1$ and $-2$, respectively. The real grading $\deg_I$ records the Chern--Simons filtration, with $\deg_I(1)=0$; for a chain, it is the largest degree of a nonzero term, with $\deg_I(0)=-\infty$. Multiplication by $U$ raises the integer and real gradings by $4$ and $1$, and multiplying by $T$ preserves both gradings~\cite[\S\S2.1 and 2.6]{DS24}.
\begin{lemma}\label{lem:2bridge-r0-criterion}
Let $K$ be a 2-bridge knot. If there is a chain $\alpha\in C_{-1}$ such that
\begin{equation}\label{eq:2bridge-r0-chain-criterion}
d\alpha=\delta_2(1),\qquad \deg_I(\alpha)\leq0,
\end{equation}
then $r_0(K)=\infty$.
\end{lemma}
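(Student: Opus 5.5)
The plan is to unwind the definition of $r_0$ in \cite{DISST25} in the special case of the I-graded S-complexes of 2-bridge knots, where no perturbation is needed. Recall that $r_0(K)$ is defined as a level in the Chern--Simons filtration: roughly, it is the infimum of the $r$ such that the reducible generator $1$ cannot be ``killed'' at filtration level below $r$. Equivalently, $r_0(K)<\infty$ exactly when every cycle of the form $1+x$ in the enhanced complex $\tilde C_*=C_*\oplus C_{*-1}\oplus\Z[T^{\pm1},U^{\pm1}]$, with the correct degree and with $x$ in the irreducible part, has irreducible component of bounded real grading that detects a nontrivial class. I will first write down, from \cite[\S\S2.1, 2.6]{DS24} and \cite[\S6]{DISST25}, the differential $\tilde d$ on $\tilde C_*$: it has components $d$ on $C_*$, $-d$ and $v$ on the shifted copy, $\delta_1\colon C_*\to\Z$, and $\delta_2\colon\Z\to C_*$. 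In this setting, the relevant quantity for $r_0$ is the minimal filtration level at which $\delta_2(1)$ fails to be a boundary. Equivalently, one needs the minimal $\deg_I$ of a chain that, together with the reducible generator, forms a cycle whose pairing against the reducible remains nonzero.

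The key step is the observation that if $\delta_2(1)=d\alpha$ with $\alpha\in C_{-1}$, then the element $(0,\alpha,1)$ (or $(\alpha,0,1)$, depending on the sign conventions for the shifted summand) is a $\tilde d$-cycle. Here I would check that $\delta_1$ vanishes on $\alpha$ for degree reasons, or else that the relevant term lands in the correct grading. This cycle extends $1$ by a chain whose filtration level is $\max(0,\deg_I(\alpha))=0$. I will then invoke the characterization of $r_s$ (in \cite{NST24}, adapted in \cite[Def.~6.?]{DISST25}) as the infimum of levels $r$ at which the inclusion of the sub-complex filtered below $r$ fails to carry a cycle hitting $1$ with the appropriate $U$-torsion behavior. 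The existence of such a cycle at every level $r>0$, and in fact at all levels once it is multiplied by powers of $U^{-1}$, then shows that the defining set is empty, so the infimum is $+\infty$.

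The main obstacle will be matching conventions exactly: which of $d\alpha=\delta_2(1)$ or $d\alpha=-\delta_2(1)$ is needed, the role of $\delta_1$ and $v$ in the cycle condition, and whether the criterion requires $\deg_I(\alpha)\le 0$ or a strict inequality. For this step I would follow the explicit computation for $\overline{10_{28}}$ in \cite[Prop.~6.21]{DISST25}, where the failure of exactly this condition yields $r_0<\infty$, and read off the complementary condition from it. Everything else is formal bookkeeping with the real grading, using that $T$ preserves gradings and $U$ shifts them uniformly.
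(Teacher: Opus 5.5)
There is a gap at the step that carries the lemma. You never state what $r_0$ is in terms of the S-complex data, and the characterization you plan to invoke is not correct as written.

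Describing $r_s$ as the infimum of levels at which the filtered subcomplex "fails to carry a cycle hitting $1$ with the appropriate $U$-torsion behavior" brings in $U$-conditions that do not enter the definition of $r_0$. The step "multiply by powers of $U^{-1}$" also does not do what you need. The element $U^{-1}(0,\alpha,1)=(0,U^{-1}\alpha,U^{-1})$ no longer has reducible component $1$, so it tells you nothing further about the generator $1$. Reading the criterion off the $\overline{10_{28}}$ computation is likewise not an argument.

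Your cycle itself is fine. With $\tilde d(a,b,c)=(da,\,va-db+\delta_2 c,\,\delta_1 a)$, the element $(0,\alpha,1)$ is a cycle exactly when $d\alpha=\delta_2(1)$, so no condition on $\delta_1(\alpha)$ arises. By contrast, $(\alpha,0,1)$ would not be a cycle. The sign question is immaterial, since $\deg_I(-\alpha)=\deg_I(\alpha)$.

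The missing ingredient is the explicit formula \cite[Eq.~(118)]{DISST25}. For $s=0$ it reads
\[
r_0=-\min\bigl\{\inf\{\deg_I(d\alpha-\delta_2(1)) : \alpha\in C_{-1},\ \deg_I(\alpha)\le 0\},\,0\bigr\}.
\]
Up to the cap at $0$, $-r_0$ is the lowest filtration level to which $\delta_2(1)$ can be pushed by subtracting boundaries of chains of level at most $0$.

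Given $\alpha$ as in the hypothesis, $d\alpha-\delta_2(1)=0$ has $\deg_I=-\infty$. The inner infimum is therefore $-\infty$, and $r_0=\infty$. That is the paper's entire proof; once you have this formula, none of the enhanced-complex bookkeeping is needed.
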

\begin{proof}
By \cite[Eq.~(118)]{DISST25} with $s=0$, we have
\[
r_0=-\min\bigl\{\inf\{\deg_I(d\alpha-\delta_2(1)): \alpha\in C_{-1},\ \deg_I(\alpha)\leq0\},0\bigr\}.
\]
The stated chain makes the infimum $-\infty$.
\end{proof}
Write $K^{\mathrm{DS}}(p,q)$ for the convention of~\cite{DS24}, so $K^{\mathrm{DS}}(p,q)=K(p,-q)$ in our notation \cite[Rmk.~6.20]{DISST25}. For coprime $p,q$ with $p>1$ odd and a positive odd pair $(a,b)$, we say that the rectangle $[-a,a]\times[-b,b]$ is \emph{empty} if it contains no nonzero point of
\[
\mathcal L_{p,q}=\{(x,y)\in\Z^2:x+qy\equiv0\pmod p\}
\]
except possibly at its corners. This is the condition $N_1=1$, $N_2=0$ in~\cite[Eqs.~(3.3)--(3.4)]{DS24}. Reflection in the horizontal axis shows that this condition is unchanged when $q$ is replaced by $-q$.
\begin{lemma}\label{lem:2bridge-rectangles}
Let $h\geq3$ be odd, and set
\[
p=8h^2+4h+1,\qquad q=8h,\qquad
u=h(4h+1),\qquad w=h(4h-1).
\]
Among pairs $(a,b)$ with empty rectangle for $(p,q)$, each congruence modulo $p$ has exactly the indicated solution:
\[
\begin{array}{c|c}
\text{congruence}&(a,b)\\ \hline
a\equiv qb&(1,u)\\
a\equiv-qb&(4h+1,h)\\
a\equiv qb\pm q&(4h-1,h)\\
a\equiv-qb\pm(q+4)&(1,w)
\end{array}
\]
\end{lemma}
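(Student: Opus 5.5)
The plan is to translate each congruence into a statement about the lattice $\mathcal L_{p,q}$, and then use the geometry of that lattice for the specific parameters $p=8h^2+4h+1$, $q=8h$.

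The first observation is that $a\equiv\epsilon qb+c\pmod p$ says exactly that $(a-c,\,-\epsilon b)$, or a simple variant of it, lies in a translate of $\mathcal L_{p,q}$. For the homogeneous congruences $a\equiv\pm qb$, the point $(a,\mp b)$ lies in $\mathcal L_{p,q}$ itself.

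\emph{Step 1: a reduced basis.} I will find a reduced basis of $\mathcal L_{p,q}$, which has index $p$ in $\Z^2$. Natural short vectors are
\[
v_1=(-8h,1),\qquad v_2=(1+4h,\,h),
\]
where $v_2\in\mathcal L_{p,q}$ because $1+4h+8h^2=p$. Their determinant is $-8h^2-1-4h=-p$, so $v_1,v_2$ form a basis.

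It is probably cleaner to pass to a Gauss-reduced basis. One candidate is $v_2=(4h+1,h)$ together with
\[
v_3=v_1+2v_2=(2,\,2h+1),
\]
which also lies in $\mathcal L_{p,q}$. I will check that the pair $v_2,v_3$ is reduced, meaning $|v_2\cdot v_3|\le \tfrac12|v_3|^2$, for $h\geq3$. Given a reduced basis, every lattice point of the form $mv_2+nv_3$ with $|m|,|n|$ not both small has large coordinates.

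\emph{Step 2: the homogeneous rows.} Positive odd solutions with an empty rectangle correspond to lattice points with odd coordinates. The rectangle $[-a,a]\times[-b,b]$ must contain no other nonzero lattice point, except possibly at its corners.

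For $a\equiv-qb$, the point is $(a,b)\in\mathcal L_{p,q}$ itself. Since $v_2=(4h+1,h)$ has odd coordinates when $h$ is odd, it gives the solution $(4h+1,h)$. For $a\equiv qb$, the point is $(a,-b)$. Here I expect a combination such as $v_2-2v_3$ or $v_1+\dots$ to give $(1,-u)$ with $u=h(4h+1)$. Indeed $1+8h\cdot h(4h+1)=32h^3+8h^2+1$, and I will check that this is divisible by $p$ and equals $(4h)p+\dots$.

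For uniqueness, I will argue as follows. Any other lattice point with odd coordinates is $mv_2+nv_3$ with $m$ odd. If $|m|\ge 2$ or $n\ne0$ beyond the candidate, then either a smaller lattice point (for instance $v_3=(2,2h+1)$ or $v_2$) lies strictly inside its rectangle, or the point fails to have odd coordinates. The points $\pm v_3$ and $\pm v_2$ will serve as the witnesses that destroy emptiness.

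\emph{Step 3: the inhomogeneous rows.} For $a\equiv qb\pm q$ we need $(a,-b\mp1)$ or a similar point in the lattice; equivalently, $(a,b)$ lies in a coset of $\mathcal L_{p,q}$ shifted by $(q,0)$, that is, by $(0,\pm1)$ after subtracting. I will solve each case by writing the shift vector in the basis $v_2,v_3$ and enumerating the coset points with small odd coordinates. The candidates are $(4h-1,h)$, coming from $v_2-(2,0)$-type shifts, and $(1,w)$ with $w=h(4h-1)$, for which I will check that $1+8h\,w\pm(8h+4)\equiv0$.

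\emph{Main obstacle.} I expect uniqueness to be the hardest part: showing that no other odd pair in each coset has an empty rectangle. For this I will use a case split on the coefficient of $v_3$. Whenever $|b|$ exceeds roughly $2h+1$ with $|a|\geq2$, the rectangle contains $\pm v_3$. Otherwise $b$ is small and the congruence pins down $a$ uniquely modulo $p$, leaving only finitely many cases linear in $h$ to check by hand.
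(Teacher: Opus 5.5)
Your plan has a genuine gap, and several of its concrete steps fail as written.

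\emph{Concrete errors.} The pair $v_2=(4h+1,h)$, $v_3=(2,2h+1)$ is not Gauss-reduced: $v_2\cdot v_3=2h^2+9h+2>\tfrac12|v_3|^2=2h^2+2h+\tfrac52$. The reduced basis is $v_3$ together with $v_2-v_3=(4h-1,-h-1)$. Your check for $(1,u)$ also has a sign error: $1+qu=(4h-1)p+2$ is not divisible by $p$. What $(1,-u)\in\mathcal L_{p,q}$ actually requires is $qu-1=(4h-1)p$, i.e.\ $q^{-1}\equiv u\pmod p$.

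\emph{Structural problem.} Euclidean reduction does not control emptiness of rectangles whose aspect ratio grows with $h$. The lattice point attached to the solution $(1,u)$ is $(1,-u)=v_2-2h\,v_3$, whose $v_3$-coefficient is unbounded in $h$. Moreover, the witnesses you name ($\pm v_2$, $\pm v_3$) never lie in a rectangle with $a=1$. Your closing case split shows the same problem. Given oddness, the complement of ``$b\geq 2h+1$ and $a\geq 3$'' is ``$b\leq 2h-1$ or $a=1$''. You only treat $b\leq 2h-1$, but $a=1$, $b\geq 2h+1$ is exactly where $(1,u)$ and $(1,w)$ live. Even for small $b$, knowing $a$ modulo $p$ determines nothing without an a priori bound on $a$. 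Checking ``finitely many cases linear in $h$ by hand'' is also not an argument valid for all odd $h\geq 3$.

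\emph{What is needed.}
\begin{itemize}
\item The lattice points $(-q,1)$, $(-1,u)$, $(2,2h+1)$, together with oddness of $a,b$, give $1\leq a<q$, $b\leq u$, and $b\geq 2h+1\Rightarrow a=1$.
\item If $b\leq 2h-1$, the integer $qb-a$, $qb+a$, $q(b\pm1)-a$, or $qb+a\pm(q+4)$ (according to the row) is an odd multiple of $p$ in $(-p,3p)$. Hence it equals $p=qh+4h+1$. With $0<a<q$ this forces $b$ even in rows one and four, and $(a,b)=(4h+1,h)$ in row two. In row three it gives $(4h-1,h)$ or $(4h-1,h+2)$. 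The latter is excluded by the lattice point $(-(4h-1),h+1)$, which lies on a side of its rectangle away from the corners.
\item If $a=1$, multiply by $q^{-1}\equiv u$. In the four rows this gives $b\equiv u$; $p-u$; $u\pm1$; and $w$ or $u+6h+2$, respectively. The range $2h+1\leq b\leq u<p/2$, together with parity, leaves only $b=u$ and $b=w$.
\item ``Exactly the indicated solution'' also requires that the four listed rectangles are empty, which your plan never verifies. For $|x|\leq 1$, the nonzero lattice points with smallest $|y|$ are $(\pm1,\mp u)$. For the other two rectangles, $|x+qy|\leq p$ (resp.\ $\leq p-2$), with equality only at corners. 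So a non-corner lattice point has $x+qy=0$, and since $|x|<q$ it is the origin.
\end{itemize}
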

We defer the arithmetic proof until after we compute $r_0$.
\begin{proof}[Proof of \Cref{prop:Lh-r0}]
With the parameters of \Cref{lem:2bridge-rectangles}, $L_h$ is $K^{\mathrm{DS}}(p,q)$ and its mirror is $K^{\mathrm{DS}}(p,-q)$. Write $q'=\pm q$ for the second parameter.

The generators of $C_*$ are indexed by $1,\ldots,(p-1)/2$. By~\cite[Eqs.~(3.1)--(3.5)]{DS24}, the component of $d$ from index $i$ to index $j$ is nonzero precisely when a pair with empty rectangle satisfies
\begin{equation}\label{eq:2bridge-r0-congruences}
a\equiv\varepsilon_1i+\varepsilon_2j,\qquad q'b\equiv-\varepsilon_1i+\varepsilon_2j\pmod p,
\qquad \varepsilon_1,\varepsilon_2\in\{\pm1\}.
\end{equation}
The same rule with $i=0$ computes $\delta_2(1)$. Each nonzero coefficient is $\pm U^k(T^2-T^{-2})$. The real-grading difference between the source and the resulting term is $ab/p$ (see~\cite[Eqs.~(3.2),~(3.8) and the subsequent discussion]{DS24}).

The required indices and pairs are
\[
\begin{array}{c|c|c|c|c}
\text{knot}&i&j&(a_0,b_0)\text{ for }\delta_2&(a_1,b_1)\text{ for }d\\ \hline
L_h&4h&1&(1,u)&(4h-1,h)\\
\overline{L_h}&4h+2&4h+1&(4h+1,h)&(1,w).
\end{array}
\]

For $\delta_2(1)$ we have $i=0$, so $a\equiv q'b$ and $j\equiv\pm a$. For the differential, subtracting the congruences gives $a-q'b\equiv\pm2i$, where $2i=q$ for $L_h$ and $2i=q+4$ for $\overline{L_h}$. In each case, \Cref{lem:2bridge-rectangles} leaves exactly the pair listed above. Taking $\varepsilon_1=1$ and $\varepsilon_2=-1$ then verifies that its target is $j$, using
\[
qh=p-(4h+1),\qquad qw=(4h-3)p+8h+3.
\]

By multiplying the generators at $i,j$ by signs and powers of $U$, choose $\alpha,\beta$ with
\[
d\alpha=\delta_2(1)=(T^2-T^{-2})\beta.
\]
This uses only units of $\Z[T^{\pm1},U^{\pm1}]$. In particular, we never divide by $T^2-T^{-2}$. The map degrees give $\alpha\in C_{-1}$ and $\beta\in C_{-2}$. Since each component lowers $\deg_I$ by $ab/p$, comparing degrees gives
\[
\deg_I(\beta)=-\frac{a_0b_0}{p},\qquad
\deg_I(\alpha)=\frac{a_1b_1-a_0b_0}{p}
=\frac{w-u}{p}=-\frac{2h}{p}<0.
\]
Apply \Cref{lem:2bridge-r0-criterion} to obtain both equalities.
\end{proof}
\begin{remark}
    For $h=3$, \Cref{prop:Lh-r0} is consistent with~\cite[Table~1]{DISST25}: $L_3=K(85,61)$ and its mirror have vanishing signature but do not appear there, so $\Gamma_{L_3}(0)=\Gamma_{\overline{L_3}}(0)=0$. 
\end{remark}
\begin{proof}[Proof of \Cref{lem:2bridge-rectangles}]
Since $qu=(4h-1)p+1$, we have $q^{-1}\equiv u\pmod p$. The points
\[
(-q,1),\qquad (-1,u),\qquad (2,2h+1)
\]
belong to $\mathcal L_{p,q}$. Emptiness and the oddness of $a,b$ therefore imply
\begin{equation}\label{eq:rectangle-bounds}
1\leq a<q,\qquad b\leq u,\qquad b\geq2h+1\Longrightarrow a=1.
\end{equation}

First suppose $b<2h+1$. For the four congruences, respectively, consider
\[
qb-a,\qquad qb+a,\qquad q(b\pm1)-a,\qquad qb+a\pm(q+4).
\]
For each congruence, the corresponding expression is an odd multiple of $p$. Since $a<q$ and $b\leq2h-1$, it lies in $(-p,3p)$ and hence equals $p$. Since $p=qh+4h+1$ and $0<a<q$, the first and fourth equations force $b$ to be even. The second one gives $(a,b)=(4h+1,h)$, and the third one gives $(4h-1,h)$ or $(4h-1,h+2)$. The latter is excluded by $(-(4h-1),h+1)\in\mathcal L_{p,q}$, which lies on a side of its rectangle away from the corners.

If $2h+1\leq b\leq u$, then $a=1$. Multiplying the congruences by $q^{-1}\equiv u$ gives, in their order,
\[
b\equiv u;\qquad p-u;\qquad u\pm1;\qquad
w\ \text{or}\ u+6h+2\pmod p.
\]
Since $u<p/2$, the only odd representatives in the required range are $b=u$ in the first row and $b=w$ in the fourth.

Finally, all four listed rectangles are empty. For $|x|\leq1$, the nonzero points of $\mathcal L_{p,q}$ with smallest $|y|$ are $(\pm1,\mp u)$, proving the assertion for $(1,u)$ and $(1,w)$. For the rectangles corresponding to $(4h+1,h)$ and $(4h-1,h)$, we have $|x+qy|\leq p$ and $|x+qy|\leq p-2$, respectively, with equality to $p$ possible only at corners. Away from the corners, a lattice point must therefore satisfy $x+qy=0$. Since $|x|<q$, that must be the origin.

\end{proof}
%
%
{\footnotesize \bibliographystyle{alphamod-b}
\bibliography{references}}
\end{document}

%% file: figures/Th-3q.tex
\begin{tikzpicture}[line width=1.00pt, line cap=round, line join=round, every node/.style={font=\small}]
\draw (0.000,2.400) -- (0.151,2.395) -- (0.302,2.381) -- (0.451,2.357) -- (0.599,2.324) -- (0.744,2.282) -- (0.887,2.230) -- (1.025,2.170) -- (1.160,2.101) -- (1.290,2.024) -- (1.415,1.938) -- (1.534,1.845) -- (1.648,1.745) -- (1.754,1.638) -- (1.854,1.524) -- (1.946,1.404);
\draw (0.000,1.300) -- (0.074,1.301) -- (0.143,1.303) -- (0.207,1.307) -- (0.268,1.314) -- (0.327,1.322) -- (0.384,1.332) -- (0.440,1.343) -- (0.495,1.355) -- (0.550,1.368) -- (0.605,1.380) -- (0.661,1.392) -- (0.717,1.402) -- (0.775,1.410) -- (0.834,1.416) -- (0.895,1.417) -- (0.958,1.413) -- (1.022,1.404) -- (1.089,1.388) -- (1.157,1.364) -- (1.226,1.331) -- (1.296,1.287) -- (1.366,1.232) -- (1.434,1.164) -- (1.500,1.082);
\draw[white, line width=4.0pt, line cap=butt] (0.417,1.744) -- (0.454,1.720) -- (0.489,1.695) -- (0.521,1.669) -- (0.550,1.641) -- (0.577,1.613) -- (0.602,1.584) -- (0.625,1.555) -- (0.647,1.525) -- (0.666,1.495) -- (0.685,1.464) -- (0.702,1.433) -- (0.717,1.402) -- (0.733,1.371) -- (0.747,1.340) -- (0.761,1.309) -- (0.774,1.277) -- (0.787,1.246) -- (0.800,1.215) -- (0.814,1.184) -- (0.827,1.153) -- (0.841,1.122) -- (0.855,1.091) -- (0.870,1.059) -- (0.886,1.028);
\draw (0.000,1.850) -- (0.105,1.844) -- (0.200,1.828) -- (0.286,1.804) -- (0.362,1.773) -- (0.430,1.736) -- (0.489,1.695) -- (0.541,1.651) -- (0.586,1.604) -- (0.625,1.555) -- (0.660,1.505) -- (0.690,1.454) -- (0.717,1.402) -- (0.742,1.350) -- (0.765,1.298) -- (0.787,1.246) -- (0.809,1.194) -- (0.832,1.142) -- (0.855,1.091) -- (0.881,1.039) -- (0.908,0.986) -- (0.939,0.933) -- (0.973,0.878) -- (1.012,0.821) -- (1.054,0.761);
\draw (1.054,0.761) -- (1.089,0.710) -- (1.121,0.659) -- (1.150,0.606) -- (1.177,0.551) -- (1.202,0.496) -- (1.224,0.439) -- (1.243,0.381) -- (1.259,0.323) -- (1.273,0.264) -- (1.284,0.204) -- (1.292,0.144) -- (1.297,0.083) -- (1.300,0.022) -- (1.299,-0.038) -- (1.296,-0.099);
\draw (1.946,1.404) -- (2.001,1.319) -- (2.044,1.237) -- (2.076,1.157) -- (2.097,1.080) -- (2.111,1.007) -- (2.116,0.937) -- (2.115,0.870) -- (2.109,0.806) -- (2.097,0.745) -- (2.083,0.687) -- (2.065,0.630) -- (2.046,0.576) -- (2.024,0.522) -- (2.003,0.469) -- (1.981,0.416) -- (1.959,0.364) -- (1.938,0.310) -- (1.919,0.255) -- (1.901,0.198) -- (1.886,0.138) -- (1.872,0.075) -- (1.861,0.008) -- (1.852,-0.063) -- (1.845,-0.141);
\draw[white, line width=4.0pt, line cap=butt] (1.716,0.831) -- (1.743,0.807) -- (1.770,0.784) -- (1.797,0.761) -- (1.825,0.740) -- (1.852,0.719) -- (1.880,0.698) -- (1.907,0.678) -- (1.935,0.658) -- (1.963,0.638) -- (1.990,0.617) -- (2.018,0.597) -- (2.046,0.576) -- (2.073,0.554) -- (2.100,0.532) -- (2.127,0.508) -- (2.153,0.484) -- (2.178,0.458) -- (2.203,0.431) -- (2.227,0.402) -- (2.251,0.371) -- (2.273,0.339) -- (2.294,0.304) -- (2.314,0.268) -- (2.332,0.228);
\draw (1.500,1.082) -- (1.547,1.020) -- (1.592,0.963) -- (1.637,0.912) -- (1.681,0.866) -- (1.725,0.823) -- (1.770,0.784) -- (1.815,0.747) -- (1.861,0.712) -- (1.907,0.678) -- (1.953,0.644) -- (2.000,0.610) -- (2.046,0.576) -- (2.091,0.539) -- (2.135,0.500) -- (2.178,0.458) -- (2.220,0.412) -- (2.258,0.361) -- (2.294,0.304) -- (2.326,0.242) -- (2.353,0.172) -- (2.374,0.095) -- (2.389,0.011) -- (2.396,-0.082) -- (2.393,-0.183);
\draw (2.393,-0.183) -- (2.375,-0.344) -- (2.347,-0.503) -- (2.307,-0.661) -- (2.257,-0.815) -- (2.197,-0.966) -- (2.127,-1.112) -- (2.047,-1.253) -- (1.958,-1.388) -- (1.860,-1.517) -- (1.753,-1.639) -- (1.638,-1.754) -- (1.516,-1.860) -- (1.387,-1.959) -- (1.252,-2.048);
\draw (1.296,-0.099) -- (1.291,-0.173) -- (1.288,-0.241) -- (1.288,-0.306) -- (1.289,-0.367) -- (1.293,-0.427) -- (1.299,-0.484) -- (1.305,-0.541) -- (1.313,-0.597) -- (1.322,-0.653) -- (1.330,-0.709) -- (1.337,-0.765) -- (1.343,-0.822) -- (1.347,-0.880) -- (1.348,-0.940) -- (1.345,-1.000) -- (1.336,-1.063) -- (1.322,-1.126) -- (1.301,-1.191) -- (1.272,-1.257) -- (1.233,-1.324) -- (1.185,-1.390) -- (1.124,-1.456) -- (1.051,-1.519) -- (0.965,-1.578);
\draw[white, line width=4.0pt, line cap=butt] (1.707,-0.548) -- (1.680,-0.584) -- (1.653,-0.617) -- (1.624,-0.646) -- (1.595,-0.674) -- (1.565,-0.699) -- (1.534,-0.721) -- (1.503,-0.742) -- (1.471,-0.761) -- (1.440,-0.778) -- (1.408,-0.794) -- (1.376,-0.809) -- (1.343,-0.822) -- (1.311,-0.835) -- (1.279,-0.847) -- (1.247,-0.858) -- (1.215,-0.869) -- (1.183,-0.880) -- (1.151,-0.891) -- (1.118,-0.901) -- (1.086,-0.913) -- (1.054,-0.924) -- (1.022,-0.936) -- (0.990,-0.948) -- (0.958,-0.962);
\draw (1.845,-0.141) -- (1.831,-0.245) -- (1.808,-0.339) -- (1.777,-0.422) -- (1.740,-0.496) -- (1.698,-0.560) -- (1.653,-0.617) -- (1.605,-0.665) -- (1.554,-0.706) -- (1.503,-0.742) -- (1.450,-0.773) -- (1.397,-0.799) -- (1.343,-0.822) -- (1.290,-0.843) -- (1.236,-0.862) -- (1.183,-0.880) -- (1.129,-0.898) -- (1.076,-0.916) -- (1.022,-0.936) -- (0.968,-0.957) -- (0.914,-0.981) -- (0.858,-1.007) -- (0.801,-1.037) -- (0.741,-1.071) -- (0.678,-1.109);
\draw (0.678,-1.109) -- (0.625,-1.140) -- (0.572,-1.168) -- (0.516,-1.193) -- (0.460,-1.216) -- (0.403,-1.236) -- (0.345,-1.254) -- (0.286,-1.268) -- (0.226,-1.280) -- (0.166,-1.289) -- (0.106,-1.296) -- (0.045,-1.299) -- (-0.016,-1.300) -- (-0.077,-1.298) -- (-0.137,-1.293) -- (-0.197,-1.285);
\draw (1.252,-2.048) -- (1.163,-2.096) -- (1.078,-2.132) -- (0.996,-2.158) -- (0.917,-2.174) -- (0.843,-2.181) -- (0.773,-2.181) -- (0.706,-2.175) -- (0.643,-2.164) -- (0.583,-2.148) -- (0.526,-2.129) -- (0.471,-2.107) -- (0.418,-2.083) -- (0.366,-2.058) -- (0.315,-2.033) -- (0.264,-2.007) -- (0.213,-1.981) -- (0.161,-1.956) -- (0.108,-1.933) -- (0.052,-1.911) -- (-0.006,-1.891) -- (-0.067,-1.872) -- (-0.133,-1.856) -- (-0.204,-1.842) -- (-0.281,-1.829);
\draw[white, line width=4.0pt, line cap=butt] (0.698,-1.775) -- (0.672,-1.800) -- (0.647,-1.825) -- (0.622,-1.850) -- (0.599,-1.876) -- (0.575,-1.901) -- (0.553,-1.927) -- (0.531,-1.953) -- (0.508,-1.979) -- (0.486,-2.005) -- (0.464,-2.032) -- (0.441,-2.058) -- (0.418,-2.083) -- (0.395,-2.109) -- (0.370,-2.134) -- (0.345,-2.159) -- (0.318,-2.183) -- (0.291,-2.207) -- (0.262,-2.230) -- (0.231,-2.252) -- (0.199,-2.273) -- (0.165,-2.292) -- (0.129,-2.310) -- (0.091,-2.327) -- (0.050,-2.342);
\draw (0.965,-1.578) -- (0.899,-1.620) -- (0.839,-1.661) -- (0.785,-1.701) -- (0.735,-1.742) -- (0.689,-1.783) -- (0.647,-1.825) -- (0.606,-1.867) -- (0.568,-1.910) -- (0.531,-1.953) -- (0.494,-1.997) -- (0.456,-2.040) -- (0.418,-2.083) -- (0.378,-2.126) -- (0.336,-2.167) -- (0.291,-2.207) -- (0.242,-2.244) -- (0.188,-2.279) -- (0.129,-2.310) -- (0.064,-2.337) -- (-0.007,-2.359) -- (-0.086,-2.375) -- (-0.171,-2.383) -- (-0.264,-2.383) -- (-0.364,-2.372);
\draw (-0.197,-1.285) -- (-0.266,-1.272) -- (-0.334,-1.256) -- (-0.401,-1.237) -- (-0.467,-1.213) -- (-0.531,-1.187);
\draw (-1.224,-0.439) -- (-1.245,-0.373) -- (-1.264,-0.306) -- (-1.278,-0.237) -- (-1.289,-0.168) -- (-1.296,-0.099);
\fill (-0.743,-1.066) circle (0.035);
\fill (-0.954,-0.884) circle (0.035);
\fill (-1.120,-0.660) circle (0.035);
\draw (-0.281,-1.829) -- (-0.379,-1.811) -- (-0.475,-1.788) -- (-0.570,-1.760) -- (-0.664,-1.727) -- (-0.756,-1.689);
\draw (-1.741,-0.625) -- (-1.772,-0.531) -- (-1.798,-0.435) -- (-1.819,-0.338) -- (-1.834,-0.240) -- (-1.845,-0.141);
\fill (-1.058,-1.518) circle (0.035);
\fill (-1.357,-1.257) circle (0.035);
\fill (-1.594,-0.939) circle (0.035);
\draw (-0.364,-2.372) -- (-0.491,-2.349) -- (-0.617,-2.319) -- (-0.740,-2.283) -- (-0.862,-2.240) -- (-0.980,-2.191);
\draw (-2.259,-0.811) -- (-2.299,-0.688) -- (-2.333,-0.564) -- (-2.360,-0.438) -- (-2.380,-0.311) -- (-2.393,-0.183);
\fill (-1.372,-1.969) circle (0.035);
\fill (-1.760,-1.631) circle (0.035);
\fill (-2.068,-1.219) circle (0.035);
\draw (-2.393,-0.183) -- (-2.400,-0.032) -- (-2.397,0.120) -- (-2.385,0.271) -- (-2.363,0.420) -- (-2.332,0.568) -- (-2.291,0.714) -- (-2.242,0.857) -- (-2.183,0.997) -- (-2.116,1.132) -- (-2.041,1.263) -- (-1.957,1.390) -- (-1.865,1.510) -- (-1.767,1.625) -- (-1.661,1.733) -- (-1.548,1.834);
\draw (-1.296,-0.099) -- (-1.303,-0.025) -- (-1.310,0.043) -- (-1.319,0.107) -- (-1.330,0.167) -- (-1.343,0.225) -- (-1.357,0.282) -- (-1.372,0.336) -- (-1.389,0.391) -- (-1.405,0.444) -- (-1.422,0.498) -- (-1.438,0.553) -- (-1.453,0.609) -- (-1.465,0.666) -- (-1.475,0.724) -- (-1.481,0.785) -- (-1.482,0.847) -- (-1.478,0.912) -- (-1.467,0.980) -- (-1.448,1.049) -- (-1.420,1.121) -- (-1.382,1.194) -- (-1.332,1.268) -- (-1.270,1.342) -- (-1.193,1.414);
\draw[white, line width=4.0pt, line cap=butt] (-1.770,0.283) -- (-1.750,0.322) -- (-1.727,0.358) -- (-1.704,0.392) -- (-1.679,0.424) -- (-1.653,0.453) -- (-1.626,0.480) -- (-1.598,0.505) -- (-1.570,0.529) -- (-1.541,0.551) -- (-1.512,0.571) -- (-1.482,0.590) -- (-1.453,0.609) -- (-1.423,0.626) -- (-1.393,0.643) -- (-1.363,0.659) -- (-1.333,0.675) -- (-1.302,0.690) -- (-1.272,0.706) -- (-1.242,0.721) -- (-1.212,0.737) -- (-1.182,0.753) -- (-1.152,0.770) -- (-1.123,0.787) -- (-1.093,0.805);
\draw (-1.845,-0.141) -- (-1.847,-0.036) -- (-1.838,0.060) -- (-1.820,0.147) -- (-1.795,0.226) -- (-1.764,0.296) -- (-1.727,0.358) -- (-1.687,0.414) -- (-1.644,0.462) -- (-1.598,0.505) -- (-1.551,0.543) -- (-1.502,0.578) -- (-1.453,0.609) -- (-1.403,0.637) -- (-1.353,0.664) -- (-1.302,0.690) -- (-1.252,0.716) -- (-1.202,0.742) -- (-1.152,0.770) -- (-1.103,0.799) -- (-1.052,0.831) -- (-1.001,0.865) -- (-0.949,0.904) -- (-0.895,0.946) -- (-0.839,0.993);
\draw (-0.839,0.993) -- (-0.788,1.034) -- (-0.735,1.072) -- (-0.680,1.108) -- (-0.624,1.140) -- (-0.566,1.170) -- (-0.507,1.197) -- (-0.446,1.221) -- (-0.385,1.242) -- (-0.322,1.259) -- (-0.259,1.274) -- (-0.195,1.285) -- (-0.130,1.293) -- (-0.065,1.298) -- (-0.000,1.300);
\draw (-1.548,1.834) -- (-1.468,1.895) -- (-1.389,1.944) -- (-1.312,1.982) -- (-1.237,2.009) -- (-1.165,2.028) -- (-1.095,2.039) -- (-1.028,2.043) -- (-0.964,2.041) -- (-0.903,2.035) -- (-0.843,2.024) -- (-0.786,2.011) -- (-0.730,1.996) -- (-0.675,1.979) -- (-0.620,1.961) -- (-0.566,1.943) -- (-0.512,1.926) -- (-0.456,1.909) -- (-0.400,1.894) -- (-0.342,1.881) -- (-0.281,1.870) -- (-0.218,1.861) -- (-0.150,1.855) -- (-0.078,1.851) -- (-0.000,1.850);
\draw[white, line width=4.0pt, line cap=butt] (-0.960,1.648) -- (-0.937,1.677) -- (-0.916,1.705) -- (-0.896,1.734) -- (-0.876,1.763) -- (-0.857,1.792) -- (-0.839,1.821) -- (-0.821,1.850) -- (-0.803,1.879) -- (-0.785,1.908) -- (-0.767,1.938) -- (-0.749,1.967) -- (-0.730,1.996) -- (-0.710,2.025) -- (-0.690,2.053) -- (-0.669,2.082) -- (-0.646,2.110) -- (-0.623,2.137) -- (-0.597,2.164) -- (-0.570,2.190) -- (-0.542,2.216) -- (-0.511,2.241) -- (-0.478,2.264) -- (-0.443,2.286) -- (-0.405,2.307);
\draw (-1.193,1.414) -- (-1.135,1.465) -- (-1.082,1.514) -- (-1.034,1.562) -- (-0.991,1.610) -- (-0.952,1.658) -- (-0.916,1.705) -- (-0.883,1.753) -- (-0.851,1.802) -- (-0.821,1.850) -- (-0.791,1.899) -- (-0.761,1.947) -- (-0.730,1.996) -- (-0.697,2.044) -- (-0.661,2.091) -- (-0.623,2.137) -- (-0.580,2.182) -- (-0.532,2.224) -- (-0.478,2.264) -- (-0.418,2.301) -- (-0.351,2.333) -- (-0.276,2.360) -- (-0.193,2.381) -- (-0.101,2.395) -- (-0.000,2.400);
\end{tikzpicture}

%% file: figures/Th-3-4.tex
\begin{tikzpicture}[line width=1.00pt, line cap=round, line join=round, every node/.style={font=\small}]
\draw (0.000,2.200) -- (0.137,2.196) -- (0.273,2.183) -- (0.408,2.162) -- (0.542,2.132) -- (0.673,2.095) -- (0.802,2.049) -- (0.928,1.995) -- (1.050,1.933) -- (1.168,1.864) -- (1.281,1.788) -- (1.390,1.705) -- (1.493,1.616) -- (1.591,1.520) -- (1.682,1.418) -- (1.767,1.311);
\draw (0.000,1.000) -- (0.056,1.001) -- (0.108,1.006) -- (0.158,1.014) -- (0.206,1.024) -- (0.253,1.037) -- (0.299,1.052) -- (0.345,1.069) -- (0.391,1.087) -- (0.438,1.106) -- (0.486,1.125) -- (0.534,1.144) -- (0.584,1.161) -- (0.636,1.177) -- (0.689,1.189) -- (0.744,1.198) -- (0.800,1.202) -- (0.859,1.201) -- (0.918,1.194) -- (0.980,1.179) -- (1.042,1.155) -- (1.105,1.122) -- (1.167,1.077) -- (1.228,1.022) -- (1.285,0.953);
\draw[white, line width=4.0pt, line cap=butt] (0.352,1.497) -- (0.383,1.473) -- (0.412,1.449) -- (0.438,1.423) -- (0.461,1.396) -- (0.482,1.369) -- (0.502,1.340) -- (0.519,1.311) -- (0.535,1.282) -- (0.549,1.252) -- (0.562,1.222) -- (0.574,1.191) -- (0.584,1.161) -- (0.594,1.131) -- (0.603,1.101) -- (0.612,1.071) -- (0.620,1.041) -- (0.628,1.011) -- (0.635,0.981) -- (0.643,0.952) -- (0.651,0.923) -- (0.659,0.895) -- (0.667,0.867) -- (0.676,0.839) -- (0.686,0.812);
\draw (0.000,1.600) -- (0.089,1.594) -- (0.170,1.579) -- (0.243,1.555) -- (0.307,1.525) -- (0.363,1.489) -- (0.412,1.449) -- (0.454,1.405) -- (0.489,1.359) -- (0.519,1.311) -- (0.545,1.262) -- (0.566,1.212) -- (0.584,1.161) -- (0.600,1.111) -- (0.615,1.061) -- (0.628,1.011) -- (0.640,0.962) -- (0.653,0.914) -- (0.667,0.867) -- (0.682,0.821) -- (0.700,0.775) -- (0.720,0.731) -- (0.743,0.686) -- (0.771,0.642) -- (0.803,0.596);
\draw (0.803,0.596) -- (0.828,0.561) -- (0.851,0.525) -- (0.872,0.489) -- (0.892,0.451) -- (0.911,0.413) -- (0.928,0.374) -- (0.943,0.334) -- (0.956,0.293) -- (0.968,0.252) -- (0.977,0.211) -- (0.986,0.169) -- (0.992,0.127) -- (0.996,0.085) -- (0.999,0.043) -- (1.000,0.000);
\draw (1.767,1.311) -- (1.813,1.240) -- (1.849,1.171) -- (1.874,1.103) -- (1.889,1.038) -- (1.897,0.974) -- (1.898,0.914) -- (1.892,0.855) -- (1.882,0.800) -- (1.867,0.746) -- (1.848,0.694) -- (1.827,0.644) -- (1.804,0.596) -- (1.780,0.549) -- (1.755,0.502) -- (1.731,0.457) -- (1.707,0.411) -- (1.684,0.365) -- (1.663,0.319) -- (1.645,0.272) -- (1.629,0.223) -- (1.617,0.172) -- (1.607,0.118) -- (1.602,0.061) -- (1.600,0.000);
\draw[white, line width=4.0pt, line cap=butt] (1.475,0.767) -- (1.500,0.751) -- (1.526,0.735) -- (1.553,0.720) -- (1.579,0.705) -- (1.607,0.691) -- (1.634,0.677) -- (1.662,0.664) -- (1.690,0.651) -- (1.719,0.638) -- (1.747,0.624) -- (1.776,0.610) -- (1.804,0.596) -- (1.833,0.581) -- (1.861,0.566) -- (1.889,0.549) -- (1.916,0.532) -- (1.944,0.513) -- (1.970,0.493) -- (1.996,0.471) -- (2.021,0.448) -- (2.045,0.423) -- (2.069,0.397) -- (2.090,0.368) -- (2.111,0.337);
\draw (1.285,0.953) -- (1.323,0.905) -- (1.362,0.863) -- (1.401,0.825) -- (1.441,0.792) -- (1.483,0.762) -- (1.526,0.735) -- (1.570,0.710) -- (1.616,0.687) -- (1.662,0.664) -- (1.709,0.642) -- (1.757,0.619) -- (1.804,0.596) -- (1.851,0.571) -- (1.898,0.543) -- (1.944,0.513) -- (1.988,0.479) -- (2.029,0.440) -- (2.069,0.397) -- (2.104,0.347) -- (2.136,0.292) -- (2.162,0.230) -- (2.182,0.161) -- (2.195,0.084) -- (2.200,0.000);
\draw (2.200,0.000) -- (2.195,-0.146) -- (2.181,-0.292) -- (2.156,-0.437) -- (2.122,-0.579) -- (2.079,-0.719) -- (2.027,-0.856) -- (1.965,-0.989) -- (1.895,-1.118) -- (1.816,-1.241) -- (1.730,-1.360) -- (1.635,-1.472) -- (1.534,-1.577) -- (1.425,-1.676) -- (1.311,-1.767);
\draw (1.000,0.000) -- (1.001,-0.056) -- (1.006,-0.108) -- (1.014,-0.158) -- (1.024,-0.206) -- (1.037,-0.253) -- (1.052,-0.299) -- (1.069,-0.345) -- (1.087,-0.391) -- (1.106,-0.438) -- (1.125,-0.486) -- (1.144,-0.534) -- (1.161,-0.584) -- (1.177,-0.636) -- (1.189,-0.689) -- (1.198,-0.744) -- (1.202,-0.800) -- (1.201,-0.859) -- (1.194,-0.918) -- (1.179,-0.980) -- (1.155,-1.042) -- (1.122,-1.105) -- (1.077,-1.167) -- (1.022,-1.228) -- (0.953,-1.285);
\draw[white, line width=4.0pt, line cap=butt] (1.497,-0.352) -- (1.473,-0.383) -- (1.449,-0.412) -- (1.423,-0.438) -- (1.396,-0.461) -- (1.369,-0.482) -- (1.340,-0.502) -- (1.311,-0.519) -- (1.282,-0.535) -- (1.252,-0.549) -- (1.222,-0.562) -- (1.191,-0.574) -- (1.161,-0.584) -- (1.131,-0.594) -- (1.101,-0.603) -- (1.071,-0.612) -- (1.041,-0.620) -- (1.011,-0.628) -- (0.981,-0.635) -- (0.952,-0.643) -- (0.923,-0.651) -- (0.895,-0.659) -- (0.867,-0.667) -- (0.839,-0.676) -- (0.812,-0.686);
\draw (1.600,0.000) -- (1.594,-0.089) -- (1.579,-0.170) -- (1.555,-0.243) -- (1.525,-0.307) -- (1.489,-0.363) -- (1.449,-0.412) -- (1.405,-0.454) -- (1.359,-0.489) -- (1.311,-0.519) -- (1.262,-0.545) -- (1.212,-0.566) -- (1.161,-0.584) -- (1.111,-0.600) -- (1.061,-0.615) -- (1.011,-0.628) -- (0.962,-0.640) -- (0.914,-0.653) -- (0.867,-0.667) -- (0.821,-0.682) -- (0.775,-0.700) -- (0.731,-0.720) -- (0.686,-0.743) -- (0.642,-0.771) -- (0.596,-0.803);
\draw (0.596,-0.803) -- (0.561,-0.828) -- (0.525,-0.851) -- (0.489,-0.872) -- (0.451,-0.892) -- (0.413,-0.911) -- (0.374,-0.928) -- (0.334,-0.943) -- (0.293,-0.956) -- (0.252,-0.968) -- (0.211,-0.977) -- (0.169,-0.986) -- (0.127,-0.992) -- (0.085,-0.996) -- (0.043,-0.999) -- (0.000,-1.000);
\draw (1.311,-1.767) -- (1.240,-1.813) -- (1.171,-1.849) -- (1.103,-1.874) -- (1.038,-1.889) -- (0.974,-1.897) -- (0.914,-1.898) -- (0.855,-1.892) -- (0.800,-1.882) -- (0.746,-1.867) -- (0.694,-1.848) -- (0.644,-1.827) -- (0.596,-1.804) -- (0.549,-1.780) -- (0.502,-1.755) -- (0.457,-1.731) -- (0.411,-1.707) -- (0.365,-1.684) -- (0.319,-1.663) -- (0.272,-1.645) -- (0.223,-1.629) -- (0.172,-1.617) -- (0.118,-1.607) -- (0.061,-1.602) -- (0.000,-1.600);
\draw[white, line width=4.0pt, line cap=butt] (0.767,-1.475) -- (0.751,-1.500) -- (0.735,-1.526) -- (0.720,-1.553) -- (0.705,-1.579) -- (0.691,-1.607) -- (0.677,-1.634) -- (0.664,-1.662) -- (0.651,-1.690) -- (0.638,-1.719) -- (0.624,-1.747) -- (0.610,-1.776) -- (0.596,-1.804) -- (0.581,-1.833) -- (0.566,-1.861) -- (0.549,-1.889) -- (0.532,-1.916) -- (0.513,-1.944) -- (0.493,-1.970) -- (0.471,-1.996) -- (0.448,-2.021) -- (0.423,-2.045) -- (0.397,-2.069) -- (0.368,-2.090) -- (0.337,-2.111);
\draw (0.953,-1.285) -- (0.905,-1.323) -- (0.863,-1.362) -- (0.825,-1.401) -- (0.792,-1.441) -- (0.762,-1.483) -- (0.735,-1.526) -- (0.710,-1.570) -- (0.687,-1.616) -- (0.664,-1.662) -- (0.642,-1.709) -- (0.619,-1.757) -- (0.596,-1.804) -- (0.571,-1.851) -- (0.543,-1.898) -- (0.513,-1.944) -- (0.479,-1.988) -- (0.440,-2.029) -- (0.397,-2.069) -- (0.347,-2.104) -- (0.292,-2.136) -- (0.230,-2.162) -- (0.161,-2.182) -- (0.084,-2.195) -- (0.000,-2.200);
\draw (0.000,-2.200) -- (-0.137,-2.196) -- (-0.273,-2.183) -- (-0.408,-2.162) -- (-0.542,-2.132) -- (-0.673,-2.095) -- (-0.802,-2.049) -- (-0.928,-1.995) -- (-1.050,-1.933) -- (-1.168,-1.864) -- (-1.281,-1.788) -- (-1.390,-1.705) -- (-1.493,-1.616) -- (-1.591,-1.520) -- (-1.682,-1.418) -- (-1.767,-1.311);
\draw (0.000,-1.000) -- (-0.056,-1.001) -- (-0.108,-1.006) -- (-0.158,-1.014) -- (-0.206,-1.024) -- (-0.253,-1.037) -- (-0.299,-1.052) -- (-0.345,-1.069) -- (-0.391,-1.087) -- (-0.438,-1.106) -- (-0.486,-1.125) -- (-0.534,-1.144) -- (-0.584,-1.161) -- (-0.636,-1.177) -- (-0.689,-1.189) -- (-0.744,-1.198) -- (-0.800,-1.202) -- (-0.859,-1.201) -- (-0.918,-1.194) -- (-0.980,-1.179) -- (-1.042,-1.155) -- (-1.105,-1.122) -- (-1.167,-1.077) -- (-1.228,-1.022) -- (-1.285,-0.953);
\draw[white, line width=4.0pt, line cap=butt] (-0.352,-1.497) -- (-0.383,-1.473) -- (-0.412,-1.449) -- (-0.438,-1.423) -- (-0.461,-1.396) -- (-0.482,-1.369) -- (-0.502,-1.340) -- (-0.519,-1.311) -- (-0.535,-1.282) -- (-0.549,-1.252) -- (-0.562,-1.222) -- (-0.574,-1.191) -- (-0.584,-1.161) -- (-0.594,-1.131) -- (-0.603,-1.101) -- (-0.612,-1.071) -- (-0.620,-1.041) -- (-0.628,-1.011) -- (-0.635,-0.981) -- (-0.643,-0.952) -- (-0.651,-0.923) -- (-0.659,-0.895) -- (-0.667,-0.867) -- (-0.676,-0.839) -- (-0.686,-0.812);
\draw (0.000,-1.600) -- (-0.089,-1.594) -- (-0.170,-1.579) -- (-0.243,-1.555) -- (-0.307,-1.525) -- (-0.363,-1.489) -- (-0.412,-1.449) -- (-0.454,-1.405) -- (-0.489,-1.359) -- (-0.519,-1.311) -- (-0.545,-1.262) -- (-0.566,-1.212) -- (-0.584,-1.161) -- (-0.600,-1.111) -- (-0.615,-1.061) -- (-0.628,-1.011) -- (-0.640,-0.962) -- (-0.653,-0.914) -- (-0.667,-0.867) -- (-0.682,-0.821) -- (-0.700,-0.775) -- (-0.720,-0.731) -- (-0.743,-0.686) -- (-0.771,-0.642) -- (-0.803,-0.596);
\draw (-0.803,-0.596) -- (-0.828,-0.561) -- (-0.851,-0.525) -- (-0.872,-0.489) -- (-0.892,-0.451) -- (-0.911,-0.413) -- (-0.928,-0.374) -- (-0.943,-0.334) -- (-0.956,-0.293) -- (-0.968,-0.252) -- (-0.977,-0.211) -- (-0.986,-0.169) -- (-0.992,-0.127) -- (-0.996,-0.085) -- (-0.999,-0.043) -- (-1.000,0.000);
\draw (-1.767,-1.311) -- (-1.813,-1.240) -- (-1.849,-1.171) -- (-1.874,-1.103) -- (-1.889,-1.038) -- (-1.897,-0.974) -- (-1.898,-0.914) -- (-1.892,-0.855) -- (-1.882,-0.800) -- (-1.867,-0.746) -- (-1.848,-0.694) -- (-1.827,-0.644) -- (-1.804,-0.596) -- (-1.780,-0.549) -- (-1.755,-0.502) -- (-1.731,-0.457) -- (-1.707,-0.411) -- (-1.684,-0.365) -- (-1.663,-0.319) -- (-1.645,-0.272) -- (-1.629,-0.223) -- (-1.617,-0.172) -- (-1.607,-0.118) -- (-1.602,-0.061) -- (-1.600,0.000);
\draw[white, line width=4.0pt, line cap=butt] (-1.475,-0.767) -- (-1.500,-0.751) -- (-1.526,-0.735) -- (-1.553,-0.720) -- (-1.579,-0.705) -- (-1.607,-0.691) -- (-1.634,-0.677) -- (-1.662,-0.664) -- (-1.690,-0.651) -- (-1.719,-0.638) -- (-1.747,-0.624) -- (-1.776,-0.610) -- (-1.804,-0.596) -- (-1.833,-0.581) -- (-1.861,-0.566) -- (-1.889,-0.549) -- (-1.916,-0.532) -- (-1.944,-0.513) -- (-1.970,-0.493) -- (-1.996,-0.471) -- (-2.021,-0.448) -- (-2.045,-0.423) -- (-2.069,-0.397) -- (-2.090,-0.368) -- (-2.111,-0.337);
\draw (-1.285,-0.953) -- (-1.323,-0.905) -- (-1.362,-0.863) -- (-1.401,-0.825) -- (-1.441,-0.792) -- (-1.483,-0.762) -- (-1.526,-0.735) -- (-1.570,-0.710) -- (-1.616,-0.687) -- (-1.662,-0.664) -- (-1.709,-0.642) -- (-1.757,-0.619) -- (-1.804,-0.596) -- (-1.851,-0.571) -- (-1.898,-0.543) -- (-1.944,-0.513) -- (-1.988,-0.479) -- (-2.029,-0.440) -- (-2.069,-0.397) -- (-2.104,-0.347) -- (-2.136,-0.292) -- (-2.162,-0.230) -- (-2.182,-0.161) -- (-2.195,-0.084) -- (-2.200,0.000);
\draw (-2.200,0.000) -- (-2.196,0.137) -- (-2.183,0.273) -- (-2.162,0.408) -- (-2.132,0.542) -- (-2.095,0.673) -- (-2.049,0.802) -- (-1.995,0.928) -- (-1.933,1.050) -- (-1.864,1.168) -- (-1.788,1.281) -- (-1.705,1.390) -- (-1.616,1.493) -- (-1.520,1.591) -- (-1.418,1.682) -- (-1.311,1.767);
\draw (-1.000,0.000) -- (-1.001,0.056) -- (-1.006,0.108) -- (-1.014,0.158) -- (-1.024,0.206) -- (-1.037,0.253) -- (-1.052,0.299) -- (-1.069,0.345) -- (-1.087,0.391) -- (-1.106,0.438) -- (-1.125,0.486) -- (-1.144,0.534) -- (-1.161,0.584) -- (-1.177,0.636) -- (-1.189,0.689) -- (-1.198,0.744) -- (-1.202,0.800) -- (-1.201,0.859) -- (-1.194,0.918) -- (-1.179,0.980) -- (-1.155,1.042) -- (-1.122,1.105) -- (-1.077,1.167) -- (-1.022,1.228) -- (-0.953,1.285);
\draw[white, line width=4.0pt, line cap=butt] (-1.497,0.352) -- (-1.473,0.383) -- (-1.449,0.412) -- (-1.423,0.438) -- (-1.396,0.461) -- (-1.369,0.482) -- (-1.340,0.502) -- (-1.311,0.519) -- (-1.282,0.535) -- (-1.252,0.549) -- (-1.222,0.562) -- (-1.191,0.574) -- (-1.161,0.584) -- (-1.131,0.594) -- (-1.101,0.603) -- (-1.071,0.612) -- (-1.041,0.620) -- (-1.011,0.628) -- (-0.981,0.635) -- (-0.952,0.643) -- (-0.923,0.651) -- (-0.895,0.659) -- (-0.867,0.667) -- (-0.839,0.676) -- (-0.812,0.686);
\draw (-1.600,0.000) -- (-1.594,0.089) -- (-1.579,0.170) -- (-1.555,0.243) -- (-1.525,0.307) -- (-1.489,0.363) -- (-1.449,0.412) -- (-1.405,0.454) -- (-1.359,0.489) -- (-1.311,0.519) -- (-1.262,0.545) -- (-1.212,0.566) -- (-1.161,0.584) -- (-1.111,0.600) -- (-1.061,0.615) -- (-1.011,0.628) -- (-0.962,0.640) -- (-0.914,0.653) -- (-0.867,0.667) -- (-0.821,0.682) -- (-0.775,0.700) -- (-0.731,0.720) -- (-0.686,0.743) -- (-0.642,0.771) -- (-0.596,0.803);
\draw (-0.596,0.803) -- (-0.558,0.830) -- (-0.520,0.854) -- (-0.481,0.877) -- (-0.440,0.898) -- (-0.399,0.917) -- (-0.357,0.934) -- (-0.314,0.950) -- (-0.270,0.963) -- (-0.226,0.974) -- (-0.181,0.983) -- (-0.136,0.991) -- (-0.091,0.996) -- (-0.046,0.999) -- (-0.000,1.000);
\draw (-1.311,1.767) -- (-1.240,1.813) -- (-1.171,1.849) -- (-1.103,1.874) -- (-1.038,1.889) -- (-0.974,1.897) -- (-0.914,1.898) -- (-0.855,1.892) -- (-0.800,1.882) -- (-0.746,1.867) -- (-0.694,1.848) -- (-0.644,1.827) -- (-0.596,1.804) -- (-0.549,1.780) -- (-0.502,1.755) -- (-0.457,1.731) -- (-0.411,1.707) -- (-0.365,1.684) -- (-0.319,1.663) -- (-0.272,1.645) -- (-0.223,1.629) -- (-0.172,1.617) -- (-0.118,1.607) -- (-0.061,1.602) -- (-0.000,1.600);
\draw[white, line width=4.0pt, line cap=butt] (-0.767,1.475) -- (-0.751,1.500) -- (-0.735,1.526) -- (-0.720,1.553) -- (-0.705,1.579) -- (-0.691,1.607) -- (-0.677,1.634) -- (-0.664,1.662) -- (-0.651,1.690) -- (-0.638,1.719) -- (-0.624,1.747) -- (-0.610,1.776) -- (-0.596,1.804) -- (-0.581,1.833) -- (-0.566,1.861) -- (-0.549,1.889) -- (-0.532,1.916) -- (-0.513,1.944) -- (-0.493,1.970) -- (-0.471,1.996) -- (-0.448,2.021) -- (-0.423,2.045) -- (-0.397,2.069) -- (-0.368,2.090) -- (-0.337,2.111);
\draw (-0.953,1.285) -- (-0.905,1.323) -- (-0.863,1.362) -- (-0.825,1.401) -- (-0.792,1.441) -- (-0.762,1.483) -- (-0.735,1.526) -- (-0.710,1.570) -- (-0.687,1.616) -- (-0.664,1.662) -- (-0.642,1.709) -- (-0.619,1.757) -- (-0.596,1.804) -- (-0.571,1.851) -- (-0.543,1.898) -- (-0.513,1.944) -- (-0.479,1.988) -- (-0.440,2.029) -- (-0.397,2.069) -- (-0.347,2.104) -- (-0.292,2.136) -- (-0.230,2.162) -- (-0.161,2.182) -- (-0.084,2.195) -- (-0.000,2.200);
\end{tikzpicture}

%% file: figures/Th-3-2.tex
\begin{tikzpicture}[line width=1.00pt, line cap=round, line join=round, every node/.style={font=\small}]
\draw (0.000,2.200) -- (0.139,2.196) -- (0.278,2.182) -- (0.415,2.160) -- (0.551,2.130) -- (0.685,2.091) -- (0.815,2.043) -- (0.943,1.988) -- (1.067,1.924) -- (1.186,1.853) -- (1.301,1.774) -- (1.411,1.688) -- (1.515,1.595) -- (1.613,1.496) -- (1.704,1.391) -- (1.789,1.281);
\draw (0.000,1.000) -- (0.057,1.001) -- (0.110,1.006) -- (0.161,1.013) -- (0.210,1.023) -- (0.257,1.036) -- (0.304,1.051) -- (0.351,1.067) -- (0.398,1.085) -- (0.446,1.103) -- (0.494,1.122) -- (0.543,1.140) -- (0.594,1.156) -- (0.646,1.171) -- (0.700,1.183) -- (0.756,1.191) -- (0.813,1.194) -- (0.872,1.192) -- (0.932,1.183) -- (0.994,1.166) -- (1.057,1.141) -- (1.120,1.106) -- (1.183,1.060) -- (1.243,1.002) -- (1.301,0.932);
\draw[white, line width=4.0pt, line cap=butt] (0.359,1.495) -- (0.390,1.472) -- (0.419,1.447) -- (0.445,1.421) -- (0.469,1.394) -- (0.491,1.366) -- (0.510,1.337) -- (0.528,1.308) -- (0.544,1.278) -- (0.558,1.248) -- (0.572,1.217) -- (0.583,1.187) -- (0.594,1.156) -- (0.604,1.126) -- (0.613,1.095) -- (0.622,1.065) -- (0.630,1.035) -- (0.638,1.005) -- (0.645,0.975) -- (0.653,0.945) -- (0.661,0.916) -- (0.669,0.887) -- (0.677,0.859) -- (0.686,0.831) -- (0.696,0.803);
\draw (0.000,1.600) -- (0.091,1.594) -- (0.173,1.579) -- (0.247,1.555) -- (0.312,1.524) -- (0.369,1.488) -- (0.419,1.447) -- (0.461,1.403) -- (0.497,1.356) -- (0.528,1.308) -- (0.554,1.258) -- (0.576,1.207) -- (0.594,1.156) -- (0.610,1.105) -- (0.624,1.055) -- (0.638,1.005) -- (0.650,0.955) -- (0.663,0.907) -- (0.677,0.859) -- (0.692,0.812) -- (0.710,0.766) -- (0.730,0.721) -- (0.753,0.675) -- (0.781,0.629) -- (0.813,0.582);
\draw (0.813,0.582) -- (0.842,0.540) -- (0.869,0.496) -- (0.893,0.450) -- (0.915,0.404) -- (0.934,0.356) -- (0.951,0.308) -- (0.966,0.259) -- (0.978,0.209) -- (0.987,0.158) -- (0.994,0.107) -- (0.998,0.056) -- (1.000,0.004) -- (0.999,-0.047) -- (0.995,-0.098) -- (0.989,-0.149);
\draw (1.301,0.932) -- (1.347,0.863) -- (1.390,0.793) -- (1.429,0.721) -- (1.464,0.646) -- (1.495,0.570) -- (1.522,0.493) -- (1.546,0.414) -- (1.565,0.334) -- (1.580,0.253) -- (1.591,0.171) -- (1.598,0.089) -- (1.600,0.007) -- (1.598,-0.075) -- (1.592,-0.157) -- (1.582,-0.239);
\draw (1.789,1.281) -- (1.852,1.187) -- (1.911,1.090) -- (1.964,0.991) -- (2.013,0.889) -- (2.056,0.784) -- (2.093,0.677) -- (2.125,0.569) -- (2.152,0.459) -- (2.172,0.348) -- (2.187,0.235) -- (2.197,0.123) -- (2.200,0.010) -- (2.198,-0.103) -- (2.189,-0.216) -- (2.175,-0.329);
\draw (0.989,-0.149) -- (0.981,-0.195) -- (0.971,-0.240) -- (0.959,-0.285) -- (0.944,-0.329) -- (0.928,-0.373) -- (0.910,-0.415) -- (0.889,-0.457) -- (0.867,-0.498) -- (0.843,-0.537) -- (0.817,-0.576) -- (0.790,-0.613) -- (0.761,-0.649) -- (0.730,-0.684) -- (0.697,-0.717);
\draw (2.175,-0.329) -- (2.158,-0.412) -- (2.133,-0.488) -- (2.102,-0.554) -- (2.067,-0.613) -- (2.027,-0.664) -- (1.984,-0.708) -- (1.938,-0.746) -- (1.891,-0.778) -- (1.842,-0.806) -- (1.792,-0.830) -- (1.741,-0.850) -- (1.690,-0.868) -- (1.639,-0.885) -- (1.588,-0.900) -- (1.538,-0.915) -- (1.488,-0.931) -- (1.440,-0.947) -- (1.392,-0.966) -- (1.344,-0.986) -- (1.298,-1.010) -- (1.252,-1.037) -- (1.207,-1.068) -- (1.161,-1.105) -- (1.115,-1.147);
\draw[white, line width=4.0pt, line cap=butt] (1.583,-0.509) -- (1.588,-0.539) -- (1.595,-0.569) -- (1.603,-0.599) -- (1.611,-0.629) -- (1.620,-0.658) -- (1.630,-0.688) -- (1.640,-0.718) -- (1.650,-0.747) -- (1.660,-0.777) -- (1.670,-0.807) -- (1.680,-0.838) -- (1.690,-0.868) -- (1.699,-0.899) -- (1.708,-0.931) -- (1.715,-0.963) -- (1.722,-0.995) -- (1.727,-1.028) -- (1.731,-1.061) -- (1.734,-1.095) -- (1.735,-1.130) -- (1.734,-1.165) -- (1.730,-1.201) -- (1.725,-1.237) -- (1.717,-1.274);
\draw (1.582,-0.239) -- (1.575,-0.301) -- (1.571,-0.359) -- (1.572,-0.414) -- (1.577,-0.467) -- (1.584,-0.519) -- (1.595,-0.569) -- (1.608,-0.619) -- (1.623,-0.668) -- (1.640,-0.718) -- (1.657,-0.767) -- (1.674,-0.817) -- (1.690,-0.868) -- (1.705,-0.920) -- (1.718,-0.974) -- (1.727,-1.028) -- (1.733,-1.084) -- (1.735,-1.142) -- (1.730,-1.201) -- (1.720,-1.261) -- (1.701,-1.324) -- (1.675,-1.387) -- (1.638,-1.450) -- (1.592,-1.514) -- (1.534,-1.577);
\draw (0.697,-0.717) -- (0.659,-0.752) -- (0.620,-0.785) -- (0.579,-0.816) -- (0.536,-0.844) -- (0.492,-0.871) -- (0.446,-0.895) -- (0.400,-0.917) -- (0.352,-0.936) -- (0.304,-0.953) -- (0.254,-0.967) -- (0.204,-0.979) -- (0.154,-0.988) -- (0.103,-0.995) -- (0.051,-0.999) -- (0.000,-1.000);
\draw (1.115,-1.147) -- (1.055,-1.203) -- (0.992,-1.256) -- (0.926,-1.305) -- (0.858,-1.351) -- (0.787,-1.393) -- (0.714,-1.432) -- (0.640,-1.467) -- (0.564,-1.497) -- (0.486,-1.524) -- (0.407,-1.547) -- (0.327,-1.566) -- (0.246,-1.581) -- (0.164,-1.592) -- (0.082,-1.598) -- (0.000,-1.600);
\draw (1.534,-1.577) -- (1.451,-1.654) -- (1.364,-1.726) -- (1.273,-1.794) -- (1.179,-1.857) -- (1.082,-1.915) -- (0.982,-1.969) -- (0.880,-2.016) -- (0.775,-2.059) -- (0.668,-2.096) -- (0.559,-2.128) -- (0.449,-2.154) -- (0.338,-2.174) -- (0.226,-2.188) -- (0.113,-2.197) -- (0.000,-2.200);
\draw (0.000,-2.200) -- (-0.139,-2.196) -- (-0.278,-2.182) -- (-0.415,-2.160) -- (-0.551,-2.130) -- (-0.685,-2.091) -- (-0.815,-2.043) -- (-0.943,-1.988) -- (-1.067,-1.924) -- (-1.186,-1.853) -- (-1.301,-1.774) -- (-1.411,-1.688) -- (-1.515,-1.595) -- (-1.613,-1.496) -- (-1.704,-1.391) -- (-1.789,-1.281);
\draw (0.000,-1.000) -- (-0.057,-1.001) -- (-0.110,-1.006) -- (-0.161,-1.013) -- (-0.210,-1.023) -- (-0.257,-1.036) -- (-0.304,-1.051) -- (-0.351,-1.067) -- (-0.398,-1.085) -- (-0.446,-1.103) -- (-0.494,-1.122) -- (-0.543,-1.140) -- (-0.594,-1.156) -- (-0.646,-1.171) -- (-0.700,-1.183) -- (-0.756,-1.191) -- (-0.813,-1.194) -- (-0.872,-1.192) -- (-0.932,-1.183) -- (-0.994,-1.166) -- (-1.057,-1.141) -- (-1.120,-1.106) -- (-1.183,-1.060) -- (-1.243,-1.002) -- (-1.301,-0.932);
\draw[white, line width=4.0pt, line cap=butt] (-0.359,-1.495) -- (-0.390,-1.472) -- (-0.419,-1.447) -- (-0.445,-1.421) -- (-0.469,-1.394) -- (-0.491,-1.366) -- (-0.510,-1.337) -- (-0.528,-1.308) -- (-0.544,-1.278) -- (-0.558,-1.248) -- (-0.572,-1.217) -- (-0.583,-1.187) -- (-0.594,-1.156) -- (-0.604,-1.126) -- (-0.613,-1.095) -- (-0.622,-1.065) -- (-0.630,-1.035) -- (-0.638,-1.005) -- (-0.645,-0.975) -- (-0.653,-0.945) -- (-0.661,-0.916) -- (-0.669,-0.887) -- (-0.677,-0.859) -- (-0.686,-0.831) -- (-0.696,-0.803);
\draw (0.000,-1.600) -- (-0.091,-1.594) -- (-0.173,-1.579) -- (-0.247,-1.555) -- (-0.312,-1.524) -- (-0.369,-1.488) -- (-0.419,-1.447) -- (-0.461,-1.403) -- (-0.497,-1.356) -- (-0.528,-1.308) -- (-0.554,-1.258) -- (-0.576,-1.207) -- (-0.594,-1.156) -- (-0.610,-1.105) -- (-0.624,-1.055) -- (-0.638,-1.005) -- (-0.650,-0.955) -- (-0.663,-0.907) -- (-0.677,-0.859) -- (-0.692,-0.812) -- (-0.710,-0.766) -- (-0.730,-0.721) -- (-0.753,-0.675) -- (-0.781,-0.629) -- (-0.813,-0.582);
\draw (-0.813,-0.582) -- (-0.842,-0.540) -- (-0.869,-0.496) -- (-0.893,-0.450) -- (-0.915,-0.404) -- (-0.934,-0.356) -- (-0.951,-0.308) -- (-0.966,-0.259) -- (-0.978,-0.209) -- (-0.987,-0.158) -- (-0.994,-0.107) -- (-0.998,-0.056) -- (-1.000,-0.004) -- (-0.999,0.047) -- (-0.995,0.098) -- (-0.989,0.149);
\draw (-1.301,-0.932) -- (-1.347,-0.863) -- (-1.390,-0.793) -- (-1.429,-0.721) -- (-1.464,-0.646) -- (-1.495,-0.570) -- (-1.522,-0.493) -- (-1.546,-0.414) -- (-1.565,-0.334) -- (-1.580,-0.253) -- (-1.591,-0.171) -- (-1.598,-0.089) -- (-1.600,-0.007) -- (-1.598,0.075) -- (-1.592,0.157) -- (-1.582,0.239);
\draw (-1.789,-1.281) -- (-1.852,-1.187) -- (-1.911,-1.090) -- (-1.964,-0.991) -- (-2.013,-0.889) -- (-2.056,-0.784) -- (-2.093,-0.677) -- (-2.125,-0.569) -- (-2.152,-0.459) -- (-2.172,-0.348) -- (-2.187,-0.235) -- (-2.197,-0.123) -- (-2.200,-0.010) -- (-2.198,0.103) -- (-2.189,0.216) -- (-2.175,0.329);
\draw (-0.989,0.149) -- (-0.981,0.192) -- (-0.972,0.234) -- (-0.961,0.276) -- (-0.948,0.318) -- (-0.934,0.358) -- (-0.917,0.398) -- (-0.899,0.438) -- (-0.879,0.476) -- (-0.858,0.514) -- (-0.835,0.550) -- (-0.810,0.586) -- (-0.784,0.621) -- (-0.757,0.654) -- (-0.727,0.686) -- (-0.697,0.717);
\draw (-2.175,0.329) -- (-2.158,0.412) -- (-2.133,0.488) -- (-2.102,0.554) -- (-2.067,0.613) -- (-2.027,0.664) -- (-1.984,0.708) -- (-1.938,0.746) -- (-1.891,0.778) -- (-1.842,0.806) -- (-1.792,0.830) -- (-1.741,0.850) -- (-1.690,0.868) -- (-1.639,0.885) -- (-1.588,0.900) -- (-1.538,0.915) -- (-1.488,0.931) -- (-1.440,0.947) -- (-1.392,0.966) -- (-1.344,0.986) -- (-1.298,1.010) -- (-1.252,1.037) -- (-1.207,1.068) -- (-1.161,1.105) -- (-1.115,1.147);
\draw[white, line width=4.0pt, line cap=butt] (-1.583,0.509) -- (-1.588,0.539) -- (-1.595,0.569) -- (-1.603,0.599) -- (-1.611,0.629) -- (-1.620,0.658) -- (-1.630,0.688) -- (-1.640,0.718) -- (-1.650,0.747) -- (-1.660,0.777) -- (-1.670,0.807) -- (-1.680,0.838) -- (-1.690,0.868) -- (-1.699,0.899) -- (-1.708,0.931) -- (-1.715,0.963) -- (-1.722,0.995) -- (-1.727,1.028) -- (-1.731,1.061) -- (-1.734,1.095) -- (-1.735,1.130) -- (-1.734,1.165) -- (-1.730,1.201) -- (-1.725,1.237) -- (-1.717,1.274);
\draw (-1.582,0.239) -- (-1.575,0.301) -- (-1.571,0.359) -- (-1.572,0.414) -- (-1.577,0.467) -- (-1.584,0.519) -- (-1.595,0.569) -- (-1.608,0.619) -- (-1.623,0.668) -- (-1.640,0.718) -- (-1.657,0.767) -- (-1.674,0.817) -- (-1.690,0.868) -- (-1.705,0.920) -- (-1.718,0.974) -- (-1.727,1.028) -- (-1.733,1.084) -- (-1.735,1.142) -- (-1.730,1.201) -- (-1.720,1.261) -- (-1.701,1.324) -- (-1.675,1.387) -- (-1.638,1.450) -- (-1.592,1.514) -- (-1.534,1.577);
\draw (-0.697,0.717) -- (-0.657,0.754) -- (-0.614,0.789) -- (-0.570,0.822) -- (-0.524,0.852) -- (-0.476,0.880) -- (-0.427,0.904) -- (-0.376,0.927) -- (-0.325,0.946) -- (-0.272,0.962) -- (-0.219,0.976) -- (-0.165,0.986) -- (-0.110,0.994) -- (-0.055,0.998) -- (-0.000,1.000);
\draw (-1.115,1.147) -- (-1.050,1.207) -- (-0.982,1.263) -- (-0.911,1.315) -- (-0.838,1.363) -- (-0.761,1.407) -- (-0.683,1.447) -- (-0.602,1.482) -- (-0.519,1.513) -- (-0.435,1.540) -- (-0.350,1.561) -- (-0.263,1.578) -- (-0.176,1.590) -- (-0.088,1.598) -- (-0.000,1.600);
\draw (-1.534,1.577) -- (-1.444,1.659) -- (-1.351,1.736) -- (-1.253,1.808) -- (-1.152,1.874) -- (-1.047,1.935) -- (-0.939,1.990) -- (-0.828,2.038) -- (-0.714,2.081) -- (-0.598,2.117) -- (-0.481,2.147) -- (-0.362,2.170) -- (-0.242,2.187) -- (-0.121,2.197) -- (-0.000,2.200);
\end{tikzpicture}

%% file: figures/tangle-even.tex
\begin{tikzpicture}[line width=1.00pt, line cap=round, every node/.style={font=\small}]
\draw (0.000,0.000) -- (1.500,0.000);
\draw (0.000,0.600) -- (1.500,0.600);
\draw (0.000,1.200) -- (1.500,1.200);
\draw (0.000,1.800) -- (1.500,1.800);
\draw[fill=white] (0.150,0.400) rectangle (1.350,1.400);
\node at (0.750,0.900) {$a_1$};
\draw (1.500,0.000) -- (3.000,0.000);
\draw (1.500,0.600) -- (3.000,0.600);
\draw (1.500,1.200) -- (3.000,1.200);
\draw (1.500,1.800) -- (3.000,1.800);
\draw[fill=white] (1.650,1.000) rectangle (2.850,2.000);
\node at (2.250,1.500) {$a_2$};
\draw (3.000,0.000) -- (4.100,0.000);
\draw (3.000,0.600) -- (3.220,0.600);
\draw (3.880,0.600) -- (4.100,0.600);
\node at (3.550,0.600) {$\cdots$};
\draw (3.000,1.200) -- (3.220,1.200);
\draw (3.880,1.200) -- (4.100,1.200);
\node at (3.550,1.200) {$\cdots$};
\draw (3.000,1.800) -- (3.220,1.800);
\draw (3.880,1.800) -- (4.100,1.800);
\node at (3.550,1.800) {$\cdots$};
\draw (4.100,0.000) -- (5.600,0.000);
\draw (4.100,0.600) -- (5.600,0.600);
\draw (4.100,1.200) -- (5.600,1.200);
\draw (4.100,1.800) -- (5.600,1.800);
\draw[fill=white] (4.250,1.000) rectangle (5.450,2.000);
\node at (4.850,1.500) {$a_\ell$};
\draw (5.600,1.200) .. controls (6.000,1.200) and (6.000,0.600) .. (5.600,0.600);
\draw (5.600,1.800) .. controls (6.800,1.800) and (6.800,0.000) .. (5.600,0.000);
\draw (-0.500,0.000) -- (0.000,0.000);
\draw (-0.500,0.600) -- (0.000,0.600);
\draw (-0.500,1.200) -- (0.000,1.200);
\draw (-0.500,1.800) -- (0.000,1.800);
\draw[densely dashed, thin, gray, rounded corners=8pt] (-0.5,-0.350) rectangle (6.850,2.150);
\end{tikzpicture}

%% file: figures/tangle-odd.tex
\begin{tikzpicture}[line width=1.00pt, line cap=round, every node/.style={font=\small}]
\draw (0.000,0.000) -- (1.500,0.000);
\draw (0.000,0.600) -- (1.500,0.600);
\draw (0.000,1.200) -- (1.500,1.200);
\draw (0.000,1.800) -- (1.500,1.800);
\draw[fill=white] (0.150,0.400) rectangle (1.350,1.400);
\node at (0.750,0.900) {$a_1$};
\draw (1.500,0.000) -- (3.000,0.000);
\draw (1.500,0.600) -- (3.000,0.600);
\draw (1.500,1.200) -- (3.000,1.200);
\draw (1.500,1.800) -- (3.000,1.800);
\draw[fill=white] (1.650,1.000) rectangle (2.850,2.000);
\node at (2.250,1.500) {$a_2$};
\draw (3.000,0.000) -- (4.100,0.000);
\draw (3.000,0.600) -- (3.220,0.600);
\draw (3.880,0.600) -- (4.100,0.600);
\node at (3.550,0.600) {$\cdots$};
\draw (3.000,1.200) -- (3.220,1.200);
\draw (3.880,1.200) -- (4.100,1.200);
\node at (3.550,1.200) {$\cdots$};
\draw (3.000,1.800) -- (3.220,1.800);
\draw (3.880,1.800) -- (4.100,1.800);
\node at (3.550,1.800) {$\cdots$};
\draw (4.100,0.000) -- (5.600,0.000);
\draw (4.100,0.600) -- (5.600,0.600);
\draw (4.100,1.200) -- (5.600,1.200);
\draw (4.100,1.800) -- (5.600,1.800);
\draw[fill=white] (4.250,0.400) rectangle (5.450,1.400);
\node at (4.850,0.900) {$a_\ell$};
\draw (5.600,1.800) .. controls (6.000,1.800) and (6.000,1.200) .. (5.600,1.200);
\draw (5.600,0.600) .. controls (6.000,0.600) and (6.000,0.000) .. (5.600,0.000);
\draw (-0.500,0.000) -- (0.000,0.000);
\draw (-0.500,0.600) -- (0.000,0.600);
\draw (-0.500,1.200) -- (0.000,1.200);
\draw (-0.500,1.800) -- (0.000,1.800);
\draw[densely dashed, thin, gray, rounded corners=8pt] (-0.5,-0.350) rectangle (6.250,2.150);
\end{tikzpicture}

%% file: figures/twobridge-even.tex
\begin{tikzpicture}[line width=1.00pt, line cap=round, every node/.style={font=\small}]
\draw (0.000,0.000) -- (1.500,0.000);
\draw (0.000,0.600) -- (1.500,0.600);
\draw (0.000,1.200) -- (1.500,1.200);
\draw (0.000,1.800) -- (1.500,1.800);
\draw[fill=white] (0.150,0.400) rectangle (1.350,1.400);
\node at (0.750,0.900) {$a_1$};
\draw (1.500,0.000) -- (3.000,0.000);
\draw (1.500,0.600) -- (3.000,0.600);
\draw (1.500,1.200) -- (3.000,1.200);
\draw (1.500,1.800) -- (3.000,1.800);
\draw[fill=white] (1.650,1.000) rectangle (2.850,2.000);
\node at (2.250,1.500) {$a_2$};
\draw (3.000,0.000) -- (4.100,0.000);
\draw (3.000,0.600) -- (3.220,0.600);
\draw (3.880,0.600) -- (4.100,0.600);
\node at (3.550,0.600) {$\cdots$};
\draw (3.000,1.200) -- (3.220,1.200);
\draw (3.880,1.200) -- (4.100,1.200);
\node at (3.550,1.200) {$\cdots$};
\draw (3.000,1.800) -- (3.220,1.800);
\draw (3.880,1.800) -- (4.100,1.800);
\node at (3.550,1.800) {$\cdots$};
\draw (4.100,0.000) -- (5.600,0.000);
\draw (4.100,0.600) -- (5.600,0.600);
\draw (4.100,1.200) -- (5.600,1.200);
\draw (4.100,1.800) -- (5.600,1.800);
\draw[fill=white] (4.250,1.000) rectangle (5.450,2.000);
\node at (4.850,1.500) {$a_\ell$};
\draw (5.600,1.200) .. controls (6.000,1.200) and (6.000,0.600) .. (5.600,0.600);
\draw (5.600,1.800) .. controls (6.800,1.800) and (6.800,0.000) .. (5.600,0.000);
\draw (0.000,1.800) .. controls (-0.400,1.800) and (-0.400,1.200) .. (0.000,1.200);
\draw (0.000,0.600) .. controls (-0.400,0.600) and (-0.400,0.000) .. (0.000,0.000);
\end{tikzpicture}

%% file: figures/twobridge-odd.tex
\begin{tikzpicture}[line width=1.00pt, line cap=round, every node/.style={font=\small}]
\draw (0.000,0.000) -- (1.500,0.000);
\draw (0.000,0.600) -- (1.500,0.600);
\draw (0.000,1.200) -- (1.500,1.200);
\draw (0.000,1.800) -- (1.500,1.800);
\draw[fill=white] (0.150,0.400) rectangle (1.350,1.400);
\node at (0.750,0.900) {$a_1$};
\draw (1.500,0.000) -- (3.000,0.000);
\draw (1.500,0.600) -- (3.000,0.600);
\draw (1.500,1.200) -- (3.000,1.200);
\draw (1.500,1.800) -- (3.000,1.800);
\draw[fill=white] (1.650,1.000) rectangle (2.850,2.000);
\node at (2.250,1.500) {$a_2$};
\draw (3.000,0.000) -- (4.100,0.000);
\draw (3.000,0.600) -- (3.220,0.600);
\draw (3.880,0.600) -- (4.100,0.600);
\node at (3.550,0.600) {$\cdots$};
\draw (3.000,1.200) -- (3.220,1.200);
\draw (3.880,1.200) -- (4.100,1.200);
\node at (3.550,1.200) {$\cdots$};
\draw (3.000,1.800) -- (3.220,1.800);
\draw (3.880,1.800) -- (4.100,1.800);
\node at (3.550,1.800) {$\cdots$};
\draw (4.100,0.000) -- (5.600,0.000);
\draw (4.100,0.600) -- (5.600,0.600);
\draw (4.100,1.200) -- (5.600,1.200);
\draw (4.100,1.800) -- (5.600,1.800);
\draw[fill=white] (4.250,0.400) rectangle (5.450,1.400);
\node at (4.850,0.900) {$a_\ell$};
\draw (5.600,1.800) .. controls (6.000,1.800) and (6.000,1.200) .. (5.600,1.200);
\draw (5.600,0.600) .. controls (6.000,0.600) and (6.000,0.000) .. (5.600,0.000);
\draw (0.000,1.800) .. controls (-0.400,1.800) and (-0.400,1.200) .. (0.000,1.200);
\draw (0.000,0.600) .. controls (-0.400,0.600) and (-0.400,0.000) .. (0.000,0.000);
\end{tikzpicture}

%% file: figures/montesinos-general.tex
\begin{tikzpicture}[line width=1.00pt, line cap=round, every node/.style={font=\small}]
\draw (0.000,-0.960) -- (0.200,-0.960);
\draw (0.000,-0.640) -- (0.200,-0.640);
\draw (0.000,-0.320) -- (0.200,-0.320);
\draw (0.000,0.000) -- (0.200,0.000);
\draw[fill=white] (0.200,-1.140) rectangle (1.500,0.180);
\node at (0.850,-0.480) {$\alpha_1/\beta_1$};
\draw[rounded corners=21.91pt] (0.000,0.000) -- (-1.790,0.000) -- (-1.790,-0.820);
\draw[rounded corners=12.80pt] (0.000,-0.320) -- (-1.470,-0.320) -- (-1.470,-0.820);
\draw[rounded corners=17.64pt] (0.000,-0.640) -- (-0.620,-0.640) -- (-0.620,-1.260);
\draw[rounded corners=8.54pt] (0.000,-0.960) -- (-0.300,-0.960) -- (-0.300,-1.260);
\draw (0.000,-2.520) -- (0.200,-2.520);
\draw (0.000,-2.200) -- (0.200,-2.200);
\draw (0.000,-1.880) -- (0.200,-1.880);
\draw (0.000,-1.560) -- (0.200,-1.560);
\draw[fill=white] (0.200,-2.700) rectangle (1.500,-1.380);
\node at (0.850,-2.040) {$\alpha_2/\beta_2$};
\draw[rounded corners=8.54pt] (0.000,-1.560) -- (-0.300,-1.560) -- (-0.300,-1.260);
\draw[rounded corners=17.64pt] (0.000,-1.880) -- (-0.620,-1.880) -- (-0.620,-1.260);
\draw[rounded corners=17.64pt] (0.000,-2.200) -- (-0.620,-2.200) -- (-0.620,-2.820);
\draw[rounded corners=8.54pt] (0.000,-2.520) -- (-0.300,-2.520) -- (-0.300,-2.820);
\draw (0.000,-5.080) -- (0.200,-5.080);
\draw (0.000,-4.760) -- (0.200,-4.760);
\draw (0.000,-4.440) -- (0.200,-4.440);
\draw (0.000,-4.120) -- (0.200,-4.120);
\draw[fill=white] (0.200,-5.260) rectangle (1.500,-3.940);
\node at (0.850,-4.600) {$\alpha_k/\beta_k$};
\draw[rounded corners=8.54pt] (0.000,-4.120) -- (-0.300,-4.120) -- (-0.300,-3.820);
\draw[rounded corners=17.64pt] (0.000,-4.440) -- (-0.620,-4.440) -- (-0.620,-3.820);
\draw[rounded corners=21.91pt] (0.000,-5.080) -- (-1.790,-5.080) -- (-1.790,-4.260);
\draw[rounded corners=12.80pt] (0.000,-4.760) -- (-1.470,-4.760) -- (-1.470,-4.260);
\draw (-0.620,-2.820) -- (-0.620,-3.070);
\draw (-0.620,-3.570) -- (-0.620,-3.820);
\draw (-0.300,-2.820) -- (-0.300,-3.070);
\draw (-0.300,-3.570) -- (-0.300,-3.820);
\fill (-0.460,-3.460) circle (0.03);
\fill (-0.460,-3.320) circle (0.03);
\fill (-0.460,-3.180) circle (0.03);
\draw (-1.470,-0.820) -- (-1.470,-2.090);
\draw (-1.470,-2.990) -- (-1.470,-4.260);
\draw (-1.790,-0.820) -- (-1.790,-2.090);
\draw (-1.790,-2.990) -- (-1.790,-4.260);
\draw (-1.470,-2.090) -- (-1.470,-2.990);
\draw (-1.790,-2.090) -- (-1.790,-2.990);
\draw[fill=white] (-2.040,-2.990) rectangle (-1.220,-2.090);
\node at (-1.630,-2.540) {$e$};
\end{tikzpicture}

%% file: figures/Jq.tex
\begin{tikzpicture}[line width=1.00pt, line cap=round, every node/.style={font=\small}]
\draw (0.000,-1.800) -- (0.600,-1.800);
\draw (0.000,0.000) -- (0.600,0.000);
\draw (0.000,-1.200) .. controls (0.300,-1.200) and (0.300,-0.600) .. (0.600,-0.600);
\draw[white, line width=4.0pt, line cap=butt] (0.149,-0.662) .. controls (0.271,-0.778) and (0.329,-1.022) .. (0.451,-1.138);
\draw (0.000,-0.600) .. controls (0.300,-0.600) and (0.300,-1.200) .. (0.600,-1.200);
\draw (0.600,-1.800) -- (1.200,-1.800);
\draw (0.600,0.000) -- (1.200,0.000);
\draw (0.600,-1.200) .. controls (0.900,-1.200) and (0.900,-0.600) .. (1.200,-0.600);
\draw[white, line width=4.0pt, line cap=butt] (0.749,-0.662) .. controls (0.871,-0.778) and (0.929,-1.022) .. (1.051,-1.138);
\draw (0.600,-0.600) .. controls (0.900,-0.600) and (0.900,-1.200) .. (1.200,-1.200);
\draw (1.200,-1.800) -- (2.700,-1.800);
\draw (1.200,-1.200) -- (2.700,-1.200);
\draw (1.200,-0.600) -- (2.700,-0.600);
\draw (1.200,0.000) -- (2.700,0.000);
\draw[fill=white] (1.350,-0.800) rectangle (2.550,0.200);
\node at (1.950,-0.300) {$q+2$};
\draw (2.700,-0.600) .. controls (3.100,-0.600) and (3.100,-1.200) .. (2.700,-1.200);
\draw (2.700,0.000) .. controls (3.900,0.000) and (3.900,-1.800) .. (2.700,-1.800);
\draw[rounded corners=29.88pt] (0.000,0.000) -- (-2.150,0.000) -- (-2.150,-1.100);
\draw[rounded corners=12.80pt] (0.000,-0.600) -- (-1.550,-0.600) -- (-1.550,-1.100);
\draw[rounded corners=28.45pt] (0.000,-1.200) -- (-1.000,-1.200) -- (-1.000,-2.200);
\draw[rounded corners=11.38pt] (0.000,-1.800) -- (-0.400,-1.800) -- (-0.400,-2.200);
\draw (0.000,-4.400) -- (0.600,-4.400);
\draw (0.000,-2.600) -- (0.600,-2.600);
\draw (0.000,-3.800) .. controls (0.300,-3.800) and (0.300,-3.200) .. (0.600,-3.200);
\draw[white, line width=4.0pt, line cap=butt] (0.149,-3.262) .. controls (0.271,-3.378) and (0.329,-3.622) .. (0.451,-3.738);
\draw (0.000,-3.200) .. controls (0.300,-3.200) and (0.300,-3.800) .. (0.600,-3.800);
\draw (0.600,-4.400) -- (1.200,-4.400);
\draw (0.600,-2.600) -- (1.200,-2.600);
\draw (0.600,-3.800) .. controls (0.900,-3.800) and (0.900,-3.200) .. (1.200,-3.200);
\draw[white, line width=4.0pt, line cap=butt] (0.749,-3.262) .. controls (0.871,-3.378) and (0.929,-3.622) .. (1.051,-3.738);
\draw (0.600,-3.200) .. controls (0.900,-3.200) and (0.900,-3.800) .. (1.200,-3.800);
\draw (1.200,-4.400) -- (1.800,-4.400);
\draw (1.200,-2.600) -- (1.800,-2.600);
\draw (1.200,-3.800) .. controls (1.500,-3.800) and (1.500,-3.200) .. (1.800,-3.200);
\draw[white, line width=4.0pt, line cap=butt] (1.349,-3.262) .. controls (1.471,-3.378) and (1.529,-3.622) .. (1.651,-3.738);
\draw (1.200,-3.200) .. controls (1.500,-3.200) and (1.500,-3.800) .. (1.800,-3.800);
\draw (1.800,-4.400) -- (2.400,-4.400);
\draw (1.800,-3.800) -- (2.400,-3.800);
\draw (1.800,-3.200) .. controls (2.100,-3.200) and (2.100,-2.600) .. (2.400,-2.600);
\draw[white, line width=4.0pt, line cap=butt] (1.949,-2.662) .. controls (2.071,-2.778) and (2.129,-3.022) .. (2.251,-3.138);
\draw (1.800,-2.600) .. controls (2.100,-2.600) and (2.100,-3.200) .. (2.400,-3.200);
\draw (2.400,-4.400) -- (3.000,-4.400);
\draw (2.400,-3.800) -- (3.000,-3.800);
\draw (2.400,-3.200) .. controls (2.700,-3.200) and (2.700,-2.600) .. (3.000,-2.600);
\draw[white, line width=4.0pt, line cap=butt] (2.549,-2.662) .. controls (2.671,-2.778) and (2.729,-3.022) .. (2.851,-3.138);
\draw (2.400,-2.600) .. controls (2.700,-2.600) and (2.700,-3.200) .. (3.000,-3.200);
\draw (3.000,-4.400) -- (3.600,-4.400);
\draw (3.000,-3.800) -- (3.600,-3.800);
\draw (3.000,-3.200) .. controls (3.300,-3.200) and (3.300,-2.600) .. (3.600,-2.600);
\draw[white, line width=4.0pt, line cap=butt] (3.149,-2.662) .. controls (3.271,-2.778) and (3.329,-3.022) .. (3.451,-3.138);
\draw (3.000,-2.600) .. controls (3.300,-2.600) and (3.300,-3.200) .. (3.600,-3.200);
\draw (3.600,-3.200) .. controls (4.000,-3.200) and (4.000,-3.800) .. (3.600,-3.800);
\draw (3.600,-2.600) .. controls (4.800,-2.600) and (4.800,-4.400) .. (3.600,-4.400);
\draw[rounded corners=11.38pt] (0.000,-2.600) -- (-0.400,-2.600) -- (-0.400,-2.200);
\draw[rounded corners=28.45pt] (0.000,-3.200) -- (-1.000,-3.200) -- (-1.000,-2.200);
\draw[rounded corners=28.45pt] (0.000,-3.800) -- (-1.000,-3.800) -- (-1.000,-4.800);
\draw[rounded corners=11.38pt] (0.000,-4.400) -- (-0.400,-4.400) -- (-0.400,-4.800);
\draw (0.000,-7.000) -- (0.600,-7.000);
\draw (0.000,-5.200) -- (0.600,-5.200);
\draw (0.000,-6.400) .. controls (0.300,-6.400) and (0.300,-5.800) .. (0.600,-5.800);
\draw[white, line width=4.0pt, line cap=butt] (0.149,-5.862) .. controls (0.271,-5.978) and (0.329,-6.222) .. (0.451,-6.338);
\draw (0.000,-5.800) .. controls (0.300,-5.800) and (0.300,-6.400) .. (0.600,-6.400);
\draw (0.600,-7.000) -- (1.200,-7.000);
\draw (0.600,-5.200) -- (1.200,-5.200);
\draw (0.600,-6.400) .. controls (0.900,-6.400) and (0.900,-5.800) .. (1.200,-5.800);
\draw[white, line width=4.0pt, line cap=butt] (0.749,-5.862) .. controls (0.871,-5.978) and (0.929,-6.222) .. (1.051,-6.338);
\draw (0.600,-5.800) .. controls (0.900,-5.800) and (0.900,-6.400) .. (1.200,-6.400);
\draw (1.200,-7.000) -- (1.800,-7.000);
\draw (1.200,-6.400) -- (1.800,-6.400);
\draw (1.200,-5.800) .. controls (1.500,-5.800) and (1.500,-5.200) .. (1.800,-5.200);
\draw[white, line width=4.0pt, line cap=butt] (1.349,-5.262) .. controls (1.471,-5.378) and (1.529,-5.622) .. (1.651,-5.738);
\draw (1.200,-5.200) .. controls (1.500,-5.200) and (1.500,-5.800) .. (1.800,-5.800);
\draw (1.800,-7.000) -- (2.400,-7.000);
\draw (1.800,-6.400) -- (2.400,-6.400);
\draw (1.800,-5.800) .. controls (2.100,-5.800) and (2.100,-5.200) .. (2.400,-5.200);
\draw[white, line width=4.0pt, line cap=butt] (1.949,-5.262) .. controls (2.071,-5.378) and (2.129,-5.622) .. (2.251,-5.738);
\draw (1.800,-5.200) .. controls (2.100,-5.200) and (2.100,-5.800) .. (2.400,-5.800);
\draw (2.400,-5.800) .. controls (2.800,-5.800) and (2.800,-6.400) .. (2.400,-6.400);
\draw (2.400,-5.200) .. controls (3.600,-5.200) and (3.600,-7.000) .. (2.400,-7.000);
\draw[rounded corners=11.38pt] (0.000,-5.200) -- (-0.400,-5.200) -- (-0.400,-4.800);
\draw[rounded corners=28.45pt] (0.000,-5.800) -- (-1.000,-5.800) -- (-1.000,-4.800);
\draw[rounded corners=29.88pt] (0.000,-7.000) -- (-2.150,-7.000) -- (-2.150,-5.900);
\draw[rounded corners=12.80pt] (0.000,-6.400) -- (-1.550,-6.400) -- (-1.550,-5.900);
\draw (-1.550,-1.100) -- (-1.550,-3.200);
\draw (-1.550,-3.800) -- (-1.550,-5.900);
\draw (-2.150,-1.100) -- (-2.150,-3.200);
\draw (-2.150,-3.800) -- (-2.150,-5.900);
\draw (-2.150,-3.200) .. controls (-2.150,-3.500) and (-1.550,-3.500) .. (-1.550,-3.800);
\draw[white, line width=4.0pt, line cap=butt] (-1.612,-3.349) .. controls (-1.728,-3.471) and (-1.972,-3.529) .. (-2.088,-3.651);
\draw (-1.550,-3.200) .. controls (-1.550,-3.500) and (-2.150,-3.500) .. (-2.150,-3.800);
\end{tikzpicture}

%% file: figures/gl-eta-type.tex
\begin{tikzpicture}[strand/.style={line width=1.1pt, line cap=round}, shaded/.style={fill=black!22},
                    rim/.style={black!35, line width=0.4pt}, every node/.style={font=\small}]
\node at (0.00,1.45) {Type I};
\node at (2.60,1.45) {Type II};
\node[anchor=east, overlay] at (-1.25,0.00) {$\eta(c)=+1$};
\begin{scope}[shift={(0.00,0.00)}]
\fill[shaded] (0,0) -- (45:0.95) arc (45:135:0.95) -- cycle;
\fill[shaded] (0,0) -- (225:0.95) arc (225:315:0.95) -- cycle;
\draw[rim] (0,0) circle (0.95);
\draw[strand] (-0.672,0.672) -- (-0.113,0.113);
\draw[strand, -{Stealth[length=5pt]}] (0.113,-0.113) -- (0.672,-0.672);
\draw[strand, -{Stealth[length=5pt]}] (-0.672,-0.672) -- (0.672,0.672);
\end{scope}
\begin{scope}[shift={(2.60,0.00)}]
\fill[shaded] (0,0) -- (45:0.95) arc (45:135:0.95) -- cycle;
\fill[shaded] (0,0) -- (225:0.95) arc (225:315:0.95) -- cycle;
\draw[rim] (0,0) circle (0.95);
\draw[strand] (0.672,-0.672) -- (0.113,-0.113);
\draw[strand, -{Stealth[length=5pt]}] (-0.113,0.113) -- (-0.672,0.672);
\draw[strand, -{Stealth[length=5pt]}] (-0.672,-0.672) -- (0.672,0.672);
\end{scope}
\node[anchor=east, overlay] at (-1.25,-2.50) {$\eta(c)=-1$};
\begin{scope}[shift={(0.00,-2.50)}]
\fill[shaded] (0,0) -- (45:0.95) arc (45:135:0.95) -- cycle;
\fill[shaded] (0,0) -- (225:0.95) arc (225:315:0.95) -- cycle;
\draw[rim] (0,0) circle (0.95);
\draw[strand] (-0.672,-0.672) -- (-0.113,-0.113);
\draw[strand, -{Stealth[length=5pt]}] (0.113,0.113) -- (0.672,0.672);
\draw[strand, -{Stealth[length=5pt]}] (-0.672,0.672) -- (0.672,-0.672);
\end{scope}
\begin{scope}[shift={(2.60,-2.50)}]
\fill[shaded] (0,0) -- (45:0.95) arc (45:135:0.95) -- cycle;
\fill[shaded] (0,0) -- (225:0.95) arc (225:315:0.95) -- cycle;
\draw[rim] (0,0) circle (0.95);
\draw[strand] (-0.672,-0.672) -- (-0.113,-0.113);
\draw[strand, -{Stealth[length=5pt]}] (0.113,0.113) -- (0.672,0.672);
\draw[strand, -{Stealth[length=5pt]}] (0.672,-0.672) -- (-0.672,0.672);
\end{scope}
\end{tikzpicture}

%% file: figures/Th-3q-checkerboard.tex
\begin{tikzpicture}[knot/.style={line width=1.0pt, line cap=round, line join=round}, knotdot/.style={},
                    shaded/.style={fill=black!22}, whitereg/.style={fill=white}, reglabel/.style={font=\small, inner sep=1pt}]
\fill[shaded, rounded corners=8pt] (-2.750,-2.734) rectangle (2.746,2.750);
\fill[whitereg] (0.000,2.400) -- (0.151,2.395) -- (0.302,2.381) -- (0.451,2.357) -- (0.599,2.324) -- (0.744,2.282) -- (0.887,2.230) -- (1.025,2.170) -- (1.160,2.101) -- (1.290,2.024) -- (1.415,1.938) -- (1.534,1.845) -- (1.648,1.745) -- (1.754,1.638) -- (1.854,1.524) -- (1.946,1.404) -- (1.981,1.353) -- (2.011,1.303) -- (2.037,1.253) -- (2.058,1.205) -- (2.076,1.157) -- (2.090,1.111) -- (2.101,1.065) -- (2.109,1.021) -- (2.114,0.978) -- (2.116,0.937) -- (2.116,0.896) -- (2.114,0.857) -- (2.110,0.819) -- (2.105,0.782) -- (2.097,0.745) -- (2.089,0.710) -- (2.079,0.675) -- (2.069,0.642) -- (2.058,0.608) -- (2.046,0.576) -- (2.018,0.597) -- (1.990,0.617) -- (1.963,0.638) -- (1.935,0.658) -- (1.907,0.678) -- (1.880,0.698) -- (1.852,0.719) -- (1.825,0.740) -- (1.797,0.761) -- (1.770,0.784) -- (1.743,0.807) -- (1.716,0.831) -- (1.690,0.857) -- (1.663,0.884) -- (1.637,0.912) -- (1.610,0.942) -- (1.583,0.974) -- (1.556,1.008) -- (1.529,1.044) -- (1.500,1.082) -- (1.461,1.133) -- (1.421,1.178) -- (1.380,1.219) -- (1.338,1.255) -- (1.296,1.287) -- (1.254,1.314) -- (1.212,1.338) -- (1.170,1.358) -- (1.129,1.375) -- (1.089,1.388) -- (1.049,1.399) -- (1.009,1.407) -- (0.971,1.412) -- (0.933,1.415) -- (0.895,1.417) -- (0.858,1.417) -- (0.822,1.415) -- (0.787,1.412) -- (0.752,1.407) -- (0.717,1.402) -- (0.702,1.433) -- (0.685,1.464) -- (0.666,1.495) -- (0.647,1.525) -- (0.625,1.555) -- (0.602,1.584) -- (0.577,1.613) -- (0.550,1.641) -- (0.521,1.669) -- (0.489,1.695) -- (0.454,1.720) -- (0.417,1.744) -- (0.376,1.766) -- (0.333,1.786) -- (0.286,1.804) -- (0.235,1.819) -- (0.182,1.832) -- (0.125,1.842) -- (0.064,1.848) -- (0.000,1.850) -- (-0.047,1.850) -- (-0.093,1.852) -- (-0.136,1.854) -- (-0.178,1.857) -- (-0.218,1.861) -- (-0.256,1.866) -- (-0.294,1.872) -- (-0.330,1.878) -- (-0.365,1.886) -- (-0.400,1.894) -- (-0.434,1.903) -- (-0.468,1.912) -- (-0.501,1.922) -- (-0.533,1.933) -- (-0.566,1.943) -- (-0.599,1.954) -- (-0.631,1.965) -- (-0.664,1.975) -- (-0.697,1.986) -- (-0.730,1.996) -- (-0.710,2.025) -- (-0.690,2.053) -- (-0.669,2.082) -- (-0.646,2.110) -- (-0.623,2.137) -- (-0.597,2.164) -- (-0.570,2.190) -- (-0.542,2.216) -- (-0.511,2.241) -- (-0.478,2.264) -- (-0.443,2.286) -- (-0.405,2.307) -- (-0.365,2.327) -- (-0.322,2.345) -- (-0.276,2.360) -- (-0.227,2.374) -- (-0.175,2.385) -- (-0.120,2.393) -- (-0.062,2.398) -- cycle;
\fill[whitereg] (0.000,1.300) -- (0.045,1.300) -- (0.088,1.301) -- (0.129,1.302) -- (0.169,1.305) -- (0.207,1.307) -- (0.244,1.311) -- (0.280,1.315) -- (0.315,1.320) -- (0.350,1.326) -- (0.384,1.332) -- (0.418,1.338) -- (0.451,1.345) -- (0.484,1.352) -- (0.517,1.360) -- (0.550,1.368) -- (0.583,1.375) -- (0.616,1.382) -- (0.650,1.390) -- (0.683,1.396) -- (0.717,1.402) -- (0.733,1.371) -- (0.747,1.340) -- (0.761,1.309) -- (0.774,1.277) -- (0.787,1.246) -- (0.800,1.215) -- (0.814,1.184) -- (0.827,1.153) -- (0.841,1.122) -- (0.855,1.091) -- (0.870,1.059) -- (0.886,1.028) -- (0.903,0.997) -- (0.920,0.965) -- (0.939,0.933) -- (0.959,0.900) -- (0.981,0.867) -- (1.004,0.832) -- (1.028,0.797) -- (1.054,0.761) -- (1.089,0.710) -- (1.121,0.659) -- (1.150,0.606) -- (1.177,0.551) -- (1.202,0.496) -- (1.224,0.439) -- (1.243,0.381) -- (1.259,0.323) -- (1.273,0.264) -- (1.284,0.204) -- (1.292,0.144) -- (1.297,0.083) -- (1.300,0.022) -- (1.299,-0.038) -- (1.296,-0.099) -- (1.293,-0.144) -- (1.291,-0.187) -- (1.289,-0.228) -- (1.288,-0.268) -- (1.288,-0.306) -- (1.288,-0.343) -- (1.290,-0.380) -- (1.292,-0.415) -- (1.295,-0.450) -- (1.299,-0.484) -- (1.303,-0.518) -- (1.307,-0.552) -- (1.312,-0.586) -- (1.317,-0.619) -- (1.322,-0.653) -- (1.327,-0.686) -- (1.332,-0.720) -- (1.336,-0.754) -- (1.340,-0.788) -- (1.343,-0.822) -- (1.311,-0.835) -- (1.279,-0.847) -- (1.247,-0.858) -- (1.215,-0.869) -- (1.183,-0.880) -- (1.151,-0.891) -- (1.118,-0.901) -- (1.086,-0.913) -- (1.054,-0.924) -- (1.022,-0.936) -- (0.990,-0.948) -- (0.958,-0.962) -- (0.925,-0.976) -- (0.892,-0.991) -- (0.858,-1.007) -- (0.824,-1.025) -- (0.789,-1.044) -- (0.754,-1.064) -- (0.717,-1.086) -- (0.678,-1.109) -- (0.625,-1.140) -- (0.572,-1.168) -- (0.516,-1.193) -- (0.460,-1.216) -- (0.403,-1.236) -- (0.345,-1.254) -- (0.286,-1.268) -- (0.226,-1.280) -- (0.166,-1.289) -- (0.106,-1.296) -- (0.045,-1.299) -- (-0.016,-1.300) -- (-0.077,-1.298) -- (-0.137,-1.293) -- (-0.197,-1.285) -- (-0.266,-1.272) -- (-0.334,-1.256) -- (-0.401,-1.237) -- (-0.467,-1.213) -- (-0.531,-1.187) -- (-0.594,-1.156) -- (-0.655,-1.123) -- (-0.715,-1.086) -- (-0.772,-1.046) -- (-0.827,-1.003) -- (-0.879,-0.957) -- (-0.930,-0.909) -- (-0.977,-0.858) -- (-1.022,-0.804) -- (-1.063,-0.748) -- (-1.102,-0.690) -- (-1.137,-0.630) -- (-1.169,-0.568) -- (-1.198,-0.504) -- (-1.224,-0.439) -- (-1.245,-0.373) -- (-1.264,-0.306) -- (-1.278,-0.237) -- (-1.289,-0.168) -- (-1.296,-0.099) -- (-1.300,-0.054) -- (-1.304,-0.011) -- (-1.308,0.030) -- (-1.314,0.069) -- (-1.319,0.107) -- (-1.326,0.144) -- (-1.333,0.179) -- (-1.340,0.214) -- (-1.348,0.248) -- (-1.357,0.282) -- (-1.366,0.315) -- (-1.376,0.347) -- (-1.385,0.380) -- (-1.395,0.412) -- (-1.405,0.444) -- (-1.415,0.477) -- (-1.425,0.509) -- (-1.435,0.542) -- (-1.444,0.575) -- (-1.453,0.609) -- (-1.423,0.626) -- (-1.393,0.643) -- (-1.363,0.659) -- (-1.333,0.675) -- (-1.302,0.690) -- (-1.272,0.706) -- (-1.242,0.721) -- (-1.212,0.737) -- (-1.182,0.753) -- (-1.152,0.770) -- (-1.123,0.787) -- (-1.093,0.805) -- (-1.062,0.824) -- (-1.032,0.844) -- (-1.001,0.865) -- (-0.970,0.888) -- (-0.939,0.912) -- (-0.906,0.937) -- (-0.873,0.965) -- (-0.839,0.993) -- (-0.788,1.034) -- (-0.735,1.072) -- (-0.680,1.108) -- (-0.624,1.140) -- (-0.566,1.170) -- (-0.507,1.197) -- (-0.446,1.221) -- (-0.385,1.242) -- (-0.322,1.259) -- (-0.259,1.274) -- (-0.195,1.285) -- (-0.130,1.293) -- (-0.065,1.298) -- cycle;
\fill[whitereg] (1.845,-0.141) -- (1.849,-0.094) -- (1.853,-0.049) -- (1.859,-0.005) -- (1.865,0.036) -- (1.872,0.075) -- (1.880,0.113) -- (1.889,0.150) -- (1.898,0.186) -- (1.908,0.221) -- (1.919,0.255) -- (1.931,0.288) -- (1.943,0.321) -- (1.955,0.353) -- (1.968,0.385) -- (1.981,0.416) -- (1.994,0.448) -- (2.007,0.480) -- (2.020,0.511) -- (2.033,0.543) -- (2.046,0.576) -- (2.073,0.554) -- (2.100,0.532) -- (2.127,0.508) -- (2.153,0.484) -- (2.178,0.458) -- (2.203,0.431) -- (2.227,0.402) -- (2.251,0.371) -- (2.273,0.339) -- (2.294,0.304) -- (2.314,0.268) -- (2.332,0.228) -- (2.348,0.187) -- (2.362,0.142) -- (2.374,0.095) -- (2.384,0.046) -- (2.391,-0.007) -- (2.395,-0.063) -- (2.396,-0.121) -- (2.393,-0.183) -- (2.375,-0.344) -- (2.347,-0.503) -- (2.307,-0.661) -- (2.257,-0.815) -- (2.197,-0.966) -- (2.127,-1.112) -- (2.047,-1.253) -- (1.958,-1.388) -- (1.860,-1.517) -- (1.753,-1.639) -- (1.638,-1.754) -- (1.516,-1.860) -- (1.387,-1.959) -- (1.252,-2.048) -- (1.198,-2.078) -- (1.146,-2.104) -- (1.094,-2.126) -- (1.044,-2.144) -- (0.996,-2.158) -- (0.948,-2.168) -- (0.902,-2.176) -- (0.858,-2.180) -- (0.815,-2.182) -- (0.773,-2.181) -- (0.733,-2.178) -- (0.694,-2.173) -- (0.656,-2.166) -- (0.619,-2.158) -- (0.583,-2.148) -- (0.549,-2.137) -- (0.515,-2.125) -- (0.482,-2.112) -- (0.450,-2.098) -- (0.418,-2.083) -- (0.441,-2.058) -- (0.464,-2.032) -- (0.486,-2.005) -- (0.508,-1.979) -- (0.531,-1.953) -- (0.553,-1.927) -- (0.575,-1.901) -- (0.599,-1.876) -- (0.622,-1.850) -- (0.647,-1.825) -- (0.672,-1.800) -- (0.698,-1.775) -- (0.726,-1.750) -- (0.755,-1.726) -- (0.785,-1.701) -- (0.817,-1.677) -- (0.851,-1.653) -- (0.887,-1.628) -- (0.925,-1.604) -- (0.965,-1.578) -- (1.018,-1.543) -- (1.067,-1.507) -- (1.111,-1.469) -- (1.150,-1.430) -- (1.185,-1.390) -- (1.215,-1.350) -- (1.242,-1.310) -- (1.265,-1.270) -- (1.285,-1.231) -- (1.301,-1.191) -- (1.315,-1.152) -- (1.326,-1.113) -- (1.334,-1.075) -- (1.340,-1.038) -- (1.345,-1.000) -- (1.347,-0.964) -- (1.348,-0.928) -- (1.348,-0.892) -- (1.346,-0.857) -- (1.343,-0.822) -- (1.376,-0.809) -- (1.408,-0.794) -- (1.440,-0.778) -- (1.471,-0.761) -- (1.503,-0.742) -- (1.534,-0.721) -- (1.565,-0.699) -- (1.595,-0.674) -- (1.624,-0.646) -- (1.653,-0.617) -- (1.680,-0.584) -- (1.707,-0.548) -- (1.732,-0.510) -- (1.755,-0.468) -- (1.777,-0.422) -- (1.796,-0.373) -- (1.813,-0.321) -- (1.827,-0.264) -- (1.838,-0.204) -- cycle;
\fill[whitereg] (-0.281,-1.829) -- (-0.234,-1.836) -- (-0.189,-1.844) -- (-0.147,-1.853) -- (-0.106,-1.862) -- (-0.067,-1.872) -- (-0.030,-1.883) -- (0.006,-1.895) -- (0.041,-1.907) -- (0.075,-1.920) -- (0.108,-1.933) -- (0.140,-1.947) -- (0.172,-1.961) -- (0.203,-1.976) -- (0.234,-1.991) -- (0.264,-2.007) -- (0.295,-2.022) -- (0.326,-2.038) -- (0.356,-2.053) -- (0.387,-2.068) -- (0.418,-2.083) -- (0.395,-2.109) -- (0.370,-2.134) -- (0.345,-2.159) -- (0.318,-2.183) -- (0.291,-2.207) -- (0.262,-2.230) -- (0.231,-2.252) -- (0.199,-2.273) -- (0.165,-2.292) -- (0.129,-2.310) -- (0.091,-2.327) -- (0.050,-2.342) -- (0.007,-2.355) -- (-0.038,-2.366) -- (-0.086,-2.375) -- (-0.136,-2.381) -- (-0.189,-2.384) -- (-0.245,-2.383) -- (-0.303,-2.380) -- (-0.364,-2.372) -- (-0.491,-2.349) -- (-0.617,-2.319) -- (-0.740,-2.283) -- (-0.862,-2.240) -- (-0.980,-2.191) -- (-1.097,-2.135) -- (-1.210,-2.073) -- (-1.319,-2.005) -- (-1.425,-1.931) -- (-1.526,-1.852) -- (-1.624,-1.768) -- (-1.716,-1.678) -- (-1.760,-1.631) -- (-1.357,-1.257) -- (-1.323,-1.293) -- (-1.251,-1.362) -- (-1.177,-1.428) -- (-1.098,-1.489) -- (-1.017,-1.546) -- (-0.932,-1.598) -- (-0.845,-1.646) -- (-0.756,-1.689) -- (-0.664,-1.727) -- (-0.570,-1.760) -- (-0.475,-1.788) -- (-0.379,-1.811) -- cycle;
\fill[whitereg] (-1.772,-0.531) -- (-1.741,-0.625) -- (-1.705,-0.718) -- (-1.664,-0.808) -- (-1.618,-0.896) -- (-1.568,-0.982) -- (-1.513,-1.064) -- (-1.454,-1.144) -- (-1.390,-1.220) -- (-1.357,-1.257) -- (-1.760,-1.631) -- (-1.804,-1.583) -- (-1.886,-1.484) -- (-1.963,-1.381) -- (-2.034,-1.274) -- (-2.100,-1.163) -- (-2.159,-1.048) -- (-2.212,-0.931) -- (-2.259,-0.811) -- (-2.299,-0.688) -- (-2.333,-0.564) -- (-2.360,-0.438) -- (-2.380,-0.311) -- (-2.393,-0.183) -- (-2.400,-0.032) -- (-2.397,0.120) -- (-2.385,0.271) -- (-2.363,0.420) -- (-2.332,0.568) -- (-2.291,0.714) -- (-2.242,0.857) -- (-2.183,0.997) -- (-2.116,1.132) -- (-2.041,1.263) -- (-1.957,1.390) -- (-1.865,1.510) -- (-1.767,1.625) -- (-1.661,1.733) -- (-1.548,1.834) -- (-1.500,1.872) -- (-1.452,1.906) -- (-1.405,1.935) -- (-1.358,1.960) -- (-1.312,1.982) -- (-1.266,1.999) -- (-1.222,2.014) -- (-1.179,2.025) -- (-1.136,2.033) -- (-1.095,2.039) -- (-1.055,2.042) -- (-1.015,2.043) -- (-0.977,2.042) -- (-0.940,2.039) -- (-0.903,2.035) -- (-0.867,2.029) -- (-0.832,2.022) -- (-0.797,2.014) -- (-0.763,2.005) -- (-0.730,1.996) -- (-0.749,1.967) -- (-0.767,1.938) -- (-0.785,1.908) -- (-0.803,1.879) -- (-0.821,1.850) -- (-0.839,1.821) -- (-0.857,1.792) -- (-0.876,1.763) -- (-0.896,1.734) -- (-0.916,1.705) -- (-0.937,1.677) -- (-0.960,1.648) -- (-0.983,1.620) -- (-1.008,1.591) -- (-1.034,1.562) -- (-1.062,1.534) -- (-1.092,1.504) -- (-1.124,1.475) -- (-1.157,1.445) -- (-1.193,1.414) -- (-1.241,1.371) -- (-1.283,1.327) -- (-1.321,1.283) -- (-1.353,1.238) -- (-1.382,1.194) -- (-1.406,1.150) -- (-1.426,1.107) -- (-1.443,1.064) -- (-1.457,1.021) -- (-1.467,0.980) -- (-1.474,0.939) -- (-1.479,0.899) -- (-1.482,0.860) -- (-1.482,0.822) -- (-1.481,0.785) -- (-1.478,0.748) -- (-1.473,0.712) -- (-1.467,0.677) -- (-1.460,0.643) -- (-1.453,0.609) -- (-1.482,0.590) -- (-1.512,0.571) -- (-1.541,0.551) -- (-1.570,0.529) -- (-1.598,0.505) -- (-1.626,0.480) -- (-1.653,0.453) -- (-1.679,0.424) -- (-1.704,0.392) -- (-1.727,0.358) -- (-1.750,0.322) -- (-1.770,0.283) -- (-1.789,0.241) -- (-1.806,0.196) -- (-1.820,0.147) -- (-1.832,0.096) -- (-1.841,0.042) -- (-1.846,-0.016) -- (-1.847,-0.077) -- (-1.845,-0.141) -- (-1.834,-0.240) -- (-1.819,-0.338) -- (-1.798,-0.435) -- cycle;
\draw[knot] (0.000,2.400) -- (0.151,2.395) -- (0.302,2.381) -- (0.451,2.357) -- (0.599,2.324) -- (0.744,2.282) -- (0.887,2.230) -- (1.025,2.170) -- (1.160,2.101) -- (1.290,2.024) -- (1.415,1.938) -- (1.534,1.845) -- (1.648,1.745) -- (1.754,1.638) -- (1.854,1.524) -- (1.946,1.404);
\draw[knot] (0.000,1.850) -- (0.064,1.848) -- (0.125,1.842) -- (0.182,1.832) -- (0.235,1.819) -- (0.286,1.804) -- (0.333,1.786) -- (0.376,1.766) -- (0.417,1.744) -- (0.454,1.720) -- (0.489,1.695) -- (0.521,1.669) -- (0.550,1.641) -- (0.577,1.613) -- (0.602,1.584) -- (0.625,1.555) -- (0.647,1.525) -- (0.666,1.495) -- (0.685,1.464) -- (0.702,1.433) -- (0.717,1.402) -- (0.733,1.371) -- (0.747,1.340) -- (0.761,1.309) -- (0.774,1.277) -- (0.787,1.246) -- (0.800,1.215) -- (0.814,1.184) -- (0.827,1.153) -- (0.841,1.122) -- (0.855,1.091) -- (0.870,1.059) -- (0.886,1.028) -- (0.903,0.997) -- (0.920,0.965) -- (0.939,0.933) -- (0.959,0.900) -- (0.981,0.867) -- (1.004,0.832) -- (1.028,0.797) -- (1.054,0.761);
\draw[knot] (0.000,1.300) -- (0.045,1.300) -- (0.088,1.301) -- (0.129,1.302) -- (0.169,1.305) -- (0.207,1.307) -- (0.244,1.311) -- (0.280,1.315) -- (0.315,1.320) -- (0.350,1.326) -- (0.384,1.332) -- (0.418,1.338) -- (0.451,1.345) -- (0.484,1.352) -- (0.517,1.360) -- (0.550,1.368) -- (0.583,1.375);
\draw[knot] (0.858,1.417) -- (0.895,1.417) -- (0.933,1.415) -- (0.971,1.412) -- (1.009,1.407) -- (1.049,1.399) -- (1.089,1.388) -- (1.129,1.375) -- (1.170,1.358) -- (1.212,1.338) -- (1.254,1.314) -- (1.296,1.287) -- (1.338,1.255) -- (1.380,1.219) -- (1.421,1.178) -- (1.461,1.133) -- (1.500,1.082);
\draw[knot] (1.054,0.761) -- (1.089,0.710) -- (1.121,0.659) -- (1.150,0.606) -- (1.177,0.551) -- (1.202,0.496) -- (1.224,0.439) -- (1.243,0.381) -- (1.259,0.323) -- (1.273,0.264) -- (1.284,0.204) -- (1.292,0.144) -- (1.297,0.083) -- (1.300,0.022) -- (1.299,-0.038) -- (1.296,-0.099);
\draw[knot] (1.500,1.082) -- (1.529,1.044) -- (1.556,1.008) -- (1.583,0.974) -- (1.610,0.942) -- (1.637,0.912) -- (1.663,0.884) -- (1.690,0.857) -- (1.716,0.831) -- (1.743,0.807) -- (1.770,0.784) -- (1.797,0.761) -- (1.825,0.740) -- (1.852,0.719) -- (1.880,0.698) -- (1.907,0.678) -- (1.935,0.658) -- (1.963,0.638) -- (1.990,0.617) -- (2.018,0.597) -- (2.046,0.576) -- (2.073,0.554) -- (2.100,0.532) -- (2.127,0.508) -- (2.153,0.484) -- (2.178,0.458) -- (2.203,0.431) -- (2.227,0.402) -- (2.251,0.371) -- (2.273,0.339) -- (2.294,0.304) -- (2.314,0.268) -- (2.332,0.228) -- (2.348,0.187) -- (2.362,0.142) -- (2.374,0.095) -- (2.384,0.046) -- (2.391,-0.007) -- (2.395,-0.063) -- (2.396,-0.121) -- (2.393,-0.183);
\draw[knot] (1.946,1.404) -- (1.981,1.353) -- (2.011,1.303) -- (2.037,1.253) -- (2.058,1.205) -- (2.076,1.157) -- (2.090,1.111) -- (2.101,1.065) -- (2.109,1.021) -- (2.114,0.978) -- (2.116,0.937) -- (2.116,0.896) -- (2.114,0.857) -- (2.110,0.819) -- (2.105,0.782) -- (2.097,0.745) -- (2.089,0.710);
\draw[knot] (1.994,0.448) -- (1.981,0.416) -- (1.968,0.385) -- (1.955,0.353) -- (1.943,0.321) -- (1.931,0.288) -- (1.919,0.255) -- (1.908,0.221) -- (1.898,0.186) -- (1.889,0.150) -- (1.880,0.113) -- (1.872,0.075) -- (1.865,0.036) -- (1.859,-0.005) -- (1.853,-0.049) -- (1.849,-0.094) -- (1.845,-0.141);
\draw[knot] (2.393,-0.183) -- (2.375,-0.344) -- (2.347,-0.503) -- (2.307,-0.661) -- (2.257,-0.815) -- (2.197,-0.966) -- (2.127,-1.112) -- (2.047,-1.253) -- (1.958,-1.388) -- (1.860,-1.517) -- (1.753,-1.639) -- (1.638,-1.754) -- (1.516,-1.860) -- (1.387,-1.959) -- (1.252,-2.048);
\draw[knot] (1.845,-0.141) -- (1.838,-0.204) -- (1.827,-0.264) -- (1.813,-0.321) -- (1.796,-0.373) -- (1.777,-0.422) -- (1.755,-0.468) -- (1.732,-0.510) -- (1.707,-0.548) -- (1.680,-0.584) -- (1.653,-0.617) -- (1.624,-0.646) -- (1.595,-0.674) -- (1.565,-0.699) -- (1.534,-0.721) -- (1.503,-0.742) -- (1.471,-0.761) -- (1.440,-0.778) -- (1.408,-0.794) -- (1.376,-0.809) -- (1.343,-0.822) -- (1.311,-0.835) -- (1.279,-0.847) -- (1.247,-0.858) -- (1.215,-0.869) -- (1.183,-0.880) -- (1.151,-0.891) -- (1.118,-0.901) -- (1.086,-0.913) -- (1.054,-0.924) -- (1.022,-0.936) -- (0.990,-0.948) -- (0.958,-0.962) -- (0.925,-0.976) -- (0.892,-0.991) -- (0.858,-1.007) -- (0.824,-1.025) -- (0.789,-1.044) -- (0.754,-1.064) -- (0.717,-1.086) -- (0.678,-1.109);
\draw[knot] (1.296,-0.099) -- (1.293,-0.144) -- (1.291,-0.187) -- (1.289,-0.228) -- (1.288,-0.268) -- (1.288,-0.306) -- (1.288,-0.343) -- (1.290,-0.380) -- (1.292,-0.415) -- (1.295,-0.450) -- (1.299,-0.484) -- (1.303,-0.518) -- (1.307,-0.552) -- (1.312,-0.586) -- (1.317,-0.619) -- (1.322,-0.653) -- (1.327,-0.686);
\draw[knot] (1.347,-0.964) -- (1.345,-1.000) -- (1.340,-1.038) -- (1.334,-1.075) -- (1.326,-1.113) -- (1.315,-1.152) -- (1.301,-1.191) -- (1.285,-1.231) -- (1.265,-1.270) -- (1.242,-1.310) -- (1.215,-1.350) -- (1.185,-1.390) -- (1.150,-1.430) -- (1.111,-1.469) -- (1.067,-1.507) -- (1.018,-1.543) -- (0.965,-1.578);
\draw[knot] (0.678,-1.109) -- (0.625,-1.140) -- (0.572,-1.168) -- (0.516,-1.193) -- (0.460,-1.216) -- (0.403,-1.236) -- (0.345,-1.254) -- (0.286,-1.268) -- (0.226,-1.280) -- (0.166,-1.289) -- (0.106,-1.296) -- (0.045,-1.299) -- (-0.016,-1.300) -- (-0.077,-1.298) -- (-0.137,-1.293) -- (-0.197,-1.285);
\draw[knot] (0.965,-1.578) -- (0.925,-1.604) -- (0.887,-1.628) -- (0.851,-1.653) -- (0.817,-1.677) -- (0.785,-1.701) -- (0.755,-1.726) -- (0.726,-1.750) -- (0.698,-1.775) -- (0.672,-1.800) -- (0.647,-1.825) -- (0.622,-1.850) -- (0.599,-1.876) -- (0.575,-1.901) -- (0.553,-1.927) -- (0.531,-1.953) -- (0.508,-1.979) -- (0.486,-2.005) -- (0.464,-2.032) -- (0.441,-2.058) -- (0.418,-2.083) -- (0.395,-2.109) -- (0.370,-2.134) -- (0.345,-2.159) -- (0.318,-2.183) -- (0.291,-2.207) -- (0.262,-2.230) -- (0.231,-2.252) -- (0.199,-2.273) -- (0.165,-2.292) -- (0.129,-2.310) -- (0.091,-2.327) -- (0.050,-2.342) -- (0.007,-2.355) -- (-0.038,-2.366) -- (-0.086,-2.375) -- (-0.136,-2.381) -- (-0.189,-2.384) -- (-0.245,-2.383) -- (-0.303,-2.380) -- (-0.364,-2.372);
\draw[knot] (1.252,-2.048) -- (1.198,-2.078) -- (1.146,-2.104) -- (1.094,-2.126) -- (1.044,-2.144) -- (0.996,-2.158) -- (0.948,-2.168) -- (0.902,-2.176) -- (0.858,-2.180) -- (0.815,-2.182) -- (0.773,-2.181) -- (0.733,-2.178) -- (0.694,-2.173) -- (0.656,-2.166) -- (0.619,-2.158) -- (0.583,-2.148) -- (0.549,-2.137);
\draw[knot] (0.295,-2.022) -- (0.264,-2.007) -- (0.234,-1.991) -- (0.203,-1.976) -- (0.172,-1.961) -- (0.140,-1.947) -- (0.108,-1.933) -- (0.075,-1.920) -- (0.041,-1.907) -- (0.006,-1.895) -- (-0.030,-1.883) -- (-0.067,-1.872) -- (-0.106,-1.862) -- (-0.147,-1.853) -- (-0.189,-1.844) -- (-0.234,-1.836) -- (-0.281,-1.829);
\draw[knot] (-0.197,-1.285) -- (-0.266,-1.272) -- (-0.334,-1.256) -- (-0.401,-1.237) -- (-0.467,-1.213) -- (-0.531,-1.187);
\draw[knot] (-1.224,-0.439) -- (-1.245,-0.373) -- (-1.264,-0.306) -- (-1.278,-0.237) -- (-1.289,-0.168) -- (-1.296,-0.099);
\draw[knot] (-0.281,-1.829) -- (-0.379,-1.811) -- (-0.475,-1.788) -- (-0.570,-1.760) -- (-0.664,-1.727) -- (-0.756,-1.689);
\draw[knot] (-1.741,-0.625) -- (-1.772,-0.531) -- (-1.798,-0.435) -- (-1.819,-0.338) -- (-1.834,-0.240) -- (-1.845,-0.141);
\draw[knot] (-0.364,-2.372) -- (-0.491,-2.349) -- (-0.617,-2.319) -- (-0.740,-2.283) -- (-0.862,-2.240) -- (-0.980,-2.191);
\draw[knot] (-2.259,-0.811) -- (-2.299,-0.688) -- (-2.333,-0.564) -- (-2.360,-0.438) -- (-2.380,-0.311) -- (-2.393,-0.183);
\draw[knot] (-2.393,-0.183) -- (-2.400,-0.032) -- (-2.397,0.120) -- (-2.385,0.271) -- (-2.363,0.420) -- (-2.332,0.568) -- (-2.291,0.714) -- (-2.242,0.857) -- (-2.183,0.997) -- (-2.116,1.132) -- (-2.041,1.263) -- (-1.957,1.390) -- (-1.865,1.510) -- (-1.767,1.625) -- (-1.661,1.733) -- (-1.548,1.834);
\draw[knot] (-1.845,-0.141) -- (-1.847,-0.077) -- (-1.846,-0.016) -- (-1.841,0.042) -- (-1.832,0.096) -- (-1.820,0.147) -- (-1.806,0.196) -- (-1.789,0.241) -- (-1.770,0.283) -- (-1.750,0.322) -- (-1.727,0.358) -- (-1.704,0.392) -- (-1.679,0.424) -- (-1.653,0.453) -- (-1.626,0.480) -- (-1.598,0.505) -- (-1.570,0.529) -- (-1.541,0.551) -- (-1.512,0.571) -- (-1.482,0.590) -- (-1.453,0.609) -- (-1.423,0.626) -- (-1.393,0.643) -- (-1.363,0.659) -- (-1.333,0.675) -- (-1.302,0.690) -- (-1.272,0.706) -- (-1.242,0.721) -- (-1.212,0.737) -- (-1.182,0.753) -- (-1.152,0.770) -- (-1.123,0.787) -- (-1.093,0.805) -- (-1.062,0.824) -- (-1.032,0.844) -- (-1.001,0.865) -- (-0.970,0.888) -- (-0.939,0.912) -- (-0.906,0.937) -- (-0.873,0.965) -- (-0.839,0.993);
\draw[knot] (-1.296,-0.099) -- (-1.300,-0.054) -- (-1.304,-0.011) -- (-1.308,0.030) -- (-1.314,0.069) -- (-1.319,0.107) -- (-1.326,0.144) -- (-1.333,0.179) -- (-1.340,0.214) -- (-1.348,0.248) -- (-1.357,0.282) -- (-1.366,0.315) -- (-1.376,0.347) -- (-1.385,0.380) -- (-1.395,0.412) -- (-1.405,0.444) -- (-1.415,0.477);
\draw[knot] (-1.478,0.748) -- (-1.481,0.785) -- (-1.482,0.822) -- (-1.482,0.860) -- (-1.479,0.899) -- (-1.474,0.939) -- (-1.467,0.980) -- (-1.457,1.021) -- (-1.443,1.064) -- (-1.426,1.107) -- (-1.406,1.150) -- (-1.382,1.194) -- (-1.353,1.238) -- (-1.321,1.283) -- (-1.283,1.327) -- (-1.241,1.371) -- (-1.193,1.414);
\draw[knot] (-0.839,0.993) -- (-0.788,1.034) -- (-0.735,1.072) -- (-0.680,1.108) -- (-0.624,1.140) -- (-0.566,1.170) -- (-0.507,1.197) -- (-0.446,1.221) -- (-0.385,1.242) -- (-0.322,1.259) -- (-0.259,1.274) -- (-0.195,1.285) -- (-0.130,1.293) -- (-0.065,1.298) -- (-0.000,1.300);
\draw[knot] (-1.193,1.414) -- (-1.157,1.445) -- (-1.124,1.475) -- (-1.092,1.504) -- (-1.062,1.534) -- (-1.034,1.562) -- (-1.008,1.591) -- (-0.983,1.620) -- (-0.960,1.648) -- (-0.937,1.677) -- (-0.916,1.705) -- (-0.896,1.734) -- (-0.876,1.763) -- (-0.857,1.792) -- (-0.839,1.821) -- (-0.821,1.850) -- (-0.803,1.879) -- (-0.785,1.908) -- (-0.767,1.938) -- (-0.749,1.967) -- (-0.730,1.996) -- (-0.710,2.025) -- (-0.690,2.053) -- (-0.669,2.082) -- (-0.646,2.110) -- (-0.623,2.137) -- (-0.597,2.164) -- (-0.570,2.190) -- (-0.542,2.216) -- (-0.511,2.241) -- (-0.478,2.264) -- (-0.443,2.286) -- (-0.405,2.307) -- (-0.365,2.327) -- (-0.322,2.345) -- (-0.276,2.360) -- (-0.227,2.374) -- (-0.175,2.385) -- (-0.120,2.393) -- (-0.062,2.398) -- (-0.000,2.400);
\draw[knot] (-1.548,1.834) -- (-1.500,1.872) -- (-1.452,1.906) -- (-1.405,1.935) -- (-1.358,1.960) -- (-1.312,1.982) -- (-1.266,1.999) -- (-1.222,2.014) -- (-1.179,2.025) -- (-1.136,2.033) -- (-1.095,2.039) -- (-1.055,2.042) -- (-1.015,2.043) -- (-0.977,2.042) -- (-0.940,2.039) -- (-0.903,2.035) -- (-0.867,2.029);
\draw[knot] (-0.599,1.954) -- (-0.566,1.943) -- (-0.533,1.933) -- (-0.501,1.922) -- (-0.468,1.912) -- (-0.434,1.903) -- (-0.400,1.894) -- (-0.365,1.886) -- (-0.330,1.878) -- (-0.294,1.872) -- (-0.256,1.866) -- (-0.218,1.861) -- (-0.178,1.857) -- (-0.136,1.854) -- (-0.093,1.852) -- (-0.047,1.850) -- (-0.000,1.850);
\fill[knotdot] (-0.743,-1.066) circle (0.035);
\fill[knotdot] (-0.954,-0.884) circle (0.035);
\fill[knotdot] (-1.120,-0.660) circle (0.035);
\fill[knotdot] (-1.058,-1.518) circle (0.035);
\fill[knotdot] (-1.357,-1.257) circle (0.035);
\fill[knotdot] (-1.594,-0.939) circle (0.035);
\fill[knotdot] (-1.372,-1.969) circle (0.035);
\fill[knotdot] (-1.760,-1.631) circle (0.035);
\fill[knotdot] (-2.068,-1.219) circle (0.035);
\node[reglabel] (w0) at (0.000,0.000) {$5$};
\node[reglabel] (w1) at (-1.960,0.821) {$1$};
\node[reglabel] (w2) at (0.968,1.892) {$2$};
\node[reglabel] (w3) at (1.813,-1.109) {$3$};
\node[reglabel] (w4) at (-1.046,-1.850) {$4$};
\end{tikzpicture}

%% file: figures/Th-3q-tait-white-overlay.tex
\begin{tikzpicture}[tait/.style={line width=1.3pt, black, line cap=round},
                    vert/.style={circle, fill=white, draw=black, line width=1.0pt, inner sep=0pt, minimum size=0.48cm, font=\small},
                    knot/.style={line width=0.8pt, black!25, line cap=round, line join=round}, knotdot/.style={black!35}]
\draw[knot] (0.000,2.400) -- (0.151,2.395) -- (0.302,2.381) -- (0.451,2.357) -- (0.599,2.324) -- (0.744,2.282) -- (0.887,2.230) -- (1.025,2.170) -- (1.160,2.101) -- (1.290,2.024) -- (1.415,1.938) -- (1.534,1.845) -- (1.648,1.745) -- (1.754,1.638) -- (1.854,1.524) -- (1.946,1.404);
\draw[knot] (0.000,1.850) -- (0.064,1.848) -- (0.125,1.842) -- (0.182,1.832) -- (0.235,1.819) -- (0.286,1.804) -- (0.333,1.786) -- (0.376,1.766) -- (0.417,1.744) -- (0.454,1.720) -- (0.489,1.695) -- (0.521,1.669) -- (0.550,1.641) -- (0.577,1.613) -- (0.602,1.584) -- (0.625,1.555) -- (0.647,1.525) -- (0.666,1.495) -- (0.685,1.464) -- (0.702,1.433) -- (0.717,1.402) -- (0.733,1.371) -- (0.747,1.340) -- (0.761,1.309) -- (0.774,1.277) -- (0.787,1.246) -- (0.800,1.215) -- (0.814,1.184) -- (0.827,1.153) -- (0.841,1.122) -- (0.855,1.091) -- (0.870,1.059) -- (0.886,1.028) -- (0.903,0.997) -- (0.920,0.965) -- (0.939,0.933) -- (0.959,0.900) -- (0.981,0.867) -- (1.004,0.832) -- (1.028,0.797) -- (1.054,0.761);
\draw[knot] (0.000,1.300) -- (0.045,1.300) -- (0.088,1.301) -- (0.129,1.302) -- (0.169,1.305) -- (0.207,1.307) -- (0.244,1.311) -- (0.280,1.315) -- (0.315,1.320) -- (0.350,1.326) -- (0.384,1.332) -- (0.418,1.338) -- (0.451,1.345) -- (0.484,1.352) -- (0.517,1.360) -- (0.550,1.368) -- (0.583,1.375);
\draw[knot] (0.858,1.417) -- (0.895,1.417) -- (0.933,1.415) -- (0.971,1.412) -- (1.009,1.407) -- (1.049,1.399) -- (1.089,1.388) -- (1.129,1.375) -- (1.170,1.358) -- (1.212,1.338) -- (1.254,1.314) -- (1.296,1.287) -- (1.338,1.255) -- (1.380,1.219) -- (1.421,1.178) -- (1.461,1.133) -- (1.500,1.082);
\draw[knot] (1.054,0.761) -- (1.089,0.710) -- (1.121,0.659) -- (1.150,0.606) -- (1.177,0.551) -- (1.202,0.496) -- (1.224,0.439) -- (1.243,0.381) -- (1.259,0.323) -- (1.273,0.264) -- (1.284,0.204) -- (1.292,0.144) -- (1.297,0.083) -- (1.300,0.022) -- (1.299,-0.038) -- (1.296,-0.099);
\draw[knot] (1.500,1.082) -- (1.529,1.044) -- (1.556,1.008) -- (1.583,0.974) -- (1.610,0.942) -- (1.637,0.912) -- (1.663,0.884) -- (1.690,0.857) -- (1.716,0.831) -- (1.743,0.807) -- (1.770,0.784) -- (1.797,0.761) -- (1.825,0.740) -- (1.852,0.719) -- (1.880,0.698) -- (1.907,0.678) -- (1.935,0.658) -- (1.963,0.638) -- (1.990,0.617) -- (2.018,0.597) -- (2.046,0.576) -- (2.073,0.554) -- (2.100,0.532) -- (2.127,0.508) -- (2.153,0.484) -- (2.178,0.458) -- (2.203,0.431) -- (2.227,0.402) -- (2.251,0.371) -- (2.273,0.339) -- (2.294,0.304) -- (2.314,0.268) -- (2.332,0.228) -- (2.348,0.187) -- (2.362,0.142) -- (2.374,0.095) -- (2.384,0.046) -- (2.391,-0.007) -- (2.395,-0.063) -- (2.396,-0.121) -- (2.393,-0.183);
\draw[knot] (1.946,1.404) -- (1.981,1.353) -- (2.011,1.303) -- (2.037,1.253) -- (2.058,1.205) -- (2.076,1.157) -- (2.090,1.111) -- (2.101,1.065) -- (2.109,1.021) -- (2.114,0.978) -- (2.116,0.937) -- (2.116,0.896) -- (2.114,0.857) -- (2.110,0.819) -- (2.105,0.782) -- (2.097,0.745) -- (2.089,0.710);
\draw[knot] (1.994,0.448) -- (1.981,0.416) -- (1.968,0.385) -- (1.955,0.353) -- (1.943,0.321) -- (1.931,0.288) -- (1.919,0.255) -- (1.908,0.221) -- (1.898,0.186) -- (1.889,0.150) -- (1.880,0.113) -- (1.872,0.075) -- (1.865,0.036) -- (1.859,-0.005) -- (1.853,-0.049) -- (1.849,-0.094) -- (1.845,-0.141);
\draw[knot] (2.393,-0.183) -- (2.375,-0.344) -- (2.347,-0.503) -- (2.307,-0.661) -- (2.257,-0.815) -- (2.197,-0.966) -- (2.127,-1.112) -- (2.047,-1.253) -- (1.958,-1.388) -- (1.860,-1.517) -- (1.753,-1.639) -- (1.638,-1.754) -- (1.516,-1.860) -- (1.387,-1.959) -- (1.252,-2.048);
\draw[knot] (1.845,-0.141) -- (1.838,-0.204) -- (1.827,-0.264) -- (1.813,-0.321) -- (1.796,-0.373) -- (1.777,-0.422) -- (1.755,-0.468) -- (1.732,-0.510) -- (1.707,-0.548) -- (1.680,-0.584) -- (1.653,-0.617) -- (1.624,-0.646) -- (1.595,-0.674) -- (1.565,-0.699) -- (1.534,-0.721) -- (1.503,-0.742) -- (1.471,-0.761) -- (1.440,-0.778) -- (1.408,-0.794) -- (1.376,-0.809) -- (1.343,-0.822) -- (1.311,-0.835) -- (1.279,-0.847) -- (1.247,-0.858) -- (1.215,-0.869) -- (1.183,-0.880) -- (1.151,-0.891) -- (1.118,-0.901) -- (1.086,-0.913) -- (1.054,-0.924) -- (1.022,-0.936) -- (0.990,-0.948) -- (0.958,-0.962) -- (0.925,-0.976) -- (0.892,-0.991) -- (0.858,-1.007) -- (0.824,-1.025) -- (0.789,-1.044) -- (0.754,-1.064) -- (0.717,-1.086) -- (0.678,-1.109);
\draw[knot] (1.296,-0.099) -- (1.293,-0.144) -- (1.291,-0.187) -- (1.289,-0.228) -- (1.288,-0.268) -- (1.288,-0.306) -- (1.288,-0.343) -- (1.290,-0.380) -- (1.292,-0.415) -- (1.295,-0.450) -- (1.299,-0.484) -- (1.303,-0.518) -- (1.307,-0.552) -- (1.312,-0.586) -- (1.317,-0.619) -- (1.322,-0.653) -- (1.327,-0.686);
\draw[knot] (1.347,-0.964) -- (1.345,-1.000) -- (1.340,-1.038) -- (1.334,-1.075) -- (1.326,-1.113) -- (1.315,-1.152) -- (1.301,-1.191) -- (1.285,-1.231) -- (1.265,-1.270) -- (1.242,-1.310) -- (1.215,-1.350) -- (1.185,-1.390) -- (1.150,-1.430) -- (1.111,-1.469) -- (1.067,-1.507) -- (1.018,-1.543) -- (0.965,-1.578);
\draw[knot] (0.678,-1.109) -- (0.625,-1.140) -- (0.572,-1.168) -- (0.516,-1.193) -- (0.460,-1.216) -- (0.403,-1.236) -- (0.345,-1.254) -- (0.286,-1.268) -- (0.226,-1.280) -- (0.166,-1.289) -- (0.106,-1.296) -- (0.045,-1.299) -- (-0.016,-1.300) -- (-0.077,-1.298) -- (-0.137,-1.293) -- (-0.197,-1.285);
\draw[knot] (0.965,-1.578) -- (0.925,-1.604) -- (0.887,-1.628) -- (0.851,-1.653) -- (0.817,-1.677) -- (0.785,-1.701) -- (0.755,-1.726) -- (0.726,-1.750) -- (0.698,-1.775) -- (0.672,-1.800) -- (0.647,-1.825) -- (0.622,-1.850) -- (0.599,-1.876) -- (0.575,-1.901) -- (0.553,-1.927) -- (0.531,-1.953) -- (0.508,-1.979) -- (0.486,-2.005) -- (0.464,-2.032) -- (0.441,-2.058) -- (0.418,-2.083) -- (0.395,-2.109) -- (0.370,-2.134) -- (0.345,-2.159) -- (0.318,-2.183) -- (0.291,-2.207) -- (0.262,-2.230) -- (0.231,-2.252) -- (0.199,-2.273) -- (0.165,-2.292) -- (0.129,-2.310) -- (0.091,-2.327) -- (0.050,-2.342) -- (0.007,-2.355) -- (-0.038,-2.366) -- (-0.086,-2.375) -- (-0.136,-2.381) -- (-0.189,-2.384) -- (-0.245,-2.383) -- (-0.303,-2.380) -- (-0.364,-2.372);
\draw[knot] (1.252,-2.048) -- (1.198,-2.078) -- (1.146,-2.104) -- (1.094,-2.126) -- (1.044,-2.144) -- (0.996,-2.158) -- (0.948,-2.168) -- (0.902,-2.176) -- (0.858,-2.180) -- (0.815,-2.182) -- (0.773,-2.181) -- (0.733,-2.178) -- (0.694,-2.173) -- (0.656,-2.166) -- (0.619,-2.158) -- (0.583,-2.148) -- (0.549,-2.137);
\draw[knot] (0.295,-2.022) -- (0.264,-2.007) -- (0.234,-1.991) -- (0.203,-1.976) -- (0.172,-1.961) -- (0.140,-1.947) -- (0.108,-1.933) -- (0.075,-1.920) -- (0.041,-1.907) -- (0.006,-1.895) -- (-0.030,-1.883) -- (-0.067,-1.872) -- (-0.106,-1.862) -- (-0.147,-1.853) -- (-0.189,-1.844) -- (-0.234,-1.836) -- (-0.281,-1.829);
\draw[knot] (-0.197,-1.285) -- (-0.266,-1.272) -- (-0.334,-1.256) -- (-0.401,-1.237) -- (-0.467,-1.213) -- (-0.531,-1.187);
\draw[knot] (-1.224,-0.439) -- (-1.245,-0.373) -- (-1.264,-0.306) -- (-1.278,-0.237) -- (-1.289,-0.168) -- (-1.296,-0.099);
\draw[knot] (-0.281,-1.829) -- (-0.379,-1.811) -- (-0.475,-1.788) -- (-0.570,-1.760) -- (-0.664,-1.727) -- (-0.756,-1.689);
\draw[knot] (-1.741,-0.625) -- (-1.772,-0.531) -- (-1.798,-0.435) -- (-1.819,-0.338) -- (-1.834,-0.240) -- (-1.845,-0.141);
\draw[knot] (-0.364,-2.372) -- (-0.491,-2.349) -- (-0.617,-2.319) -- (-0.740,-2.283) -- (-0.862,-2.240) -- (-0.980,-2.191);
\draw[knot] (-2.259,-0.811) -- (-2.299,-0.688) -- (-2.333,-0.564) -- (-2.360,-0.438) -- (-2.380,-0.311) -- (-2.393,-0.183);
\draw[knot] (-2.393,-0.183) -- (-2.400,-0.032) -- (-2.397,0.120) -- (-2.385,0.271) -- (-2.363,0.420) -- (-2.332,0.568) -- (-2.291,0.714) -- (-2.242,0.857) -- (-2.183,0.997) -- (-2.116,1.132) -- (-2.041,1.263) -- (-1.957,1.390) -- (-1.865,1.510) -- (-1.767,1.625) -- (-1.661,1.733) -- (-1.548,1.834);
\draw[knot] (-1.845,-0.141) -- (-1.847,-0.077) -- (-1.846,-0.016) -- (-1.841,0.042) -- (-1.832,0.096) -- (-1.820,0.147) -- (-1.806,0.196) -- (-1.789,0.241) -- (-1.770,0.283) -- (-1.750,0.322) -- (-1.727,0.358) -- (-1.704,0.392) -- (-1.679,0.424) -- (-1.653,0.453) -- (-1.626,0.480) -- (-1.598,0.505) -- (-1.570,0.529) -- (-1.541,0.551) -- (-1.512,0.571) -- (-1.482,0.590) -- (-1.453,0.609) -- (-1.423,0.626) -- (-1.393,0.643) -- (-1.363,0.659) -- (-1.333,0.675) -- (-1.302,0.690) -- (-1.272,0.706) -- (-1.242,0.721) -- (-1.212,0.737) -- (-1.182,0.753) -- (-1.152,0.770) -- (-1.123,0.787) -- (-1.093,0.805) -- (-1.062,0.824) -- (-1.032,0.844) -- (-1.001,0.865) -- (-0.970,0.888) -- (-0.939,0.912) -- (-0.906,0.937) -- (-0.873,0.965) -- (-0.839,0.993);
\draw[knot] (-1.296,-0.099) -- (-1.300,-0.054) -- (-1.304,-0.011) -- (-1.308,0.030) -- (-1.314,0.069) -- (-1.319,0.107) -- (-1.326,0.144) -- (-1.333,0.179) -- (-1.340,0.214) -- (-1.348,0.248) -- (-1.357,0.282) -- (-1.366,0.315) -- (-1.376,0.347) -- (-1.385,0.380) -- (-1.395,0.412) -- (-1.405,0.444) -- (-1.415,0.477);
\draw[knot] (-1.478,0.748) -- (-1.481,0.785) -- (-1.482,0.822) -- (-1.482,0.860) -- (-1.479,0.899) -- (-1.474,0.939) -- (-1.467,0.980) -- (-1.457,1.021) -- (-1.443,1.064) -- (-1.426,1.107) -- (-1.406,1.150) -- (-1.382,1.194) -- (-1.353,1.238) -- (-1.321,1.283) -- (-1.283,1.327) -- (-1.241,1.371) -- (-1.193,1.414);
\draw[knot] (-0.839,0.993) -- (-0.788,1.034) -- (-0.735,1.072) -- (-0.680,1.108) -- (-0.624,1.140) -- (-0.566,1.170) -- (-0.507,1.197) -- (-0.446,1.221) -- (-0.385,1.242) -- (-0.322,1.259) -- (-0.259,1.274) -- (-0.195,1.285) -- (-0.130,1.293) -- (-0.065,1.298) -- (-0.000,1.300);
\draw[knot] (-1.193,1.414) -- (-1.157,1.445) -- (-1.124,1.475) -- (-1.092,1.504) -- (-1.062,1.534) -- (-1.034,1.562) -- (-1.008,1.591) -- (-0.983,1.620) -- (-0.960,1.648) -- (-0.937,1.677) -- (-0.916,1.705) -- (-0.896,1.734) -- (-0.876,1.763) -- (-0.857,1.792) -- (-0.839,1.821) -- (-0.821,1.850) -- (-0.803,1.879) -- (-0.785,1.908) -- (-0.767,1.938) -- (-0.749,1.967) -- (-0.730,1.996) -- (-0.710,2.025) -- (-0.690,2.053) -- (-0.669,2.082) -- (-0.646,2.110) -- (-0.623,2.137) -- (-0.597,2.164) -- (-0.570,2.190) -- (-0.542,2.216) -- (-0.511,2.241) -- (-0.478,2.264) -- (-0.443,2.286) -- (-0.405,2.307) -- (-0.365,2.327) -- (-0.322,2.345) -- (-0.276,2.360) -- (-0.227,2.374) -- (-0.175,2.385) -- (-0.120,2.393) -- (-0.062,2.398) -- (-0.000,2.400);
\draw[knot] (-1.548,1.834) -- (-1.500,1.872) -- (-1.452,1.906) -- (-1.405,1.935) -- (-1.358,1.960) -- (-1.312,1.982) -- (-1.266,1.999) -- (-1.222,2.014) -- (-1.179,2.025) -- (-1.136,2.033) -- (-1.095,2.039) -- (-1.055,2.042) -- (-1.015,2.043) -- (-0.977,2.042) -- (-0.940,2.039) -- (-0.903,2.035) -- (-0.867,2.029);
\draw[knot] (-0.599,1.954) -- (-0.566,1.943) -- (-0.533,1.933) -- (-0.501,1.922) -- (-0.468,1.912) -- (-0.434,1.903) -- (-0.400,1.894) -- (-0.365,1.886) -- (-0.330,1.878) -- (-0.294,1.872) -- (-0.256,1.866) -- (-0.218,1.861) -- (-0.178,1.857) -- (-0.136,1.854) -- (-0.093,1.852) -- (-0.047,1.850) -- (-0.000,1.850);
\fill[knotdot] (-0.743,-1.066) circle (0.035);
\fill[knotdot] (-0.954,-0.884) circle (0.035);
\fill[knotdot] (-1.120,-0.660) circle (0.035);
\fill[knotdot] (-1.058,-1.518) circle (0.035);
\fill[knotdot] (-1.357,-1.257) circle (0.035);
\fill[knotdot] (-1.594,-0.939) circle (0.035);
\fill[knotdot] (-1.372,-1.969) circle (0.035);
\fill[knotdot] (-1.760,-1.631) circle (0.035);
\fill[knotdot] (-2.068,-1.219) circle (0.035);
\draw[tait] (0.000,0.000) -- (0.008,0.015) -- (0.015,0.030) -- (0.023,0.044) -- (0.030,0.059) -- (0.038,0.074) -- (0.045,0.089) -- (0.053,0.103) -- (0.060,0.118) -- (0.068,0.133) -- (0.076,0.148) -- (0.083,0.162) -- (0.091,0.177) -- (0.098,0.192) -- (0.106,0.207) -- (0.113,0.221) -- (0.121,0.236) -- (0.128,0.251) -- (0.136,0.266) -- (0.143,0.280) -- (0.151,0.295) -- (0.159,0.310) -- (0.166,0.325) -- (0.174,0.339) -- (0.181,0.354) -- (0.189,0.369) -- (0.196,0.384) -- (0.204,0.398) -- (0.211,0.413) -- (0.219,0.428) -- (0.227,0.443) -- (0.234,0.458) -- (0.242,0.472) -- (0.249,0.487) -- (0.257,0.502) -- (0.264,0.517) -- (0.272,0.531) -- (0.279,0.546) -- (0.287,0.561) -- (0.295,0.576) -- (0.302,0.590) -- (0.310,0.605) -- (0.317,0.620) -- (0.325,0.635) -- (0.332,0.649) -- (0.340,0.664) -- (0.347,0.679) -- (0.355,0.694) -- (0.362,0.708) -- (0.370,0.723) -- (0.378,0.738) -- (0.385,0.753) -- (0.393,0.767) -- (0.400,0.782) -- (0.408,0.797) -- (0.415,0.812) -- (0.423,0.827) -- (0.430,0.841) -- (0.438,0.856) -- (0.446,0.871) -- (0.453,0.886) -- (0.461,0.900) -- (0.468,0.915) -- (0.476,0.930) -- (0.483,0.945) -- (0.491,0.959) -- (0.498,0.974) -- (0.506,0.989) -- (0.514,1.004) -- (0.521,1.018) -- (0.529,1.033) -- (0.536,1.048) -- (0.544,1.063) -- (0.551,1.077) -- (0.559,1.092) -- (0.566,1.107) -- (0.574,1.122) -- (0.582,1.136) -- (0.589,1.151) -- (0.597,1.166) -- (0.604,1.181) -- (0.612,1.195) -- (0.619,1.210) -- (0.627,1.225) -- (0.634,1.240) -- (0.642,1.255) -- (0.649,1.269) -- (0.657,1.284) -- (0.665,1.299) -- (0.672,1.314) -- (0.680,1.328) -- (0.687,1.343) -- (0.695,1.358) -- (0.702,1.373) -- (0.710,1.387) -- (0.717,1.402) -- (0.725,1.417) -- (0.733,1.432) -- (0.740,1.447) -- (0.748,1.461) -- (0.755,1.476) -- (0.763,1.491) -- (0.771,1.506) -- (0.778,1.521) -- (0.786,1.536) -- (0.793,1.550) -- (0.801,1.565) -- (0.809,1.580) -- (0.816,1.595) -- (0.824,1.610) -- (0.831,1.625) -- (0.839,1.639) -- (0.847,1.654) -- (0.854,1.669) -- (0.862,1.684) -- (0.869,1.699) -- (0.877,1.714) -- (0.884,1.729) -- (0.892,1.743) -- (0.900,1.758) -- (0.907,1.773) -- (0.915,1.788) -- (0.922,1.803) -- (0.930,1.818) -- (0.938,1.832) -- (0.945,1.847) -- (0.953,1.862) -- (0.960,1.877) -- (0.968,1.892);  
\draw[tait] (0.968,1.892) -- (1.061,1.841) -- (1.151,1.786) -- (1.239,1.727) -- (1.323,1.663) -- (1.404,1.595) -- (1.482,1.523) -- (1.556,1.447) -- (1.626,1.368) -- (1.692,1.286) -- (1.746,1.211) -- (1.790,1.146) -- (1.831,1.079) -- (1.869,1.011) -- (1.905,0.941) -- (1.939,0.870) -- (1.969,0.798) -- (1.998,0.725) -- (2.023,0.651) -- (2.046,0.576) -- (2.064,0.504) -- (2.081,0.431) -- (2.095,0.358) -- (2.106,0.284) -- (2.115,0.210) -- (2.121,0.136) -- (2.124,0.062) -- (2.125,-0.013) -- (2.123,-0.087) -- (2.119,-0.162) -- (2.109,-0.262) -- (2.094,-0.361) -- (2.075,-0.460) -- (2.051,-0.557) -- (2.022,-0.654) -- (1.989,-0.749) -- (1.951,-0.842) -- (1.909,-0.933) -- (1.863,-1.022) -- (1.813,-1.109);  
\draw[tait] (0.000,0.000) -- (0.014,-0.009) -- (0.028,-0.017) -- (0.042,-0.026) -- (0.057,-0.035) -- (0.071,-0.043) -- (0.085,-0.052) -- (0.099,-0.061) -- (0.113,-0.069) -- (0.127,-0.078) -- (0.141,-0.087) -- (0.156,-0.095) -- (0.170,-0.104) -- (0.184,-0.112) -- (0.198,-0.121) -- (0.212,-0.130) -- (0.226,-0.138) -- (0.240,-0.147) -- (0.255,-0.156) -- (0.269,-0.164) -- (0.283,-0.173) -- (0.297,-0.182) -- (0.311,-0.190) -- (0.325,-0.199) -- (0.339,-0.208) -- (0.354,-0.216) -- (0.368,-0.225) -- (0.382,-0.234) -- (0.396,-0.242) -- (0.410,-0.251) -- (0.424,-0.260) -- (0.438,-0.268) -- (0.453,-0.277) -- (0.467,-0.286) -- (0.481,-0.294) -- (0.495,-0.303) -- (0.509,-0.312) -- (0.523,-0.320) -- (0.537,-0.329) -- (0.552,-0.337) -- (0.566,-0.346) -- (0.580,-0.355) -- (0.594,-0.363) -- (0.608,-0.372) -- (0.622,-0.381) -- (0.636,-0.389) -- (0.650,-0.398) -- (0.665,-0.407) -- (0.679,-0.415) -- (0.693,-0.424) -- (0.707,-0.433) -- (0.721,-0.441) -- (0.735,-0.450) -- (0.749,-0.459) -- (0.764,-0.467) -- (0.778,-0.476) -- (0.792,-0.485) -- (0.806,-0.493) -- (0.820,-0.502) -- (0.834,-0.511) -- (0.848,-0.519) -- (0.863,-0.528) -- (0.877,-0.537) -- (0.891,-0.545) -- (0.905,-0.554) -- (0.919,-0.562) -- (0.933,-0.571) -- (0.947,-0.580) -- (0.962,-0.588) -- (0.976,-0.597) -- (0.990,-0.606) -- (1.004,-0.614) -- (1.018,-0.623) -- (1.032,-0.632) -- (1.046,-0.640) -- (1.061,-0.649) -- (1.075,-0.658) -- (1.089,-0.666) -- (1.103,-0.675) -- (1.117,-0.684) -- (1.131,-0.692) -- (1.145,-0.701) -- (1.160,-0.710) -- (1.174,-0.718) -- (1.188,-0.727) -- (1.202,-0.736) -- (1.216,-0.744) -- (1.230,-0.753) -- (1.244,-0.762) -- (1.259,-0.770) -- (1.273,-0.779) -- (1.287,-0.787) -- (1.301,-0.796) -- (1.315,-0.805) -- (1.329,-0.813) -- (1.343,-0.822) -- (1.358,-0.831) -- (1.372,-0.840) -- (1.386,-0.848) -- (1.400,-0.857) -- (1.414,-0.866) -- (1.429,-0.874) -- (1.443,-0.883) -- (1.457,-0.892) -- (1.471,-0.900) -- (1.486,-0.909) -- (1.500,-0.918) -- (1.514,-0.927) -- (1.528,-0.935) -- (1.542,-0.944) -- (1.557,-0.953) -- (1.571,-0.961) -- (1.585,-0.970) -- (1.599,-0.979) -- (1.614,-0.987) -- (1.628,-0.996) -- (1.642,-1.005) -- (1.656,-1.014) -- (1.670,-1.022) -- (1.685,-1.031) -- (1.699,-1.040) -- (1.713,-1.048) -- (1.727,-1.057) -- (1.741,-1.066) -- (1.756,-1.074) -- (1.770,-1.083) -- (1.784,-1.092) -- (1.798,-1.100) -- (1.813,-1.109);  
\draw[tait] (1.813,-1.109) -- (1.755,-1.198) -- (1.693,-1.284) -- (1.627,-1.367) -- (1.557,-1.446) -- (1.483,-1.522) -- (1.406,-1.593) -- (1.325,-1.661) -- (1.240,-1.725) -- (1.153,-1.785) -- (1.075,-1.833) -- (1.006,-1.872) -- (0.937,-1.907) -- (0.866,-1.941) -- (0.793,-1.971) -- (0.720,-1.999) -- (0.646,-2.024) -- (0.571,-2.047) -- (0.495,-2.067) -- (0.418,-2.083) -- (0.345,-2.097) -- (0.271,-2.108) -- (0.197,-2.116) -- (0.123,-2.121) -- (0.049,-2.124) -- (-0.026,-2.125) -- (-0.100,-2.123) -- (-0.175,-2.118) -- (-0.249,-2.110) -- (-0.323,-2.100) -- (-0.407,-2.086) -- (-0.491,-2.068) -- (-0.573,-2.046) -- (-0.655,-2.021) -- (-0.736,-1.993) -- (-0.816,-1.962) -- (-0.894,-1.928) -- (-0.971,-1.890) -- (-1.046,-1.850);  
\draw[tait] (0.000,0.000) -- (-0.015,0.006) -- (-0.031,0.013) -- (-0.046,0.019) -- (-0.061,0.026) -- (-0.076,0.032) -- (-0.092,0.038) -- (-0.107,0.045) -- (-0.122,0.051) -- (-0.138,0.058) -- (-0.153,0.064) -- (-0.168,0.070) -- (-0.183,0.077) -- (-0.199,0.083) -- (-0.214,0.090) -- (-0.229,0.096) -- (-0.245,0.103) -- (-0.260,0.109) -- (-0.275,0.115) -- (-0.291,0.122) -- (-0.306,0.128) -- (-0.321,0.135) -- (-0.336,0.141) -- (-0.352,0.147) -- (-0.367,0.154) -- (-0.382,0.160) -- (-0.398,0.167) -- (-0.413,0.173) -- (-0.428,0.179) -- (-0.443,0.186) -- (-0.459,0.192) -- (-0.474,0.199) -- (-0.489,0.205) -- (-0.505,0.211) -- (-0.520,0.218) -- (-0.535,0.224) -- (-0.550,0.231) -- (-0.566,0.237) -- (-0.581,0.243) -- (-0.596,0.250) -- (-0.612,0.256) -- (-0.627,0.263) -- (-0.642,0.269) -- (-0.658,0.275) -- (-0.673,0.282) -- (-0.688,0.288) -- (-0.703,0.295) -- (-0.719,0.301) -- (-0.734,0.308) -- (-0.749,0.314) -- (-0.765,0.320) -- (-0.780,0.327) -- (-0.795,0.333) -- (-0.810,0.340) -- (-0.826,0.346) -- (-0.841,0.352) -- (-0.856,0.359) -- (-0.872,0.365) -- (-0.887,0.372) -- (-0.902,0.378) -- (-0.917,0.384) -- (-0.933,0.391) -- (-0.948,0.397) -- (-0.963,0.404) -- (-0.979,0.410) -- (-0.994,0.416) -- (-1.009,0.423) -- (-1.025,0.429) -- (-1.040,0.436) -- (-1.055,0.442) -- (-1.070,0.448) -- (-1.086,0.455) -- (-1.101,0.461) -- (-1.116,0.468) -- (-1.132,0.474) -- (-1.147,0.480) -- (-1.162,0.487) -- (-1.177,0.493) -- (-1.193,0.500) -- (-1.208,0.506) -- (-1.223,0.513) -- (-1.239,0.519) -- (-1.254,0.525) -- (-1.269,0.532) -- (-1.284,0.538) -- (-1.300,0.545) -- (-1.315,0.551) -- (-1.330,0.557) -- (-1.346,0.564) -- (-1.361,0.570) -- (-1.376,0.577) -- (-1.391,0.583) -- (-1.407,0.589) -- (-1.422,0.596) -- (-1.437,0.602) -- (-1.453,0.609) -- (-1.468,0.615) -- (-1.483,0.621) -- (-1.499,0.628) -- (-1.514,0.634) -- (-1.530,0.641) -- (-1.545,0.647) -- (-1.560,0.654) -- (-1.576,0.660) -- (-1.591,0.667) -- (-1.606,0.673) -- (-1.622,0.679) -- (-1.637,0.686) -- (-1.652,0.692) -- (-1.668,0.699) -- (-1.683,0.705) -- (-1.699,0.712) -- (-1.714,0.718) -- (-1.729,0.725) -- (-1.745,0.731) -- (-1.760,0.737) -- (-1.775,0.744) -- (-1.791,0.750) -- (-1.806,0.757) -- (-1.822,0.763) -- (-1.837,0.770) -- (-1.852,0.776) -- (-1.868,0.783) -- (-1.883,0.789) -- (-1.898,0.795) -- (-1.914,0.802) -- (-1.929,0.808) -- (-1.945,0.815) -- (-1.960,0.821);  
\draw[tait] (-1.960,0.821) -- (-1.919,0.913) -- (-1.874,1.003) -- (-1.824,1.090) -- (-1.771,1.175) -- (-1.713,1.257) -- (-1.652,1.337) -- (-1.587,1.414) -- (-1.518,1.487) -- (-1.446,1.557) -- (-1.371,1.624) -- (-1.313,1.671) -- (-1.254,1.716) -- (-1.193,1.759) -- (-1.130,1.799) -- (-1.067,1.838) -- (-1.002,1.874) -- (-0.935,1.908) -- (-0.868,1.940) -- (-0.799,1.969) -- (-0.730,1.996) -- (-0.656,2.021) -- (-0.581,2.044) -- (-0.505,2.064) -- (-0.428,2.081) -- (-0.351,2.096) -- (-0.274,2.107) -- (-0.196,2.116) -- (-0.118,2.122) -- (-0.039,2.125) -- (0.053,2.124) -- (0.159,2.119) -- (0.264,2.109) -- (0.368,2.093) -- (0.472,2.072) -- (0.575,2.046) -- (0.676,2.015) -- (0.775,1.979) -- (0.873,1.938) -- (0.968,1.892);  
\draw[tait] (0.000,0.000) -- (-1.046,-1.850);  
\draw[tait] (-1.046,-1.850) -- (-1.110,-1.812) -- (-1.173,-1.772) -- (-1.234,-1.730) -- (-1.293,-1.686) -- (-1.351,-1.640);
\draw[tait] (-2.000,-0.718) -- (-2.016,-0.671) -- (-2.031,-0.625) -- (-2.045,-0.577) -- (-2.058,-0.530) -- (-2.070,-0.482) -- (-2.080,-0.434) -- (-2.090,-0.386) -- (-2.098,-0.338) -- (-2.105,-0.289) -- (-2.111,-0.240) -- (-2.116,-0.191) -- (-2.120,-0.139) -- (-2.123,-0.081) -- (-2.125,-0.023) -- (-2.125,0.034) -- (-2.123,0.092) -- (-2.120,0.150) -- (-2.115,0.207) -- (-2.108,0.265) -- (-2.100,0.322) -- (-2.091,0.379) -- (-2.080,0.436) -- (-2.067,0.492) -- (-2.053,0.548) -- (-2.037,0.604) -- (-2.020,0.659) -- (-2.002,0.713) -- (-1.982,0.768) -- (-1.960,0.821);
\fill (-1.597,-1.402) circle (0.035);
\fill (-1.738,-1.223) circle (0.035);
\fill (-1.859,-1.029) circle (0.035);
\node[vert] (v0) at (0.000,0.000) {$v_{5}$};
\node[vert] (v1) at (-1.960,0.821) {$v_{1}$};
\node[vert] (v2) at (0.968,1.892) {$v_{2}$};
\node[vert] (v3) at (1.813,-1.109) {$v_{3}$};
\node[vert] (v4) at (-1.046,-1.850) {$v_{4}$};
\end{tikzpicture}

%% file: figures/Lh.tex
\begin{tikzpicture}[line width=1.00pt, line cap=round, every node/.style={font=\small}]
\draw (0.000,0.000) -- (0.600,0.000);
\draw (0.000,1.800) -- (0.600,1.800);
\draw (0.000,0.600) .. controls (0.300,0.600) and (0.300,1.200) .. (0.600,1.200);
\draw[white, line width=4.0pt, line cap=butt] (0.149,1.138) .. controls (0.271,1.022) and (0.329,0.778) .. (0.451,0.662);
\draw (0.000,1.200) .. controls (0.300,1.200) and (0.300,0.600) .. (0.600,0.600);
\draw (0.600,0.000) -- (1.200,0.000);
\draw (0.600,1.800) -- (1.200,1.800);
\draw (0.600,0.600) .. controls (0.900,0.600) and (0.900,1.200) .. (1.200,1.200);
\draw[white, line width=4.0pt, line cap=butt] (0.749,1.138) .. controls (0.871,1.022) and (0.929,0.778) .. (1.051,0.662);
\draw (0.600,1.200) .. controls (0.900,1.200) and (0.900,0.600) .. (1.200,0.600);
\draw (1.200,0.000) -- (1.800,0.000);
\draw (1.200,0.600) -- (1.800,0.600);
\draw (1.200,1.200) .. controls (1.500,1.200) and (1.500,1.800) .. (1.800,1.800);
\draw[white, line width=4.0pt, line cap=butt] (1.349,1.738) .. controls (1.471,1.622) and (1.529,1.378) .. (1.651,1.262);
\draw (1.200,1.800) .. controls (1.500,1.800) and (1.500,1.200) .. (1.800,1.200);
\draw (1.800,0.000) -- (2.400,0.000);
\draw (1.800,0.600) -- (2.400,0.600);
\draw (1.800,1.200) .. controls (2.100,1.200) and (2.100,1.800) .. (2.400,1.800);
\draw[white, line width=4.0pt, line cap=butt] (1.949,1.738) .. controls (2.071,1.622) and (2.129,1.378) .. (2.251,1.262);
\draw (1.800,1.800) .. controls (2.100,1.800) and (2.100,1.200) .. (2.400,1.200);
\draw (2.400,0.000) -- (3.500,0.000);
\draw (2.400,0.600) -- (2.620,0.600);
\draw (3.280,0.600) -- (3.500,0.600);
\node at (2.950,0.600) {$\cdots$};
\draw (2.400,1.200) -- (2.620,1.200);
\draw (3.280,1.200) -- (3.500,1.200);
\node at (2.950,1.200) {$\cdots$};
\draw (2.400,1.800) -- (2.620,1.800);
\draw (3.280,1.800) -- (3.500,1.800);
\node at (2.950,1.800) {$\cdots$};
\draw (3.500,0.000) -- (4.100,0.000);
\draw (3.500,0.600) -- (4.100,0.600);
\draw (3.500,1.200) .. controls (3.800,1.200) and (3.800,1.800) .. (4.100,1.800);
\draw[white, line width=4.0pt, line cap=butt] (3.649,1.738) .. controls (3.771,1.622) and (3.829,1.378) .. (3.951,1.262);
\draw (3.500,1.800) .. controls (3.800,1.800) and (3.800,1.200) .. (4.100,1.200);
\draw (4.100,0.000) -- (4.700,0.000);
\draw (4.100,0.600) -- (4.700,0.600);
\draw (4.100,1.200) .. controls (4.400,1.200) and (4.400,1.800) .. (4.700,1.800);
\draw[white, line width=4.0pt, line cap=butt] (4.249,1.738) .. controls (4.371,1.622) and (4.429,1.378) .. (4.551,1.262);
\draw (4.100,1.800) .. controls (4.400,1.800) and (4.400,1.200) .. (4.700,1.200);
\draw[thin, decorate, decoration={brace, mirror, amplitude=4pt}] (0.050,-0.200) -- (4.650,-0.200) node[midway, below=5pt] {$2^{[h-1]}$};
\draw (4.700,0.000) -- (5.300,0.000);
\draw (4.700,1.800) -- (5.300,1.800);
\draw (4.700,0.600) .. controls (5.000,0.600) and (5.000,1.200) .. (5.300,1.200);
\draw[white, line width=4.0pt, line cap=butt] (4.849,1.138) .. controls (4.971,1.022) and (5.029,0.778) .. (5.151,0.662);
\draw (4.700,1.200) .. controls (5.000,1.200) and (5.000,0.600) .. (5.300,0.600);
\draw (5.300,0.000) -- (5.900,0.000);
\draw (5.300,1.800) -- (5.900,1.800);
\draw (5.300,0.600) .. controls (5.600,0.600) and (5.600,1.200) .. (5.900,1.200);
\draw[white, line width=4.0pt, line cap=butt] (5.449,1.138) .. controls (5.571,1.022) and (5.629,0.778) .. (5.751,0.662);
\draw (5.300,1.200) .. controls (5.600,1.200) and (5.600,0.600) .. (5.900,0.600);
\draw (5.900,0.000) -- (6.500,0.000);
\draw (5.900,1.800) -- (6.500,1.800);
\draw (5.900,0.600) .. controls (6.200,0.600) and (6.200,1.200) .. (6.500,1.200);
\draw[white, line width=4.0pt, line cap=butt] (6.049,1.138) .. controls (6.171,1.022) and (6.229,0.778) .. (6.351,0.662);
\draw (5.900,1.200) .. controls (6.200,1.200) and (6.200,0.600) .. (6.500,0.600);
\draw (6.500,0.000) -- (8.000,0.000);
\draw (6.500,0.600) -- (8.000,0.600);
\draw (6.500,1.200) -- (8.000,1.200);
\draw (6.500,1.800) -- (8.000,1.800);
\draw[fill=white] (6.650,1.000) rectangle (7.850,2.000);
\node at (7.250,1.500) {$2h+1$};
\draw (8.000,0.000) -- (8.600,0.000);
\draw (8.000,1.800) -- (8.600,1.800);
\draw (8.000,0.600) .. controls (8.300,0.600) and (8.300,1.200) .. (8.600,1.200);
\draw[white, line width=4.0pt, line cap=butt] (8.149,1.138) .. controls (8.271,1.022) and (8.329,0.778) .. (8.451,0.662);
\draw (8.000,1.200) .. controls (8.300,1.200) and (8.300,0.600) .. (8.600,0.600);
\draw (8.600,0.000) -- (9.200,0.000);
\draw (8.600,1.800) -- (9.200,1.800);
\draw (8.600,0.600) .. controls (8.900,0.600) and (8.900,1.200) .. (9.200,1.200);
\draw[white, line width=4.0pt, line cap=butt] (8.749,1.138) .. controls (8.871,1.022) and (8.929,0.778) .. (9.051,0.662);
\draw (8.600,1.200) .. controls (8.900,1.200) and (8.900,0.600) .. (9.200,0.600);
\draw (9.200,1.800) .. controls (9.600,1.800) and (9.600,1.200) .. (9.200,1.200);
\draw (9.200,0.600) .. controls (9.600,0.600) and (9.600,0.000) .. (9.200,0.000);
\draw (0.000,1.800) .. controls (-0.400,1.800) and (-0.400,1.200) .. (0.000,1.200);
\draw (0.000,0.600) .. controls (-0.400,0.600) and (-0.400,0.000) .. (0.000,0.000);
\end{tikzpicture}

%% file: figures/Kmn.tex
\begin{tikzpicture}[line width=1.00pt, line cap=round, every node/.style={font=\small}]
\draw (0.000,0.000) -- (1.500,0.000);
\draw (0.000,0.600) -- (1.500,0.600);
\draw (0.000,1.200) -- (1.500,1.200);
\draw (0.000,1.800) -- (1.500,1.800);
\draw[fill=white] (0.150,0.400) rectangle (1.350,1.400);
\node at (0.750,0.900) {$2n+1$};
\draw (1.500,0.000) -- (2.100,0.000);
\draw (1.500,0.600) -- (2.100,0.600);
\draw (1.500,1.200) .. controls (1.800,1.200) and (1.800,1.800) .. (2.100,1.800);
\draw[white, line width=4.0pt, line cap=butt] (1.649,1.738) .. controls (1.771,1.622) and (1.829,1.378) .. (1.951,1.262);
\draw (1.500,1.800) .. controls (1.800,1.800) and (1.800,1.200) .. (2.100,1.200);
\draw (2.100,0.000) -- (2.700,0.000);
\draw (2.100,0.600) -- (2.700,0.600);
\draw (2.100,1.200) .. controls (2.400,1.200) and (2.400,1.800) .. (2.700,1.800);
\draw[white, line width=4.0pt, line cap=butt] (2.249,1.738) .. controls (2.371,1.622) and (2.429,1.378) .. (2.551,1.262);
\draw (2.100,1.800) .. controls (2.400,1.800) and (2.400,1.200) .. (2.700,1.200);
\draw (2.700,0.000) -- (3.300,0.000);
\draw (2.700,1.800) -- (3.300,1.800);
\draw (2.700,0.600) .. controls (3.000,0.600) and (3.000,1.200) .. (3.300,1.200);
\draw[white, line width=4.0pt, line cap=butt] (2.849,1.138) .. controls (2.971,1.022) and (3.029,0.778) .. (3.151,0.662);
\draw (2.700,1.200) .. controls (3.000,1.200) and (3.000,0.600) .. (3.300,0.600);
\draw (3.300,0.000) -- (3.900,0.000);
\draw (3.300,1.800) -- (3.900,1.800);
\draw (3.300,0.600) .. controls (3.600,0.600) and (3.600,1.200) .. (3.900,1.200);
\draw[white, line width=4.0pt, line cap=butt] (3.449,1.138) .. controls (3.571,1.022) and (3.629,0.778) .. (3.751,0.662);
\draw (3.300,1.200) .. controls (3.600,1.200) and (3.600,0.600) .. (3.900,0.600);
\draw (3.900,0.000) -- (5.000,0.000);
\draw (3.900,0.600) -- (4.120,0.600);
\draw (4.780,0.600) -- (5.000,0.600);
\node at (4.450,0.600) {$\cdots$};
\draw (3.900,1.200) -- (4.120,1.200);
\draw (4.780,1.200) -- (5.000,1.200);
\node at (4.450,1.200) {$\cdots$};
\draw (3.900,1.800) -- (4.120,1.800);
\draw (4.780,1.800) -- (5.000,1.800);
\node at (4.450,1.800) {$\cdots$};
\draw (5.000,0.000) -- (5.600,0.000);
\draw (5.000,1.800) -- (5.600,1.800);
\draw (5.000,0.600) .. controls (5.300,0.600) and (5.300,1.200) .. (5.600,1.200);
\draw[white, line width=4.0pt, line cap=butt] (5.149,1.138) .. controls (5.271,1.022) and (5.329,0.778) .. (5.451,0.662);
\draw (5.000,1.200) .. controls (5.300,1.200) and (5.300,0.600) .. (5.600,0.600);
\draw (5.600,0.000) -- (6.200,0.000);
\draw (5.600,1.800) -- (6.200,1.800);
\draw (5.600,0.600) .. controls (5.900,0.600) and (5.900,1.200) .. (6.200,1.200);
\draw[white, line width=4.0pt, line cap=butt] (5.749,1.138) .. controls (5.871,1.022) and (5.929,0.778) .. (6.051,0.662);
\draw (5.600,1.200) .. controls (5.900,1.200) and (5.900,0.600) .. (6.200,0.600);
\draw[thin, decorate, decoration={brace, mirror, amplitude=4pt}] (1.550,-0.200) -- (6.150,-0.200) node[midway, below=5pt] {$2^{[2m-2]}$};
\draw (6.200,0.000) -- (6.800,0.000);
\draw (6.200,0.600) -- (6.800,0.600);
\draw (6.200,1.200) .. controls (6.500,1.200) and (6.500,1.800) .. (6.800,1.800);
\draw[white, line width=4.0pt, line cap=butt] (6.349,1.738) .. controls (6.471,1.622) and (6.529,1.378) .. (6.651,1.262);
\draw (6.200,1.800) .. controls (6.500,1.800) and (6.500,1.200) .. (6.800,1.200);
\draw (6.800,0.000) -- (7.400,0.000);
\draw (6.800,0.600) -- (7.400,0.600);
\draw (6.800,1.200) .. controls (7.100,1.200) and (7.100,1.800) .. (7.400,1.800);
\draw[white, line width=4.0pt, line cap=butt] (6.949,1.738) .. controls (7.071,1.622) and (7.129,1.378) .. (7.251,1.262);
\draw (6.800,1.800) .. controls (7.100,1.800) and (7.100,1.200) .. (7.400,1.200);
\draw (7.400,0.000) -- (8.000,0.000);
\draw (7.400,0.600) -- (8.000,0.600);
\draw (7.400,1.200) .. controls (7.700,1.200) and (7.700,1.800) .. (8.000,1.800);
\draw[white, line width=4.0pt, line cap=butt] (7.549,1.738) .. controls (7.671,1.622) and (7.729,1.378) .. (7.851,1.262);
\draw (7.400,1.800) .. controls (7.700,1.800) and (7.700,1.200) .. (8.000,1.200);
\draw (8.000,0.000) -- (8.600,0.000);
\draw (8.000,0.600) -- (8.600,0.600);
\draw (8.000,1.200) .. controls (8.300,1.200) and (8.300,1.800) .. (8.600,1.800);
\draw[white, line width=4.0pt, line cap=butt] (8.149,1.738) .. controls (8.271,1.622) and (8.329,1.378) .. (8.451,1.262);
\draw (8.000,1.800) .. controls (8.300,1.800) and (8.300,1.200) .. (8.600,1.200);
\draw (8.600,0.000) -- (9.200,0.000);
\draw (8.600,1.800) -- (9.200,1.800);
\draw (8.600,0.600) .. controls (8.900,0.600) and (8.900,1.200) .. (9.200,1.200);
\draw[white, line width=4.0pt, line cap=butt] (8.749,1.138) .. controls (8.871,1.022) and (8.929,0.778) .. (9.051,0.662);
\draw (8.600,1.200) .. controls (8.900,1.200) and (8.900,0.600) .. (9.200,0.600);
\draw (9.200,0.000) -- (9.800,0.000);
\draw (9.200,1.800) -- (9.800,1.800);
\draw (9.200,0.600) .. controls (9.500,0.600) and (9.500,1.200) .. (9.800,1.200);
\draw[white, line width=4.0pt, line cap=butt] (9.349,1.138) .. controls (9.471,1.022) and (9.529,0.778) .. (9.651,0.662);
\draw (9.200,1.200) .. controls (9.500,1.200) and (9.500,0.600) .. (9.800,0.600);
\draw (9.800,0.000) -- (10.400,0.000);
\draw (9.800,1.800) -- (10.400,1.800);
\draw (9.800,0.600) .. controls (10.100,0.600) and (10.100,1.200) .. (10.400,1.200);
\draw[white, line width=4.0pt, line cap=butt] (9.949,1.138) .. controls (10.071,1.022) and (10.129,0.778) .. (10.251,0.662);
\draw (9.800,1.200) .. controls (10.100,1.200) and (10.100,0.600) .. (10.400,0.600);
\draw (10.400,0.000) -- (11.000,0.000);
\draw (10.400,1.800) -- (11.000,1.800);
\draw (10.400,0.600) .. controls (10.700,0.600) and (10.700,1.200) .. (11.000,1.200);
\draw[white, line width=4.0pt, line cap=butt] (10.549,1.138) .. controls (10.671,1.022) and (10.729,0.778) .. (10.851,0.662);
\draw (10.400,1.200) .. controls (10.700,1.200) and (10.700,0.600) .. (11.000,0.600);
\draw (11.000,0.000) -- (11.600,0.000);
\draw (11.000,1.800) -- (11.600,1.800);
\draw (11.000,0.600) .. controls (11.300,0.600) and (11.300,1.200) .. (11.600,1.200);
\draw[white, line width=4.0pt, line cap=butt] (11.149,1.138) .. controls (11.271,1.022) and (11.329,0.778) .. (11.451,0.662);
\draw (11.000,1.200) .. controls (11.300,1.200) and (11.300,0.600) .. (11.600,0.600);
\draw (11.600,0.000) -- (12.200,0.000);
\draw (11.600,0.600) -- (12.200,0.600);
\draw (11.600,1.200) .. controls (11.900,1.200) and (11.900,1.800) .. (12.200,1.800);
\draw[white, line width=4.0pt, line cap=butt] (11.749,1.738) .. controls (11.871,1.622) and (11.929,1.378) .. (12.051,1.262);
\draw (11.600,1.800) .. controls (11.900,1.800) and (11.900,1.200) .. (12.200,1.200);
\draw (12.200,0.000) -- (12.800,0.000);
\draw (12.200,0.600) -- (12.800,0.600);
\draw (12.200,1.200) .. controls (12.500,1.200) and (12.500,1.800) .. (12.800,1.800);
\draw[white, line width=4.0pt, line cap=butt] (12.349,1.738) .. controls (12.471,1.622) and (12.529,1.378) .. (12.651,1.262);
\draw (12.200,1.800) .. controls (12.500,1.800) and (12.500,1.200) .. (12.800,1.200);
\draw (12.800,0.000) -- (13.400,0.000);
\draw (12.800,0.600) -- (13.400,0.600);
\draw (12.800,1.200) .. controls (13.100,1.200) and (13.100,1.800) .. (13.400,1.800);
\draw[white, line width=4.0pt, line cap=butt] (12.949,1.738) .. controls (13.071,1.622) and (13.129,1.378) .. (13.251,1.262);
\draw (12.800,1.800) .. controls (13.100,1.800) and (13.100,1.200) .. (13.400,1.200);
\draw (13.400,0.000) -- (14.000,0.000);
\draw (13.400,0.600) -- (14.000,0.600);
\draw (13.400,1.200) .. controls (13.700,1.200) and (13.700,1.800) .. (14.000,1.800);
\draw[white, line width=4.0pt, line cap=butt] (13.549,1.738) .. controls (13.671,1.622) and (13.729,1.378) .. (13.851,1.262);
\draw (13.400,1.800) .. controls (13.700,1.800) and (13.700,1.200) .. (14.000,1.200);
\draw (14.000,1.200) .. controls (14.400,1.200) and (14.400,0.600) .. (14.000,0.600);
\draw (14.000,1.800) .. controls (15.200,1.800) and (15.200,0.000) .. (14.000,0.000);
\draw (0.000,1.800) .. controls (-0.400,1.800) and (-0.400,1.200) .. (0.000,1.200);
\draw (0.000,0.600) .. controls (-0.400,0.600) and (-0.400,0.000) .. (0.000,0.000);
\end{tikzpicture}

%% file: figures/K354.tex
\begin{tikzpicture}[line width=1.00pt, line cap=round, every node/.style={font=\small}]
\draw (0.000,0.000) -- (0.600,0.000);
\draw (0.000,1.800) -- (0.600,1.800);
\draw (0.000,0.600) .. controls (0.300,0.600) and (0.300,1.200) .. (0.600,1.200);
\draw[white, line width=4.0pt, line cap=butt] (0.149,1.138) .. controls (0.271,1.022) and (0.329,0.778) .. (0.451,0.662);
\draw (0.000,1.200) .. controls (0.300,1.200) and (0.300,0.600) .. (0.600,0.600);
\draw (0.600,0.000) -- (1.200,0.000);
\draw (0.600,1.800) -- (1.200,1.800);
\draw (0.600,0.600) .. controls (0.900,0.600) and (0.900,1.200) .. (1.200,1.200);
\draw[white, line width=4.0pt, line cap=butt] (0.749,1.138) .. controls (0.871,1.022) and (0.929,0.778) .. (1.051,0.662);
\draw (0.600,1.200) .. controls (0.900,1.200) and (0.900,0.600) .. (1.200,0.600);
\draw (1.200,0.000) -- (1.800,0.000);
\draw (1.200,1.800) -- (1.800,1.800);
\draw (1.200,0.600) .. controls (1.500,0.600) and (1.500,1.200) .. (1.800,1.200);
\draw[white, line width=4.0pt, line cap=butt] (1.349,1.138) .. controls (1.471,1.022) and (1.529,0.778) .. (1.651,0.662);
\draw (1.200,1.200) .. controls (1.500,1.200) and (1.500,0.600) .. (1.800,0.600);
\draw (1.800,0.000) -- (2.400,0.000);
\draw (1.800,0.600) -- (2.400,0.600);
\draw (1.800,1.200) .. controls (2.100,1.200) and (2.100,1.800) .. (2.400,1.800);
\draw[white, line width=4.0pt, line cap=butt] (1.949,1.738) .. controls (2.071,1.622) and (2.129,1.378) .. (2.251,1.262);
\draw (1.800,1.800) .. controls (2.100,1.800) and (2.100,1.200) .. (2.400,1.200);
\draw (2.400,0.000) -- (3.000,0.000);
\draw (2.400,0.600) -- (3.000,0.600);
\draw (2.400,1.200) .. controls (2.700,1.200) and (2.700,1.800) .. (3.000,1.800);
\draw[white, line width=4.0pt, line cap=butt] (2.549,1.738) .. controls (2.671,1.622) and (2.729,1.378) .. (2.851,1.262);
\draw (2.400,1.800) .. controls (2.700,1.800) and (2.700,1.200) .. (3.000,1.200);
\draw (3.000,0.000) -- (3.600,0.000);
\draw (3.000,0.600) -- (3.600,0.600);
\draw (3.000,1.200) .. controls (3.300,1.200) and (3.300,1.800) .. (3.600,1.800);
\draw[white, line width=4.0pt, line cap=butt] (3.149,1.738) .. controls (3.271,1.622) and (3.329,1.378) .. (3.451,1.262);
\draw (3.000,1.800) .. controls (3.300,1.800) and (3.300,1.200) .. (3.600,1.200);
\draw (3.600,0.000) -- (4.200,0.000);
\draw (3.600,0.600) -- (4.200,0.600);
\draw (3.600,1.200) .. controls (3.900,1.200) and (3.900,1.800) .. (4.200,1.800);
\draw[white, line width=4.0pt, line cap=butt] (3.749,1.738) .. controls (3.871,1.622) and (3.929,1.378) .. (4.051,1.262);
\draw (3.600,1.800) .. controls (3.900,1.800) and (3.900,1.200) .. (4.200,1.200);
\draw (4.200,0.000) -- (4.800,0.000);
\draw (4.200,0.600) -- (4.800,0.600);
\draw (4.200,1.200) .. controls (4.500,1.200) and (4.500,1.800) .. (4.800,1.800);
\draw[white, line width=4.0pt, line cap=butt] (4.349,1.738) .. controls (4.471,1.622) and (4.529,1.378) .. (4.651,1.262);
\draw (4.200,1.800) .. controls (4.500,1.800) and (4.500,1.200) .. (4.800,1.200);
\draw (4.800,0.000) -- (5.400,0.000);
\draw (4.800,1.800) -- (5.400,1.800);
\draw (4.800,0.600) .. controls (5.100,0.600) and (5.100,1.200) .. (5.400,1.200);
\draw[white, line width=4.0pt, line cap=butt] (4.949,1.138) .. controls (5.071,1.022) and (5.129,0.778) .. (5.251,0.662);
\draw (4.800,1.200) .. controls (5.100,1.200) and (5.100,0.600) .. (5.400,0.600);
\draw (5.400,0.000) -- (6.000,0.000);
\draw (5.400,1.800) -- (6.000,1.800);
\draw (5.400,0.600) .. controls (5.700,0.600) and (5.700,1.200) .. (6.000,1.200);
\draw[white, line width=4.0pt, line cap=butt] (5.549,1.138) .. controls (5.671,1.022) and (5.729,0.778) .. (5.851,0.662);
\draw (5.400,1.200) .. controls (5.700,1.200) and (5.700,0.600) .. (6.000,0.600);
\draw (6.000,0.000) -- (6.600,0.000);
\draw (6.000,1.800) -- (6.600,1.800);
\draw (6.000,0.600) .. controls (6.300,0.600) and (6.300,1.200) .. (6.600,1.200);
\draw[white, line width=4.0pt, line cap=butt] (6.149,1.138) .. controls (6.271,1.022) and (6.329,0.778) .. (6.451,0.662);
\draw (6.000,1.200) .. controls (6.300,1.200) and (6.300,0.600) .. (6.600,0.600);
\draw (6.600,0.000) -- (7.200,0.000);
\draw (6.600,1.800) -- (7.200,1.800);
\draw (6.600,0.600) .. controls (6.900,0.600) and (6.900,1.200) .. (7.200,1.200);
\draw[white, line width=4.0pt, line cap=butt] (6.749,1.138) .. controls (6.871,1.022) and (6.929,0.778) .. (7.051,0.662);
\draw (6.600,1.200) .. controls (6.900,1.200) and (6.900,0.600) .. (7.200,0.600);
\draw (7.200,1.800) .. controls (7.600,1.800) and (7.600,1.200) .. (7.200,1.200);
\draw (7.200,0.600) .. controls (7.600,0.600) and (7.600,0.000) .. (7.200,0.000);
\draw (0.000,1.800) .. controls (-0.400,1.800) and (-0.400,1.200) .. (0.000,1.200);
\draw (0.000,0.600) .. controls (-0.400,0.600) and (-0.400,0.000) .. (0.000,0.000);
\end{tikzpicture}

%% file: figures/Jq-nonalt-highlight.tex
\begin{tikzpicture}[line width=1.00pt, line cap=round, every node/.style={font=\small}]
\draw (0.000,-1.800) -- (0.600,-1.800);
\draw (0.000,0.000) -- (0.600,0.000);
\draw (0.000,-1.200) .. controls (0.300,-1.200) and (0.300,-0.600) .. (0.600,-0.600);
\draw[white, line width=4.0pt, line cap=butt] (0.149,-0.662) .. controls (0.271,-0.778) and (0.329,-1.022) .. (0.451,-1.138);
\draw (0.000,-0.600) .. controls (0.300,-0.600) and (0.300,-1.200) .. (0.600,-1.200);
\draw (0.600,-1.800) -- (1.200,-1.800);
\draw (0.600,-1.200) -- (1.200,-1.200);
\draw (0.600,0.000) .. controls (0.900,0.000) and (0.900,-0.600) .. (1.200,-0.600);
\draw[white, line width=4.0pt, line cap=butt] (0.749,-0.538) .. controls (0.871,-0.422) and (0.929,-0.178) .. (1.051,-0.062);
\draw (0.600,-0.600) .. controls (0.900,-0.600) and (0.900,0.000) .. (1.200,0.000);
\draw (1.200,-1.800) -- (2.700,-1.800);
\draw (1.200,-1.200) -- (2.700,-1.200);
\draw (1.200,-0.600) -- (2.700,-0.600);
\draw (1.200,0.000) -- (2.700,0.000);
\draw[fill=white] (1.350,-1.400) rectangle (2.550,-0.400);
\node at (1.950,-0.900) {$q+1$};
\draw (2.700,0.000) .. controls (3.100,0.000) and (3.100,-0.600) .. (2.700,-0.600);
\draw (2.700,-1.200) .. controls (3.100,-1.200) and (3.100,-1.800) .. (2.700,-1.800);
\draw[rounded corners=29.88pt] (0.000,0.000) -- (-2.150,0.000) -- (-2.150,-1.100);
\draw[rounded corners=12.80pt] (0.000,-0.600) -- (-1.550,-0.600) -- (-1.550,-1.100);
\draw[rounded corners=28.45pt] (0.000,-1.200) -- (-1.000,-1.200) -- (-1.000,-2.200);
\draw[rounded corners=11.38pt] (0.000,-1.800) -- (-0.400,-1.800) -- (-0.400,-2.200);
\draw (0.000,-4.400) -- (0.600,-4.400);
\draw (0.000,-2.600) -- (0.600,-2.600);
\draw (0.000,-3.800) .. controls (0.300,-3.800) and (0.300,-3.200) .. (0.600,-3.200);
\draw[white, line width=4.0pt, line cap=butt] (0.149,-3.262) .. controls (0.271,-3.378) and (0.329,-3.622) .. (0.451,-3.738);
\draw (0.000,-3.200) .. controls (0.300,-3.200) and (0.300,-3.800) .. (0.600,-3.800);
\draw (0.600,-4.400) -- (1.200,-4.400);
\draw (0.600,-2.600) -- (1.200,-2.600);
\draw (0.600,-3.800) .. controls (0.900,-3.800) and (0.900,-3.200) .. (1.200,-3.200);
\draw[white, line width=4.0pt, line cap=butt] (0.749,-3.262) .. controls (0.871,-3.378) and (0.929,-3.622) .. (1.051,-3.738);
\draw (0.600,-3.200) .. controls (0.900,-3.200) and (0.900,-3.800) .. (1.200,-3.800);
\draw (1.200,-4.400) -- (1.800,-4.400);
\draw (1.200,-3.800) -- (1.800,-3.800);
\draw (1.200,-2.600) .. controls (1.500,-2.600) and (1.500,-3.200) .. (1.800,-3.200);
\draw[white, line width=4.0pt, line cap=butt] (1.349,-3.138) .. controls (1.471,-3.022) and (1.529,-2.778) .. (1.651,-2.662);
\draw (1.200,-3.200) .. controls (1.500,-3.200) and (1.500,-2.600) .. (1.800,-2.600);
\draw (1.800,-4.400) -- (2.400,-4.400);
\draw (1.800,-2.600) -- (2.400,-2.600);
\draw (1.800,-3.800) .. controls (2.100,-3.800) and (2.100,-3.200) .. (2.400,-3.200);
\draw[white, line width=4.0pt, line cap=butt] (1.949,-3.262) .. controls (2.071,-3.378) and (2.129,-3.622) .. (2.251,-3.738);
\draw (1.800,-3.200) .. controls (2.100,-3.200) and (2.100,-3.800) .. (2.400,-3.800);
\draw (2.400,-4.400) -- (3.000,-4.400);
\draw (2.400,-2.600) -- (3.000,-2.600);
\draw (2.400,-3.800) .. controls (2.700,-3.800) and (2.700,-3.200) .. (3.000,-3.200);
\draw[white, line width=4.0pt, line cap=butt] (2.549,-3.262) .. controls (2.671,-3.378) and (2.729,-3.622) .. (2.851,-3.738);
\draw (2.400,-3.200) .. controls (2.700,-3.200) and (2.700,-3.800) .. (3.000,-3.800);
\draw (3.000,-2.600) .. controls (3.400,-2.600) and (3.400,-3.200) .. (3.000,-3.200);
\draw (3.000,-3.800) .. controls (3.400,-3.800) and (3.400,-4.400) .. (3.000,-4.400);
\draw[rounded corners=11.38pt] (0.000,-2.600) -- (-0.400,-2.600) -- (-0.400,-2.200);
\draw[rounded corners=28.45pt] (0.000,-3.200) -- (-1.000,-3.200) -- (-1.000,-2.200);
\draw[rounded corners=28.45pt] (0.000,-3.800) -- (-1.000,-3.800) -- (-1.000,-4.800);
\draw[rounded corners=11.38pt] (0.000,-4.400) -- (-0.400,-4.400) -- (-0.400,-4.800);
\draw (0.000,-7.000) -- (0.600,-7.000);
\draw (0.000,-5.200) -- (0.600,-5.200);
\draw (0.000,-5.800) .. controls (0.300,-5.800) and (0.300,-6.400) .. (0.600,-6.400);
\draw[white, line width=4.0pt, line cap=butt] (0.149,-6.338) .. controls (0.271,-6.222) and (0.329,-5.978) .. (0.451,-5.862);
\draw (0.000,-6.400) .. controls (0.300,-6.400) and (0.300,-5.800) .. (0.600,-5.800);
\draw (0.600,-7.000) -- (1.200,-7.000);
\draw (0.600,-5.200) -- (1.200,-5.200);
\draw (0.600,-5.800) .. controls (0.900,-5.800) and (0.900,-6.400) .. (1.200,-6.400);
\draw[white, line width=4.0pt, line cap=butt] (0.749,-6.338) .. controls (0.871,-6.222) and (0.929,-5.978) .. (1.051,-5.862);
\draw (0.600,-6.400) .. controls (0.900,-6.400) and (0.900,-5.800) .. (1.200,-5.800);
\draw (1.200,-7.000) -- (1.800,-7.000);
\draw (1.200,-5.200) -- (1.800,-5.200);
\draw (1.200,-5.800) .. controls (1.500,-5.800) and (1.500,-6.400) .. (1.800,-6.400);
\draw[white, line width=4.0pt, line cap=butt] (1.349,-6.338) .. controls (1.471,-6.222) and (1.529,-5.978) .. (1.651,-5.862);
\draw (1.200,-6.400) .. controls (1.500,-6.400) and (1.500,-5.800) .. (1.800,-5.800);
\draw (1.800,-5.200) .. controls (2.200,-5.200) and (2.200,-5.800) .. (1.800,-5.800);
\draw (1.800,-6.400) .. controls (2.200,-6.400) and (2.200,-7.000) .. (1.800,-7.000);
\draw[rounded corners=11.38pt] (0.000,-5.200) -- (-0.400,-5.200) -- (-0.400,-4.800);
\draw[rounded corners=28.45pt] (0.000,-5.800) -- (-1.000,-5.800) -- (-1.000,-4.800);
\draw[rounded corners=29.88pt] (0.000,-7.000) -- (-2.150,-7.000) -- (-2.150,-5.900);
\draw[rounded corners=12.80pt] (0.000,-6.400) -- (-1.550,-6.400) -- (-1.550,-5.900);
\draw (-1.550,-1.100) -- (-1.550,-3.500);
\draw (-1.550,-3.500) -- (-1.550,-5.900);
\draw (-2.150,-1.100) -- (-2.150,-3.500);
\draw (-2.150,-3.500) -- (-2.150,-5.900);
\draw[red!75!black, line width=1.6pt] (0.000,-0.600) .. controls (0.300,-0.600) and (0.300,-1.200) .. (0.600,-1.200);
\draw[red!75!black, line width=1.6pt, rounded corners=12.80pt] (0.000,-0.600) -- (-1.550,-0.600) -- (-1.550,-1.100);
\draw[red!75!black, line width=1.6pt] (-1.550,-1.100) -- (-1.550,-3.500);
\draw[red!75!black, line width=1.6pt] (-1.550,-3.500) -- (-1.550,-5.900);
\draw[red!75!black, line width=1.6pt, rounded corners=12.80pt] (0.000,-6.400) -- (-1.550,-6.400) -- (-1.550,-5.900);
\draw[red!75!black, line width=1.6pt] (0.000,-6.400) .. controls (0.300,-6.400) and (0.300,-5.800) .. (0.600,-5.800);
\draw[red!75!black, line width=0.8pt] (0.300,-0.900) circle (0.27);
\draw[red!75!black, line width=0.8pt] (0.300,-6.100) circle (0.27);
\end{tikzpicture}

%% file: figures/sq-plumbing.tex
\begin{tikzpicture}[
  vtx/.style={circle,fill,inner sep=1.8pt},
  wt/.style={above=2pt,font=\small},
  lab/.style={below=3pt,font=\small}]
  \node[vtx,label={[wt]$2$},label={[lab]$v_1$}]       (v1) at (0,0) {};
  \node                                              (dots) at (1.1,0) {$\cdots$};
  \node[vtx,label={[wt]$2$},label={[lab]$v_q$}]       (vq) at (2.2,0) {};
  \node[vtx,label={[wt]$3$},label={[lab]$v_{q+1}$}]   (a1) at (3.6,0) {};
  \node[vtx,label={[wt]$2$},label={[lab,xshift=-12pt,yshift=0pt]$v_{q+2}$}] (c)  at (5,0) {};
  \node[vtx,label={[wt]$2$},label={[lab]$v_{q+3}$}]   (b1) at (6.4,0) {};
  \node[vtx,label={[wt]$3$},label={[lab]$v_{q+4}$}]   (b2) at (7.8,0) {};
  \node[vtx,label={[wt]$2$},label={[lab]$v_{q+5}$}]   (b3) at (9.2,0) {};
  \node[vtx,label={[left=0pt,xshift=2pt,yshift=2pt,font=\small]$v_{q+6}$},label={[lab,yshift=-2pt]$3$}] (d) at (5,-1.2) {};
  \draw (v1) -- (dots) -- (vq) -- (a1) -- (c) -- (b1) -- (b2) -- (b3);
  \draw (c) -- (d);
\end{tikzpicture}

%% file: figures/cc3-1-13a4237.tex
\begin{tikzpicture}[line width=1.0pt, line cap=round, line join=round]
\draw (4.014,-5.640) -- (3.976,-5.590) -- (3.947,-5.537) -- (3.927,-5.483) -- (3.918,-5.428) -- (3.918,-5.372) -- (3.927,-5.317) -- (3.947,-5.263) -- (3.976,-5.210) -- (4.014,-5.160) -- (4.061,-5.113) -- (4.116,-5.069) -- (4.180,-5.029) -- (4.251,-4.993) -- (4.329,-4.961) -- (4.413,-4.933) -- (4.502,-4.908) -- (4.597,-4.887) -- (4.696,-4.869) -- (4.799,-4.854) -- (4.906,-4.842) -- (5.015,-4.832) -- (5.126,-4.825) -- (5.239,-4.818) -- (5.354,-4.814) -- (5.470,-4.810) -- (5.587,-4.807) -- (5.705,-4.805) -- (5.824,-4.804) -- (5.942,-4.802) -- (6.062,-4.802) -- (6.181,-4.801) -- (6.301,-4.801) -- (6.420,-4.800) -- (6.540,-4.800) -- (6.660,-4.800) -- (6.780,-4.800) -- (6.899,-4.799) -- (7.019,-4.799) -- (7.138,-4.798) -- (7.258,-4.798) -- (7.376,-4.796) -- (7.495,-4.795) -- (7.613,-4.793) -- (7.730,-4.790) -- (7.846,-4.786) -- (7.961,-4.781) -- (8.075,-4.775) -- (8.186,-4.767) -- (8.296,-4.756) -- (8.403,-4.743) -- (8.507,-4.727) -- (8.608,-4.708) -- (8.705,-4.685) -- (8.797,-4.658) -- (8.885,-4.625) -- (8.967,-4.588) -- (9.045,-4.546) -- (9.116,-4.498) -- (9.181,-4.445) -- (9.240,-4.385) -- (9.293,-4.321) -- (9.340,-4.250) -- (9.381,-4.175) -- (9.415,-4.095) -- (9.443,-4.011) -- (9.466,-3.924) -- (9.483,-3.833) -- (9.494,-3.741) -- (9.499,-3.647) -- (9.499,-3.553) -- (9.494,-3.459) -- (9.483,-3.367) -- (9.466,-3.276) -- (9.443,-3.189) -- (9.415,-3.105) -- (9.381,-3.025) -- (9.340,-2.950) -- (9.293,-2.879) -- (9.240,-2.815) -- (9.181,-2.755) -- (9.116,-2.702) -- (9.045,-2.654) -- (8.967,-2.612) -- (8.885,-2.575) -- (8.797,-2.542) -- (8.705,-2.515) -- (8.608,-2.492) -- (8.507,-2.473) -- (8.403,-2.457) -- (8.296,-2.444) -- (8.186,-2.433) -- (8.075,-2.425) -- (7.961,-2.419) -- (7.846,-2.414) -- (7.730,-2.410) -- (7.613,-2.407) -- (7.495,-2.405) -- (7.376,-2.404) -- (7.258,-2.402) -- (7.138,-2.402) -- (7.019,-2.401) -- (6.899,-2.401) -- (6.780,-2.400) -- (6.660,-2.400) -- (6.540,-2.400) -- (6.420,-2.400) -- (6.300,-2.400) -- (6.180,-2.400) -- (6.060,-2.400) -- (5.940,-2.400) -- (5.820,-2.400) -- (5.700,-2.400) -- (5.580,-2.400) -- (5.460,-2.400) -- (5.340,-2.400) -- (5.220,-2.400) -- (5.101,-2.401) -- (4.981,-2.401) -- (4.862,-2.402) -- (4.742,-2.402) -- (4.624,-2.404) -- (4.505,-2.405) -- (4.387,-2.407) -- (4.270,-2.410) -- (4.154,-2.414) -- (4.039,-2.419) -- (3.925,-2.425) -- (3.814,-2.434) -- (3.704,-2.444) -- (3.597,-2.457) -- (3.493,-2.473) -- (3.392,-2.492) -- (3.295,-2.515) -- (3.203,-2.543) -- (3.115,-2.575) -- (3.032,-2.612) -- (2.955,-2.655) -- (2.883,-2.703) -- (2.817,-2.757) -- (2.757,-2.817) -- (2.703,-2.883) -- (2.655,-2.955) -- (2.612,-3.032) -- (2.575,-3.115) -- (2.543,-3.203) -- (2.515,-3.295) -- (2.492,-3.392) -- (2.473,-3.493) -- (2.457,-3.597) -- (2.444,-3.704) -- (2.434,-3.814) -- (2.425,-3.925) -- (2.419,-4.039) -- (2.414,-4.154) -- (2.410,-4.270) -- (2.407,-4.387) -- (2.405,-4.505) -- (2.404,-4.624) -- (2.402,-4.742) -- (2.402,-4.862) -- (2.401,-4.981) -- (2.401,-5.101) -- (2.401,-5.220) -- (2.401,-5.340) -- (2.401,-5.460) -- (2.401,-5.580) -- (2.401,-5.699) -- (2.401,-5.819) -- (2.402,-5.938) -- (2.402,-6.058) -- (2.404,-6.176) -- (2.405,-6.295) -- (2.407,-6.413) -- (2.410,-6.530) -- (2.414,-6.646) -- (2.419,-6.761) -- (2.425,-6.875) -- (2.433,-6.986) -- (2.444,-7.096) -- (2.457,-7.203) -- (2.473,-7.307) -- (2.492,-7.408) -- (2.515,-7.505) -- (2.542,-7.597) -- (2.575,-7.685) -- (2.612,-7.767) -- (2.654,-7.845) -- (2.702,-7.916) -- (2.755,-7.981) -- (2.815,-8.040) -- (2.879,-8.093) -- (2.950,-8.140) -- (3.025,-8.181) -- (3.105,-8.215) -- (3.189,-8.243) -- (3.276,-8.266) -- (3.367,-8.283) -- (3.459,-8.294) -- (3.553,-8.299) -- (3.647,-8.299) -- (3.741,-8.294) -- (3.833,-8.283) -- (3.924,-8.266) -- (4.011,-8.243) -- (4.095,-8.215) -- (4.175,-8.181) -- (4.250,-8.140) -- (4.321,-8.093) -- (4.385,-8.040) -- (4.445,-7.981) -- (4.498,-7.916) -- (4.546,-7.845) -- (4.588,-7.767) -- (4.625,-7.685) -- (4.658,-7.597) -- (4.685,-7.505) -- (4.708,-7.408) -- (4.727,-7.307) -- (4.743,-7.203) -- (4.756,-7.096) -- (4.767,-6.986) -- (4.775,-6.875) -- (4.781,-6.761) -- (4.786,-6.646) -- (4.790,-6.530) -- (4.793,-6.413) -- (4.795,-6.295) -- (4.796,-6.176) -- (4.798,-6.058) -- (4.798,-5.938) -- (4.799,-5.819) -- (4.799,-5.699) -- (4.800,-5.580) -- (4.800,-5.460) -- (4.800,-5.340) -- (4.800,-5.220) -- (4.800,-5.100) -- (4.800,-4.980) -- (4.800,-4.860) -- (4.800,-4.740) -- (4.800,-4.620) -- (4.800,-4.500) -- (4.800,-4.380) -- (4.800,-4.260) -- (4.800,-4.140) -- (4.800,-4.020) -- (4.801,-3.901) -- (4.801,-3.781) -- (4.802,-3.662) -- (4.802,-3.542) -- (4.804,-3.424) -- (4.805,-3.305) -- (4.807,-3.187) -- (4.810,-3.070) -- (4.814,-2.954) -- (4.819,-2.839) -- (4.825,-2.725) -- (4.834,-2.614) -- (4.844,-2.504) -- (4.857,-2.397) -- (4.873,-2.293) -- (4.892,-2.192) -- (4.915,-2.095) -- (4.943,-2.003) -- (4.975,-1.915) -- (5.012,-1.832) -- (5.055,-1.755) -- (5.103,-1.683) -- (5.157,-1.617) -- (5.217,-1.557) -- (5.283,-1.503) -- (5.355,-1.455) -- (5.432,-1.412) -- (5.515,-1.375) -- (5.603,-1.343) -- (5.695,-1.315) -- (5.792,-1.292) -- (5.893,-1.273) -- (5.997,-1.257) -- (6.104,-1.244) -- (6.214,-1.234) -- (6.325,-1.225) -- (6.439,-1.219) -- (6.554,-1.214) -- (6.670,-1.210) -- (6.787,-1.207) -- (6.905,-1.205) -- (7.024,-1.204) -- (7.142,-1.202) -- (7.262,-1.202) -- (7.381,-1.201) -- (7.501,-1.201) -- (7.620,-1.201) -- (7.740,-1.201) -- (7.860,-1.201) -- (7.980,-1.201) -- (8.099,-1.201) -- (8.219,-1.201) -- (8.338,-1.202) -- (8.458,-1.202) -- (8.576,-1.204) -- (8.695,-1.205) -- (8.813,-1.207) -- (8.930,-1.210) -- (9.046,-1.214) -- (9.161,-1.219) -- (9.275,-1.225) -- (9.386,-1.234) -- (9.496,-1.244) -- (9.603,-1.257) -- (9.707,-1.273) -- (9.808,-1.292) -- (9.905,-1.315) -- (9.997,-1.343) -- (10.085,-1.375) -- (10.168,-1.412) -- (10.245,-1.455) -- (10.317,-1.503) -- (10.383,-1.557) -- (10.443,-1.617) -- (10.497,-1.683) -- (10.545,-1.755) -- (10.588,-1.832) -- (10.625,-1.915) -- (10.657,-2.003) -- (10.685,-2.095) -- (10.708,-2.192) -- (10.727,-2.293) -- (10.743,-2.397) -- (10.756,-2.504) -- (10.766,-2.614) -- (10.775,-2.725) -- (10.781,-2.839) -- (10.786,-2.954) -- (10.790,-3.070) -- (10.793,-3.187) -- (10.795,-3.305) -- (10.796,-3.424) -- (10.798,-3.542) -- (10.798,-3.662) -- (10.799,-3.781) -- (10.799,-3.901) -- (10.799,-4.020) -- (10.799,-4.140) -- (10.799,-4.260) -- (10.799,-4.380) -- (10.799,-4.499) -- (10.799,-4.619) -- (10.798,-4.738) -- (10.798,-4.858) -- (10.796,-4.976) -- (10.795,-5.095) -- (10.793,-5.213) -- (10.790,-5.330) -- (10.786,-5.446) -- (10.781,-5.561) -- (10.775,-5.675) -- (10.766,-5.786) -- (10.756,-5.896) -- (10.743,-6.003) -- (10.727,-6.107) -- (10.708,-6.208) -- (10.685,-6.305) -- (10.657,-6.397) -- (10.625,-6.485) -- (10.588,-6.568) -- (10.545,-6.645) -- (10.497,-6.717) -- (10.443,-6.783) -- (10.383,-6.843) -- (10.317,-6.897) -- (10.245,-6.945) -- (10.168,-6.988) -- (10.085,-7.025) -- (9.997,-7.057) -- (9.905,-7.085) -- (9.808,-7.108) -- (9.707,-7.127) -- (9.603,-7.143) -- (9.496,-7.156) -- (9.386,-7.166) -- (9.275,-7.175) -- (9.161,-7.181) -- (9.046,-7.186) -- (8.930,-7.190) -- (8.813,-7.193) -- (8.695,-7.195) -- (8.576,-7.196) -- (8.458,-7.198) -- (8.338,-7.198) -- (8.219,-7.199) -- (8.099,-7.199) -- (7.980,-7.200) -- (7.860,-7.200) -- (7.740,-7.200) -- (7.620,-7.200) -- (7.500,-7.200) -- (7.380,-7.200) -- (7.260,-7.200) -- (7.140,-7.200) -- (7.020,-7.200) -- (6.900,-7.200) -- (6.780,-7.200) -- (6.660,-7.200) -- (6.540,-7.200) -- (6.420,-7.200) -- (6.300,-7.200) -- (6.180,-7.200) -- (6.060,-7.200) -- (5.940,-7.200) -- (5.820,-7.200) -- (5.700,-7.200) -- (5.580,-7.200) -- (5.460,-7.200) -- (5.340,-7.200) -- (5.220,-7.200) -- (5.100,-7.200) -- (4.980,-7.200) -- (4.860,-7.200) -- (4.740,-7.200) -- (4.620,-7.200) -- (4.500,-7.200) -- (4.380,-7.200) -- (4.260,-7.200) -- (4.140,-7.200) -- (4.020,-7.200) -- (3.901,-7.199) -- (3.781,-7.199) -- (3.662,-7.198) -- (3.542,-7.198) -- (3.424,-7.196) -- (3.305,-7.195) -- (3.187,-7.193) -- (3.070,-7.190) -- (2.954,-7.186) -- (2.839,-7.181) -- (2.725,-7.175) -- (2.614,-7.166) -- (2.504,-7.156) -- (2.397,-7.143) -- (2.293,-7.127) -- (2.192,-7.108) -- (2.095,-7.085) -- (2.003,-7.057) -- (1.915,-7.025) -- (1.832,-6.988) -- (1.755,-6.945) -- (1.683,-6.897) -- (1.617,-6.843) -- (1.557,-6.783) -- (1.503,-6.717) -- (1.455,-6.645) -- (1.412,-6.568) -- (1.375,-6.485) -- (1.343,-6.398) -- (1.316,-6.305) -- (1.293,-6.209) -- (1.274,-6.108) -- (1.259,-6.005) -- (1.246,-5.899) -- (1.237,-5.790) -- (1.230,-5.680) -- (1.226,-5.568) -- (1.224,-5.456) -- (1.224,-5.344) -- (1.226,-5.232) -- (1.230,-5.120) -- (1.237,-5.010) -- (1.246,-4.901) -- (1.259,-4.795) -- (1.274,-4.692) -- (1.293,-4.591) -- (1.316,-4.495) -- (1.343,-4.402) -- (1.375,-4.315) -- (1.412,-4.232) -- (1.455,-4.155) -- (1.503,-4.083) -- (1.557,-4.017) -- (1.617,-3.957) -- (1.683,-3.903) -- (1.755,-3.855) -- (1.832,-3.812) -- (1.915,-3.775) -- (2.003,-3.743) -- (2.095,-3.715) -- (2.192,-3.692) -- (2.293,-3.673) -- (2.397,-3.657) -- (2.504,-3.644) -- (2.613,-3.634) -- (2.725,-3.625) -- (2.839,-3.619) -- (2.954,-3.614) -- (3.070,-3.610) -- (3.187,-3.608) -- (3.304,-3.606) -- (3.422,-3.605) -- (3.541,-3.604) -- (3.659,-3.604) -- (3.778,-3.605) -- (3.896,-3.606) -- (4.013,-3.608) -- (4.130,-3.610) -- (4.246,-3.614) -- (4.361,-3.619) -- (4.475,-3.625) -- (4.587,-3.634) -- (4.696,-3.644) -- (4.803,-3.657) -- (4.907,-3.673) -- (5.008,-3.692) -- (5.105,-3.715) -- (5.197,-3.743) -- (5.285,-3.775) -- (5.368,-3.812) -- (5.445,-3.855) -- (5.517,-3.903) -- (5.583,-3.957) -- (5.643,-4.017) -- (5.697,-4.083) -- (5.745,-4.155) -- (5.788,-4.232) -- (5.825,-4.315) -- (5.857,-4.403) -- (5.885,-4.495) -- (5.908,-4.592) -- (5.927,-4.693) -- (5.943,-4.797) -- (5.956,-4.904) -- (5.967,-5.013) -- (5.975,-5.125) -- (5.981,-5.239) -- (5.986,-5.354) -- (5.990,-5.470) -- (5.993,-5.587) -- (5.996,-5.704) -- (5.998,-5.822) -- (5.999,-5.941) -- (6.001,-6.059) -- (6.002,-6.178) -- (6.004,-6.296) -- (6.007,-6.413) -- (6.010,-6.530) -- (6.014,-6.646) -- (6.019,-6.761) -- (6.025,-6.875) -- (6.033,-6.987) -- (6.044,-7.096) -- (6.057,-7.203) -- (6.073,-7.307) -- (6.092,-7.408) -- (6.115,-7.505) -- (6.142,-7.597) -- (6.175,-7.685) -- (6.212,-7.767) -- (6.254,-7.845) -- (6.302,-7.916) -- (6.355,-7.981) -- (6.415,-8.040) -- (6.479,-8.093) -- (6.550,-8.140) -- (6.625,-8.181) -- (6.705,-8.215) -- (6.789,-8.243) -- (6.876,-8.266) -- (6.967,-8.283) -- (7.059,-8.294) -- (7.153,-8.299) -- (7.247,-8.299) -- (7.341,-8.294) -- (7.433,-8.283) -- (7.524,-8.266) -- (7.611,-8.243) -- (7.695,-8.215) -- (7.775,-8.181) -- (7.850,-8.140) -- (7.921,-8.093) -- (7.985,-8.040) -- (8.045,-7.981) -- (8.098,-7.916) -- (8.146,-7.845) -- (8.188,-7.767) -- (8.225,-7.685) -- (8.258,-7.597) -- (8.285,-7.505) -- (8.308,-7.408) -- (8.327,-7.307) -- (8.343,-7.203) -- (8.356,-7.096) -- (8.366,-6.987) -- (8.375,-6.875) -- (8.381,-6.761) -- (8.386,-6.646) -- (8.390,-6.530) -- (8.392,-6.413) -- (8.394,-6.296) -- (8.395,-6.178) -- (8.396,-6.059) -- (8.396,-5.941) -- (8.395,-5.822) -- (8.394,-5.704) -- (8.392,-5.587) -- (8.390,-5.470) -- (8.386,-5.354) -- (8.382,-5.239) -- (8.375,-5.126) -- (8.368,-5.015) -- (8.358,-4.906) -- (8.346,-4.799) -- (8.331,-4.696) -- (8.313,-4.597) -- (8.292,-4.502) -- (8.267,-4.413) -- (8.239,-4.329) -- (8.207,-4.251) -- (8.171,-4.180) -- (8.131,-4.116) -- (8.087,-4.061) -- (8.040,-4.014) -- (7.990,-3.976) -- (7.937,-3.947) -- (7.883,-3.927) -- (7.828,-3.917) -- (7.772,-3.917) -- (7.716,-3.927) -- (7.662,-3.946) -- (7.609,-3.975) -- (7.559,-4.012) -- (7.511,-4.059) -- (7.466,-4.113) -- (7.424,-4.175) -- (7.386,-4.244) -- (7.351,-4.319) -- (7.319,-4.399) -- (7.289,-4.484) -- (7.262,-4.572) -- (7.236,-4.663) -- (7.210,-4.755) -- (7.185,-4.849) -- (7.160,-4.942) -- (7.132,-5.034) -- (7.103,-5.124) -- (7.071,-5.211) -- (7.035,-5.295) -- (6.995,-5.375) -- (6.950,-5.450) -- (6.901,-5.521) -- (6.845,-5.585) -- (6.785,-5.645) -- (6.718,-5.698) -- (6.646,-5.746) -- (6.569,-5.788) -- (6.486,-5.825) -- (6.398,-5.857) -- (6.305,-5.884) -- (6.209,-5.907) -- (6.108,-5.926) -- (6.005,-5.942) -- (5.899,-5.954) -- (5.790,-5.963) -- (5.680,-5.970) -- (5.569,-5.974) -- (5.456,-5.976) -- (5.344,-5.976) -- (5.232,-5.974) -- (5.121,-5.970) -- (5.011,-5.964) -- (4.903,-5.955) -- (4.798,-5.944) -- (4.695,-5.930) -- (4.596,-5.912) -- (4.502,-5.892) -- (4.412,-5.867) -- (4.329,-5.839) -- (4.251,-5.807) -- (4.180,-5.770) -- (4.116,-5.730) -- (4.061,-5.687) -- cycle;
\draw[white, line width=4.5pt, line cap=butt] (4.800,-4.980) -- (4.800,-4.860) -- (4.800,-4.854) -- (4.800,-4.740) -- (4.800,-4.620);
\draw[line cap=butt] (4.800,-4.980) -- (4.800,-4.860) -- (4.800,-4.854) -- (4.800,-4.740) -- (4.800,-4.620);
\draw[white, line width=4.5pt, line cap=butt] (5.705,-4.805) -- (5.824,-4.804) -- (5.942,-4.802) -- (5.944,-4.802) -- (6.062,-4.802) -- (6.181,-4.801);
\draw[line cap=butt] (5.705,-4.805) -- (5.824,-4.804) -- (5.942,-4.802) -- (5.944,-4.802) -- (6.062,-4.802) -- (6.181,-4.801);
\draw[white, line width=4.5pt, line cap=butt] (7.262,-4.572) -- (7.236,-4.663) -- (7.210,-4.755) -- (7.199,-4.798) -- (7.185,-4.849) -- (7.160,-4.942);
\draw[line cap=butt] (7.262,-4.572) -- (7.236,-4.663) -- (7.210,-4.755) -- (7.199,-4.798) -- (7.185,-4.849) -- (7.160,-4.942);
\draw[white, line width=4.5pt, line cap=butt] (8.186,-4.767) -- (8.296,-4.756) -- (8.339,-4.751) -- (8.403,-4.743) -- (8.507,-4.727);
\draw[line cap=butt] (8.186,-4.767) -- (8.296,-4.756) -- (8.339,-4.751) -- (8.403,-4.743) -- (8.507,-4.727);
\draw[white, line width=4.5pt, line cap=butt] (4.834,-2.614) -- (4.844,-2.504) -- (4.856,-2.402) -- (4.857,-2.397) -- (4.873,-2.293) -- (4.892,-2.192);
\draw[line cap=butt] (4.834,-2.614) -- (4.844,-2.504) -- (4.856,-2.402) -- (4.857,-2.397) -- (4.873,-2.293) -- (4.892,-2.192);
\draw[white, line width=4.5pt, line cap=butt] (2.473,-3.493) -- (2.457,-3.597) -- (2.450,-3.650) -- (2.444,-3.704) -- (2.434,-3.814);
\draw[line cap=butt] (2.473,-3.493) -- (2.457,-3.597) -- (2.450,-3.650) -- (2.444,-3.704) -- (2.434,-3.814);
\draw[white, line width=4.5pt, line cap=butt] (2.614,-7.166) -- (2.504,-7.156) -- (2.450,-7.150) -- (2.397,-7.143) -- (2.293,-7.127);
\draw[line cap=butt] (2.614,-7.166) -- (2.504,-7.156) -- (2.450,-7.150) -- (2.397,-7.143) -- (2.293,-7.127);
\draw[white, line width=4.5pt, line cap=butt] (4.708,-7.408) -- (4.727,-7.307) -- (4.743,-7.203) -- (4.744,-7.200) -- (4.756,-7.096) -- (4.767,-6.986);
\draw[line cap=butt] (4.708,-7.408) -- (4.727,-7.307) -- (4.743,-7.203) -- (4.744,-7.200) -- (4.756,-7.096) -- (4.767,-6.986);
\draw[white, line width=4.5pt, line cap=butt] (5.011,-5.964) -- (4.903,-5.955) -- (4.798,-5.944) -- (4.798,-5.944) -- (4.695,-5.930) -- (4.596,-5.912);
\draw[line cap=butt] (5.011,-5.964) -- (4.903,-5.955) -- (4.798,-5.944) -- (4.798,-5.944) -- (4.695,-5.930) -- (4.596,-5.912);
\draw[white, line width=4.5pt, line cap=butt] (4.587,-3.634) -- (4.696,-3.644) -- (4.802,-3.657) -- (4.803,-3.657) -- (4.907,-3.673) -- (5.008,-3.692);
\draw[line cap=butt] (4.587,-3.634) -- (4.696,-3.644) -- (4.802,-3.657) -- (4.803,-3.657) -- (4.907,-3.673) -- (5.008,-3.692);
\draw[white, line width=4.5pt, line cap=butt] (8.308,-7.408) -- (8.327,-7.307) -- (8.343,-7.203) -- (8.344,-7.198) -- (8.356,-7.096) -- (8.366,-6.987);
\draw[line cap=butt] (8.308,-7.408) -- (8.327,-7.307) -- (8.343,-7.203) -- (8.344,-7.198) -- (8.356,-7.096) -- (8.366,-6.987);
\draw[white, line width=4.5pt, line cap=butt] (6.180,-7.200) -- (6.060,-7.200) -- (6.056,-7.200) -- (5.940,-7.200) -- (5.820,-7.200);
\draw[line cap=butt] (6.180,-7.200) -- (6.060,-7.200) -- (6.056,-7.200) -- (5.940,-7.200) -- (5.820,-7.200);
\draw[white, line width=4.5pt, line cap=butt] (5.996,-5.704) -- (5.998,-5.822) -- (5.999,-5.941) -- (5.999,-5.942) -- (6.001,-6.059) -- (6.002,-6.178);
\draw[line cap=butt] (5.996,-5.704) -- (5.998,-5.822) -- (5.999,-5.941) -- (5.999,-5.942) -- (6.001,-6.059) -- (6.002,-6.178);
\draw[red!75!black, line width=0.8pt] (7.199,-4.798) circle (0.512);
\end{tikzpicture}

%% file: figures/cc3-2-switched.tex
\begin{tikzpicture}[line width=1.0pt, line cap=round, line join=round]
\draw (4.014,-5.640) -- (3.976,-5.590) -- (3.947,-5.537) -- (3.927,-5.483) -- (3.918,-5.428) -- (3.918,-5.372) -- (3.927,-5.317) -- (3.947,-5.263) -- (3.976,-5.210) -- (4.014,-5.160) -- (4.061,-5.113) -- (4.116,-5.069) -- (4.180,-5.029) -- (4.251,-4.993) -- (4.329,-4.961) -- (4.413,-4.933) -- (4.502,-4.908) -- (4.597,-4.887) -- (4.696,-4.869) -- (4.799,-4.854) -- (4.906,-4.842) -- (5.015,-4.832) -- (5.126,-4.825) -- (5.239,-4.818) -- (5.354,-4.814) -- (5.470,-4.810) -- (5.587,-4.807) -- (5.705,-4.805) -- (5.824,-4.804) -- (5.942,-4.802) -- (6.062,-4.802) -- (6.181,-4.801) -- (6.301,-4.801) -- (6.420,-4.800) -- (6.540,-4.800) -- (6.660,-4.800) -- (6.780,-4.800) -- (6.899,-4.799) -- (7.019,-4.799) -- (7.138,-4.798) -- (7.258,-4.798) -- (7.376,-4.796) -- (7.495,-4.795) -- (7.613,-4.793) -- (7.730,-4.790) -- (7.846,-4.786) -- (7.961,-4.781) -- (8.075,-4.775) -- (8.186,-4.767) -- (8.296,-4.756) -- (8.403,-4.743) -- (8.507,-4.727) -- (8.608,-4.708) -- (8.705,-4.685) -- (8.797,-4.658) -- (8.885,-4.625) -- (8.967,-4.588) -- (9.045,-4.546) -- (9.116,-4.498) -- (9.181,-4.445) -- (9.240,-4.385) -- (9.293,-4.321) -- (9.340,-4.250) -- (9.381,-4.175) -- (9.415,-4.095) -- (9.443,-4.011) -- (9.466,-3.924) -- (9.483,-3.833) -- (9.494,-3.741) -- (9.499,-3.647) -- (9.499,-3.553) -- (9.494,-3.459) -- (9.483,-3.367) -- (9.466,-3.276) -- (9.443,-3.189) -- (9.415,-3.105) -- (9.381,-3.025) -- (9.340,-2.950) -- (9.293,-2.879) -- (9.240,-2.815) -- (9.181,-2.755) -- (9.116,-2.702) -- (9.045,-2.654) -- (8.967,-2.612) -- (8.885,-2.575) -- (8.797,-2.542) -- (8.705,-2.515) -- (8.608,-2.492) -- (8.507,-2.473) -- (8.403,-2.457) -- (8.296,-2.444) -- (8.186,-2.433) -- (8.075,-2.425) -- (7.961,-2.419) -- (7.846,-2.414) -- (7.730,-2.410) -- (7.613,-2.407) -- (7.495,-2.405) -- (7.376,-2.404) -- (7.258,-2.402) -- (7.138,-2.402) -- (7.019,-2.401) -- (6.899,-2.401) -- (6.780,-2.400) -- (6.660,-2.400) -- (6.540,-2.400) -- (6.420,-2.400) -- (6.300,-2.400) -- (6.180,-2.400) -- (6.060,-2.400) -- (5.940,-2.400) -- (5.820,-2.400) -- (5.700,-2.400) -- (5.580,-2.400) -- (5.460,-2.400) -- (5.340,-2.400) -- (5.220,-2.400) -- (5.101,-2.401) -- (4.981,-2.401) -- (4.862,-2.402) -- (4.742,-2.402) -- (4.624,-2.404) -- (4.505,-2.405) -- (4.387,-2.407) -- (4.270,-2.410) -- (4.154,-2.414) -- (4.039,-2.419) -- (3.925,-2.425) -- (3.814,-2.434) -- (3.704,-2.444) -- (3.597,-2.457) -- (3.493,-2.473) -- (3.392,-2.492) -- (3.295,-2.515) -- (3.203,-2.543) -- (3.115,-2.575) -- (3.032,-2.612) -- (2.955,-2.655) -- (2.883,-2.703) -- (2.817,-2.757) -- (2.757,-2.817) -- (2.703,-2.883) -- (2.655,-2.955) -- (2.612,-3.032) -- (2.575,-3.115) -- (2.543,-3.203) -- (2.515,-3.295) -- (2.492,-3.392) -- (2.473,-3.493) -- (2.457,-3.597) -- (2.444,-3.704) -- (2.434,-3.814) -- (2.425,-3.925) -- (2.419,-4.039) -- (2.414,-4.154) -- (2.410,-4.270) -- (2.407,-4.387) -- (2.405,-4.505) -- (2.404,-4.624) -- (2.402,-4.742) -- (2.402,-4.862) -- (2.401,-4.981) -- (2.401,-5.101) -- (2.401,-5.220) -- (2.401,-5.340) -- (2.401,-5.460) -- (2.401,-5.580) -- (2.401,-5.699) -- (2.401,-5.819) -- (2.402,-5.938) -- (2.402,-6.058) -- (2.404,-6.176) -- (2.405,-6.295) -- (2.407,-6.413) -- (2.410,-6.530) -- (2.414,-6.646) -- (2.419,-6.761) -- (2.425,-6.875) -- (2.433,-6.986) -- (2.444,-7.096) -- (2.457,-7.203) -- (2.473,-7.307) -- (2.492,-7.408) -- (2.515,-7.505) -- (2.542,-7.597) -- (2.575,-7.685) -- (2.612,-7.767) -- (2.654,-7.845) -- (2.702,-7.916) -- (2.755,-7.981) -- (2.815,-8.040) -- (2.879,-8.093) -- (2.950,-8.140) -- (3.025,-8.181) -- (3.105,-8.215) -- (3.189,-8.243) -- (3.276,-8.266) -- (3.367,-8.283) -- (3.459,-8.294) -- (3.553,-8.299) -- (3.647,-8.299) -- (3.741,-8.294) -- (3.833,-8.283) -- (3.924,-8.266) -- (4.011,-8.243) -- (4.095,-8.215) -- (4.175,-8.181) -- (4.250,-8.140) -- (4.321,-8.093) -- (4.385,-8.040) -- (4.445,-7.981) -- (4.498,-7.916) -- (4.546,-7.845) -- (4.588,-7.767) -- (4.625,-7.685) -- (4.658,-7.597) -- (4.685,-7.505) -- (4.708,-7.408) -- (4.727,-7.307) -- (4.743,-7.203) -- (4.756,-7.096) -- (4.767,-6.986) -- (4.775,-6.875) -- (4.781,-6.761) -- (4.786,-6.646) -- (4.790,-6.530) -- (4.793,-6.413) -- (4.795,-6.295) -- (4.796,-6.176) -- (4.798,-6.058) -- (4.798,-5.938) -- (4.799,-5.819) -- (4.799,-5.699) -- (4.800,-5.580) -- (4.800,-5.460) -- (4.800,-5.340) -- (4.800,-5.220) -- (4.800,-5.100) -- (4.800,-4.980) -- (4.800,-4.860) -- (4.800,-4.740) -- (4.800,-4.620) -- (4.800,-4.500) -- (4.800,-4.380) -- (4.800,-4.260) -- (4.800,-4.140) -- (4.800,-4.020) -- (4.801,-3.901) -- (4.801,-3.781) -- (4.802,-3.662) -- (4.802,-3.542) -- (4.804,-3.424) -- (4.805,-3.305) -- (4.807,-3.187) -- (4.810,-3.070) -- (4.814,-2.954) -- (4.819,-2.839) -- (4.825,-2.725) -- (4.834,-2.614) -- (4.844,-2.504) -- (4.857,-2.397) -- (4.873,-2.293) -- (4.892,-2.192) -- (4.915,-2.095) -- (4.943,-2.003) -- (4.975,-1.915) -- (5.012,-1.832) -- (5.055,-1.755) -- (5.103,-1.683) -- (5.157,-1.617) -- (5.217,-1.557) -- (5.283,-1.503) -- (5.355,-1.455) -- (5.432,-1.412) -- (5.515,-1.375) -- (5.603,-1.343) -- (5.695,-1.315) -- (5.792,-1.292) -- (5.893,-1.273) -- (5.997,-1.257) -- (6.104,-1.244) -- (6.214,-1.234) -- (6.325,-1.225) -- (6.439,-1.219) -- (6.554,-1.214) -- (6.670,-1.210) -- (6.787,-1.207) -- (6.905,-1.205) -- (7.024,-1.204) -- (7.142,-1.202) -- (7.262,-1.202) -- (7.381,-1.201) -- (7.501,-1.201) -- (7.620,-1.201) -- (7.740,-1.201) -- (7.860,-1.201) -- (7.980,-1.201) -- (8.099,-1.201) -- (8.219,-1.201) -- (8.338,-1.202) -- (8.458,-1.202) -- (8.576,-1.204) -- (8.695,-1.205) -- (8.813,-1.207) -- (8.930,-1.210) -- (9.046,-1.214) -- (9.161,-1.219) -- (9.275,-1.225) -- (9.386,-1.234) -- (9.496,-1.244) -- (9.603,-1.257) -- (9.707,-1.273) -- (9.808,-1.292) -- (9.905,-1.315) -- (9.997,-1.343) -- (10.085,-1.375) -- (10.168,-1.412) -- (10.245,-1.455) -- (10.317,-1.503) -- (10.383,-1.557) -- (10.443,-1.617) -- (10.497,-1.683) -- (10.545,-1.755) -- (10.588,-1.832) -- (10.625,-1.915) -- (10.657,-2.003) -- (10.685,-2.095) -- (10.708,-2.192) -- (10.727,-2.293) -- (10.743,-2.397) -- (10.756,-2.504) -- (10.766,-2.614) -- (10.775,-2.725) -- (10.781,-2.839) -- (10.786,-2.954) -- (10.790,-3.070) -- (10.793,-3.187) -- (10.795,-3.305) -- (10.796,-3.424) -- (10.798,-3.542) -- (10.798,-3.662) -- (10.799,-3.781) -- (10.799,-3.901) -- (10.799,-4.020) -- (10.799,-4.140) -- (10.799,-4.260) -- (10.799,-4.380) -- (10.799,-4.499) -- (10.799,-4.619) -- (10.798,-4.738) -- (10.798,-4.858) -- (10.796,-4.976) -- (10.795,-5.095) -- (10.793,-5.213) -- (10.790,-5.330) -- (10.786,-5.446) -- (10.781,-5.561) -- (10.775,-5.675) -- (10.766,-5.786) -- (10.756,-5.896) -- (10.743,-6.003) -- (10.727,-6.107) -- (10.708,-6.208) -- (10.685,-6.305) -- (10.657,-6.397) -- (10.625,-6.485) -- (10.588,-6.568) -- (10.545,-6.645) -- (10.497,-6.717) -- (10.443,-6.783) -- (10.383,-6.843) -- (10.317,-6.897) -- (10.245,-6.945) -- (10.168,-6.988) -- (10.085,-7.025) -- (9.997,-7.057) -- (9.905,-7.085) -- (9.808,-7.108) -- (9.707,-7.127) -- (9.603,-7.143) -- (9.496,-7.156) -- (9.386,-7.166) -- (9.275,-7.175) -- (9.161,-7.181) -- (9.046,-7.186) -- (8.930,-7.190) -- (8.813,-7.193) -- (8.695,-7.195) -- (8.576,-7.196) -- (8.458,-7.198) -- (8.338,-7.198) -- (8.219,-7.199) -- (8.099,-7.199) -- (7.980,-7.200) -- (7.860,-7.200) -- (7.740,-7.200) -- (7.620,-7.200) -- (7.500,-7.200) -- (7.380,-7.200) -- (7.260,-7.200) -- (7.140,-7.200) -- (7.020,-7.200) -- (6.900,-7.200) -- (6.780,-7.200) -- (6.660,-7.200) -- (6.540,-7.200) -- (6.420,-7.200) -- (6.300,-7.200) -- (6.180,-7.200) -- (6.060,-7.200) -- (5.940,-7.200) -- (5.820,-7.200) -- (5.700,-7.200) -- (5.580,-7.200) -- (5.460,-7.200) -- (5.340,-7.200) -- (5.220,-7.200) -- (5.100,-7.200) -- (4.980,-7.200) -- (4.860,-7.200) -- (4.740,-7.200) -- (4.620,-7.200) -- (4.500,-7.200) -- (4.380,-7.200) -- (4.260,-7.200) -- (4.140,-7.200) -- (4.020,-7.200) -- (3.901,-7.199) -- (3.781,-7.199) -- (3.662,-7.198) -- (3.542,-7.198) -- (3.424,-7.196) -- (3.305,-7.195) -- (3.187,-7.193) -- (3.070,-7.190) -- (2.954,-7.186) -- (2.839,-7.181) -- (2.725,-7.175) -- (2.614,-7.166) -- (2.504,-7.156) -- (2.397,-7.143) -- (2.293,-7.127) -- (2.192,-7.108) -- (2.095,-7.085) -- (2.003,-7.057) -- (1.915,-7.025) -- (1.832,-6.988) -- (1.755,-6.945) -- (1.683,-6.897) -- (1.617,-6.843) -- (1.557,-6.783) -- (1.503,-6.717) -- (1.455,-6.645) -- (1.412,-6.568) -- (1.375,-6.485) -- (1.343,-6.398) -- (1.316,-6.305) -- (1.293,-6.209) -- (1.274,-6.108) -- (1.259,-6.005) -- (1.246,-5.899) -- (1.237,-5.790) -- (1.230,-5.680) -- (1.226,-5.568) -- (1.224,-5.456) -- (1.224,-5.344) -- (1.226,-5.232) -- (1.230,-5.120) -- (1.237,-5.010) -- (1.246,-4.901) -- (1.259,-4.795) -- (1.274,-4.692) -- (1.293,-4.591) -- (1.316,-4.495) -- (1.343,-4.402) -- (1.375,-4.315) -- (1.412,-4.232) -- (1.455,-4.155) -- (1.503,-4.083) -- (1.557,-4.017) -- (1.617,-3.957) -- (1.683,-3.903) -- (1.755,-3.855) -- (1.832,-3.812) -- (1.915,-3.775) -- (2.003,-3.743) -- (2.095,-3.715) -- (2.192,-3.692) -- (2.293,-3.673) -- (2.397,-3.657) -- (2.504,-3.644) -- (2.613,-3.634) -- (2.725,-3.625) -- (2.839,-3.619) -- (2.954,-3.614) -- (3.070,-3.610) -- (3.187,-3.608) -- (3.304,-3.606) -- (3.422,-3.605) -- (3.541,-3.604) -- (3.659,-3.604) -- (3.778,-3.605) -- (3.896,-3.606) -- (4.013,-3.608) -- (4.130,-3.610) -- (4.246,-3.614) -- (4.361,-3.619) -- (4.475,-3.625) -- (4.587,-3.634) -- (4.696,-3.644) -- (4.803,-3.657) -- (4.907,-3.673) -- (5.008,-3.692) -- (5.105,-3.715) -- (5.197,-3.743) -- (5.285,-3.775) -- (5.368,-3.812) -- (5.445,-3.855) -- (5.517,-3.903) -- (5.583,-3.957) -- (5.643,-4.017) -- (5.697,-4.083) -- (5.745,-4.155) -- (5.788,-4.232) -- (5.825,-4.315) -- (5.857,-4.403) -- (5.885,-4.495) -- (5.908,-4.592) -- (5.927,-4.693) -- (5.943,-4.797) -- (5.956,-4.904) -- (5.967,-5.013) -- (5.975,-5.125) -- (5.981,-5.239) -- (5.986,-5.354) -- (5.990,-5.470) -- (5.993,-5.587) -- (5.996,-5.704) -- (5.998,-5.822) -- (5.999,-5.941) -- (6.001,-6.059) -- (6.002,-6.178) -- (6.004,-6.296) -- (6.007,-6.413) -- (6.010,-6.530) -- (6.014,-6.646) -- (6.019,-6.761) -- (6.025,-6.875) -- (6.033,-6.987) -- (6.044,-7.096) -- (6.057,-7.203) -- (6.073,-7.307) -- (6.092,-7.408) -- (6.115,-7.505) -- (6.142,-7.597) -- (6.175,-7.685) -- (6.212,-7.767) -- (6.254,-7.845) -- (6.302,-7.916) -- (6.355,-7.981) -- (6.415,-8.040) -- (6.479,-8.093) -- (6.550,-8.140) -- (6.625,-8.181) -- (6.705,-8.215) -- (6.789,-8.243) -- (6.876,-8.266) -- (6.967,-8.283) -- (7.059,-8.294) -- (7.153,-8.299) -- (7.247,-8.299) -- (7.341,-8.294) -- (7.433,-8.283) -- (7.524,-8.266) -- (7.611,-8.243) -- (7.695,-8.215) -- (7.775,-8.181) -- (7.850,-8.140) -- (7.921,-8.093) -- (7.985,-8.040) -- (8.045,-7.981) -- (8.098,-7.916) -- (8.146,-7.845) -- (8.188,-7.767) -- (8.225,-7.685) -- (8.258,-7.597) -- (8.285,-7.505) -- (8.308,-7.408) -- (8.327,-7.307) -- (8.343,-7.203) -- (8.356,-7.096) -- (8.366,-6.987) -- (8.375,-6.875) -- (8.381,-6.761) -- (8.386,-6.646) -- (8.390,-6.530) -- (8.392,-6.413) -- (8.394,-6.296) -- (8.395,-6.178) -- (8.396,-6.059) -- (8.396,-5.941) -- (8.395,-5.822) -- (8.394,-5.704) -- (8.392,-5.587) -- (8.390,-5.470) -- (8.386,-5.354) -- (8.382,-5.239) -- (8.375,-5.126) -- (8.368,-5.015) -- (8.358,-4.906) -- (8.346,-4.799) -- (8.331,-4.696) -- (8.313,-4.597) -- (8.292,-4.502) -- (8.267,-4.413) -- (8.239,-4.329) -- (8.207,-4.251) -- (8.171,-4.180) -- (8.131,-4.116) -- (8.087,-4.061) -- (8.040,-4.014) -- (7.990,-3.976) -- (7.937,-3.947) -- (7.883,-3.927) -- (7.828,-3.917) -- (7.772,-3.917) -- (7.716,-3.927) -- (7.662,-3.946) -- (7.609,-3.975) -- (7.559,-4.012) -- (7.511,-4.059) -- (7.466,-4.113) -- (7.424,-4.175) -- (7.386,-4.244) -- (7.351,-4.319) -- (7.319,-4.399) -- (7.289,-4.484) -- (7.262,-4.572) -- (7.236,-4.663) -- (7.210,-4.755) -- (7.185,-4.849) -- (7.160,-4.942) -- (7.132,-5.034) -- (7.103,-5.124) -- (7.071,-5.211) -- (7.035,-5.295) -- (6.995,-5.375) -- (6.950,-5.450) -- (6.901,-5.521) -- (6.845,-5.585) -- (6.785,-5.645) -- (6.718,-5.698) -- (6.646,-5.746) -- (6.569,-5.788) -- (6.486,-5.825) -- (6.398,-5.857) -- (6.305,-5.884) -- (6.209,-5.907) -- (6.108,-5.926) -- (6.005,-5.942) -- (5.899,-5.954) -- (5.790,-5.963) -- (5.680,-5.970) -- (5.569,-5.974) -- (5.456,-5.976) -- (5.344,-5.976) -- (5.232,-5.974) -- (5.121,-5.970) -- (5.011,-5.964) -- (4.903,-5.955) -- (4.798,-5.944) -- (4.695,-5.930) -- (4.596,-5.912) -- (4.502,-5.892) -- (4.412,-5.867) -- (4.329,-5.839) -- (4.251,-5.807) -- (4.180,-5.770) -- (4.116,-5.730) -- (4.061,-5.687) -- cycle;
\draw[white, line width=4.5pt, line cap=butt] (4.800,-4.980) -- (4.800,-4.860) -- (4.800,-4.854) -- (4.800,-4.740) -- (4.800,-4.620);
\draw[line cap=butt] (4.800,-4.980) -- (4.800,-4.860) -- (4.800,-4.854) -- (4.800,-4.740) -- (4.800,-4.620);
\draw[white, line width=4.5pt, line cap=butt] (5.705,-4.805) -- (5.824,-4.804) -- (5.942,-4.802) -- (5.944,-4.802) -- (6.062,-4.802) -- (6.181,-4.801);
\draw[line cap=butt] (5.705,-4.805) -- (5.824,-4.804) -- (5.942,-4.802) -- (5.944,-4.802) -- (6.062,-4.802) -- (6.181,-4.801);
\draw[white, line width=4.5pt, line cap=butt] (7.019,-4.799) -- (7.138,-4.798) -- (7.199,-4.798) -- (7.258,-4.798) -- (7.376,-4.796);
\draw[line cap=butt] (7.019,-4.799) -- (7.138,-4.798) -- (7.199,-4.798) -- (7.258,-4.798) -- (7.376,-4.796);
\draw[white, line width=4.5pt, line cap=butt] (8.186,-4.767) -- (8.296,-4.756) -- (8.339,-4.751) -- (8.403,-4.743) -- (8.507,-4.727);
\draw[line cap=butt] (8.186,-4.767) -- (8.296,-4.756) -- (8.339,-4.751) -- (8.403,-4.743) -- (8.507,-4.727);
\draw[white, line width=4.5pt, line cap=butt] (4.834,-2.614) -- (4.844,-2.504) -- (4.856,-2.402) -- (4.857,-2.397) -- (4.873,-2.293) -- (4.892,-2.192);
\draw[line cap=butt] (4.834,-2.614) -- (4.844,-2.504) -- (4.856,-2.402) -- (4.857,-2.397) -- (4.873,-2.293) -- (4.892,-2.192);
\draw[white, line width=4.5pt, line cap=butt] (2.473,-3.493) -- (2.457,-3.597) -- (2.450,-3.650) -- (2.444,-3.704) -- (2.434,-3.814);
\draw[line cap=butt] (2.473,-3.493) -- (2.457,-3.597) -- (2.450,-3.650) -- (2.444,-3.704) -- (2.434,-3.814);
\draw[white, line width=4.5pt, line cap=butt] (2.614,-7.166) -- (2.504,-7.156) -- (2.450,-7.150) -- (2.397,-7.143) -- (2.293,-7.127);
\draw[line cap=butt] (2.614,-7.166) -- (2.504,-7.156) -- (2.450,-7.150) -- (2.397,-7.143) -- (2.293,-7.127);
\draw[white, line width=4.5pt, line cap=butt] (4.708,-7.408) -- (4.727,-7.307) -- (4.743,-7.203) -- (4.744,-7.200) -- (4.756,-7.096) -- (4.767,-6.986);
\draw[line cap=butt] (4.708,-7.408) -- (4.727,-7.307) -- (4.743,-7.203) -- (4.744,-7.200) -- (4.756,-7.096) -- (4.767,-6.986);
\draw[white, line width=4.5pt, line cap=butt] (5.011,-5.964) -- (4.903,-5.955) -- (4.798,-5.944) -- (4.798,-5.944) -- (4.695,-5.930) -- (4.596,-5.912);
\draw[line cap=butt] (5.011,-5.964) -- (4.903,-5.955) -- (4.798,-5.944) -- (4.798,-5.944) -- (4.695,-5.930) -- (4.596,-5.912);
\draw[white, line width=4.5pt, line cap=butt] (4.587,-3.634) -- (4.696,-3.644) -- (4.802,-3.657) -- (4.803,-3.657) -- (4.907,-3.673) -- (5.008,-3.692);
\draw[line cap=butt] (4.587,-3.634) -- (4.696,-3.644) -- (4.802,-3.657) -- (4.803,-3.657) -- (4.907,-3.673) -- (5.008,-3.692);
\draw[white, line width=4.5pt, line cap=butt] (8.308,-7.408) -- (8.327,-7.307) -- (8.343,-7.203) -- (8.344,-7.198) -- (8.356,-7.096) -- (8.366,-6.987);
\draw[line cap=butt] (8.308,-7.408) -- (8.327,-7.307) -- (8.343,-7.203) -- (8.344,-7.198) -- (8.356,-7.096) -- (8.366,-6.987);
\draw[white, line width=4.5pt, line cap=butt] (6.180,-7.200) -- (6.060,-7.200) -- (6.056,-7.200) -- (5.940,-7.200) -- (5.820,-7.200);
\draw[line cap=butt] (6.180,-7.200) -- (6.060,-7.200) -- (6.056,-7.200) -- (5.940,-7.200) -- (5.820,-7.200);
\draw[white, line width=4.5pt, line cap=butt] (5.996,-5.704) -- (5.998,-5.822) -- (5.999,-5.941) -- (5.999,-5.942) -- (6.001,-6.059) -- (6.002,-6.178);
\draw[line cap=butt] (5.996,-5.704) -- (5.998,-5.822) -- (5.999,-5.941) -- (5.999,-5.942) -- (6.001,-6.059) -- (6.002,-6.178);
\draw[red!75!black, line width=0.8pt] (7.199,-4.798) circle (0.512);
\draw[blue!70!black, line width=0.8pt, densely dashed] (8.339,-4.751) circle (0.512);
\end{tikzpicture}

%% file: figures/cc3-3-snappy-mirror-11a169.tex
\begin{tikzpicture}[line width=1.0pt, line cap=round, line join=round]
\draw (6.786,-3.960) -- (6.824,-4.010) -- (6.853,-4.063) -- (6.873,-4.117) -- (6.883,-4.172) -- (6.883,-4.228) -- (6.873,-4.283) -- (6.854,-4.337) -- (6.825,-4.389) -- (6.788,-4.438) -- (6.741,-4.484) -- (6.687,-4.527) -- (6.625,-4.565) -- (6.556,-4.600) -- (6.481,-4.629) -- (6.401,-4.654) -- (6.316,-4.673) -- (6.228,-4.688) -- (6.137,-4.697) -- (6.045,-4.702) -- (5.951,-4.701) -- (5.858,-4.695) -- (5.766,-4.683) -- (5.676,-4.666) -- (5.589,-4.644) -- (5.505,-4.615) -- (5.425,-4.581) -- (5.350,-4.540) -- (5.279,-4.493) -- (5.215,-4.440) -- (5.155,-4.381) -- (5.102,-4.316) -- (5.054,-4.245) -- (5.011,-4.168) -- (4.974,-4.085) -- (4.942,-3.997) -- (4.915,-3.905) -- (4.891,-3.809) -- (4.872,-3.708) -- (4.855,-3.605) -- (4.841,-3.499) -- (4.830,-3.390) -- (4.820,-3.280) -- (4.812,-3.168) -- (4.804,-3.056) -- (4.796,-2.944) -- (4.788,-2.832) -- (4.780,-2.720) -- (4.770,-2.610) -- (4.759,-2.501) -- (4.745,-2.395) -- (4.728,-2.292) -- (4.709,-2.191) -- (4.685,-2.095) -- (4.658,-2.003) -- (4.626,-1.915) -- (4.589,-1.832) -- (4.546,-1.755) -- (4.498,-1.684) -- (4.445,-1.619) -- (4.385,-1.560) -- (4.321,-1.507) -- (4.250,-1.460) -- (4.175,-1.419) -- (4.095,-1.385) -- (4.011,-1.357) -- (3.924,-1.334) -- (3.833,-1.317) -- (3.741,-1.306) -- (3.647,-1.301) -- (3.553,-1.301) -- (3.459,-1.306) -- (3.367,-1.317) -- (3.276,-1.334) -- (3.189,-1.357) -- (3.105,-1.385) -- (3.025,-1.419) -- (2.950,-1.460) -- (2.879,-1.507) -- (2.815,-1.560) -- (2.755,-1.619) -- (2.702,-1.684) -- (2.654,-1.755) -- (2.612,-1.833) -- (2.575,-1.915) -- (2.542,-2.003) -- (2.515,-2.095) -- (2.492,-2.192) -- (2.473,-2.293) -- (2.457,-2.397) -- (2.444,-2.504) -- (2.433,-2.614) -- (2.425,-2.725) -- (2.419,-2.839) -- (2.414,-2.954) -- (2.410,-3.070) -- (2.407,-3.187) -- (2.405,-3.305) -- (2.404,-3.424) -- (2.402,-3.542) -- (2.402,-3.662) -- (2.401,-3.781) -- (2.401,-3.901) -- (2.400,-4.020) -- (2.400,-4.140) -- (2.400,-4.260) -- (2.400,-4.380) -- (2.400,-4.500) -- (2.400,-4.620) -- (2.400,-4.740) -- (2.400,-4.860) -- (2.400,-4.980) -- (2.400,-5.100) -- (2.400,-5.220) -- (2.400,-5.340) -- (2.400,-5.460) -- (2.400,-5.580) -- (2.401,-5.699) -- (2.401,-5.819) -- (2.402,-5.938) -- (2.402,-6.058) -- (2.404,-6.176) -- (2.405,-6.295) -- (2.407,-6.413) -- (2.410,-6.530) -- (2.414,-6.646) -- (2.419,-6.761) -- (2.425,-6.875) -- (2.434,-6.986) -- (2.444,-7.096) -- (2.457,-7.203) -- (2.473,-7.307) -- (2.492,-7.408) -- (2.515,-7.505) -- (2.543,-7.597) -- (2.575,-7.685) -- (2.612,-7.768) -- (2.655,-7.845) -- (2.703,-7.917) -- (2.757,-7.983) -- (2.817,-8.043) -- (2.883,-8.097) -- (2.955,-8.145) -- (3.032,-8.188) -- (3.115,-8.225) -- (3.202,-8.257) -- (3.295,-8.284) -- (3.391,-8.307) -- (3.492,-8.326) -- (3.595,-8.341) -- (3.701,-8.354) -- (3.810,-8.363) -- (3.920,-8.370) -- (4.032,-8.374) -- (4.144,-8.376) -- (4.256,-8.376) -- (4.368,-8.374) -- (4.480,-8.370) -- (4.590,-8.363) -- (4.699,-8.354) -- (4.805,-8.341) -- (4.908,-8.326) -- (5.009,-8.307) -- (5.105,-8.284) -- (5.198,-8.257) -- (5.285,-8.225) -- (5.368,-8.188) -- (5.445,-8.145) -- (5.517,-8.097) -- (5.583,-8.043) -- (5.643,-7.983) -- (5.697,-7.917) -- (5.745,-7.845) -- (5.788,-7.768) -- (5.825,-7.685) -- (5.857,-7.597) -- (5.885,-7.505) -- (5.908,-7.408) -- (5.927,-7.307) -- (5.943,-7.203) -- (5.956,-7.096) -- (5.966,-6.986) -- (5.975,-6.875) -- (5.981,-6.761) -- (5.986,-6.646) -- (5.990,-6.530) -- (5.993,-6.413) -- (5.995,-6.295) -- (5.996,-6.176) -- (5.998,-6.058) -- (5.998,-5.938) -- (5.999,-5.819) -- (5.999,-5.699) -- (6.000,-5.580) -- (6.000,-5.460) -- (6.000,-5.340) -- (6.000,-5.220) -- (6.000,-5.100) -- (6.000,-4.980) -- (6.000,-4.860) -- (6.000,-4.740) -- (6.000,-4.620) -- (6.000,-4.500) -- (6.000,-4.380) -- (6.000,-4.260) -- (6.000,-4.140) -- (6.000,-4.020) -- (6.001,-3.901) -- (6.001,-3.781) -- (6.002,-3.662) -- (6.002,-3.542) -- (6.004,-3.424) -- (6.005,-3.305) -- (6.007,-3.187) -- (6.010,-3.070) -- (6.014,-2.954) -- (6.019,-2.839) -- (6.025,-2.725) -- (6.033,-2.614) -- (6.044,-2.504) -- (6.057,-2.397) -- (6.073,-2.293) -- (6.092,-2.192) -- (6.115,-2.095) -- (6.142,-2.003) -- (6.175,-1.915) -- (6.212,-1.833) -- (6.254,-1.755) -- (6.302,-1.684) -- (6.355,-1.619) -- (6.415,-1.560) -- (6.479,-1.507) -- (6.550,-1.460) -- (6.625,-1.419) -- (6.705,-1.385) -- (6.789,-1.357) -- (6.876,-1.334) -- (6.967,-1.317) -- (7.059,-1.306) -- (7.153,-1.301) -- (7.247,-1.301) -- (7.341,-1.306) -- (7.433,-1.317) -- (7.524,-1.334) -- (7.611,-1.357) -- (7.695,-1.385) -- (7.775,-1.419) -- (7.850,-1.460) -- (7.921,-1.507) -- (7.985,-1.560) -- (8.045,-1.619) -- (8.098,-1.684) -- (8.146,-1.755) -- (8.188,-1.833) -- (8.225,-1.915) -- (8.258,-2.003) -- (8.285,-2.095) -- (8.308,-2.192) -- (8.327,-2.293) -- (8.343,-2.397) -- (8.356,-2.504) -- (8.366,-2.613) -- (8.375,-2.725) -- (8.381,-2.839) -- (8.386,-2.954) -- (8.390,-3.070) -- (8.392,-3.187) -- (8.394,-3.304) -- (8.395,-3.422) -- (8.396,-3.541) -- (8.396,-3.659) -- (8.395,-3.778) -- (8.394,-3.896) -- (8.392,-4.013) -- (8.390,-4.130) -- (8.386,-4.246) -- (8.381,-4.361) -- (8.375,-4.475) -- (8.366,-4.587) -- (8.356,-4.696) -- (8.343,-4.803) -- (8.327,-4.907) -- (8.308,-5.008) -- (8.285,-5.105) -- (8.257,-5.197) -- (8.225,-5.285) -- (8.188,-5.368) -- (8.145,-5.445) -- (8.097,-5.517) -- (8.043,-5.583) -- (7.983,-5.643) -- (7.917,-5.697) -- (7.845,-5.745) -- (7.768,-5.788) -- (7.685,-5.825) -- (7.597,-5.857) -- (7.505,-5.885) -- (7.408,-5.908) -- (7.307,-5.927) -- (7.203,-5.943) -- (7.096,-5.956) -- (6.986,-5.966) -- (6.875,-5.975) -- (6.761,-5.981) -- (6.646,-5.986) -- (6.530,-5.990) -- (6.413,-5.993) -- (6.295,-5.995) -- (6.176,-5.996) -- (6.058,-5.998) -- (5.938,-5.998) -- (5.819,-5.999) -- (5.699,-5.999) -- (5.580,-6.000) -- (5.460,-6.000) -- (5.340,-6.000) -- (5.220,-6.000) -- (5.100,-6.000) -- (4.980,-6.000) -- (4.860,-6.000) -- (4.740,-6.000) -- (4.620,-6.000) -- (4.500,-6.000) -- (4.380,-6.000) -- (4.260,-6.000) -- (4.140,-6.000) -- (4.020,-6.000) -- (3.901,-5.999) -- (3.781,-5.999) -- (3.662,-5.998) -- (3.542,-5.998) -- (3.424,-5.996) -- (3.305,-5.995) -- (3.187,-5.993) -- (3.070,-5.990) -- (2.954,-5.986) -- (2.839,-5.981) -- (2.725,-5.975) -- (2.614,-5.966) -- (2.504,-5.956) -- (2.397,-5.943) -- (2.293,-5.927) -- (2.192,-5.908) -- (2.095,-5.885) -- (2.003,-5.857) -- (1.915,-5.825) -- (1.832,-5.788) -- (1.755,-5.745) -- (1.683,-5.697) -- (1.617,-5.643) -- (1.557,-5.583) -- (1.503,-5.517) -- (1.455,-5.445) -- (1.412,-5.368) -- (1.375,-5.285) -- (1.343,-5.198) -- (1.316,-5.105) -- (1.293,-5.009) -- (1.274,-4.908) -- (1.259,-4.805) -- (1.246,-4.699) -- (1.237,-4.590) -- (1.230,-4.480) -- (1.226,-4.368) -- (1.224,-4.256) -- (1.224,-4.144) -- (1.226,-4.032) -- (1.230,-3.920) -- (1.237,-3.810) -- (1.246,-3.701) -- (1.259,-3.595) -- (1.274,-3.492) -- (1.293,-3.391) -- (1.316,-3.295) -- (1.343,-3.202) -- (1.375,-3.115) -- (1.412,-3.032) -- (1.455,-2.955) -- (1.503,-2.883) -- (1.557,-2.817) -- (1.617,-2.757) -- (1.683,-2.703) -- (1.755,-2.655) -- (1.832,-2.612) -- (1.915,-2.575) -- (2.003,-2.543) -- (2.095,-2.515) -- (2.192,-2.492) -- (2.293,-2.473) -- (2.397,-2.457) -- (2.504,-2.444) -- (2.614,-2.434) -- (2.725,-2.425) -- (2.839,-2.419) -- (2.954,-2.414) -- (3.070,-2.410) -- (3.187,-2.407) -- (3.305,-2.405) -- (3.424,-2.404) -- (3.542,-2.402) -- (3.662,-2.402) -- (3.781,-2.401) -- (3.901,-2.401) -- (4.020,-2.400) -- (4.140,-2.400) -- (4.260,-2.400) -- (4.380,-2.400) -- (4.500,-2.400) -- (4.620,-2.400) -- (4.740,-2.400) -- (4.860,-2.400) -- (4.980,-2.400) -- (5.100,-2.400) -- (5.220,-2.400) -- (5.340,-2.400) -- (5.460,-2.400) -- (5.580,-2.400) -- (5.700,-2.400) -- (5.820,-2.400) -- (5.940,-2.400) -- (6.060,-2.400) -- (6.180,-2.400) -- (6.300,-2.400) -- (6.420,-2.400) -- (6.540,-2.400) -- (6.660,-2.400) -- (6.780,-2.400) -- (6.899,-2.401) -- (7.019,-2.401) -- (7.138,-2.402) -- (7.258,-2.402) -- (7.376,-2.404) -- (7.495,-2.405) -- (7.613,-2.407) -- (7.730,-2.410) -- (7.846,-2.414) -- (7.961,-2.419) -- (8.075,-2.425) -- (8.186,-2.434) -- (8.296,-2.444) -- (8.403,-2.457) -- (8.507,-2.473) -- (8.608,-2.492) -- (8.705,-2.515) -- (8.797,-2.543) -- (8.885,-2.575) -- (8.968,-2.612) -- (9.045,-2.655) -- (9.117,-2.703) -- (9.183,-2.757) -- (9.243,-2.817) -- (9.297,-2.883) -- (9.345,-2.955) -- (9.388,-3.032) -- (9.425,-3.115) -- (9.457,-3.203) -- (9.485,-3.295) -- (9.508,-3.392) -- (9.527,-3.493) -- (9.543,-3.597) -- (9.556,-3.704) -- (9.566,-3.813) -- (9.575,-3.925) -- (9.581,-4.039) -- (9.586,-4.154) -- (9.590,-4.270) -- (9.592,-4.387) -- (9.594,-4.504) -- (9.595,-4.622) -- (9.596,-4.741) -- (9.596,-4.859) -- (9.595,-4.978) -- (9.594,-5.096) -- (9.592,-5.213) -- (9.590,-5.330) -- (9.586,-5.446) -- (9.581,-5.561) -- (9.575,-5.675) -- (9.566,-5.787) -- (9.556,-5.896) -- (9.543,-6.003) -- (9.527,-6.107) -- (9.508,-6.208) -- (9.485,-6.305) -- (9.457,-6.397) -- (9.425,-6.485) -- (9.388,-6.568) -- (9.345,-6.645) -- (9.297,-6.717) -- (9.243,-6.783) -- (9.183,-6.843) -- (9.117,-6.897) -- (9.045,-6.945) -- (8.968,-6.988) -- (8.885,-7.025) -- (8.797,-7.057) -- (8.705,-7.085) -- (8.608,-7.108) -- (8.507,-7.127) -- (8.403,-7.143) -- (8.296,-7.156) -- (8.186,-7.166) -- (8.075,-7.175) -- (7.961,-7.181) -- (7.846,-7.186) -- (7.730,-7.190) -- (7.613,-7.193) -- (7.495,-7.195) -- (7.376,-7.196) -- (7.258,-7.198) -- (7.138,-7.198) -- (7.019,-7.199) -- (6.899,-7.199) -- (6.780,-7.199) -- (6.660,-7.199) -- (6.540,-7.199) -- (6.420,-7.199) -- (6.301,-7.199) -- (6.181,-7.199) -- (6.062,-7.198) -- (5.942,-7.198) -- (5.824,-7.196) -- (5.705,-7.195) -- (5.587,-7.193) -- (5.470,-7.190) -- (5.354,-7.186) -- (5.239,-7.181) -- (5.125,-7.175) -- (5.014,-7.166) -- (4.904,-7.156) -- (4.797,-7.143) -- (4.693,-7.127) -- (4.592,-7.108) -- (4.495,-7.085) -- (4.403,-7.057) -- (4.315,-7.025) -- (4.232,-6.988) -- (4.155,-6.945) -- (4.083,-6.897) -- (4.017,-6.843) -- (3.957,-6.783) -- (3.903,-6.717) -- (3.855,-6.645) -- (3.812,-6.568) -- (3.775,-6.485) -- (3.743,-6.398) -- (3.716,-6.305) -- (3.693,-6.209) -- (3.674,-6.108) -- (3.659,-6.005) -- (3.646,-5.899) -- (3.637,-5.790) -- (3.630,-5.680) -- (3.626,-5.568) -- (3.624,-5.456) -- (3.624,-5.344) -- (3.626,-5.232) -- (3.630,-5.120) -- (3.637,-5.010) -- (3.646,-4.901) -- (3.659,-4.795) -- (3.674,-4.692) -- (3.693,-4.591) -- (3.716,-4.495) -- (3.743,-4.402) -- (3.775,-4.315) -- (3.812,-4.232) -- (3.855,-4.155) -- (3.903,-4.083) -- (3.957,-4.017) -- (4.017,-3.957) -- (4.083,-3.903) -- (4.155,-3.855) -- (4.232,-3.812) -- (4.315,-3.775) -- (4.402,-3.743) -- (4.495,-3.716) -- (4.591,-3.693) -- (4.692,-3.674) -- (4.795,-3.658) -- (4.901,-3.646) -- (5.010,-3.637) -- (5.120,-3.630) -- (5.231,-3.626) -- (5.344,-3.624) -- (5.456,-3.624) -- (5.568,-3.626) -- (5.679,-3.630) -- (5.789,-3.636) -- (5.897,-3.645) -- (6.002,-3.656) -- (6.105,-3.670) -- (6.204,-3.688) -- (6.298,-3.708) -- (6.388,-3.733) -- (6.471,-3.761) -- (6.549,-3.793) -- (6.620,-3.830) -- (6.684,-3.870) -- (6.739,-3.913) -- cycle;
\draw[white, line width=4.5pt, line cap=butt] (6.000,-4.860) -- (6.000,-4.740) -- (6.000,-4.701) -- (6.000,-4.620) -- (6.000,-4.500);
\draw[line cap=butt] (6.000,-4.860) -- (6.000,-4.740) -- (6.000,-4.701) -- (6.000,-4.620) -- (6.000,-4.500);
\draw[white, line width=4.5pt, line cap=butt] (4.891,-3.809) -- (4.872,-3.708) -- (4.862,-3.651) -- (4.855,-3.605) -- (4.841,-3.499);
\draw[line cap=butt] (4.891,-3.809) -- (4.872,-3.708) -- (4.862,-3.651) -- (4.855,-3.605) -- (4.841,-3.499);
\draw[white, line width=4.5pt, line cap=butt] (4.620,-2.400) -- (4.740,-2.400) -- (4.745,-2.400) -- (4.860,-2.400) -- (4.980,-2.400);
\draw[line cap=butt] (4.620,-2.400) -- (4.740,-2.400) -- (4.745,-2.400) -- (4.860,-2.400) -- (4.980,-2.400);
\draw[white, line width=4.5pt, line cap=butt] (2.473,-2.293) -- (2.457,-2.397) -- (2.450,-2.450) -- (2.444,-2.504) -- (2.433,-2.614);
\draw[line cap=butt] (2.473,-2.293) -- (2.457,-2.397) -- (2.450,-2.450) -- (2.444,-2.504) -- (2.433,-2.614);
\draw[white, line width=4.5pt, line cap=butt] (2.614,-5.966) -- (2.504,-5.956) -- (2.402,-5.944) -- (2.397,-5.943) -- (2.293,-5.927) -- (2.192,-5.908);
\draw[line cap=butt] (2.614,-5.966) -- (2.504,-5.956) -- (2.402,-5.944) -- (2.397,-5.943) -- (2.293,-5.927) -- (2.192,-5.908);
\draw[white, line width=4.5pt, line cap=butt] (5.908,-7.408) -- (5.927,-7.307) -- (5.943,-7.203) -- (5.944,-7.198) -- (5.956,-7.096) -- (5.966,-6.986);
\draw[line cap=butt] (5.908,-7.408) -- (5.927,-7.307) -- (5.943,-7.203) -- (5.944,-7.198) -- (5.956,-7.096) -- (5.966,-6.986);
\draw[white, line width=4.5pt, line cap=butt] (6.176,-5.996) -- (6.058,-5.998) -- (5.998,-5.998) -- (5.938,-5.998) -- (5.819,-5.999);
\draw[line cap=butt] (6.176,-5.996) -- (6.058,-5.998) -- (5.998,-5.998) -- (5.938,-5.998) -- (5.819,-5.999);
\draw[white, line width=4.5pt, line cap=butt] (5.789,-3.636) -- (5.897,-3.645) -- (6.002,-3.656) -- (6.002,-3.656) -- (6.105,-3.670) -- (6.204,-3.688);
\draw[line cap=butt] (5.789,-3.636) -- (5.897,-3.645) -- (6.002,-3.656) -- (6.002,-3.656) -- (6.105,-3.670) -- (6.204,-3.688);
\draw[white, line width=4.5pt, line cap=butt] (6.033,-2.614) -- (6.044,-2.504) -- (6.056,-2.400) -- (6.057,-2.397) -- (6.073,-2.293) -- (6.092,-2.192);
\draw[line cap=butt] (6.033,-2.614) -- (6.044,-2.504) -- (6.056,-2.400) -- (6.057,-2.397) -- (6.073,-2.293) -- (6.092,-2.192);
\draw[white, line width=4.5pt, line cap=butt] (8.186,-2.434) -- (8.296,-2.444) -- (8.350,-2.450) -- (8.403,-2.457) -- (8.507,-2.473);
\draw[line cap=butt] (8.186,-2.434) -- (8.296,-2.444) -- (8.350,-2.450) -- (8.403,-2.457) -- (8.507,-2.473);
\draw[white, line width=4.5pt, line cap=butt] (3.693,-6.209) -- (3.674,-6.108) -- (3.659,-6.005) -- (3.658,-5.998) -- (3.646,-5.899) -- (3.637,-5.790);
\draw[line cap=butt] (3.693,-6.209) -- (3.674,-6.108) -- (3.659,-6.005) -- (3.658,-5.998) -- (3.646,-5.899) -- (3.637,-5.790);
\end{tikzpicture}

%% file: figures/13a4237-checkerboard.tex
\begin{tikzpicture}[knot/.style={line width=1.0pt, line cap=round, line join=round},
                    shaded/.style={fill=black!22}, whitereg/.style={fill=white},
                    reglabel/.style={font=\small, inner sep=1pt}]
\fill[shaded, rounded corners=8pt] (0.774,-8.749) rectangle (11.249,-0.751);
\fill[whitereg] (2.450,-3.650) -- (2.457,-3.597) -- (2.473,-3.493) -- (2.492,-3.392) -- (2.515,-3.295) -- (2.543,-3.203) -- (2.575,-3.115) -- (2.612,-3.032) -- (2.655,-2.955) -- (2.703,-2.883) -- (2.757,-2.817) -- (2.817,-2.757) -- (2.883,-2.703) -- (2.955,-2.655) -- (3.032,-2.612) -- (3.115,-2.575) -- (3.203,-2.543) -- (3.295,-2.515) -- (3.392,-2.492) -- (3.493,-2.473) -- (3.597,-2.457) -- (3.704,-2.444) -- (3.814,-2.434) -- (3.925,-2.425) -- (4.039,-2.419) -- (4.154,-2.414) -- (4.270,-2.410) -- (4.387,-2.407) -- (4.505,-2.405) -- (4.624,-2.404) -- (4.742,-2.402) -- (4.856,-2.402) -- (4.844,-2.504) -- (4.834,-2.614) -- (4.825,-2.725) -- (4.819,-2.839) -- (4.814,-2.954) -- (4.810,-3.070) -- (4.807,-3.187) -- (4.805,-3.305) -- (4.804,-3.424) -- (4.802,-3.542) -- (4.802,-3.657) -- (4.696,-3.644) -- (4.587,-3.634) -- (4.475,-3.625) -- (4.361,-3.619) -- (4.246,-3.614) -- (4.130,-3.610) -- (4.013,-3.608) -- (3.896,-3.606) -- (3.778,-3.605) -- (3.659,-3.604) -- (3.541,-3.604) -- (3.422,-3.605) -- (3.304,-3.606) -- (3.187,-3.608) -- (3.070,-3.610) -- (2.954,-3.614) -- (2.839,-3.619) -- (2.725,-3.625) -- (2.613,-3.634) -- (2.504,-3.644) -- cycle;
\fill[whitereg] (5.944,-4.802) -- (5.942,-4.802) -- (5.824,-4.804) -- (5.705,-4.805) -- (5.587,-4.807) -- (5.470,-4.810) -- (5.354,-4.814) -- (5.239,-4.818) -- (5.126,-4.825) -- (5.015,-4.832) -- (4.906,-4.842) -- (4.800,-4.854) -- (4.800,-4.740) -- (4.800,-4.620) -- (4.800,-4.500) -- (4.800,-4.380) -- (4.800,-4.260) -- (4.800,-4.140) -- (4.800,-4.020) -- (4.801,-3.901) -- (4.801,-3.781) -- (4.802,-3.662) -- (4.802,-3.657) -- (4.803,-3.657) -- (4.907,-3.673) -- (5.008,-3.692) -- (5.105,-3.715) -- (5.197,-3.743) -- (5.285,-3.775) -- (5.368,-3.812) -- (5.445,-3.855) -- (5.517,-3.903) -- (5.583,-3.957) -- (5.643,-4.017) -- (5.697,-4.083) -- (5.745,-4.155) -- (5.788,-4.232) -- (5.825,-4.315) -- (5.857,-4.403) -- (5.885,-4.495) -- (5.908,-4.592) -- (5.927,-4.693) -- (5.943,-4.797) -- cycle;
\fill[whitereg] (8.339,-4.751) -- (8.296,-4.756) -- (8.186,-4.767) -- (8.075,-4.775) -- (7.961,-4.781) -- (7.846,-4.786) -- (7.730,-4.790) -- (7.613,-4.793) -- (7.495,-4.795) -- (7.376,-4.796) -- (7.258,-4.798) -- (7.199,-4.798) -- (7.210,-4.755) -- (7.236,-4.663) -- (7.262,-4.572) -- (7.289,-4.484) -- (7.319,-4.399) -- (7.351,-4.319) -- (7.386,-4.244) -- (7.424,-4.175) -- (7.466,-4.113) -- (7.511,-4.059) -- (7.559,-4.012) -- (7.609,-3.975) -- (7.662,-3.946) -- (7.716,-3.927) -- (7.772,-3.917) -- (7.828,-3.917) -- (7.883,-3.927) -- (7.937,-3.947) -- (7.990,-3.976) -- (8.040,-4.014) -- (8.087,-4.061) -- (8.131,-4.116) -- (8.171,-4.180) -- (8.207,-4.251) -- (8.239,-4.329) -- (8.267,-4.413) -- (8.292,-4.502) -- (8.313,-4.597) -- (8.331,-4.696) -- cycle;
\fill[whitereg] (5.944,-4.802) -- (6.062,-4.802) -- (6.181,-4.801) -- (6.301,-4.801) -- (6.420,-4.800) -- (6.540,-4.800) -- (6.660,-4.800) -- (6.780,-4.800) -- (6.899,-4.799) -- (7.019,-4.799) -- (7.138,-4.798) -- (7.199,-4.798) -- (7.185,-4.849) -- (7.160,-4.942) -- (7.132,-5.034) -- (7.103,-5.124) -- (7.071,-5.211) -- (7.035,-5.295) -- (6.995,-5.375) -- (6.950,-5.450) -- (6.901,-5.521) -- (6.845,-5.585) -- (6.785,-5.645) -- (6.718,-5.698) -- (6.646,-5.746) -- (6.569,-5.788) -- (6.486,-5.825) -- (6.398,-5.857) -- (6.305,-5.884) -- (6.209,-5.907) -- (6.108,-5.926) -- (6.005,-5.942) -- (5.999,-5.942) -- (5.999,-5.941) -- (5.998,-5.822) -- (5.996,-5.704) -- (5.993,-5.587) -- (5.990,-5.470) -- (5.986,-5.354) -- (5.981,-5.239) -- (5.975,-5.125) -- (5.967,-5.013) -- (5.956,-4.904) -- cycle;
\fill[whitereg] (2.450,-3.650) -- (2.444,-3.704) -- (2.434,-3.814) -- (2.425,-3.925) -- (2.419,-4.039) -- (2.414,-4.154) -- (2.410,-4.270) -- (2.407,-4.387) -- (2.405,-4.505) -- (2.404,-4.624) -- (2.402,-4.742) -- (2.402,-4.862) -- (2.401,-4.981) -- (2.401,-5.101) -- (2.401,-5.220) -- (2.401,-5.340) -- (2.401,-5.460) -- (2.401,-5.580) -- (2.401,-5.699) -- (2.401,-5.819) -- (2.402,-5.938) -- (2.402,-6.058) -- (2.404,-6.176) -- (2.405,-6.295) -- (2.407,-6.413) -- (2.410,-6.530) -- (2.414,-6.646) -- (2.419,-6.761) -- (2.425,-6.875) -- (2.433,-6.986) -- (2.444,-7.096) -- (2.450,-7.150) -- (2.397,-7.143) -- (2.293,-7.127) -- (2.192,-7.108) -- (2.095,-7.085) -- (2.003,-7.057) -- (1.915,-7.025) -- (1.832,-6.988) -- (1.755,-6.945) -- (1.683,-6.897) -- (1.617,-6.843) -- (1.557,-6.783) -- (1.503,-6.717) -- (1.455,-6.645) -- (1.412,-6.568) -- (1.375,-6.485) -- (1.343,-6.398) -- (1.316,-6.305) -- (1.293,-6.209) -- (1.274,-6.108) -- (1.259,-6.005) -- (1.246,-5.899) -- (1.237,-5.790) -- (1.230,-5.680) -- (1.226,-5.568) -- (1.224,-5.456) -- (1.224,-5.344) -- (1.226,-5.232) -- (1.230,-5.120) -- (1.237,-5.010) -- (1.246,-4.901) -- (1.259,-4.795) -- (1.274,-4.692) -- (1.293,-4.591) -- (1.316,-4.495) -- (1.343,-4.402) -- (1.375,-4.315) -- (1.412,-4.232) -- (1.455,-4.155) -- (1.503,-4.083) -- (1.557,-4.017) -- (1.617,-3.957) -- (1.683,-3.903) -- (1.755,-3.855) -- (1.832,-3.812) -- (1.915,-3.775) -- (2.003,-3.743) -- (2.095,-3.715) -- (2.192,-3.692) -- (2.293,-3.673) -- (2.397,-3.657) -- cycle;
\fill[whitereg] (4.014,-5.640) -- (3.976,-5.590) -- (3.947,-5.537) -- (3.927,-5.483) -- (3.918,-5.428) -- (3.918,-5.372) -- (3.927,-5.317) -- (3.947,-5.263) -- (3.976,-5.210) -- (4.014,-5.160) -- (4.061,-5.113) -- (4.116,-5.069) -- (4.180,-5.029) -- (4.251,-4.993) -- (4.329,-4.961) -- (4.413,-4.933) -- (4.502,-4.908) -- (4.597,-4.887) -- (4.696,-4.869) -- (4.799,-4.854) -- (4.800,-4.854) -- (4.800,-4.860) -- (4.800,-4.980) -- (4.800,-5.100) -- (4.800,-5.220) -- (4.800,-5.340) -- (4.800,-5.460) -- (4.800,-5.580) -- (4.799,-5.699) -- (4.799,-5.819) -- (4.798,-5.938) -- (4.798,-5.944) -- (4.798,-5.944) -- (4.695,-5.930) -- (4.596,-5.912) -- (4.502,-5.892) -- (4.412,-5.867) -- (4.329,-5.839) -- (4.251,-5.807) -- (4.180,-5.770) -- (4.116,-5.730) -- (4.061,-5.687) -- cycle;
\fill[whitereg] (8.339,-4.751) -- (8.403,-4.743) -- (8.507,-4.727) -- (8.608,-4.708) -- (8.705,-4.685) -- (8.797,-4.658) -- (8.885,-4.625) -- (8.967,-4.588) -- (9.045,-4.546) -- (9.116,-4.498) -- (9.181,-4.445) -- (9.240,-4.385) -- (9.293,-4.321) -- (9.340,-4.250) -- (9.381,-4.175) -- (9.415,-4.095) -- (9.443,-4.011) -- (9.466,-3.924) -- (9.483,-3.833) -- (9.494,-3.741) -- (9.499,-3.647) -- (9.499,-3.553) -- (9.494,-3.459) -- (9.483,-3.367) -- (9.466,-3.276) -- (9.443,-3.189) -- (9.415,-3.105) -- (9.381,-3.025) -- (9.340,-2.950) -- (9.293,-2.879) -- (9.240,-2.815) -- (9.181,-2.755) -- (9.116,-2.702) -- (9.045,-2.654) -- (8.967,-2.612) -- (8.885,-2.575) -- (8.797,-2.542) -- (8.705,-2.515) -- (8.608,-2.492) -- (8.507,-2.473) -- (8.403,-2.457) -- (8.296,-2.444) -- (8.186,-2.433) -- (8.075,-2.425) -- (7.961,-2.419) -- (7.846,-2.414) -- (7.730,-2.410) -- (7.613,-2.407) -- (7.495,-2.405) -- (7.376,-2.404) -- (7.258,-2.402) -- (7.138,-2.402) -- (7.019,-2.401) -- (6.899,-2.401) -- (6.780,-2.400) -- (6.660,-2.400) -- (6.540,-2.400) -- (6.420,-2.400) -- (6.300,-2.400) -- (6.180,-2.400) -- (6.060,-2.400) -- (5.940,-2.400) -- (5.820,-2.400) -- (5.700,-2.400) -- (5.580,-2.400) -- (5.460,-2.400) -- (5.340,-2.400) -- (5.220,-2.400) -- (5.101,-2.401) -- (4.981,-2.401) -- (4.862,-2.402) -- (4.856,-2.402) -- (4.857,-2.397) -- (4.873,-2.293) -- (4.892,-2.192) -- (4.915,-2.095) -- (4.943,-2.003) -- (4.975,-1.915) -- (5.012,-1.832) -- (5.055,-1.755) -- (5.103,-1.683) -- (5.157,-1.617) -- (5.217,-1.557) -- (5.283,-1.503) -- (5.355,-1.455) -- (5.432,-1.412) -- (5.515,-1.375) -- (5.603,-1.343) -- (5.695,-1.315) -- (5.792,-1.292) -- (5.893,-1.273) -- (5.997,-1.257) -- (6.104,-1.244) -- (6.214,-1.234) -- (6.325,-1.225) -- (6.439,-1.219) -- (6.554,-1.214) -- (6.670,-1.210) -- (6.787,-1.207) -- (6.905,-1.205) -- (7.024,-1.204) -- (7.142,-1.202) -- (7.262,-1.202) -- (7.381,-1.201) -- (7.501,-1.201) -- (7.620,-1.201) -- (7.740,-1.201) -- (7.860,-1.201) -- (7.980,-1.201) -- (8.099,-1.201) -- (8.219,-1.201) -- (8.338,-1.202) -- (8.458,-1.202) -- (8.576,-1.204) -- (8.695,-1.205) -- (8.813,-1.207) -- (8.930,-1.210) -- (9.046,-1.214) -- (9.161,-1.219) -- (9.275,-1.225) -- (9.386,-1.234) -- (9.496,-1.244) -- (9.603,-1.257) -- (9.707,-1.273) -- (9.808,-1.292) -- (9.905,-1.315) -- (9.997,-1.343) -- (10.085,-1.375) -- (10.168,-1.412) -- (10.245,-1.455) -- (10.317,-1.503) -- (10.383,-1.557) -- (10.443,-1.617) -- (10.497,-1.683) -- (10.545,-1.755) -- (10.588,-1.832) -- (10.625,-1.915) -- (10.657,-2.003) -- (10.685,-2.095) -- (10.708,-2.192) -- (10.727,-2.293) -- (10.743,-2.397) -- (10.756,-2.504) -- (10.766,-2.614) -- (10.775,-2.725) -- (10.781,-2.839) -- (10.786,-2.954) -- (10.790,-3.070) -- (10.793,-3.187) -- (10.795,-3.305) -- (10.796,-3.424) -- (10.798,-3.542) -- (10.798,-3.662) -- (10.799,-3.781) -- (10.799,-3.901) -- (10.799,-4.020) -- (10.799,-4.140) -- (10.799,-4.260) -- (10.799,-4.380) -- (10.799,-4.499) -- (10.799,-4.619) -- (10.798,-4.738) -- (10.798,-4.858) -- (10.796,-4.976) -- (10.795,-5.095) -- (10.793,-5.213) -- (10.790,-5.330) -- (10.786,-5.446) -- (10.781,-5.561) -- (10.775,-5.675) -- (10.766,-5.786) -- (10.756,-5.896) -- (10.743,-6.003) -- (10.727,-6.107) -- (10.708,-6.208) -- (10.685,-6.305) -- (10.657,-6.397) -- (10.625,-6.485) -- (10.588,-6.568) -- (10.545,-6.645) -- (10.497,-6.717) -- (10.443,-6.783) -- (10.383,-6.843) -- (10.317,-6.897) -- (10.245,-6.945) -- (10.168,-6.988) -- (10.085,-7.025) -- (9.997,-7.057) -- (9.905,-7.085) -- (9.808,-7.108) -- (9.707,-7.127) -- (9.603,-7.143) -- (9.496,-7.156) -- (9.386,-7.166) -- (9.275,-7.175) -- (9.161,-7.181) -- (9.046,-7.186) -- (8.930,-7.190) -- (8.813,-7.193) -- (8.695,-7.195) -- (8.576,-7.196) -- (8.458,-7.198) -- (8.344,-7.198) -- (8.356,-7.096) -- (8.366,-6.987) -- (8.375,-6.875) -- (8.381,-6.761) -- (8.386,-6.646) -- (8.390,-6.530) -- (8.392,-6.413) -- (8.394,-6.296) -- (8.395,-6.178) -- (8.396,-6.059) -- (8.396,-5.941) -- (8.395,-5.822) -- (8.394,-5.704) -- (8.392,-5.587) -- (8.390,-5.470) -- (8.386,-5.354) -- (8.382,-5.239) -- (8.375,-5.126) -- (8.368,-5.015) -- (8.358,-4.906) -- (8.346,-4.799) -- cycle;
\fill[whitereg] (4.744,-7.200) -- (4.756,-7.096) -- (4.767,-6.986) -- (4.775,-6.875) -- (4.781,-6.761) -- (4.786,-6.646) -- (4.790,-6.530) -- (4.793,-6.413) -- (4.795,-6.295) -- (4.796,-6.176) -- (4.798,-6.058) -- (4.798,-5.944) -- (4.903,-5.955) -- (5.011,-5.964) -- (5.121,-5.970) -- (5.232,-5.974) -- (5.344,-5.976) -- (5.456,-5.976) -- (5.569,-5.974) -- (5.680,-5.970) -- (5.790,-5.963) -- (5.899,-5.954) -- (5.999,-5.942) -- (6.001,-6.059) -- (6.002,-6.178) -- (6.004,-6.296) -- (6.007,-6.413) -- (6.010,-6.530) -- (6.014,-6.646) -- (6.019,-6.761) -- (6.025,-6.875) -- (6.033,-6.987) -- (6.044,-7.096) -- (6.056,-7.200) -- (5.940,-7.200) -- (5.820,-7.200) -- (5.700,-7.200) -- (5.580,-7.200) -- (5.460,-7.200) -- (5.340,-7.200) -- (5.220,-7.200) -- (5.100,-7.200) -- (4.980,-7.200) -- (4.860,-7.200) -- cycle;
\fill[whitereg] (4.744,-7.200) -- (4.743,-7.203) -- (4.727,-7.307) -- (4.708,-7.408) -- (4.685,-7.505) -- (4.658,-7.597) -- (4.625,-7.685) -- (4.588,-7.767) -- (4.546,-7.845) -- (4.498,-7.916) -- (4.445,-7.981) -- (4.385,-8.040) -- (4.321,-8.093) -- (4.250,-8.140) -- (4.175,-8.181) -- (4.095,-8.215) -- (4.011,-8.243) -- (3.924,-8.266) -- (3.833,-8.283) -- (3.741,-8.294) -- (3.647,-8.299) -- (3.553,-8.299) -- (3.459,-8.294) -- (3.367,-8.283) -- (3.276,-8.266) -- (3.189,-8.243) -- (3.105,-8.215) -- (3.025,-8.181) -- (2.950,-8.140) -- (2.879,-8.093) -- (2.815,-8.040) -- (2.755,-7.981) -- (2.702,-7.916) -- (2.654,-7.845) -- (2.612,-7.767) -- (2.575,-7.685) -- (2.542,-7.597) -- (2.515,-7.505) -- (2.492,-7.408) -- (2.473,-7.307) -- (2.457,-7.203) -- (2.450,-7.150) -- (2.504,-7.156) -- (2.614,-7.166) -- (2.725,-7.175) -- (2.839,-7.181) -- (2.954,-7.186) -- (3.070,-7.190) -- (3.187,-7.193) -- (3.305,-7.195) -- (3.424,-7.196) -- (3.542,-7.198) -- (3.662,-7.198) -- (3.781,-7.199) -- (3.901,-7.199) -- (4.020,-7.200) -- (4.140,-7.200) -- (4.260,-7.200) -- (4.380,-7.200) -- (4.500,-7.200) -- (4.620,-7.200) -- (4.740,-7.200) -- cycle;
\fill[whitereg] (6.056,-7.200) -- (6.060,-7.200) -- (6.180,-7.200) -- (6.300,-7.200) -- (6.420,-7.200) -- (6.540,-7.200) -- (6.660,-7.200) -- (6.780,-7.200) -- (6.900,-7.200) -- (7.020,-7.200) -- (7.140,-7.200) -- (7.260,-7.200) -- (7.380,-7.200) -- (7.500,-7.200) -- (7.620,-7.200) -- (7.740,-7.200) -- (7.860,-7.200) -- (7.980,-7.200) -- (8.099,-7.199) -- (8.219,-7.199) -- (8.338,-7.198) -- (8.344,-7.198) -- (8.343,-7.203) -- (8.327,-7.307) -- (8.308,-7.408) -- (8.285,-7.505) -- (8.258,-7.597) -- (8.225,-7.685) -- (8.188,-7.767) -- (8.146,-7.845) -- (8.098,-7.916) -- (8.045,-7.981) -- (7.985,-8.040) -- (7.921,-8.093) -- (7.850,-8.140) -- (7.775,-8.181) -- (7.695,-8.215) -- (7.611,-8.243) -- (7.524,-8.266) -- (7.433,-8.283) -- (7.341,-8.294) -- (7.247,-8.299) -- (7.153,-8.299) -- (7.059,-8.294) -- (6.967,-8.283) -- (6.876,-8.266) -- (6.789,-8.243) -- (6.705,-8.215) -- (6.625,-8.181) -- (6.550,-8.140) -- (6.479,-8.093) -- (6.415,-8.040) -- (6.355,-7.981) -- (6.302,-7.916) -- (6.254,-7.845) -- (6.212,-7.767) -- (6.175,-7.685) -- (6.142,-7.597) -- (6.115,-7.505) -- (6.092,-7.408) -- (6.073,-7.307) -- (6.057,-7.203) -- cycle;
\draw[knot] (4.929,-4.840) -- (5.015,-4.832) -- (5.126,-4.825) -- (5.239,-4.818) -- (5.354,-4.814) -- (5.470,-4.810) -- (5.587,-4.807) -- (5.705,-4.805) -- (5.824,-4.804) -- (5.942,-4.802) -- (6.062,-4.802) -- (6.181,-4.801) -- (6.301,-4.801) -- (6.420,-4.800) -- (6.540,-4.800) -- (6.660,-4.800) -- (6.780,-4.800) -- (6.899,-4.799) -- (7.019,-4.799) -- (7.069,-4.799);
\draw[knot] (7.329,-4.797) -- (7.376,-4.796) -- (7.495,-4.795) -- (7.613,-4.793) -- (7.730,-4.790) -- (7.846,-4.786) -- (7.961,-4.781) -- (8.075,-4.775) -- (8.186,-4.767) -- (8.296,-4.756) -- (8.403,-4.743) -- (8.507,-4.727) -- (8.608,-4.708) -- (8.705,-4.685) -- (8.797,-4.658) -- (8.885,-4.625) -- (8.967,-4.588) -- (9.045,-4.546) -- (9.116,-4.498) -- (9.181,-4.445) -- (9.240,-4.385) -- (9.293,-4.321) -- (9.340,-4.250) -- (9.381,-4.175) -- (9.415,-4.095) -- (9.443,-4.011) -- (9.466,-3.924) -- (9.483,-3.833) -- (9.494,-3.741) -- (9.499,-3.647) -- (9.499,-3.553) -- (9.494,-3.459) -- (9.483,-3.367) -- (9.466,-3.276) -- (9.443,-3.189) -- (9.415,-3.105) -- (9.381,-3.025) -- (9.340,-2.950) -- (9.293,-2.879) -- (9.240,-2.815) -- (9.181,-2.755) -- (9.116,-2.702) -- (9.045,-2.654) -- (8.967,-2.612) -- (8.885,-2.575) -- (8.797,-2.542) -- (8.705,-2.515) -- (8.608,-2.492) -- (8.507,-2.473) -- (8.403,-2.457) -- (8.296,-2.444) -- (8.186,-2.433) -- (8.075,-2.425) -- (7.961,-2.419) -- (7.846,-2.414) -- (7.730,-2.410) -- (7.613,-2.407) -- (7.495,-2.405) -- (7.376,-2.404) -- (7.258,-2.402) -- (7.138,-2.402) -- (7.019,-2.401) -- (6.899,-2.401) -- (6.780,-2.400) -- (6.660,-2.400) -- (6.540,-2.400) -- (6.420,-2.400) -- (6.300,-2.400) -- (6.180,-2.400) -- (6.060,-2.400) -- (5.940,-2.400) -- (5.820,-2.400) -- (5.700,-2.400) -- (5.580,-2.400) -- (5.460,-2.400) -- (5.340,-2.400) -- (5.220,-2.400) -- (5.101,-2.401) -- (4.986,-2.401);
\draw[knot] (4.726,-2.403) -- (4.624,-2.404) -- (4.505,-2.405) -- (4.387,-2.407) -- (4.270,-2.410) -- (4.154,-2.414) -- (4.039,-2.419) -- (3.925,-2.425) -- (3.814,-2.434) -- (3.704,-2.444) -- (3.597,-2.457) -- (3.493,-2.473) -- (3.392,-2.492) -- (3.295,-2.515) -- (3.203,-2.543) -- (3.115,-2.575) -- (3.032,-2.612) -- (2.955,-2.655) -- (2.883,-2.703) -- (2.817,-2.757) -- (2.757,-2.817) -- (2.703,-2.883) -- (2.655,-2.955) -- (2.612,-3.032) -- (2.575,-3.115) -- (2.543,-3.203) -- (2.515,-3.295) -- (2.492,-3.392) -- (2.473,-3.493) -- (2.457,-3.597) -- (2.444,-3.704) -- (2.434,-3.814) -- (2.425,-3.925) -- (2.419,-4.039) -- (2.414,-4.154) -- (2.410,-4.270) -- (2.407,-4.387) -- (2.405,-4.505) -- (2.404,-4.624) -- (2.402,-4.742) -- (2.402,-4.862) -- (2.401,-4.981) -- (2.401,-5.101) -- (2.401,-5.220) -- (2.401,-5.340) -- (2.401,-5.460) -- (2.401,-5.580) -- (2.401,-5.699) -- (2.401,-5.819) -- (2.402,-5.938) -- (2.402,-6.058) -- (2.404,-6.176) -- (2.405,-6.295) -- (2.407,-6.413) -- (2.410,-6.530) -- (2.414,-6.646) -- (2.419,-6.761) -- (2.425,-6.875) -- (2.433,-6.986) -- (2.437,-7.020);
\draw[knot] (2.468,-7.278) -- (2.473,-7.307) -- (2.492,-7.408) -- (2.515,-7.505) -- (2.542,-7.597) -- (2.575,-7.685) -- (2.612,-7.767) -- (2.654,-7.845) -- (2.702,-7.916) -- (2.755,-7.981) -- (2.815,-8.040) -- (2.879,-8.093) -- (2.950,-8.140) -- (3.025,-8.181) -- (3.105,-8.215) -- (3.189,-8.243) -- (3.276,-8.266) -- (3.367,-8.283) -- (3.459,-8.294) -- (3.553,-8.299) -- (3.647,-8.299) -- (3.741,-8.294) -- (3.833,-8.283) -- (3.924,-8.266) -- (4.011,-8.243) -- (4.095,-8.215) -- (4.175,-8.181) -- (4.250,-8.140) -- (4.321,-8.093) -- (4.385,-8.040) -- (4.445,-7.981) -- (4.498,-7.916) -- (4.546,-7.845) -- (4.588,-7.767) -- (4.625,-7.685) -- (4.658,-7.597) -- (4.685,-7.505) -- (4.708,-7.408) -- (4.727,-7.307) -- (4.743,-7.203) -- (4.756,-7.096) -- (4.767,-6.986) -- (4.775,-6.875) -- (4.781,-6.761) -- (4.786,-6.646) -- (4.790,-6.530) -- (4.793,-6.413) -- (4.795,-6.295) -- (4.796,-6.176) -- (4.797,-6.074);
\draw[knot] (4.799,-5.814) -- (4.799,-5.699) -- (4.800,-5.580) -- (4.800,-5.460) -- (4.800,-5.340) -- (4.800,-5.220) -- (4.800,-5.100) -- (4.800,-4.980) -- (4.800,-4.860) -- (4.800,-4.740) -- (4.800,-4.620) -- (4.800,-4.500) -- (4.800,-4.380) -- (4.800,-4.260) -- (4.800,-4.140) -- (4.800,-4.020) -- (4.801,-3.901) -- (4.801,-3.787);
\draw[knot] (4.803,-3.527) -- (4.804,-3.424) -- (4.805,-3.305) -- (4.807,-3.187) -- (4.810,-3.070) -- (4.814,-2.954) -- (4.819,-2.839) -- (4.825,-2.725) -- (4.834,-2.614) -- (4.844,-2.504) -- (4.857,-2.397) -- (4.873,-2.293) -- (4.892,-2.192) -- (4.915,-2.095) -- (4.943,-2.003) -- (4.975,-1.915) -- (5.012,-1.832) -- (5.055,-1.755) -- (5.103,-1.683) -- (5.157,-1.617) -- (5.217,-1.557) -- (5.283,-1.503) -- (5.355,-1.455) -- (5.432,-1.412) -- (5.515,-1.375) -- (5.603,-1.343) -- (5.695,-1.315) -- (5.792,-1.292) -- (5.893,-1.273) -- (5.997,-1.257) -- (6.104,-1.244) -- (6.214,-1.234) -- (6.325,-1.225) -- (6.439,-1.219) -- (6.554,-1.214) -- (6.670,-1.210) -- (6.787,-1.207) -- (6.905,-1.205) -- (7.024,-1.204) -- (7.142,-1.202) -- (7.262,-1.202) -- (7.381,-1.201) -- (7.501,-1.201) -- (7.620,-1.201) -- (7.740,-1.201) -- (7.860,-1.201) -- (7.980,-1.201) -- (8.099,-1.201) -- (8.219,-1.201) -- (8.338,-1.202) -- (8.458,-1.202) -- (8.576,-1.204) -- (8.695,-1.205) -- (8.813,-1.207) -- (8.930,-1.210) -- (9.046,-1.214) -- (9.161,-1.219) -- (9.275,-1.225) -- (9.386,-1.234) -- (9.496,-1.244) -- (9.603,-1.257) -- (9.707,-1.273) -- (9.808,-1.292) -- (9.905,-1.315) -- (9.997,-1.343) -- (10.085,-1.375) -- (10.168,-1.412) -- (10.245,-1.455) -- (10.317,-1.503) -- (10.383,-1.557) -- (10.443,-1.617) -- (10.497,-1.683) -- (10.545,-1.755) -- (10.588,-1.832) -- (10.625,-1.915) -- (10.657,-2.003) -- (10.685,-2.095) -- (10.708,-2.192) -- (10.727,-2.293) -- (10.743,-2.397) -- (10.756,-2.504) -- (10.766,-2.614) -- (10.775,-2.725) -- (10.781,-2.839) -- (10.786,-2.954) -- (10.790,-3.070) -- (10.793,-3.187) -- (10.795,-3.305) -- (10.796,-3.424) -- (10.798,-3.542) -- (10.798,-3.662) -- (10.799,-3.781) -- (10.799,-3.901) -- (10.799,-4.020) -- (10.799,-4.140) -- (10.799,-4.260) -- (10.799,-4.380) -- (10.799,-4.499) -- (10.799,-4.619) -- (10.798,-4.738) -- (10.798,-4.858) -- (10.796,-4.976) -- (10.795,-5.095) -- (10.793,-5.213) -- (10.790,-5.330) -- (10.786,-5.446) -- (10.781,-5.561) -- (10.775,-5.675) -- (10.766,-5.786) -- (10.756,-5.896) -- (10.743,-6.003) -- (10.727,-6.107) -- (10.708,-6.208) -- (10.685,-6.305) -- (10.657,-6.397) -- (10.625,-6.485) -- (10.588,-6.568) -- (10.545,-6.645) -- (10.497,-6.717) -- (10.443,-6.783) -- (10.383,-6.843) -- (10.317,-6.897) -- (10.245,-6.945) -- (10.168,-6.988) -- (10.085,-7.025) -- (9.997,-7.057) -- (9.905,-7.085) -- (9.808,-7.108) -- (9.707,-7.127) -- (9.603,-7.143) -- (9.496,-7.156) -- (9.386,-7.166) -- (9.275,-7.175) -- (9.161,-7.181) -- (9.046,-7.186) -- (8.930,-7.190) -- (8.813,-7.193) -- (8.695,-7.195) -- (8.576,-7.196) -- (8.474,-7.197);
\draw[knot] (8.214,-7.199) -- (8.099,-7.199) -- (7.980,-7.200) -- (7.860,-7.200) -- (7.740,-7.200) -- (7.620,-7.200) -- (7.500,-7.200) -- (7.380,-7.200) -- (7.260,-7.200) -- (7.140,-7.200) -- (7.020,-7.200) -- (6.900,-7.200) -- (6.780,-7.200) -- (6.660,-7.200) -- (6.540,-7.200) -- (6.420,-7.200) -- (6.300,-7.200) -- (6.180,-7.200) -- (6.060,-7.200) -- (5.940,-7.200) -- (5.820,-7.200) -- (5.700,-7.200) -- (5.580,-7.200) -- (5.460,-7.200) -- (5.340,-7.200) -- (5.220,-7.200) -- (5.100,-7.200) -- (4.980,-7.200) -- (4.874,-7.200);
\draw[knot] (4.614,-7.200) -- (4.500,-7.200) -- (4.380,-7.200) -- (4.260,-7.200) -- (4.140,-7.200) -- (4.020,-7.200) -- (3.901,-7.199) -- (3.781,-7.199) -- (3.662,-7.198) -- (3.542,-7.198) -- (3.424,-7.196) -- (3.305,-7.195) -- (3.187,-7.193) -- (3.070,-7.190) -- (2.954,-7.186) -- (2.839,-7.181) -- (2.725,-7.175) -- (2.614,-7.166) -- (2.504,-7.156) -- (2.397,-7.143) -- (2.293,-7.127) -- (2.192,-7.108) -- (2.095,-7.085) -- (2.003,-7.057) -- (1.915,-7.025) -- (1.832,-6.988) -- (1.755,-6.945) -- (1.683,-6.897) -- (1.617,-6.843) -- (1.557,-6.783) -- (1.503,-6.717) -- (1.455,-6.645) -- (1.412,-6.568) -- (1.375,-6.485) -- (1.343,-6.398) -- (1.316,-6.305) -- (1.293,-6.209) -- (1.274,-6.108) -- (1.259,-6.005) -- (1.246,-5.899) -- (1.237,-5.790) -- (1.230,-5.680) -- (1.226,-5.568) -- (1.224,-5.456) -- (1.224,-5.344) -- (1.226,-5.232) -- (1.230,-5.120) -- (1.237,-5.010) -- (1.246,-4.901) -- (1.259,-4.795) -- (1.274,-4.692) -- (1.293,-4.591) -- (1.316,-4.495) -- (1.343,-4.402) -- (1.375,-4.315) -- (1.412,-4.232) -- (1.455,-4.155) -- (1.503,-4.083) -- (1.557,-4.017) -- (1.617,-3.957) -- (1.683,-3.903) -- (1.755,-3.855) -- (1.832,-3.812) -- (1.915,-3.775) -- (2.003,-3.743) -- (2.095,-3.715) -- (2.192,-3.692) -- (2.293,-3.673) -- (2.322,-3.668);
\draw[knot] (2.580,-3.637) -- (2.613,-3.634) -- (2.725,-3.625) -- (2.839,-3.619) -- (2.954,-3.614) -- (3.070,-3.610) -- (3.187,-3.608) -- (3.304,-3.606) -- (3.422,-3.605) -- (3.541,-3.604) -- (3.659,-3.604) -- (3.778,-3.605) -- (3.896,-3.606) -- (4.013,-3.608) -- (4.130,-3.610) -- (4.246,-3.614) -- (4.361,-3.619) -- (4.475,-3.625) -- (4.587,-3.634) -- (4.696,-3.644) -- (4.803,-3.657) -- (4.907,-3.673) -- (5.008,-3.692) -- (5.105,-3.715) -- (5.197,-3.743) -- (5.285,-3.775) -- (5.368,-3.812) -- (5.445,-3.855) -- (5.517,-3.903) -- (5.583,-3.957) -- (5.643,-4.017) -- (5.697,-4.083) -- (5.745,-4.155) -- (5.788,-4.232) -- (5.825,-4.315) -- (5.857,-4.403) -- (5.885,-4.495) -- (5.908,-4.592) -- (5.924,-4.674);
\draw[knot] (5.959,-4.932) -- (5.967,-5.013) -- (5.975,-5.125) -- (5.981,-5.239) -- (5.986,-5.354) -- (5.990,-5.470) -- (5.993,-5.587) -- (5.996,-5.704) -- (5.998,-5.822) -- (5.999,-5.941) -- (6.001,-6.059) -- (6.002,-6.178) -- (6.004,-6.296) -- (6.007,-6.413) -- (6.010,-6.530) -- (6.014,-6.646) -- (6.019,-6.761) -- (6.025,-6.875) -- (6.033,-6.987) -- (6.041,-7.071);
\draw[knot] (6.077,-7.328) -- (6.092,-7.408) -- (6.115,-7.505) -- (6.142,-7.597) -- (6.175,-7.685) -- (6.212,-7.767) -- (6.254,-7.845) -- (6.302,-7.916) -- (6.355,-7.981) -- (6.415,-8.040) -- (6.479,-8.093) -- (6.550,-8.140) -- (6.625,-8.181) -- (6.705,-8.215) -- (6.789,-8.243) -- (6.876,-8.266) -- (6.967,-8.283) -- (7.059,-8.294) -- (7.153,-8.299) -- (7.247,-8.299) -- (7.341,-8.294) -- (7.433,-8.283) -- (7.524,-8.266) -- (7.611,-8.243) -- (7.695,-8.215) -- (7.775,-8.181) -- (7.850,-8.140) -- (7.921,-8.093) -- (7.985,-8.040) -- (8.045,-7.981) -- (8.098,-7.916) -- (8.146,-7.845) -- (8.188,-7.767) -- (8.225,-7.685) -- (8.258,-7.597) -- (8.285,-7.505) -- (8.308,-7.408) -- (8.327,-7.307) -- (8.343,-7.203) -- (8.356,-7.096) -- (8.366,-6.987) -- (8.375,-6.875) -- (8.381,-6.761) -- (8.386,-6.646) -- (8.390,-6.530) -- (8.392,-6.413) -- (8.394,-6.296) -- (8.395,-6.178) -- (8.396,-6.059) -- (8.396,-5.941) -- (8.395,-5.822) -- (8.394,-5.704) -- (8.392,-5.587) -- (8.390,-5.470) -- (8.386,-5.354) -- (8.382,-5.239) -- (8.375,-5.126) -- (8.368,-5.015) -- (8.358,-4.906) -- (8.355,-4.880);
\draw[knot] (8.318,-4.623) -- (8.313,-4.597) -- (8.292,-4.502) -- (8.267,-4.413) -- (8.239,-4.329) -- (8.207,-4.251) -- (8.171,-4.180) -- (8.131,-4.116) -- (8.087,-4.061) -- (8.040,-4.014) -- (7.990,-3.976) -- (7.937,-3.947) -- (7.883,-3.927) -- (7.828,-3.917) -- (7.772,-3.917) -- (7.716,-3.927) -- (7.662,-3.946) -- (7.609,-3.975) -- (7.559,-4.012) -- (7.511,-4.059) -- (7.466,-4.113) -- (7.424,-4.175) -- (7.386,-4.244) -- (7.351,-4.319) -- (7.319,-4.399) -- (7.289,-4.484) -- (7.262,-4.572) -- (7.236,-4.663) -- (7.210,-4.755) -- (7.185,-4.849) -- (7.160,-4.942) -- (7.132,-5.034) -- (7.103,-5.124) -- (7.071,-5.211) -- (7.035,-5.295) -- (6.995,-5.375) -- (6.950,-5.450) -- (6.901,-5.521) -- (6.845,-5.585) -- (6.785,-5.645) -- (6.718,-5.698) -- (6.646,-5.746) -- (6.569,-5.788) -- (6.486,-5.825) -- (6.398,-5.857) -- (6.305,-5.884) -- (6.209,-5.907) -- (6.128,-5.923);
\draw[knot] (5.870,-5.956) -- (5.790,-5.963) -- (5.680,-5.970) -- (5.569,-5.974) -- (5.456,-5.976) -- (5.344,-5.976) -- (5.232,-5.974) -- (5.121,-5.970) -- (5.011,-5.964) -- (4.903,-5.955) -- (4.798,-5.944) -- (4.695,-5.930) -- (4.596,-5.912) -- (4.502,-5.892) -- (4.412,-5.867) -- (4.329,-5.839) -- (4.251,-5.807) -- (4.180,-5.770) -- (4.116,-5.730) -- (4.061,-5.687) -- (4.014,-5.640) -- (3.976,-5.590) -- (3.947,-5.537) -- (3.927,-5.483) -- (3.918,-5.428) -- (3.918,-5.372) -- (3.927,-5.317) -- (3.947,-5.263) -- (3.976,-5.210) -- (4.014,-5.160) -- (4.061,-5.113) -- (4.116,-5.069) -- (4.180,-5.029) -- (4.251,-4.993) -- (4.329,-4.961) -- (4.413,-4.933) -- (4.502,-4.908) -- (4.597,-4.887) -- (4.671,-4.874);
\node[reglabel] (w1) at (4.210,-3.013) {$2$};
\node[reglabel] (w2) at (5.282,-4.336) {$4$};
\node[reglabel] (w3) at (7.797,-4.370) {$5$};
\node[reglabel] (w4) at (6.465,-5.283) {$7$};
\node[reglabel] (w5) at (1.812,-5.400) {$1$};
\node[reglabel] (w6) at (4.379,-5.400) {$6$};
\node[reglabel] (w7) at (9.584,-5.610) {$3$};
\node[reglabel] (w8) at (5.400,-6.588) {$10$};
\node[reglabel] (w9) at (3.553,-7.748) {$8$};
\node[reglabel] (w10) at (7.228,-7.750) {$9$};
\end{tikzpicture}

%% file: figures/13a4237-tait-white-overlay.tex
\begin{tikzpicture}[tait/.style={line width=1.3pt, black},
                    vert/.style={circle, fill=white, draw=black, line width=1.0pt, inner sep=0pt, minimum size=0.6cm, font=\small},
                    knot/.style={line width=0.8pt, black!25, line cap=round, line join=round}]
\draw[knot] (4.929,-4.840) -- (5.015,-4.832) -- (5.126,-4.825) -- (5.239,-4.818) -- (5.354,-4.814) -- (5.470,-4.810) -- (5.587,-4.807) -- (5.705,-4.805) -- (5.824,-4.804) -- (5.942,-4.802) -- (6.062,-4.802) -- (6.181,-4.801) -- (6.301,-4.801) -- (6.420,-4.800) -- (6.540,-4.800) -- (6.660,-4.800) -- (6.780,-4.800) -- (6.899,-4.799) -- (7.019,-4.799) -- (7.069,-4.799);
\draw[knot] (7.329,-4.797) -- (7.376,-4.796) -- (7.495,-4.795) -- (7.613,-4.793) -- (7.730,-4.790) -- (7.846,-4.786) -- (7.961,-4.781) -- (8.075,-4.775) -- (8.186,-4.767) -- (8.296,-4.756) -- (8.403,-4.743) -- (8.507,-4.727) -- (8.608,-4.708) -- (8.705,-4.685) -- (8.797,-4.658) -- (8.885,-4.625) -- (8.967,-4.588) -- (9.045,-4.546) -- (9.116,-4.498) -- (9.181,-4.445) -- (9.240,-4.385) -- (9.293,-4.321) -- (9.340,-4.250) -- (9.381,-4.175) -- (9.415,-4.095) -- (9.443,-4.011) -- (9.466,-3.924) -- (9.483,-3.833) -- (9.494,-3.741) -- (9.499,-3.647) -- (9.499,-3.553) -- (9.494,-3.459) -- (9.483,-3.367) -- (9.466,-3.276) -- (9.443,-3.189) -- (9.415,-3.105) -- (9.381,-3.025) -- (9.340,-2.950) -- (9.293,-2.879) -- (9.240,-2.815) -- (9.181,-2.755) -- (9.116,-2.702) -- (9.045,-2.654) -- (8.967,-2.612) -- (8.885,-2.575) -- (8.797,-2.542) -- (8.705,-2.515) -- (8.608,-2.492) -- (8.507,-2.473) -- (8.403,-2.457) -- (8.296,-2.444) -- (8.186,-2.433) -- (8.075,-2.425) -- (7.961,-2.419) -- (7.846,-2.414) -- (7.730,-2.410) -- (7.613,-2.407) -- (7.495,-2.405) -- (7.376,-2.404) -- (7.258,-2.402) -- (7.138,-2.402) -- (7.019,-2.401) -- (6.899,-2.401) -- (6.780,-2.400) -- (6.660,-2.400) -- (6.540,-2.400) -- (6.420,-2.400) -- (6.300,-2.400) -- (6.180,-2.400) -- (6.060,-2.400) -- (5.940,-2.400) -- (5.820,-2.400) -- (5.700,-2.400) -- (5.580,-2.400) -- (5.460,-2.400) -- (5.340,-2.400) -- (5.220,-2.400) -- (5.101,-2.401) -- (4.986,-2.401);
\draw[knot] (4.726,-2.403) -- (4.624,-2.404) -- (4.505,-2.405) -- (4.387,-2.407) -- (4.270,-2.410) -- (4.154,-2.414) -- (4.039,-2.419) -- (3.925,-2.425) -- (3.814,-2.434) -- (3.704,-2.444) -- (3.597,-2.457) -- (3.493,-2.473) -- (3.392,-2.492) -- (3.295,-2.515) -- (3.203,-2.543) -- (3.115,-2.575) -- (3.032,-2.612) -- (2.955,-2.655) -- (2.883,-2.703) -- (2.817,-2.757) -- (2.757,-2.817) -- (2.703,-2.883) -- (2.655,-2.955) -- (2.612,-3.032) -- (2.575,-3.115) -- (2.543,-3.203) -- (2.515,-3.295) -- (2.492,-3.392) -- (2.473,-3.493) -- (2.457,-3.597) -- (2.444,-3.704) -- (2.434,-3.814) -- (2.425,-3.925) -- (2.419,-4.039) -- (2.414,-4.154) -- (2.410,-4.270) -- (2.407,-4.387) -- (2.405,-4.505) -- (2.404,-4.624) -- (2.402,-4.742) -- (2.402,-4.862) -- (2.401,-4.981) -- (2.401,-5.101) -- (2.401,-5.220) -- (2.401,-5.340) -- (2.401,-5.460) -- (2.401,-5.580) -- (2.401,-5.699) -- (2.401,-5.819) -- (2.402,-5.938) -- (2.402,-6.058) -- (2.404,-6.176) -- (2.405,-6.295) -- (2.407,-6.413) -- (2.410,-6.530) -- (2.414,-6.646) -- (2.419,-6.761) -- (2.425,-6.875) -- (2.433,-6.986) -- (2.437,-7.020);
\draw[knot] (2.468,-7.278) -- (2.473,-7.307) -- (2.492,-7.408) -- (2.515,-7.505) -- (2.542,-7.597) -- (2.575,-7.685) -- (2.612,-7.767) -- (2.654,-7.845) -- (2.702,-7.916) -- (2.755,-7.981) -- (2.815,-8.040) -- (2.879,-8.093) -- (2.950,-8.140) -- (3.025,-8.181) -- (3.105,-8.215) -- (3.189,-8.243) -- (3.276,-8.266) -- (3.367,-8.283) -- (3.459,-8.294) -- (3.553,-8.299) -- (3.647,-8.299) -- (3.741,-8.294) -- (3.833,-8.283) -- (3.924,-8.266) -- (4.011,-8.243) -- (4.095,-8.215) -- (4.175,-8.181) -- (4.250,-8.140) -- (4.321,-8.093) -- (4.385,-8.040) -- (4.445,-7.981) -- (4.498,-7.916) -- (4.546,-7.845) -- (4.588,-7.767) -- (4.625,-7.685) -- (4.658,-7.597) -- (4.685,-7.505) -- (4.708,-7.408) -- (4.727,-7.307) -- (4.743,-7.203) -- (4.756,-7.096) -- (4.767,-6.986) -- (4.775,-6.875) -- (4.781,-6.761) -- (4.786,-6.646) -- (4.790,-6.530) -- (4.793,-6.413) -- (4.795,-6.295) -- (4.796,-6.176) -- (4.797,-6.074);
\draw[knot] (4.799,-5.814) -- (4.799,-5.699) -- (4.800,-5.580) -- (4.800,-5.460) -- (4.800,-5.340) -- (4.800,-5.220) -- (4.800,-5.100) -- (4.800,-4.980) -- (4.800,-4.860) -- (4.800,-4.740) -- (4.800,-4.620) -- (4.800,-4.500) -- (4.800,-4.380) -- (4.800,-4.260) -- (4.800,-4.140) -- (4.800,-4.020) -- (4.801,-3.901) -- (4.801,-3.787);
\draw[knot] (4.803,-3.527) -- (4.804,-3.424) -- (4.805,-3.305) -- (4.807,-3.187) -- (4.810,-3.070) -- (4.814,-2.954) -- (4.819,-2.839) -- (4.825,-2.725) -- (4.834,-2.614) -- (4.844,-2.504) -- (4.857,-2.397) -- (4.873,-2.293) -- (4.892,-2.192) -- (4.915,-2.095) -- (4.943,-2.003) -- (4.975,-1.915) -- (5.012,-1.832) -- (5.055,-1.755) -- (5.103,-1.683) -- (5.157,-1.617) -- (5.217,-1.557) -- (5.283,-1.503) -- (5.355,-1.455) -- (5.432,-1.412) -- (5.515,-1.375) -- (5.603,-1.343) -- (5.695,-1.315) -- (5.792,-1.292) -- (5.893,-1.273) -- (5.997,-1.257) -- (6.104,-1.244) -- (6.214,-1.234) -- (6.325,-1.225) -- (6.439,-1.219) -- (6.554,-1.214) -- (6.670,-1.210) -- (6.787,-1.207) -- (6.905,-1.205) -- (7.024,-1.204) -- (7.142,-1.202) -- (7.262,-1.202) -- (7.381,-1.201) -- (7.501,-1.201) -- (7.620,-1.201) -- (7.740,-1.201) -- (7.860,-1.201) -- (7.980,-1.201) -- (8.099,-1.201) -- (8.219,-1.201) -- (8.338,-1.202) -- (8.458,-1.202) -- (8.576,-1.204) -- (8.695,-1.205) -- (8.813,-1.207) -- (8.930,-1.210) -- (9.046,-1.214) -- (9.161,-1.219) -- (9.275,-1.225) -- (9.386,-1.234) -- (9.496,-1.244) -- (9.603,-1.257) -- (9.707,-1.273) -- (9.808,-1.292) -- (9.905,-1.315) -- (9.997,-1.343) -- (10.085,-1.375) -- (10.168,-1.412) -- (10.245,-1.455) -- (10.317,-1.503) -- (10.383,-1.557) -- (10.443,-1.617) -- (10.497,-1.683) -- (10.545,-1.755) -- (10.588,-1.832) -- (10.625,-1.915) -- (10.657,-2.003) -- (10.685,-2.095) -- (10.708,-2.192) -- (10.727,-2.293) -- (10.743,-2.397) -- (10.756,-2.504) -- (10.766,-2.614) -- (10.775,-2.725) -- (10.781,-2.839) -- (10.786,-2.954) -- (10.790,-3.070) -- (10.793,-3.187) -- (10.795,-3.305) -- (10.796,-3.424) -- (10.798,-3.542) -- (10.798,-3.662) -- (10.799,-3.781) -- (10.799,-3.901) -- (10.799,-4.020) -- (10.799,-4.140) -- (10.799,-4.260) -- (10.799,-4.380) -- (10.799,-4.499) -- (10.799,-4.619) -- (10.798,-4.738) -- (10.798,-4.858) -- (10.796,-4.976) -- (10.795,-5.095) -- (10.793,-5.213) -- (10.790,-5.330) -- (10.786,-5.446) -- (10.781,-5.561) -- (10.775,-5.675) -- (10.766,-5.786) -- (10.756,-5.896) -- (10.743,-6.003) -- (10.727,-6.107) -- (10.708,-6.208) -- (10.685,-6.305) -- (10.657,-6.397) -- (10.625,-6.485) -- (10.588,-6.568) -- (10.545,-6.645) -- (10.497,-6.717) -- (10.443,-6.783) -- (10.383,-6.843) -- (10.317,-6.897) -- (10.245,-6.945) -- (10.168,-6.988) -- (10.085,-7.025) -- (9.997,-7.057) -- (9.905,-7.085) -- (9.808,-7.108) -- (9.707,-7.127) -- (9.603,-7.143) -- (9.496,-7.156) -- (9.386,-7.166) -- (9.275,-7.175) -- (9.161,-7.181) -- (9.046,-7.186) -- (8.930,-7.190) -- (8.813,-7.193) -- (8.695,-7.195) -- (8.576,-7.196) -- (8.474,-7.197);
\draw[knot] (8.214,-7.199) -- (8.099,-7.199) -- (7.980,-7.200) -- (7.860,-7.200) -- (7.740,-7.200) -- (7.620,-7.200) -- (7.500,-7.200) -- (7.380,-7.200) -- (7.260,-7.200) -- (7.140,-7.200) -- (7.020,-7.200) -- (6.900,-7.200) -- (6.780,-7.200) -- (6.660,-7.200) -- (6.540,-7.200) -- (6.420,-7.200) -- (6.300,-7.200) -- (6.180,-7.200) -- (6.060,-7.200) -- (5.940,-7.200) -- (5.820,-7.200) -- (5.700,-7.200) -- (5.580,-7.200) -- (5.460,-7.200) -- (5.340,-7.200) -- (5.220,-7.200) -- (5.100,-7.200) -- (4.980,-7.200) -- (4.874,-7.200);
\draw[knot] (4.614,-7.200) -- (4.500,-7.200) -- (4.380,-7.200) -- (4.260,-7.200) -- (4.140,-7.200) -- (4.020,-7.200) -- (3.901,-7.199) -- (3.781,-7.199) -- (3.662,-7.198) -- (3.542,-7.198) -- (3.424,-7.196) -- (3.305,-7.195) -- (3.187,-7.193) -- (3.070,-7.190) -- (2.954,-7.186) -- (2.839,-7.181) -- (2.725,-7.175) -- (2.614,-7.166) -- (2.504,-7.156) -- (2.397,-7.143) -- (2.293,-7.127) -- (2.192,-7.108) -- (2.095,-7.085) -- (2.003,-7.057) -- (1.915,-7.025) -- (1.832,-6.988) -- (1.755,-6.945) -- (1.683,-6.897) -- (1.617,-6.843) -- (1.557,-6.783) -- (1.503,-6.717) -- (1.455,-6.645) -- (1.412,-6.568) -- (1.375,-6.485) -- (1.343,-6.398) -- (1.316,-6.305) -- (1.293,-6.209) -- (1.274,-6.108) -- (1.259,-6.005) -- (1.246,-5.899) -- (1.237,-5.790) -- (1.230,-5.680) -- (1.226,-5.568) -- (1.224,-5.456) -- (1.224,-5.344) -- (1.226,-5.232) -- (1.230,-5.120) -- (1.237,-5.010) -- (1.246,-4.901) -- (1.259,-4.795) -- (1.274,-4.692) -- (1.293,-4.591) -- (1.316,-4.495) -- (1.343,-4.402) -- (1.375,-4.315) -- (1.412,-4.232) -- (1.455,-4.155) -- (1.503,-4.083) -- (1.557,-4.017) -- (1.617,-3.957) -- (1.683,-3.903) -- (1.755,-3.855) -- (1.832,-3.812) -- (1.915,-3.775) -- (2.003,-3.743) -- (2.095,-3.715) -- (2.192,-3.692) -- (2.293,-3.673) -- (2.322,-3.668);
\draw[knot] (2.580,-3.637) -- (2.613,-3.634) -- (2.725,-3.625) -- (2.839,-3.619) -- (2.954,-3.614) -- (3.070,-3.610) -- (3.187,-3.608) -- (3.304,-3.606) -- (3.422,-3.605) -- (3.541,-3.604) -- (3.659,-3.604) -- (3.778,-3.605) -- (3.896,-3.606) -- (4.013,-3.608) -- (4.130,-3.610) -- (4.246,-3.614) -- (4.361,-3.619) -- (4.475,-3.625) -- (4.587,-3.634) -- (4.696,-3.644) -- (4.803,-3.657) -- (4.907,-3.673) -- (5.008,-3.692) -- (5.105,-3.715) -- (5.197,-3.743) -- (5.285,-3.775) -- (5.368,-3.812) -- (5.445,-3.855) -- (5.517,-3.903) -- (5.583,-3.957) -- (5.643,-4.017) -- (5.697,-4.083) -- (5.745,-4.155) -- (5.788,-4.232) -- (5.825,-4.315) -- (5.857,-4.403) -- (5.885,-4.495) -- (5.908,-4.592) -- (5.924,-4.674);
\draw[knot] (5.959,-4.932) -- (5.967,-5.013) -- (5.975,-5.125) -- (5.981,-5.239) -- (5.986,-5.354) -- (5.990,-5.470) -- (5.993,-5.587) -- (5.996,-5.704) -- (5.998,-5.822) -- (5.999,-5.941) -- (6.001,-6.059) -- (6.002,-6.178) -- (6.004,-6.296) -- (6.007,-6.413) -- (6.010,-6.530) -- (6.014,-6.646) -- (6.019,-6.761) -- (6.025,-6.875) -- (6.033,-6.987) -- (6.041,-7.071);
\draw[knot] (6.077,-7.328) -- (6.092,-7.408) -- (6.115,-7.505) -- (6.142,-7.597) -- (6.175,-7.685) -- (6.212,-7.767) -- (6.254,-7.845) -- (6.302,-7.916) -- (6.355,-7.981) -- (6.415,-8.040) -- (6.479,-8.093) -- (6.550,-8.140) -- (6.625,-8.181) -- (6.705,-8.215) -- (6.789,-8.243) -- (6.876,-8.266) -- (6.967,-8.283) -- (7.059,-8.294) -- (7.153,-8.299) -- (7.247,-8.299) -- (7.341,-8.294) -- (7.433,-8.283) -- (7.524,-8.266) -- (7.611,-8.243) -- (7.695,-8.215) -- (7.775,-8.181) -- (7.850,-8.140) -- (7.921,-8.093) -- (7.985,-8.040) -- (8.045,-7.981) -- (8.098,-7.916) -- (8.146,-7.845) -- (8.188,-7.767) -- (8.225,-7.685) -- (8.258,-7.597) -- (8.285,-7.505) -- (8.308,-7.408) -- (8.327,-7.307) -- (8.343,-7.203) -- (8.356,-7.096) -- (8.366,-6.987) -- (8.375,-6.875) -- (8.381,-6.761) -- (8.386,-6.646) -- (8.390,-6.530) -- (8.392,-6.413) -- (8.394,-6.296) -- (8.395,-6.178) -- (8.396,-6.059) -- (8.396,-5.941) -- (8.395,-5.822) -- (8.394,-5.704) -- (8.392,-5.587) -- (8.390,-5.470) -- (8.386,-5.354) -- (8.382,-5.239) -- (8.375,-5.126) -- (8.368,-5.015) -- (8.358,-4.906) -- (8.355,-4.880);
\draw[knot] (8.318,-4.623) -- (8.313,-4.597) -- (8.292,-4.502) -- (8.267,-4.413) -- (8.239,-4.329) -- (8.207,-4.251) -- (8.171,-4.180) -- (8.131,-4.116) -- (8.087,-4.061) -- (8.040,-4.014) -- (7.990,-3.976) -- (7.937,-3.947) -- (7.883,-3.927) -- (7.828,-3.917) -- (7.772,-3.917) -- (7.716,-3.927) -- (7.662,-3.946) -- (7.609,-3.975) -- (7.559,-4.012) -- (7.511,-4.059) -- (7.466,-4.113) -- (7.424,-4.175) -- (7.386,-4.244) -- (7.351,-4.319) -- (7.319,-4.399) -- (7.289,-4.484) -- (7.262,-4.572) -- (7.236,-4.663) -- (7.210,-4.755) -- (7.185,-4.849) -- (7.160,-4.942) -- (7.132,-5.034) -- (7.103,-5.124) -- (7.071,-5.211) -- (7.035,-5.295) -- (6.995,-5.375) -- (6.950,-5.450) -- (6.901,-5.521) -- (6.845,-5.585) -- (6.785,-5.645) -- (6.718,-5.698) -- (6.646,-5.746) -- (6.569,-5.788) -- (6.486,-5.825) -- (6.398,-5.857) -- (6.305,-5.884) -- (6.209,-5.907) -- (6.128,-5.923);
\draw[knot] (5.870,-5.956) -- (5.790,-5.963) -- (5.680,-5.970) -- (5.569,-5.974) -- (5.456,-5.976) -- (5.344,-5.976) -- (5.232,-5.974) -- (5.121,-5.970) -- (5.011,-5.964) -- (4.903,-5.955) -- (4.798,-5.944) -- (4.695,-5.930) -- (4.596,-5.912) -- (4.502,-5.892) -- (4.412,-5.867) -- (4.329,-5.839) -- (4.251,-5.807) -- (4.180,-5.770) -- (4.116,-5.730) -- (4.061,-5.687) -- (4.014,-5.640) -- (3.976,-5.590) -- (3.947,-5.537) -- (3.927,-5.483) -- (3.918,-5.428) -- (3.918,-5.372) -- (3.927,-5.317) -- (3.947,-5.263) -- (3.976,-5.210) -- (4.014,-5.160) -- (4.061,-5.113) -- (4.116,-5.069) -- (4.180,-5.029) -- (4.251,-4.993) -- (4.329,-4.961) -- (4.413,-4.933) -- (4.502,-4.908) -- (4.597,-4.887) -- (4.671,-4.874);
\draw[tait] (5.282,-4.336) -- (5.262,-4.358) -- (5.242,-4.379) -- (5.221,-4.401) -- (5.201,-4.422) -- (5.180,-4.444) -- (5.160,-4.465) -- (5.139,-4.486) -- (5.119,-4.508) -- (5.098,-4.529) -- (5.078,-4.551) -- (5.058,-4.572) -- (5.037,-4.593) -- (5.017,-4.615) -- (4.996,-4.636) -- (4.976,-4.658) -- (4.956,-4.680) -- (4.936,-4.701) -- (4.916,-4.723) -- (4.896,-4.745) -- (4.877,-4.766) -- (4.857,-4.788) -- (4.838,-4.810) -- (4.819,-4.832) -- (4.800,-4.854) -- (4.781,-4.877) -- (4.763,-4.899) -- (4.744,-4.921) -- (4.726,-4.944) -- (4.708,-4.966) -- (4.690,-4.989) -- (4.673,-5.011) -- (4.655,-5.034) -- (4.638,-5.057) -- (4.620,-5.080) -- (4.603,-5.103) -- (4.586,-5.126) -- (4.568,-5.148) -- (4.551,-5.171) -- (4.534,-5.194) -- (4.517,-5.217) -- (4.500,-5.240) -- (4.483,-5.263) -- (4.466,-5.286) -- (4.448,-5.309) -- (4.431,-5.332) -- (4.414,-5.355) -- (4.396,-5.377) -- (4.379,-5.400);  
\draw[tait] (5.282,-4.336) -- (5.310,-4.356) -- (5.337,-4.375) -- (5.366,-4.394) -- (5.394,-4.414) -- (5.422,-4.433) -- (5.451,-4.452) -- (5.479,-4.472) -- (5.508,-4.491) -- (5.536,-4.510) -- (5.565,-4.530) -- (5.593,-4.549) -- (5.622,-4.568) -- (5.650,-4.588) -- (5.678,-4.607) -- (5.706,-4.627) -- (5.734,-4.646) -- (5.761,-4.665) -- (5.788,-4.685) -- (5.815,-4.704) -- (5.842,-4.724) -- (5.868,-4.743) -- (5.894,-4.763) -- (5.919,-4.783) -- (5.944,-4.802) -- (5.968,-4.822) -- (5.992,-4.842) -- (6.016,-4.862) -- (6.039,-4.882) -- (6.062,-4.902) -- (6.084,-4.922) -- (6.106,-4.942) -- (6.128,-4.962) -- (6.150,-4.982) -- (6.171,-5.002) -- (6.192,-5.022) -- (6.213,-5.042) -- (6.234,-5.062) -- (6.255,-5.082) -- (6.276,-5.102) -- (6.296,-5.122) -- (6.317,-5.142) -- (6.338,-5.163) -- (6.359,-5.183) -- (6.380,-5.203) -- (6.401,-5.223) -- (6.422,-5.243) -- (6.443,-5.263) -- (6.465,-5.283);  
\draw[tait] (7.797,-4.370) -- (7.772,-4.388) -- (7.747,-4.405) -- (7.723,-4.423) -- (7.699,-4.441) -- (7.675,-4.458) -- (7.650,-4.476) -- (7.626,-4.493) -- (7.603,-4.510) -- (7.579,-4.528) -- (7.555,-4.545) -- (7.531,-4.563) -- (7.506,-4.580) -- (7.482,-4.598) -- (7.458,-4.616) -- (7.433,-4.633) -- (7.408,-4.651) -- (7.383,-4.669) -- (7.358,-4.687) -- (7.332,-4.705) -- (7.307,-4.723) -- (7.280,-4.742) -- (7.254,-4.760) -- (7.227,-4.779) -- (7.199,-4.798) -- (7.171,-4.817) -- (7.143,-4.836) -- (7.114,-4.856) -- (7.085,-4.875) -- (7.055,-4.895) -- (7.025,-4.915) -- (6.995,-4.935) -- (6.964,-4.955) -- (6.934,-4.976) -- (6.903,-4.996) -- (6.872,-5.016) -- (6.840,-5.037) -- (6.809,-5.057) -- (6.778,-5.078) -- (6.746,-5.099) -- (6.715,-5.119) -- (6.683,-5.140) -- (6.652,-5.160) -- (6.620,-5.181) -- (6.589,-5.202) -- (6.558,-5.222) -- (6.526,-5.242) -- (6.496,-5.263) -- (6.465,-5.283);  
\draw[tait] (7.797,-4.370) -- (7.819,-4.385) -- (7.840,-4.400) -- (7.860,-4.414) -- (7.879,-4.428) -- (7.898,-4.441) -- (7.916,-4.454) -- (7.934,-4.467) -- (7.951,-4.479) -- (7.969,-4.492) -- (7.987,-4.504) -- (8.005,-4.517) -- (8.024,-4.531) -- (8.043,-4.544) -- (8.063,-4.558) -- (8.084,-4.573) -- (8.106,-4.589) -- (8.129,-4.605) -- (8.154,-4.622) -- (8.180,-4.640) -- (8.208,-4.660) -- (8.237,-4.680) -- (8.269,-4.702) -- (8.303,-4.726) -- (8.339,-4.751) -- (8.377,-4.778) -- (8.418,-4.806) -- (8.461,-4.836) -- (8.506,-4.867) -- (8.552,-4.899) -- (8.601,-4.932) -- (8.650,-4.967) -- (8.702,-5.002) -- (8.754,-5.038) -- (8.808,-5.075) -- (8.862,-5.113) -- (8.918,-5.151) -- (8.973,-5.189) -- (9.030,-5.228) -- (9.086,-5.267) -- (9.143,-5.306) -- (9.200,-5.345) -- (9.256,-5.384) -- (9.313,-5.423) -- (9.369,-5.462) -- (9.424,-5.500) -- (9.478,-5.537) -- (9.532,-5.574) -- (9.584,-5.610);  
\draw[tait] (4.210,-3.013) -- (4.234,-2.988) -- (4.251,-2.963) -- (4.264,-2.939) -- (4.272,-2.915) -- (4.276,-2.892) -- (4.278,-2.869) -- (4.278,-2.846) -- (4.278,-2.823) -- (4.277,-2.800) -- (4.277,-2.777) -- (4.279,-2.754) -- (4.284,-2.731) -- (4.292,-2.707) -- (4.304,-2.683) -- (4.321,-2.658) -- (4.345,-2.633) -- (4.376,-2.607) -- (4.414,-2.581) -- (4.461,-2.553) -- (4.517,-2.525) -- (4.584,-2.496) -- (4.663,-2.466) -- (4.753,-2.434) -- (4.856,-2.402) -- (4.977,-2.364) -- (5.118,-2.316) -- (5.276,-2.261) -- (5.451,-2.199) -- (5.640,-2.132) -- (5.842,-2.061) -- (6.054,-1.986) -- (6.276,-1.910) -- (6.504,-1.833) -- (6.737,-1.756) -- (6.973,-1.682) -- (7.211,-1.610) -- (7.448,-1.542) -- (7.683,-1.480) -- (7.914,-1.424) -- (8.139,-1.377) -- (8.356,-1.338) -- (8.563,-1.309) -- (8.758,-1.292) -- (8.940,-1.287) -- (9.107,-1.296) -- (9.256,-1.320) -- (9.387,-1.361) -- (9.496,-1.418) -- (9.587,-1.494) -- (9.663,-1.586) -- (9.725,-1.695) -- (9.774,-1.817) -- (9.812,-1.954) -- (9.838,-2.102) -- (9.855,-2.262) -- (9.863,-2.432) -- (9.863,-2.611) -- (9.856,-2.798) -- (9.843,-2.992) -- (9.825,-3.191) -- (9.803,-3.394) -- (9.778,-3.602) -- (9.751,-3.811) -- (9.724,-4.021) -- (9.696,-4.232) -- (9.669,-4.441) -- (9.644,-4.648) -- (9.622,-4.852) -- (9.604,-5.051) -- (9.591,-5.244) -- (9.584,-5.431) -- (9.584,-5.610);  
\draw[tait] (4.210,-3.013) -- (4.136,-3.038) -- (4.060,-3.062) -- (3.982,-3.085) -- (3.904,-3.106) -- (3.824,-3.126) -- (3.744,-3.146) -- (3.663,-3.165) -- (3.582,-3.184) -- (3.501,-3.203) -- (3.420,-3.222) -- (3.340,-3.242) -- (3.260,-3.262) -- (3.182,-3.283) -- (3.104,-3.306) -- (3.028,-3.330) -- (2.954,-3.355) -- (2.882,-3.383) -- (2.811,-3.413) -- (2.744,-3.445) -- (2.679,-3.480) -- (2.617,-3.517) -- (2.558,-3.558) -- (2.502,-3.603) -- (2.450,-3.650) -- (2.402,-3.702) -- (2.358,-3.757) -- (2.317,-3.816) -- (2.279,-3.878) -- (2.244,-3.942) -- (2.212,-4.010) -- (2.182,-4.079) -- (2.155,-4.151) -- (2.129,-4.225) -- (2.105,-4.301) -- (2.082,-4.378) -- (2.061,-4.456) -- (2.041,-4.535) -- (2.021,-4.615) -- (2.002,-4.695) -- (1.983,-4.776) -- (1.965,-4.856) -- (1.945,-4.937) -- (1.926,-5.016) -- (1.906,-5.096) -- (1.884,-5.174) -- (1.862,-5.251) -- (1.838,-5.326) -- (1.812,-5.400);  
\draw[tait] (1.812,-5.400) -- (1.838,-5.474) -- (1.864,-5.549) -- (1.889,-5.627) -- (1.913,-5.705) -- (1.937,-5.784) -- (1.961,-5.864) -- (1.984,-5.945) -- (2.008,-6.026) -- (2.031,-6.107) -- (2.055,-6.187) -- (2.078,-6.267) -- (2.102,-6.347) -- (2.127,-6.425) -- (2.152,-6.502) -- (2.177,-6.578) -- (2.203,-6.652) -- (2.230,-6.724) -- (2.258,-6.793) -- (2.287,-6.860) -- (2.317,-6.925) -- (2.348,-6.986) -- (2.381,-7.044) -- (2.415,-7.099) -- (2.450,-7.150) -- (2.487,-7.197) -- (2.526,-7.240) -- (2.566,-7.280) -- (2.607,-7.316) -- (2.650,-7.349) -- (2.693,-7.380) -- (2.738,-7.408) -- (2.784,-7.434) -- (2.830,-7.458) -- (2.877,-7.481) -- (2.925,-7.501) -- (2.973,-7.521) -- (3.021,-7.539) -- (3.070,-7.557) -- (3.119,-7.574) -- (3.168,-7.591) -- (3.218,-7.608) -- (3.267,-7.625) -- (3.316,-7.643) -- (3.364,-7.662) -- (3.412,-7.681) -- (3.460,-7.702) -- (3.507,-7.724) -- (3.553,-7.748);  
\draw[tait] (5.400,-6.588) -- (5.373,-6.614) -- (5.347,-6.639) -- (5.321,-6.665) -- (5.297,-6.691) -- (5.272,-6.717) -- (5.248,-6.743) -- (5.225,-6.769) -- (5.201,-6.794) -- (5.177,-6.820) -- (5.153,-6.846) -- (5.129,-6.872) -- (5.105,-6.898) -- (5.080,-6.924) -- (5.055,-6.950) -- (5.029,-6.975) -- (5.002,-7.001) -- (4.974,-7.026) -- (4.945,-7.052) -- (4.915,-7.077) -- (4.884,-7.102) -- (4.851,-7.127) -- (4.817,-7.151) -- (4.781,-7.176) -- (4.744,-7.200) -- (4.704,-7.224) -- (4.663,-7.248) -- (4.620,-7.272) -- (4.576,-7.295) -- (4.530,-7.318) -- (4.484,-7.341) -- (4.435,-7.364) -- (4.386,-7.387) -- (4.336,-7.410) -- (4.285,-7.433) -- (4.234,-7.455) -- (4.182,-7.478) -- (4.129,-7.500) -- (4.076,-7.523) -- (4.023,-7.545) -- (3.970,-7.568) -- (3.917,-7.590) -- (3.863,-7.612) -- (3.810,-7.635) -- (3.758,-7.657) -- (3.706,-7.680) -- (3.654,-7.703) -- (3.603,-7.725) -- (3.553,-7.748);  
\draw[tait] (4.379,-5.400) -- (4.396,-5.423) -- (4.413,-5.445) -- (4.430,-5.468) -- (4.447,-5.490) -- (4.463,-5.512) -- (4.480,-5.534) -- (4.496,-5.556) -- (4.512,-5.578) -- (4.528,-5.600) -- (4.544,-5.622) -- (4.561,-5.644) -- (4.577,-5.666) -- (4.594,-5.688) -- (4.611,-5.710) -- (4.628,-5.733) -- (4.645,-5.755) -- (4.663,-5.778) -- (4.681,-5.801) -- (4.699,-5.824) -- (4.718,-5.848) -- (4.737,-5.871) -- (4.757,-5.895) -- (4.777,-5.919) -- (4.798,-5.944) -- (4.820,-5.969) -- (4.842,-5.994) -- (4.865,-6.020) -- (4.888,-6.046) -- (4.912,-6.072) -- (4.936,-6.098) -- (4.960,-6.125) -- (4.985,-6.151) -- (5.011,-6.178) -- (5.036,-6.205) -- (5.062,-6.233) -- (5.088,-6.260) -- (5.114,-6.287) -- (5.140,-6.315) -- (5.166,-6.342) -- (5.193,-6.370) -- (5.219,-6.397) -- (5.245,-6.425) -- (5.271,-6.452) -- (5.297,-6.480) -- (5.323,-6.507) -- (5.349,-6.534) -- (5.375,-6.561) -- (5.400,-6.588);  
\draw[tait] (4.210,-3.013) -- (4.235,-3.040) -- (4.260,-3.066) -- (4.285,-3.093) -- (4.310,-3.120) -- (4.335,-3.146) -- (4.361,-3.173) -- (4.386,-3.199) -- (4.411,-3.226) -- (4.437,-3.253) -- (4.462,-3.279) -- (4.488,-3.306) -- (4.513,-3.332) -- (4.538,-3.359) -- (4.563,-3.386) -- (4.588,-3.413) -- (4.613,-3.439) -- (4.637,-3.466) -- (4.662,-3.493) -- (4.686,-3.520) -- (4.710,-3.547) -- (4.733,-3.574) -- (4.756,-3.602) -- (4.779,-3.629) -- (4.802,-3.657) -- (4.824,-3.684) -- (4.846,-3.712) -- (4.867,-3.740) -- (4.888,-3.768) -- (4.909,-3.796) -- (4.930,-3.824) -- (4.950,-3.852) -- (4.970,-3.881) -- (4.990,-3.909) -- (5.010,-3.937) -- (5.029,-3.966) -- (5.049,-3.994) -- (5.068,-4.023) -- (5.087,-4.051) -- (5.107,-4.080) -- (5.126,-4.108) -- (5.145,-4.137) -- (5.164,-4.165) -- (5.184,-4.194) -- (5.203,-4.223) -- (5.223,-4.251) -- (5.242,-4.279) -- (5.262,-4.308) -- (5.282,-4.336);  
\draw[tait] (9.584,-5.610) -- (9.532,-5.677) -- (9.481,-5.746) -- (9.428,-5.816) -- (9.376,-5.887) -- (9.324,-5.959) -- (9.271,-6.032) -- (9.219,-6.105) -- (9.166,-6.178) -- (9.114,-6.251) -- (9.061,-6.324) -- (9.009,-6.397) -- (8.956,-6.469) -- (8.904,-6.540) -- (8.852,-6.610) -- (8.800,-6.679) -- (8.748,-6.746) -- (8.696,-6.811) -- (8.645,-6.874) -- (8.594,-6.935) -- (8.543,-6.994) -- (8.493,-7.049) -- (8.443,-7.102) -- (8.393,-7.152) -- (8.344,-7.198) -- (8.295,-7.241) -- (8.246,-7.281) -- (8.198,-7.317) -- (8.151,-7.350) -- (8.103,-7.381) -- (8.056,-7.409) -- (8.009,-7.435) -- (7.963,-7.459) -- (7.916,-7.481) -- (7.870,-7.502) -- (7.824,-7.521) -- (7.778,-7.539) -- (7.732,-7.556) -- (7.687,-7.573) -- (7.641,-7.589) -- (7.595,-7.604) -- (7.550,-7.620) -- (7.504,-7.636) -- (7.458,-7.653) -- (7.413,-7.670) -- (7.367,-7.688) -- (7.321,-7.707) -- (7.274,-7.728) -- (7.228,-7.750);  
\draw[tait] (5.400,-6.588) -- (5.427,-6.614) -- (5.453,-6.639) -- (5.478,-6.665) -- (5.503,-6.691) -- (5.528,-6.717) -- (5.552,-6.743) -- (5.576,-6.769) -- (5.600,-6.794) -- (5.623,-6.820) -- (5.647,-6.846) -- (5.671,-6.872) -- (5.696,-6.898) -- (5.721,-6.924) -- (5.746,-6.949) -- (5.772,-6.975) -- (5.799,-7.001) -- (5.827,-7.026) -- (5.856,-7.051) -- (5.886,-7.077) -- (5.917,-7.102) -- (5.950,-7.126) -- (5.984,-7.151) -- (6.019,-7.176) -- (6.056,-7.200) -- (6.095,-7.224) -- (6.136,-7.248) -- (6.178,-7.272) -- (6.222,-7.295) -- (6.267,-7.319) -- (6.313,-7.342) -- (6.360,-7.365) -- (6.409,-7.388) -- (6.458,-7.411) -- (6.508,-7.434) -- (6.559,-7.456) -- (6.610,-7.479) -- (6.662,-7.501) -- (6.714,-7.524) -- (6.766,-7.546) -- (6.818,-7.569) -- (6.871,-7.591) -- (6.923,-7.614) -- (6.975,-7.636) -- (7.027,-7.659) -- (7.078,-7.682) -- (7.129,-7.704) -- (7.179,-7.727) -- (7.228,-7.750);  
\draw[tait] (6.465,-5.283) -- (6.446,-5.311) -- (6.426,-5.338) -- (6.408,-5.366) -- (6.389,-5.393) -- (6.370,-5.421) -- (6.352,-5.448) -- (6.333,-5.476) -- (6.315,-5.503) -- (6.296,-5.531) -- (6.278,-5.558) -- (6.259,-5.586) -- (6.240,-5.613) -- (6.222,-5.641) -- (6.203,-5.668) -- (6.184,-5.696) -- (6.164,-5.723) -- (6.145,-5.751) -- (6.125,-5.778) -- (6.105,-5.806) -- (6.085,-5.833) -- (6.064,-5.860) -- (6.043,-5.888) -- (6.021,-5.915) -- (5.999,-5.942) -- (5.977,-5.969) -- (5.954,-5.996) -- (5.931,-6.024) -- (5.907,-6.051) -- (5.883,-6.078) -- (5.859,-6.105) -- (5.834,-6.132) -- (5.809,-6.158) -- (5.784,-6.185) -- (5.759,-6.212) -- (5.734,-6.239) -- (5.708,-6.266) -- (5.682,-6.293) -- (5.656,-6.320) -- (5.631,-6.347) -- (5.605,-6.373) -- (5.579,-6.400) -- (5.553,-6.427) -- (5.527,-6.454) -- (5.501,-6.481) -- (5.476,-6.508) -- (5.450,-6.534) -- (5.425,-6.561) -- (5.400,-6.588);  
\node[vert] (v1) at (4.210,-3.013) {$v_{2}$};
\node[vert] (v2) at (5.282,-4.336) {$v_{4}$};
\node[vert] (v3) at (7.797,-4.370) {$v_{5}$};
\node[vert] (v4) at (6.465,-5.283) {$v_{7}$};
\node[vert] (v5) at (1.812,-5.400) {$v_{1}$};
\node[vert] (v6) at (4.379,-5.400) {$v_{6}$};
\node[vert] (v7) at (9.584,-5.610) {$v_{3}$};
\node[vert] (v8) at (5.400,-6.588) {$v_{10}$};
\node[vert] (v9) at (3.553,-7.748) {$v_{8}$};
\node[vert] (v10) at (7.228,-7.750) {$v_{9}$};
\end{tikzpicture}

%% file: figures/10_109-checkerboard.tex
\begin{tikzpicture}[knot/.style={line width=1.0pt, line cap=round, line join=round},
                    shaded/.style={fill=black!22}, reglabel/.style={font=\small, inner sep=1pt}]
\fill[shaded] (7.026,-5.801) -- (7.025,-5.912) -- (7.024,-6.028) -- (7.022,-6.144) -- (7.020,-6.259) -- (7.018,-6.373) -- (7.014,-6.487) -- (7.009,-6.599) -- (7.003,-6.710) -- (6.995,-6.819) -- (6.985,-6.926) -- (6.972,-7.030) -- (6.956,-7.132) -- (6.938,-7.230) -- (6.915,-7.325) -- (6.888,-7.415) -- (6.857,-7.501) -- (6.820,-7.582) -- (6.779,-7.657) -- (6.732,-7.727) -- (6.679,-7.792) -- (6.620,-7.850) -- (6.556,-7.903) -- (6.486,-7.950) -- (6.411,-7.992) -- (6.330,-8.028) -- (6.244,-8.059) -- (6.154,-8.086) -- (6.060,-8.108) -- (5.962,-8.127) -- (5.861,-8.142) -- (5.757,-8.153) -- (5.651,-8.162) -- (5.544,-8.169) -- (5.435,-8.173) -- (5.325,-8.175) -- (5.216,-8.175) -- (5.106,-8.173) -- (4.997,-8.169) -- (4.890,-8.162) -- (4.784,-8.153) -- (4.680,-8.142) -- (4.579,-8.127) -- (4.481,-8.108) -- (4.387,-8.086) -- (4.297,-8.059) -- (4.211,-8.028) -- (4.131,-7.992) -- (4.055,-7.950) -- (3.985,-7.903) -- (3.921,-7.850) -- (3.862,-7.792) -- (3.809,-7.727) -- (3.762,-7.657) -- (3.721,-7.582) -- (3.684,-7.501) -- (3.653,-7.415) -- (3.626,-7.325) -- (3.604,-7.230) -- (3.585,-7.132) -- (3.569,-7.030) -- (3.557,-6.926) -- (3.546,-6.819) -- (3.538,-6.710) -- (3.532,-6.599) -- (3.527,-6.487) -- (3.524,-6.373) -- (3.521,-6.259) -- (3.519,-6.144) -- (3.517,-6.028) -- (3.516,-5.912) -- (3.515,-5.801) -- (3.615,-5.813) -- (3.722,-5.823) -- (3.831,-5.832) -- (3.942,-5.838) -- (4.054,-5.843) -- (4.168,-5.846) -- (4.282,-5.849) -- (4.397,-5.851) -- (4.513,-5.853) -- (4.629,-5.854) -- (4.683,-5.854) -- (4.695,-5.900) -- (4.720,-5.990) -- (4.745,-6.079) -- (4.772,-6.165) -- (4.801,-6.248) -- (4.832,-6.326) -- (4.867,-6.399) -- (4.904,-6.466) -- (4.944,-6.527) -- (4.988,-6.580) -- (5.035,-6.625) -- (5.084,-6.662) -- (5.136,-6.690) -- (5.189,-6.708) -- (5.243,-6.718) -- (5.298,-6.718) -- (5.352,-6.708) -- (5.405,-6.689) -- (5.456,-6.661) -- (5.505,-6.623) -- (5.550,-6.577) -- (5.593,-6.523) -- (5.632,-6.461) -- (5.668,-6.392) -- (5.699,-6.316) -- (5.727,-6.234) -- (5.751,-6.147) -- (5.771,-6.054) -- (5.789,-5.957) -- (5.803,-5.857) -- (5.803,-5.854) -- (5.912,-5.854) -- (6.028,-5.853) -- (6.144,-5.851) -- (6.259,-5.849) -- (6.373,-5.846) -- (6.487,-5.843) -- (6.599,-5.838) -- (6.710,-5.832) -- (6.819,-5.823) -- (6.926,-5.813) -- cycle;
\fill[shaded] (7.026,-5.801) -- (7.026,-5.796) -- (7.026,-5.679) -- (7.027,-5.563) -- (7.027,-5.446) -- (7.027,-5.329) -- (7.027,-5.212) -- (7.027,-5.095) -- (7.027,-4.978) -- (7.027,-4.861) -- (7.027,-4.743) -- (7.027,-4.626) -- (7.027,-4.509) -- (7.027,-4.392) -- (7.027,-4.275) -- (7.027,-4.158) -- (7.027,-4.041) -- (7.027,-3.924) -- (7.027,-3.807) -- (7.026,-3.690) -- (7.026,-3.574) -- (7.026,-3.569) -- (7.030,-3.569) -- (7.132,-3.585) -- (7.230,-3.603) -- (7.325,-3.626) -- (7.415,-3.653) -- (7.501,-3.684) -- (7.581,-3.720) -- (7.656,-3.762) -- (7.726,-3.808) -- (7.790,-3.861) -- (7.848,-3.918) -- (7.899,-3.982) -- (7.945,-4.050) -- (7.985,-4.124) -- (8.018,-4.202) -- (8.046,-4.284) -- (8.068,-4.369) -- (8.084,-4.457) -- (8.095,-4.548) -- (8.100,-4.639) -- (8.100,-4.731) -- (8.095,-4.822) -- (8.084,-4.913) -- (8.068,-5.001) -- (8.046,-5.086) -- (8.018,-5.168) -- (7.985,-5.246) -- (7.945,-5.320) -- (7.899,-5.388) -- (7.848,-5.451) -- (7.790,-5.509) -- (7.726,-5.562) -- (7.656,-5.608) -- (7.581,-5.650) -- (7.501,-5.686) -- (7.415,-5.717) -- (7.325,-5.744) -- (7.230,-5.766) -- (7.132,-5.785) -- (7.030,-5.801) -- cycle;
\fill[shaded] (2.347,-3.566) -- (2.347,-3.456) -- (2.347,-3.340) -- (2.348,-3.225) -- (2.350,-3.110) -- (2.353,-2.996) -- (2.356,-2.883) -- (2.361,-2.771) -- (2.367,-2.660) -- (2.375,-2.551) -- (2.385,-2.444) -- (2.398,-2.339) -- (2.413,-2.238) -- (2.432,-2.140) -- (2.455,-2.045) -- (2.482,-1.955) -- (2.513,-1.869) -- (2.549,-1.788) -- (2.591,-1.713) -- (2.638,-1.643) -- (2.691,-1.578) -- (2.750,-1.520) -- (2.814,-1.467) -- (2.884,-1.420) -- (2.959,-1.378) -- (3.040,-1.342) -- (3.126,-1.311) -- (3.216,-1.284) -- (3.311,-1.261) -- (3.409,-1.242) -- (3.511,-1.227) -- (3.615,-1.214) -- (3.722,-1.204) -- (3.831,-1.196) -- (3.942,-1.190) -- (4.054,-1.185) -- (4.167,-1.181) -- (4.282,-1.179) -- (4.396,-1.177) -- (4.512,-1.176) -- (4.627,-1.175) -- (4.743,-1.175) -- (4.858,-1.176) -- (4.973,-1.177) -- (5.088,-1.179) -- (5.202,-1.181) -- (5.316,-1.185) -- (5.428,-1.190) -- (5.539,-1.196) -- (5.648,-1.204) -- (5.755,-1.214) -- (5.859,-1.227) -- (5.961,-1.242) -- (6.059,-1.261) -- (6.154,-1.284) -- (6.244,-1.311) -- (6.330,-1.342) -- (6.410,-1.378) -- (6.486,-1.420) -- (6.556,-1.467) -- (6.620,-1.520) -- (6.679,-1.578) -- (6.732,-1.643) -- (6.779,-1.713) -- (6.820,-1.788) -- (6.857,-1.869) -- (6.888,-1.955) -- (6.915,-2.045) -- (6.938,-2.140) -- (6.956,-2.238) -- (6.972,-2.339) -- (6.985,-2.444) -- (6.995,-2.551) -- (7.003,-2.660) -- (7.009,-2.771) -- (7.014,-2.883) -- (7.018,-2.996) -- (7.020,-3.111) -- (7.022,-3.226) -- (7.024,-3.342) -- (7.025,-3.458) -- (7.026,-3.569) -- (6.926,-3.557) -- (6.819,-3.546) -- (6.710,-3.538) -- (6.599,-3.532) -- (6.487,-3.527) -- (6.373,-3.524) -- (6.259,-3.521) -- (6.144,-3.519) -- (6.028,-3.517) -- (5.912,-3.516) -- (5.801,-3.515) -- (5.801,-3.511) -- (5.785,-3.409) -- (5.766,-3.311) -- (5.744,-3.216) -- (5.717,-3.126) -- (5.686,-3.040) -- (5.650,-2.960) -- (5.608,-2.885) -- (5.562,-2.815) -- (5.509,-2.751) -- (5.451,-2.693) -- (5.388,-2.642) -- (5.320,-2.596) -- (5.246,-2.557) -- (5.168,-2.523) -- (5.086,-2.495) -- (5.001,-2.473) -- (4.913,-2.457) -- (4.822,-2.446) -- (4.731,-2.441) -- (4.639,-2.441) -- (4.548,-2.446) -- (4.457,-2.457) -- (4.369,-2.473) -- (4.284,-2.495) -- (4.202,-2.523) -- (4.124,-2.557) -- (4.050,-2.596) -- (3.982,-2.642) -- (3.918,-2.693) -- (3.861,-2.751) -- (3.808,-2.815) -- (3.762,-2.885) -- (3.720,-2.960) -- (3.684,-3.040) -- (3.653,-3.126) -- (3.626,-3.216) -- (3.603,-3.311) -- (3.585,-3.409) -- (3.569,-3.511) -- (3.569,-3.515) -- (3.458,-3.516) -- (3.342,-3.517) -- (3.226,-3.519) -- (3.111,-3.521) -- (2.997,-3.523) -- (2.883,-3.527) -- (2.771,-3.532) -- (2.661,-3.538) -- (2.552,-3.545) -- (2.446,-3.555) -- cycle;
\fill[shaded] (2.347,-3.566) -- (2.347,-3.571) -- (2.347,-3.687) -- (2.348,-3.802) -- (2.350,-3.917) -- (2.353,-4.031) -- (2.356,-4.145) -- (2.361,-4.257) -- (2.367,-4.368) -- (2.375,-4.477) -- (2.385,-4.584) -- (2.392,-4.640) -- (2.340,-4.634) -- (2.240,-4.619) -- (2.144,-4.601) -- (2.052,-4.580) -- (1.964,-4.556) -- (1.882,-4.528) -- (1.806,-4.496) -- (1.737,-4.461) -- (1.675,-4.422) -- (1.621,-4.379) -- (1.575,-4.333) -- (1.538,-4.285) -- (1.510,-4.233) -- (1.491,-4.180) -- (1.481,-4.126) -- (1.481,-4.072) -- (1.491,-4.018) -- (1.510,-3.965) -- (1.538,-3.914) -- (1.575,-3.865) -- (1.621,-3.819) -- (1.675,-3.777) -- (1.737,-3.738) -- (1.807,-3.702) -- (1.882,-3.671) -- (1.965,-3.643) -- (2.052,-3.619) -- (2.145,-3.599) -- (2.241,-3.581) -- (2.342,-3.567) -- cycle;
\fill[shaded] (3.515,-5.801) -- (3.511,-5.801) -- (3.409,-5.785) -- (3.311,-5.766) -- (3.216,-5.744) -- (3.126,-5.717) -- (3.040,-5.686) -- (2.959,-5.649) -- (2.884,-5.608) -- (2.814,-5.560) -- (2.750,-5.508) -- (2.691,-5.449) -- (2.638,-5.385) -- (2.591,-5.315) -- (2.549,-5.239) -- (2.513,-5.158) -- (2.482,-5.073) -- (2.455,-4.982) -- (2.432,-4.888) -- (2.413,-4.790) -- (2.398,-4.688) -- (2.392,-4.640) -- (2.443,-4.646) -- (2.548,-4.657) -- (2.656,-4.666) -- (2.764,-4.674) -- (2.874,-4.682) -- (2.983,-4.689) -- (3.093,-4.696) -- (3.201,-4.705) -- (3.309,-4.714) -- (3.415,-4.725) -- (3.515,-4.738) -- (3.515,-4.745) -- (3.515,-4.862) -- (3.515,-4.978) -- (3.514,-5.095) -- (3.514,-5.212) -- (3.514,-5.329) -- (3.514,-5.446) -- (3.515,-5.563) -- (3.515,-5.679) -- (3.515,-5.796) -- cycle;
\fill[shaded] (5.803,-5.854) -- (5.796,-5.855) -- (5.679,-5.855) -- (5.563,-5.855) -- (5.446,-5.856) -- (5.329,-5.856) -- (5.212,-5.856) -- (5.095,-5.856) -- (4.978,-5.855) -- (4.862,-5.855) -- (4.745,-5.855) -- (4.683,-5.854) -- (4.671,-5.809) -- (4.646,-5.718) -- (4.619,-5.628) -- (4.590,-5.540) -- (4.559,-5.455) -- (4.524,-5.373) -- (4.485,-5.295) -- (4.441,-5.221) -- (4.393,-5.153) -- (4.339,-5.090) -- (4.280,-5.032) -- (4.215,-4.980) -- (4.144,-4.933) -- (4.069,-4.891) -- (3.988,-4.855) -- (3.902,-4.824) -- (3.812,-4.797) -- (3.717,-4.774) -- (3.620,-4.755) -- (3.518,-4.739) -- (3.515,-4.738) -- (3.516,-4.629) -- (3.517,-4.513) -- (3.519,-4.397) -- (3.521,-4.282) -- (3.524,-4.168) -- (3.527,-4.054) -- (3.532,-3.942) -- (3.538,-3.831) -- (3.546,-3.722) -- (3.557,-3.615) -- (3.569,-3.515) -- (3.574,-3.515) -- (3.690,-3.515) -- (3.807,-3.514) -- (3.924,-3.514) -- (4.041,-3.514) -- (4.158,-3.514) -- (4.275,-3.514) -- (4.392,-3.514) -- (4.509,-3.514) -- (4.626,-3.514) -- (4.743,-3.514) -- (4.861,-3.514) -- (4.978,-3.514) -- (5.095,-3.514) -- (5.212,-3.514) -- (5.329,-3.514) -- (5.446,-3.514) -- (5.563,-3.514) -- (5.679,-3.515) -- (5.796,-3.515) -- (5.801,-3.515) -- (5.813,-3.615) -- (5.823,-3.722) -- (5.831,-3.831) -- (5.838,-3.942) -- (5.842,-4.054) -- (5.846,-4.167) -- (5.849,-4.282) -- (5.850,-4.396) -- (5.852,-4.512) -- (5.852,-4.627) -- (5.852,-4.743) -- (5.852,-4.858) -- (5.851,-4.973) -- (5.849,-5.088) -- (5.846,-5.202) -- (5.843,-5.315) -- (5.838,-5.428) -- (5.832,-5.538) -- (5.825,-5.647) -- (5.815,-5.753) -- cycle;
\draw[knot] (7.026,-5.681) -- (7.026,-5.679) -- (7.027,-5.563) -- (7.027,-5.446) -- (7.027,-5.329) -- (7.027,-5.212) -- (7.027,-5.095) -- (7.027,-4.978) -- (7.027,-4.861) -- (7.027,-4.743) -- (7.027,-4.626) -- (7.027,-4.509) -- (7.027,-4.392) -- (7.027,-4.275) -- (7.027,-4.158) -- (7.027,-4.041) -- (7.027,-3.924) -- (7.027,-3.807) -- (7.026,-3.690) -- (7.026,-3.574) -- (7.025,-3.458) -- (7.024,-3.342) -- (7.022,-3.226) -- (7.020,-3.111) -- (7.018,-2.996) -- (7.014,-2.883) -- (7.009,-2.771) -- (7.003,-2.660) -- (6.995,-2.551) -- (6.985,-2.444) -- (6.972,-2.339) -- (6.956,-2.238) -- (6.938,-2.140) -- (6.915,-2.045) -- (6.888,-1.955) -- (6.857,-1.869) -- (6.820,-1.788) -- (6.779,-1.713) -- (6.732,-1.643) -- (6.679,-1.578) -- (6.620,-1.520) -- (6.556,-1.467) -- (6.486,-1.420) -- (6.410,-1.378) -- (6.330,-1.342) -- (6.244,-1.311) -- (6.154,-1.284) -- (6.059,-1.261) -- (5.961,-1.242) -- (5.859,-1.227) -- (5.755,-1.214) -- (5.648,-1.204) -- (5.539,-1.196) -- (5.428,-1.190) -- (5.316,-1.185) -- (5.202,-1.181) -- (5.088,-1.179) -- (4.973,-1.177) -- (4.858,-1.176) -- (4.743,-1.175) -- (4.627,-1.175) -- (4.512,-1.176) -- (4.396,-1.177) -- (4.282,-1.179) -- (4.167,-1.181) -- (4.054,-1.185) -- (3.942,-1.190) -- (3.831,-1.196) -- (3.722,-1.204) -- (3.615,-1.214) -- (3.511,-1.227) -- (3.409,-1.242) -- (3.311,-1.261) -- (3.216,-1.284) -- (3.126,-1.311) -- (3.040,-1.342) -- (2.959,-1.378) -- (2.884,-1.420) -- (2.814,-1.467) -- (2.750,-1.520) -- (2.691,-1.578) -- (2.638,-1.643) -- (2.591,-1.713) -- (2.549,-1.788) -- (2.513,-1.869) -- (2.482,-1.955) -- (2.455,-2.045) -- (2.432,-2.140) -- (2.413,-2.238) -- (2.398,-2.339) -- (2.385,-2.444) -- (2.375,-2.551) -- (2.367,-2.660) -- (2.361,-2.771) -- (2.356,-2.883) -- (2.353,-2.996) -- (2.350,-3.110) -- (2.348,-3.225) -- (2.347,-3.340) -- (2.347,-3.446);
\draw[knot] (2.347,-3.686) -- (2.347,-3.687) -- (2.348,-3.802) -- (2.350,-3.917) -- (2.353,-4.031) -- (2.356,-4.145) -- (2.361,-4.257) -- (2.367,-4.368) -- (2.375,-4.477) -- (2.385,-4.584) -- (2.398,-4.688) -- (2.413,-4.790) -- (2.432,-4.888) -- (2.455,-4.982) -- (2.482,-5.073) -- (2.513,-5.158) -- (2.549,-5.239) -- (2.591,-5.315) -- (2.638,-5.385) -- (2.691,-5.449) -- (2.750,-5.508) -- (2.814,-5.560) -- (2.884,-5.608) -- (2.959,-5.649) -- (3.040,-5.686) -- (3.126,-5.717) -- (3.216,-5.744) -- (3.311,-5.766) -- (3.397,-5.783);
\draw[knot] (3.635,-5.815) -- (3.722,-5.823) -- (3.831,-5.832) -- (3.942,-5.838) -- (4.054,-5.843) -- (4.168,-5.846) -- (4.282,-5.849) -- (4.397,-5.851) -- (4.513,-5.853) -- (4.629,-5.854) -- (4.745,-5.855) -- (4.862,-5.855) -- (4.978,-5.855) -- (5.095,-5.856) -- (5.212,-5.856) -- (5.329,-5.856) -- (5.446,-5.856) -- (5.563,-5.855) -- (5.679,-5.855) -- (5.683,-5.855);
\draw[knot] (5.923,-5.854) -- (6.028,-5.853) -- (6.144,-5.851) -- (6.259,-5.849) -- (6.373,-5.846) -- (6.487,-5.843) -- (6.599,-5.838) -- (6.710,-5.832) -- (6.819,-5.823) -- (6.926,-5.813) -- (7.030,-5.801) -- (7.132,-5.785) -- (7.230,-5.766) -- (7.325,-5.744) -- (7.415,-5.717) -- (7.501,-5.686) -- (7.581,-5.650) -- (7.656,-5.608) -- (7.726,-5.562) -- (7.790,-5.509) -- (7.848,-5.451) -- (7.899,-5.388) -- (7.945,-5.320) -- (7.985,-5.246) -- (8.018,-5.168) -- (8.046,-5.086) -- (8.068,-5.001) -- (8.084,-4.913) -- (8.095,-4.822) -- (8.100,-4.731) -- (8.100,-4.639) -- (8.095,-4.548) -- (8.084,-4.457) -- (8.068,-4.369) -- (8.046,-4.284) -- (8.018,-4.202) -- (7.985,-4.124) -- (7.945,-4.050) -- (7.899,-3.982) -- (7.848,-3.918) -- (7.790,-3.861) -- (7.726,-3.808) -- (7.656,-3.762) -- (7.581,-3.720) -- (7.501,-3.684) -- (7.415,-3.653) -- (7.325,-3.626) -- (7.230,-3.603) -- (7.144,-3.587);
\draw[knot] (6.907,-3.555) -- (6.819,-3.546) -- (6.710,-3.538) -- (6.599,-3.532) -- (6.487,-3.527) -- (6.373,-3.524) -- (6.259,-3.521) -- (6.144,-3.519) -- (6.028,-3.517) -- (5.912,-3.516) -- (5.796,-3.515) -- (5.679,-3.515) -- (5.563,-3.514) -- (5.446,-3.514) -- (5.329,-3.514) -- (5.212,-3.514) -- (5.095,-3.514) -- (4.978,-3.514) -- (4.861,-3.514) -- (4.743,-3.514) -- (4.626,-3.514) -- (4.509,-3.514) -- (4.392,-3.514) -- (4.275,-3.514) -- (4.158,-3.514) -- (4.041,-3.514) -- (3.924,-3.514) -- (3.807,-3.514) -- (3.690,-3.515) -- (3.689,-3.515);
\draw[knot] (3.449,-3.516) -- (3.342,-3.517) -- (3.226,-3.519) -- (3.111,-3.521) -- (2.997,-3.523) -- (2.883,-3.527) -- (2.771,-3.532) -- (2.661,-3.538) -- (2.552,-3.545) -- (2.446,-3.555) -- (2.342,-3.567) -- (2.241,-3.581) -- (2.145,-3.599) -- (2.052,-3.619) -- (1.965,-3.643) -- (1.882,-3.671) -- (1.807,-3.702) -- (1.737,-3.738) -- (1.675,-3.777) -- (1.621,-3.819) -- (1.575,-3.865) -- (1.538,-3.914) -- (1.510,-3.965) -- (1.491,-4.018) -- (1.481,-4.072) -- (1.481,-4.126) -- (1.491,-4.180) -- (1.510,-4.233) -- (1.538,-4.285) -- (1.575,-4.333) -- (1.621,-4.379) -- (1.675,-4.422) -- (1.737,-4.461) -- (1.806,-4.496) -- (1.882,-4.528) -- (1.964,-4.556) -- (2.052,-4.580) -- (2.144,-4.601) -- (2.240,-4.619) -- (2.273,-4.623);
\draw[knot] (2.511,-4.653) -- (2.548,-4.657) -- (2.656,-4.666) -- (2.764,-4.674) -- (2.874,-4.682) -- (2.983,-4.689) -- (3.093,-4.696) -- (3.201,-4.705) -- (3.309,-4.714) -- (3.415,-4.725) -- (3.518,-4.739) -- (3.620,-4.755) -- (3.717,-4.774) -- (3.812,-4.797) -- (3.902,-4.824) -- (3.988,-4.855) -- (4.069,-4.891) -- (4.144,-4.933) -- (4.215,-4.980) -- (4.280,-5.032) -- (4.339,-5.090) -- (4.393,-5.153) -- (4.441,-5.221) -- (4.485,-5.295) -- (4.524,-5.373) -- (4.559,-5.455) -- (4.590,-5.540) -- (4.619,-5.628) -- (4.646,-5.718) -- (4.651,-5.738);
\draw[knot] (4.714,-5.970) -- (4.720,-5.990) -- (4.745,-6.079) -- (4.772,-6.165) -- (4.801,-6.248) -- (4.832,-6.326) -- (4.867,-6.399) -- (4.904,-6.466) -- (4.944,-6.527) -- (4.988,-6.580) -- (5.035,-6.625) -- (5.084,-6.662) -- (5.136,-6.690) -- (5.189,-6.708) -- (5.243,-6.718) -- (5.298,-6.718) -- (5.352,-6.708) -- (5.405,-6.689) -- (5.456,-6.661) -- (5.505,-6.623) -- (5.550,-6.577) -- (5.593,-6.523) -- (5.632,-6.461) -- (5.668,-6.392) -- (5.699,-6.316) -- (5.727,-6.234) -- (5.751,-6.147) -- (5.771,-6.054) -- (5.789,-5.957) -- (5.803,-5.857) -- (5.815,-5.753) -- (5.825,-5.647) -- (5.832,-5.538) -- (5.838,-5.428) -- (5.843,-5.315) -- (5.846,-5.202) -- (5.849,-5.088) -- (5.851,-4.973) -- (5.852,-4.858) -- (5.852,-4.743) -- (5.852,-4.627) -- (5.852,-4.512) -- (5.850,-4.396) -- (5.849,-4.282) -- (5.846,-4.167) -- (5.842,-4.054) -- (5.838,-3.942) -- (5.831,-3.831) -- (5.823,-3.722) -- (5.815,-3.635);
\draw[knot] (5.783,-3.397) -- (5.766,-3.311) -- (5.744,-3.216) -- (5.717,-3.126) -- (5.686,-3.040) -- (5.650,-2.960) -- (5.608,-2.885) -- (5.562,-2.815) -- (5.509,-2.751) -- (5.451,-2.693) -- (5.388,-2.642) -- (5.320,-2.596) -- (5.246,-2.557) -- (5.168,-2.523) -- (5.086,-2.495) -- (5.001,-2.473) -- (4.913,-2.457) -- (4.822,-2.446) -- (4.731,-2.441) -- (4.639,-2.441) -- (4.548,-2.446) -- (4.457,-2.457) -- (4.369,-2.473) -- (4.284,-2.495) -- (4.202,-2.523) -- (4.124,-2.557) -- (4.050,-2.596) -- (3.982,-2.642) -- (3.918,-2.693) -- (3.861,-2.751) -- (3.808,-2.815) -- (3.762,-2.885) -- (3.720,-2.960) -- (3.684,-3.040) -- (3.653,-3.126) -- (3.626,-3.216) -- (3.603,-3.311) -- (3.585,-3.409) -- (3.569,-3.511) -- (3.557,-3.615) -- (3.546,-3.722) -- (3.538,-3.831) -- (3.532,-3.942) -- (3.527,-4.054) -- (3.524,-4.168) -- (3.521,-4.282) -- (3.519,-4.397) -- (3.517,-4.513) -- (3.516,-4.618);
\draw[knot] (3.515,-4.858) -- (3.515,-4.862) -- (3.515,-4.978) -- (3.514,-5.095) -- (3.514,-5.212) -- (3.514,-5.329) -- (3.514,-5.446) -- (3.515,-5.563) -- (3.515,-5.679) -- (3.515,-5.796) -- (3.516,-5.912) -- (3.517,-6.028) -- (3.519,-6.144) -- (3.521,-6.259) -- (3.524,-6.373) -- (3.527,-6.487) -- (3.532,-6.599) -- (3.538,-6.710) -- (3.546,-6.819) -- (3.557,-6.926) -- (3.569,-7.030) -- (3.585,-7.132) -- (3.604,-7.230) -- (3.626,-7.325) -- (3.653,-7.415) -- (3.684,-7.501) -- (3.721,-7.582) -- (3.762,-7.657) -- (3.809,-7.727) -- (3.862,-7.792) -- (3.921,-7.850) -- (3.985,-7.903) -- (4.055,-7.950) -- (4.131,-7.992) -- (4.211,-8.028) -- (4.297,-8.059) -- (4.387,-8.086) -- (4.481,-8.108) -- (4.579,-8.127) -- (4.680,-8.142) -- (4.784,-8.153) -- (4.890,-8.162) -- (4.997,-8.169) -- (5.106,-8.173) -- (5.216,-8.175) -- (5.325,-8.175) -- (5.435,-8.173) -- (5.544,-8.169) -- (5.651,-8.162) -- (5.757,-8.153) -- (5.861,-8.142) -- (5.962,-8.127) -- (6.060,-8.108) -- (6.154,-8.086) -- (6.244,-8.059) -- (6.330,-8.028) -- (6.411,-7.992) -- (6.486,-7.950) -- (6.556,-7.903) -- (6.620,-7.850) -- (6.679,-7.792) -- (6.732,-7.727) -- (6.779,-7.657) -- (6.820,-7.582) -- (6.857,-7.501) -- (6.888,-7.415) -- (6.915,-7.325) -- (6.938,-7.230) -- (6.956,-7.132) -- (6.972,-7.030) -- (6.985,-6.926) -- (6.995,-6.819) -- (7.003,-6.710) -- (7.009,-6.599) -- (7.014,-6.487) -- (7.018,-6.373) -- (7.020,-6.259) -- (7.022,-6.144) -- (7.024,-6.028) -- (7.025,-5.921);
\node[reglabel] (w0) at (2.5,-6.267) {$6$};
\node[reglabel] (w1) at (6.436,-4.120) {$1$};
\node[reglabel] (w2) at (5.266,-6.267) {$2$};
\node[reglabel] (w3) at (2.946,-4.105) {$3$};
\node[reglabel] (w4) at (4.714,-2.977) {$4$};
\node[reglabel] (w5) at (3.974,-5.377) {$5$};
\end{tikzpicture}

%% file: figures/L3-mirror.tex
\begin{tikzpicture}[line width=1.00pt, line cap=round, every node/.style={font=\small}]
\draw (0.000,0.000) -- (0.600,0.000);
\draw (0.000,1.800) -- (0.600,1.800);
\draw (0.000,1.200) .. controls (0.300,1.200) and (0.300,0.600) .. (0.600,0.600);
\draw[white, line width=4.0pt, line cap=butt] (0.149,0.662) .. controls (0.271,0.778) and (0.329,1.022) .. (0.451,1.138);
\draw (0.000,0.600) .. controls (0.300,0.600) and (0.300,1.200) .. (0.600,1.200);
\draw (0.600,0.000) -- (1.200,0.000);
\draw (0.600,1.800) -- (1.200,1.800);
\draw (0.600,1.200) .. controls (0.900,1.200) and (0.900,0.600) .. (1.200,0.600);
\draw[white, line width=4.0pt, line cap=butt] (0.749,0.662) .. controls (0.871,0.778) and (0.929,1.022) .. (1.051,1.138);
\draw (0.600,0.600) .. controls (0.900,0.600) and (0.900,1.200) .. (1.200,1.200);
\draw (1.200,0.000) -- (1.800,0.000);
\draw (1.200,0.600) -- (1.800,0.600);
\draw (1.200,1.800) .. controls (1.500,1.800) and (1.500,1.200) .. (1.800,1.200);
\draw[white, line width=4.0pt, line cap=butt] (1.349,1.262) .. controls (1.471,1.378) and (1.529,1.622) .. (1.651,1.738);
\draw (1.200,1.200) .. controls (1.500,1.200) and (1.500,1.800) .. (1.800,1.800);
\draw (1.800,0.000) -- (2.400,0.000);
\draw (1.800,0.600) -- (2.400,0.600);
\draw (1.800,1.800) .. controls (2.100,1.800) and (2.100,1.200) .. (2.400,1.200);
\draw[white, line width=4.0pt, line cap=butt] (1.949,1.262) .. controls (2.071,1.378) and (2.129,1.622) .. (2.251,1.738);
\draw (1.800,1.200) .. controls (2.100,1.200) and (2.100,1.800) .. (2.400,1.800);
\draw (2.400,0.000) -- (3.000,0.000);
\draw (2.400,1.800) -- (3.000,1.800);
\draw (2.400,1.200) .. controls (2.700,1.200) and (2.700,0.600) .. (3.000,0.600);
\draw[white, line width=4.0pt, line cap=butt] (2.549,0.662) .. controls (2.671,0.778) and (2.729,1.022) .. (2.851,1.138);
\draw (2.400,0.600) .. controls (2.700,0.600) and (2.700,1.200) .. (3.000,1.200);
\draw (3.000,0.000) -- (3.600,0.000);
\draw (3.000,1.800) -- (3.600,1.800);
\draw (3.000,1.200) .. controls (3.300,1.200) and (3.300,0.600) .. (3.600,0.600);
\draw[white, line width=4.0pt, line cap=butt] (3.149,0.662) .. controls (3.271,0.778) and (3.329,1.022) .. (3.451,1.138);
\draw (3.000,0.600) .. controls (3.300,0.600) and (3.300,1.200) .. (3.600,1.200);
\draw (3.600,0.000) -- (4.200,0.000);
\draw (3.600,1.800) -- (4.200,1.800);
\draw (3.600,1.200) .. controls (3.900,1.200) and (3.900,0.600) .. (4.200,0.600);
\draw[white, line width=4.0pt, line cap=butt] (3.749,0.662) .. controls (3.871,0.778) and (3.929,1.022) .. (4.051,1.138);
\draw (3.600,0.600) .. controls (3.900,0.600) and (3.900,1.200) .. (4.200,1.200);
\draw (4.200,0.000) -- (4.800,0.000);
\draw (4.200,0.600) -- (4.800,0.600);
\draw (4.200,1.800) .. controls (4.500,1.800) and (4.500,1.200) .. (4.800,1.200);
\draw[white, line width=4.0pt, line cap=butt] (4.349,1.262) .. controls (4.471,1.378) and (4.529,1.622) .. (4.651,1.738);
\draw (4.200,1.200) .. controls (4.500,1.200) and (4.500,1.800) .. (4.800,1.800);
\draw (4.800,0.000) -- (5.400,0.000);
\draw (4.800,0.600) -- (5.400,0.600);
\draw (4.800,1.800) .. controls (5.100,1.800) and (5.100,1.200) .. (5.400,1.200);
\draw[white, line width=4.0pt, line cap=butt] (4.949,1.262) .. controls (5.071,1.378) and (5.129,1.622) .. (5.251,1.738);
\draw (4.800,1.200) .. controls (5.100,1.200) and (5.100,1.800) .. (5.400,1.800);
\draw (5.400,0.000) -- (6.000,0.000);
\draw (5.400,0.600) -- (6.000,0.600);
\draw (5.400,1.800) .. controls (5.700,1.800) and (5.700,1.200) .. (6.000,1.200);
\draw[white, line width=4.0pt, line cap=butt] (5.549,1.262) .. controls (5.671,1.378) and (5.729,1.622) .. (5.851,1.738);
\draw (5.400,1.200) .. controls (5.700,1.200) and (5.700,1.800) .. (6.000,1.800);
\draw (6.000,0.000) -- (6.600,0.000);
\draw (6.000,0.600) -- (6.600,0.600);
\draw (6.000,1.800) .. controls (6.300,1.800) and (6.300,1.200) .. (6.600,1.200);
\draw[white, line width=4.0pt, line cap=butt] (6.149,1.262) .. controls (6.271,1.378) and (6.329,1.622) .. (6.451,1.738);
\draw (6.000,1.200) .. controls (6.300,1.200) and (6.300,1.800) .. (6.600,1.800);
\draw (6.600,0.000) -- (7.200,0.000);
\draw (6.600,0.600) -- (7.200,0.600);
\draw (6.600,1.800) .. controls (6.900,1.800) and (6.900,1.200) .. (7.200,1.200);
\draw[white, line width=4.0pt, line cap=butt] (6.749,1.262) .. controls (6.871,1.378) and (6.929,1.622) .. (7.051,1.738);
\draw (6.600,1.200) .. controls (6.900,1.200) and (6.900,1.800) .. (7.200,1.800);
\draw (7.200,0.000) -- (7.800,0.000);
\draw (7.200,0.600) -- (7.800,0.600);
\draw (7.200,1.800) .. controls (7.500,1.800) and (7.500,1.200) .. (7.800,1.200);
\draw[white, line width=4.0pt, line cap=butt] (7.349,1.262) .. controls (7.471,1.378) and (7.529,1.622) .. (7.651,1.738);
\draw (7.200,1.200) .. controls (7.500,1.200) and (7.500,1.800) .. (7.800,1.800);
\draw (7.800,0.000) -- (8.400,0.000);
\draw (7.800,0.600) -- (8.400,0.600);
\draw (7.800,1.800) .. controls (8.100,1.800) and (8.100,1.200) .. (8.400,1.200);
\draw[white, line width=4.0pt, line cap=butt] (7.949,1.262) .. controls (8.071,1.378) and (8.129,1.622) .. (8.251,1.738);
\draw (7.800,1.200) .. controls (8.100,1.200) and (8.100,1.800) .. (8.400,1.800);
\draw (8.400,0.000) -- (9.000,0.000);
\draw (8.400,1.800) -- (9.000,1.800);
\draw (8.400,1.200) .. controls (8.700,1.200) and (8.700,0.600) .. (9.000,0.600);
\draw[white, line width=4.0pt, line cap=butt] (8.549,0.662) .. controls (8.671,0.778) and (8.729,1.022) .. (8.851,1.138);
\draw (8.400,0.600) .. controls (8.700,0.600) and (8.700,1.200) .. (9.000,1.200);
\draw (9.000,0.000) -- (9.600,0.000);
\draw (9.000,1.800) -- (9.600,1.800);
\draw (9.000,1.200) .. controls (9.300,1.200) and (9.300,0.600) .. (9.600,0.600);
\draw[white, line width=4.0pt, line cap=butt] (9.149,0.662) .. controls (9.271,0.778) and (9.329,1.022) .. (9.451,1.138);
\draw (9.000,0.600) .. controls (9.300,0.600) and (9.300,1.200) .. (9.600,1.200);
\draw (9.600,1.800) .. controls (10.000,1.800) and (10.000,1.200) .. (9.600,1.200);
\draw (9.600,0.600) .. controls (10.000,0.600) and (10.000,0.000) .. (9.600,0.000);
\draw (0.000,1.800) .. controls (-0.400,1.800) and (-0.400,1.200) .. (0.000,1.200);
\draw (0.000,0.600) .. controls (-0.400,0.600) and (-0.400,0.000) .. (0.000,0.000);
\end{tikzpicture}

%% file: figures/cc1028-1-10_28.tex
\begin{tikzpicture}[line width=1.0pt, line cap=round, line join=round]
\draw (2.817,-6.843) -- (2.883,-6.897) -- (2.955,-6.945) -- (3.032,-6.988) -- (3.115,-7.025) -- (3.203,-7.057) -- (3.295,-7.085) -- (3.392,-7.108) -- (3.493,-7.127) -- (3.597,-7.143) -- (3.704,-7.156) -- (3.814,-7.166) -- (3.925,-7.175) -- (4.039,-7.181) -- (4.154,-7.186) -- (4.270,-7.190) -- (4.387,-7.193) -- (4.505,-7.195) -- (4.624,-7.196) -- (4.742,-7.198) -- (4.862,-7.198) -- (4.981,-7.199) -- (5.101,-7.199) -- (5.220,-7.199) -- (5.340,-7.199) -- (5.460,-7.199) -- (5.580,-7.199) -- (5.699,-7.199) -- (5.819,-7.199) -- (5.938,-7.198) -- (6.058,-7.198) -- (6.176,-7.196) -- (6.295,-7.195) -- (6.413,-7.193) -- (6.530,-7.190) -- (6.646,-7.186) -- (6.761,-7.181) -- (6.875,-7.175) -- (6.986,-7.167) -- (7.096,-7.156) -- (7.203,-7.143) -- (7.307,-7.127) -- (7.408,-7.108) -- (7.505,-7.085) -- (7.597,-7.058) -- (7.685,-7.025) -- (7.767,-6.988) -- (7.845,-6.946) -- (7.916,-6.898) -- (7.981,-6.845) -- (8.040,-6.785) -- (8.093,-6.721) -- (8.140,-6.650) -- (8.181,-6.575) -- (8.215,-6.495) -- (8.243,-6.411) -- (8.266,-6.324) -- (8.283,-6.233) -- (8.294,-6.141) -- (8.299,-6.047) -- (8.299,-5.953) -- (8.294,-5.859) -- (8.283,-5.767) -- (8.266,-5.676) -- (8.243,-5.589) -- (8.215,-5.505) -- (8.181,-5.425) -- (8.140,-5.350) -- (8.093,-5.279) -- (8.040,-5.215) -- (7.981,-5.155) -- (7.916,-5.102) -- (7.845,-5.054) -- (7.767,-5.012) -- (7.685,-4.975) -- (7.597,-4.942) -- (7.505,-4.915) -- (7.408,-4.892) -- (7.307,-4.873) -- (7.203,-4.857) -- (7.096,-4.844) -- (6.986,-4.833) -- (6.875,-4.825) -- (6.761,-4.819) -- (6.646,-4.814) -- (6.530,-4.810) -- (6.413,-4.807) -- (6.295,-4.805) -- (6.176,-4.804) -- (6.058,-4.802) -- (5.938,-4.802) -- (5.819,-4.801) -- (5.699,-4.801) -- (5.580,-4.800) -- (5.460,-4.800) -- (5.340,-4.800) -- (5.220,-4.800) -- (5.100,-4.800) -- (4.980,-4.800) -- (4.860,-4.800) -- (4.740,-4.800) -- (4.620,-4.800) -- (4.500,-4.800) -- (4.380,-4.800) -- (4.260,-4.800) -- (4.140,-4.800) -- (4.020,-4.800) -- (3.901,-4.801) -- (3.781,-4.801) -- (3.662,-4.802) -- (3.542,-4.802) -- (3.424,-4.804) -- (3.305,-4.805) -- (3.187,-4.807) -- (3.070,-4.810) -- (2.954,-4.814) -- (2.839,-4.818) -- (2.726,-4.825) -- (2.615,-4.832) -- (2.506,-4.842) -- (2.399,-4.854) -- (2.296,-4.869) -- (2.197,-4.887) -- (2.102,-4.908) -- (2.013,-4.933) -- (1.929,-4.961) -- (1.851,-4.993) -- (1.780,-5.029) -- (1.716,-5.069) -- (1.661,-5.113) -- (1.614,-5.160) -- (1.576,-5.210) -- (1.547,-5.263) -- (1.527,-5.317) -- (1.517,-5.372) -- (1.517,-5.428) -- (1.527,-5.483) -- (1.546,-5.537) -- (1.575,-5.589) -- (1.612,-5.638) -- (1.659,-5.684) -- (1.713,-5.727) -- (1.775,-5.765) -- (1.844,-5.800) -- (1.919,-5.829) -- (1.999,-5.854) -- (2.084,-5.873) -- (2.172,-5.888) -- (2.263,-5.897) -- (2.355,-5.902) -- (2.449,-5.901) -- (2.542,-5.895) -- (2.634,-5.883) -- (2.724,-5.866) -- (2.811,-5.844) -- (2.895,-5.815) -- (2.975,-5.781) -- (3.050,-5.740) -- (3.121,-5.693) -- (3.185,-5.640) -- (3.245,-5.581) -- (3.298,-5.516) -- (3.346,-5.445) -- (3.388,-5.367) -- (3.425,-5.285) -- (3.458,-5.197) -- (3.485,-5.105) -- (3.508,-5.008) -- (3.527,-4.907) -- (3.543,-4.803) -- (3.556,-4.696) -- (3.567,-4.587) -- (3.575,-4.475) -- (3.581,-4.361) -- (3.586,-4.246) -- (3.590,-4.130) -- (3.593,-4.013) -- (3.596,-3.896) -- (3.598,-3.778) -- (3.599,-3.659) -- (3.601,-3.541) -- (3.602,-3.422) -- (3.604,-3.304) -- (3.607,-3.187) -- (3.610,-3.070) -- (3.613,-2.954) -- (3.618,-2.839) -- (3.624,-2.726) -- (3.632,-2.615) -- (3.642,-2.506) -- (3.654,-2.399) -- (3.669,-2.296) -- (3.687,-2.197) -- (3.708,-2.102) -- (3.733,-2.013) -- (3.761,-1.929) -- (3.793,-1.851) -- (3.829,-1.780) -- (3.869,-1.716) -- (3.913,-1.661) -- (3.960,-1.614) -- (4.010,-1.576) -- (4.063,-1.547) -- (4.117,-1.527) -- (4.172,-1.517) -- (4.228,-1.517) -- (4.284,-1.527) -- (4.338,-1.546) -- (4.391,-1.575) -- (4.441,-1.612) -- (4.489,-1.659) -- (4.534,-1.713) -- (4.576,-1.775) -- (4.614,-1.844) -- (4.649,-1.919) -- (4.681,-1.999) -- (4.711,-2.084) -- (4.738,-2.172) -- (4.764,-2.263) -- (4.790,-2.355) -- (4.815,-2.449) -- (4.840,-2.542) -- (4.867,-2.634) -- (4.897,-2.724) -- (4.929,-2.812) -- (4.965,-2.896) -- (5.005,-2.976) -- (5.049,-3.051) -- (5.098,-3.122) -- (5.153,-3.187) -- (5.213,-3.247) -- (5.278,-3.302) -- (5.349,-3.351) -- (5.424,-3.396) -- (5.504,-3.436) -- (5.589,-3.471) -- (5.676,-3.504) -- (5.767,-3.533) -- (5.859,-3.561) -- (5.953,-3.587) -- (6.047,-3.613) -- (6.141,-3.639) -- (6.233,-3.667) -- (6.324,-3.696) -- (6.411,-3.729) -- (6.495,-3.765) -- (6.575,-3.805) -- (6.650,-3.850) -- (6.721,-3.899) -- (6.785,-3.955) -- (6.845,-4.015) -- (6.898,-4.082) -- (6.946,-4.154) -- (6.988,-4.232) -- (7.025,-4.315) -- (7.058,-4.402) -- (7.085,-4.495) -- (7.108,-4.592) -- (7.127,-4.693) -- (7.143,-4.797) -- (7.156,-4.904) -- (7.166,-5.013) -- (7.175,-5.125) -- (7.181,-5.239) -- (7.186,-5.354) -- (7.190,-5.470) -- (7.192,-5.587) -- (7.194,-5.704) -- (7.195,-5.822) -- (7.196,-5.941) -- (7.196,-6.059) -- (7.195,-6.178) -- (7.194,-6.296) -- (7.192,-6.413) -- (7.190,-6.530) -- (7.186,-6.646) -- (7.182,-6.761) -- (7.175,-6.874) -- (7.168,-6.985) -- (7.158,-7.094) -- (7.146,-7.201) -- (7.131,-7.304) -- (7.113,-7.403) -- (7.092,-7.498) -- (7.067,-7.587) -- (7.039,-7.671) -- (7.007,-7.749) -- (6.971,-7.820) -- (6.931,-7.884) -- (6.887,-7.939) -- (6.840,-7.986) -- (6.790,-8.024) -- (6.737,-8.053) -- (6.683,-8.073) -- (6.628,-8.082) -- (6.572,-8.082) -- (6.517,-8.073) -- (6.463,-8.053) -- (6.410,-8.024) -- (6.360,-7.986) -- (6.313,-7.939) -- (6.269,-7.884) -- (6.229,-7.820) -- (6.193,-7.749) -- (6.161,-7.671) -- (6.133,-7.587) -- (6.108,-7.498) -- (6.087,-7.403) -- (6.069,-7.304) -- (6.054,-7.201) -- (6.042,-7.094) -- (6.032,-6.985) -- (6.025,-6.874) -- (6.018,-6.761) -- (6.014,-6.646) -- (6.010,-6.530) -- (6.007,-6.413) -- (6.005,-6.295) -- (6.004,-6.176) -- (6.002,-6.058) -- (6.002,-5.938) -- (6.001,-5.819) -- (6.001,-5.699) -- (6.000,-5.580) -- (6.000,-5.460) -- (6.000,-5.340) -- (6.000,-5.220) -- (5.999,-5.101) -- (5.999,-4.981) -- (5.998,-4.862) -- (5.998,-4.742) -- (5.996,-4.624) -- (5.995,-4.505) -- (5.993,-4.387) -- (5.990,-4.270) -- (5.986,-4.154) -- (5.981,-4.039) -- (5.975,-3.925) -- (5.966,-3.814) -- (5.956,-3.704) -- (5.943,-3.597) -- (5.927,-3.493) -- (5.908,-3.392) -- (5.885,-3.295) -- (5.857,-3.203) -- (5.825,-3.115) -- (5.788,-3.032) -- (5.745,-2.955) -- (5.697,-2.883) -- (5.643,-2.817) -- (5.583,-2.757) -- (5.517,-2.703) -- (5.445,-2.655) -- (5.368,-2.612) -- (5.285,-2.575) -- (5.198,-2.543) -- (5.105,-2.516) -- (5.009,-2.493) -- (4.908,-2.474) -- (4.805,-2.459) -- (4.699,-2.446) -- (4.590,-2.437) -- (4.480,-2.430) -- (4.368,-2.426) -- (4.256,-2.424) -- (4.144,-2.424) -- (4.032,-2.426) -- (3.920,-2.430) -- (3.810,-2.437) -- (3.701,-2.446) -- (3.595,-2.459) -- (3.492,-2.474) -- (3.391,-2.493) -- (3.295,-2.516) -- (3.202,-2.543) -- (3.115,-2.575) -- (3.032,-2.612) -- (2.955,-2.655) -- (2.883,-2.703) -- (2.817,-2.757) -- (2.757,-2.817) -- (2.703,-2.883) -- (2.655,-2.955) -- (2.612,-3.032) -- (2.575,-3.115) -- (2.543,-3.203) -- (2.515,-3.295) -- (2.492,-3.392) -- (2.473,-3.493) -- (2.457,-3.597) -- (2.444,-3.704) -- (2.434,-3.813) -- (2.425,-3.925) -- (2.419,-4.039) -- (2.414,-4.154) -- (2.410,-4.270) -- (2.408,-4.387) -- (2.406,-4.504) -- (2.405,-4.622) -- (2.404,-4.741) -- (2.404,-4.859) -- (2.405,-4.978) -- (2.406,-5.096) -- (2.408,-5.213) -- (2.410,-5.330) -- (2.414,-5.446) -- (2.419,-5.561) -- (2.425,-5.675) -- (2.434,-5.787) -- (2.444,-5.896) -- (2.457,-6.003) -- (2.473,-6.107) -- (2.492,-6.208) -- (2.515,-6.305) -- (2.543,-6.397) -- (2.575,-6.485) -- (2.612,-6.568) -- (2.655,-6.645) -- (2.703,-6.717) -- (2.757,-6.783) -- cycle;
\draw[white, line width=4.5pt, line cap=butt] (6.087,-7.403) -- (6.069,-7.304) -- (6.054,-7.201) -- (6.054,-7.198) -- (6.042,-7.094) -- (6.032,-6.985);
\draw[line cap=butt] (6.087,-7.403) -- (6.069,-7.304) -- (6.054,-7.201) -- (6.054,-7.198) -- (6.042,-7.094) -- (6.032,-6.985);
\draw[white, line width=4.5pt, line cap=butt] (6.986,-7.167) -- (7.096,-7.156) -- (7.151,-7.149) -- (7.203,-7.143) -- (7.307,-7.127);
\draw[line cap=butt] (6.986,-7.167) -- (7.096,-7.156) -- (7.151,-7.149) -- (7.203,-7.143) -- (7.307,-7.127);
\draw[white, line width=4.5pt, line cap=butt] (7.127,-4.693) -- (7.143,-4.797) -- (7.150,-4.850) -- (7.156,-4.904) -- (7.166,-5.013);
\draw[line cap=butt] (7.127,-4.693) -- (7.143,-4.797) -- (7.150,-4.850) -- (7.156,-4.904) -- (7.166,-5.013);
\draw[white, line width=4.5pt, line cap=butt] (6.176,-4.804) -- (6.058,-4.802) -- (5.998,-4.802) -- (5.938,-4.802) -- (5.819,-4.801);
\draw[line cap=butt] (6.176,-4.804) -- (6.058,-4.802) -- (5.998,-4.802) -- (5.938,-4.802) -- (5.819,-4.801);
\draw[white, line width=4.5pt, line cap=butt] (3.508,-5.008) -- (3.527,-4.907) -- (3.543,-4.803) -- (3.543,-4.802) -- (3.556,-4.696) -- (3.567,-4.587);
\draw[line cap=butt] (3.508,-5.008) -- (3.527,-4.907) -- (3.543,-4.803) -- (3.543,-4.802) -- (3.556,-4.696) -- (3.567,-4.587);
\draw[white, line width=4.5pt, line cap=butt] (2.615,-4.832) -- (2.506,-4.842) -- (2.404,-4.854) -- (2.399,-4.854) -- (2.296,-4.869) -- (2.197,-4.887);
\draw[line cap=butt] (2.615,-4.832) -- (2.506,-4.842) -- (2.404,-4.854) -- (2.399,-4.854) -- (2.296,-4.869) -- (2.197,-4.887);
\draw[white, line width=4.5pt, line cap=butt] (2.425,-5.675) -- (2.434,-5.787) -- (2.444,-5.896) -- (2.445,-5.901) -- (2.457,-6.003) -- (2.473,-6.107);
\draw[line cap=butt] (2.425,-5.675) -- (2.434,-5.787) -- (2.444,-5.896) -- (2.445,-5.901) -- (2.457,-6.003) -- (2.473,-6.107);
\draw[white, line width=4.5pt, line cap=butt] (3.810,-2.437) -- (3.701,-2.446) -- (3.648,-2.452) -- (3.595,-2.459) -- (3.492,-2.474);
\draw[line cap=butt] (3.810,-2.437) -- (3.701,-2.446) -- (3.648,-2.452) -- (3.595,-2.459) -- (3.492,-2.474);
\draw[white, line width=4.5pt, line cap=butt] (4.764,-2.263) -- (4.790,-2.355) -- (4.815,-2.449) -- (4.818,-2.460) -- (4.840,-2.542) -- (4.867,-2.634);
\draw[line cap=butt] (4.764,-2.263) -- (4.790,-2.355) -- (4.815,-2.449) -- (4.818,-2.460) -- (4.840,-2.542) -- (4.867,-2.634);
\draw[white, line width=4.5pt, line cap=butt] (5.966,-3.814) -- (5.956,-3.704) -- (5.943,-3.597) -- (5.941,-3.584) -- (5.927,-3.493) -- (5.908,-3.392);
\draw[line cap=butt] (5.966,-3.814) -- (5.956,-3.704) -- (5.943,-3.597) -- (5.941,-3.584) -- (5.927,-3.493) -- (5.908,-3.392);
\draw[red!75!black, line width=0.8pt] (6.054,-7.198) circle (0.512);
\end{tikzpicture}

%% file: figures/cc1028-2-afterR2.tex
\begin{tikzpicture}[line width=1.0pt, line cap=round, line join=round]
\draw (2.817,-6.843) -- (2.883,-6.897) -- (2.955,-6.945) -- (3.032,-6.988) -- (3.115,-7.025) -- (3.203,-7.057) -- (3.295,-7.085) -- (3.392,-7.108) -- (3.493,-7.127) -- (3.597,-7.143) -- (3.704,-7.156) -- (3.814,-7.166) -- (3.925,-7.175) -- (4.039,-7.181) -- (4.154,-7.186) -- (4.270,-7.190) -- (4.387,-7.193) -- (4.505,-7.195) -- (4.624,-7.196) -- (4.742,-7.198) -- (4.862,-7.198) -- (4.981,-7.199) -- (5.101,-7.199) -- (5.220,-7.199) -- (5.340,-7.199) -- (5.460,-7.199) -- (5.580,-7.199) -- (5.699,-7.199) -- (5.819,-7.199) -- (5.938,-7.198) -- (6.058,-7.198) -- (6.176,-7.196) -- (6.295,-7.195) -- (6.413,-7.193) -- (6.530,-7.190) -- (6.646,-7.186) -- (6.761,-7.181) -- (6.875,-7.175) -- (6.986,-7.167) -- (7.096,-7.156) -- (7.203,-7.143) -- (7.307,-7.127) -- (7.408,-7.108) -- (7.505,-7.085) -- (7.597,-7.058) -- (7.685,-7.025) -- (7.767,-6.988) -- (7.845,-6.946) -- (7.916,-6.898) -- (7.981,-6.845) -- (8.040,-6.785) -- (8.093,-6.721) -- (8.140,-6.650) -- (8.181,-6.575) -- (8.215,-6.495) -- (8.243,-6.411) -- (8.266,-6.324) -- (8.283,-6.233) -- (8.294,-6.141) -- (8.299,-6.047) -- (8.299,-5.953) -- (8.294,-5.859) -- (8.283,-5.767) -- (8.266,-5.676) -- (8.243,-5.589) -- (8.215,-5.505) -- (8.181,-5.425) -- (8.140,-5.350) -- (8.093,-5.279) -- (8.040,-5.215) -- (7.981,-5.155) -- (7.916,-5.102) -- (7.845,-5.054) -- (7.767,-5.012) -- (7.685,-4.975) -- (7.597,-4.942) -- (7.505,-4.915) -- (7.408,-4.892) -- (7.307,-4.873) -- (7.203,-4.857) -- (7.096,-4.844) -- (6.986,-4.833) -- (6.875,-4.825) -- (6.761,-4.819) -- (6.646,-4.814) -- (6.530,-4.810) -- (6.413,-4.807) -- (6.295,-4.805) -- (6.176,-4.804) -- (6.058,-4.802) -- (5.938,-4.802) -- (5.819,-4.801) -- (5.699,-4.801) -- (5.580,-4.800) -- (5.460,-4.800) -- (5.340,-4.800) -- (5.220,-4.800) -- (5.100,-4.800) -- (4.980,-4.800) -- (4.860,-4.800) -- (4.740,-4.800) -- (4.620,-4.800) -- (4.500,-4.800) -- (4.380,-4.800) -- (4.260,-4.800) -- (4.140,-4.800) -- (4.020,-4.800) -- (3.901,-4.801) -- (3.781,-4.801) -- (3.662,-4.802) -- (3.542,-4.802) -- (3.424,-4.804) -- (3.305,-4.805) -- (3.187,-4.807) -- (3.070,-4.810) -- (2.954,-4.814) -- (2.839,-4.818) -- (2.726,-4.825) -- (2.615,-4.832) -- (2.506,-4.842) -- (2.399,-4.854) -- (2.296,-4.869) -- (2.197,-4.887) -- (2.102,-4.908) -- (2.013,-4.933) -- (1.929,-4.961) -- (1.851,-4.993) -- (1.780,-5.029) -- (1.716,-5.069) -- (1.661,-5.113) -- (1.614,-5.160) -- (1.576,-5.210) -- (1.547,-5.263) -- (1.527,-5.317) -- (1.517,-5.372) -- (1.517,-5.428) -- (1.527,-5.483) -- (1.546,-5.537) -- (1.575,-5.589) -- (1.612,-5.638) -- (1.659,-5.684) -- (1.713,-5.727) -- (1.775,-5.765) -- (1.844,-5.800) -- (1.919,-5.829) -- (1.999,-5.854) -- (2.084,-5.873) -- (2.172,-5.888) -- (2.263,-5.897) -- (2.355,-5.902) -- (2.449,-5.901) -- (2.542,-5.895) -- (2.634,-5.883) -- (2.724,-5.866) -- (2.811,-5.844) -- (2.895,-5.815) -- (2.975,-5.781) -- (3.050,-5.740) -- (3.121,-5.693) -- (3.185,-5.640) -- (3.245,-5.581) -- (3.298,-5.516) -- (3.346,-5.445) -- (3.388,-5.367) -- (3.425,-5.285) -- (3.458,-5.197) -- (3.485,-5.105) -- (3.508,-5.008) -- (3.527,-4.907) -- (3.543,-4.803) -- (3.556,-4.696) -- (3.567,-4.587) -- (3.575,-4.475) -- (3.581,-4.361) -- (3.586,-4.246) -- (3.590,-4.130) -- (3.593,-4.013) -- (3.596,-3.896) -- (3.598,-3.778) -- (3.599,-3.659) -- (3.601,-3.541) -- (3.602,-3.422) -- (3.604,-3.304) -- (3.607,-3.187) -- (3.610,-3.070) -- (3.613,-2.954) -- (3.618,-2.839) -- (3.624,-2.726) -- (3.632,-2.615) -- (3.642,-2.506) -- (3.654,-2.399) -- (3.669,-2.296) -- (3.687,-2.197) -- (3.708,-2.102) -- (3.733,-2.013) -- (3.761,-1.929) -- (3.793,-1.851) -- (3.829,-1.780) -- (3.869,-1.716) -- (3.913,-1.661) -- (3.960,-1.614) -- (4.010,-1.576) -- (4.063,-1.547) -- (4.117,-1.527) -- (4.172,-1.517) -- (4.228,-1.517) -- (4.284,-1.527) -- (4.338,-1.546) -- (4.391,-1.575) -- (4.441,-1.612) -- (4.489,-1.659) -- (4.534,-1.713) -- (4.576,-1.775) -- (4.614,-1.844) -- (4.649,-1.919) -- (4.681,-1.999) -- (4.711,-2.084) -- (4.738,-2.172) -- (4.764,-2.263) -- (4.790,-2.355) -- (4.815,-2.449) -- (4.840,-2.542) -- (4.867,-2.634) -- (4.897,-2.724) -- (4.929,-2.812) -- (4.965,-2.896) -- (5.005,-2.976) -- (5.049,-3.051) -- (5.098,-3.122) -- (5.153,-3.187) -- (5.213,-3.247) -- (5.278,-3.302) -- (5.349,-3.351) -- (5.424,-3.396) -- (5.504,-3.436) -- (5.589,-3.471) -- (5.676,-3.504) -- (5.767,-3.533) -- (5.859,-3.561) -- (5.953,-3.587) -- (6.047,-3.613) -- (6.141,-3.639) -- (6.233,-3.667) -- (6.324,-3.696) -- (6.411,-3.729) -- (6.495,-3.765) -- (6.575,-3.805) -- (6.645,-3.855) -- (6.710,-3.910) -- (6.769,-3.971) -- (6.823,-4.037) -- (6.872,-4.108) -- (6.915,-4.185) -- (6.955,-4.265) -- (6.989,-4.351) -- (7.020,-4.440) -- (7.047,-4.533) -- (7.070,-4.630) -- (7.090,-4.729) -- (7.108,-4.831) -- (7.122,-4.935) -- (7.135,-5.042) -- (7.145,-5.149) -- (7.154,-5.258) -- (7.160,-5.366) -- (7.165,-5.475) -- (7.169,-5.583) -- (7.171,-5.690) -- (7.171,-5.794) -- (7.170,-5.896) -- (7.166,-5.994) -- (7.162,-6.088) -- (7.155,-6.176) -- (7.146,-6.259) -- (7.135,-6.335) -- (7.122,-6.405) -- (7.107,-6.467) -- (7.090,-6.521) -- (7.070,-6.569) -- (7.049,-6.609) -- (7.026,-6.643) -- (7.002,-6.671) -- (6.976,-6.694) -- (6.949,-6.711) -- (6.921,-6.725) -- (6.892,-6.735) -- (6.863,-6.743) -- (6.833,-6.749) -- (6.802,-6.753) -- (6.772,-6.756) -- (6.741,-6.757) -- (6.709,-6.759) -- (6.678,-6.759) -- (6.647,-6.760) -- (6.616,-6.760) -- (6.584,-6.760) -- (6.553,-6.760) -- (6.522,-6.759) -- (6.490,-6.759) -- (6.459,-6.757) -- (6.428,-6.755) -- (6.398,-6.753) -- (6.367,-6.749) -- (6.337,-6.743) -- (6.307,-6.735) -- (6.279,-6.725) -- (6.250,-6.711) -- (6.223,-6.693) -- (6.198,-6.671) -- (6.173,-6.643) -- (6.150,-6.609) -- (6.129,-6.568) -- (6.109,-6.520) -- (6.091,-6.465) -- (6.076,-6.402) -- (6.062,-6.332) -- (6.050,-6.255) -- (6.040,-6.171) -- (6.031,-6.081) -- (6.024,-5.985) -- (6.018,-5.884) -- (6.013,-5.780) -- (6.009,-5.671) -- (6.006,-5.560) -- (6.004,-5.447) -- (6.002,-5.333) -- (6.000,-5.217) -- (5.998,-5.100) -- (5.997,-4.982) -- (5.995,-4.864) -- (5.993,-4.746) -- (5.991,-4.629) -- (5.989,-4.511) -- (5.986,-4.394) -- (5.982,-4.278) -- (5.977,-4.163) -- (5.972,-4.048) -- (5.965,-3.935) -- (5.957,-3.823) -- (5.947,-3.713) -- (5.935,-3.605) -- (5.921,-3.499) -- (5.905,-3.395) -- (5.885,-3.295) -- (5.857,-3.203) -- (5.825,-3.115) -- (5.788,-3.032) -- (5.745,-2.955) -- (5.697,-2.883) -- (5.643,-2.817) -- (5.583,-2.757) -- (5.517,-2.703) -- (5.445,-2.655) -- (5.368,-2.612) -- (5.285,-2.575) -- (5.198,-2.543) -- (5.105,-2.516) -- (5.009,-2.493) -- (4.908,-2.474) -- (4.805,-2.459) -- (4.699,-2.446) -- (4.590,-2.437) -- (4.480,-2.430) -- (4.368,-2.426) -- (4.256,-2.424) -- (4.144,-2.424) -- (4.032,-2.426) -- (3.920,-2.430) -- (3.810,-2.437) -- (3.701,-2.446) -- (3.595,-2.459) -- (3.492,-2.474) -- (3.391,-2.493) -- (3.295,-2.516) -- (3.202,-2.543) -- (3.115,-2.575) -- (3.032,-2.612) -- (2.955,-2.655) -- (2.883,-2.703) -- (2.817,-2.757) -- (2.757,-2.817) -- (2.703,-2.883) -- (2.655,-2.955) -- (2.612,-3.032) -- (2.575,-3.115) -- (2.543,-3.203) -- (2.515,-3.295) -- (2.492,-3.392) -- (2.473,-3.493) -- (2.457,-3.597) -- (2.444,-3.704) -- (2.434,-3.813) -- (2.425,-3.925) -- (2.419,-4.039) -- (2.414,-4.154) -- (2.410,-4.270) -- (2.408,-4.387) -- (2.406,-4.504) -- (2.405,-4.622) -- (2.404,-4.741) -- (2.404,-4.859) -- (2.405,-4.978) -- (2.406,-5.096) -- (2.408,-5.213) -- (2.410,-5.330) -- (2.414,-5.446) -- (2.419,-5.561) -- (2.425,-5.675) -- (2.434,-5.787) -- (2.444,-5.896) -- (2.457,-6.003) -- (2.473,-6.107) -- (2.492,-6.208) -- (2.515,-6.305) -- (2.543,-6.397) -- (2.575,-6.485) -- (2.612,-6.568) -- (2.655,-6.645) -- (2.703,-6.717) -- (2.757,-6.783) -- cycle;
\draw[white, line width=4.5pt, line cap=butt] (7.070,-4.630) -- (7.090,-4.729) -- (7.108,-4.831) -- (7.110,-4.846) -- (7.122,-4.935) -- (7.135,-5.042);
\draw[line cap=butt] (7.070,-4.630) -- (7.090,-4.729) -- (7.108,-4.831) -- (7.110,-4.846) -- (7.122,-4.935) -- (7.135,-5.042);
\draw[white, line width=4.5pt, line cap=butt] (6.176,-4.804) -- (6.058,-4.802) -- (5.994,-4.802) -- (5.938,-4.802) -- (5.819,-4.801);
\draw[line cap=butt] (6.176,-4.804) -- (6.058,-4.802) -- (5.994,-4.802) -- (5.938,-4.802) -- (5.819,-4.801);
\draw[white, line width=4.5pt, line cap=butt] (3.508,-5.008) -- (3.527,-4.907) -- (3.543,-4.803) -- (3.543,-4.802) -- (3.556,-4.696) -- (3.567,-4.587);
\draw[line cap=butt] (3.508,-5.008) -- (3.527,-4.907) -- (3.543,-4.803) -- (3.543,-4.802) -- (3.556,-4.696) -- (3.567,-4.587);
\draw[white, line width=4.5pt, line cap=butt] (2.615,-4.832) -- (2.506,-4.842) -- (2.404,-4.854) -- (2.399,-4.854) -- (2.296,-4.869) -- (2.197,-4.887);
\draw[line cap=butt] (2.615,-4.832) -- (2.506,-4.842) -- (2.404,-4.854) -- (2.399,-4.854) -- (2.296,-4.869) -- (2.197,-4.887);
\draw[white, line width=4.5pt, line cap=butt] (2.425,-5.675) -- (2.434,-5.787) -- (2.444,-5.896) -- (2.445,-5.901) -- (2.457,-6.003) -- (2.473,-6.107);
\draw[line cap=butt] (2.425,-5.675) -- (2.434,-5.787) -- (2.444,-5.896) -- (2.445,-5.901) -- (2.457,-6.003) -- (2.473,-6.107);
\draw[white, line width=4.5pt, line cap=butt] (3.810,-2.437) -- (3.701,-2.446) -- (3.648,-2.452) -- (3.595,-2.459) -- (3.492,-2.474);
\draw[line cap=butt] (3.810,-2.437) -- (3.701,-2.446) -- (3.648,-2.452) -- (3.595,-2.459) -- (3.492,-2.474);
\draw[white, line width=4.5pt, line cap=butt] (4.764,-2.263) -- (4.790,-2.355) -- (4.815,-2.449) -- (4.818,-2.460) -- (4.840,-2.542) -- (4.867,-2.634);
\draw[line cap=butt] (4.764,-2.263) -- (4.790,-2.355) -- (4.815,-2.449) -- (4.818,-2.460) -- (4.840,-2.542) -- (4.867,-2.634);
\draw[white, line width=4.5pt, line cap=butt] (5.947,-3.713) -- (5.935,-3.605) -- (5.932,-3.581) -- (5.921,-3.499) -- (5.905,-3.395);
\draw[line cap=butt] (5.947,-3.713) -- (5.935,-3.605) -- (5.932,-3.581) -- (5.921,-3.499) -- (5.905,-3.395);
\end{tikzpicture}

%% file: figures/cc1028-3-8_8.tex
\begin{tikzpicture}[line width=1.0pt, line cap=round, line join=round]
\draw (5.640,-5.585) -- (5.693,-5.521) -- (5.740,-5.450) -- (5.781,-5.375) -- (5.815,-5.295) -- (5.843,-5.211) -- (5.866,-5.124) -- (5.883,-5.033) -- (5.894,-4.941) -- (5.899,-4.847) -- (5.899,-4.753) -- (5.894,-4.659) -- (5.883,-4.567) -- (5.866,-4.476) -- (5.843,-4.389) -- (5.815,-4.305) -- (5.781,-4.225) -- (5.740,-4.150) -- (5.693,-4.079) -- (5.640,-4.015) -- (5.581,-3.955) -- (5.516,-3.902) -- (5.445,-3.854) -- (5.368,-3.812) -- (5.285,-3.775) -- (5.197,-3.743) -- (5.105,-3.716) -- (5.009,-3.693) -- (4.908,-3.674) -- (4.805,-3.658) -- (4.699,-3.646) -- (4.590,-3.637) -- (4.480,-3.630) -- (4.368,-3.626) -- (4.256,-3.624) -- (4.144,-3.624) -- (4.032,-3.626) -- (3.920,-3.630) -- (3.810,-3.637) -- (3.701,-3.646) -- (3.595,-3.659) -- (3.492,-3.674) -- (3.391,-3.693) -- (3.295,-3.716) -- (3.202,-3.743) -- (3.115,-3.775) -- (3.032,-3.812) -- (2.955,-3.855) -- (2.883,-3.903) -- (2.817,-3.957) -- (2.757,-4.017) -- (2.703,-4.083) -- (2.655,-4.155) -- (2.612,-4.232) -- (2.575,-4.315) -- (2.543,-4.402) -- (2.516,-4.495) -- (2.493,-4.591) -- (2.474,-4.692) -- (2.458,-4.795) -- (2.446,-4.901) -- (2.437,-5.010) -- (2.430,-5.120) -- (2.426,-5.231) -- (2.424,-5.344) -- (2.424,-5.456) -- (2.426,-5.568) -- (2.430,-5.679) -- (2.436,-5.789) -- (2.445,-5.897) -- (2.456,-6.002) -- (2.470,-6.105) -- (2.488,-6.204) -- (2.508,-6.298) -- (2.533,-6.388) -- (2.561,-6.471) -- (2.593,-6.549) -- (2.630,-6.620) -- (2.670,-6.684) -- (2.713,-6.739) -- (2.760,-6.786) -- (2.810,-6.824) -- (2.863,-6.853) -- (2.917,-6.873) -- (2.972,-6.882) -- (3.028,-6.882) -- (3.083,-6.873) -- (3.137,-6.853) -- (3.190,-6.824) -- (3.240,-6.786) -- (3.287,-6.739) -- (3.331,-6.684) -- (3.371,-6.620) -- (3.407,-6.549) -- (3.439,-6.471) -- (3.467,-6.387) -- (3.492,-6.298) -- (3.513,-6.203) -- (3.531,-6.104) -- (3.546,-6.001) -- (3.558,-5.894) -- (3.568,-5.785) -- (3.575,-5.674) -- (3.582,-5.561) -- (3.586,-5.446) -- (3.590,-5.330) -- (3.593,-5.213) -- (3.595,-5.095) -- (3.596,-4.976) -- (3.598,-4.858) -- (3.598,-4.738) -- (3.599,-4.619) -- (3.599,-4.499) -- (3.600,-4.380) -- (3.600,-4.260) -- (3.600,-4.140) -- (3.600,-4.020) -- (3.601,-3.901) -- (3.601,-3.781) -- (3.602,-3.662) -- (3.602,-3.542) -- (3.604,-3.424) -- (3.605,-3.305) -- (3.607,-3.187) -- (3.610,-3.070) -- (3.614,-2.954) -- (3.619,-2.839) -- (3.625,-2.725) -- (3.634,-2.614) -- (3.644,-2.504) -- (3.657,-2.397) -- (3.673,-2.293) -- (3.692,-2.192) -- (3.715,-2.095) -- (3.743,-2.003) -- (3.775,-1.915) -- (3.812,-1.832) -- (3.855,-1.755) -- (3.903,-1.683) -- (3.957,-1.617) -- (4.017,-1.557) -- (4.083,-1.503) -- (4.155,-1.455) -- (4.232,-1.412) -- (4.315,-1.375) -- (4.402,-1.343) -- (4.495,-1.316) -- (4.591,-1.293) -- (4.692,-1.274) -- (4.795,-1.259) -- (4.901,-1.246) -- (5.010,-1.237) -- (5.120,-1.230) -- (5.232,-1.226) -- (5.344,-1.224) -- (5.456,-1.224) -- (5.568,-1.226) -- (5.680,-1.230) -- (5.790,-1.237) -- (5.899,-1.246) -- (6.005,-1.259) -- (6.108,-1.274) -- (6.209,-1.293) -- (6.305,-1.316) -- (6.398,-1.343) -- (6.485,-1.375) -- (6.568,-1.412) -- (6.645,-1.455) -- (6.717,-1.503) -- (6.783,-1.557) -- (6.843,-1.617) -- (6.897,-1.683) -- (6.945,-1.755) -- (6.988,-1.832) -- (7.025,-1.915) -- (7.057,-2.003) -- (7.085,-2.095) -- (7.108,-2.192) -- (7.127,-2.293) -- (7.143,-2.397) -- (7.156,-2.504) -- (7.166,-2.614) -- (7.175,-2.725) -- (7.181,-2.839) -- (7.186,-2.954) -- (7.190,-3.070) -- (7.193,-3.187) -- (7.195,-3.305) -- (7.196,-3.424) -- (7.198,-3.542) -- (7.198,-3.662) -- (7.199,-3.781) -- (7.199,-3.901) -- (7.199,-4.020) -- (7.199,-4.140) -- (7.199,-4.260) -- (7.199,-4.380) -- (7.199,-4.499) -- (7.199,-4.619) -- (7.198,-4.738) -- (7.198,-4.858) -- (7.196,-4.976) -- (7.195,-5.095) -- (7.193,-5.213) -- (7.190,-5.330) -- (7.186,-5.446) -- (7.181,-5.561) -- (7.175,-5.675) -- (7.167,-5.786) -- (7.156,-5.896) -- (7.143,-6.003) -- (7.127,-6.107) -- (7.108,-6.208) -- (7.085,-6.305) -- (7.058,-6.397) -- (7.025,-6.485) -- (6.988,-6.567) -- (6.946,-6.645) -- (6.898,-6.716) -- (6.845,-6.781) -- (6.785,-6.840) -- (6.721,-6.893) -- (6.650,-6.940) -- (6.575,-6.981) -- (6.495,-7.015) -- (6.411,-7.043) -- (6.324,-7.066) -- (6.233,-7.083) -- (6.141,-7.094) -- (6.047,-7.099) -- (5.953,-7.099) -- (5.859,-7.094) -- (5.767,-7.083) -- (5.677,-7.066) -- (5.589,-7.044) -- (5.505,-7.015) -- (5.425,-6.981) -- (5.350,-6.941) -- (5.281,-6.895) -- (5.216,-6.842) -- (5.158,-6.784) -- (5.105,-6.719) -- (5.059,-6.650) -- (5.019,-6.575) -- (4.985,-6.495) -- (4.956,-6.411) -- (4.934,-6.323) -- (4.917,-6.233) -- (4.906,-6.141) -- (4.901,-6.047) -- (4.901,-5.953) -- (4.906,-5.859) -- (4.917,-5.767) -- (4.934,-5.676) -- (4.957,-5.589) -- (4.985,-5.505) -- (5.019,-5.425) -- (5.060,-5.350) -- (5.107,-5.279) -- (5.160,-5.215) -- (5.219,-5.155) -- (5.284,-5.102) -- (5.355,-5.054) -- (5.432,-5.011) -- (5.515,-4.974) -- (5.603,-4.942) -- (5.695,-4.915) -- (5.791,-4.891) -- (5.892,-4.872) -- (5.995,-4.855) -- (6.101,-4.841) -- (6.210,-4.830) -- (6.320,-4.820) -- (6.432,-4.812) -- (6.544,-4.804) -- (6.656,-4.796) -- (6.768,-4.788) -- (6.880,-4.780) -- (6.990,-4.770) -- (7.099,-4.759) -- (7.205,-4.745) -- (7.308,-4.728) -- (7.409,-4.709) -- (7.505,-4.685) -- (7.597,-4.658) -- (7.685,-4.626) -- (7.768,-4.589) -- (7.845,-4.546) -- (7.916,-4.498) -- (7.981,-4.445) -- (8.040,-4.385) -- (8.093,-4.321) -- (8.140,-4.250) -- (8.181,-4.175) -- (8.215,-4.095) -- (8.243,-4.011) -- (8.266,-3.924) -- (8.283,-3.833) -- (8.294,-3.741) -- (8.299,-3.647) -- (8.299,-3.553) -- (8.294,-3.459) -- (8.283,-3.367) -- (8.266,-3.276) -- (8.243,-3.189) -- (8.215,-3.105) -- (8.181,-3.025) -- (8.140,-2.950) -- (8.093,-2.879) -- (8.040,-2.815) -- (7.981,-2.755) -- (7.916,-2.702) -- (7.845,-2.654) -- (7.767,-2.612) -- (7.685,-2.575) -- (7.597,-2.542) -- (7.505,-2.515) -- (7.408,-2.492) -- (7.307,-2.473) -- (7.203,-2.457) -- (7.096,-2.444) -- (6.986,-2.433) -- (6.875,-2.425) -- (6.761,-2.419) -- (6.646,-2.414) -- (6.530,-2.410) -- (6.413,-2.407) -- (6.295,-2.405) -- (6.176,-2.404) -- (6.058,-2.402) -- (5.938,-2.402) -- (5.819,-2.401) -- (5.699,-2.401) -- (5.580,-2.400) -- (5.460,-2.400) -- (5.340,-2.400) -- (5.220,-2.400) -- (5.100,-2.400) -- (4.980,-2.400) -- (4.860,-2.400) -- (4.740,-2.400) -- (4.620,-2.400) -- (4.500,-2.400) -- (4.380,-2.400) -- (4.260,-2.400) -- (4.140,-2.400) -- (4.020,-2.400) -- (3.901,-2.401) -- (3.781,-2.401) -- (3.662,-2.402) -- (3.542,-2.402) -- (3.424,-2.404) -- (3.305,-2.405) -- (3.187,-2.407) -- (3.070,-2.410) -- (2.954,-2.414) -- (2.839,-2.419) -- (2.725,-2.425) -- (2.614,-2.434) -- (2.504,-2.444) -- (2.397,-2.457) -- (2.293,-2.473) -- (2.192,-2.492) -- (2.095,-2.515) -- (2.003,-2.543) -- (1.915,-2.575) -- (1.832,-2.612) -- (1.755,-2.655) -- (1.683,-2.703) -- (1.617,-2.757) -- (1.557,-2.817) -- (1.503,-2.883) -- (1.455,-2.955) -- (1.412,-3.032) -- (1.375,-3.115) -- (1.343,-3.202) -- (1.316,-3.295) -- (1.293,-3.391) -- (1.274,-3.492) -- (1.259,-3.595) -- (1.246,-3.701) -- (1.237,-3.810) -- (1.230,-3.920) -- (1.226,-4.032) -- (1.224,-4.144) -- (1.224,-4.256) -- (1.226,-4.368) -- (1.230,-4.480) -- (1.237,-4.590) -- (1.246,-4.699) -- (1.259,-4.805) -- (1.274,-4.908) -- (1.293,-5.009) -- (1.316,-5.105) -- (1.343,-5.198) -- (1.375,-5.285) -- (1.412,-5.368) -- (1.455,-5.445) -- (1.503,-5.517) -- (1.557,-5.583) -- (1.617,-5.643) -- (1.683,-5.697) -- (1.755,-5.745) -- (1.832,-5.788) -- (1.915,-5.825) -- (2.003,-5.857) -- (2.095,-5.885) -- (2.192,-5.908) -- (2.293,-5.927) -- (2.397,-5.943) -- (2.504,-5.956) -- (2.613,-5.966) -- (2.725,-5.975) -- (2.839,-5.981) -- (2.954,-5.986) -- (3.070,-5.990) -- (3.187,-5.992) -- (3.304,-5.994) -- (3.422,-5.995) -- (3.541,-5.996) -- (3.659,-5.996) -- (3.778,-5.995) -- (3.896,-5.994) -- (4.013,-5.992) -- (4.130,-5.990) -- (4.246,-5.986) -- (4.361,-5.981) -- (4.475,-5.975) -- (4.587,-5.966) -- (4.696,-5.956) -- (4.803,-5.943) -- (4.907,-5.927) -- (5.008,-5.908) -- (5.105,-5.885) -- (5.197,-5.858) -- (5.285,-5.825) -- (5.367,-5.788) -- (5.445,-5.746) -- (5.516,-5.698) -- (5.581,-5.645) -- cycle;
\draw[white, line width=4.5pt, line cap=butt] (5.883,-5.033) -- (5.894,-4.941) -- (5.898,-4.871) -- (5.899,-4.847) -- (5.899,-4.753) -- (5.894,-4.659);
\draw[line cap=butt] (5.883,-5.033) -- (5.894,-4.941) -- (5.898,-4.871) -- (5.899,-4.847) -- (5.899,-4.753) -- (5.894,-4.659);
\draw[white, line width=4.5pt, line cap=butt] (3.601,-3.781) -- (3.602,-3.662) -- (3.602,-3.658) -- (3.602,-3.542) -- (3.604,-3.424);
\draw[line cap=butt] (3.601,-3.781) -- (3.602,-3.662) -- (3.602,-3.658) -- (3.602,-3.542) -- (3.604,-3.424);
\draw[white, line width=4.5pt, line cap=butt] (2.436,-5.789) -- (2.445,-5.897) -- (2.450,-5.950) -- (2.456,-6.002) -- (2.470,-6.105);
\draw[line cap=butt] (2.436,-5.789) -- (2.445,-5.897) -- (2.450,-5.950) -- (2.456,-6.002) -- (2.470,-6.105);
\draw[white, line width=4.5pt, line cap=butt] (3.422,-5.995) -- (3.541,-5.996) -- (3.546,-5.996) -- (3.659,-5.996) -- (3.778,-5.995);
\draw[line cap=butt] (3.422,-5.995) -- (3.541,-5.996) -- (3.546,-5.996) -- (3.659,-5.996) -- (3.778,-5.995);
\draw[white, line width=4.5pt, line cap=butt] (3.781,-2.401) -- (3.662,-2.402) -- (3.656,-2.402) -- (3.542,-2.402) -- (3.424,-2.404);
\draw[line cap=butt] (3.781,-2.401) -- (3.662,-2.402) -- (3.656,-2.402) -- (3.542,-2.402) -- (3.424,-2.404);
\draw[white, line width=4.5pt, line cap=butt] (7.127,-2.293) -- (7.143,-2.397) -- (7.150,-2.450) -- (7.156,-2.504) -- (7.166,-2.614);
\draw[line cap=butt] (7.127,-2.293) -- (7.143,-2.397) -- (7.150,-2.450) -- (7.156,-2.504) -- (7.166,-2.614);
\draw[white, line width=4.5pt, line cap=butt] (6.990,-4.770) -- (7.099,-4.759) -- (7.198,-4.746) -- (7.205,-4.745) -- (7.308,-4.728) -- (7.409,-4.709);
\draw[line cap=butt] (6.990,-4.770) -- (7.099,-4.759) -- (7.198,-4.746) -- (7.205,-4.745) -- (7.308,-4.728) -- (7.409,-4.709);
\draw[white, line width=4.5pt, line cap=butt] (4.906,-6.141) -- (4.901,-6.047) -- (4.901,-5.953) -- (4.902,-5.928) -- (4.906,-5.859) -- (4.917,-5.767);
\draw[line cap=butt] (4.906,-6.141) -- (4.901,-6.047) -- (4.901,-5.953) -- (4.902,-5.928) -- (4.906,-5.859) -- (4.917,-5.767);

\end{tikzpicture}

%% file: figures/Th-podd-tait-target.tex
\begin{tikzpicture}[tait/.style={line width=1.1pt, black, line cap=round},
  vlab/.style={font=\small, inner sep=1pt}]
\draw[tait] (-1.126,0.650) arc[start angle=150, end angle=90, radius=1.300];
\draw[tait] (0.000,1.300) arc[start angle=90, end angle=30, radius=1.300];
\draw[tait] (1.126,0.650) arc[start angle=30, end angle=-30, radius=1.300];
\draw[tait] (1.126,-0.650) arc[start angle=-30, end angle=-82, radius=1.300];
\draw[tait] (-1.126,0.650) arc[start angle=150, end angle=202, radius=1.300];
\fill (-0.865,-0.971) circle (0.03);
\fill (-0.650,-1.126) circle (0.03);
\fill (-0.408,-1.234) circle (0.03);
\draw[tait] (-1.992,1.150) arc[start angle=150, end angle=90, radius=2.300];
\draw[tait] (0.000,2.300) arc[start angle=90, end angle=30, radius=2.300];
\draw[tait] (1.992,1.150) arc[start angle=30, end angle=-30, radius=2.300];
\draw[tait] (1.992,-1.150) arc[start angle=-30, end angle=-82, radius=2.300];
\draw[tait] (-1.992,1.150) arc[start angle=150, end angle=202, radius=2.300];
\fill (-1.372,-1.846) circle (0.03);
\fill (-1.150,-1.992) circle (0.03);
\fill (-0.913,-2.111) circle (0.03);
\draw[tait] (-3.031,1.750) arc[start angle=150, end angle=90, radius=3.500];
\draw[tait] (0.000,3.500) arc[start angle=90, end angle=30, radius=3.500];
\draw[tait] (3.031,1.750) arc[start angle=30, end angle=-30, radius=3.500];
\draw[tait] (3.031,-1.750) arc[start angle=-30, end angle=-82, radius=3.500];
\draw[tait] (-3.031,1.750) arc[start angle=150, end angle=202, radius=3.500];
\fill (-1.975,-2.890) circle (0.03);
\fill (-1.750,-3.031) circle (0.03);
\fill (-1.515,-3.155) circle (0.03);
\draw[tait] (0,0) -- (0.000,1.300);
\draw[tait] (0.000,1.300) -- (0.000,2.300);
\draw[tait] (0.000,2.300) -- (0.000,2.620);
\draw[tait] (0.000,3.180) -- (0.000,3.500);
\fill (0.000,2.750) circle (0.03);
\fill (0.000,2.900) circle (0.03);
\fill (0.000,3.050) circle (0.03);
\draw[tait] (0,0) -- (1.126,0.650);
\draw[tait] (1.126,0.650) -- (1.992,1.150);
\draw[tait] (1.992,1.150) -- (2.269,1.310);
\draw[tait] (2.754,1.590) -- (3.031,1.750);
\fill (2.382,1.375) circle (0.03);
\fill (2.511,1.450) circle (0.03);
\fill (2.641,1.525) circle (0.03);
\draw[tait] (0,0) -- (1.126,-0.650);
\draw[tait] (1.126,-0.650) -- (1.992,-1.150);
\draw[tait] (1.992,-1.150) -- (2.269,-1.310);
\draw[tait] (2.754,-1.590) -- (3.031,-1.750);
\fill (2.382,-1.375) circle (0.03);
\fill (2.511,-1.450) circle (0.03);
\fill (2.641,-1.525) circle (0.03);
\draw[tait] (0,0) -- (-1.126,0.650);
\draw[tait] (-1.126,0.650) -- (-1.992,1.150);
\draw[tait] (-1.992,1.150) -- (-2.269,1.310);
\draw[tait] (-2.754,1.590) -- (-3.031,1.750);
\fill (-2.382,1.375) circle (0.03);
\fill (-2.511,1.450) circle (0.03);
\fill (-2.641,1.525) circle (0.03);
\draw[tait, dash pattern=on 0pt off 2.6pt, line width=1.5pt] (0,0) -- (-0.525,-0.909);
\fill (0,0) circle (0.070);
\node[vlab] at (0.175,-0.62) {$v_{mq+1}$};
\fill (0.000,1.300) circle (0.070);
\node[vlab] at (-0.337,1.503) {$v_1$};
\fill (1.126,0.650) circle (0.070);
\node[vlab] at (1.133,1.044) {$v_2$};
\fill (1.126,-0.650) circle (0.070);
\node[vlab] at (1.470,-0.459) {$v_3$};
\fill (-1.126,0.650) circle (0.070);
\node[vlab] at (-1.470,0.459) {$v_q$};
\fill (0.000,2.300) circle (0.070);
\node[vlab] at (-0.443,2.501) {$v_{q+1}$};
\fill (1.992,1.150) circle (0.070);
\node[vlab] at (2.085,1.634) {$v_{q+2}$};
\fill (1.992,-1.150) circle (0.070);
\node[vlab] at (2.487,-0.987) {$v_{q+3}$};
\fill (-1.992,1.150) circle (0.070);
\node[vlab] at (-2.5,0.95) {$v_{2q}$};
\fill (0.000,3.500) circle (0.070);
\node[vlab] at (0.000,3.790) {$v_{(m-1)q+1}$};
\fill (3.031,1.750) circle (0.070);
\node[vlab] at (3.605,2.2) {$v_{(m-1)q+2}$};
\fill (3.031,-1.750) circle (0.070);
\node[vlab] at (3.605,-2.2) {$v_{(m-1)q+3}$};
\fill (-3.031,1.750) circle (0.070);
\node[vlab] at (-3.368,1.945) {$v_{mq}$};
\end{tikzpicture}

%% file: references.bib
@article{AMMMPS21,
  author        = {Aceto, Paolo and Meier, Jeffrey and Miller, Allison N. and Miller, Maggie and Park, JungHwan and Stipsicz, Andr{\'a}s I.},
  title         = {Branched covers bounding rational homology balls},
  journal       = {Algebr. Geom. Topol.},
  volume        = {21},
  number        = {7},
  year          = {2021},
  pages         = {3569--3599},
  doi           = {10.2140/agt.2021.21.3569},
  eprint        = {2002.10324},
  archivePrefix = {arXiv},
  primaryClass  = {math.GT},
}

@article{CHH13,
  author        = {Cochran, Tim D. and Harvey, Shelly and Horn, Peter},
  title         = {Filtering smooth concordance classes of topologically slice knots},
  journal       = {Geom. Topol.},
  volume        = {17},
  number        = {4},
  year          = {2013},
  pages         = {2103--2162},
  doi           = {10.2140/gt.2013.17.2103},
  eprint        = {1201.6283},
  archivePrefix = {arXiv},
  primaryClass  = {math.GT},
}

@article{Coh96,
  author = {Cohn, J. H. E.}, title = {Perfect {Pell} powers},
  journal = {Glasgow Mathematical Journal},
  volume = {38}, number = {1}, pages = {19--20}, year = {1996},
  doi = {10.1017/S0017089500031207}
}

@incollection{Con21,
  author        = {Conway, Anthony},
  title         = {The {Levine--Tristram} signature: a survey},
  booktitle     = {2019--20 {MATRIX} Annals},
  series        = {MATRIX Book Ser.},
  volume        = {4},
  publisher     = {Springer},
  address       = {Cham},
  year          = {2021},
  pages         = {31--56},
  doi           = {10.1007/978-3-030-62497-2_2},
  eprint        = {1903.04477},
  archivePrefix = {arXiv},
}

@article{DISST25,
  author        = {Daemi, Aliakbar and Imori, Hayato and Sato, Kouki and Scaduto, Christopher and Taniguchi, Masaki},
  title         = {Instantons, special cycles and knot concordance},
  journal       = {Geom. Topol.},
  volume        = {29},
  number        = {8},
  year          = {2025},
  pages         = {4189--4298},
  doi           = {10.2140/gt.2025.29.4189},
  eprint        = {2209.05400},
  archivePrefix = {arXiv},
  primaryClass  = {math.GT},
}

@article{DS24,
  author        = {Daemi, Aliakbar and Scaduto, Christopher},
  title         = {{Chern--Simons} functional, singular instantons, and the four-dimensional clasp number},
  journal       = {J. Eur. Math. Soc. (JEMS)},
  volume        = {26},
  number        = {6},
  year          = {2024},
  pages         = {2127--2190},
  doi           = {10.4171/JEMS/1320},
  eprint        = {2007.13160},
  archivePrefix = {arXiv},
  primaryClass  = {math.GT},
}

@article{DPLS25,
    title = {Every negative amphichiral knot is rationally slice},
    journal = {Advances in Mathematics},
    volume = {503},
    pages = {111267},
    year = {2026},
    issn = {0001-8708},
    doi = {10.1016/j.aim.2026.111267},
    author = {Alessio {Di Prisa} and Jaewon Lee and O{\u{g}}uz {\c{S}}avk},
  eprint        = {2509.21140},
  archivePrefix = {arXiv},
  primaryClass  = {math.GT},
}

@article{DPS26,
  author        = {Alessio {Di Prisa} and O{\u{g}}uz {\c{S}}avk},
  title         = {{Turk's head} knots and links: a survey},
  journal       = {Expositiones Mathematicae},
  volume        = {44},
  number        = {3},
  year          = {2026},
  pages         = {125777},
  doi           = {10.1016/j.exmath.2026.125777},
  eprint        = {2409.20106},
  archivePrefix = {arXiv},
  primaryClass  = {math.GT},
}

@article{Don87,
  author  = {Donaldson, Simon K.},
  title   = {The orientation of {Yang--Mills} moduli spaces and $4$-manifold topology},
  journal = {J. Differential Geom.},
  volume  = {26},
  number  = {3},
  year    = {1987},
  pages   = {397--428},
  doi     = {10.4310/jdg/1214441485},
}

@article{DMMS10,
  author        = {Dowdall, Nicholas E. and Mattman, Thomas W. and Meek, Kevin and Solis, Pablo R.},
  title         = {On the {Harary--Kauffman} conjecture and {Turk's head} knots},
  journal       = {Kobe J. Math.},
  volume        = {27},
  number        = {1--2},
  year          = {2010},
  pages         = {1--20},
  eprint        = {0811.0044},
  archivePrefix = {arXiv},
  primaryClass  = {math.GT},
}

@misc{DG,
  author        = {Dunfield, Nathan M. and Gong, Sherry},
  title         = {Ribbon concordances and slice obstructions: experiments and examples},
  year          = {2025},
  eprint        = {2512.21825},
  archivePrefix = {arXiv},
  primaryClass  = {math.GT},
  note          = {Preprint, arXiv:2512.21825},
}

@article{DLS26,
  author        = {Dunfield, Nathan M. and Lipshitz, Robert and Sch{\"u}tz, Dirk},
  title         = {Local equivalence and refinements of {Rasmussen}'s {$s$}-invariant},
  journal       = {J. Topol.},
  volume        = {19},
  number        = {1},
  year          = {2026},
  pages         = {e70056},
  doi           = {10.1112/topo.70056},
  eprint        = {2312.09114},
  archivePrefix = {arXiv},
  primaryClass  = {math.GT},
  url           = {https://arxiv.org/abs/2312.09114}
}

@article{FGMW10,
  author        = {Freedman, Michael H. and Gompf, Robert E. and Morrison, Scott and Walker, Kevin},
  title         = {Man and machine thinking about the smooth 4-dimensional {Poincar\'e} conjecture},
  journal       = {Quantum Topol.},
  volume        = {1},
  number        = {2},
  year          = {2010},
  pages         = {171--208},
  doi           = {10.4171/QT/5},
  eprint        = {0906.5177},
  archivePrefix = {arXiv},
}

@misc{GP26,
  author        = {Golla, Marco and Pinz{\'o}n-Caicedo, Juanita},
  title         = {The {Conway} knot has infinite order in smooth concordance},
  year          = {2026},
  eprint        = {2609.22378},
  archivePrefix = {arXiv},
  primaryClass  = {math.GT},
  note          = {Preprint, arXiv:2609.22378},
  url           = {https://arxiv.org/abs/2609.22378}
}

@book{GS99,
  author    = {Gompf, Robert E. and Stipsicz, Andr{\'a}s I.},
  title     = {$4$-Manifolds and {Kirby} Calculus},
  series    = {Graduate Studies in Mathematics},
  volume    = {20},
  publisher = {American Mathematical Society},
  address   = {Providence, RI},
  year      = {1999},
  doi       = {10.1090/gsm/020},
  isbn      = {978-0-8218-0994-5},
  mrnumber  = {1707327}
}

@article{Gong,
  author        = {Gong, Sherry},
  title         = {On the {Kronheimer--Mrowka} concordance invariant},
  journal       = {J. Topol.},
  volume        = {14},
  number        = {1},
  year          = {2021},
  pages         = {1--28},
  doi           = {10.1112/topo.12175},
  eprint        = {1908.05018},
  archivePrefix = {arXiv},
  primaryClass  = {math.GT},
}

@article{GL78,
  author  = {Gordon, C. McA. and Litherland, Richard A.},
  title   = {On the signature of a link},
  journal = {Invent. Math.},
  volume  = {47},
  number  = {1},
  year    = {1978},
  pages   = {53--69},
  doi     = {10.1007/BF01609479},
}

@article{Greene13Spanning,
  author        = {Greene, Joshua Evan},
  title         = {A spanning tree model for the {Heegaard Floer}
                   homology of a branched double-cover},
  journal       = {J. Topol.},
  volume        = {6},
  number        = {2},
  year          = {2013},
  pages         = {525--567},
  doi           = {10.1112/jtopol/jtt007},
  eprint        = {0805.1381},
  archivePrefix = {arXiv},
  primaryClass  = {math.GT}
}

@article{Hom14Cable,
  author        = {Hom, Jennifer},
  title         = {Bordered {Heegaard Floer} homology and the
                   tau-invariant of cable knots},
  journal       = {J. Topol.},
  volume        = {7},
  number        = {2},
  year          = {2014},
  pages         = {287--326},
  doi           = {10.1112/jtopol/jtt030},
  eprint        = {1202.1463},
  archivePrefix = {arXiv},
  primaryClass  = {math.GT}
}

@article{Iss18,
  author = {Issa, Ahmad},
  title = {The classification of quasi-alternating {Montesinos} links},
  journal = {Proceedings of the American Mathematical Society},
  volume = {146},
  number = {9},
  year = {2018},
  pages = {4047--4057},
  doi = {10.1090/proc/14053},
  eprint = {1701.08425},
  archivePrefix = {arXiv},
  primaryClass = {math.GT}
}

@article{IM18,
  author        = {Issa, Ahmad and McCoy, Duncan},
  title         = {On {Seifert} fibered spaces bounding definite manifolds},
  journal       = {Pacific J. Math.},
  volume        = {304},
  number        = {2},
  year          = {2020},
  pages         = {463--480},
  doi           = {10.2140/pjm.2020.304.463},
  eprint        = {1807.10310},
  archivePrefix = {arXiv},
  primaryClass  = {math.GT},
}

@article{Jab12,
  author = {Jabuka, Stanislav},
  title = {Concordance invariants from higher order covers},
  journal = {Topology and its Applications},
  volume = {159},
  number = {10-11},
  year = {2012},
  pages = {2694--2710},
  doi = {10.1016/j.topol.2012.03.014}
}

@article{Jiang81,
  author  = {Jiang, Bo Ju},
  title   = {A simple proof that the concordance group of
             algebraically slice knots is infinitely generated},
  journal = {Proceedings of the American Mathematical Society},
  volume  = {83},
  number  = {1},
  year    = {1981},
  pages   = {189--192},
  doi     = {10.1090/S0002-9939-1981-0620010-7}
}

@article{KL04,
  author  = {Kauffman, Louis H. and Lambropoulou, Sofia},
  title   = {On the classification of rational tangles},
  journal = {Advances in Applied Mathematics},
  volume  = {33},
  number  = {2},
  year    = {2004},
  pages   = {199--237},
  doi     = {10.1016/j.aam.2003.06.002},
  eprint  = {math/0311499},
  archivePrefix = {arXiv}
}

@article{Kaw09,
  author  = {Kawauchi, Akio},
  title   = {Rational-slice knots via strongly negative-amphicheiral knots},
  journal = {Commun. Math. Res.},
  volume  = {25},
  number  = {2},
  year    = {2009},
  pages   = {177--192},
}

@article{KimOh05,
  author  = {Kim, Myung-Hwan and Oh, Byeong-Kweon},
  title   = {Representations of integral quadratic forms by sums of squares},
  journal = {Mathematische Zeitschrift},
  volume  = {250},
  number  = {2},
  pages   = {427--442},
  year    = {2005},
  doi     = {10.1007/s00209-005-0761-5}
}

@article{KMRS24,
  author        = {Kjuchukova, Alexandra and Miller, Allison N. and Ray, Arunima and Sakall{\i}, S{\"u}meyra},
  title         = {Slicing knots in definite $4$-manifolds},
  journal       = {Trans. Amer. Math. Soc.},
  volume        = {377},
  number        = {8},
  year          = {2024},
  pages         = {5905--5946},
  eprint        = {2112.14596},
  archivePrefix = {arXiv},
  primaryClass  = {math.GT},
}

@misc{KnotInfo,
label = {KnotInfo},
  author       = {Livingston, Charles and Moore, Allison H.},
  title        = {{KnotInfo}: Table of Knot Invariants},
  howpublished = {Available at \url{https://knotinfo.org}},
  year         = {2026},
  month        = {October},
}

@article{KM13,
  author        = {Kronheimer, Peter B. and Mrowka, Tomasz S.},
  title         = {Gauge theory and {Rasmussen's} invariant},
  journal       = {J. Topol.},
  volume        = {6},
  number        = {3},
  year          = {2013},
  pages         = {659--674},
  doi           = {10.1112/jtopol/jtt008},
  eprint        = {1110.1297},
  archivePrefix = {arXiv},
  primaryClass  = {math.GT},
  note          = {Corrigendum available in arXiv:1110.1297v3},
}

@misc{Lee24,
  author        = {Lee, Jaewon},
  title         = {Obstructing two-torsion in the rational knot concordance group},
  year          = {2024},
  eprint        = {2406.12761},
  archivePrefix = {arXiv},
  primaryClass  = {math.GT},
  note          = {Preprint, arXiv:2406.12761}
}

@article{LT88,
  author  = {Lickorish, W. B. R. and Thistlethwaite, M. B.},
  title   = {Some links with non-trivial polynomials and their crossing-numbers},
  journal = {Commentarii Mathematici Helvetici},
  volume  = {63},
  pages   = {527--539},
  year    = {1988},
  doi     = {10.1007/BF02566777}
}

@article{Lis07,
  author        = {Lisca, Paolo},
  title         = {Lens spaces, rational balls and the ribbon conjecture},
  journal       = {Geom. Topol.},
  volume        = {11},
  year          = {2007},
  pages         = {429--472},
  doi           = {10.2140/gt.2007.11.429},
  eprint        = {math/0701610},
  archivePrefix = {arXiv},
  primaryClass  = {math.GT},
}

@article{Lis17,
  author        = {Lisca, Paolo},
  title         = {On $3$-braid knots of finite concordance order},
  journal       = {Transactions of the American Mathematical Society},
  volume        = {369},
  number        = {7},
  pages         = {5087--5112},
  year          = {2017},
  doi           = {10.1090/tran/6888},
  eprint        = {1504.03129},
  archivePrefix = {arXiv},
  primaryClass  = {math.GT}
}

@article{LO15,
  author   = {Lisca, Paolo and Owens, Brendan},
  title    = {Signatures, {H}eegaard {F}loer correction terms and quasi-alternating links},
  journal  = {Proceedings of the American Mathematical Society},
  volume   = {143},
  number   = {2},
  year     = {2015},
  pages    = {907--914},
  doi      = {10.1090/S0002-9939-2014-12265-9},
  mrnumber = {3283677},
  eprint   = {1302.3190},
  archivePrefix = {arXiv}
}

@misc{LinkInfo,
  label = {LinkInfo},
  author       = {Livingston, Charles and Moore, Allison H.},
  title        = {{LinkInfo}: Table of Link Invariants},
  howpublished = {Available at \url{https://knotinfo.org/linkinfo}},
  year         = {2026},
  month        = {October}
}

@article{MMSW23,
  author        = {Manolescu, Ciprian and Marengon, Marco and Sarkar, Sucharit and Willis, Michael},
  title         = {A generalization of {Rasmussen's} invariant, with applications to surfaces in some four-manifolds},
  journal       = {Duke Math. J.},
  volume        = {172},
  number        = {2},
  year          = {2023},
  pages         = {231--311},
  eprint        = {1910.08195},
  archivePrefix = {arXiv},
  primaryClass  = {math.GT},
}

@incollection{ManolescuOzsvath08,
  author        = {Manolescu, Ciprian and Ozsv{\'a}th, Peter},
  title         = {On the {Khovanov} and knot {Floer} homologies of quasi-alternating links},
  booktitle     = {Proceedings of G{\"o}kova Geometry--Topology Conference 2007},
  publisher     = {G{\"o}kova Geometry/Topology Conference (GGT)},
  address       = {G{\"o}kova},
  year          = {2008},
  pages         = {60--81},
  eprint        = {0708.3249},
  archivePrefix = {arXiv},
  primaryClass  = {math.GT},
}

@article{MP23,
  author        = {Manolescu, Ciprian and Piccirillo, Lisa},
  title         = {From zero surgeries to candidates for exotic definite $4$-manifolds},
  journal       = {J. Lond. Math. Soc. (2)},
  volume        = {108},
  number        = {5},
  year          = {2023},
  pages         = {2001--2036},
  doi           = {10.1112/jlms.12800},
  eprint        = {2102.04391},
  archivePrefix = {arXiv},
  primaryClass  = {math.GT},
}

@article{MS09,
  author  = {Mattman, Thomas W. and Solis, Pablo},
  title   = {A proof of the {Kauffman--Harary} conjecture},
  journal = {Algebraic \& Geometric Topology},
  volume  = {9},
  number  = {4},
  pages   = {2027--2039},
  year    = {2009},
  doi     = {10.2140/agt.2009.9.2027}
}

@article{Men84,
  author  = {Menasco, William},
  title   = {Closed incompressible surfaces in alternating knot and link complements},
  journal = {Topology},
  volume  = {23},
  number  = {1},
  year    = {1984},
  pages   = {37--44},
  doi     = {10.1016/0040-9383(84)90023-5}
}

@article{MWW22,
  author        = {Morrison, Scott and Walker, Kevin and Wedrich, Paul},
  title         = {Invariants of 4-manifolds from {Khovanov--Rozansky} link homology},
  journal       = {Geom. Topol.},
  volume        = {26},
  number        = {8},
  year          = {2022},
  pages         = {3367--3420},
  doi           = {10.2140/gt.2022.26.3367},
  eprint        = {1907.12194},
  archivePrefix = {arXiv},
}

@article{Mur65,
  author   = {Murasugi, Kunio},
  title    = {On a certain numerical invariant of link types},
  journal  = {Trans. Amer. Math. Soc.},
  volume   = {117},
  year     = {1965},
  pages    = {387--422},
  doi      = {10.1090/S0002-9947-1965-0171275-5},
  mrnumber = {0171275}
}

@article{Mye71,
  author  = {Myers, B. R.},
  title   = {Number of spanning trees in a wheel},
  journal = {IEEE Trans. Circuit Theory},
  volume  = {18},
  number  = {2},
  year    = {1971},
  pages   = {280--282},
  doi     = {10.1109/TCT.1971.1083273},
}

@article{Nak23,
  author  = {Nakamura, Kai},
  title   = {Trace embeddings from zero surgery homeomorphisms},
  journal = {Journal of Topology},
  volume  = {16},
  number  = {4},
  year    = {2023},
  pages   = {1641--1664},
  doi     = {10.1112/topo.12319}
}

@incollection{NR,
  author    = {Neumann, Walter D. and Raymond, Frank},
  title     = {{Seifert} manifolds, plumbing, {$\mu$}-invariant
               and orientation reversing maps},
  booktitle = {Algebraic and geometric topology},
  series    = {Lecture Notes in Math.},
  volume    = {664},
  pages     = {163--196},
  publisher = {Springer},
  address   = {Berlin},
  year      = {1978},
  doi       = {10.1007/BFb0061699}
}

@article{NW15,
  author  = {Ni, Yi and Wu, Zhongtao},
  title   = {Cosmetic surgeries on knots in {$S^3$}},
  journal = {Journal f\"{u}r die reine und angewandte Mathematik},
  volume  = {706},
  year    = {2015},
  pages   = {1--17},
  doi     = {10.1515/crelle-2013-0067},
  eprint  = {1009.4720},
  archivePrefix = {arXiv}
}

@article{NST24,
  author  = {Nozaki, Yuta and Sato, Kouki and Taniguchi, Masaki},
  title   = {Filtered instanton {F}loer homology and the homology cobordism group},
  journal = {Journal of the European Mathematical Society},
  volume  = {26},
  year    = {2024},
  pages   = {4699--4761},
  doi     = {10.4171/JEMS/1371},
  eprint  = {1905.04001},
  archivePrefix = {arXiv}
}

@article{Owe10,
  author        = {Owens, Brendan},
  title         = {On slicing invariants of knots},
  journal       = {Trans. Amer. Math. Soc.},
  volume        = {362},
  number        = {6},
  year          = {2010},
  pages         = {3095--3106},
  eprint        = {0802.2109},
  archivePrefix = {arXiv},
  primaryClass  = {math.GT},
}

@article{Owe20,
  author        = {Owens, Brendan},
  title         = {Smooth, nonsymplectic embeddings of rational balls in the complex projective plane},
  journal       = {Q. J. Math.},
  volume        = {71},
  number        = {3},
  year          = {2020},
  pages         = {997--1007},
  eprint        = {1906.05913},
  archivePrefix = {arXiv},
  primaryClass  = {math.GT},
}

@article{OwS,
  author        = {Owens, Brendan and Strle, Sa{\v{s}}o},
  title         = {Rational homology spheres and the four-ball genus of knots},
  journal       = {Advances in Mathematics},
  volume        = {200},
  number        = {1},
  pages         = {196--216},
  year          = {2006},
  doi           = {10.1016/j.aim.2004.12.007},
  eprint        = {math/0308073},
  archivePrefix = {arXiv},
  primaryClass  = {math.GT}
}

@article{Pet13,
  author  = {Petkova, Ina},
  title   = {Cables of thin knots and bordered {Heegaard Floer} homology},
  journal = {Quantum Topology},
  volume  = {4},
  number  = {4},
  year    = {2013},
  pages   = {377--409},
  doi     = {10.4171/QT/43}
}

@article{Picc,
  author        = {Piccirillo, Lisa},
  title         = {The {Conway} knot is not slice},
  journal       = {Ann. of Math. (2)},
  volume        = {191},
  number        = {2},
  year          = {2020},
  pages         = {581--591},
  doi           = {10.4007/annals.2020.191.2.5},
  eprint        = {1808.02923},
  archivePrefix = {arXiv},
  primaryClass  = {math.GT},
}

@article{Qin25RBG,
  author  = {Qin, Qianhe},
  title   = {An {RBG} construction of integral surgery homeomorphisms},
  journal = {Algebraic \& Geometric Topology},
  volume  = {25},
  number  = {6},
  year    = {2025},
  pages   = {3755--3774},
  doi     = {10.2140/agt.2025.25.3755}
}

@misc{Qin24,
  author        = {Qin, Qianhe},
  title         = {Slicing degree of knots},
  year          = {2024},
  eprint        = {2404.15991},
  archivePrefix = {arXiv},
  primaryClass  = {math.GT},
  url           = {https://arxiv.org/abs/2404.15991},
  note          = {Preprint, arXiv:2404.15991},
}

@article{Ras10,
  author = {Rasmussen, Jacob},
  title = {{Khovanov} homology and the slice genus},
  journal = {Inventiones Mathematicae},
  volume = {182}, number = {2}, pages = {419--447}, year = {2010},
  doi = {10.1007/s00222-010-0275-6}
}

@misc{Sau26,
   author        = {Saurabh, Suman},
   title         = {Equivariant rational sliceness and {Klein} amphichirality of odd-stranded {Turk's head} knots},
   year          = {2026},
   eprint        = {2607.14638},
   archivePrefix = {arXiv},
   primaryClass  = {math.GT},
   note          = {Preprint, arXiv:2607.14638},
 }

@inproceedings{Sed70,
  author    = {Sedl{\'a}\v{c}ek, J.},
  title     = {On the skeletons of a graph or digraph},
  booktitle = {Combinatorial Structures and Their Applications (Proc. Calgary Internat. Conf., Calgary, Alta., 1969)},
  publisher = {Gordon and Breach},
  address   = {New York},
  year      = {1970},
  pages     = {387--391},
}

@misc{Sta26,
  author        = {Stanggassinger, Werner Alois},
  title         = {{Conway} polynomials of {$Th(p,q)$} and {$Th(q,p)$}: a proof of {Takemura}'s conjecture},
  year          = {2026},
  eprint        = {2610.32269},
  archivePrefix = {arXiv},
  primaryClass  = {math.GT},
  url           = {https://arxiv.org/abs/2610.32269},
  note          = {Preprint, arXiv:2610.32269},
}

@article{T87,
  author  = {Thistlethwaite, M. B.},
  title   = {A spanning tree expansion of the {Jones} polynomial},
  journal = {Topology},
  volume  = {26},
  number  = {3},
  pages   = {297--309},
  year    = {1987},
  doi     = {10.1016/0040-9383(87)90003-6}
}

@article{T91,
  author  = {Thistlethwaite, Morwen B.},
  title   = {On the algebraic part of an alternating link},
  journal = {Pacific Journal of Mathematics},
  volume  = {151},
  number  = {2},
  pages   = {317--333},
  year    = {1991},
  doi     = {10.2140/pjm.1991.151.317}
}

@misc{Wil08,
  author        = {Williams, Luke},
  title         = {Obstructing sliceness in a family of {Montesinos} knots},
  year          = {2008},
  eprint        = {0809.1247},
  archivePrefix = {arXiv},
  primaryClass  = {math.GT},
  note          = {Preprint, arXiv:0809.1247},
}
